\newcommand{\myauthor}{Elden Elmanto, Marc Levine, Markus Spitzweck, Paul Arne {\O}stv{\ae}r}
\newcommand{\mytitle}{Algebraic Cobordism and \'{E}tale Cohomology}
\title{Algebraic Cobordism and \'{E}tale Cohomology}
\author{\myauthor}
\date{}
\definecolor{todo}{rgb}{1,0,0}
\definecolor{conditional}{rgb}{0,1,0}
\definecolor{e-mail}{rgb}{0,.40,.80}
\definecolor{reference}{rgb}{.20,.60,.22}
\definecolor{mrnumber}{rgb}{.80,.40,0}
\definecolor{citation}{rgb}{0,.40,.80}
\let\oldmarginpar\marginpar
\renewcommand\marginpar[1]{\-\oldmarginpar[\raggedleft\footnotesize #1]%
{\raggedright\footnotesize #1}}
\newcommand{\Fscr}{\mathscr{F}}
\newcommand{\Oscr}{\mathscr{O}}
\newcommand{\Mrm}{\mathrm{M}}
\DeclareMathOperator{\limone}{lim^1}
\newcommand{\proj}{\mathsf{proj}}
\newcommand{\hyp}{\mathsf{hyp}}
\newcommand{\compl}{{}^{{\kern -.5pt}\wedge}_{\ell}}
\newcommand{\C}{\mathrm{C}}
\newcommand{\D}{\mathrm{D}}
\newcommand{\F}{\mathsf{F}}
\renewcommand{\H}{\mathrm{H}}
\newcommand{\SH}{\mathrm{SH}}
\newcommand{\M}{\mathsf{M}}
\renewcommand{\sc}{\mathrm{sc}}
\newcommand{\Shv}{\mathrm{Shv}}
\newcommand{\conn}{\mathrm{conn}}
\newcommand{\Sp}{\mathsf{Sp}}
\newcommand{\LL}{\mathrm{L}}
\newcommand{\TriCat}{\mathrm{TriCat}}
\newcommand{\cell}{\mathsf{cell}}
\renewcommand{\AA}{\mathbf{A}}
\newcommand{\GG}{\mathbf{G}}
\newcommand{\NN}{\mathds{N}}
\newcommand{\PP}{\mathbf{P}}
\newcommand{\QQ}{\mathbb{Q}}
\newcommand{\RR}{\mathbf{R}}
\newcommand{\ZZ}{\mathbf{Z}}
\renewcommand{\P}{\mathrm{P}}
\newcommand{\gr}{\mathsf{gr}}
\DeclareMathOperator{\CAlg}{CAlg}
\newcommand{\op}{\mathrm{op}}
\DeclareMathOperator{\Gal}{Gal}
\DeclareMathOperator{\id}{id}
\DeclareMathOperator{\Char}{{char}\,}
\DeclareMathOperator{\Pic}{Pic}
\newcommand{\Nis}{\mathrm{Nis}}
\newcommand{\Spc}{\mathrm{Spc}}
\newcommand{\Sm}{\mathrm{Sm}}
\newcommand{\KU}{\mathrm{KU}}
\newcommand{\tr}{\mathrm{tr}}
\newcommand{\eff}{\mathrm{eff}}
\DeclareMathOperator{\Aut}{Aut}
\DeclareMathOperator{\Hom}{Hom}
\DeclareMathOperator{\Maps}{Maps}
\DeclareMathOperator{\maps}{maps}
\DeclareMathOperator{\fib}{fib}
\newcommand{\mapint}{\underline{\mathrm{map}}}
\newcommand{\Mod}{\mathrm{Mod}}
\renewcommand{\Pr}{\mathrm{Pr}}
\renewcommand{\CAlg}{\mathrm{CAlg}}
\newcommand{\cd}{\mathsf{cd}}
\newcommand{\sspt}{\mathbf{1}}
\newcommand{\G}{\mathrm{G}}
\DeclareMathOperator{\Ho}{Ho}
\newcommand{\Cat}{\mathrm{Cat}}
\newcommand{\Sing}{\mathrm{Sing}}
\newcommand{\stab}{\mathsf{stab}}
\newcommand{\Ind}{\mathrm{Ind}}
\newcommand{\KGL}{\mathsf{KGL}}
\newcommand{\MGL}{\mathsf{MGL}}
\newcommand{\MU}{\mathsf{MU}}
\newcommand{\E}{\mathsf{E}}
\newcommand{\unit}{\mathbf{1}}
\newcommand{\s}{\mathbf{S}}
\newcommand{\pre}{\mathrm{pre}}
\newcommand{\ho}{\mathsf{ho}}
\newcommand{\Fun}{\mathrm{Fun}}
\DeclareMathOperator*{\colim}{colim}
\newcommand{\et}{\mathrm{\acute{e}t}}
\newcommand{\Et}{\mathrm{\acute{E}t}}
\newcommand{\Ab}{\mathrm{Ab}}
\newcommand{\e}{\mathrm{e}}
\newcommand{\mot}{\mathsf{m}}
\DeclareMathOperator{\Spec}{Spec}
\DeclareMathOperator{\DM}{DM}
\DeclareMathOperator{\Sq}{\mathrm{Sq}}
\newcommand{\Spt}{\mathrm{Spt}}
\DeclareMathOperator{\Stab}{Stab}
\DeclareMathOperator{\Sch}{Sch}
\newcommand{\iso}{\cong}
\renewcommand{\gr}{\mathrm{gr}}
\theoremstyle{plain}
\newtheorem{theorem}{Theorem}[section]
\newtheorem*{theorem*}{Theorem}
\newtheorem{lemma}[theorem]{Lemma}
\newtheorem{proposition}[theorem]{Proposition}
\newtheorem{corollary}[theorem]{Corollary}
\newtheoremstyle{named}{}{}{\itshape}{}{\bfseries}{.}{.5em}{#1 \thmnote{#3}}
\theoremstyle{named}
\theoremstyle{definition}
\newtheorem{definition}[theorem]{Definition}
\newtheorem{example}[theorem]{Example}
\newtheorem{question}[theorem]{Question}
\theoremstyle{remark}
\newtheorem{remark}[theorem]{Remark}
\subjclass[2010]{14F20, 14F42, 19E15, 55P42, 55P43}
\keywords{Motivic homotopy theory, \'{e}tale motives, slice spectral sequence, algebraic cobordism, algebraic $K$-theory, motivic cohomology.}
\begin{document}

\begin{abstract}
Thomason's  \'{e}tale descent theorem for Bott periodic algebraic $K$-theory \cite{aktec} is generalized to any $\MGL$ module over a regular Noetherian scheme of finite dimension. Over arbitrary Noetherian schemes of finite dimension, this generalizes the analog of Thomason's theorem for Weibel's homotopy $K$-theory. This is achieved by amplifying the effects from the case of motivic cohomology, 
using the slice spectral sequence in the case of the universal example of algebraic cobordism. 
We also obtain integral versions of these statements: Bousfield localization at \'etale motivic cohomology is the universal way to impose \'etale descent for these theories.
As applications, we describe the \'etale local objects in modules over these spectra and show that they satisfy the full six functor formalism, 
construct an \'etale descent spectral sequence converging to Bott-inverted motivic Landweber exact theories, 
and prove cellularity and effectivity of the \'{e}tale versions of these motivic spectra.
\end{abstract}
\maketitle
\tableofcontents

\section{Introduction}
In constructing the motivic homotopy category $\H(S)$ over a base scheme $S$, 
Morel and Voevodsky \cite{morel-voevodsky} use the Nisnevich topology. 
One inspiration for this choice is the fact that many interesting invariants of schemes such as algebraic $K$-theory and Chow groups satisfy Nisnevich (hyper)descent but not \'{e}tale (hyper)descent (we refer the reader to~\S\ref{sect:hyp-remove} for a self-contained discussion on hyperdescent). This failure for $K$-theory has both arithmetic and geometric sources. For example, if $K$-theory did satisfy \'etale hyperdescent then $K_0$ of many fields will admit contributions from its (generally non-zero) Brauer group which is simply not true since this group is always just $\ZZ$; see \cite{mitchell}*{Section 4} for a discussion. Even assuming that the field is algebraically closed, \'etale hyperdescent combined with the comparison theorem in \'etale cohomology would prove that profinitely completed algebraic $K$-theory of a complex variety would simply be the profinitely completed topological $K$-theory of its complex points which is again not true; see \cite{eventuallysurjects} for the precise relationship.

As a result, 
the venture of motivic homotopy theory has focused on algebro-geometric invariants which are Nisnevich local in nature. Besides the fact that the aforementioned invariants are representable in the motivic homotopy category, the Nisnevich topology enjoys some ``finiteness" properties. If $S$ is noetherian, then the Nisnevich cohomological dimension of $S$ is bounded above by its Krull dimension \cite{kato-saito};
this is important for dualizability and compactness questions, 
and also for the convergence of spectral sequences derived from motivic homotopy theory. Recent work of Clausen-Mathew has extended greatly the scope of these ``finiteness" properties of the Nisnevich site \cite{clausen-mathew}.


However, 
there are good reasons to study versions of $\H(S)$ and its stable counterpart $\SH(S)$ using the \'{e}tale topology. 
Indeed, 
$\SH(S)$ suffers from the fact that its ``linearization", 
i.e., 
Voevodsky's big category of motives $\DM$ \cite{ro} and subcategories thereof, 
do not
admit the elusive motivic $t$-structure \cite{VSF}*{Proposition 4.8}.
In recent years, 
a theory of \'{e}tale motives has been developed by Ayoub \cite{ayoub-etale} and by Cisinski-Deglise \cite{etalemotives}, 
including a full-fledged six functors formalism;
for an overview, 
see \cite{ayoub-icm}. 
The \'{e}tale theory has many good properties ---
from the construction of an integral \'{e}tale cycle class map \cite{etalemotives}*{Section 7.1} to providing an environment for the conservativity conjecture \cite{ayoub-icm}*{Section 5.1}.

This article is a contribution to our understanding of the \emph{homotopical} analogue of the above \emph{homological} picture. 
We are interested in a specific question which is, in some sense, quite classical: 

\begin{question}
\label{qn:main} 
What is the difference between $\SH(S)$ and $\SH_{\et}(S)$, 
the stable motivic homotopy categories constructed in the Nisnevich and \'{e}tale topologies?
\end{question}
These categories agree upon rationalization, 
at least if $-1$ is a sum of squares (see \cite{cisinski-deglise}*{\S12.3, Corollary 16.2.14}, and the discussion in Appendix~\S\ref{et-q}),
so our question is basically one of torsion.
The main difference concerns the descent condition;
whereas the Mayer-Vietoris property with respect to distinguished Nisnevich squares is sufficient for descent in $\SH(S)$,  the more involved \'{e}tale hyperdescent condition
is required 
 in $\SH_{\et}(S)$.
An \'{e}tale hypercover $U_{\bullet}\rightarrow X$ is essentially a generalized Cech nerve construction, 
where at each stage one allows a further \'{e}tale covering, 
and the descent condition amounts to a weak equivalence between $\mathcal{F}(X)$ and the homotopy limit of the diagram $\mathcal{F}(U_{\bullet})$ \cite{MR2034012}*{Definition 4.3}.

\subsubsection{Thomason's work}     
\label{sec:thomasonwork}                                                                                                                                                                                                                   
A version of Question \ref{qn:main} was answered by Thomason in his seminal paper \cite{aktec}.
Suppose $X$ is a separated, regular, noetherian scheme of finite Krull dimension.  
For $\ell$ a prime, we say that $X$ is a \emph{$T_{\ell}$-scheme} if:
\begin{enumerate}
\item $\ell$ invertible in $X$, 
\item the residue fields of $X$ have a uniformly bounded mod-$\ell$ \'{e}tale cohomological dimension, and admit Tate-Tsen filtrations, 
\item if $\ell=2$, $\sqrt{-1} \in \H^0(X, \Oscr_X)$.
\end{enumerate}

With these technical hypotheses at hand, 
Thomason \cite{aktec}*{Theorem 2.45} proved the following 
beautiful
result for Bott periodic mod-$\ell^{\nu}$ algebraic $K$-theory.
Let $\Et_X$ denote the small \'{e}tale site on $X$ and $\Sp$ is the category of spectra.
\begin{theorem} 
\label{thm:thomasonoriginal} 
Suppose $X$ is a $T_{\ell}$-scheme and let 
$\beta \in\epsilon_*(K(-)/\ell^{\nu})$ 
be a Bott element \cite{aktec}*{A.7}. 
Then the presheaf of spectra: 
$$
K(-)/\ell^{\nu}[\beta^{-1}]: \Et_X^{\op}  
\rightarrow 
\Sp; U \mapsto K(U)/\ell^{\nu}[\beta^{-1}],
$$ 
satisfies \'{e}tale hyperdescent. 
\end{theorem}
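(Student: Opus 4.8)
The plan is to realize $K(-)/\ell^{\nu}[\beta^{-1}]$ as a form of \emph{\'{e}tale $K$-theory} and to deduce hyperdescent from the fact that the latter is, by construction, an \'{e}tale hypersheaf. Write $L_{\et}$ for \'{e}tale hypersheafification of presheaves of spectra on $\Et_X$, and set $K^{\et}(-)/\ell^{\nu} := L_{\et}(K(-)/\ell^{\nu})$; this satisfies \'{e}tale hyperdescent tautologically. A standard local-to-global estimate bounds the mod-$\ell$ \'{e}tale cohomological dimension of $X$ in terms of its Krull dimension and those of its residue fields, so the $T_{\ell}$-hypotheses force $X$ itself to have bounded mod-$\ell$ \'{e}tale cohomological dimension. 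Two things follow. First, on such a scheme $L_{\et}$ commutes with filtered colimits, hence with the colimit defining Bott inversion, so $L_{\et}(K(-)/\ell^{\nu}[\beta^{-1}]) \simeq K^{\et}(-)/\ell^{\nu}[\beta^{-1}]$ also satisfies hyperdescent. Second --- this time using the Tate--Tsen condition --- the \'{e}tale descent spectral sequence computing $\pi_{\ast} K^{\et}(U)/\ell^{\nu}$ from the twisted mod-$\ell^{\nu}$ \'{e}tale cohomology of $U$ converges strongly. Since a presheaf $\mathcal{F}$ satisfies hyperdescent precisely when its unit $\mathcal{F} \to L_{\et}\mathcal{F}$ is an equivalence, and since $\beta$ already acts invertibly on $K^{\et}(-)/\ell^{\nu}$, the theorem reduces to showing that the comparison map
\[
c\colon K(-)/\ell^{\nu}[\beta^{-1}] \longrightarrow K^{\et}(-)/\ell^{\nu}
\]
is an equivalence of presheaves of spectra on $\Et_X$.

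To prove that $c$ is a pointwise equivalence I would argue by descent and rigidity. Both its source and target are Nisnevich sheaves of spectra --- the source by the Thomason--Trobaugh descent theorem, the target because \'{e}tale sheaves of spectra are in particular Nisnevich sheaves --- and both commute with filtered colimits of rings, so it suffices to evaluate $c$ on a Henselian local ring $A$ with $1/\ell \in A$ and residue field $\kappa$. Gabber's rigidity theorem gives $K(A)/\ell^{\nu} \simeq K(\kappa)/\ell^{\nu}$, and since the small \'{e}tale sites of $A$ and $\kappa$ agree the same holds for their \'{e}tale $K$-theories; thus one is reduced to a field $\kappa$ with $1/\ell \in \kappa$, bounded \'{e}tale cohomological dimension, and a Tate--Tsen filtration. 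When $\kappa$ is separably closed, Suslin's theorem identifies $K(\kappa)/\ell^{\nu}$ with $\KU/\ell^{\nu}$, on which $\beta$ already acts invertibly, compatibly with the degenerate descent spectral sequence over $\Spec\kappa$, so $c$ is an equivalence there. For general $\kappa$ I would compare the motivic (slice) spectral sequence converging to $\pi_{\ast} K(\kappa)/\ell^{\nu}$, whose $E_{2}$-page is the mod-$\ell^{\nu}$ motivic cohomology of $\kappa$, with the \'{e}tale descent spectral sequence for $K^{\et}(\kappa)/\ell^{\nu}$, whose $E_{2}$-page is the twisted mod-$\ell^{\nu}$ \'{e}tale cohomology of $\kappa$; the Beilinson--Lichtenbaum comparison $H^{p}_{\mathrm{mot}}(\kappa,\ZZ/\ell^{\nu}(q)) \simeq H^{p}_{\et}(\kappa,\mu_{\ell^{\nu}}^{\otimes q})$ for $p \le q$, together with the vanishing $H^{p}_{\mathrm{mot}}(\kappa,\ZZ/\ell^{\nu}(q)) = 0$ for $p > q$, shows that multiplication by the Bott element precisely erases the weight restriction, so $c$ induces an isomorphism of strongly convergent spectral sequences and is therefore an equivalence.

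The decisive obstacle is the step over a general residue field: this is the Quillen--Lichtenbaum conjecture, resting on the Bloch--Kato / norm residue isomorphism theorem, which supplies the Beilinson--Lichtenbaum comparison used above. The Tate--Tsen hypothesis is the other sensitive point --- without it the \'{e}tale descent spectral sequence need only converge conditionally, and an isomorphism of $E_{2}$-pages would not force $c$ to be an equivalence of spectra --- and it is also what guarantees, via bounded cohomological dimension, that $L_{\et}$ interacts well with Bott inversion. The remaining ingredients --- Nisnevich descent, continuity, Gabber rigidity, and the bookkeeping identifying the given Bott element (when $\mu_{\ell^{\nu}}$ is not globally defined) with the periodicity operators used locally, which one handles by passing to a suitable power together with a transfer argument --- are formal, and together with the above inputs they yield the theorem. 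One could instead follow Thomason's original route, which predates Bloch--Kato and replaces it by a direct analysis of the localization sequence and of multiplication by Bott elements on mod-$\ell^{\nu}$ Moore spectra; the structure of the argument is the same.
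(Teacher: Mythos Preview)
Your argument is correct in outline and is essentially the approach of Levine \cite{levine-motvsk} and Rosenschon--{\O}stv{\ae}r \cite{ro1,aro2}, which the paper cites as precursors. It is, however, not the route the paper takes. In the paper, Theorem~\ref{thm:thomasonoriginal} is stated as Thomason's result and is only recovered at the end (\S\ref{thomason-theorem}) as the special case $\E=\KGL$ of Theorem~\ref{thm:S-noeth-new-module}. The paper's proof of that general theorem runs through $\MGL$: one first proves the Bott-inversion statement for $\M\ZZ$ (Theorem~\ref{thm:motcohcase3}), then amplifies to $\MGL$ via the slice filtration and the connectivity/convergence analysis of \S\S\ref{sect:some-conn}--\ref{sect:commuting}, and finally specializes to any $\MGL$-module. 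Your direct approach buys simplicity for $K$-theory alone but says nothing about $\MGL$ or other oriented theories; the paper's detour through $\MGL$ is precisely what makes the result uniform in the module.

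Two smaller points. First, you invoke Beilinson--Lichtenbaum, hence the full Bloch--Kato conjecture; the paper explicitly avoids this (Remark~\ref{rem:BK}), using instead the weaker Theorem~\ref{thm:IntroMotcoh}, which follows from the more elementary arguments of \cite{levine-bott}. Your spectral-sequence comparison only needs that Bott-inverted motivic cohomology agrees with \'etale cohomology, not the sharper range $p\le q$, so you can downgrade your input accordingly. Second, your comparison of ``the motivic (slice) spectral sequence'' for $K/\ell^\nu$ with ``the \'etale descent spectral sequence'' for $K^{\et}/\ell^\nu$ glosses over the fact that these arise from different filtrations; to make the map of spectral sequences precise one should either run both through the slice tower (as the paper does, comparing $\{f_q\KGL/\ell^\nu\}$ with $\{\pi_*\pi^*f_q\KGL/\ell^\nu\}$) or identify the \'etale homotopy sheaves via Gabber rigidity first and then match the filtrations. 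This is routine but worth saying.
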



One concrete manifestation of \'{e}tale hyperdescent is the strongly convergent descent spectral sequence for $X$ a $T_{\ell}$-scheme:
$$
H^{*}_{\et}(X, \underline{\pi}^{\et}_{*}K/\ell^{\nu}[\beta^{-1}]) \Rightarrow \pi_{*}K/\ell^{\nu}(X)[\beta^{-1}],
$$ 
which by Gabber rigidity allows for concrete computations of the target groups, 
see e.g., 
\cite[Examples 4.4, 4.8, 4.18]{aktec}.

In the language of motivic homotopy theory, 
we may reformulate Theorem~\ref{thm:thomasonoriginal} for the motivic spectrum $\KGL/\ell^{\nu}_X$ representing mod-$\ell^{\nu}$ $K$-theory over $X$\footnote{Since $X$ is regular, we note that homotopy $K$-theory agrees with Thomason-Trobaugh $K$-theory.}: 
the unit of the adjunction between stable motivic homotopy categories:
\begin{equation} \label{eq:pi-adjunction}
\epsilon^*: \SH(X) \rightleftarrows \SH_{\et}(X): \epsilon_*,
\end{equation}
furnished by \'{e}tale localization induces an isomorphism in $\SH(X)$:
$$
\KGL/\ell^{\nu}_X[\beta^{-1}] \stackrel{\simeq}{\rightarrow} \epsilon_*\epsilon^*\KGL/\ell^{\nu}_X.
$$
Phrased this way, 
motivic homotopy theory also provides a universal property of $\beta$-inverted $K$-theory as an \'{e}tale hyperdescent localization.

\subsubsection{Bott-inverted motivic cohomology} 
The story in Section \ref{sec:thomasonwork} has a parallel in motivic cohomology, considered by the third author in \cite{levine-bott}. We write $H^{p,q}_{\mot}(X,\ZZ/n)$ for mod $n$ motivic cohomology of a smooth $k$ scheme $X$.

Suppose $\ell$ is prime to the exponential characteristic of the field $k$ and that $k$ contains a primitive $\ell$th root of unity $\zeta_\ell$. Via the isomorphism $H^{0,1}_{\mot}(\Spec\,k;\ZZ/\ell)\cong \mu_\ell(k)$
we have the element $\tau_{\ell}^{\M\ZZ} \in H^{0,1}_{\mot}(\Spec\,k;\ZZ/\ell)$ corresponding to $\zeta_\ell$. Even without supposing that $\zeta_\ell$ is in $k$, one can construct  (see \S \ref{sec:bott-mz}) a family of  ``Bockstein-compatible'' generators $\tau_{\ell^{\nu}}^{\M\ZZ} \in H^{0,\e(\ell^{\nu})}_{\mot}(\Spec\,k; \ZZ/\ell^{\nu}) \simeq \ZZ/\ell^{\nu}$, 
where $\e(\ell^{\nu})$ is short for the exponent of the group $(\ZZ/\ell^{\nu})^{\times}$.  This is analogous to Thomason's construction of the Bott elements $\beta_\nu\in\pi_{n(\nu)}(K(-)/\ell^{\nu})$ for suitable $n(\nu)$; corresponding to Thomason's theorem on Bott inverted $K$-theory one has the following result for motivic cohomology.

\begin{theorem} 
\label{thm:IntroMotcoh} 
Let $k$ be a field with exponential characteristic prime to $\ell$ and let $X$ be in $\Sm_{k}$. Suppose that $\cd_{\ell}(k) < \infty$. 
For all $\nu \geq 1$, the natural map:
$$
H^{p,q}_{\mot}(X; \ZZ/\ell^{\nu}) 
\to 
H^p_{\et}(X; \mu_{\ell^{\nu}}^{\otimes q})
$$ 
induces an isomorphism
$$
H^{p,*}_{\mot}(X; \ZZ/\ell^{\nu})[(\tau_{\ell^{\nu}}^{\M\ZZ})^{-1}] 
\cong 
H^p_{\et}(X; \mu_{\ell^{\nu}}^{\otimes *}).
$$
\end{theorem}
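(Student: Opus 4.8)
The plan is to deduce the statement from the norm residue isomorphism -- the Bloch--Kato conjecture, proved by Voevodsky, Rost, Weibel and others -- in its Beilinson--Lichtenbaum form, together with a short bookkeeping argument matching multiplication by $\tau^{\M\ZZ}_{\ell^{\nu}}$ with the Tate twist on \'etale cohomology. Write $\epsilon\colon X_{\et}\to X_{\Nis}$ for the change-of-topology morphism; the \'etale sheafification of the weight-$q$ mod $\ell^{\nu}$ motivic complex is $\mu_{\ell^{\nu}}^{\otimes q}$, and Beilinson--Lichtenbaum identifies $\ZZ/\ell^{\nu}(q)$ with the truncation $\tau_{\leq q}R\epsilon_{*}\mu_{\ell^{\nu}}^{\otimes q}$ in the derived category of Nisnevich sheaves on $X$. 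On cohomology this says precisely that the comparison map $H^{p,q}_{\mot}(X;\ZZ/\ell^{\nu})\to H^{p}_{\et}(X;\mu_{\ell^{\nu}}^{\otimes q})$ of the statement is an isomorphism for $p\leq q$ and a monomorphism for $p=q+1$.

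Here I would invoke the hypothesis $\cd_{\ell}(k)<\infty$: since $X$ is of finite type over $k$, its mod $\ell$ \'etale cohomological dimension is finite, say $\leq N$, so $R\epsilon_{*}\mu_{\ell^{\nu}}^{\otimes q}$ is concentrated in cohomological degrees $\leq N$ and the truncation above is vacuous for every $q\geq N$; one thus has an honest equivalence $\ZZ/\ell^{\nu}(q)\xrightarrow{\sim}R\epsilon_{*}\mu_{\ell^{\nu}}^{\otimes q}$ of complexes of Nisnevich sheaves for $q$ large. The mod $\ell^{\nu}$ assertion itself reduces to the mod $\ell$ norm residue theorem via the five lemma applied to the Bockstein triangles $\ZZ/\ell(q)\to\ZZ/\ell^{\nu}(q)\to\ZZ/\ell^{\nu-1}(q)$ and their \'etale counterparts.

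Next I would identify the operator ``cup product with $\tau^{\M\ZZ}_{\ell^{\nu}}$''. By the construction recalled in \S\ref{sec:bott-mz}, the image of $\tau^{\M\ZZ}_{\ell^{\nu}}$ under $H^{0,\e(\ell^{\nu})}_{\mot}(\Spec k;\ZZ/\ell^{\nu})\to H^{0}_{\et}(\Spec k;\mu_{\ell^{\nu}}^{\otimes\e(\ell^{\nu})})$ is the canonical generator of $\mu_{\ell^{\nu}}^{\otimes\e(\ell^{\nu})}$: since $\e(\ell^{\nu})$ is the exponent of $(\ZZ/\ell^{\nu})^{\times}$, the tensor power $\zeta^{\otimes\e(\ell^{\nu})}$ of a primitive $\ell^{\nu}$-th root of unity $\zeta$ is independent of the choice of $\zeta$ and trivializes $\mu_{\ell^{\nu}}^{\otimes\e(\ell^{\nu})}$ as a Galois module, so $H^{0}_{\et}(\Spec k;\mu_{\ell^{\nu}}^{\otimes\e(\ell^{\nu})})\cong\ZZ/\ell^{\nu}$ with this distinguished generator. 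As \'etale realization is multiplicative, cup product with $\tau^{\M\ZZ}_{\ell^{\nu}}$ therefore corresponds to cup product with that generator, i.e. to the periodicity isomorphism $H^{p}_{\et}(X;\mu_{\ell^{\nu}}^{\otimes q})\xrightarrow{\sim}H^{p}_{\et}(X;\mu_{\ell^{\nu}}^{\otimes q+\e(\ell^{\nu})})$.

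Finally I would assemble the pieces. In fixed cohomological degree $p$ and weight $q$, the group $H^{p,*}_{\mot}(X;\ZZ/\ell^{\nu})[(\tau^{\M\ZZ}_{\ell^{\nu}})^{-1}]$ is the colimit of $H^{p,q}_{\mot}(X;\ZZ/\ell^{\nu})\xrightarrow{\cdot\,\tau^{\M\ZZ}_{\ell^{\nu}}}H^{p,q+\e(\ell^{\nu})}_{\mot}(X;\ZZ/\ell^{\nu})\to\cdots$, and because $X$ has finite Krull dimension the functor $H^{p}_{\Nis}(X,-)$ commutes with this filtered colimit, so it may be computed after passing to the colimit of the sheaf-level comparison maps. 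Once $q+k\e(\ell^{\nu})\geq\max(p,N)$ every transition map is an isomorphism onto $H^{p}_{\et}(X;\mu_{\ell^{\nu}}^{\otimes q+k\e(\ell^{\nu})})$, which the canonical generator of $\mu_{\ell^{\nu}}^{\otimes\e(\ell^{\nu})}$ identifies with $H^{p}_{\et}(X;\mu_{\ell^{\nu}}^{\otimes q})$; the colimit is therefore $H^{p}_{\et}(X;\mu_{\ell^{\nu}}^{\otimes q})$ and, by construction, the map of the statement realizes this identification. The genuinely deep input -- the norm residue theorem -- is imported wholesale, so what remains is formal; the point demanding real care is the one in the third paragraph, that it is the \emph{Bockstein-compatible} family $\tau^{\M\ZZ}_{\ell^{\nu}}$ in weight $\e(\ell^{\nu})$, rather than a root of unity in weight $1$ (which is divisible by $\ell^{\nu-1}$, hence far from invertible when $\nu>1$), that produces an invertible operator on \'etale cohomology, and that this choice must be threaded consistently through both the colimit and the five-lemma reduction to $\ZZ/\ell$-coefficients.
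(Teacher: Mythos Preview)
Your argument is correct, but it follows a genuinely different route from the paper's. You deduce the result as a formal corollary of the Beilinson--Lichtenbaum theorem (equivalently, the Bloch--Kato conjecture): once $q\ge p$ the comparison map is an isomorphism, multiplication by $\tau^{\M\ZZ}_{\ell^{\nu}}$ corresponds on the \'etale side to the periodicity isomorphism, and the colimit stabilizes. The paper instead proves the theorem by the more elementary Thomason-style argument of \cite{levine-bott}, which avoids Bloch--Kato entirely: one builds a tower of field extensions $k=L_0\subset\cdots\subset L_N=\bar{k}$ with each $\Gal(L_i/L_{i-1})$ of $\ell$-cohomological dimension $\le 1$, and runs a Hochschild--Serre descent. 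The only new content in the paper's proof is the construction of such a tower in the residual case $\Char k=0$, $\ell=2$, $\sqrt{-1}\notin k$, which \cite{levine-bott} did not cover.

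The paper is explicit about this trade-off in Remark~\ref{rem:BK}: Bloch--Kato ``directly implies'' the theorem, and in fact does so \emph{without} the hypothesis $\cd_\ell(k)<\infty$ (so your appeal to that hypothesis, and the sheaf-level bound $N$, is actually superfluous in your approach --- Beilinson--Lichtenbaum already gives the isomorphism on cohomology for $p\le q$, which is all the colimit needs). The authors nonetheless prefer the elementary argument because the cohomological-dimension hypothesis is required elsewhere anyway, and they wish to keep the logical dependencies of their main results as light as possible. Your route is shorter and cleaner if one is willing to import Voevodsky's theorem wholesale; theirs is self-contained modulo \cite{levine-bott}.
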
	
In case $\Char k>0$, or $\Char k=0$ and $\ell$ is odd,  or $\Char k=0$, $\ell=2$ and $\sqrt{-1}\in k$,  this is  \cite[Theorem 6.2]{levine-bott}, proved using essentially the same argument as for  Thomason's theorem. The case $\Char k=0$, $\ell=2$ and $\sqrt{-1}\not\in k$ is discussed in \S\ref{sect:bott-motcoh}.

\begin{remark}\label{rem:BK} The Milnor conjecture (the case $\ell=2$) and the Bloch-Kato conjecture (the case $\ell>2$), proven by Voevodsky, gives a finer statement about the relationship between motivic mod $\ell^\nu$ cohomology and \'etale cohomology with $\mu_{\ell^\nu}^{\otimes *}$-coefficients:
\begin{theorem}  
\label{thm:BLconjecture}
(Voevodsky \cite{voevodsky-2003, voevodsky-2011}) 
For $p \leq q$ and $X\in\Sm_{k}$, 
the \'{e}tale sheafification functor induces an isomorphism between motivic and \'{e}tale cohomology groups:
$$
H^{p,q}_{\mot}(X; \ZZ/\ell^{\nu}) 
\overset{\cong}{\rightarrow} 
H^p_{\et}(X; \mu_{\ell^{\nu}}^{\otimes q}).
$$
\end{theorem}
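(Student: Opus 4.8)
The plan is to deduce this from Voevodsky's norm residue isomorphism theorem, i.e.\ the Bloch--Kato conjecture (proven by Voevodsky building on work of Rost), via the argument of Suslin--Voevodsky and Geisser--Levine. Write $\pi\colon(\Sm_k)_\et\to(\Sm_k)_\Nis$ for the change-of-topology morphism. First I would reformulate the statement at the level of complexes of Nisnevich sheaves. By the Suslin--Voevodsky rigidity theorem (supplemented by Geisser--Levine when $\Char k>0$) the \'etale sheafification of the motivic complex is $\pi^*(\ZZ/\ell^\nu(q))\simeq\mu_{\ell^\nu}^{\otimes q}$, concentrated in degree zero, so that $R\pi_*\mu_{\ell^\nu}^{\otimes q}$ computes the \'etale hypercohomology on the right and the map of the theorem is the one induced on Nisnevich hypercohomology by the unit $\ZZ/\ell^\nu(q)\to R\pi_*\mu_{\ell^\nu}^{\otimes q}$. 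Since the sheaf $R^i\pi_*\mu_{\ell^\nu}^{\otimes q}$ contributes to $H^p_\Nis(X;-)$ only for $i\le p$, and since $\ZZ(q)$ --- hence $\ZZ/\ell^\nu(q)$ --- is concentrated in cohomological degrees $\le q$, for $p\le q$ one may replace the target by $\tau_{\le q}R\pi_*\mu_{\ell^\nu}^{\otimes q}$; it therefore suffices to prove that the natural map
$$\ZZ/\ell^\nu(q)\longrightarrow\tau_{\le q}R\pi_*\mu_{\ell^\nu}^{\otimes q}$$
is an isomorphism in the derived category of Nisnevich sheaves on $\Sm_k$. A Bockstein d\'evissage along $0\to\ZZ/\ell\to\ZZ/\ell^\nu\to\ZZ/\ell^{\nu-1}\to0$, applied compatibly to both sides together with the five lemma, reduces this to $\nu=1$.

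Next I would prove this quasi-isomorphism by induction on the weight $q$. The cases $q\le1$ are elementary: for $q=1$ the Kummer sequence presents $\ZZ/\ell(1)$ as a complex with cohomology sheaves $\mu_\ell$ in degree $0$ and $\Gm/\ell$ in degree $1$, and Hilbert's Theorem~90 identifies it with $\tau_{\le1}R\pi_*\mu_\ell$. For the inductive step one compares the two sides on cohomology sheaves: both are complexes of Nisnevich sheaves with transfers that vanish in degrees $>q$ and carry Gersten-type resolutions, so it is enough to check the map at the residue fields $F$. In top degree, the Nesterenko--Suslin--Totaro isomorphism $H^q_{\mot}(F;\ZZ/\ell(q))\cong K^M_q(F)/\ell$ turns the comparison map into the norm residue map $K^M_q(F)/\ell\to H^q_\et(F;\mu_\ell^{\otimes q})$, an isomorphism by Bloch--Kato. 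In degrees $<q$ one bootstraps from the weight $q-1$ case: a transfer/norm argument reduces to base fields containing a primitive $\ell$-th root of unity, over which multiplication by a Bott-type element relates the weight-$q$ and weight-$(q-1)$ complexes, and the inductive hypothesis yields the isomorphism in the remaining degrees. This last part is the contribution of Geisser--Levine, which in particular removes the appeal to resolution of singularities made in the original Suslin--Voevodsky argument.

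Granting the quasi-isomorphism, the theorem follows: taking Nisnevich hypercohomology of $X\in\Sm_k$ and using that truncation at $q$ is invisible to $H^p_\Nis$ for $p\le q$ while $R\pi_*$ computes \'etale hypercohomology, one obtains $H^{p,q}_{\mot}(X;\ZZ/\ell^\nu)=H^p_\Nis(X;\ZZ/\ell^\nu(q))\cong H^p_\Nis(X;\tau_{\le q}R\pi_*\mu_{\ell^\nu}^{\otimes q})\cong H^p_\et(X;\mu_{\ell^\nu}^{\otimes q})$ for $p\le q$, as desired.

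The main obstacle is the norm residue theorem itself, which we use as a black box; its proof (motivic Steenrod operations, norm varieties and Rost's degree formula, the Bloch--Kato--Gabber computation of $K^M/\ell$ of local fields, $\dots$) is a major undertaking on its own. The secondary, still substantial, difficulty is the formal reduction ``norm residue in weights $\le q$ $\Rightarrow$ Beilinson--Lichtenbaum in weight $q$'', which rests on the full theory of motivic complexes with transfers, Voevodsky's cancellation and Gersten-resolution theorems, Suslin--Voevodsky rigidity for the \'etale side, and the Geisser--Levine refinements needed in positive characteristic.
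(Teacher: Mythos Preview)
The paper does not supply a proof of this statement: it is quoted as Voevodsky's theorem (the Beilinson--Lichtenbaum conjecture, a consequence of the norm residue isomorphism theorem) with citations to \cite{voevodsky-2003, voevodsky-2011}, and is used only as background in Remark~\ref{rem:BK}. The authors explicitly note that they do not need its full strength for their main results and instead rely on the more elementary Theorem~\ref{thm:IntroMotcoh}.

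Your sketch is the standard Suslin--Voevodsky/Geisser--Levine reduction showing that the norm residue isomorphism implies Beilinson--Lichtenbaum, and as an outline it is accurate. The steps --- rigidity to identify the \'etale sheafification, truncation to $\tau_{\le q}$, Bockstein reduction to $\nu=1$, induction on $q$, checking on fields via Gersten-type resolutions, Nesterenko--Suslin--Totaro in top degree, and the Bott-element bootstrap in lower degrees --- are exactly the ingredients of the published argument. One small point: the claim that $\ZZ/\ell^\nu(q)$ is concentrated in degrees $\le q$ is a statement about its Nisnevich cohomology sheaves (checked on fields via the vanishing $H^i_{\mot}(F;\ZZ(q))=0$ for $i>q$), not about the complex itself termwise; you use it correctly but the phrasing is slightly loose. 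Since the paper offers no competing argument, there is nothing further to compare.
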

This directly implies Theorem~\ref{thm:IntroMotcoh}, even without the additional assumption about the finiteness of the \'etale-$\ell$-cohomological dimension. However, as we will need these extra conditions for other portions of our argument, we will not need the full power of the Bloch-Kato conjecture for our main results, and can rely on the more elementary arguments that go into the proof of Theorem~\ref{thm:IntroMotcoh}.
\end{remark} 

In motivic terms, 
we may reformulate the isomorphism of Theorem~\ref{thm:IntroMotcoh}  in a more structured way by using the motivic spectrum $\M\ZZ/\ell^{\nu}_X$ over $X$ representing mod-$\ell$ motivic cohomology.
That is, the unit of the adjunction~\eqref{eq:pi-adjunction}
induces an equivalence in $\SH(X)$:
\begin{equation}
\label{equation:BottBLiso}
\M\ZZ/\ell^{\nu}_X[(\tau_{\ell^{\nu}}^{\M\ZZ})^{-1}] 
\stackrel{\simeq}{\rightarrow} 
\epsilon_*\epsilon^*\M\ZZ/\ell^{\nu}_X.
\end{equation} 
We interpret the above equivalence as an answer to the linearized version of Question~\ref{qn:main}: 
the categories $\DM(X, \ZZ/\ell^{\nu})$ and $\DM_{\et}(X, \ZZ/\ell^{\nu})$ differ only by inversion of the element $\tau_{\ell^{\nu}}^{\M\ZZ}$. 
Indeed, this was formalized and verified by Haesemayer-Hornbostel in \cite{horn-hae}, 
at least when $X = \Spec\,k$ and for the subcategories generated by motives of $k$-varieties;
we refine their result in Theorem \ref{thm:categorical-mz} and generalize it to non-linear situations in Theorem \ref{thm:3}.

 \subsubsection{Main result} We can state a version of our main result as follows:

\begin{theorem} \label{thm:1} Let $\ell$ be a prime and $S$ be a Noetherian $\ZZ[\frac{1}{\ell}]$-scheme of finite dimension and assume that for all $x \in S$, $\cd_{\ell}(k(x)) < \infty$, and let $\E_S$ be an $\MGL_S$-module. Then, there exists an element $\tau_{\ell^{\nu}} \in \MGL/\ell^{\nu}_{0,-\e_{\MGL}(\ell^{\nu})}(S)$ such that:
\begin{enumerate}
\item The $\MGL_S$-module obtained by inverting the action of $\tau_{\ell^{\nu}}$ on $\E_S$, 
\[
\E_S/{\ell^{\nu}}[(\tau_{\ell^{\nu}})_{S}^{-1}] 
\]
satisfies \'etale hyperdescent.
\item The spectrum $\E_S/{\ell^{\nu}}[(\tau_{\ell^{\nu}})_{S}^{-1}]$ identifies with the \'etale localization of $\E_S$.
%
\end{enumerate}
\end{theorem}

%
\begin{remark}
Theorem~\ref{thm:1} has a long and interesting history starting with Thomason's work \cite{aktec}. Two complementary accounts are Mitchell's survey \cite{mitchell} and Jardine's book \cite{jardine}. The insight that one can deduce Thomason's theorem from its analogue for motivic cohomology comes from \cite{levine-motvsk}; 
it is also executed in \cite{ro1} and \cite{aro2}. 
A similar argument for algebraic cobordism over algebraically closed fields was the subject of Quick's thesis in \cite{quick}.
\end{remark}

\begin{remark} 
When $\ell =2$ and $\nu =1$, 
the mod-$2$ Moore spectrum does not have a unital multiplication so the Bott-inverted spectrum in Theorem~\ref{thm:1} has to be defined using (a motivic version) of Oka's module action of the 
mod-$4$ Moore spectrum on the mod-$2$ Moore spectrum~\eqref{equation:okapairing1}. 
This point is discussed in~\S\ref{bott-choice}.
\end{remark}

\subsubsection{Integral statement} While the above statement is interesting, an integral statement is of course preferable. The reason for doing this is more than just aesthetics. 
For example, 
such a statement would give a lift of the above theorem to the level of categories of modules over highly structured motivic ring spectra as localization usually preserves such structures, 
while passing to some homotopy cofiber does not.  

In algebraic $K$-theory, 
this was achieved by localizing algebraic $K$-theory spectra at the topological $K$-theory spectrum; 
see \cite{aktec}*{Theorem 2.50}. 
For algebraic cobordism over the complex numbers, 
shades of an integral statement appeared first in Heller's work \cite{heller} on semitopological cobordism. 
The right thing to consider turns out to be the completion (in the sense of, say, \cite{mnn}*{Section 2}) at the \'{e}tale motivic cohomology spectrum $\M\ZZ^{\et} := \epsilon_*\epsilon^*\M\ZZ$.
If $J$ is a set of primes, we denote by $\E_{(J)}$ the localization of $\E$ at $J$.

\begin{theorem} 
\label{thm:2} Under the same assumptions as in Theorem~\ref{thm:1}, the completion of $\E$ at the \'etale motivic cohomology spectrum $\M\ZZ^{\et}$ coincides with the \'etale localization of $\E$ up to localizing at the primes which are invertible in $S$.

%
\end{theorem}
%

Denote by $L_{\M\ZZ^{\et}}$ the left adjoint to the inclusion of $\M\ZZ^{\et}$-local objects, i.e., a Bousfield localization functor. Theorem \ref{thm:2} helps us deduce our main result for modules over highly structured ring spectra using the language of stable $\infty$-categories:
\begin{theorem}  
\label{thm:3}   
Under the 
hypotheses
of Theorem~\ref{thm:2}, assume further that $\E$ is a motivic $\mathcal{E}_{\infty}$ ring spectrum (such as $\MGL$ itself).
Then there are equivalences of stable $\infty$-categories: 
$$
\Mod_{L_{\M\ZZ^{\et}}\E_{(J)}} 
\simeq 
\Mod_{\epsilon_*\epsilon^*\E_{(J)}}
\simeq 
\Mod^{\et}_{\E_{(J)}}.
$$
\end{theorem}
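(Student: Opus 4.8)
The plan is to deduce the three-term chain of equivalences from Theorem~\ref{thm:2} by a combination of module-category formalism and a descent/localization argument. First I would establish the left-hand equivalence $\Mod_{L_{\M\ZZ^{\et}}\E_{(J)}} \simeq \Mod_{\pi_*\pi^*\E_{(J)}}$. By Theorem~\ref{thm:2} we have an equivalence $L_{\M\ZZ^{\et}}\E_{(J)} \simeq \pi_*\pi^*\E_{(J)}$ in $\Mod_{\MGL_{(J)}}$, so the only thing to check is that this equivalence is compatible with the $\mathcal{E}_\infty$- (or at least $\mathcal{E}_1$-) algebra structures on the two sides. For the Bousfield localization $L_{\M\ZZ^{\et}}$, the localized object $L_{\M\ZZ^{\et}}\E_{(J)}$ inherits a canonical $\mathcal{E}_\infty$-structure because smashing localizations of $\mathcal{E}_\infty$-rings are again $\mathcal{E}_\infty$-rings (here one must verify that $L_{\M\ZZ^{\et}}$ is smashing on $\Mod_{\MGL_{(J)}}$, or at least that it is a localization at an idempotent $\mathcal{E}_\infty$-algebra, which follows from the Bott-element description and the fact that $\tau_{\ell^\nu}^{\MGL}$-inversion is a filtered colimit); on the other side $\pi_*\pi^*\E_{(J)}$ is an $\mathcal{E}_\infty$-ring because $\pi^*$ is symmetric monoidal and $\pi_*$ is lax symmetric monoidal. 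Once one checks that the comparison map $\E_{(J)} \to \pi_*\pi^*\E_{(J)}$ is a map of $\mathcal{E}_\infty$-rings exhibiting the target as $L_{\M\ZZ^{\et}}\E_{(J)}$, the equivalence of module $\infty$-categories follows formally by extension of scalars.

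Next I would treat the right-hand equivalence $\Mod_{\pi_*\pi^*\E_{(J)}} \simeq \Mod^{\et}_{\E_{(J)}}$, where $\Mod^{\et}_{\E_{(J)}}$ denotes the $\infty$-category of $\et$-local $\E_{(J)}$-modules (the essential image of $\pi_*$ on module categories, equivalently $\Mod_{\E_{(J)}}(\SH_{\et}(S))$ via the adjunction~\eqref{eq:pi-adjunction}). The adjunction $\pi^*\dashv\pi_*$ is monoidal in the appropriate sense, so it lifts to an adjunction on module categories $\Mod_{\E_{(J)}}(\SH(S)) \rightleftarrows \Mod_{\pi^*\E_{(J)}}(\SH_{\et}(S))$, and the latter is by definition $\Mod^{\et}_{\E_{(J)}}$. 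The right adjoint $\pi_*$ is fully faithful on the relevant subcategory precisely because $\pi_*$ is fully faithful on $\SH_{\et}(S)$-objects (the counit $\pi^*\pi_* \to \id$ is an equivalence), and its essential image consists of the $\M\ZZ^{\et}$-local — equivalently, by Theorem~\ref{thm:2} applied to all modules, the $(\tau^{\MGL}_{\ell^\nu})$-local — $\E_{(J)}$-modules. So $\Mod^{\et}_{\E_{(J)}} \simeq \Mod_{\E_{(J)}}(\SH(S))^{L_{\M\ZZ^{\et}}\text{-loc}}$. Finally, for a smashing localization $L$, the $\infty$-category of $L$-local $R$-modules is canonically equivalent to $\Mod_{LR}$, which identifies this with $\Mod_{L_{\M\ZZ^{\et}}\E_{(J)}} \simeq \Mod_{\pi_*\pi^*\E_{(J)}}$, closing the loop.

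The main obstacle I expect is the verification that $L_{\M\ZZ^{\et}}$ (equivalently, $(\tau^{\MGL}_{\ell^\nu})$-inversion together with rationalization away from $J$) is a \emph{smashing} Bousfield localization on $\Mod_{\MGL_{(J)}}$, and moreover that it commutes with the symmetric-monoidal structure finely enough that $\pi_*\pi^*$ of an $\mathcal{E}_\infty$-ring acquires its $\mathcal{E}_\infty$-structure compatibly with the one coming from the localization. Over a general Noetherian base $S$ of finite dimension this requires care: one needs that $\M\ZZ^{\et}$-localization on $\MGL$-modules is detected by an idempotent object, which in turn should follow from the integral statement in Theorem~\ref{thm:2} (the equivalence $L_{\M\ZZ^{\et}}\MGL_{(J)}\simeq\pi_*\pi^*\MGL_{(J)}$ exhibits $\pi_*\pi^*\MGL_{(J)}$ as an idempotent $\MGL_{(J)}$-algebra, since $\pi_*\pi^*$ is idempotent). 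Granting that, smashing-ness and $\mathcal{E}_\infty$-compatibility are formal consequences of the theory of idempotent algebras in symmetric monoidal stable $\infty$-categories; the other steps above are then bookkeeping about adjunctions on module categories. A secondary subtlety, flagged already in the remarks, is the $\ell=2$, $\nu=1$ case where the Moore spectrum is not a ring — but since Theorem~\ref{thm:3} only concerns the integral ($J$-local) localization $L_{\M\ZZ^{\et}}$, and $\M\ZZ^{\et}$ is a genuine $\mathcal{E}_\infty$-ring, this does not actually intervene here.
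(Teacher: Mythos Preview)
Your overall architecture is correct and the first equivalence $\Mod_{L_{\M\ZZ^{\et}}\E_{(J)}}\simeq\Mod_{\pi_*\pi^*\E_{(J)}}$ is handled exactly as in the paper: Bousfield localization at $\M\ZZ^{\et}$ is compatible with the monoidal structure (this is \cite{higheralgebra}*{Proposition 2.2.1.9}), so $L_{\M\ZZ^{\et}}\E_{(J)}$ is an $\mathcal{E}_\infty$-ring and the equivalence of Theorem~\ref{thm:2} is one of $\mathcal{E}_\infty$-rings, whence the module categories agree.

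The gap is in your justification of smashing-ness for the second equivalence. The inference ``$\pi_*\pi^*$ is idempotent as an endofunctor, therefore $\pi_*\pi^*\MGL_{(J)}$ is an idempotent $\MGL_{(J)}$-algebra'' is circular: endofunctor idempotence is automatic for any localization (it just encodes full faithfulness of $\pi_*$), while the \emph{algebra} idempotence $\MGL^{\et}_{(J)}\wedge_{\MGL_{(J)}}\MGL^{\et}_{(J)}\simeq\MGL^{\et}_{(J)}$ is precisely the statement that the localization is smashing. Your alternative via the Bott-element description does work mod $\ell^\nu$ (since $\tau$-inversion is a filtered colimit), but getting from there to the integral $J$-local statement requires gluing through the arithmetic fracture square, which you have not carried out.

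The paper bypasses this and identifies the actual input you are missing. It proves (Proposition~\ref{prop:e}) that the adjunction $\Mod_{\E^{\et}}\rightleftarrows\Mod^{\et}_{\E}$ is an equivalence whenever the composite right adjoint $\Mod^{\et}_{\E}\to\SH(S)$ preserves small colimits, by verifying the projection formula $\E^{\et}\wedge\M\to(\E\wedge\M)^{\et}$ on generators $\Sigma^{p,q}\Sigma^{\infty}_TX_+$ (using smooth base change to commute $f^*$ with $\pi_*\pi^*$). The colimit-preservation hypothesis is exactly Theorem~\ref{thm:pi-pres-colimits}: $\pi_*$ preserves colimits under the uniform finite $\ell$-cohomological dimension assumption on residue fields. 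This projection formula \emph{is} the smashing property you wanted, but it is established directly from the cohomological-dimension bound rather than from idempotence bookkeeping.
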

Here, by a motivic $\mathcal{E}_{\infty}$-ring spectrum we mean a commutative algebra object of the symmetric monoidal $\infty$-category of motivic spectra over a Noetherian scheme of finite dimension $S$. Hence Theorem~\ref{thm:2} gives a description of the \'etale-local subcategory of $\E_{(J)}$-modules as modules over a certain motivic ring spectrum.

\subsubsection{Functoriality of \'etale algebraic cobordism} Another application concerns the functoriality of \'etale algebraic cobordism. If we let $\Sch^{(J)}$ be the full subcategory of schemes for which the 
hypotheses
 of Theorem~\ref{thm:2} 
 hold
 then, restricted to this subcategory of schemes, we show that $\MGL_{(J)}^{\et}$ forms a Cartesian section of $\SH$. The point is that $f^*$ and $\epsilon_*$ have no \emph{a priori} reason to commute, while $f^*$ commutes with colimits and Bott elements (by construction). Therefore, $f^*$ preserves Bott-inverted algebraic cobordism. In conjunction with Theorem~\ref{thm:3}, we obtain a six functors formalism for $\Mod^{\et}_{\MGL_{(J)}}$.

\begin{corollary} \label{thm:4} The functor
\begin{equation} \label{thm:4}
\Mod^{\et}_{\MGL_{(J)}}:(\Sch^{(J)})^{\op} \rightarrow \Cat_{\infty},
\end{equation}
satisfies the full six functors formalism in the sense of \cite{ayoub}, \cite{cisinski-deglise}. In particular, $\Mod^{\et}_{\MGL_{(J)}}$ satisfies localization: for any closed immersion $i:Z \hookrightarrow X$ with open complement $j: U \rightarrow X$ and $\M \in \Mod^{\et}_{\MGL_{(J)}}$, then we have a cofiber sequences
\[
j_!j^*\M \rightarrow \M \rightarrow i_!i^*\M,
\]
and
\[
i_*i^!\M \rightarrow \M \rightarrow j_*j^*\M.
\]
\end{corollary}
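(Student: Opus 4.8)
The plan is to transfer the six operations from $\SH$ to $\Mod^{\et}_{\MGL_{(J)}}$ through two reductions. By Theorem~\ref{thm:3} one may identify $\Mod^{\et}_{\MGL_{(J)}}(S)$ with $\Mod_{\MGL^{\et}_{(J),S}}\bigl(\SH(S)\bigr)$, where $\MGL^{\et}_{(J),S} := \pi_*\pi^*\MGL_{(J),S}$; this is a motivic $\mathcal{E}_{\infty}$-ring, being the image of the $\mathcal{E}_{\infty}$-ring $\MGL_{(J),S}$ under the symmetric monoidal functor $\pi^*$ followed by the lax symmetric monoidal functor $\pi_*$. Since $\MGL$, hence $\MGL_{(J)}$, is stable under arbitrary pullback, the assignment $S \mapsto \Mod_{\MGL_{(J),S}}(\SH(S))$ already constitutes a premotivic $\infty$-category with the full six operations, inherited from $\SH$ (\cite{ayoub}, \cite{cisinski-deglise}). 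It therefore suffices to show that $\MGL^{\et}_{(J)}$ is a \emph{Cartesian} commutative algebra object over $\Sch^{(J)}$, i.e.\ that $S \mapsto \MGL^{\et}_{(J),S}$ is a Cartesian section of $\CAlg \circ \SH$, and then to invoke the standard transfer of the six operations to module categories over such an algebra.

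The heart of the matter is the Cartesian property: for every morphism $f\colon X \to Y$ in $\Sch^{(J)}$ the exchange morphism $f^*\MGL^{\et}_{(J),Y} \to \MGL^{\et}_{(J),X}$ is an equivalence. \emph{A priori}, $f^*$ need not commute with $\pi_*$, so this is exactly where Theorems~\ref{thm:1} and~\ref{thm:2} are indispensable. They identify $\MGL^{\et}_{(J),S}$ with the $\M\ZZ^{\et}$-localization of $\MGL_{(J),S}$, realized mod $\ell^{\nu}$ as the Bott-inverted spectrum $\MGL_S/\ell^{\nu}\bigl[(\tau^{\MGL}_{\ell^{\nu}})_S^{-1}\bigr]$ and assembled over $\nu$ and over $J$. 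Now $f^*$ is symmetric monoidal and preserves colimits, it carries $\MGL_{(J),Y}$ to $\MGL_{(J),X}$, and by construction it carries the Bott element over $Y$ to the Bott element over $X$ (both descend from the motivic cohomology Bott elements $\tau^{\M\ZZ}_{\ell^{\nu}}$ of \S\ref{sec:bott-mz}, which are defined compatibly with pullback). Hence $f^*$ sends the Bott-inverted, $\M\ZZ^{\et}$-localized $\MGL_{(J)}$ over $Y$ to the one over $X$, which proves the claim.

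Granting this, $\MGL^{\et}_{(J)}$ is a Cartesian commutative algebra in the premotivic $\infty$-category $\Mod_{\MGL_{(J)}}$, and the six operations descend to its module categories $S \mapsto \Mod_{\MGL^{\et}_{(J),S}}$ by the standard argument (\cite{cisinski-deglise}, in its $\infty$-categorical incarnation): $f^*$ is induced from $f^*$ on $\Mod_{\MGL_{(J)}}$ via $f^*\MGL^{\et}_{(J),Y} \simeq \MGL^{\et}_{(J),X}$; $f_*$ is its right adjoint; for $f$ separated of finite type the functor $f_!$ lifts to modules through the projection formula $f_!(f^*N \otimes M) \simeq N \otimes f_!M$ valid in $\SH$, and $f^!$ is the right adjoint of $f_!$ (here one uses that $\Sch^{(J)}$ is closed under separated finite type morphisms, since $\cd_{\ell}$ of a finitely generated field extension remains finite when $\ell$ is invertible); base change, the projection formula, purity and localization are all $\MGL^{\et}_{(J)}$-linear statements and hence descend from $\SH$. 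In particular the localization triangle $j_!j^* \to \id \to i_*i^*$ on $\SH(X)$ is a triangle of $\MGL^{\et}_{(J),X}$-modules with $\MGL^{\et}_{(J),X}$-linear maps, and $i_! = i_*$ for the closed immersion $i$, so it descends to the asserted cofiber sequence $j_!j^*\M \to \M \to i_!i^*\M$ in $\Mod^{\et}_{\MGL_{(J)}}(X)$.

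I expect Step two -- stability of $\pi_*\pi^*\MGL_{(J)}$ under arbitrary pullback -- to be the only genuine obstacle. Everything else is already available (the six operations on $\SH$ and their transfer to modules over a Cartesian ring spectrum) or is bookkeeping: the naturality in $S$ of the identification in Theorem~\ref{thm:3} and the compatible assembly over $\nu$ and over $J$ of the integral Bott-inverted cobordism spectrum. The role of Theorems~\ref{thm:1}--\ref{thm:2} is precisely to rewrite the opaque functor $\pi_*$ as a filtered colimit of copies of $\MGL_{(J)}$, an operation that $f^*$ visibly preserves.
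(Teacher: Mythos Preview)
Your proposal is correct and follows the paper's approach: reduce via Theorem~\ref{thm:3} to modules over $\MGL^{\et}_{(J)}$, establish that this is a Cartesian section of $\CAlg(\SH)$ over $\Sch^{(J)}$ using the Bott-inverted/localized description from Theorems~\ref{thm:1}--\ref{thm:2}, and then invoke the standard transfer of the six operations to modules over a Cartesian ring spectrum. The paper handles the integral Cartesian step not by direct ``assembly over $\nu$ and $J$'' but by rewriting $\MGL^{\et}_{(J)} \simeq L_{\M\ZZ^{\et}}\MGL_{(J)}$ (Theorem~\ref{thm:2}) and applying a general lemma that $f^*L_{\M} \simeq L_{\M}f^*$ whenever $\M$ is itself Cartesian, which reduces to the mod-$\ell^{\nu}$ Cartesian property of $\M\ZZ^{\et}$; this is cleaner than your assembly sketch but amounts to the same thing.
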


\subsubsection{Subsequent work} While this paper was under revision, the first and last author, in a joint project with Bachmann, have managed to obtain a stronger \'etale descent result for the motivic sphere spectrum in \cite{beo}. While the general ideas are very similar --- via reduction to the case of motivic cohomology --- the execution is rather different. In this paper, our technical ingredients are 1) a study of the convergence of localized spectral sequences and 2) a thorough study of \'etale localization in the context of oriented theories. We believe that these techniques and the results they yield are of independent interest to motivic homotopy theory as well. 

\subsubsection{Organization} 
Using the language of $\infty$-categories we start Section~\ref{sec:preliminaries} by reviewing motivic homotopy theory with respect to some topology $\tau$.
We recall the adjunctions comparing motivic homotopy theories across different topologies along with Voevodsky's category of motives. 
In Section~\ref{sect:slice}, we introduce the ``slice comparison paradigm" which is a factorization of the unit map of the adjunction comparing different topologies. We recall, in a slightly more general setting, how to associate to $\E \in \SH(S)$ the filtered motivic spectra $\{ f_q\E \}_{q \in \ZZ}$ of its slice covers. 
The ``slice comparison paradigm" then takes the form:
$$
\eta: 
\colim f_q\E \simeq \E  
\stackrel{\eta_{\infty}}{\longrightarrow} 
\colim \epsilon_*\epsilon^*f_q\E 
\stackrel{\eta^{\infty}}{\longrightarrow} 
\epsilon_*\epsilon^*\E.
$$
We explain conditions for when $\eta^{\infty}$ and $\eta_{\infty}$ can be turned into equivalences. This is addressed in Sections~\ref{sect:some-conn} and~\ref{sect:commuting}. In more details, Section~\ref{sect:some-conn} studies the effect $\epsilon_*\epsilon^*$ has on connectivity of motivic spectra and thus addresses the convergence properties of the spectral sequence arising from the tower of spectra $\{\epsilon_*\epsilon^*f_q\E \}.$ These connectivity results are of independent interest. In Section~\ref{sect:commuting}, we give conditions for $\eta^{\infty}$ to be an equivalence, which boils down to understanding situations where the $\epsilon_*$ functor commutes past colimits. In Section~\ref{proof-main} we apply this paradigm to prove Theorem \ref{thm:1}. 
After dealing with the motivic cohomology spectrum $\M\ZZ$ we turn to algebraic cobordism $\MGL$, and finally arbitrary Landweber exact theories. 
The main point is that the result for $\M\ZZ$ proves the theorem on the level of slices and the ``slice comparison paradigm" bridges the gap between the slices and the spectra of interest. 
In Section~\ref{sect:applications} we give applications of our main theorems and promote Theorem~\ref{thm:1} to the level of module categories as in Theorem~\ref{thm:2}. We also prove that \'etale local algebraic cobordism defines a Cartesian section of $\SH$ and thus the category of modules over it satisfy the six functors formalism. In the first appendix (Section~\ref{appendix:invert}) we review the basics of periodization (after \cite{hoyois-cdh}) and localization in suitable $\infty$-categories,  the short second appendix (Section~\ref{appendix:e-locals}) records a well-known result about $\E$-locality, while the third appendix (Section~\ref{mgl-trsfs}) constructs transfers for $\MGL$ necessary to pick out Bott elements over general fields.
 
\subsubsection{Conventions}
Categorical terminologies are to be interpreted in the $\infty$-categorical context;
``functor" always means a functor of $\infty$-categories (i.e., a morphism of quasicategories).
We freely use the notions of algebras and modules in higher algebra \cite{higheralgebra}.
\begin{itemize}
\item We use the following 
vocabulary
of higher category theory:
\begin{itemize}
\item $\Spc$ is the $\infty$-category of $\infty$-groupoids (Kan complexes give a concrete model).
\item $\Maps(X,Y)$ is the Kan complex of maps between objects $X, Y \in \C$ and $\mapint(X,Y) \in \C$ is the internal mapping object.
\item $[X,Y] := \pi_0 \Maps(X, Y)$ denotes homotopy classes of maps between $X$, $Y\in \C$.
\item $\Pr^{L, \otimes}$ ---the symmetric monoidal $\infty$-category of presentable $\infty$-categories and colimit preserving functors ---
has a full subcategory $\Pr^L_{\stab}$ of \emph{stable} presentable $\infty$-categories.
\end{itemize}
\item If $p: X \rightarrow S$ is a smooth $S$-scheme, 
we always mean that $X$ is of \emph{finite type}. 
An essentially smooth scheme over $k$ is an inverse limit of smooth $S$-schemes with affine and dominant transition maps.
\item If $F$ is a presheaf of abelian groups we denote its $\tau$-sheafification by $a_{\tau}(F)$, 
$\tau$ any topology.
\item Some conventions in motivic homotopy theory:
\begin{itemize}
\item We denote unstable motivic spheres by $\s^{p,q} := (S^{1})^{p-q} \wedge \GG_m^q$ for $p\geq q$, 
the motivic sphere spectrum by $\sspt$,
and motivic suspensions by $\Sigma^{p,q}$. 
\item For a motivic spectrum $\E$ over a base scheme $S$ we write $\E \in \SH(S)$, or $\E_S$ for emphasis. If the base is clear in the context, we will sometimes drop the subscript and simply write ``$\E$".
\end{itemize}
\end{itemize}

\subsubsection{Acknowledgements} 
Work on this paper took place at the Institut Mittag-Leffler in Djursholm, the Hausdorff Research Institute for Mathematics in Bonn and the Centre for Advanced Study at the Norwegian Academy of Science and Letters in Oslo which hosted the project ``Motivic Geometry" during the 2020/21 academic year;
we thank all institutions for providing excellent working conditions, hospitality, and support.
Oliver R\"{o}ndigs has been a helpful interlocutor offering advice on this work, and Glen Wilson provided very helpful spectral sequence guidance.
Elmanto thanks Marc Hoyois for discussions on $\infty$-categories, 
and Mike Hill for explaining the possible apocalyptic effects of inverting elements in a spectral sequence and why the ones considered in this paper stick in dry land. 
He is very grateful to his adviser John Francis ---techniques from discussions with him throughout the years have made their way into this paper. We would like to thank Niko Naumann for helpful comments on the first draft of this paper and Peter Haine for help with some references. We would like to especially thank Tom Bachmann and Denis-Charles Cisinski for comments that substantially improved some arguments in this paper, and for patiently answering our questions. Last but certainly not least, the referee has kindly offered an extensive list of comments which improved the readability and the mathematical content of the paper and we are very grateful to them.
Elmanto was supported by a Mittag-Leffler postdoctoral fellowship grant from Varg stiftelsen. Spitzweck was supported by DFG grants ``Applications of Motivic Filtrations" LE 2259/7-2
and ``Operads in Algebraic Geometry and their Realisations", both within the SPP 1786. \O stv\ae r was supported by a Friedrich Wilhelm Bessel Research Award from the Humboldt Foundation and the RCN Frontier Research Group Project no.~250399 ``Motivic Hopf Equations." Spitzweck and \O stv\ae r were
also supported by the RCN project no. 312472 "Equations in Motivic Homotopy
Theory." Levine was supported by the DFG via grant ``Applications of Motivic Filtrations" LE 2259/7-2 within the SPP 1786 and from the ERC project QUADAG. This paper is part of a project that has received funding from the European Research Council (ERC) under the European Union's Horizon 2020 research and innovation programme (grant agreement No. 832833).

\includegraphics[scale = 0.1]{logo}

\section{Preliminaries} 
\label{sec:preliminaries}
In this section, 
we review the basics of motivic homotopy theory ---
mainly setting up notation and checking certain statements in the generality of arbitrary topologies; 
the reader familiar with this theory may feel free to skip this section and return as necessary. 
In Section~\ref{sect:SHtau} we review the construction of the presentably symmetric monoidal stable $\infty$-category of motivic spectra in the generality of arbitrary Grothendieck topologies. 
In Section~\ref{sect:compare} we compare motivic homotopy categories across different topologies, 
and in Section~\ref{sect:motives} we review Voevodsky motives in our language.
For the following,  
$S$ is a quasicompact quasiseparated (qcqs) base scheme.

\subsection{$\tau$-motivic homotopy theory} 
\label{sect:SHtau}
We first construct the presentably symmetric monoidal $\infty$-category of \emph{$\tau$-motivic spaces}, $\H_{\tau}(S)$. 
For our purposes, 
this $\infty$-category has \emph{$\tau$-hyperdescent} built into it as we will explain later.
We start with the (discrete) category $\Sm_S$ of smooth $S$-schemes equipped with a Grothendieck topology $\tau$. 
Let $\P(\Sm_S)$ be the $\infty$-category of presheaves over $S$;
it is obtained from $\Sm_S$ by freely adjoining arbitrary small colimits \cite{htt}*{Theorem 5.1.5.6}.
We denote by $j: \Sm_S \rightarrow \P(\Sm_S)$ the Yoneda embedding.

\subsubsection{}
\label{sec:pshv} 
Recall that $F \in \P(\Sm_S)$ is called \emph{homotopy invariant} or \emph{$\AA^{1}$-invariant} if the projection induces an equivalence $F(X) \rightarrow F(X \times_S \AA^{1})$. 
We denote by 
$
\P_{\AA^1}(\Sm_S)
$ 
the $\infty$-category of homotopy invariant presheaves. 
The embedding $\P_{\AA^1}(\Sm_S) \hookrightarrow \P(\Sm_S)$ preserves limits, 
and thus admits a left adjoint:
$$
\LL_{\AA^1}: \P(\Sm_S) \rightarrow \P_{\AA^1}(\Sm_S),
$$ 
which witnesses $\P_{\AA^1}(\Sm_S)$ as an accessible localization \cite{htt}*{Proposition 5.2.7.4}. 
A map $f: F \rightarrow G$ in $\P(\Sm_S)$ is an \emph{$\AA^{1}$-weak equivalence} if $\LL_{\AA^1}f: \LL_{\AA^1}F \rightarrow \LL_{\AA^1}G$ is an equivalence.

\subsubsection{} \label{sect:hypersheaves}
One difference between doing motivic homotopy theory with an arbitrary topology $\tau$ and the usual Nisnevich localization is the choice of whether or not to hypercomplete; 
recall that if $S$ is a base scheme which is quasi-compact and has finite Krull dimension then the Nisnevich $\infty$-topos is automatically hypercomplete (see \cite{clausen-mathew}*{Theorem 3.17} for a general version of this statement). 
In the main body of this paper, 
we work with the \emph{hypercompletion} of the \'etale topology, 
mainly because we want to obtain spectral sequences with convergent properties to calculate \'etale versions of motivic Landweber exact theories as in \S\ref{sect:hyp-ss}. However, as we will prove in the appendix, we can remove this hypothesis in \S\ref{sect:hyp-remove}.

We consider the full subcategory $\P_{\tau, \hyp}(\Sm_S) \hookrightarrow \P(\Sm_S)$ spanned by \emph{$\tau$-hypersheaves} (also called \emph{$\tau$-local}), i.e., those presheaves $F$ such that for any $\tau$-hypercover $U_{\bullet} \rightarrow U$, 
the map $F(U) \rightarrow \lim_{\Delta} F(U_{\bullet})$ is an equivalence.
As above, 
the embedding $\P_{\tau, \hyp}(\Sm_S) \hookrightarrow \P(\Sm_S)$ preserves limits and admits a left adjoint:
$$
\LL_{\tau}: \P(\Sm_S) \rightarrow \P_{\tau,\hyp}(\Sm_S),
$$ 
which witnesses $\P_{\AA^1}(\Sm_S)$ as an accessible localization. 
A map $f: F \rightarrow G$ is a \emph{$\tau$-weak equivalence} if the induced map $\LL_{\tau}f: \LL_{\tau}F \rightarrow \LL_{\tau}G$ is an equivalence.

For the purposes of this paper, we work with $\tau$-hypersheaves. However, we could also consider the $\infty$-category $\P_{\tau}(\Sm_S)$ of \emph{$\tau$-sheaves}, 
see \cite{htt}*{Section 6.2.2}, 
and its localization functor $\LL'_{\tau}: \P(\Sm_S) \rightarrow \P_{\tau}(\Sm_S)$. 
The $\infty$-category $\P_{\tau, \hyp}(\Sm_S)$ is obtained as a further accessible localization by \cite{htt}*{Theorem 6.5.3.13}  (usually called \emph{hypercompletion}):
$$
\widehat{(-)}: \P_{\tau}(\Sm_S) \rightarrow \P_{\tau,\hyp}(\Sm_S).
$$ 
Note that this is the underlying $\infty$-category of the Brown-Joyal-Jardine model structure on simplicial presheaves \cite{htt}*{Proposition 6.5.2.14} and $\tau$-hypersheaves 
in our sense are equivalent to hypercomplete objects in the sense of \cite{htt} by \cite{htt}*{Theorem 6.5.3.12}.

\subsubsection{}
\label{sec:ltau}
The $\infty$-category $\H_{\tau}(S)$ of \emph{$\tau$-motivic spaces} is the full subcategory of $\P(\Sm_S)$ spanned by homotopy invariant $\tau$-hypersheaves. 
Combining the localizations above, 
it is characterized by a universal property whose proof follows the one for $\tau= \Nis$ verbatim:

\begin{proposition}  
\label{prop:univprop-unstab} 
\label{prop:uniproph} (\cite{robalo}*{Theorem 2.30}, \cite{robalo}*{Remark 2.31}) 
For any $\infty$-category $\C$ with small colimits, 
the composite of localizations $\LL_{\AA^1} \LL_{\tau}: \P(\Sm_S) \rightarrow \P_{\tau,\hyp}(\Sm_S) \rightarrow \H_{\tau}(S)$ induces a fully faithful functor 
$\Fun^L(\H_{\tau}(S), \C) \rightarrow \Fun(\Sm_S, \C)$, 
whose essential image is spanned by functors which are homotopy invariant and satisfy $\tau$-hyperdescent. 
Furthermore, 
the Cartesian monoidal structure on $\P(\Sm_S)$ descends to a Cartesian monoidal structure on $\H_{\tau}(S)$, 
and the functor:
$$
\LL_{\AA^1} \circ \LL_{\tau, \hyp} \circ j: \Sm_S^{\times} \rightarrow \P(\Sm_S)^{\times} \rightarrow \P_{\tau,\hyp}(\Sm_S)^{\times} \rightarrow \H_{\tau}(S)^{\times},
$$ 
is monoidal.
\end{proposition}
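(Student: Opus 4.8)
The plan is to establish both assertions exactly as for $\tau=\Nis$ in \cite{robalo}*{Theorem 2.30, Remark 2.31}, the only inputs being: (a) that $\P(\Sm_S)$ is the free cocompletion of $\Sm_S$, so that restriction along $j$ gives an equivalence $\Fun^L(\P(\Sm_S),\C)\simeq\Fun(\Sm_S,\C)$ for every $\C$ with small colimits \cite{htt}*{Theorem 5.1.5.6}; and (b) that if $L\colon\mathcal P\to\mathcal P'$ exhibits $\mathcal P'$ as the Bousfield localization of a presentable $\mathcal P$ at a strongly saturated class $W$ generated by a small set of maps, then for any $\C$ with small colimits restriction along $L$ is a fully faithful functor $\Fun^L(\mathcal P',\C)\hookrightarrow\Fun^L(\mathcal P,\C)$ whose essential image consists of the colimit-preserving functors inverting $W$ (this is the formal heart of \cite{robalo}*{Theorem 2.30}; compare \cite{htt}*{\S5.5.4}). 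Combining (a) and (b), a colimit-preserving functor out of $\P(\Sm_S)$ descends uniquely to a localization $\mathcal P'$ of $\P(\Sm_S)$ at $W$ precisely when the corresponding functor $\Sm_S\to\C$ inverts $W$.

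I would apply this with $W:=W_{\AA^1}\cup W_\tau$, where $W_{\AA^1}$ is generated by the projections $j(X\times_S\AA^1)\to j(X)$ ($X\in\Sm_S$) and $W_\tau$ by the maps $\colim_{\Delta^{\op}}j(U_\bullet)\to j(U)$ over $\tau$-hypercovers $U_\bullet\to U$ (so that an object is $W_\tau$-local iff it is a $\tau$-hypersheaf, by the definition recalled in \S\ref{sect:hypersheaves}). The $W$-local objects of $\P(\Sm_S)$ are then exactly the homotopy-invariant $\tau$-hypersheaves, i.e.\ $\H_\tau(S)$; this is again a presentable localization, being the intersection of the two accessible localizations $\P_{\AA^1}(\Sm_S)$ and $\P_{\tau,\hyp}(\Sm_S)$, and — as in the Nisnevich case — its reflector is the composite $\LL_{\AA^1}\LL_\tau$, using that $\LL_{\AA^1}$ carries $\tau$-hypersheaves to $\tau$-hypersheaves. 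Transporting the universal property along $j$: a functor $F\colon\Sm_S\to\C$ inverts $W_{\AA^1}$ iff it is homotopy invariant, and it inverts $W_\tau$ iff it satisfies $\tau$-hyperdescent; this produces the fully faithful $\Fun^L(\H_\tau(S),\C)\to\Fun(\Sm_S,\C)$ with the stated essential image.

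For the monoidal assertion I would observe that $\P(\Sm_S)$ with its Cartesian symmetric monoidal structure is a presentable symmetric monoidal $\infty$-category, in fact a Cartesian closed $\infty$-topos, so $(-)\times Y$ preserves small colimits for every $Y$. By \cite{higheralgebra}*{Proposition 2.2.1.9}, to transport the symmetric monoidal structure along $\LL_{\AA^1}\LL_\tau$ it suffices that $W$ be stable under $(-)\times Y$ for all $Y$. Since both the class of such $Y$ and the class $W$ are closed under colimits, it is enough to treat $Y=j(V)$ representable and to check the generators of $W$; then $j(X\times_S\AA^1)\times j(V)\simeq j((X\times_S V)\times_S\AA^1)\to j(X\times_S V)$ is again a projection in $W_{\AA^1}$, and $\colim_{\Delta^{\op}}j(U_\bullet\times_S V)\to j(U\times_S V)$ lies in $W_\tau$ because base change of a $\tau$-hypercover along the smooth map $U\times_S V\to U$ is again a $\tau$-hypercover. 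Hence $\H_\tau(S)$ inherits a symmetric monoidal structure making $\LL_{\AA^1}\LL_\tau$ symmetric monoidal; this structure is Cartesian, a product-stable localization of a Cartesian structure being Cartesian, and precomposing with the symmetric monoidal Yoneda embedding $j\colon\Sm_S^{\times}\to\P(\Sm_S)^{\times}$ yields the desired monoidal functor $\Sm_S^{\times}\to\H_\tau(S)^{\times}$.

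The formal skeleton is routine; the genuinely topology-dependent inputs are (i) that $\LL_{\AA^1}$ preserves $\tau$-hypersheaves — so that $\LL_{\AA^1}\LL_\tau$, rather than a transfinite iteration of the two reflectors, is the reflector onto $\H_\tau(S)$ — and (ii) the stability of $\tau$-hypercovers under base change along smooth maps. Both are available for the Nisnevich topology from \cite{robalo} and hold verbatim for the hypercomplete \'etale topology; I expect (i), where the failure of left-exactness of $\AA^1$-localization must be reckoned with, to be the only point requiring genuine care.
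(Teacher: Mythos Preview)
Your argument is correct and matches the paper's approach, which simply says the proof ``follows the one for $\tau=\Nis$ verbatim'' and defers to Robalo. Your sketch is precisely what carrying that out would look like.

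One clarification on your flagged point (i): the claim that the presheaf-level $\LL_{\AA^1}$ carries $\tau$-hypersheaves to $\tau$-hypersheaves is \emph{false} in general --- on presheaves $\LL_{\AA^1}\simeq\Sing^{\AA^1}$, and the paper's own Appendix~D (Proposition~\ref{prop:transfinite-motivic}) records that the motivic reflector is the transfinite composite $(\LL_\tau\Sing^{\AA^1})^{\circ\kappa}$ precisely because a single pass does not suffice. So your assertion that (i) ``holds verbatim for the hypercomplete \'etale topology'' is not right. Fortunately (i) is also unnecessary: in the displayed composite $\P(\Sm_S)\to\P_{\tau,\hyp}(\Sm_S)\to\H_\tau(S)$ the second arrow should be read as the $\AA^1$-localization \emph{internal to} $\P_{\tau,\hyp}(\Sm_S)$, i.e.\ the left adjoint to the inclusion $\H_\tau(S)\hookrightarrow\P_{\tau,\hyp}(\Sm_S)$. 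With that reading the composite is tautologically the reflector onto $\H_\tau(S)$, since the corresponding right adjoints compose to the full inclusion. Your universal-property argument only uses that $\H_\tau(S)$ is the localization of $\P(\Sm_S)$ at $W=W_{\AA^1}\cup W_\tau$ and that a reflector exists, so it goes through unchanged; the monoidal part is likewise unaffected.
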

Let $\LL_{\tau,\mot}: \P(\Sm_S) \rightarrow \H_{\tau}(S)$ be short for the \emph{motivic localization} functor $\LL_{\AA^1}\LL_{\tau}$. 
If $X \in \Sm_S$ we write $\LL_{\tau,\mot}(X)$ or, if the context is clear, simply $X$ for the motivic localization $\LL_{\tau,\mot}(j(X))$ of $X$ under the Yoneda embedding.

\subsubsection{} 
From here on, we impose the following assumption on $\tau$:
\begin{itemize}
\item A $\tau$-hypersheaf takes coproducts to products, 
i.e., 
for any two $S$-schemes $X$, $Y$, 
the natural map $F(X \coprod Y) \rightarrow F(X) \times F(Y)$ is an equivalence.
\end{itemize}
The Zariski (and hence Nisnevich) topology satisfies the above condition (use the distinguished square obtained by the cover $\{X \rightarrow X \coprod Y, Y \rightarrow X \coprod Y\}$), 
and thus also all finer topologies. 
With this assumption, 
we see that $\P_{\tau, \hyp}(\Sm_S) \subset \P_{\Sigma}(\Sm_S)$, 
where $\P_{\Sigma}(\Sm_S)$ is the full subcategory of presheaves which transforms coproducts into products. 
In \cite{htt}*{Section 5.5.5} this is called the \emph{nonabelian derived category}, 
while (in the context of model categories) these are called \emph{simplicial radditive functors} in \cite{voe10b}. 
The universal property of $\P_{\Sigma}(\Sm_S)$ is that it is freely generated by the Yoneda image of $\Sm_S$ under sifted colimits and furthermore an object in the Yoneda image is compact 
\cite{htt}*{Proposition 5.5.8.10}. 
In general, 
$\P_{\Sigma}(\C)$ is \emph{not} the category of sheaves for some Grothendieck topology \cite{voe10b}*{Example 3.7}.

\begin{proposition} 
\label{prop:generation} 
\begin{itemize}
\item The $\infty$-category $\H_{\tau}(S)$ is generated under sifted colimits by smooth  $S$-schemes. 
\item If $\tau$ is at least as fine as the Zariski topology, then the smooth $S$-schemes which are affine generate $\H_{\tau}(S)$.
\item If the inclusion $\H_{\tau}(S) \hookrightarrow \P_{\Sigma}(\Sm_S)$ preserves filtered colimits, 
then the generators are compact.
\end{itemize}
\end{proposition}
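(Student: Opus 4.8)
All three assertions follow by transporting the universal property of $\P_{\Sigma}(\Sm_S)$ recalled above—that $\P_{\Sigma}(\Sm_S)$ is freely generated under sifted colimits by the Yoneda image of $\Sm_S$, that every representable is compact, and that every object is canonically a sifted colimit of representables \cite{htt}*{Proposition 5.5.8.10}—across the localization $\LL_{\tau,\mot}$, which (by the standing assumption on $\tau$) restricts to a colimit-preserving left adjoint $\P_{\Sigma}(\Sm_S) \to \H_{\tau}(S)$ with fully faithful reflective inclusion $\iota \colon \H_{\tau}(S) \hookrightarrow \P_{\Sigma}(\Sm_S)$. For the first assertion the plan is: given $F \in \H_{\tau}(S)$, write $\iota F$ as a sifted colimit $\colim_i j(X_i)$ of representables in $\P_{\Sigma}(\Sm_S)$, and apply $\LL_{\tau,\mot}$; since $\LL_{\tau,\mot}$ preserves sifted (indeed all) colimits and $\LL_{\tau,\mot}\iota F \simeq F$, one obtains $F \simeq \colim_i \LL_{\tau,\mot} j(X_i) = \colim_i X_i$ in $\H_{\tau}(S)$, a sifted colimit of smooth $S$-schemes.

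For the second assertion, by the first it suffices to write each $X \in \Sm_S$ as a sifted colimit of affine smooth $S$-schemes, and the geometric input I would use is that when $\tau$ refines the Zariski topology the \v{C}ech nerve of a Zariski cover is a $\tau$-hypercover, so $\LL_{\tau}$—hence $\LL_{\tau,\mot}$—carries $|\check{C}(\mathcal{U})_\bullet| \to X$ to an equivalence and exhibits $X$ as the geometric realization (a sifted colimit) of the terms $\LL_{\tau,\mot}\check{C}(\mathcal{U})_m$. I would invoke this twice. First, for a \emph{quasi-affine} smooth $S$-scheme $W \subseteq \Spec R$, cover $W$ by the basic opens $D(f_j)$, $j \in J$: every finite intersection $D(f_{j_0}\cdots f_{j_m})$ is affine, so each $\check{C}(\{D(f_j)\})_m$ is a possibly infinite coproduct of affine smooth $S$-schemes; realizing such a coproduct as the filtered colimit of its finite subcoproducts and using that $\Spec$ sends a finite product of rings to a finite disjoint union of affines, each term is a filtered (hence sifted) colimit of affine smooth $S$-schemes, so $W$ lies in the sifted-colimit closure of the affine smooth $S$-schemes. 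Second, for arbitrary $X \in \Sm_S$, an affine open cover $\{U_i \to X\}_{i \in I}$ gives \v{C}ech terms $\check{C}(\mathcal{U})_m = \coprod_{I^{m+1}} U_{i_0}\cap\cdots\cap U_{i_m}$, coproducts of quasi-affine smooth $S$-schemes; since a finite product of sifted categories is sifted and coproducts commute with colimits, these terms, and hence $X = |\check{C}(\mathcal{U})_\bullet|$, again lie in the sifted-colimit closure of the affine smooth $S$-schemes. I expect the fussiest point to be exactly this bookkeeping—the possibly infinite coproducts and the two-stage reduction through quasi-affine schemes—which is routine but not purely formal; everything else reduces to the universal property of $\P_{\Sigma}(\Sm_S)$.

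For the third assertion, the plan is to use that $\iota$ is fully faithful and, by hypothesis, preserves filtered colimits, while every (affine) smooth $S$-scheme $X$ lies in the Yoneda image and is therefore compact in $\P_{\Sigma}(\Sm_S)$. Then for a filtered diagram $(F_i)$ in $\H_{\tau}(S)$: full faithfulness identifies $\Maps_{\H_{\tau}(S)}(X, \colim_i F_i)$ with $\Maps_{\P_{\Sigma}(\Sm_S)}(X, \iota\colim_i F_i)$; the filtered-colimit hypothesis rewrites this as $\Maps_{\P_{\Sigma}(\Sm_S)}(X, \colim_i \iota F_i)$; compactness of $X$ in $\P_{\Sigma}(\Sm_S)$ pulls the colimit outside; and full faithfulness again identifies the result with $\colim_i \Maps_{\H_{\tau}(S)}(X, F_i)$. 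Hence $X$ is compact in $\H_{\tau}(S)$. This last assertion is purely formal.
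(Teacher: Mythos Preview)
Your proof is correct and follows essentially the same strategy as the paper's. The first and third assertions are argued identically: push sifted-colimit generation through the localization $\LL_{\tau,\mot}$, and deduce compactness from full faithfulness plus the filtered-colimit hypothesis. For the second assertion, both you and the paper run a two-stage \v{C}ech-nerve reduction, but with different intermediate classes: the paper first reduces an arbitrary $X$ to \emph{separated} smooth $S$-schemes (intersections of affine opens in any scheme are separated), and then uses that in a separated scheme intersections of affine opens are affine; you instead reduce to \emph{quasi-affine} schemes (an intersection of affine opens sits as an open in one of them) and then cover a quasi-affine scheme by basic opens with affine intersections. Both routes are valid and equally short. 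Your worry about infinite coproducts is unnecessary under the paper's conventions---smooth $S$-schemes are of finite type, hence qcqs, so one may always take finite Zariski covers and every \v{C}ech level is already a finite disjoint union---but it does no harm.
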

\begin{proof} 
By the above discussion, we see that since $\H_{\tau}(S) \subset \P_{\Sigma}(\Sm_S),$ it is generated by $\LL_{\tau,\mot}(X)$ under sifted colimits, 
where $X$ is a smooth $S$-scheme. 
The second statement follows by the argument in \cite{adeel}*{Lemma 4.3.5}. In more detail, we 
first note that, by definition:
$$
\Sm_S \rightarrow \H_{\tau}(S);
X \mapsto \LL_{\tau,\mot}X,
$$ 
is a $\tau$-cosheaf. 
Thus if $\tau$ is at least as fine as the Zariski topology we may assume that $X$ is separated by picking a Zariski cover of $X$ consisting of affine schemes, 
where the pairwise intersections are all separated. 
We may further pick a Zariski cover of $X$ where the pairwise intersections are affine. 
Applying $\LL_{\tau,\mot}$ to the nerve of this cover produces a simplicial object in $\P_{\Sigma}(\Sm_S)$ with colimit $\LL_{\tau,\mot}X$ (its terms are $\LL_{\tau,\mot}$ applied to affine schemes).
The last statement is immediate.
\end{proof}

\begin{remark} 
The inclusion $\H_{\tau}(S) \hookrightarrow \P_{\Sigma}(\Sm_S)$ need not preserve filtered colimits, 
and hence $\LL_{\tau,\mot}(X)$ may be noncompact. 
But $\H_{\Nis}(S) \hookrightarrow \P(\Sm_S)$ does preserve filtered colimits. 
Indeed, 
being a $\Nis$-hypersheaf is the same as being Nisnevich excisive.
Thus we need to check that if $F_i$ is a filtered diagram of Nisnevich excisive presheaves, 
then the colimit $\colim F_i$ taken in $\P_{\Sigma}(\Sm_S)$ is Nisnevich excisive. 
If $Q$ is an elementary distinguished square, 
then taking the pointwise colimit in $\P_{\Sigma}(\Sm_S)$ we see that $(\colim_i F)(Q)$ is a Cartesian square since colimits commute pullbacks in $\Spc$; 
so $\colim F_i$ is Nisnevich excisive. 
\end{remark}

\subsubsection{} 
We may consider the $\infty$-category of pointed objects $\H_{\tau,\bullet}(S) := \H(S)_{\star/}$ for the final object $\star$ of $\H_{\tau}(S)$.
It is also presentable since $\H_{\tau, \bullet}(S) \simeq \H_{\tau}(S) \otimes \Spc_{\bullet}$, 
where the $\otimes$-product is taken in $\Pr^L$. 
There is an adjunction: $$(-)_{+}: \H_{\tau}(S) \rightleftarrows \H_{\tau, \bullet}(S): u_{\E}.$$
The symmetric monoidal structure on $\H_{\tau}(S)$ extends to a monoidal structure on $\H_{\tau, \bullet}(S)$ so that $(-)_{+}: \H_{\tau}(S) \rightarrow \H_{\tau, \bullet}(S)$ is a monoidal functor.

\subsubsection{} 
Let $\CAlg(\Pr^L)$ be the $\infty$-category of presentably symmetric monoidal $\infty$-categories and symmetric monoidal colimit preserving functors. 
To $\C^{\otimes} \in \CAlg(\Pr^L)$ we can associate the $\infty$-category of $\C^{\otimes}$-modules $\Mod_{\C}:=\Mod_{\C}(\Pr^L)$ and $\C^{\otimes}$-algebras $\CAlg_{\C}:=\CAlg(\Mod_{\C})$.
The latter can be identified with $\CAlg(\Pr^L)_{\C/}$ \cite{higheralgebra}*{Corollary 3.4.1.7}.
Hence a $\C^{\otimes}$-algebra object in $\C^{\otimes}$-modules is the data of a presentably symmetric monoidal category $\D$ admitting a colimit preserving symmetric monoidal functor $F:\C^{\otimes} \rightarrow \D^{\otimes}$. 
If $X \in \C^{\otimes}$, 
we let $X$ ``act" on $\D$ via the functor $F$. 
Let $\CAlg_{\C}[X^{-1}]$ denote the full subcategory of $\CAlg_{\C}$ spanned by those $\C^{\otimes}$-algebras on which $X$ acts invertibly. 

\begin{theorem} 
\label{thm:robalo-stab1} (Robalo, \cite{robalo}*{Proposition 2.9}) 
Let $\C^{\otimes} \in \CAlg(\Pr^L)$ and $X \in \C^{\otimes}$ be an object. 
\begin{itemize}
\item There exists a functor:
$$
L_X: \CAlg_{\C} \rightarrow \CAlg_{\C}[X^{-1}],
$$ 
which witnesses $\CAlg_{\C}[X^{-1}]$ as a localization of $\CAlg_{\C}$. 
Thus for any $\D^{\otimes} \in \CAlg_{\C}$ there is a canonical map $\D^{\otimes} \rightarrow L_X(\D^{\otimes})$ in $\CAlg_{\C}$ sending $X$ to an invertible object.
\item The functor $L_X$ fits into the commutative diagram:
\begin{equation}
\xymatrix{ 
\CAlg_{\C} \ar[rr]^-{L_X} \ar[d]_-{U} &  & \CAlg_{\C}[X^{-1}] \ar[d]^-{U} \\ 
\Mod_{\C} \ar[rr]^-{-\otimes_{\C^{\otimes}} \C^{\otimes}[X^{-1}]} &  &\Mod_{\C^{\otimes}[X^{-1}]},
}
\end{equation}
where the horizontal arrows are forgetful functors. 
Thus if $\D^{\otimes} \in \CAlg_{\C}$ the underlying $\C^{\otimes}$-module of $L_X(\D^{\otimes})$ is equivalent to $\D^{\otimes} \otimes_{\C^{\otimes}} \C^{\otimes}[X^{-1}]$.
\end{itemize}
\end{theorem}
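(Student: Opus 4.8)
The plan is to reduce both assertions to the universal property of a single object: the presentably symmetric monoidal $\infty$-category $\C^{\otimes}[X^{-1}]$ obtained by forcing $X$ to become $\otimes$-invertible, together with its canonical colimit-preserving symmetric monoidal functor $j\colon \C^{\otimes}\to\C^{\otimes}[X^{-1}]$. Concretely, I want the statement that for every $\D^{\otimes}\in\CAlg(\Pr^L)$, restriction along $j$ induces an equivalence between colimit-preserving symmetric monoidal functors $\C^{\otimes}[X^{-1}]\to\D^{\otimes}$ and those colimit-preserving symmetric monoidal functors $\C^{\otimes}\to\D^{\otimes}$ that carry $X$ to an invertible object. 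Everything else will be a formal consequence, using that $\CAlg_{\C}\simeq\CAlg(\Pr^L)_{\C^{\otimes}/}$ and $\CAlg_{\C^{\otimes}[X^{-1}]}\simeq\CAlg(\Pr^L)_{\C^{\otimes}[X^{-1}]/}$ by \cite{higheralgebra}*{Corollary 3.4.1.7}.

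First I would construct $\C^{\otimes}[X^{-1}]$ and verify this universal property; this is the technical heart of the argument (it is what Robalo establishes in \cite{robalo}*{Proposition 2.9}) and the only genuine obstacle. One model is the colimit in $\CAlg(\Pr^L)$ of the sequential diagram $\C^{\otimes}\xrightarrow{X\otimes-}\C^{\otimes}\xrightarrow{X\otimes-}\cdots$; the delicate point is to equip this colimit with a symmetric monoidal structure for which $j$ is symmetric monoidal, and then to read off the universal property from the fact that colimits in $\CAlg(\Pr^L)$ are computed on underlying $\infty$-categories. The symmetric monoidal structure requires a coherence input on $X$ --- in the situations relevant to this paper $X$ is a Tate-type object, for which the cyclic permutation of $X^{\otimes 3}$ is homotopic to the identity, and this suffices. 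Two formal consequences I will record immediately: $j$ is idempotent, $\C^{\otimes}[X^{-1}]\otimes_{\C^{\otimes}}\C^{\otimes}[X^{-1}]\simeq\C^{\otimes}[X^{-1}]$ (invert twice $=$ invert once, by the universal property), and, by the same reasoning applied to module categories, $\Mod_{\C^{\otimes}[X^{-1}]}$ identifies with the full subcategory of $\Mod_{\C}$ on those modules where $X$ acts invertibly.

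Granting this, the first bullet is pure formalism. Restriction of scalars along $j$ gives a functor $\CAlg_{\C^{\otimes}[X^{-1}]}\to\CAlg_{\C}$; by the universal property its essential image consists exactly of those $\C^{\otimes}$-algebras on which $X$ acts invertibly, i.e.\ precisely $\CAlg_{\C}[X^{-1}]$, and since $j$ is idempotent this restriction functor is fully faithful. Hence $\CAlg_{\C}[X^{-1}]\hookrightarrow\CAlg_{\C}$ is a reflective localization. Its left adjoint is extension of scalars $L_X(\D^{\otimes})=\D^{\otimes}\otimes_{\C^{\otimes}}\C^{\otimes}[X^{-1}]$, and the unit $\D^{\otimes}\to L_X(\D^{\otimes})$ is the canonical map, which sends $X$ to an invertible object by construction.

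Finally, the commutative square of the second bullet expresses the compatibility of extension of scalars for algebras with extension of scalars for modules: the forgetful functor $U$ intertwines the two, since the underlying $\C^{\otimes}$-module of $\D^{\otimes}\otimes_{\C^{\otimes}}\C^{\otimes}[X^{-1}]$ is $U(\D^{\otimes})\otimes_{\C^{\otimes}}\C^{\otimes}[X^{-1}]$, and this object lies in $\Mod_{\C^{\otimes}[X^{-1}]}$ by the module-level statement recorded above. I expect no difficulty here beyond bookkeeping --- the whole weight of the proof sits in producing $\C^{\otimes}[X^{-1}]$ as a presentably symmetric monoidal $\infty$-category with the asserted universal property.
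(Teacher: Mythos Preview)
The paper gives no proof of this statement; it is simply quoted from Robalo \cite{robalo}*{Proposition 2.9}. Your formal reduction --- deducing both bullets from the universal property of $j\colon\C^{\otimes}\to\C^{\otimes}[X^{-1}]$ via the identification $\CAlg_{\C}\simeq\CAlg(\Pr^L)_{\C^{\otimes}/}$, and reading off the module-level compatibility from base change --- is correct and is indeed how the result follows once $\C^{\otimes}[X^{-1}]$ is in hand.

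There is one genuine confusion in your sketch of the construction itself, however. You write that ``one model is the colimit in $\CAlg(\Pr^L)$ of the sequential diagram $\C^{\otimes}\xrightarrow{X\otimes-}\C^{\otimes}\to\cdots$,'' and then invoke the $n$-symmetry of $X$ to produce the symmetric monoidal structure. But the functors $X\otimes-$ are not symmetric monoidal, so this is not a diagram in $\CAlg(\Pr^L)$ at all; and more to the point, Theorem~\ref{thm:robalo-stab1} holds for an \emph{arbitrary} object $X$, with no symmetry hypothesis. The sequential-colimit description you have in mind is the content of the separate Corollary~\ref{thm:robalo-stab2}, which computes the underlying $\infty$-category of $\C^{\otimes}[X^{-1}]$ under the additional assumption that $X$ is $n$-symmetric. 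Robalo's construction of $\C^{\otimes}[X^{-1}]$ in \cite{robalo}*{Proposition 2.9} proceeds instead by a direct presentable-localization argument (formally adjoining an inverse to $X$ in the $\infty$-category of presentably symmetric monoidal $\infty$-categories), which requires no hypothesis on $X$. So your outline is sound once you grant the universal property, but your proposed route to that universal property conflates two distinct results and would not establish the theorem in the stated generality.
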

In light of the second point above, 
we write $L_X(\D^{\otimes}):=\D[X^{-1}]$.

\subsubsection{} 
Recall that an object $X \in \C^{\otimes}$ is \emph{$n$-symmetric} if there is an invertible two cell that witnesses an equivalence between the cyclic permutation on 
$X \otimes X \otimes \cdots \otimes X$ and the identity (see \cite{robalo}*{Definition 2.16}). 
For $\M \in \Mod_{\C}$ we set:
\begin{equation} \label{eq:colim-stab}
\Stab_X(\M) := \colim  \M \stackrel{-\otimes X}{\longrightarrow} \M \stackrel{-\otimes X}{\longrightarrow} \M \stackrel{-\otimes X}{\longrightarrow} \cdots. 
\end{equation}
\begin{corollary} 
\label{thm:robalo-stab2} 
(Robalo, \cite{robalo}*{Corollary 2.22}]) 
Let $\C^{\otimes} \in \CAlg(\Pr^L)$ and let $X$ be an $n$-symmetric object in $\C$. 
Then for any $\M \in \Mod_C$ there is a natural equivalence:
$$
\M[X^{-1}] \rightarrow \Stab_X(M).
$$ 
\end{corollary}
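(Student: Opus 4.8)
The plan is to reduce to the case $\M = \C^{\otimes}$ and then propagate by base change. For a general module $\M \in \Mod_{\C}$ we interpret $\M[X^{-1}]$ as $\M \otimes_{\C^{\otimes}} \C^{\otimes}[X^{-1}]$; this is consistent with the notation introduced after Theorem~\ref{thm:robalo-stab1} in the case where $\M$ happens to be an algebra, since that theorem identifies the underlying $\C$-module of $L_X(\D^{\otimes})$ with $\D^{\otimes} \otimes_{\C^{\otimes}} \C^{\otimes}[X^{-1}]$. Granting this convention, it suffices to (i) identify the underlying $\C$-module of $\C^{\otimes}[X^{-1}]$ with the telescope $\Stab_X(\C)$, and then (ii) commute the relative tensor product $\M \otimes_{\C^{\otimes}}(-)$ past the sequential colimit defining that telescope.

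Step (i) is the crux, and it is the only place where $n$-symmetry of $X$ is used. The transition functors $-\otimes X\colon \C \to \C$ appearing in \eqref{eq:colim-stab} are colimit-preserving and $\C$-linear, but they are \emph{not} symmetric monoidal --- they do not fix the unit --- so the $\NN$-indexed diagram computing $\Stab_X(\C)$ does not a priori lift to $\CAlg(\Pr^L)$. The first thing I would do is invoke Robalo's construction (\cite{robalo}*{\S2}): the existence of an invertible $2$-cell trivializing the cyclic permutation of $X^{\otimes n}$ is exactly what is needed to reorganize this diagram so that its colimit, formed in $\Pr^L$, inherits a presentably symmetric monoidal structure together with a symmetric monoidal colimit-preserving functor $\C^{\otimes} \to \Stab_X(\C)$. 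Next I would check that this functor has the universal property of $\C^{\otimes} \to \C^{\otimes}[X^{-1}]$ furnished by Theorem~\ref{thm:robalo-stab1}: the object $X$ acts invertibly in $\Stab_X(\C)$ because $-\otimes X$ is, tautologically, an equivalence on the telescope; and the functor is initial among symmetric monoidal colimit-preserving $\C$-algebras in which $X$ is inverted, since for any such $\D^{\otimes}$ the space $\Maps_{\CAlg_{\C}}(\Stab_X(\C), \D^{\otimes})$ is the limit of the tower obtained by applying $\Maps_{\CAlg_{\C}}(-, \D^{\otimes})$ to \eqref{eq:colim-stab}, whose structure maps (restriction along $-\otimes X$) are equivalences, hence that space is equivalent to $\Maps_{\CAlg_{\C}}(\C^{\otimes}, \D^{\otimes}) \simeq \ast$. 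By uniqueness of localizations this yields $\C^{\otimes}[X^{-1}] \simeq \Stab_X(\C)$ in $\CAlg(\Pr^L)$, and in particular in $\Mod_{\C}$.

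For step (ii), since the functors $-\otimes X$ are $\C$-linear, the diagram \eqref{eq:colim-stab} with $\M = \C$ lives in $\Mod_{\C}(\Pr^L)$, and the forgetful functor $\Mod_{\C}(\Pr^L) \to \Pr^L$ preserves colimits, so $\Stab_X(\C)$ is equally the colimit of that diagram in $\Mod_{\C}$. Because $\Mod_{\C}$ is presentably symmetric monoidal, $\M \otimes_{\C^{\otimes}}(-)$ preserves colimits, and tensoring \eqref{eq:colim-stab} (with $\M = \C$) by $\M$ carries the unit $\C$ to $\M$ and each transition map to $-\otimes X\colon \M \to \M$; hence
\[
\M[X^{-1}] \simeq \M \otimes_{\C^{\otimes}} \C^{\otimes}[X^{-1}] \simeq \M \otimes_{\C^{\otimes}} \Stab_X(\C) \simeq \colim\bigl( \M \xrightarrow{-\otimes X} \M \xrightarrow{-\otimes X} \cdots \bigr) = \Stab_X(\M).
\]
Everything here is functorial in $\M$, which gives the asserted natural equivalence; tracing through the identifications, the resulting map $\M[X^{-1}] \to \Stab_X(\M)$ is the canonical one, induced by $\M \to \Stab_X(\M)$ together with the invertibility of the $X$-action on the target. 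The main obstacle is step (i) --- promoting the naive telescope $\Stab_X(\C)$ to a commutative algebra in $\Pr^L$ realizing the universal property of $\C^{\otimes}[X^{-1}]$ --- which is the genuine content of \cite{robalo} and the single point at which $n$-symmetry enters; everything afterward is a formal manipulation of colimits and base change.
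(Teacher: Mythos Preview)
The paper does not supply its own proof of this corollary; it is quoted as Robalo's result with a bare citation to \cite{robalo}*{Corollary 2.22}. So there is nothing in the paper's text to compare your argument against beyond that citation, and in that sense your proposal and the paper agree: both defer the substance to Robalo.

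That said, your step (i) contains a gap worth naming. You correctly observe that the transition maps $-\otimes X$ in \eqref{eq:colim-stab} are not symmetric monoidal, so the diagram does not lift to $\CAlg_{\C}$. But then your verification of initiality computes $\Maps_{\CAlg_{\C}}(\Stab_X(\C), \D^{\otimes})$ as the limit of $\Maps_{\CAlg_{\C}}(-, \D^{\otimes})$ applied to that same diagram --- which only makes sense if the diagram \emph{were} one in $\CAlg_{\C}$. The telescope $\Stab_X(\C)$ is a colimit in $\Mod_{\C}$ (equivalently $\Pr^L$), and mapping out of it in $\CAlg_{\C}$ is not governed by that module-level colimit presentation. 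Robalo's actual argument has to do more: the $n$-symmetry is used not only to endow the telescope with a symmetric monoidal structure, but to exhibit it as a colimit of a reorganized diagram in $\CAlg(\Pr^L)$ itself, after which the universal property follows by the usual colimit formula for mapping spaces. Your closing sentence concedes that step (i) is ``the genuine content of \cite{robalo}'', so perhaps you intended to black-box it entirely; but the specific initiality check you wrote down does not go through as stated. Step (ii), the base-change reduction, is fine.
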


\subsubsection{}
\label{sec:stab}
$\H_{\tau}(S)^{\times}_{\bullet} \in \CAlg(\Pr^L)$ and we invert the $3$-symmetric object $(\PP^{1}, \infty) \in \H_{\tau}(S)_{\bullet}$ \cite{voevodsky-icm}*{Lemma 4.4} to form:
$$
\SH_{\tau}(S):=
\H_{\tau}(S)[ (\PP^{1},\infty)^{-1}].
$$ 
The sum of our discussion is the following universal property of $\SH_{\tau}(S)$.
\begin{theorem} 
\label{thm:univprop-stab} (Robalo, \cite{robalo}*{Corollary 2.39}) 
Let $S$ be a qcqs base scheme and let:
$$
\theta:\Sm_S^{\times} \rightarrow \SH_{\tau}(S)^{\otimes},
$$ 
be the symmetric monoidal functor obtained from $\LL_{(\PP^{1},\infty)} \circ (-)_{+} \circ \LL_{\tau,\mot} \circ j$.
Then for $\D \in \CAlg(\Pr^L)$, $\theta$ induces a fully faithful functor:
$$
\Fun^{\otimes, L}(\SH_{\tau}(S), \D) \rightarrow \Fun^{\otimes}(\Sm_S, \D),
$$ 
whose essential image is spanned by those symmetric monoidal functors $F: \Sm_S \rightarrow \D$ which satisfies $\tau$-hyperdescent, 
homotopy invariance, 
and such that the cofiber of $F(S) \rightarrow F(\PP^{1})$ induced by the $\infty$-section acts invertibly.
\end{theorem}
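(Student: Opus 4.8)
The plan is to assemble the statement from the two universal properties already recorded: Proposition~\ref{prop:univprop-unstab} for the unstable $\tau$-motivic category, and Theorem~\ref{thm:robalo-stab1} together with Corollary~\ref{thm:robalo-stab2} for inverting a symmetric object inside $\CAlg(\Pr^L)$. The point worth stressing is that the topology $\tau$ enters \emph{only} through the descent clause, and that clause is packaged entirely inside Proposition~\ref{prop:univprop-unstab}; everything downstream of it is formal. So the argument is a transcription of Robalo's argument in \cite{robalo}, with Proposition~\ref{prop:univprop-unstab} carrying the descent hypothesis in place of its Nisnevich predecessor.

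First I would record that $\SH_\tau(S) = \H_{\tau,\bullet}(S)[(\PP^{1},\infty)^{-1}]$ by construction (\S\ref{sec:stab}), that $(\PP^{1},\infty)$ is $3$-symmetric \cite{voevodsky-icm}*{Lemma 4.4}, and invoke Theorem~\ref{thm:robalo-stab1} applied to $\C^{\otimes} = \H_{\tau,\bullet}(S)^{\times}$ and $X = (\PP^{1},\infty)$. This yields, for each $\D \in \CAlg(\Pr^L)$, a fully faithful restriction functor $\Fun^{\otimes, L}(\SH_\tau(S),\D) \hookrightarrow \Fun^{\otimes, L}(\H_{\tau,\bullet}(S),\D)$ whose essential image consists of those functors carrying $(\PP^{1},\infty)$ to an invertible object of $\D$ --- one uses here that a localization of presentably symmetric monoidal $\infty$-categories is an epimorphism, so restriction is fully faithful, and that its universal property identifies the essential image. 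Corollary~\ref{thm:robalo-stab2} then guarantees this localization is computed by the sequential colimit $\Stab_{(\PP^{1},\infty)}(\H_{\tau,\bullet}(S))$, which is what makes $\SH_\tau(S)$ presentable and the restriction functor accessible.

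Next I would dispose of the pointing. The symmetric monoidal functor $(-)_{+}\colon \H_\tau(S) \to \H_{\tau,\bullet}(S)$ induces an equivalence $\Fun^{\otimes, L}(\H_{\tau,\bullet}(S),\D) \xrightarrow{\sim} \Fun^{\otimes, L}(\H_\tau(S),\D)$: any symmetric monoidal colimit-preserving $F\colon \H_\tau(S)\to\D$ automatically sends the final object $\star = S$ to the unit $\mathbf{1}_{\D}$, and $\D$, already having a unit, supplies the data needed to extend along $(-)_{+}$ with no further choice. Composing this equivalence with Proposition~\ref{prop:univprop-unstab} identifies $\Fun^{\otimes, L}(\H_{\tau,\bullet}(S),\D)$ with the full subcategory of $\Fun^{\otimes}(\Sm_S,\D)$ spanned by functors satisfying $\tau$-hyperdescent and homotopy invariance.

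Finally I would translate the invertibility condition: in $\H_{\tau,\bullet}(S)$ the object $(\PP^{1},\infty)$ is the cofiber of the map $S_{+}\to\PP^{1}_{+}$ induced by the $\infty$-section, so for $F$ corresponding to a symmetric monoidal functor $\Sm_S\to\D$ the image of $(\PP^{1},\infty)$ is exactly the cofiber of $F(S)\to F(\PP^{1})$; threading this through the two previous steps produces the asserted fully faithful $\Fun^{\otimes, L}(\SH_\tau(S),\D) \hookrightarrow \Fun^{\otimes}(\Sm_S,\D)$ with the claimed essential image. I do not expect a genuine obstacle: the one point requiring care rather than bookkeeping is that, for the localization of Theorem~\ref{thm:robalo-stab1} to be accessible and computed by $\Stab_{(\PP^{1},\infty)}$ --- hence for $\SH_\tau(S)$ to be presentable and the universal property usable --- one must use the $3$-symmetry of $(\PP^{1},\infty)$, and this is the sole place where concrete geometry of $\PP^{1}$ (not formal nonsense) is needed, supplied by \cite{voevodsky-icm}*{Lemma 4.4} and Corollary~\ref{thm:robalo-stab2}.
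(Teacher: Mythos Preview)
The paper does not give a proof of this theorem: it is stated as a citation to Robalo \cite{robalo}*{Corollary 2.39}, and the surrounding text only unwinds the consequence (the adjunction~\eqref{eqn:stab-adjunction}). Your proposal is therefore not competing with a proof in the paper but supplying one, and the decomposition you give --- Proposition~\ref{prop:univprop-unstab} for the unstable step, then pointing, then Theorem~\ref{thm:robalo-stab1} for the $(\PP^1,\infty)$-inversion --- is exactly the route Robalo takes and is correct.

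One small point to tighten: your claim that $(-)_+$ induces an equivalence $\Fun^{\otimes,L}(\H_{\tau,\bullet}(S),\D)\xrightarrow{\sim}\Fun^{\otimes,L}(\H_\tau(S),\D)$ for \emph{every} $\D\in\CAlg(\Pr^L)$ is not quite right as stated. The universal property of pointing (e.g.\ \cite{robalo}*{Proposition 2.24}) identifies the left side with those symmetric monoidal left adjoints $\H_\tau(S)\to\D$ for which $\D$ is pointed, i.e.\ the monoidal unit is a zero object. This is automatic once you already know the cofiber of $F(S)\to F(\PP^1)$ is $\otimes$-invertible (an invertible object receiving a map from the initial object forces the initial object to be the unit), so the essential-image description in the theorem is unaffected; but the intermediate equivalence you assert should be phrased as landing in the pointed $\D$'s, or you should note that the invertibility hypothesis forces pointedness. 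Otherwise the argument is sound.
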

Unwinding the above theorem we get the standard adjunction: 
\begin{equation} \label{eqn:stab-adjunction}
\Sigma^{\infty}_{T,+}
: 
\H_{\tau}(S) 
\leftrightarrows 
\SH_{\tau}(S)
: 
\Omega_T^{\infty}.
\end{equation}

\subsubsection{} The stable analogue of Proposition~\ref{prop:generation} is:
\begin{proposition} 
\label{prop:gen-stab} 
\begin{itemize}
\item The presentably symmetric monoidal stable $\infty$-category $\SH_{\tau}(S)$ is generated under sifted colimits by $\{\Sigma^{-2n,-n}\Sigma^{\infty}_T X_+\}_{n \in \ZZ}$, 
where $X$ is a smooth $S$-scheme.  
\item If $\tau$ is at least as fine as the Zariski topology, then the smooth $S$-schemes which are affine generate $\SH_{\tau}(S)$ under sifted colimits.
\item If the inclusion $\H_{\tau}(S) \hookrightarrow \P_{\Sigma}(\Sm_S)$ preserves filtered colimits,
then the generators are compact.
\end{itemize}
\end{proposition}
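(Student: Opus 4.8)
The plan is to reduce all three assertions to the unstable Proposition~\ref{prop:generation} by tracking generators through the $\PP^1$-stabilization. Since $(\PP^1,\infty)$ is $3$-symmetric (\S\ref{sec:stab}), Corollary~\ref{thm:robalo-stab2} gives the explicit model
\[
\SH_{\tau}(S)\simeq\Stab_{(\PP^1,\infty)}\H_{\tau,\bullet}(S)\simeq\colim\bigl(\H_{\tau,\bullet}(S)\xrightarrow{-\wedge(\PP^1,\infty)}\H_{\tau,\bullet}(S)\xrightarrow{-\wedge(\PP^1,\infty)}\cdots\bigr),
\]
a sequential colimit in $\Pr^L$ with colimit-preserving structure functors $\iota_n\colon\H_{\tau,\bullet}(S)\to\SH_{\tau}(S)$, $n\geq 0$; under the identification $(\PP^1,\infty)\simeq\s^{2,1}$ one has $\iota_0=\Sigma^{\infty}_T$ and, in general, $\iota_n(Y)\simeq\Sigma^{-2n,-n}\Sigma^{\infty}_T Y$. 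From this model, every $E\in\SH_{\tau}(S)$ admits a presentation as a sequential colimit $E\simeq\colim_n\iota_n(Y_n)$ with $Y_n=\Omega^{\infty}_T\Sigma^{2n,n}E\in\H_{\tau,\bullet}(S)$ the $n$-th space of $E$.

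For the first bullet I would first establish the pointed analogue of Proposition~\ref{prop:generation}: $\H_{\tau,\bullet}(S)$ is generated under sifted colimits by $\{X_{+}\}_{X\in\Sm_S}$. Since $(-)_{+}$ is left adjoint to the conservative forgetful functor $\H_{\tau,\bullet}(S)\to\H_{\tau}(S)$, the corepresentable functors $\Maps(X_{+},-)$ are jointly conservative, so by presentability $\{X_{+}\}$ generates $\H_{\tau,\bullet}(S)$ under colimits; and because $(X\coprod Y)_{+}\simeq X_{+}\vee Y_{+}$ with $X\coprod Y\in\Sm_S$, while $\emptyset_{+}$ is the zero object, the sifted-colimit closure of $\{X_{+}\}$ is closed under finite coproducts, hence (being already closed under sifted colimits) under all colimits, hence equals $\H_{\tau,\bullet}(S)$. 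Now combine: each $\iota_n$ preserves sifted colimits, so $\iota_n(Y_n)$, being a sifted colimit of objects $\iota_n(X_{+})=\Sigma^{-2n,-n}\Sigma^{\infty}_T X_{+}$, lies in the sifted-colimit closure $\Gscr$ of $\{\Sigma^{-2n,-n}\Sigma^{\infty}_T X_{+}:n\geq 0,\ X\in\Sm_S\}$; since $\Gscr$ is closed under sifted colimits and $E=\colim_n\iota_n(Y_n)$ is a sequential colimit, $E\in\Gscr$. Thus $\SH_{\tau}(S)$ is generated under sifted colimits by $\{\Sigma^{-2n,-n}\Sigma^{\infty}_T X_{+}:n\geq 0\}$, a fortiori by the stated family indexed by $n\in\ZZ$.

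For the second bullet, assume $\tau$ is at least as fine as the Zariski topology. Exactly as in the proof of Proposition~\ref{prop:generation} (following \cite{adeel}*{Lemma 4.3.5}), each $\LL_{\tau,\mot}X$ with $X\in\Sm_S$ is a geometric realization of a simplicial object of $\H_{\tau}(S)$ whose terms are $\LL_{\tau,\mot}$ of smooth affine $S$-schemes. Applying the colimit-preserving functors $(-)_{+}$ and $\iota_n$ exhibits every generator $\Sigma^{-2n,-n}\Sigma^{\infty}_T X_{+}$ from the first bullet as a geometric realization of objects $\Sigma^{-2n,-n}\Sigma^{\infty}_T U_{+}$ with $U$ smooth affine over $S$; together with the first bullet this gives the claim. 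For the third bullet, assume $\H_{\tau}(S)\hookrightarrow\P_{\Sigma}(\Sm_S)$ preserves filtered colimits. Then, as in Proposition~\ref{prop:generation}, $\Maps(\LL_{\tau,\mot}X,-)$ computes evaluation at $X$ on $\tau$-motivic spaces and hence commutes with filtered colimits, so $\LL_{\tau,\mot}X$ — and therefore $X_{+}$ — is compact; since the compact generators $\{X_{+}\}$ are closed under $\wedge$ (as $X_{+}\wedge Y_{+}\simeq(X\times_S Y)_{+}$), the compact objects of $\H_{\tau,\bullet}(S)$ form a monoidal subcategory, so the compact object $(\PP^1,\infty)$, a finite colimit of $X_{+}$'s, makes $-\wedge(\PP^1,\infty)$ preserve both filtered colimits and compact objects. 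A sequential colimit in $\Pr^L$ of compactly generated $\infty$-categories along compact-object-preserving left adjoints is compactly generated with the $\iota_n$ preserving compact objects (see \cite{robalo}, \cite{htt}); hence $\iota_n(X_{+})=\Sigma^{-2n,-n}\Sigma^{\infty}_T X_{+}$ is compact for $n\geq 0$, while for $n<0$ one tensors the compact object $\Sigma^{\infty}_T X_{+}$ with the invertible object $(\PP^1,\infty)^{\wedge|n|}$, which preserves compactness.

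The main obstacle is not a deep step but the careful handling of the phrase ``under sifted colimits'' across the stabilization: one must check that sifted-colimit generation is preserved both by each (left adjoint) structure functor $\iota_n$ and by the assembly of the sequential colimit out of its stages — the latter working only because an arbitrary object can be presented as a sequential colimit of objects of the form $\iota_n(Y)$ with $Y$ a motivic space, and a sequential colimit is itself sifted. The compactness assertion in the third bullet similarly rests on the standard-but-not-formal fact that a sequential colimit of compactly generated categories along compact-preserving left adjoints stays compactly generated, together with the bookkeeping around smashing with the invertible object $(\PP^1,\infty)$.
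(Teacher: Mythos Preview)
Your approach is essentially the same as the paper's: both identify $\SH_{\tau}(S)$ with the sequential colimit $\Stab_{(\PP^1,\infty)}\H_{\tau,\bullet}(S)$ in $\Pr^L$ (citing Robalo), deduce that objects are filtered colimits of desuspended suspension spectra of pointed motivic spaces, and then reduce to Proposition~\ref{prop:generation}. The one substantive difference is how you pass from pointed to unpointed generators. The paper invokes monadicity of the adjunction $(-)_+\dashv u$: every pointed object is the geometric realization of its bar resolution, whose terms lie in the image of $(-)_+$, hence is a sifted colimit of objects coming from $\H_{\tau}(S)$. You instead argue directly that the family $\{X_+\}$ is closed under finite coproducts (via $(X\amalg Y)_+\simeq X_+\vee Y_+$), so its sifted-colimit closure coincides with its full colimit closure. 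Both are valid; the monadicity route is perhaps more self-contained, while yours requires the (true but not entirely trivial) observation that a sifted-colimit-closed subcategory containing a coproduct-closed generating family is closed under all coproducts. For the third bullet you supply considerably more detail than the paper, which simply defers to Proposition~\ref{prop:generation}; your argument about sequential colimits of compactly generated categories along compact-preserving left adjoints is correct and makes explicit what the paper leaves implicit.
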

\begin{proof} 
According to \cite{robalo}*{Proposition 2.19}, the symmetric monoidal stable $\infty$-category $\SH_{\tau}(S)$ is calculated as the colimit in $\Pr^{L, \otimes}$ of the diagram:
$$
\H_{\tau,\bullet}(S) \stackrel{- \wedge (\PP^{1}, \infty) \simeq \Sigma^{2,1}}\longrightarrow \H_{\tau,\bullet}(S) \stackrel{- \wedge (\PP^{1}, \infty) \simeq \Sigma^{2,1}(-)}\longrightarrow \cdots,
$$
and is thus generated under filtered colimits by $\Sigma^{-2n,-n} \Sigma^{\infty}X$ where $X \in \H_{\bullet}(S)$ from the formula for filtered colimits of presentable categories \cite{htt}*{Lemma 6.3.3.6}. 
The adjunction $\H_{\tau}(S)\rightleftarrows \H(S)_{\tau,\bullet}$ is monadic and in particular any object in $\H(S)_{\tau,\bullet}$ can be written as a sifted colimit of objects in $\H_{\tau}(S)$. 
This implies the first statement since the $\infty$-category $\H_{\tau}(S)$ is generated by smooth $S$-schemes. 
The next two statements follows immediately from their analogues in Proposition~\ref{prop:generation}.
\end{proof}
For a general topology $\tau$, suspension spectra of schemes need not be compact in the category $\SH_{\tau}(S)$. In fact its unit can already be noncompact, as the following example illustrates.
\begin{example} 
\label{ex:r} 
We claim the sphere spectrum is noncompact in $\SH_{\et}(\RR)$.
Suppose for contradiction $\sspt_{\RR}$ is compact and let $\M_i$ be a countable collection of objects in $\DM_{\et}(\RR;\ZZ/2)$ viewed as objects of $\SH(\RR)$ via 
$u_{\tr}: \DM_{\et}(\RR; \ZZ/2) \rightarrow \SH_{\et}(\RR)$. 
Then $\Maps(\sspt_{\RR}, \oplus u_{\tr}(\M_i)) \simeq \oplus \Maps(\sspt_{\RR}, u_{\tr}(\M_i))$, 
which cannot be the case as explained in~\cite{etalemotives}*{Remark 5.4.10} since $\cd_{2}(\RR)=\infty$ 
(the remark uses $\DM_h(\RR;\ZZ/2)$ which agrees with $\DM_{\et}(\RR;\ZZ/2)$ after \cite{etalemotives}*{Corollary 5.5.5}).
\end{example}
\subsubsection{}
\label{sect:hptysheaves1}
If $\E \in \SH_{\tau}(S)$ and $p,q \in \ZZ$, 
the \emph{homotopy presheaf} $\underline{\pi}^{\pre}_{p,q}(\E)$ on $\Sm_S$ is defined by:
$$
U 
\mapsto 
[\Sigma^{p,q} \Sigma^{\infty}_{T}U_+,\E ]_{\SH_{\tau}(S)}.
$$ 
The \emph{$\tau$-homotopy sheaf} of $\E$ is the $\tau$-sheafification $\underline{\pi}^{\tau}_{p,q}(\E) := a_{\tau}( \underline{\pi}^{\pre}_{p,q}(\E))$. 
When $\tau=\Nis$ we refer to $\underline{\pi}^{\tau}_{p,q}(\E)$ as the homotopy sheaf.

\subsection{Comparing motivic categories}
\label{sect:compare}

\subsubsection{} \label{subcanonical}
Let $S$ be a qcqs scheme and consider two topologies $\sigma, \tau$ on $\Sm_S$ such that $\tau$ is finer than $\sigma$ (we write $\sigma \leq \tau$ to signify this); 
the examples that concern us are $\sigma = \Nis, \tau = \et$. 
The identity functor $id: \Sm_S \rightarrow \Sm_S$ induces a geometric morphism of $\infty$-topoi $\epsilon_*: \P_{\tau, \hyp}(\Sm_S) \rightarrow \P_{\sigma, \hyp}(\Sm_S)$; 
that is, 
$\epsilon_*$ admits a left adjoint, denoted by $\epsilon^*$, that preserves finite limits \cite{htt}*{Definition 6.3.1.1}. 
The functor $\epsilon_*$ is obtained by precomposition $F \mapsto F \circ id$ while $\epsilon^*$ is obtained as localization at $\tau$-hypercovers. 
This functor can be computed via the left Kan extension:
$$
\xymatrix{
\Sm_S \ar[d]_{y_{\sigma}} \ar[r]^-{y_{\tau}}   & \P_{\tau,\hyp}(\Sm_S) \\
\P_{\sigma,\hyp}(\Sm_S), \ar@{-->}[ur]_{\epsilon^*} 
}$$
where $y_{\sigma}$ and $y_{\tau}$ are the composite of the Yoneda embedding with $\sigma$ and $\tau$-sheafifications respectively. 
Hence, if $\sigma, \tau$ are subcanonical, $\epsilon^*$ preserves representable sheaves. 
Since $\epsilon^*$ preserves finite products, 
and the categories are equipped with the Cartesian monoidal structures, 
it is symmetric monoidal.

\subsubsection{} 
We note that the adjunction above descends to the motivic categories.
\begin{lemma} 
\label{lem:a1preserve} 
In the adjunction: 
$$
\epsilon^*:  
\P_{\sigma, \hyp}(\Sm_S)  
\rightleftarrows 
\P_{\tau, \hyp}(\Sm_S)
:\epsilon_*,
$$ 
$\epsilon^*$ preserves $\AA^{1}$-weak equivalences and $\epsilon_*$ preserves homotopy invariant objects. 
\end{lemma}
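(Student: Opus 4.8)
The plan is to prove the two assertions of Lemma~\ref{lem:a1preserve} separately, since the two halves of an adjunction behave dually with respect to localization. For the second assertion, suppose $F \in \P_{\tau,\hyp}(\Sm_S)$ is homotopy invariant; I want to show $\pi_* F$ is homotopy invariant, i.e.\ that $\pi_* F(X) \to \pi_* F(X \times_S \AA^1)$ is an equivalence for all $X \in \Sm_S$. Since $\pi_*$ is computed by precomposition with $\id\colon \Sm_S \to \Sm_S$, we have $(\pi_* F)(X) = F(X)$ and $(\pi_* F)(X\times_S\AA^1) = F(X \times_S \AA^1)$, so this is \emph{immediate} from homotopy invariance of $F$ itself. (More structurally: $\pi_*$ restricts to the identity on the common underlying presheaves, so it certainly preserves the full subcategory of $\AA^1$-invariant presheaves.)

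For the first assertion I would argue by adjunction rather than directly. The functor $\pi^*$ preserves all colimits (being a left adjoint), so to show it preserves $\AA^1$-weak equivalences it suffices to show that for every $X \in \Sm_S$ the projection map $X \times_S \AA^1 \to X$ is sent by $\pi^*$ to an $\AA^1$-weak equivalence in $\P_{\tau,\hyp}(\Sm_S)$; indeed the class of $\AA^1$-weak equivalences in $\P_{\sigma,\hyp}(\Sm_S)$ is the strongly saturated class generated by the maps $X \times_S \AA^1 \to X$ (this is the content of $\P_{\AA^1}$ being an accessible localization, \cite{htt}*{Proposition 5.2.7.4} and \cite{htt}*{Proposition 5.5.4.15}), and a colimit-preserving functor sends a strongly saturated class to the strongly saturated class generated by its image. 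But $\pi^*$ commutes with $\sigma$- and $\tau$-sheafification of representables in a compatible way (the left-Kan-extension square displayed just before the lemma), so $\pi^*$ applied to $X \times_S \AA^1 \to X$ is again the projection $y_\tau(X\times_S \AA^1) \to y_\tau(X)$, which is tautologically an $\AA^1$-weak equivalence in $\P_{\tau,\hyp}(\Sm_S)$. Hence $\pi^*$ carries the generating maps into $\AA^1$-weak equivalences, and therefore carries all $\AA^1$-weak equivalences into $\AA^1$-weak equivalences.

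An alternative, perhaps cleaner, route for the first assertion: observe that $\pi_*$ preserves homotopy-invariant objects (the second assertion), hence $\pi_*$ restricts to a functor $\P_{\tau,\hyp}(\Sm_S)_{\AA^1} \to \P_{\sigma,\hyp}(\Sm_S)_{\AA^1}$ between the $\AA^1$-local subcategories; this restricted functor has a left adjoint given by $\LL_{\AA^1}\circ\pi^*$, and a standard localization argument (the same one used to descend $\pi^*$ through Bousfield localizations) shows the original $\pi^*$ already lands in $\AA^1$-equivalences when restricted appropriately. Concretely: for an $\AA^1$-equivalence $f$ and any $\AA^1$-invariant $\tau$-hypersheaf $G$, the map $\Maps(\pi^* g, G)$ for any $\AA^1$-equivalence $g$ is $\Maps(g, \pi_* G)$, and $\pi_* G$ is $\AA^1$-invariant, so this is an equivalence; thus $\pi^* f$ is an $\AA^1$-equivalence.

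I expect the only real subtlety to be bookkeeping about \emph{hypercompletion}: one must make sure that the localization $\LL_\tau$ into $\tau$-hypersheaves, and not merely $\tau$-sheaves, still gives $\pi^*$ as a symmetric monoidal left adjoint compatible with representables, so that the generating-map argument goes through in the hypercomplete setting. This is handled by the discussion in \S\ref{sect:hypersheaves} identifying $\P_{\tau,\hyp}(\Sm_S)$ with the hypercomplete $\infty$-topos and by the left-Kan-extension description of $\pi^*$ recalled just above the lemma; granting those, there is no genuine obstacle, and the proof is essentially formal. I would write it up in the order: (1) the second assertion via "$\pi_*$ is precomposition"; (2) reduce the first assertion to generating $\AA^1$-equivalences; (3) check $\pi^*$ sends the projections $X\times_S\AA^1\to X$ to projections; (4) conclude via strong saturation.
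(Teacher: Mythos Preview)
Your proposal is correct. Your ``alternative, cleaner route'' via the adjunction---showing that for any $\AA^1$-invariant $G$ one has $\Maps(\pi^*f, G) \simeq \Maps(f, \pi_*G)$ and invoking the already-established fact that $\pi_*G$ is $\AA^1$-invariant---is exactly the argument the paper gives (modulo notation). Your primary route through strongly saturated classes is a genuinely different, equally valid approach: it trades the one-line adjunction computation for an explicit check on generating equivalences together with the formal fact that the preimage of a strongly saturated class under a colimit-preserving functor is strongly saturated. The adjunction argument is shorter and avoids appealing to the left-Kan-extension description of $\pi^*$ on representables; your generating-map argument has the minor advantage of making explicit which maps $\pi^*$ needs to preserve, which can be useful in variants where the adjunction is less transparent. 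Either would be acceptable here, and the paper opts for the shorter one.
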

\begin{proof} 
If $F \in  \P_{\tau, \hyp}(\Sm_S)$ is homotopy invariant, 
$\epsilon_*(F)(X \times \AA^{1}) \simeq F(X \times \AA^{1}) \simeq F(X)$ for $X \in \Sm_S$, 
so $\epsilon_*$ preserves homotopy invariant objects. 
To show $\epsilon^*$ preserves $\AA^{1}$-weak equivalences, we note that by general considerations (\cite{htt}*{Proposition 5.5.4.15}), the class of $\AA^1$-weak equivalences is the strong saturation of the maps $\{ X \times \AA^1 \rightarrow X \}$. By \cite{bachmann-hoyois}*{Lemma 2.10} and the fact that $\epsilon^*$ preserves colimits, it suffices to prove that $\epsilon^*(X \times \AA^1 \rightarrow X)$ is an $\AA^1$-weak equivalence, which follows from the fact that $\epsilon^*$ preserves products.
\end{proof}

\begin{proposition} 
\label{proposition:unstableadjoints}
The adjunction of Lemma~\ref{lem:a1preserve} descends to an adjunction:  
$$
\epsilon^*: \H_{\sigma}(S) \rightleftarrows \H_{\tau}(S): \epsilon_*,
$$ 
such that the following diagram of adjunctions commutes:
\begin{equation} \label{eq:adj-unstab}
\xymatrix{
\H_{\sigma}(S) \ar@/^1.2pc/[rr]^{\epsilon^*}\ar[dd]^{i}
&&   
\H_{\tau}(S) \ar[dd]_{i}\ar[ll]_{\epsilon_*}\\\\
\P_{\sigma, \hyp}(\Sm_S) \ar@/_1.2pc/[rr]^-{\epsilon^*}\ar@/^1.2pc/[uu]^-{\LL_{\tau,\mot}} 
&&  
\P_{\tau, \hyp}(\Sm_S) \ar@/_1.2pc/[uu]_-{\LL_{\tau,\mot}}\ar[ll]_-{\epsilon_*}.
}
\end{equation}
\end{proposition}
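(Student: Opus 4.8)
The plan is to invoke the general principle that an adjunction of presentable $\infty$-categories descends to any pair of reflective (Bousfield) localizations of the two sides that is compatible with it: if $F\colon\C\rightleftarrows\D\colon G$ with $\C,\D$ presentable, and $\C_0\subseteq\C$, $\D_0\subseteq\D$ are reflective localizations with left adjoints $L_\C$, $L_\D$, then --- provided $G(\D_0)\subseteq\C_0$, equivalently $F$ carries $L_\C$-equivalences to $L_\D$-equivalences --- the functors $F_0:=L_\D\circ F\circ(\text{incl})$ and $G_0:=G|_{\D_0}$ form an adjunction $F_0\colon\C_0\rightleftarrows\D_0\colon G_0$, and the resulting square of the four adjunctions commutes. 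I would instantiate this with $\C=\P_{\sigma,\hyp}(\Sm_S)$, $\D=\P_{\tau,\hyp}(\Sm_S)$, the adjunction $(F,G)=(\pi^*,\pi_*)$ of \S\ref{sect:compare}, and $\C_0=\H_\sigma(S)$, $\D_0=\H_\tau(S)$, which are reflective in the respective hypersheaf categories with left adjoints the motivic localizations $\LL_{\sigma,\mot}$, $\LL_{\tau,\mot}$ by Proposition~\ref{prop:univprop-unstab}. The compatibility hypothesis is supplied verbatim by Lemma~\ref{lem:a1preserve}: $\pi_*$ preserves homotopy invariant objects, and dually $\pi^*$ preserves $\AA^1$-weak equivalences.

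Unwinding this, I would define $\pi_*\colon\H_\tau(S)\to\H_\sigma(S)$ to be the restriction of $\pi_*\colon\P_{\tau,\hyp}(\Sm_S)\to\P_{\sigma,\hyp}(\Sm_S)$ (well defined by Lemma~\ref{lem:a1preserve}), and $\pi^*\colon\H_\sigma(S)\to\H_\tau(S)$ to be $\LL_{\tau,\mot}\circ\pi^*\circ i$, where $i$ denotes the inclusions of homotopy invariant objects. For $F\in\H_\sigma(S)$ and $G\in\H_\tau(S)$ the adjunction then follows from the chain of natural equivalences
\[
\Maps_{\H_\tau(S)}(\pi^*F,G)\simeq\Maps_{\P_{\tau,\hyp}(\Sm_S)}(\pi^*iF,iG)\simeq\Maps_{\P_{\sigma,\hyp}(\Sm_S)}(iF,\pi_*iG)\simeq\Maps_{\H_\sigma(S)}(F,\pi_*G),
\]
where the first equivalence uses that $\pi^*iF$ is a $\tau$-hypersheaf and $iG$ is $\LL_{\tau,\mot}$-local; the second is the $\pi^*\dashv\pi_*$ adjunction on hypersheaves from \S\ref{sect:compare}; and the third uses full faithfulness of $i$ together with the fact that, by the very definition of $\pi_*$ on $\H_\tau(S)$, the object $\pi_*iG$ coincides with $i\pi_*G$. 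Naturality in both variables then promotes this to the asserted adjunction.

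For the commutativity of the diagram~\eqref{eq:adj-unstab} I would argue on the level of right adjoints: by the definition of $\pi_*$ on $\H_\tau(S)$ as a restriction, the square $i\circ\pi_*\simeq\pi_*\circ i\colon\H_\tau(S)\to\P_{\sigma,\hyp}(\Sm_S)$ commutes on the nose. Since all four categories are presentable and all four functors preserve colimits, passing to left adjoints is legitimate and unambiguous, and it converts this equivalence into $\pi^*\circ\LL_{\sigma,\mot}\simeq\LL_{\tau,\mot}\circ\pi^*$, which is precisely the remaining square of~\eqref{eq:adj-unstab}; the triangle identities $\LL_{\tau,\mot}\circ i\simeq\id$ and $\LL_{\sigma,\mot}\circ i\simeq\id$ are part of the reflective-localization data and need no argument.

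I do not expect a genuine obstacle here: the entire mathematical content is concentrated in Lemma~\ref{lem:a1preserve}, and what remains is bookkeeping with Bousfield localizations. The single point deserving a moment's attention is to confirm that $\pi^*$ really takes values in $\P_{\tau,\hyp}(\Sm_S)$ --- so that $\LL_{\tau,\mot}$ may be applied to $\pi^*iF$ and the result lies in $\H_\tau(S)$ --- which is exactly how $\pi^*$ was constructed in \S\ref{sect:compare} as localization at $\tau$-hypercovers.
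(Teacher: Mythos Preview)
Your proposal is correct and follows essentially the same approach as the paper: the paper's proof is a two-sentence version of your argument, observing that the square of right adjoints commutes because $\pi_*$ preserves homotopy invariant objects (Lemma~\ref{lem:a1preserve}), and then invoking uniqueness of adjoints for the left adjoints. One small slip: where you write ``all four functors preserve colimits'' to justify passing to left adjoints, you mean that all four are \emph{right adjoints} (they preserve limits, not colimits); the passage from $i\circ\pi_*\simeq\pi_*\circ i$ to $\pi^*\circ\LL_{\sigma,\mot}\simeq\LL_{\tau,\mot}\circ\pi^*$ is by uniqueness of left adjoints, not by any colimit-preservation statement.
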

\begin{proof} 
Since $\epsilon_*$ preserves homotopy invariant presheaves by Lemma~\ref{lem:a1preserve}, the diagram of right adjoints commute. 
The corresponding left adjoints commutes by uniqueness of adjoints.
\end{proof}

Upon taking stabilization we obtain the following comparison result.
\begin{proposition} 
\label{prop:stab}  
The adjunction of Proposition \ref{proposition:unstableadjoints} induces an adjunction:
$$
\epsilon^*: \SH_{\sigma}(S) \rightleftarrows \SH_{\tau}(S): \epsilon_*,
$$
which fits into the following commutative diagram of adjunctions:
\begin{equation} \label{eq:adj-stab}
\xymatrix{
\SH_{\sigma}(S)  \ar@/^1.2pc/[rr]^{\epsilon^*}\ar[dd]^{\Omega^{\infty}_{T}}
&&  
\SH_{\tau}(S)  \ar[dd]_{\Omega^{\infty}_{T}}\ar[ll]_{\epsilon_*}\\\\
\H_{\sigma}(S) \ar@/_1.2pc/[rr]^-{\epsilon^*}\ar@/^1.2pc/[uu]^-{\Sigma^{\infty}_{T,+}}
&&   
\H_{\tau}(S)  \ar@/_1.2pc/[uu]_-{\Sigma^{\infty}_{T,+}}\ar[ll]_-{\epsilon_*}.
}
\end{equation}
\end{proposition}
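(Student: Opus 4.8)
The plan is to construct $\pi^*$ on the stable level directly from Robalo's universal property (Theorem~\ref{thm:univprop-stab}), obtain $\pi_*$ as its right adjoint by the adjoint functor theorem, and then deduce the commutativity of \eqref{eq:adj-stab} by comparing squares of left adjoints. Throughout, write $\theta_\sigma\colon\Sm_S^\times\to\SH_\sigma(S)^\otimes$ and $\theta_\tau\colon\Sm_S^\times\to\SH_\tau(S)^\otimes$ for the symmetric monoidal functors of Theorem~\ref{thm:univprop-stab} attached to the topologies $\sigma$ and $\tau$.

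First I would note that the unstable $\pi^*\colon\H_\sigma(S)\to\H_\tau(S)$ of Proposition~\ref{proposition:unstableadjoints} is symmetric monoidal for the Cartesian structures: as a left adjoint it preserves the terminal object and finite coproducts, so it passes to pointed objects, and it preserves finite products; moreover the commuting square of Proposition~\ref{proposition:unstableadjoints} gives $\pi^*\circ\LL_{\sigma,\mot}\circ j\simeq\LL_{\tau,\mot}\circ j$ on $\Sm_S$, so in particular $\pi^*$ carries $(\PP^1,\infty)$ to $(\PP^1,\infty)$. Now I would feed $\theta_\tau$ into the universal property of $\SH_\sigma(S)$. Since $\sigma\le\tau$, every $\sigma$-hypercover is a $\tau$-hypercover, so $\tau$-hyperdescent (a condition imposed against the larger class) implies $\sigma$-hyperdescent; thus $\theta_\tau$ satisfies $\sigma$-hyperdescent. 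It is homotopy invariant, and the cofiber of $\theta_\tau(S)\to\theta_\tau(\PP^1)$ is the Tate object, invertible in $\SH_\tau(S)$ by construction. Hence $\theta_\tau$ lies in the essential image of the fully faithful restriction $\Fun^{\otimes, L}(\SH_\sigma(S),\SH_\tau(S))\hookrightarrow\Fun^{\otimes}(\Sm_S,\SH_\tau(S))$ of Theorem~\ref{thm:univprop-stab}, producing a colimit-preserving symmetric monoidal functor $\pi^*\colon\SH_\sigma(S)\to\SH_\tau(S)$ with $\pi^*\circ\theta_\sigma\simeq\theta_\tau$. (Equivalently, one can realize $\pi^*$ as the colimit in $\Pr^{L,\otimes}$ of the map of $\PP^1$-stabilization towers induced by the unstable $\pi^*$, as in the colimit description of $\SH_\tau(S)$ recalled in the proof of Proposition~\ref{prop:gen-stab}.) Since $\SH_\sigma(S)$ and $\SH_\tau(S)$ are presentable and $\pi^*$ preserves colimits, the adjoint functor theorem furnishes the right adjoint $\pi_*$.

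It remains to verify \eqref{eq:adj-stab}. For the left-hand square, both $\pi^*\circ\Sigma^\infty_{T,+}$ and $\Sigma^\infty_{T,+}\circ\pi^*$ (the latter using the unstable $\pi^*$) are colimit-preserving functors $\H_\sigma(S)\to\SH_\tau(S)$, and precomposing with $\LL_{\sigma,\mot}\circ j\colon\Sm_S\to\H_\sigma(S)$ they both become $\theta_\tau$: for the first composite because $\pi^*\circ\theta_\sigma\simeq\theta_\tau$, and for the second because $\Sigma^\infty_{T,+}\circ\LL_{\tau,\mot}\circ j\simeq\theta_\tau$ together with $\pi^*\circ\LL_{\sigma,\mot}\circ j\simeq\LL_{\tau,\mot}\circ j$. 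By the universal property of $\H_\sigma(S)$ (Proposition~\ref{prop:univprop-unstab}) these two functors agree, so the square of left adjoints commutes; passing to mates (uniqueness of adjoints, as in the proof of Proposition~\ref{proposition:unstableadjoints}) then gives commutativity of the square of right adjoints, i.e.\ $\Omega^\infty_T\circ\pi_*\simeq\pi_*\circ\Omega^\infty_T$. The only real work here is bookkeeping --- checking the symmetric-monoidal and colimit-preservation hypotheses in Robalo's universal properties and tracking the identifications of $\pi^*$ on generators through the several adjunctions in play; the one genuine input is the elementary remark that $\sigma\le\tau$ forces $\theta_\tau$ to satisfy $\sigma$-hyperdescent, which is precisely what makes it admissible for the universal property of $\SH_\sigma(S)$.
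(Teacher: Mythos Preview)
Your argument is correct and is essentially the paper's approach, carried out in more detail. The paper's one-line proof invokes Robalo's universal property of $\otimes$-inversion (Theorem~\ref{thm:robalo-stab1}) to stabilize the unstable $\pi^*$, whereas you invoke the closely related universal property of Theorem~\ref{thm:univprop-stab} characterizing colimit-preserving symmetric monoidal functors out of $\SH_\sigma(S)$ in terms of $\Sm_S$; you even note parenthetically the alternative via the colimit of stabilization towers, which is exactly how the paper proceeds. Both routes hinge on the same observation---that the unstable $\pi^*$ is symmetric monoidal and carries $(\PP^1,\infty)$ to $(\PP^1,\infty)$---and both conclude commutativity of the right-adjoint square by uniqueness of adjoints. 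Your version has the minor advantage of making explicit the one non-formal input (that $\sigma\le\tau$ forces $\theta_\tau$ to be a $\sigma$-cosheaf), which the paper leaves implicit.
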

\begin{proof} 
The adjunction of~\eqref{eq:adj-unstab} gives rise to the adjunction of~\eqref{eq:adj-stab} via the universal property of stabilization given in Theorem~\ref{thm:robalo-stab1}, 
which also leads to the diagram of left adjoints commuting. By uniqueness of adjoints, the right adjoints also commute.
\end{proof}

\begin{remark} 
\label{rem:susp-sheaf} 
From the commutation of the left adjoints in Proposition~\ref{prop:stab}, we get an equivalence of functors $\Sigma_T\epsilon^* \simeq \epsilon^*\Sigma_T$, 
and likewise for the right adjoints $\epsilon_*\Sigma_T^{-1} \simeq \Sigma_T^{-1}\epsilon_*$. 
The latter furnishes natural equivalences: $$\epsilon_* \simeq \epsilon_*\Sigma^{-1}_T \Sigma_T \simeq \Sigma^{-1}_T\epsilon_*\Sigma_T,$$ and thus:$$\Sigma_T\epsilon_*\simeq\epsilon_*\Sigma_T.$$
By the same reasoning:
$$\epsilon^* \simeq \epsilon^*\Sigma_T\Sigma^{-1}_T \simeq \Sigma^T \epsilon^* \Sigma_T^{-1},$$ and: $$\Sigma^{-1}_T \epsilon^* \simeq \epsilon^* \Sigma^{-1}_T.$$ 
Since $\epsilon^*$  and $\epsilon_*$ are exact and thus commute with $\Sigma^{1,0}$ and $\Sigma^{-1,0}$ we deduce the natural equivalence: $$\Sigma^{p,q}\epsilon_*\epsilon^* \simeq \epsilon_*\epsilon^*\Sigma^{p,q},$$ for all $p, q \in \ZZ$ in the $\PP^1$-stable motivic homotopy categories.
\end{remark}

\subsubsection{} We will call the functor:$$\epsilon^*: \SH_{\sigma}(S) \rightarrow \SH_{\tau}(S),$$ the \emph{$\tau$-localization} functor.
This terminology is justified by the next proposition. Let us define:
$$
W_{\tau}: = \{ \LL_{\mot,\sigma}(U \rightarrow X): X \in \Sm_S, U \rightarrow X\text{ is a $\tau$-hypercover} \},
$$ 
and:
$$
\Sigma^{\infty}_{T,+}W_{\tau} := \{ \Sigma^{p,q}\Sigma^{\infty}_{T,+}\LL_{\mot,\sigma}(U \rightarrow X):X \in \Sm_S, U \rightarrow X\text{ is a $\tau$-hypercover of $X$}, p,q \in \ZZ \}.
$$

\begin{proposition} 
\label{prop:loc} 
The functors:
$$
\epsilon^*: \H_{\sigma}(S) \rightarrow \H_{\tau}(S) 
\text{ and }
\epsilon^*: \SH_{\sigma}(S) \rightarrow \SH_{\tau}(S),
$$ 
are localizations at $W_{\tau}$ and $\Sigma^{\infty}_{T,+}W_{\tau}$, respectively. 
In particular, their right adjoints are fully faithful.
\end{proposition}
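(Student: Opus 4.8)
The plan is to identify $\pi^*$ as a Bousfield localization directly from the universal properties already recorded. I would first treat the unstable case. By Proposition~\ref{prop:univprop-unstab}, the $\infty$-category $\H_\tau(S)$ is the localization of $\H_\sigma(S)$ obtained by forcing $\tau$-hyperdescent on top of the objects of $\H_\sigma(S)$; concretely, $\H_\tau(S) \subset \H_\sigma(S)$ is the full subcategory of those homotopy-invariant $\sigma$-hypersheaves which additionally satisfy $\tau$-hyperdescent, and $\pi^*$ is the associated localization functor with $\pi_*$ its fully faithful right adjoint (this fully faithfulness is the diagram~\eqref{eq:adj-unstab} plus the fact that $\pi_*$ on presheaves is fully faithful onto the $\tau$-hypersheaves). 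So the only real content is to show that the class of maps inverted by $\pi^*$ is generated, as a strongly saturated class closed under the relevant operations, by $W_\tau$. A map $F \to G$ in $\H_\sigma(S)$ is inverted by $\pi^*$ iff it becomes an equivalence after $\tau$-hypersheafification; by the universal property of $\tau$-localization (Section~\ref{sect:hypersheaves}) the strongly saturated class of such maps is the smallest one containing the $\tau$-hypercover maps $y_\sigma(U_\bullet) \to y_\sigma(X)$, and applying the (colimit-preserving, hence saturated-class-preserving) functor $\LL_{\sigma,\mot}$ shows this coincides with the saturated class generated by $W_\tau$. Combined with the observation that $\H_\sigma(S)$ is generated under colimits by representables (Proposition~\ref{prop:generation}), this gives that $\pi^*\colon \H_\sigma(S)\to\H_\tau(S)$ is exactly the localization at $W_\tau$.

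For the stable statement I would bootstrap from the unstable one. Stabilization $\SH_\tau(S) = \H_{\tau,\bullet}(S)[(\PP^1,\infty)^{-1}]$ is computed as a colimit in $\Pr^{L,\otimes}$ (Proposition~\ref{prop:gen-stab} and \cite{robalo}*{Proposition 2.19}), and $\pi^*$ is induced levelwise by the unstable $\pi^*$, which is compatible with the suspension $\Sigma^{2,1} = -\wedge(\PP^1,\infty)$ by Remark~\ref{rem:susp-sheaf}. A map in $\SH_\sigma(S)$ is inverted by $\pi^*$ iff, equivalently, it is a $\tau$-equivalence on each $\PP^1$-level; since $\Sigma^\infty_{T,+}$ and its shifts $\Sigma^{p,q}$ are colimit-preserving and commute with $\pi^*$, the strongly saturated class of $\pi^*$-equivalences in $\SH_\sigma(S)$ is generated by $\Sigma^\infty_{T,+}$ of the unstable generating class, i.e.\ by $\Sigma^\infty_{T,+}W_\tau$ together with its $\Sigma^{p,q}$-shifts — which is exactly the set $\Sigma^\infty_{T,+}W_\tau$ as defined just before the proposition. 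Hence $\pi^*\colon \SH_\sigma(S)\to\SH_\tau(S)$ is the localization at $\Sigma^\infty_{T,+}W_\tau$, and in either case full faithfulness of the right adjoint is the standard fact that the right adjoint of a Bousfield localization is fully faithful.

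The main obstacle, and the step I would spend the most care on, is the interchange of the three localizations — $\tau$-hypersheafification, $\AA^1$-localization, and $\PP^1$-stabilization — so as to be sure that ``inverting $W_\tau$'' inside $\H_\sigma(S)$ really produces $\H_\tau(S)$ rather than some intermediate or larger localization. The subtlety is that $\AA^1$-localization and $\tau$-hypersheafification do not commute on the nose, so one must argue at the level of the motivic localization $\LL_{\sigma,\mot}$ that the composite localization already built into $\H_\sigma(S)$, further localized at the (already motivically localized) hypercover maps, agrees with $\LL_{\tau,\mot}$; this is precisely where diagram~\eqref{eq:adj-unstab} and the fact that $\pi^*$ preserves $\AA^1$-equivalences (Lemma~\ref{lem:a1preserve}) are used. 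Once that interchange is pinned down the rest is formal manipulation of strongly saturated classes and adjunctions.
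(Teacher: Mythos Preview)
Your approach is workable in spirit but takes a more laborious route than the paper's, and contains one imprecise claim that would need repair. The paper argues purely by universal properties: the composite $\Sm_S \to \H_\sigma(S) \to \H_\sigma(S)[W_\tau^{-1}]$ is homotopy invariant (since $\H_\sigma(S)$ already is) and satisfies $\tau$-hyperdescent (since the maps in $W_\tau$ are inverted), so by Proposition~\ref{prop:univprop-unstab} it \emph{is} $\H_\tau(S)$; the stable case is identical using Theorem~\ref{thm:univprop-stab}. This is two sentences and never touches the question of which maps $\pi^*$ inverts.

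Your argument instead tries to identify the strongly saturated class of $\pi^*$-equivalences directly, and here the claim ``a map $F\to G$ in $\H_\sigma(S)$ is inverted by $\pi^*$ iff it becomes an equivalence after $\tau$-hypersheafification'' is not quite right: $\LL_\tau$ may destroy $\AA^1$-invariance, so $\pi^*$ is really an iterated (or transfinite) composite $\LL_{\AA^1}\LL_\tau\cdots$, and being a $\pi^*$-equivalence is a priori weaker than being a $\tau$-equivalence. What \emph{is} true, and what you should say instead, is that an object $Z\in\H_\sigma(S)$ is $W_\tau$-local iff $Z(X)\to\lim Z(U_\bullet)$ is an equivalence for every $\tau$-hypercover, iff $Z\in\H_\tau(S)$; this identifies the local objects rather than the equivalences and avoids the interchange problem you flag in your last paragraph. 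Once you reformulate this way, your unstable argument goes through cleanly. For the stable case your levelwise description is vague (prespectra versus spectra), and again the paper's one-line appeal to Theorem~\ref{thm:univprop-stab} is both shorter and safer than bootstrapping through the colimit presentation of stabilization.
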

\begin{proof} 
For the unstable case, 
$\epsilon^*$ and localization at $W_{\tau}$ satisfy the same universal properties via Proposition~\ref{prop:univprop-unstab}. 
In slightly more detail, 
the composite of functors: 
$$
\Sm_S \rightarrow \H_{\sigma}(S) \rightarrow \H_{\sigma}(S)[W_{\tau}^{-1}],
$$ 
satisfies the universal property for $\H_{\tau}(S)$ as stated in Proposition~\ref{prop:univprop-unstab}. 
This follows similarly for the stabilized version via Theorem~\ref{thm:univprop-stab}.
\end{proof}

As a consequence of Proposition~\ref{prop:loc} we adopt the following terminology: 
If $X \in \H_{\sigma}(S)$ is in the full subcategory $\H_{\tau}(S)$ (equivalently, it satisfies $X \simeq \epsilon_*\epsilon^*X$), 
then $X$ is called a \emph{$\tau$-local object} in $\H_{\sigma}(S)$. 
Similarly, 
$\E \in \SH_{\sigma}(S)$ is a \emph{$\tau$-local object} in $\SH_{\sigma}(S)$ if it is in the full subcategory $\SH_{\tau}(S)$.

\subsubsection{} 
We give a characterization of $\tau$-local objects in terms of $S^1$-spectra, i.e., presheaves of spectra; we will discuss this $\infty$-category in more detail in~\S\ref{sect:s1-spectra}. 
Recall $\SH_{\sigma}(S)$ is enriched over the $\infty$-category of spectra.
We denote by $\maps(\E, \F)$ the \emph{mapping spectrum} corresponding to this enrichment. 
If $\E \in \SH_{\sigma}(S)$, 
define $\Omega^{\infty}_{\GG_m, \sigma}\E$ as the presheaf of spectra: 
$$
X \mapsto \maps(\Sigma^{\infty}_TX_+, \Sigma^{p,q}\E).
$$ 
This presheaf of spectra satisfies $\sigma$-hyperdescent and is $\AA^1$-invariant. 

\begin{lemma} 
\label{lem:concrete} 
An object $\E \in \SH_{\sigma}(S)$ is $\tau$-local if and only if $\Omega^{\infty}_{\GG_m, \sigma}\Sigma^{p,q}\E$ satisfies $\tau$-hyperdescent for all integers $p,q \in \ZZ$.
\end{lemma}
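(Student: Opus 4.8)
The plan is to unwind the definition of $\tau$-localization from Proposition~\ref{prop:loc} into a statement about mapping spectra. Recall that $\pi^*\colon \SH_\sigma(S)\to\SH_\tau(S)$ is the localization at $\Sigma^\infty_{T,+}W_\tau$, so by Theorem~\ref{thm:robalo-stab1} (and the general theory of Bousfield localizations) an object $\E\in\SH_\sigma(S)$ lies in the essential image of the fully faithful $\pi_*$ — i.e.\ is $\tau$-local in our terminology — if and only if $\E$ is local with respect to the class $\Sigma^\infty_{T,+}W_\tau$, meaning $\maps(g,\E)$ is an equivalence of spectra for every $g\in\Sigma^\infty_{T,+}W_\tau$. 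Since $\Sigma^\infty_{T,+}W_\tau$ is by definition closed under the bigraded suspensions $\Sigma^{p,q}$, and $\maps$ converts source suspensions into target desuspensions, it suffices to test $g=\Sigma^\infty_{T,+}\LL_{\mot,\sigma}(U_\bullet\to X)$ for a single bidegree and then let $p,q$ range; equivalently $\E$ is $\tau$-local iff for every $X\in\Sm_S$, every $\tau$-hypercover $U_\bullet\to X$, and all $p,q\in\ZZ$, the induced map
\[
\maps\bigl(\Sigma^\infty_{T,+}X,\ \Sigma^{p,q}\E\bigr)\longrightarrow \maps\bigl(\Sigma^\infty_{T,+}\LL_{\mot,\sigma}U_\bullet,\ \Sigma^{p,q}\E\bigr)
\]
is an equivalence of spectra.

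Next I would rewrite both sides in terms of the presheaf of spectra $\Omega^\infty_{\GG_m,\sigma}\Sigma^{p,q}\E$. By construction this presheaf sends $X$ to $\maps(\Sigma^\infty_T X_+,\Sigma^{p,q}\E)$, so the left-hand side above is exactly $(\Omega^\infty_{\GG_m,\sigma}\Sigma^{p,q}\E)(X)$. For the right-hand side, one uses that $\Sigma^\infty_{T,+}$ and $\LL_{\mot,\sigma}$ are colimit-preserving, so $\Sigma^\infty_{T,+}\LL_{\mot,\sigma}U_\bullet\simeq \colim_{\Delta^{\op}}\Sigma^\infty_{T,+} U_n$ as a simplicial colimit in $\SH_\sigma(S)$; since $\maps(-,\Sigma^{p,q}\E)$ turns colimits into limits, the right-hand side becomes $\lim_{\Delta}\maps(\Sigma^\infty_{T,+}U_n,\Sigma^{p,q}\E)=\lim_{\Delta}(\Omega^\infty_{\GG_m,\sigma}\Sigma^{p,q}\E)(U_n)$. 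Here I should be slightly careful that the hypercover $U_\bullet\to X$ is built from smooth $S$-schemes and that passing through $\LL_{\mot,\sigma}$ and $\Sigma^\infty_{T,+}$ genuinely computes the colimit in the stable category — this is where the assumption (imposed in~\S\ref{sect:hypersheaves1}-era discussion) that $\tau$-hypersheaves take coproducts to products, together with the fact that $\Omega^\infty_{\GG_m,\sigma}\Sigma^{p,q}\E$ already satisfies $\sigma$-hyperdescent and is $\AA^1$-invariant, keeps everything honest.

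Combining the two computations, the localization condition becomes: for every $X$ and every $\tau$-hypercover $U_\bullet\to X$, the map $(\Omega^\infty_{\GG_m,\sigma}\Sigma^{p,q}\E)(X)\to\lim_\Delta(\Omega^\infty_{\GG_m,\sigma}\Sigma^{p,q}\E)(U_\bullet)$ is an equivalence — which is precisely the statement that the presheaf of spectra $\Omega^\infty_{\GG_m,\sigma}\Sigma^{p,q}\E$ satisfies $\tau$-hyperdescent. Quantifying over all $p,q\in\ZZ$ gives the claimed equivalence, in both directions. The main obstacle I anticipate is the bookkeeping in the middle paragraph: making precise that the colimit presentation of $\Sigma^\infty_{T,+}\LL_{\mot,\sigma}U_\bullet$ is compatible with the mapping-spectrum enrichment (as opposed to just the mapping-space structure), and that restricting the test class from all of $\Sigma^\infty_{T,+}W_\tau$ to suspension spectra of hypercovers of \emph{smooth} schemes loses nothing because $\SH_\sigma(S)$ is generated under colimits by such objects (Proposition~\ref{prop:gen-stab}) and $\maps(-,\Sigma^{p,q}\E)$ is exact. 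Once that reduction is in place the rest is a formal chain of adjunctions and (co)limit interchanges.
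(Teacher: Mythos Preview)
Your proposal is correct and follows essentially the same approach as the paper: both reduce $\tau$-locality of $\E$ to the locality condition with respect to the generating class $\Sigma^{\infty}_{T,+}W_\tau$ from Proposition~\ref{prop:loc}, then identify the resulting mapping condition with $\tau$-hyperdescent for the presheaves of spectra $\Omega^\infty_{\GG_m,\sigma}\Sigma^{p,q}\E$. The paper's proof is a one-line hint that records only the identity $\pi_*\maps(\Sigma^\infty_T X_+,\Sigma^{p,q}\E)\simeq[\Sigma^{*,0}\Sigma^\infty_T X_+,\Sigma^{p,q}\E]_{\SH_\sigma(S)}$, whereas you spell out the colimit-to-limit conversion and the reduction along $\Sigma^{p,q}$ explicitly; the underlying argument is the same.
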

\begin{proof} 
This follows since for $X \in \Sm_S$ the homotopy groups of $\maps(\Sigma^{\infty}_+X,\Sigma^{p,q}\E)$ are computed as: 
$$
\epsilon_*\maps(\Sigma^{\infty}_TX_+, \Sigma^{p,q}\E) 
\simeq 
[\Sigma^{*,0}\Sigma^{\infty}_TX_+, \Sigma^{p,q}\E]_{\SH_{\sigma}(S)}.
$$
\end{proof}

\subsubsection{} \label{sect:doesnotdeserve}
The reason why $\epsilon^*$ \emph{does not deserve} to be called the $\tau$-sheafification functor is because the diagram:
\begin{equation}
\xymatrix{
\SH_{\sigma}(S) \ar[rr]^{\epsilon^*} \ar[dd]_{\Omega^{\infty}_{\GG_m, \sigma}}&&  \SH_{\tau}(S) \ar[dd]^{\Omega^{\infty}_{\GG_m, \tau}} \\\\
\SH^{S^{1}}_{\sigma}(S) \ar[rr]^-{\epsilon^*}  &&  \SH^{S^{1}}_{\tau}(S),
}
\end{equation}
\emph{does not} commute. 
Here $\SH^{S^{1}}_{\sigma}(S)$ is the $\infty$-category of homotopy invariant presheaves of spectra satisfying $\sigma$-hyperdescent. 
We offer an example to clarify the situation.

\begin{example} 
The spectrum $\Sigma^{0,q}\M\ZZ$ represents motivic cohomology in $\SH_{\Nis}(S)$ with Tate twist $q \in \ZZ$. 
By the cancellation theorem \cite{voevodsky-cancel} there is an equivalence: 
$$
\Omega^{\infty}_{\GG_m,\Nis}\Sigma^{0,q}\M\ZZ
\simeq 
\ZZ^{\tr}(q).
$$ 
Here, 
the chain complex of presheaves with transfer $\ZZ^{\tr}(q)$ is viewed as an object of $\SH_{\Nis}^{S^{1}}(S)$ via the Dold-Kan corespondence. 
If $\tau = \et$, then $\epsilon^*\M\ZZ$ computes \'etale motivic cohomology, see Corollary~\ref{cor:utrvspi}. 
Hence $\Omega^{\infty}_{\GG_m,\et}\Sigma^{0,q}\epsilon^*\M\ZZ\neq 0$ for $q<0$ since \'{e}tale motivic cohomology and \'{e}tale cohomology coincide for torsion coefficients, 
cf.~Theorem~\ref{thm:rigidity} . 
However, 
$\ZZ^{\tr}(q)$ and thus $\epsilon^*\ZZ^{\tr}(q)$ are $0$ for $q<0$ since negative weight motivic cohomology vanishes.
\end{example}

\subsection{Continuity and localization}\label{sec:cl}
Let $\Sch'_S$ be subcategory of the category of Noetherian schemes of finite dimension over $S$ which is, furthermore, \emph{adequate} in the sense of \cite{cisinski-deglise}*{2.0} and contains Henselizations of schemes. In this situation, we have access to the full six functors formalism as proved by Ayoub in \cite{ayoub} and extended in \cite{cisinski-deglise}, which we now rapidly review. We refer to \cite{cisinski-deglise}*{Theorem 2.4.50} for an exact form of this formalism that we need; see also \cite{etalemotives}*{Appendix A} for a succint summary. 

We have a functor
\begin{equation} \label{eq:premot}
\Mrm: (\Sch'_S)^{\op} \rightarrow \Cat_{\infty}
\end{equation}
which satisifies the conditions to be \emph{premotivic} as defined in \cite{etalemotives}*{Definition A.1.1} with $\mathcal{P} = \Sm$, i.e., the pullback functor $p^*$ admits a left adjoint $p_{\#}$ whenever $p$ is smooth. Although \cite{etalemotives} phrases their axioms in terms of triangulated categories, an extensive discussion of these axioms in the setting of $\infty$-categories, and the proof that $\SH$ is such an example, can be found in Khan's thesis \cite{adeel}*{Chapter 2, Section 3}. 

\subsubsection{} Two properties of the functor~\eqref{eq:premot} that we will later use are \emph{continuity} and \emph{localization}. Let us review the first property. Let $c = (c_i)_{i \in I}$ be a collection of Cartesian sections of $\Mrm$. We denote by $\Mrm_c(X) \subset \Mrm(X)$ the smallest thick subcategory \footnote{Recall that if $\C$ is a stable $\infty$-category, a full stable subcategory $\D \subset \C$ is called \emph{thick} if it is a stable subcategory and contains all retracts of objects.} of $\M(X)$ which contains $f_{\#}f^*c_{i,X}$ for any $f: Y \rightarrow X$ smooth. Following \cite{integral-mixed}*{Definition 2.3}, we call objects in $\M_c(X)$ \emph{c-constructible}. We say that $\M$ is \emph{c-generated} if for all $X \in \Sch'_S$, the $\infty$-category $\M(X)$ is generated by $\M_c(X)$ under all small colimits.

\begin{definition} \label{def:cont-loc} Suppose that $\Fscr \subset (\Sch_S')^{\Delta^1}$ is a collection of morphisms in $\Sch'.$ We say that $\M$ is \emph{continuous with respect to $\Fscr$} if for any diagram  $X: I \rightarrow \Sch_S'$ where $I$ is a small cofiltered category and the transition maps are in $\Fscr$ and the limit $X:=\lim X_i$ exists in $\Sch_S'$ the canonical map
\[
\M_c(X') \rightarrow \lim_I \M_c(X_i).
\]
is an equivalence.
We say that $M$ satisfies \emph{localization} if for any closed immersion $i: Z \hookrightarrow X$ and its open complement $j: U \rightarrow X$ we have a cofiber sequence of $\infty$-categories:
\[
\M(U) \stackrel{j_{\#}}{\rightarrow} \M(X) \stackrel{i^*}{\rightarrow} \M(Z).
\]


\end{definition}

%
%

%
%
%
%
%
%
%
%
\subsubsection{} We will make good use of the following lemma to reduce to the case of fields.

\begin{lemma} \label{lem:conserve} Suppose that we have a functor $\M: (\Sch'_S)^{\op} \rightarrow \Cat_{\infty}$ of the form~\eqref{eq:premot} which satisfies continuity and localization. Then for any $X \in \Sch'_S$, the functor
\[
\prod i_x^*: \M(X) \rightarrow \underset{i_x: \Spec\,k \rightarrow X}{\prod} \M(k),
\]
is conservative.
\end{lemma}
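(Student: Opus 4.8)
The plan is to reduce to a statement about the vanishing of objects, then detect vanishing on points by an induction on the dimension of $X$ using the localization cofiber sequence and continuity to pass to the residue fields.

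First I would reformulate conservativity: since $\M(X)$ is stable, a family of exact functors $\{i_x^*\}$ is jointly conservative if and only if it is jointly \emph{zero-conservative}, i.e. an object $\Mscr \in \M(X)$ with $i_x^*\Mscr \simeq 0$ for all $x$ must itself be $0$. (Indeed a map $f$ is an equivalence iff $\cofiber(f)\simeq 0$, and $i_x^*$ is exact so commutes with taking cofibers.) So fix $\Mscr \in \M(X)$ with $i_x^*\Mscr \simeq 0$ for every $x \in X$, and aim to show $\Mscr \simeq 0$.

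Next I would induct on $\dim X$ (using that $X$ is Noetherian of finite dimension, so this is a finite induction; one may also reduce to $X$ irreducible and reduced since $\M(X_{\red})\simeq \M(X)$ and the underlying topological space is unchanged). For the inductive step, let $\eta$ be the generic point of (a component of) $X$. By continuity with respect to the appropriate class $\Fscr$ of transition maps — here $\Spec k(\eta) = \lim_U U$ over the cofiltered system of nonempty opens $U \subseteq X$, with open-immersion transition maps, which we assume lie in $\Fscr$ — the generic stalk functor $\M_c(X) \to \M_c(k(\eta)) \simeq \colim_U \M_c(U)$ is computed as a colimit, so $i_\eta^*\Mscr \simeq 0$ forces $j_U^*\Mscr \simeq 0$ for some nonempty open $j_U: U \hookrightarrow X$ (here I am using $\Mscr$ or at least the relevant piece to be $c$-constructible, or more carefully: $j_U^* \Mscr = 0$ detected after mapping out of each generator, which are $c$-constructible). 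Let $i: Z \hookrightarrow X$ be the reduced closed complement, so $\dim Z < \dim X$. Localization gives the cofiber sequence $j_{U\#}j_U^*\Mscr \to \Mscr \to i_*i^*\Mscr$ (the standard recollement triangle), and since $j_U^*\Mscr \simeq 0$ the first term vanishes, whence $\Mscr \simeq i_*i^*\Mscr$. Now for any $z \in Z$, with $i_z : \Spec k(z) \to X$ factoring through $i$ as $i \circ i_z'$, we have $i_z^*\Mscr \simeq i_z'^* i^* i_* i^*\Mscr \simeq i_z'^* i^*\Mscr$ (using $i^*i_* \simeq \id$ for a closed immersion); this is $0$ by hypothesis, so $i^*\Mscr \in \M(Z)$ has vanishing stalks at all points of $Z$. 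By the inductive hypothesis applied to $Z$, $i^*\Mscr \simeq 0$, hence $\Mscr \simeq i_*i^*\Mscr \simeq 0$. The base case $\dim X = 0$: then $X$ is a finite disjoint union of spectra of Artinian local rings, each with a single point $x$, and $i_x^*$ is (up to the reduction $\M(X)\simeq\M(X_{\red})$) an equivalence onto $\M(k(x))$, so conservativity is immediate.

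The main obstacle I anticipate is the bookkeeping at the generic point: making precise that $i_\eta^*\Mscr \simeq 0$ propagates to $j_U^*\Mscr \simeq 0$ on an actual open neighborhood. This requires knowing that the objects one tests against (the generators $f_{\#}f^*c_{i,X}$) are compact, or at least that $\M_c(X) \to \colim_U \M_c(U)$ being an equivalence lets one lift the vanishing — equivalently, that $\M(X)$ is $c$-generated and each generator is a compact object whose image in $\M(k(\eta))$ is zero, so it dies already at finite stage. One should be slightly careful whether the continuity hypothesis as stated covers the system of \emph{all} open immersions into $X$ (not just those in a prescribed $\Fscr$); if the statement of the lemma intends $\Fscr$ to include open immersions — which it must, for this argument — then this goes through, and otherwise one needs the relevant $\Fscr$-hypothesis to be part of the ambient assumptions on $\Sch'_S$. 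A secondary technical point is the identification $\M(k(\eta)) \simeq \colim_U \M(U)$ versus $\M_c(k(\eta)) \simeq \colim_U \M_c(U)$; continuity as in Definition~\ref{def:cont-loc} is phrased for the constructible subcategories, so one passes between the two using $c$-generatedness, which I would take as part of the standing hypotheses on $\M$.
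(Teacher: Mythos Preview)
Your argument is correct and is precisely the standard Noetherian-induction-via-localization proof; the paper itself does not spell this out but simply cites \cite{bachmann}*{Corollary 14}, whose proof is essentially the one you have written. The technical caveats you flag (that continuity must be taken with respect to a class $\Fscr$ containing open immersions, and that one needs $c$-generatedness together with compactness of the generators to spread the generic vanishing $i_\eta^*\Mscr\simeq 0$ out to some open $U$) are exactly the standing hypotheses one needs, and are implicit in the paper's setup.
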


\begin{proof} This is paraphrase of \cite{bachmann}*{Corollary 14}, which is a consequence of the six functors formalism of \cite{ayoub} and \cite{cisinski-deglise}.

\end{proof}

\subsection{Motives} 
\label{sect:motives}
By a theory of ``motives" we mean the stable $\infty$-category of modules over the motivic cohomology spectrum constructed by the third author. Here is a summary of its basic properties"
\begin{theorem} [Spitzweck \cite{spitzweck-integral}] \label{thm:integral} Let $\Sch'$ denote the category of Noetherian separated schemes of finite dimension. For each $S \in \SH(S)$ denote by $\M\ZZ_S \in \SH(S)$ the motivic cohomology spectrum constructed in \cite{spitzweck-integral}. Then the following properties hold.
\begin{enumerate}
\item $\M\ZZ_S$ defines a Cartesian section of $\CAlg(\SH) \rightarrow \Sch'$, i.e., each $\M\ZZ_S$ is a canonically an $\mathcal{E}_{\infty}$-algebra and for every map $p: T \rightarrow S$ the canonical map $p^*\M\ZZ_S \rightarrow \M\ZZ_T$  is an equivalence of $\mathcal{E}_{\infty}$-algebras.
\item If $S$ is smooth over a field $k$, $\M\ZZ_S$ is canonically equivalent as $\mathcal{E}_{\infty}$-algebras to the motivic cohomology spectrum in the sense of Voevodsky (see, for example, \cite{cisinski-deglise}*{Definition 11.2.17}).
\item If $S$ is a Dedekind domain and $X \in \Sm_S$, then there is a canonical isomorphism
\[
\Hom_{\SH(S)}(\Sigma^{\infty}_TX_+, \M\ZZ(q)[p]) \iso H^{p,q}(X; \ZZ)
\]
where $H^{p,q}_{\mot}(-; \ZZ)$ denotes the motivic cohomology groups of \cite{levine-dedekind}. 

\item Suppose that $S = \Spec\,D$ is the spectrum Dedekind of a domain of mixed characteristics, $i: \Spec\,k \hookrightarrow S$ an inclusion of a closed point, $j: U:= S \setminus \{ x \} \hookrightarrow S$ its open complement and $f: \Spec\,K \rightarrow S$ the canonical map from the spectrum of the fraction field of $D$. Then there are canonical equivalences 
\begin{enumerate}
\item $i^! \M\ZZ_S \simeq \M\ZZ_k(-1)[-2]$ in $\SH(k)$, and
\item $f^* \M\ZZ_S \simeq \M\ZZ_K$ in $\SH(K).$
\end{enumerate}
\item In the notation of (3), there is a canonical equivalence in $\SH^{S^1}(S)$: $\Omega^{\infty}_{T}\M\ZZ_S(1) \simeq \Sigma^{-1}\Oscr^{\times}_S.$

\end{enumerate}
\end{theorem}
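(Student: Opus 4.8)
The plan is that every assertion is already contained in \cite{spitzweck-integral}, together with the classical comparison theorems, so the proof amounts to matching conventions and citing the relevant results; here is the shape of the argument. The starting point is that $\M\ZZ_S$ is \emph{defined} as $p_S^*\M\ZZ_{\Spec\ZZ}$, where $p_S\colon S\to\Spec\ZZ$ is the structure map and $\M\ZZ_{\Spec\ZZ}\in\CAlg(\SH(\Spec\ZZ))$ is built in \cite{spitzweck-integral} from a strictly commutative model for Bloch-type higher cycle complexes. With this definition item~(1) is essentially formal: $p^*$ is symmetric monoidal, so each $\M\ZZ_S$ is canonically $\mathcal{E}_\infty$, and the coherent identifications $(p\circ q)^*\simeq q^*p^*$ exhibit $S\mapsto\M\ZZ_S$ as a Cartesian section of $\CAlg(\SH)\to\Sch'$; all one cites here is the $\mathcal{E}_\infty$-refinement over $\Spec\ZZ$.

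For~(2) and~(3) the method is the same. Over a field $k$ one produces a map of $\mathcal{E}_\infty$-rings $\M\ZZ_k\to\M\ZZ^{\Voev}_k$ and checks it is an equivalence by comparing bigraded homotopy groups on $\Sm_k$, where Spitzweck's object computes Bloch's higher Chow groups and Voevodsky's spectrum computes motivic cohomology; these agree by Voevodsky's identification of higher Chow groups with motivic cohomology in arbitrary characteristic. Passing to a general smooth $S/k$ is then base change along~(1). Item~(3) is analogous over a Dedekind base: the representing object of \cite{spitzweck-integral}, restricted to $\Sm_S$, is assembled Nisnevich-locally from exactly the cycle complexes used by Levine in \cite{levine-dedekind}, so the isomorphism $\Hom_{\SH(S)}(\Sigma^{\infty}_TX_+,\M\ZZ(q)[p])\cong H^{p,q}(X;\ZZ)$ follows from a descent argument identifying sections of the representing spectrum with Levine's complexes.

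Items~(4) and~(5) split similarly. The equivalence $f^*\M\ZZ_S\simeq\M\ZZ_K$ is the instance of~(1) for the morphism $\Spec K\to S$. The equivalence~(5) follows from~(3) and the classical computation of the weight-one motivic complex $\ZZ(1)\simeq\Gm[-1]$ (equivalently $\CH^1(-,0)=\Pic$, $\CH^1(-,1)=\Oscr^\times$, and higher terms vanish), which under the Dold--Kan identification of $\Omega^{\infty}_T\M\ZZ_S(1)$ with the weight-one cycle complex yields $\Sigma^{-1}\Oscr^\times_S$. The one genuinely substantive point, and the expected main obstacle, is~(4)(a): the purity statement $i^!\M\ZZ_S\simeq\M\ZZ_k(-1)[-2]$ is not formal. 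The plan there is to feed the localization triangle $j_\#j^*\to\id\to i_*i^*$ evaluated on $\M\ZZ_S$ into the Bloch--Levine localization theorem for higher cycle complexes over an arithmetic base, the twist and shift $(-1)[-2]$ being the codimension-one Gysin shift for the closed point of the Dedekind scheme; this is the content of the relevant section of \cite{spitzweck-integral}, and what is required here is only to transcribe it.
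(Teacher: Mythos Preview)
The paper does not give its own proof of this theorem: it is stated as a summary of results from \cite{spitzweck-integral} and is simply cited, with no proof environment following the statement. Your outline is a reasonable sketch of how those results are established in that reference, so there is nothing in the paper to compare it against.
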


We denote by $\Mod_{\M\ZZ_S}$ the stable $\infty$-category of modules over the spectrum $\M\ZZ_S$. These assemble into a premotivic category:
\[
\Mod_{\M\ZZ}: \Sch'^{\op} \rightarrow \Cat_{\infty},
\]
which satisfies continuity and localization as discussed in \S\ref{sec:cl}; see \cite{spitzweck-integral}*{\S 10} for details.

\subsubsection{} Let $\tau$ be a topology finer than $\Nis$ and consider the adjunction~\eqref{eq:pi-adjunction-paper}. By the \emph{$\tau$-motivic cohomology spectrum} we mean the spectrum $\epsilon_*\epsilon^*\M\ZZ_S \in \SH(S)$. We will be most interested in $\tau=\et$, which we now make more concrete in some situations. Let $\D(S_{\et}, R)$ be the unbounded derived $\infty$-category of \'{e}tale sheaves on the small \'{e}tale site of $S$ with coefficients in $R$; 
its homotopy category is the unbounded derived triangulated category of \'{e}tale sheaves of $R$-modules on the small \'{e}tale site. We let $\DM^{\eff}_{\tau}(S,R)$ (resp. $\DM(S,R)$) the stable $\infty$-category of Voevodsky motives; for a construction in the language used in this paper see \cite{bachmann-hoyois}*{\S 14}.
We have the following ``relative rigidity theorem," first proved by Suslin over a field and later generalized by Ayoub and Cisinski-Deglise.
\begin{theorem} [Suslin, Ayoub, Cisinski-Deglise] 
\label{thm:rigidity} 
Let $S$ be a noetherian scheme and $R$ a coefficient ring of characteristic $\ell$. 
If $\ell$ is invertible in the structure sheaf $\mathscr{O}_{S}$, then the following hold.
\begin{enumerate}
\item The infinite suspension functor $\Sigma^{\infty}_{\tr}: \DM^{\eff}_{\et}(S, R) \rightarrow \DM_{\et}(S, R)$ is an equivalence.
\item There is a symmetric monoidal functor $\rho_!: \D(S_{\et}, R) \rightarrow \DM_{\et}(S, R)$ which is an equivalence of categories with inverse $\rho^*:\DM_{\et}(S, R) \rightarrow \D(S_{\et}, R)$ 
induced by restriction to the small \'etale site.
\end{enumerate}
\end{theorem}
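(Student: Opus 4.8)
The plan is to derive both claims from the Suslin--Voevodsky rigidity theorem, propagating it from separably closed fields to arbitrary Noetherian bases by descent and the six-functor formalism; this is the route taken in Ayoub's \'etale realization and in Cisinski--D\'eglise's \'etale motives, and I would follow it.

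\emph{The effective comparison first.} With coefficients $R$ of characteristic $\ell$ and $\ell$ invertible in $\Oscr_S$, two simplifications take place. (a) The forgetful functor from \'etale sheaves of $R$-modules with transfers on $\Sm_S$ to all \'etale $R$-sheaves on $\Sm_S$ is an equivalence --- the sheaf-theoretic incarnation of rigidity --- so that $\DM^{\eff}_{\et}(S,R)$ identifies with the $\AA^1$-localization of the derived category of \'etale $R$-sheaves on $\Sm_S$. (b) The affine line is $\ell$-\'etale acyclic, $H^{*}_{\et}(\AA^1_T,F)\simeq H^{*}_{\et}(T,F)$ for \'etale $R$-sheaves $F$, so that this $\AA^1$-localization is already the identity on hypercomplete \'etale $R$-sheaves. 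Combining (a) and (b), the restriction functor $\rho^{*}$ to the small \'etale site and its left adjoint $\rho_{!}$ are mutually inverse equivalences $\DM^{\eff}_{\et}(S,R)\simeq \D(S_{\et},R)$. For part (1): under this equivalence the Tate object $R_S(1)$ corresponds to the Tate twist of \'etale $R$-sheaves, which is $\otimes$-invertible; hence $R_S(1)$ is already $\otimes$-invertible in $\DM^{\eff}_{\et}(S,R)$, and since $\DM_{\et}(S,R)$ is by definition the colimit of the tower obtained by iterating $-\otimes R_S(1)$ on $\DM^{\eff}_{\et}(S,R)$, the structure functor $\Sigma^{\infty}_{\tr}$ out of the initial term is an equivalence. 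Composing gives $\DM_{\et}(S,R)\simeq \DM^{\eff}_{\et}(S,R)\simeq \D(S_{\et},R)$, which is part (2).

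\emph{From fields to general bases.} The only genuinely non-formal input is (a), and it must be established over a general base. I would use that both $\DM_{\et}(-,R)$ and $\D((-)_{\et},R)$ are premotivic categories satisfying continuity and localization in the sense of \S\ref{sec:cl}, and that $\rho^{*}$ is compatible with $f^{*}$, $f_{*}$, $j_{\#}$ and $i^{!}$. Continuity reduces one to schemes of finite type over a prime field; the localization triangles, Noetherian induction on $\dim S$, and the conservativity of $\prod_x i_x^{*}$ from Lemma~\ref{lem:conserve} reduce full faithfulness and essential surjectivity of $\rho^{*}$ to the case $S=\Spec k$; and over a field one descends from the separable closure $\bar{k}$ --- where it is the classical rigidity theorem $\DM_{\et}(\bar{k},R)\simeq \D(R)=\D(\bar{k}_{\et},R)$ --- by Galois hyperdescent, which is available on both sides.

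\emph{Main obstacle.} The real work sits in the base case: Suslin--Voevodsky rigidity over a separably closed field is a genuine geometric theorem, not a formal consequence of anything above. Two further points need care: the target $\D(S_{\et},R)$ is the \emph{unbounded} derived category, so one must control hypercompleteness and exploit the finiteness of the $\ell$-cohomological dimension of the residue fields to keep the relevant spectral sequences convergent; and one must know that $\D((-)_{\et},R)$ supports a six-functor formalism compatible with $\rho^{*}$ in the generality needed for the gluing argument. All of these are handled in the cited work of Ayoub and Cisinski--D\'eglise, whose treatment I would invoke.
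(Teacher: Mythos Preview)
Your proposal is correct and ultimately lands on the same references the paper invokes: the paper's own proof is nothing more than a direct citation of \cite{etalemotives}*{Corollary 4.1.2} for part (1) and \cite{ayoub-etale}*{Theorem 4.1}, \cite{etalemotives}*{Theorem 4.5.2} for part (2). Your sketch of the strategy---reduce to fields via continuity and localization, descend from the separable closure, and use Suslin--Voevodsky rigidity there, with the Tate twist already $\otimes$-invertible in the effective \'etale category---is an accurate outline of what those references do, so there is no divergence in approach. One caveat: you flag finiteness of $\cd_{\ell}$ of residue fields as needed to control hypercompleteness, but the theorem as stated carries no such hypothesis, and indeed Cisinski--D\'eglise avoid it by working with constructible (hence bounded) objects and $t$-structure arguments; since you defer to their treatment anyway this is not a gap, but be aware that your sketched mechanism for handling the unbounded case is not quite the one used.
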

\begin{proof} 
Part (1) is \cite{etalemotives}*{Corollary 4.1.2} and (2) is \cite{ayoub-etale}*{Theorem 4.1}, \cite{etalemotives}*{Theorem 4.5.2}.
\end{proof}
\subsubsection{} We obtain the following corollary about \'etale motivic cohomology. We shall set $\tau = \et$ and consider the adjunction~\eqref{eq:pi-adjunction}.

\begin{corollary} \label{cor:utrvspi} Let $k$ be a field and suppose that $m$ is an integer prime to the characteristic of $k$. Then for any essentially smooth scheme over a field $S$, the spectrum $\epsilon_*\epsilon^*\M \ZZ/m_S$ represents \'etale motivic cohomology, i.e. there is a canonical isomorphism
\[
H_{\et}^p(S, \mu_m^{\otimes q}) \cong [\sspt_S, \Sigma^{p,q}\epsilon_*\epsilon^*\M \ZZ/m]_{\SH(S)}.
\]
\end{corollary}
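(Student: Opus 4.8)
The plan is to reduce the statement to the known rigidity identification of \'etale motives with \'etale sheaves, via the comparison of $\SH$ with $\DM$. First I would observe that, by part (1) of the structure of $\M\ZZ$ (Theorem~\ref{thm:integral}), $\M\ZZ/m_S$ is the mod-$m$ reduction of the motivic cohomology spectrum, and that the module category $\Mod_{\M\ZZ/m_S}$ is equivalent to $\DM^{\eff}(S,\ZZ/m)$ (resp.\ $\DM(S,\ZZ/m)$ after $\Gm$-stabilization), with the free-module functor $\sspt_S \mapsto \M\ZZ/m_S$ corresponding under this equivalence to the canonical functor $u_{\tr}\colon \DM(S,\ZZ/m)\to\SH(S)$ being right adjoint to the linearization. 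Concretely, for $X\in\Sm_S$,
\[
[\Sigma^\infty_TX_+,\Sigma^{p,q}\M\ZZ/m_S]_{\SH(S)} \;\cong\; \Hom_{\DM(S,\ZZ/m)}(\Mrm(X), \ZZ/m(q)[p]),
\]
which is the standard adjunction between $\SH$ and $\DM$.

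Next I would transport this across the topology change. The key point is that the \'etale localization functor $\pi^*$ is compatible with the linearization, i.e.\ there is a commuting square relating $\pi^*\colon\SH(S)\to\SH_{\et}(S)$ and $\pi^*\colon\DM(S,\ZZ/m)\to\DM_{\et}(S,\ZZ/m)$ under the respective $u_{\tr}$ functors; this follows because both adjunctions are localizations at (suspension spectra of) hypercovers as in Proposition~\ref{prop:loc}, and $\pi^*$ preserves transfers since it is a symmetric monoidal left adjoint preserving representables. Consequently $\pi^*\M\ZZ/m_S$ is (the image under $u_{\tr}$ of) the unit of $\DM_{\et}(S,\ZZ/m)$, and $\pi_*\pi^*\M\ZZ/m_S$ represents, for $X\in\Sm_S$,
\[
\Hom_{\DM_{\et}(S,\ZZ/m)}(\Mrm(X),\ZZ/m(q)[p]).
\]
Then I would invoke Theorem~\ref{thm:rigidity}: since $m$ is invertible on $S$ (it is prime to $\Char k$ and $S$ is essentially smooth over $k$), the functors $\rho^*$ and $\Sigma^\infty_{\tr}$ give an equivalence $\DM_{\et}(S,\ZZ/m)\simeq\D(S_{\et},\ZZ/m)$ sending the unit to $\ZZ/m$ and the Tate twist $\ZZ/m(q)$ to $\mu_m^{\otimes q}$ (with the usual degree shift in the effective identification), and sending $\Mrm(X)$ to the constant sheaf, so that the right-hand side becomes $\Hom_{\D(S_{\et},\ZZ/m)}(\ZZ/m,\, \mu_m^{\otimes q}[p]) = H^p_{\et}(S,\mu_m^{\otimes q})$. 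Taking $X=S$ (equivalently evaluating at $\sspt_S$) and using the extension to essentially smooth schemes by continuity gives the claimed isomorphism, and naturality is automatic from the naturality of all the functors involved.

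The main obstacle I expect is bookkeeping the Tate-twist normalization and the passage between the effective and non-effective (i.e.\ $\Gm$-stabilized) pictures: one must check that the generator $\ZZ/m(q)$ in $\DM_{\et}$ really corresponds to $\mu_m^{\otimes q}$ (including for $q<0$, where the effective description fails and one genuinely needs $\Sigma^\infty_{\tr}$ to be an equivalence), and that the identification of $\pi_*\pi^*\M\ZZ/m$ with $u_{\tr}$ of the \'etale unit is compatible with these twists. A secondary point is justifying the reduction from essentially smooth $S$ to smooth $S$ over $k$: this is handled by continuity of $\Mod_{\M\ZZ}$ and of $\DM_{\et}$ as recorded in \S\ref{sec:cl} and Theorem~\ref{thm:integral}, together with the fact that \'etale cohomology commutes with the relevant cofiltered limits of schemes.
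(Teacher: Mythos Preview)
Your proposal is correct and follows essentially the same route as the paper: identify $\M\ZZ/m_S$ with $u_{\tr}R_{\tr}(\sspt)$ via the $\SH$--$\DM$ adjunction, observe that this is compatible with $\pi^*$, and then apply the rigidity theorem (Theorem~\ref{thm:rigidity}) to compute the \'etale side. The only difference is that you invoke the full equivalence $\Mod_{\M\ZZ/m_S}\simeq\DM(S,\ZZ/m)$, which is stronger than needed---the paper uses only the adjunction $R_{\tr}\dashv u_{\tr}$ and the identification $\M R\simeq u_{\tr}R_{\tr}(\sspt)$ from Theorem~\ref{thm:integral}(2), which suffices and avoids the extra hypotheses that the module-category equivalence requires.
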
 

\begin{proof} Recall that for any topology $\tau$ finer than $\Nis$, and any coefficient ring $R$, we have an adjunction $R_{\tr}:\SH_{\tau}(S) \rightleftarrows \DM_{\tau}(S; R):u_{\tr}$ (see for example the discussion in~\cite{hopkinsmorelhoyois}*{(4.1)} and the formula for motivic cohomology as in~\cite{MR2029171}). By Theorem~\ref{thm:integral}.2, $\M\ZZ_S$ represents Voevodsky's motivic cohomology spectrum, i.e., an equivalence $\M R \simeq u_{\tr}R_{\tr}(\sspt)$. The claim then follows immediately from Theorem~\ref{thm:rigidity}.2 above for $R = \ZZ/m$, which implies that $\epsilon_*\epsilon^*u_{\tr}R_{\tr}(\sspt)$ exactly represents the \'etale cohomology.
\end{proof}

\section{The slice comparison paradigm} 
\label{sect:slice}
Let $\E \in \SH(S)$ be a motivic spectrum. 
The counit map of the adjunction:
\begin{equation} 
\epsilon^*: \SH(S) \rightleftarrows \SH_{\et}(S): \epsilon_*,
\end{equation}
is always an equivalence since $\epsilon_*$ is fully faithful. 

We are asking how far away the unit map: 
\begin{equation}
\label{equation:unitmap}
\eta: 
\E \rightarrow \epsilon_*\epsilon^*\E,
\end{equation}
is from being an equivalence; \emph{a priori} we only know this when $E = 0$.

A target theorem involves the inversion of some element $\alpha$. 
The map $\eta$ factors through: 
$$
\E[\alpha^{-1}] \rightarrow \epsilon_*\epsilon^*\E,
$$ 
which under favorable conditions will turn out to be an equivalence. 
Our approach to this question employs the slice spectral sequence and its \'{e}tale localization.
The theory of slices in motivic homotopy theory has been developed in recent years since its introduction in Voevodsky's ``open problems" article \cite{voevodsky-open}. 
We refer to \cite{milnor-hermitian}*{\S2}, \cite{pi1}*{\S3}, and \cite{rso-solves}*{\S3} for basic properties of slices, some of which we review in a slightly more general setting. 
For a geometric incarnation in terms of the coniveau tower, 
see \cite{levine-chow}, and \cite{levine-coniveau}.

For the $q$-th effective cover $f_q\E$ of $\E$ we can consider the unit map $\eta_q: f_q\E \rightarrow \epsilon_*\epsilon^*f_q\E$.
Moreover, 
applying $\epsilon_*\epsilon^*$ to the natural map $f_q\E\to\E$ produces $\eta^q: \epsilon_*\epsilon^*f_q\E \rightarrow \epsilon_*\epsilon^*\E$.
By passing to the colimits of the diagrams afforded by $\eta_q$ and $\eta^q$ we obtain a factorization of \eqref{equation:unitmap}:
$$
\E
\simeq
\colim f_q\E 
\stackrel{\eta_{\infty}}{\longrightarrow} 
\colim \epsilon_*\epsilon^*f_q\E 
\stackrel{\eta^{\infty}}{\longrightarrow} 
\epsilon_*\epsilon^*\E.
$$
Hence we can break down the question about $\eta$ into two parts:
\begin{enumerate}
\item [$\eta_{\infty}$] 
The upshot is that the unit maps $\{\eta_q\}_{q \in \ZZ}$ induce a map from the slice filtration $\{f_q\E\}_{q \in \ZZ}$ of $\E$ to the ``\'{e}tale slice filtration" 
$\{\epsilon_*\epsilon^*f_q\E \}_{q \in \ZZ}$ of $\colim \epsilon_*\epsilon^*f_q\E$.  
We analyze the induced map between the associated spectral sequences. 
It will never be an isomorphism unless one inverts elements in the slice spectral sequence for $\E$. 
These are daring operations --- on the one hand, we localize a spectral sequence which will generally destroy convergence properties. 
On the other hand, 
we have ``motivically sheafified" (in the sense of~\S\ref{sect:mot-sheaf-slice}) the slice spectral sequence, 
and so we need to produce new convergence statements. 
\item [$\eta^{\infty}$]
If $\epsilon_*$ commutes past sequential colimits then $\eta^{\infty}$ is an equivalence.
This will typically not happen because $\epsilon_*$ is a right adjoint, 
but it will be the case under the assumption of finite cohomological dimension.
\end{enumerate}
We address the issues with $\eta_{\infty}$ in~\S\ref{sect:some-conn} and the issue with $\eta^{\infty}$ in~\S\ref{sect:commuting}.

\subsubsection{} 
Suppose that $\E \in \CAlg(\SH(S))$. 
We begin with a simple generalization of slices to $\E$-module slices.
Let $\SH^{\eff}(S)$ be the stable $\infty$-category generated by all suspension spectra $\Sigma^{\infty}_T X_{+}$ under retracts and colimits, 
i.e., 
the localizing subcategory generated by these objects.
For $q\in\ZZ$, 
let $\Sigma^q_T \SH^{\eff}(S)$ denote the full localizing subcategory of $\SH(S)$ generated by $\Sigma^q_T \E$, 
where $\E\in\SH^{\eff}(S)$. 
These categories form an exhaustive filtration: 
$$
\cdots \subset \Sigma^q_T\SH^{\eff}(S) \subset \Sigma^{q-1}_T \SH^{\eff}(S)\subset \cdots \subset \SH^{\eff}(S)\subset \Sigma^{-1}_T\SH^{\eff}(S)\subset \cdots \subset \SH(S).
$$
The fully faithful embedding:
$$
i_q: \Sigma^q_T \SH^{\eff}(S) \hookrightarrow \SH^{\eff}(S),
$$ 
admits a right adjoint $r_q$; 
see Proposition~\ref{prop:E-adjoints} below for a slightly more general result. 
Let $f_q$ denote the composite $i_qr_q$.

If $\E \in \CAlg(\SH(S))$ there is a presentably symmetric monoidal stable $\infty$-category of $\E$-modules $(\Mod_{\E},\wedge_{\E},1)$.
\begin{definition} 
Suppose that $\E \in \CAlg(\SH(S))$ and is furthermore effective\footnote{With this assumption, the forgetful functor to $\SH(S)$ detects effectivity. Without this assumption, the forgetful functor will not if $E$ is, for example, the spectrum $\KGL$.}, i.e., $\E \in \SH(S)^{\eff}$. The stable $\infty$-category of \emph{effective $\E$-modules} $\Mod_{\E}^{\eff}$ is the full stable subcategory of $\Mod_{\E}$ generated by $\E \wedge \Sigma^{\infty}_T X_{+}$ 
under retracts and colimits, 
with $X \in \Sm_S$, 
i.e., 
the full \emph{localizing} subcategory of $\Mod_{\E}$ generated by the said objects. 
In particular it is a stable $\infty$-category.
For $q\in\ZZ$, 
let 
\[
\Sigma^q_T \Mod_{\E}^{\eff}:= \{ \Sigma^q_T\F: \F \in \Mod_{\E}^{\eff} \}.
\] 
\end{definition} 

\begin{proposition} 
\label{prop:E-adjoints} 
Let $E$ be an effective motivic spectrum. The fully faithful embedding $i_q^{\E}: \Sigma^q_T \Mod_{\E}^{\eff} \hookrightarrow \Mod_{\E}$ admits a right adjoint $r_q^{\E}: \Mod_{\E} \rightarrow  \Sigma^q_T \Mod_{\E}^{\eff}$, 
which gives rise to the following commutative diagram of adjunctions:
\begin{equation}
\xymatrix{
\Sigma^q_T \Mod^{\eff}_{\E}  \ar@/^1.2pc/[rr]^-{i_q^{\E}}\ar[dd]^-{u_{\E}}
&& 
\Mod_{\E}\ar[dd]_-{u_{\E}}\ar[ll]^-{r_q^{\E}} \\\\
\Sigma^q_T  \SH^{\eff}(S) \ar@/_1.1pc/[rr]^-{i_q}\ar@/^1.2pc/[uu]^-{\E\wedge-}
&& 
\SH(S). \ar@/_1.2pc/[uu]_-{\E\wedge-}\ar[ll]_-{r_q}
}
\end{equation}
\end{proposition}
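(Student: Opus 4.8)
The plan is to deduce the existence of $r_q^{\E}$ from the adjoint functor theorem, and to obtain the commuting square of adjunctions by passing to right adjoints; the whole argument runs parallel to the classical construction of the effective cover functors $f_q$ on $\SH(S)$ recalled in \cite{milnor-hermitian}*{\S2}, only now carried out internally to the $\SH(S)$-module category $\Mod_{\E}$.

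First I would record the relevant presentability. The $\infty$-category $\Mod_{\E}$ is presentable and stable, and $\Mod^{\eff}_{\E}$ is by definition the localizing subcategory of $\Mod_{\E}$ generated by the small set $\{\E \wedge \Sigma^{\infty}_T X_{+}\}_{X \in \Sm_S}$ (recall $\Sm_S$ is essentially small). Since $\Sigma^q_T$ is an equivalence, $\Sigma^q_T \Mod^{\eff}_{\E}$ is likewise the localizing subcategory generated by the small set $\{\Sigma^q_T(\E \wedge \Sigma^{\infty}_T X_{+})\}_{X \in \Sm_S}$. A localizing subcategory of a presentable stable $\infty$-category generated by a small set is again presentable (see \cite{htt}*{\S 5.5}), the inclusion $i_q^{\E}$ preserves small colimits by construction, and it is fully faithful since it is the inclusion of a full subcategory; the adjoint functor theorem \cite{htt}*{Corollary 5.5.2.9} then furnishes the right adjoint $r_q^{\E}$.

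Next I would assemble the square. The free--forgetful adjunction $\E \wedge - : \SH(S) \rightleftarrows \Mod_{\E} : u_{\E}$ exhibits $\Mod_{\E}$ as tensored over $\SH(S)$, so there is a natural equivalence $\E \wedge \Sigma^q_T(-) \simeq \Sigma^q_T(\E \wedge -)$ of functors $\SH(S) \to \Mod_{\E}$ (smashing with the invertible object $T$ commutes with smashing with $\E$). Hence $\E \wedge -$ carries the generators $\Sigma^q_T \Sigma^{\infty}_T X_{+}$ of $\Sigma^q_T \SH^{\eff}(S)$ into $\Sigma^q_T \Mod^{\eff}_{\E}$, and being colimit-preserving it restricts to a colimit-preserving functor $\Sigma^q_T \SH^{\eff}(S) \to \Sigma^q_T \Mod^{\eff}_{\E}$, left adjoint to the restriction of $u_{\E}$. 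The two colimit-preserving composites $\Sigma^q_T \SH^{\eff}(S) \to \Mod_{\E}$ around the square, namely $i_q^{\E} \circ (\E \wedge -)$ and $(\E \wedge -) \circ i_q$, agree on the generating set (each sends $\Sigma^q_T \Sigma^{\infty}_T X_{+}$ to $\Sigma^q_T(\E \wedge \Sigma^{\infty}_T X_{+})$) and are therefore equivalent. Passing to right adjoints throughout this commuting square of left adjoints yields the asserted commuting square of right adjoints, i.e.\ $u_{\E} \circ r_q^{\E} \simeq r_q \circ u_{\E}$, by uniqueness of adjoints.

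I do not expect a genuine obstacle: the only step with real content is checking presentability of $\Sigma^q_T \Mod^{\eff}_{\E}$ so that the adjoint functor theorem applies, and everything else is a formal consequence of the $\SH(S)$-module structure on $\Mod_{\E}$ together with uniqueness of adjoints. In effect the proposition simply records that the familiar slice bookkeeping on $\SH(S)$ transports verbatim to $\Mod_{\E}$.
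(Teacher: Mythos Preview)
Your proof is correct and follows essentially the same approach as the paper: both invoke the adjoint functor theorem \cite{htt}*{Corollary 5.5.2.9} after noting presentability and that $i_q^{\E}$ preserves colimits, and both verify the square commutes by checking the left adjoints agree on generators and then passing to right adjoints by uniqueness. Your version is slightly more explicit about why $\Sigma^q_T \Mod^{\eff}_{\E}$ is presentable (as a localizing subcategory on a small generating set), which the paper leaves implicit.
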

\begin{proof} 
$\Mod_{\E}$ and $\Sigma^q_T \Mod_{\E}$ are presentable stable $\infty$-categories and $i_q^{\E}$ preserves small coproducts and hence all small colimits by 
\cite{higheralgebra}*{Proposition 1.4.4.1.2}. 
By the $\infty$-categorical adjoint functor theorem \cite{htt}*{Corollary 5.5.2.9} $i_q^{\E}$ admits a right adjoint. 
It suffices to prove that the diagram of left adjoints commutes.
But this follows from the fact that $E$ is effective, the definition of $\Sigma^q_T \Mod^{\eff}_{\E}$ as the full subcategory generated by suspensions of free $\E$-modules together with the fact that left adjoints commute with colimits.
\end{proof}

Because of the constant appeal to Proposition~\ref{prop:E-adjoints}, we will assume $E$ to be effective for the rest of this section, unless stated otherwise.

\subsubsection{} 
The categories $\Sigma^q_T \Mod^{\eff}_{\E}$ assemble into a filtration:
$$
\cdots\subset \Sigma^q_T \Mod^{\eff}_{\E}\subset \Sigma^{q-1}_T \Mod^{\eff}_{\E} \subset \cdots \subset \Mod^{\eff}_{\E} \subset \Sigma^{-1}_T\Mod^{\eff}_{\E}\subset \cdots \subset \Mod_{\E}. 
$$
Setting $f^{\E}_q:= i_q^{\E} r_q^{\E}$ we obtain for every $\E$-module $\M$ the \emph{$\E$-module slice tower}:
$$
\cdots \rightarrow f_{q+1}^{\E}\M \rightarrow f_{q}^{\E}\M \rightarrow \cdots \rightarrow f_0^{\E}\M \rightarrow f_{-1}^{\E}\M \rightarrow \cdots \rightarrow\M.
$$
We refer to $f_q^{\E}\M$ as the \emph{$q$-th $\E$-effective cover of $\M$}, 
and the cofiber $s_q^{\E}\M$ of $f_{q}^{\E}\M \rightarrow f_{q+1}^{\E}\M$ as the \emph{$q$-th $\E$-slice of $\M$}.

By construction, we have:

\begin{proposition} 
\label{prop:univ} 
The counit $i^{\E}_qr^{\E}_q\M = f^{\E}_q\M \rightarrow \M$ witnesses $f^{\E}_q\M$ as the universal map from $\Sigma^q_T \Mod^{\eff}_{\E}$.
\end{proposition}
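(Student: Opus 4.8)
The plan is to derive this formally from the adjunction $i_q^{\E} \dashv r_q^{\E}$ established in Proposition~\ref{prop:E-adjoints}. First I would make the assertion precise. Since $i_q^{\E}\colon \Sigma_T^q\Mod_{\E}^{\eff} \hookrightarrow \Mod_{\E}$ is fully faithful with right adjoint $r_q^{\E}$, the subcategory $\Sigma_T^q\Mod_{\E}^{\eff}$ is coreflective in $\Mod_{\E}$ and $f_q^{\E} = i_q^{\E}r_q^{\E}$ is the associated colocalization endofunctor, with the counit providing a natural transformation $\varepsilon\colon f_q^{\E} \Rightarrow \id$. To say that $\varepsilon_\M\colon f_q^{\E}\M \to \M$ \emph{witnesses $f_q^{\E}\M$ as the universal map from $\Sigma_T^q\Mod_{\E}^{\eff}$} is to say that for every object $\mathcal{N}$ of $\Sigma_T^q\Mod_{\E}^{\eff}$, postcomposition with $\varepsilon_\M$ induces an equivalence of mapping spaces
\[
\Maps_{\Mod_{\E}}(i_q^{\E}\mathcal{N},\, f_q^{\E}\M) \xrightarrow{\ \simeq\ } \Maps_{\Mod_{\E}}(i_q^{\E}\mathcal{N},\, \M),
\]
equivalently that $(r_q^{\E}\M, \varepsilon_\M)$ is a terminal object of the comma $\infty$-category $(i_q^{\E} \downarrow \M)$ whose objects are pairs $(\mathcal{N}, i_q^{\E}\mathcal{N} \to \M)$.

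The proof itself is a two-step formal chase. First I would invoke the defining equivalence of the adjunction: for each $\mathcal{N}$ the counit supplies a natural equivalence
\[
\Maps_{\Sigma_T^q\Mod_{\E}^{\eff}}(\mathcal{N},\, r_q^{\E}\M) \xrightarrow{\ \simeq\ } \Maps_{\Mod_{\E}}(i_q^{\E}\mathcal{N},\, \M), \qquad g \longmapsto \varepsilon_\M \circ i_q^{\E}(g).
\]
Then I would use full faithfulness of $i_q^{\E}$ (Proposition~\ref{prop:E-adjoints}) to identify the source with $\Maps_{\Mod_{\E}}(i_q^{\E}\mathcal{N},\, i_q^{\E}r_q^{\E}\M) = \Maps_{\Mod_{\E}}(i_q^{\E}\mathcal{N},\, f_q^{\E}\M)$; tracing through the identifications shows that the composite equivalence is precisely postcomposition with $\varepsilon_\M$, which is the first displayed equivalence. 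The reformulation as a terminal object of $(i_q^{\E}\downarrow\M)$ is the standard unwinding of this mapping-space statement in the $\infty$-categorical theory of adjunctions \cite{htt}*{\S 5.2.2} (it is the coreflective dual of the localization picture of \cite{htt}*{Proposition 5.2.7.4}), and naturality of the equivalence in $\mathcal{N}$ is what promotes the pointwise statement to a genuine universal property of the functor $f_q^{\E}$.

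I do not anticipate any real obstacle here: the content is entirely formal, and every nontrivial input --- the existence of the right adjoint $r_q^{\E}$ and the full faithfulness of $i_q^{\E}$ --- has already been supplied by Proposition~\ref{prop:E-adjoints}. The only point that deserves a moment's care is bookkeeping, namely pinning the informal phrase ``universal map from $\Sigma_T^q\Mod_{\E}^{\eff}$'' to the mapping-space equivalence above, so that the statement has content at the level of spaces rather than merely on $\pi_0$.
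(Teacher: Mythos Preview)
Your proof is correct and follows essentially the same approach as the paper: invoke the adjunction $i_q^{\E}\dashv r_q^{\E}$ and then the full faithfulness of $i_q^{\E}$ to obtain the mapping-space equivalence. Your write-up is in fact more careful than the paper's, which records the same two equivalences but does not explicitly track that the composite is postcomposition with the counit.
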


\subsubsection{} 

The following result compares $\E$-module slices with standard slices. 
\begin{proposition} 
\label{prop:oblv} 
For $q \in \ZZ$, there are natural equivalences: 
$$
u_{\E} \circ f_q^{\E} \stackrel{\simeq}{\rightarrow} f_q \circ u_{\E} \text{ and }u_{\E} \circ s_q^{\E} \simeq s_q u_{\E}.
$$
\end{proposition}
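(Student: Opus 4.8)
The plan is to prove both equivalences simultaneously by exploiting the compatibility of the two slice filtrations encoded in Proposition~\ref{prop:E-adjoints}, together with the universal property of the $\E$-effective cover established in Proposition~\ref{prop:univ}. The key observation is that the forgetful functor $u_\E \colon \Mod_\E \to \SH(S)$ carries the $\E$-effective part into the effective part in a filtration-compatible way, so one expects the square of adjunctions to produce a Beck--Chevalley-type comparison of the two right adjoints $f_q^\E$ and $f_q$.

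First I would construct the natural transformation $u_\E \circ f_q^\E \to f_q \circ u_\E$. Given an $\E$-module $\M$, the counit $f_q^\E \M \to \M$ lies in $\Sigma^q_T \Mod^\eff_\E$, and applying $u_\E$ (which by Proposition~\ref{prop:E-adjoints} sends $\Sigma^q_T \Mod^\eff_\E$ into $\Sigma^q_T \SH^\eff(S)$) gives a map $u_\E f_q^\E \M \to u_\E \M$ with source in $\Sigma^q_T \SH^\eff(S)$; the universal property of $f_q(u_\E \M) \to u_\E \M$ then furnishes the comparison map $u_\E f_q^\E \M \to f_q u_\E \M$, and this is natural in $\M$. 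To show it is an equivalence I would test against an arbitrary object $\N \in \Sigma^q_T \SH^\eff(S)$ via the mapping-space adjunctions: $\Maps_{\SH(S)}(\N, f_q u_\E \M) \simeq \Maps_{\SH(S)}(\N, u_\E \M)$, while $\Maps_{\SH(S)}(\N, u_\E f_q^\E \M)$ should be computed using that $u_\E$ has a left adjoint $\E \wedge -$, turning it into $\Maps_{\Mod_\E}(\E \wedge \N, f_q^\E \M) \simeq \Maps_{\Mod_\E}(\E \wedge \N, \M)$ since $\E \wedge \N \in \Sigma^q_T \Mod^\eff_\E$ (again by the commuting diagram of left adjoints in Proposition~\ref{prop:E-adjoints}). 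Identifying $\Maps_{\Mod_\E}(\E \wedge \N, \M) \simeq \Maps_{\SH(S)}(\N, u_\E \M)$ by the free-forgetful adjunction completes the argument — but here one must be careful that $\Sigma^q_T \SH^\eff(S)$ is generated under colimits by objects of the form $\Sigma^q_T \Sigma^\infty_T X_+$, so it suffices to test against those, for which $\E \wedge \N$ is manifestly a suspended free $\E$-module.

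For the slice statement, I would pass to cofibers. The map $f_{q}^\E \M \to f_{q+1}^\E \M$ (or rather $f_{q+1}^\E\M \to f_q^\E\M$) has cofiber $s_q^\E \M$ by definition, and since $u_\E$ is exact it commutes with cofiber sequences; combining this with the already-established equivalences $u_\E f_q^\E \simeq f_q u_\E$ for all $q$ — and the compatibility of these equivalences with the structure maps $f_{q+1}^\E \to f_q^\E$, which follows from naturality — yields $u_\E s_q^\E \M \simeq s_q u_\E \M$.

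The main obstacle I anticipate is verifying that the comparison map $u_\E f_q^\E \to f_q u_\E$ is genuinely an equivalence rather than merely a natural transformation: this hinges on the interplay between the two generating sets for the effective subcategories and requires knowing that $\E \wedge \Sigma^q_T \Sigma^\infty_T X_+$ generates $\Sigma^q_T \Mod^\eff_\E$ under colimits — essentially that the left adjoint $\E \wedge -$ of $u_\E$ restricts to the effective subcategories, which is exactly the content of the commuting square of left adjoints in Proposition~\ref{prop:E-adjoints}. The secondary subtlety is keeping track of naturality of the equivalences across the tower so that the cofiber argument for $s_q^\E$ goes through cleanly; this is routine but should be stated.
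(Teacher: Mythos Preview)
Your argument is correct and reaches the same conclusion as the paper, but it is organized differently. The paper decomposes $f_q = i_q \circ r_q$ and $f_q^{\E} = i_q^{\E} \circ r_q^{\E}$, then invokes the commutation $r_q \circ u_{\E} \simeq u_{\E} \circ r_q^{\E}$ already encoded in Proposition~\ref{prop:E-adjoints} (as the right-adjoint square), and finally checks $u_{\E} \circ i_q^{\E} \simeq i_q \circ u_{\E}$ on the generators $\E \wedge \Sigma^q_T \Sigma^{\infty}_T X_+$ using that both $i_q$ and $u_{\E}$ preserve colimits. Your approach instead builds the comparison map $u_{\E} f_q^{\E} \to f_q u_{\E}$ directly from the universal property of $f_q$ and verifies it is an equivalence by a Yoneda-style mapping-space test against the generators of $\Sigma^q_T \SH^{\eff}(S)$, using the free--forgetful adjunction $(\E \wedge -, u_{\E})$ to pass back and forth. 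Both routes ultimately rest on the same ingredients (the generators, the adjunctions, and colimit preservation of $u_{\E}$); yours is perhaps more self-contained and makes the role of the universal property explicit, while the paper's is shorter because it reuses the commuting square of right adjoints from Proposition~\ref{prop:E-adjoints} as a black box. The deduction of the slice statement from the effective-cover statement by passing to cofibers is identical in both.
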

\begin{proof} 
To prove the first equivalence note that for $\M \in \Mod_{\E}$, $f_qu_{\E}(\M) \simeq i_q  r_q  u_{\E} \M \simeq i_q  u_{\E}  r_q^E M$ by Proposition~\ref{prop:E-adjoints}. 
We claim that $u_{\E}  i_q \simeq i_q  u_{\E}$. 
It holds by definition for $\E \wedge \Sigma^q_T \Sigma^{\infty}_TX_{+}$.
So we are done if $i_q$ and $u_{\E}$ preserve colimits. 
This holds for $i_q$ since it is localizing, 
and for $u_{\E}$ since the adjunction $\E \wedge -: \SH(S) \rightleftarrows \Mod_{\E}: u_{\E}$ is monadic. 
The second equivalence follows readily from the first.
\end{proof}

\subsubsection{} 
We review some basic properties of effective covers and slices for $\E$-modules.
\begin{proposition} 
For all $q \in \ZZ$, the functors $f^{\E}_q$ and $s^{\E}_q$ preserve all small colimits.
\end{proposition}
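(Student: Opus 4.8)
The plan is to reduce the statement about $\E$-modules to the already-known analogue for the standard slice functors $f_q$ and $s_q$ on $\SH(S)$, using the forgetful functor $u_{\E}$ as a conservative probe. First I would recall that $u_{\E}\colon \Mod_{\E}\to\SH(S)$ is conservative (being the right adjoint of a monadic adjunction) and preserves all small colimits. Combined with Proposition~\ref{prop:oblv}, which gives natural equivalences $u_{\E}\circ f_q^{\E}\simeq f_q\circ u_{\E}$ and $u_{\E}\circ s_q^{\E}\simeq s_q\circ u_{\E}$, it then suffices to know that the ordinary functors $f_q$ and $s_q$ on $\SH(S)$ commute with small colimits; this is a standard fact about the slice filtration.

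Concretely, I would argue as follows. The functor $f_q^{\E}=i_q^{\E}r_q^{\E}$ where $i_q^{\E}$ is the inclusion $\Sigma^q_T\Mod^{\eff}_{\E}\hookrightarrow\Mod_{\E}$ and $r_q^{\E}$ is its right adjoint. The inclusion $i_q^{\E}$ preserves all colimits, so the only issue is whether $r_q^{\E}$ does. For this, take a small diagram $(\M_\alpha)$ in $\Mod_{\E}$ with colimit $\M$. Apply $u_{\E}$ and use Proposition~\ref{prop:oblv} to identify $u_{\E}f_q^{\E}\M\simeq f_qu_{\E}\M\simeq f_q\bigl(\colim_\alpha u_{\E}\M_\alpha\bigr)$. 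Since $f_q\colon\SH(S)\to\SH(S)$ preserves small colimits, this is $\colim_\alpha f_qu_{\E}\M_\alpha\simeq\colim_\alpha u_{\E}f_q^{\E}\M_\alpha\simeq u_{\E}\bigl(\colim_\alpha f_q^{\E}\M_\alpha\bigr)$, the last step because $u_{\E}$ preserves colimits. There is a canonical comparison map $\colim_\alpha f_q^{\E}\M_\alpha\to f_q^{\E}\M$ in $\Mod_{\E}$ (from the colimit cone of the counits), and we have just shown $u_{\E}$ carries it to an equivalence; by conservativity of $u_{\E}$ it is an equivalence in $\Mod_{\E}$. The argument for $s_q^{\E}$ is identical, or alternatively follows since $s_q^{\E}$ is the cofiber of $f_{q+1}^{\E}\to f_q^{\E}$ and cofibers commute with colimits, so $s_q^{\E}$ inherits the colimit-preservation from $f_q^{\E}$ and $f_{q+1}^{\E}$.

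The only genuine input needed is that the standard functor $f_q\colon\SH(S)\to\SH(S)$ preserves small colimits. I would justify this by noting that $i_q\colon\Sigma^q_T\SH^{\eff}(S)\hookrightarrow\SH(S)$ is the inclusion of a \emph{localizing} subcategory — generated under colimits and retracts by the compact-in-the-appropriate-sense objects $\Sigma^q_T\Sigma^\infty_T X_+$ — so $i_q$ preserves colimits, and its right adjoint $r_q$ preserves colimits precisely because $\Sigma^q_T\SH^{\eff}(S)$ is generated by objects $\Sigma^q_T\Sigma^\infty_T X_+$ that are compact in $\SH(S)$ when $X$ ranges over smooth affine $S$-schemes (or more robustly: $r_q$ is a right adjoint between presentable stable categories, and one checks colimit-preservation by testing against these generators, for which $\Maps(\Sigma^q_T\Sigma^\infty_T X_+, f_q(-))\simeq\Maps(\Sigma^q_T\Sigma^\infty_T X_+, -)$ commutes with filtered colimits and arbitrary coproducts). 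This verification is the one place requiring care, and it is exactly where the hypotheses ensuring compactness of the generators (as in Proposition~\ref{prop:gen-stab}) enter; it is, however, entirely standard in the slice literature, so I would cite \cite{pi1}*{\S3} or \cite{rso-solves}*{\S3} rather than reprove it. Everything else is formal manipulation of adjunctions and conservativity.
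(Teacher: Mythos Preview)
Your proof is correct but takes a different route from the paper. You reduce to the standard slice functors $f_q,s_q$ on $\SH(S)$ by pushing everything through the conservative, colimit-preserving forgetful functor $u_{\E}$ together with the interchange $u_{\E}\circ f_q^{\E}\simeq f_q\circ u_{\E}$, and then cite the known colimit-preservation of $f_q$ on $\SH(S)$. The paper instead argues directly in $\Mod_{\E}$: since $f_q^{\E}=i_q^{\E}r_q^{\E}$ is exact, it suffices to check small coproducts; the left adjoint $i_q^{\E}$ preserves compact objects (free $\E$-modules on compact generators of $\SH(S)$ are compact in $\Mod_{\E}$), so by Lemma~\ref{lem:rightadjcolim} the right adjoint $r_q^{\E}$ preserves coproducts. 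Both arguments ultimately rest on the same compactness input you flag in your last paragraph, but the paper's version is shorter because it does not pass through $\SH(S)$ at all and applies the compactness-criterion lemma directly to $i_q^{\E}$. Your reduction has the mild virtue of making the dependence on the ``base case'' in $\SH(S)$ explicit, which could be useful if one wanted to vary the ambient category; the paper's version is more self-contained.
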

\begin{proof} 
For $\E$ the sphere spectrum this is in \cite{spitzweck-rel}*{Corollary 4.6} and the argument is similar in our setting. These are exact functors of stable $\infty$-categories by construction. 
For $f^E_q$ we need only check that it preserves all small sums. 
Since $f^{\E}_q$ is the composite of a left adjoint $i^{\E}_q$ and a right adjoint $r^{\E}_n$, 
it suffices to consider $r^{\E}_q$. 
We conclude by Lemma~\ref{lem:rightadjcolim} because $i^{\E}_q$ preserves compact objects. 
It follows that $s^{\E}_{q}$ preserves small colimits since it is the cofiber of $f^{\E}_{q+1}\to f^{\E}_{q}$.
\end{proof}

\begin{proposition} 
For any $\M\in\Mod_{\E}$ and $q\in\ZZ$, the $q$-th slice $s^{\E}_q\M$ belongs to $\Sigma^q_T \Mod^{\eff}_{\E}$ and is furthermore right orthogonal to $\Sigma^{q+1}_T\Mod^{\eff}_{\E}$.
\end{proposition}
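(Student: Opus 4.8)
The plan is to deduce both assertions formally from the defining cofiber sequence of the slice together with the universal property of the $\E$-effective covers recorded in Proposition~\ref{prop:univ}. Write $s^{\E}_q\M=\cofiber(f^{\E}_{q+1}\M\to f^{\E}_q\M)$, the map being the transition map in the $\E$-module slice tower. First I would check that $s^{\E}_q\M$ lies in $\Sigma^q_T\Mod^{\eff}_{\E}$: by the filtration $\Sigma^{q+1}_T\Mod^{\eff}_{\E}\subseteq\Sigma^q_T\Mod^{\eff}_{\E}$, so $f^{\E}_{q+1}\M\in\Sigma^q_T\Mod^{\eff}_{\E}$, while $f^{\E}_q\M\in\Sigma^q_T\Mod^{\eff}_{\E}$ by construction; since $\Sigma^q_T\Mod^{\eff}_{\E}$ is a localizing, hence stable, subcategory of $\Mod_{\E}$, it is closed under cofibers, which gives the membership claim.

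Next I would verify right orthogonality to $\Sigma^{q+1}_T\Mod^{\eff}_{\E}$. Fix $N\in\Sigma^{q+1}_T\Mod^{\eff}_{\E}$ and apply the mapping-spectrum functor $\maps(N,-)$ to the cofiber sequence $f^{\E}_{q+1}\M\to f^{\E}_q\M\to s^{\E}_q\M$, obtaining a fiber sequence of spectra. By Proposition~\ref{prop:univ} the counit $f^{\E}_{q+1}\M\to\M$ exhibits $f^{\E}_{q+1}\M$ as the universal object of $\Sigma^{q+1}_T\Mod^{\eff}_{\E}$ mapping to $\M$, so $\maps(N,f^{\E}_{q+1}\M)\simeq\maps(N,\M)$; since $N$ also lies in $\Sigma^q_T\Mod^{\eff}_{\E}$, the same proposition gives $\maps(N,f^{\E}_q\M)\simeq\maps(N,\M)$. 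Moreover the transition map $f^{\E}_{q+1}\M\to f^{\E}_q\M$ is, by the uniqueness clause of Proposition~\ref{prop:univ}, the unique map compatible with the two counits to $\M$, so under these identifications it induces the identity of $\maps(N,\M)$. Hence the first map of the fiber sequence is an equivalence, $\maps(N,s^{\E}_q\M)\simeq 0$, and therefore $s^{\E}_q\M$ is right orthogonal to $\Sigma^{q+1}_T\Mod^{\eff}_{\E}$.

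I do not expect a genuine obstacle here: the statement is a formal consequence of Propositions~\ref{prop:E-adjoints} and~\ref{prop:univ}, exactly as in the non-module case for ordinary slices. The only point that requires a moment's care is the compatibility of the slice-tower transition map with the counits to $\M$ --- it is precisely this compatibility, forced by the uniqueness part of Proposition~\ref{prop:univ}, that makes the induced self-map of $\maps(N,\M)$ the identity rather than merely some endomorphism, and hence makes $\maps(N,-)$ of the cofiber sequence split off the orthogonality statement.
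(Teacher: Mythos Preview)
Your proof is correct and follows essentially the same approach as the paper. The only difference is packaging: the paper first reduces to the generators $\Sigma^{q+1}_T\E\wedge\Sigma^{\infty}_TX_+$ of $\Sigma^{q+1}_T\Mod^{\eff}_{\E}$ and then establishes the equivalence $\Maps(N,f^{\E}_q\M)\simeq\Maps(N,f^{\E}_{q+1}\M)$ via the adjunction $(i^{\E}_{q+1},r^{\E}_{q+1})$ and the identity $f^{\E}_{q+1}f^{\E}_q\simeq f^{\E}_{q+1}$ from Lemma~\ref{lem:q<q'}, whereas you work directly with an arbitrary $N\in\Sigma^{q+1}_T\Mod^{\eff}_{\E}$ and appeal to the universal property of Proposition~\ref{prop:univ} together with compatibility of the transition map with the two counits. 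Your route is marginally more direct in that it avoids the reduction to generators and does not need to name Lemma~\ref{lem:q<q'} separately; the paper's route makes the adjunction structure more visible. Both are unpacking the same content.
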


\begin{proof} 
For the first claim, we note that both $f_{q+1}^{\E}\M$ and $f_q^{\E}M$ belong to the full subcategory $\Sigma^q_T \Mod^{\eff}_{\E}$ of $\Mod_{\E}$. Since $\Sigma^q_T \Mod^{\eff}_{\E}$ is closed under cofibers, it follows that $s_q^{\E}\M$ is in $\Sigma^q_T \Mod^{\eff}_{\E}$.
%

For the second claim, it suffices to prove that:
$$
\Maps_{\Mod_{\E}}(\Sigma^{q+1}_T{\E} \wedge \Sigma^{\infty}_{T}X_+, s^{\E}_q\M) 
\simeq 0.
$$ 
This follows from the equivalences:
\begin{eqnarray*}
\Maps_{\Mod_{\E}}(\Sigma^{q+1}_T\E \wedge \Sigma^{\infty}_{T}X_+, f^\E_q\M) 
& \simeq & \Maps_{\Mod_{\E}}(i^{\E}_{q+1}r^{\E}_{q+1}\Sigma^{q+1}_T\E \wedge \Sigma^{\infty}_{T}X_+, f^{\E}_q\M)\\
& \simeq & \Maps_{\Sigma^{q+1}_T\Mod_{\E}}(r^{\E}_{q+1}\Sigma^{q+1}_T\E \wedge \Sigma^{\infty}_{T}X_+, r_{q+1}^{\E}f^{\E}_q\M) \\
& = &  \Maps_{\Sigma^{q+1}_T\Mod_{\E}}(r^{\E}_{q+1}\Sigma^{q+1}_T\E \wedge \Sigma^{\infty}_{T}X_+, r_{q+1}^{\E}i^{\E}_qr^{\E}_q\M)\\
& \simeq & \Maps_{\Mod_{\E}}(i^{\E}_{q+1}r^{\E}_{q+1}\Sigma^{q+1}_T\E \wedge \Sigma^{\infty}_{T}X_+, i^{\E}_{q+1}r_{q+1}^{\E}i^{\E}_qr^{\E}_q\M)\\
& = & \Maps_{\Mod_{\E}}(\Sigma^{q+1}_T\E \wedge \Sigma^{\infty}_{T}X_+, f^{\E}_{q+1}f^{\E}_q\M) \\
& \simeq & \Maps_{\Mod_{\E}}(\Sigma^{q+1}_T\E \wedge \Sigma^{\infty}_{T}X_+, f^{\E}_{q+1}\M),
\end{eqnarray*}
where the first equivalence follows because, by definition, $\Sigma^{q+1}_T\E \wedge \Sigma^{\infty}_{T}X_+ \simeq i^{\E}_{q+1}r^{\E}_{q+1}\Sigma^{q+1}_T\E \wedge \Sigma^{\infty}_{T}X_+$. The second equivalence is by the adjunction between $i^{\E}_{q+1}$ and $r^{\E}_{q+1}$, the third is by fully faithfulness of $i_{q+1}$, and the fourth and final is by Lemma~\ref{lem:q<q'}.
\end{proof}
%

\begin{lemma}  
\label{lem:q<q'}
The natural transformation $f^{\E}_{q'}f^{\E}_q \rightarrow f^{\E}_{q'}$ is invertible for $q<q'$, 
and $f^{\E}_{q'}f^{\E}_q \rightarrow f^{\E}_q$ is invertible for $q' \leq q$.
Hence for $q \leq q'$ there is a natural equivalence $s^{\E}_{q'}f^{\E}_{q} \overset{\simeq}{\rightarrow} s^{\E}_{q'}$.
\end{lemma}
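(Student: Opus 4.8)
The plan is to deduce everything from the (co)universal property of the functors $f^{\E}_q$ recorded in Proposition~\ref{prop:univ}, together with the observation that the subcategories $\Sigma^q_T\Mod^{\eff}_{\E}$ are nested, with $\Sigma^{q'}_T\Mod^{\eff}_{\E} \subseteq \Sigma^{q}_T\Mod^{\eff}_{\E}$ whenever $q \leq q'$. Concretely, Proposition~\ref{prop:univ} says that for $\M \in \Mod_{\E}$ and $N \in \Sigma^q_T\Mod^{\eff}_{\E}$ the counit $f^{\E}_q\M \to \M$ induces an equivalence $\Maps_{\Mod_{\E}}(N, f^{\E}_q\M) \xrightarrow{\simeq} \Maps_{\Mod_{\E}}(N,\M)$; that is, $f^{\E}_q$ is the coreflection of $\Mod_{\E}$ onto $\Sigma^q_T\Mod^{\eff}_{\E}$.

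First I would settle the range $q' \leq q$. Since $f^{\E}_q\M$ lies in $\Sigma^q_T\Mod^{\eff}_{\E} \subseteq \Sigma^{q'}_T\Mod^{\eff}_{\E}$, it is already a $q'$-effective object, so, since $i^{\E}_{q'}$ is fully faithful, the counit $f^{\E}_{q'}f^{\E}_q\M = i^{\E}_{q'}r^{\E}_{q'}f^{\E}_q\M \to f^{\E}_q\M$ is an equivalence. In particular $f^{\E}_q f^{\E}_q \simeq f^{\E}_q$, which will cover the boundary case $q = q'$ below.

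Next, for $q < q'$, the map in question $f^{\E}_{q'}f^{\E}_q\M \to f^{\E}_{q'}\M$ is $f^{\E}_{q'}$ applied to the counit $f^{\E}_q\M \to \M$; by the uniqueness clause of Proposition~\ref{prop:univ} it is also the unique arrow over $\M$ out of the $q'$-effective object $f^{\E}_{q'}f^{\E}_q\M$. Both source and target lie in $\Sigma^{q'}_T\Mod^{\eff}_{\E}$, so by the Yoneda lemma it is enough to check that $\Maps_{\Mod_{\E}}(N, f^{\E}_{q'}f^{\E}_q\M) \to \Maps_{\Mod_{\E}}(N, f^{\E}_{q'}\M)$ is an equivalence for every $N \in \Sigma^{q'}_T\Mod^{\eff}_{\E}$. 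These two mapping spaces map compatibly, via the counits, to $\Maps_{\Mod_{\E}}(N,\M)$, so it suffices to see that both of those maps to $\Maps_{\Mod_{\E}}(N,\M)$ are equivalences. For the target this is the coreflection property at level $q'$. For the source one applies the coreflection property twice: first at level $q'$ to identify $\Maps_{\Mod_{\E}}(N, f^{\E}_{q'}f^{\E}_q\M) \simeq \Maps_{\Mod_{\E}}(N, f^{\E}_q\M)$, and then at level $q$ --- legitimate because $N \in \Sigma^{q'}_T\Mod^{\eff}_{\E} \subseteq \Sigma^q_T\Mod^{\eff}_{\E}$, using $q < q'$ --- to identify the latter with $\Maps_{\Mod_{\E}}(N,\M)$.

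Finally, for the slice statement assume $q \leq q'$. Then $q < q'+1$, and also $q < q'$ or $q = q'$, so by the previous two paragraphs both $f^{\E}_{q'+1}f^{\E}_q \to f^{\E}_{q'+1}$ and $f^{\E}_{q'}f^{\E}_q \to f^{\E}_{q'}$ are equivalences, and they are compatible with the transition map $f^{\E}_{q'+1} \to f^{\E}_{q'}$. Since $s^{\E}_{q'}(-) = \cofiber\bigl(f^{\E}_{q'+1}(-) \to f^{\E}_{q'}(-)\bigr)$, passing to cofibers gives the desired equivalence $s^{\E}_{q'}f^{\E}_q \xrightarrow{\simeq} s^{\E}_{q'}$. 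The only subtle point in the whole argument is the identification, in the $q < q'$ case, of the natural transformation $f^{\E}_{q'}f^{\E}_q \to f^{\E}_{q'}$ with the canonical arrow coming from the universal property of $f^{\E}_{q'}$, which is what makes the Yoneda reduction valid; this is the main (and only mild) obstacle, and it is handled by Proposition~\ref{prop:univ}.
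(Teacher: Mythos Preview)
Your proof is correct and follows essentially the same approach as the paper, which simply invokes the universal property of $f^{\E}_q$ from Proposition~\ref{prop:univ} together with the defining cofiber sequences for the slices. Your argument is a careful unpacking of that one-line proof: the coreflection property at the relevant levels, the Yoneda test against objects of $\Sigma^{q'}_T\Mod^{\eff}_{\E}$, and the passage to cofibers are exactly what the paper has in mind.
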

\begin{proof} 
These follow immediately from the universal property of $f^{\E}_q$ given in Proposition~\ref{prop:univ}, 
the defining cofiber sequences for the slices, 
and commutativity of the effective covers.
\end{proof}

\begin{lemma}  
The natural map $\colim_{q \rightarrow - \infty} f^{\E}_q \rightarrow id$ is an equivalence of endofunctors of $\Mod_{\E}$.
\end{lemma}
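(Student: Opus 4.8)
The plan is to reduce the assertion to the free generators of $\Mod_{\E}$ and then to invoke idempotence of the effective-cover localizations. First I would record that, by the Proposition just proved, each $f^{\E}_q$ preserves all small colimits; since sequential colimits commute with all small colimits in the presentable $\infty$-category $\Mod_{\E}$, the endofunctor $\colim_{q\to-\infty}f^{\E}_q$ preserves small colimits too, and so of course does $\id$. Hence the full subcategory of $\Mod_{\E}$ spanned by those $\M$ for which the natural map $\colim_{q\to-\infty}f^{\E}_q\M\to\M$ is an equivalence is closed under small colimits. As $\Mod_{\E}$ is generated under small colimits by the free modules $\E\wedge\Sigma^{p,q}\Sigma^{\infty}_TX_+$ with $X\in\Sm_S$ and $p,q\in\ZZ$ (the images under the monadic, colimit-preserving functor $\E\wedge-$ of a set of generators of $\SH(S)$), it suffices to treat $\M=\E\wedge\Sigma^{p,q}\Sigma^{\infty}_TX_+$.

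For such an $\M$ I would use the equivalence $\PP^{1}\simeq S^{1}\wedge\GG_{m}$, i.e.\ $\Sigma_T\simeq\Sigma^{2,1}$, to rewrite $\Sigma^{p,q}\Sigma^{\infty}_TX_+\simeq\Sigma^{p-2q}_{S^1}\Sigma^{q}_T\Sigma^{\infty}_TX_+$. Since $\Sigma^{\infty}_TX_+$ is effective and $\Mod^{\eff}_{\E}$ is a localizing, in particular stable, subcategory of $\Mod_{\E}$ --- hence closed under $S^{1}$-suspension and desuspension --- the module $\Sigma^{p-2q}_{S^1}(\E\wedge\Sigma^{\infty}_TX_+)$ lies in $\Mod^{\eff}_{\E}$, so $\M$ lies in $\Sigma^{q}_T\Mod^{\eff}_{\E}$, and therefore in $\Sigma^{q'}_T\Mod^{\eff}_{\E}$ for every $q'\le q$.

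To finish, fix any $q'\le q$. By Proposition~\ref{prop:univ} the counit $f^{\E}_{q'}\M\to\M$ is the universal map to $\M$ out of $\Sigma^{q'}_T\Mod^{\eff}_{\E}$; since $\M$ itself lies in $\Sigma^{q'}_T\Mod^{\eff}_{\E}$ and the embedding $i^{\E}_{q'}$ is fully faithful, the standard argument that a coreflective localization is idempotent --- the identity $\M\to\M$ lifts uniquely through the counit, and the lift is a two-sided inverse --- shows this counit is an equivalence. Thus the maps $f^{\E}_q\M\to\M$ are equivalences for all sufficiently negative $q$, so that part of the tower is constant equal to $\M$ and $\colim_{q\to-\infty}f^{\E}_q\M\simeq\M$. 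By the reduction of the first paragraph the same holds for an arbitrary $\M\in\Mod_{\E}$.

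I do not expect a genuine obstacle here. The only places needing a little care are the bookkeeping with bigraded suspensions --- recording that $\Sigma^{p,q}\Sigma^{\infty}_TX_+$ lands in filtration level $q$ because $\Sigma_T=\Sigma^{2,1}$ and $\SH^{\eff}$ (equivalently $\Mod^{\eff}_{\E}$) is closed under $S^{1}$-(de)suspension --- and the verification that the subcategory on which the comparison map is an equivalence really is closed under all small colimits, which is exactly where colimit-preservation of the functors $f^{\E}_q$ (the Proposition just proved) is used.
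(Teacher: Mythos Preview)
Your proof is correct but takes a genuinely different route from the paper. The paper tests the map on mapping spaces \emph{out of} the compact generators $\E\wedge\Sigma^{2n,n}\Sigma^{\infty}_TX_+$, then uses the free--forgetful adjunction together with Proposition~\ref{prop:oblv} (the identity $u_{\E}\circ f^{\E}_q\simeq f_q\circ u_{\E}$) and the fact that $u_{\E}$ preserves colimits to reduce the statement to the corresponding result for the ordinary slice tower in $\SH(S)$, which is then cited from \cite{rso-solves}*{Lemma 3.1}. You instead argue internally to $\Mod_{\E}$: you use colimit-preservation of each $f^{\E}_q$ (the Proposition immediately preceding the Lemma) to reduce to free generators, then show directly that the tower is eventually constant on such generators by identifying the filtration level they live in and invoking idempotence of the coreflection. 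Your approach is more self-contained --- it avoids the external citation and the detour through $u_{\E}$ --- while the paper's approach is shorter on the page by outsourcing the stabilization argument and leverages the comparison with $\SH(S)$ already set up in Proposition~\ref{prop:oblv}.
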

\begin{proof} 
First note that the slice filtration on categories is exhaustive in the sense that: 
$$
\colim_q \Sigma^q_T \Mod_{\E}^{\eff} \simeq \Mod_{\E}.
$$ 
Since the $\infty$-category $\Mod_{\E}$ is generated by $\E \wedge \Sigma^{2n,n}\Sigma^{\infty}_TX_{+}, n \in \ZZ$, 
where $X \in \Sm_S$, 
it suffices to prove that for any $\E$-module $\M$ and any $X \in \Sm_S$ we have: 
$$
\Maps_{\Mod_{\E}}( \E \wedge \Sigma^{2n,n}\Sigma^{\infty}_TX_{+}, \colim_{q \rightarrow - \infty} f^{\E}_qM) 
\simeq 
\Maps_{\Mod_{\E}}(\E \wedge \Sigma^{2n,n}\Sigma^{\infty}_TX_{+}, M).
$$ 
By adjunction we are checking that:
$$
\Maps_{\SH(S)}( \Sigma^{2n,n}\Sigma^{\infty}_TX_{+}, u_{\E} \colim_{q \rightarrow - \infty} f^{\E}_qM) 
\simeq 
\Maps_{\SH(S)}( \Sigma^{2n,n}\Sigma^{\infty}_TX_{+}, u_{\E} M).
$$ 
Since $u_{\E}$ commutes with colimits and $u_{\E} \circ f_q^{\E} \simeq f_q \circ u_{\E}$ according to Proposition~\ref{prop:oblv}, 
this equivalence follows from the analogous statement for $f_q$ shown in \cite{rso-solves}*{Lemma 3.1}.
\end{proof}

\subsubsection{} 
\label{slice-complete} 
The cofiber of $f^{\E}_n \rightarrow id$ is called the  \emph{$(n-1)$-th $\E$-effective cocover} of $\E$ and is denoted by $f_{\E}^{n-1}$ (see \cite{pi1}*{3.1} for a discussion in $\SH(S)$).
This yields, 
for any $\E$-module $\M$, 
a diagram where the horizontal rows are cofiber sequences (see also \cite{rso-solves}*{3.9}):

\begin{equation} \label{eq:cofibers}
\xymatrix{
f^{\E}_{q+1}\M \ar[r] \ar[d] & \M \ar[d] \ar[r] & f_{\E}^{q}\M   \ar[d]  \\
f^{\E}_{q}\M \ar[d] \ar[r] & \M \ar[r] \ar[d] & f_{\E}^{q-1}\M  \ar[d] \\
s_q^{\E}\M \ar[r] & 0 \ar[r] & s^{\E}_q\M[1], 
}
\end{equation} 
and the cofiber sequence (see also \cite{rso-solves}*{3.11}): 
$$
\lim_{q \rightarrow +\infty} f_q^{\E}\M \rightarrow \M \rightarrow \lim_{q \rightarrow +\infty} f^{q}_{\E}\M.
$$
The \emph{$\E$-slice completion of $\M$} is defined by setting $\sc_{\E}(\M) := \lim f^{q}_{\E}\M$. 
The $\E$-module $\M$ is \emph{slice complete} if $\M \rightarrow \sc_{\E}(\M)$ is an equivalence, 
or equivalently $\lim_{q \rightarrow +\infty}  f^q_{\E}\M \simeq 0$.

\subsubsection{Slice spectral sequence} 
\label{sss} 
If $\M$ is an $\E$-module, 
we write 
$$
\pi_{p,q}(\M)(X):= [\Sigma^{p,q}\E \wedge \Sigma^{\infty}_TX_+, \M]_{\Mod_{\E}}.
$$ 
If $X$ is the base scheme we just write $\pi_{p,q}(\M) :=[\Sigma^{p,q}\E, \M]_{\Mod_{\E}}$. 
So if $\E$ is the unit sphere $\sspt$, 
this is just the group $[\Sigma^{p,q}\sspt, \M]_{\SH(S)}$. 
When $S=\Spec\,k$ is a field, 
we recover the value of the homotopy sheaf on $\Spec\,k$: $\underline{\pi}_{p,q}^{\Nis}(\M)(k) \simeq \pi_{p,q}(\M)$.
The $\E$-module slice tower yields the trigraded $\E$-slice spectral sequence: 
$$
E^{1}_{p,q,w}(\M)(X)
= 
\pi_{p,w}(s_q^{\E}\M)(X) \Rightarrow \pi_{p,w}(\M)(X).
$$
The exact couple that gives rise to this spectral sequence (with our preference for its indexing) is written in, for example, \cite{milnor-hermitian}*{(11)}. 
The differentials go: 
$$
d_r(p,q,w): 
\pi_{p,w}(s_q^{\E}\M)(X) 
\rightarrow 
\pi_{p-1, w}(s_{q+r}^{\E}\M)(X).
$$ 
We view the trigraded slice spectral sequence as a family of bigraded spectral sequences indexed by the weight $w$. 
This spectral sequence is always conditionally convergent to the homotopy groups of the slice completion $\pi_{*,*}\sc_{\E}(\M)(X)$.   
If a conditionally convergent spectral sequence collapses at a finite stage, then it is strongly convergent \cite{boardman}*{Theorem 7.1}.
Convergence of the slice spectral sequence is discussed at length in \cite{levine-converge}, \cite{pi1}, and \cite{voevodsky-open}.

\subsubsection{$\et$-localization of slices}
\label{sect:mot-sheaf-slice}
Suppose $\tau$ is finer than the Nisnevich topology and there is an adjunction $\epsilon^*: \SH(S) \rightleftarrows \SH_{\tau}(S): \epsilon_*$. 
We define the \emph{$q$-th $\tau$-effective cover} of a motivic spectrum $\E$ to be $\epsilon_*\epsilon^*f_q\E$ and refer to the corresponding filtration as the \emph{$\tau$-slice filtration}. 
Using the cofiber sequence $f_{q+1}\E \rightarrow f_{q}\E \rightarrow s_q\E$ we obtain the commutative diagram:
\begin{equation}
\xymatrix{
f_{q+1}\E \ar[r] \ar[d] & f_{q}\E \ar[r] \ar[d] & s_q\E \ar[d] \\
\epsilon_*\epsilon^*f_{q+1}\E \ar[r] & \epsilon_*\epsilon^*f_{q}\E \ar[r] & \epsilon_*\epsilon^*s_q\E.
}
\end{equation}
We call $\epsilon_*\epsilon^*s_q\E$ the \emph{$q$-th $\tau$-slice} because the bottom row is also a cofiber sequence since $\epsilon_*\epsilon^*$ is an exact functor; 
the object $\epsilon_*\epsilon^*f^q\E$ is the \emph{$q$-th $\tau$-coeffective cover}. 
The \emph{$\tau$-slice tower} is given by: 
\begin{equation} \label{eqn:tau-slice-tower}
\cdots \rightarrow \epsilon_*\epsilon^*f_q\E \rightarrow \epsilon_*\epsilon^*f_{q-1}\E \rightarrow\cdots.
\end{equation}
We refer to the corresponding spectral sequence:
\begin{equation}
\label{equation:tausss} 
E^{1,\tau}_{p,q,w}(\E)
:= 
\pi_{p,n}(\epsilon_*\epsilon^*s_q\E) 
\Rightarrow 
\pi_{p,w}(\colim \epsilon_*\epsilon^*f_q\E),
\end{equation}
as the \emph{$\tau$-slice spectral sequence}. 
As above, this give a bigraded spectral sequence for every fixed weight $w$.
Some of the issues at stake in this paper are to determine the convergence properties of \eqref{equation:tausss} and identify its filtered target groups with $\pi_{p,w}(\epsilon_*\epsilon^*\E)$.

The above has an $\E$-module analogue. 
If $\E \in \CAlg(\SH(S))$ then $\epsilon^*\E \in \CAlg(\SH_{\tau}(S))$ since $\epsilon^*$ is a monoidal functor. 
Thus it makes sense to form the \emph{$q$-th $\E^{\tau}$-effective cover} $f_q^{\E^{\tau}}M := \epsilon_* \epsilon^*f_q^{\E}M$ and the slices $s_q^{\E^{\tau}}M := \epsilon_* \epsilon^*s_q^{\E}M$. 
\subsubsection{} 
\label{sect:wrong} 
There is another possibility for what one might call ``\'{e}tale slices." 
Let $\SH^{\eff}_{\et}(S)$ be the stable $\infty$-category generated by the suspension spectra $\Sigma^{\infty}_T X_{+}$ under retracts and colimits in $\SH_{\et}(S)$. 
We may define $\Sigma^q_T \SH^{\eff}_{\et}(S)$ as above and build a slice tower in the \'{e}tale setting. 
Denote the effective covers by $f^{\et}_q$ and the slices by $s^{\et}_q$. The following proposition essentially follows from \cite{voevodsky-open}*{Remark 2.1}, but we elaborate it here for the reader's convenience.
\begin{proposition} 
Suppose $k$ is a field containing a primitive $\ell$-th root of unity where $\ell$ is prime to the exponential characteristic of $k$. 
Then $s^{\et}_q\epsilon^*\M\ZZ/\ell \simeq 0$ for all $q \in \ZZ$.
\end{proposition}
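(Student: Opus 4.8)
The plan is to show that the \'etale-localized motivic cohomology spectrum $\pi^*\M\ZZ/\ell$ is already ``infinitely divisible'' for the \'etale slice filtration, i.e.\ lies in $\bigcap_q \Sigma^q_T\SH^{\eff}_{\et}(S)$, which forces every slice $s^{\et}_q\pi^*\M\ZZ/\ell$ to vanish. The key mechanism is the Bott element: under the hypothesis that $k$ contains a primitive $\ell$-th root of unity, there is a class $\tau^{\M\ZZ}_{\ell}\in H^{0,1}_{\mot}(\Spec k;\ZZ/\ell)$, and after \'etale localization this class becomes \emph{invertible}. Indeed by Theorem~\ref{thm:rigidity}(2) (the relative rigidity theorem) and Corollary~\ref{cor:utrvspi}, $\pi^*\M\ZZ/\ell$ corresponds to the constant \'etale sheaf $\ZZ/\ell$ (with all Tate twists identified via $\mu_\ell\simeq\ZZ/\ell$), so multiplication by $\tau$, which is a map $\pi^*\M\ZZ/\ell\to\Sigma^{0,-1}\pi^*\M\ZZ/\ell$ (equivalently $\Sigma^{0,1}\pi^*\M\ZZ/\ell\to\pi^*\M\ZZ/\ell$), is an equivalence.

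Here is the sequence of steps I would carry out. First, identify $\pi^*\M\ZZ/\ell$ with $u_{\tr}R_{\tr}(\sspt)/\ell$ in $\SH_{\et}(S)$ via Theorem~\ref{thm:integral}(2), and apply Theorem~\ref{thm:rigidity}(2) to see that the twist functor $\Sigma^{0,1}$ acts on this object as the identity up to the chosen generator $\tau$; concretely, $\tau$-multiplication $\Sigma^{0,1}\pi^*\M\ZZ/\ell\xrightarrow{\ \simeq\ }\pi^*\M\ZZ/\ell$ is an equivalence in $\SH_{\et}(S)$. Second, since $\Sigma^{2,1}=\Sigma^{1,0}\circ\Sigma^{1,1}$ and $\Sigma^{1,1}\pi^*\M\ZZ/\ell\simeq\Sigma^{1,0}\pi^*\M\ZZ/\ell$ from the previous step, we get $\Sigma^{2,1}\pi^*\M\ZZ/\ell\simeq\Sigma^{2,0}\pi^*\M\ZZ/\ell\simeq \pi^*\M\ZZ/\ell[2]$; in particular $\pi^*\M\ZZ/\ell$ is an $S^1$-desuspension of a twisted copy of itself. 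More usefully, $\pi^*\M\ZZ/\ell$ lies in $\Sigma^q_T\SH^{\eff}_{\et}(S)$ for every $q$: for $q\le 0$ this is clear since $\pi^*\M\ZZ/\ell$ is effective (it is $\pi^*$ applied to an effective spectrum, and $\pi^*$ preserves the generators of $\SH^{\eff}$), and for $q>0$ one rewrites $\pi^*\M\ZZ/\ell\simeq \Sigma^q_T(\Sigma^{-2q,-q}\pi^*\M\ZZ/\ell)$ and observes that $\Sigma^{-2q,-q}\pi^*\M\ZZ/\ell\simeq\Sigma^{-2q,0}\Sigma^{0,-q}\pi^*\M\ZZ/\ell\simeq\Sigma^{-2q,0}\pi^*\M\ZZ/\ell[\,]$—no, more carefully: $\Sigma^{0,-q}\pi^*\M\ZZ/\ell\simeq\pi^*\M\ZZ/\ell$ by iterating the $\tau$-equivalence, and $\Sigma^{-2q,0}$ of an effective spectrum is effective, so $\Sigma^{-2q,-q}\pi^*\M\ZZ/\ell$ is effective and hence $\pi^*\M\ZZ/\ell\in\Sigma^q_T\SH^{\eff}_{\et}(S)$. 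Third, conclude: since $\pi^*\M\ZZ/\ell\in\Sigma^q_T\SH^{\eff}_{\et}(S)$ for all $q$, the effective cover map $f^{\et}_{q+1}\pi^*\M\ZZ/\ell\to\pi^*\M\ZZ/\ell\to f^{\et}_q\pi^*\M\ZZ/\ell$ has both outer terms equal to $\pi^*\M\ZZ/\ell$ and identity transition maps, whence the cofiber $s^{\et}_q\pi^*\M\ZZ/\ell\simeq 0$.

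The main obstacle I anticipate is being careful about \emph{which} twist operation the Bott element trivializes and verifying that $\tau$-multiplication is genuinely an equivalence of motivic spectra (not merely an isomorphism on some homotopy groups): this requires invoking the rigidity equivalence $\rho^*\colon\DM_{\et}(S,\ZZ/\ell)\xrightarrow{\simeq}\D(S_{\et},\ZZ/\ell)$ and checking that under $\rho^*$ the Tate twist goes to the identity (up to the chosen root of unity), so that the statement reduces to the triviality of the twist on the derived category of \'etale sheaves, where $\ZZ/\ell(1)\simeq\mu_\ell\simeq\ZZ/\ell$ canonically once $\zeta_\ell\in k$. A secondary point is making sure $\pi^*$ carries $\SH^{\eff}(S)$ into $\SH^{\eff}_{\et}(S)$, which is immediate since $\pi^*$ is a colimit-preserving monoidal functor sending the generators $\Sigma^\infty_T X_+$ to the generators of $\SH^{\eff}_{\et}(S)$. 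With those two checks in place the argument is formal.
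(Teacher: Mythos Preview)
Your argument is correct and rests on the same key fact as the paper's proof: the Bott element $\tau^{\M\ZZ}_{\ell}$ becomes an equivalence $\pi^*\M\ZZ/\ell \xrightarrow{\simeq} \Sigma^{0,1}\pi^*\M\ZZ/\ell$ in $\SH_{\et}(k)$ once $\zeta_\ell\in k$, via rigidity. The packaging differs slightly. You deduce from $\tau$-invertibility that $\pi^*\M\ZZ/\ell \simeq \Sigma^{q}_T\bigl(\Sigma^{-2q,0}\pi^*\M\ZZ/\ell\bigr)$ lies in $\Sigma^q_T\SH^{\eff}_{\et}(k)$ for every $q$, hence all counits $f^{\et}_q\pi^*\M\ZZ/\ell \to \pi^*\M\ZZ/\ell$ are equivalences and, by two-out-of-three, so are the transition maps $f^{\et}_{q+1}\to f^{\et}_q$. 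The paper instead invokes the general identity $f^{\et}_{q+1}\Sigma^{0,1} \simeq \Sigma^{0,1}f^{\et}_q$ (the \'etale analogue of \cite{milnor-hermitian}*{Lemma 2.1}) and reads off directly from a commutative square that each $f^{\et}_{q+1}\pi^*\M\ZZ/\ell \to f^{\et}_q\pi^*\M\ZZ/\ell$ is an equivalence. Your route avoids citing that shifting lemma at the cost of spelling out the ``infinitely effective'' consequence; the paper's route is a one-diagram argument once that lemma is in hand. Both are equally valid and equally short.

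One small wording fix: in your final sentence you write the composite ``$f^{\et}_{q+1}\pi^*\M\ZZ/\ell\to\pi^*\M\ZZ/\ell\to f^{\et}_q\pi^*\M\ZZ/\ell$'', but the second arrow goes the wrong way. What you want is that both counits $f^{\et}_{q+1}\pi^*\M\ZZ/\ell\to\pi^*\M\ZZ/\ell$ and $f^{\et}_{q}\pi^*\M\ZZ/\ell\to\pi^*\M\ZZ/\ell$ are equivalences, and the transition map is the unique factorization of the first through the second.
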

\begin{proof} 
Since $H^{0}_{\et}(k;\mu_{\ell}) \simeq [\sspt, \Sigma^{0,1}\epsilon^*\M\ZZ_{\ell}]_{\SH_{\et}}$ there is an invertible map $ \tau^{\M\ZZ}_{\ell}: \epsilon^*\M\ZZ/\ell  \rightarrow \Sigma^{0,1}\epsilon^*\M\ZZ/\ell $ 
classifying a primitive $\ell$-th root of unity in $k$. 
By the proof of \cite{milnor-hermitian}*{Lemma 2.1}, 
which makes no use of the Nisnevich topology, 
there is an equivalence $f^{\et}_{q+1}\Sigma^{0,1}\epsilon^*\M\ZZ/\ell \overset{\simeq}{\to} \Sigma^{0,1}f^{\et}_{q}\epsilon^*\M\ZZ/\ell$ and a commutative diagram:
\begin{equation}
\xymatrix{
f^{\et}_{q+1}\epsilon^*\M\ZZ/\ell  \ar[d]_-{f^{\et}_{q+1}(\tau^{\M\ZZ}_{\ell})} \ar[r] & f^{\et}_q\epsilon^*\M\ZZ/\ell  \ar[d]^-{\tau^{\M\ZZ}_{\ell}} \\
f^{\et}_{q+1}\Sigma^{0,1}\epsilon^*\M\ZZ/\ell   \ar[r]^-{\simeq} & \Sigma^{0,1}f^{\et}_{q}\epsilon^*\M\ZZ/\ell .
}
\end{equation}
The vertical arrows are invertible because multiplication by the class of $\tau^{\M\ZZ}_{\ell}$ is invertible.
Thus the top arrow is invertible and its cofiber $s^{\et}_q\epsilon^*\M\ZZ/\ell\simeq 0$.
\end{proof}
In fact, one can run the argument in \cite{voevodsky-open}*{Remark 2.1} to show that $s^{\et}_0\epsilon^*\MGL/\ell \simeq s^{\et}_0\epsilon^*\M\ZZ/\ell\simeq 0$.  
Hence this approach should be abandoned.

\section{Some \'{e}tale connectivity results} 
\label{sect:some-conn}
We come to the technical heart of the paper which concerns the convergence of the \'et-slice spectral sequence. The main result that we will need is Corollary~\ref{cor:et-complete}. For the rest of the paper, we will only be considering the adjunction:
\begin{equation} \label{eq:pi-adjunction-paper} 
\epsilon^*:\SH(S) \rightleftarrows \SH_{\et}(S): \epsilon_*.
\end{equation}
We also employ the following conventions:
\begin{itemize}
\item $\E^{\et}:=\epsilon_*\epsilon^*\E$ for the \'{e}tale localization of $\E \in \SH(S)$,
\item $\H(S)$, $\SH^{S^1}(S)$, $\SH(S)$ for $\H_{\Nis}(S)$, $\SH^{S^1}_{\Nis}(S)$, $\SH_{\Nis}(S)$, respectively. Objects in $\SH(S)$ will be called motivic spectra.
\end{itemize}
We will still have occasion to use $\epsilon^*\E$ when we make arguments internal to $\SH_{\et}(S)$. We are interested in the spectral sequence \eqref{equation:tausss} for the \'{e}tale topology. 
For $X \in \Sm_S$ this takes the form: 
\begin{equation}
\label{equation:etsss}
E^{1,\et}_{p,q,w}(\E)(X)
:= 
\pi_{p,w}(s_q\E^{\et})(X)
\Rightarrow 
\pi_{p,w}(\colim f_q\E^{\et})(X).
\end{equation} 
We will refer to \eqref{equation:etsss} as the \emph{\'{e}tale slice spectral sequence}, 
but be warned that it might not be what the reader expects, 
as discussed in \S\ref{sect:wrong} and further in Example~\ref{exmp:doesnotdeserve2}. The goal of this section is to prove convergence results about this spectral sequence.

In order to access the convergence of the \'{e}tale slice spectral sequence, 
we will need a bound on the connectivity of $\E^{\et}$ and its \'{e}tale slices. 
This is in general a delicate issue, 
mainly due to the problems pointed out in~\S\ref{sect:doesnotdeserve} and the lack of a connectivity theorem for the $\SH_{\et}(S)$ in the sense of \cite{morel-conn}.
When $\E$ is a Landweber exact motivic spectrum, we can produce a bound on the connectivity, measured differently.
Producing this bound is achieved in Theorem~\ref{thm:et-conn-final}. Before we proceed, let us examine the example in~\S\ref{sect:doesnotdeserve} in greater detail.

\begin{example} 
\label{exmp:doesnotdeserve2} 
Suppose $k$ is a field with exponential characteristic coprime to $\ell$. 
We view the mod-$\ell$ motivic cohomology spectrum $\M\ZZ/\ell$ as a $\GG_m$-spectrum in $\SH^{S^1} (S)$, 
i.e., 
a sequence of $\AA^1$-invariant and Nisnevich-local presheaves of spectra $(\E_i)_{i \in \NN}$ equipped with bonding maps $\E_i \rightarrow \Omega_{\GG_m}\E_{i+1}$, 
each of which is an equivalence. 
From this point of view the constituent $S^1$-spectra (again thinking of chain complexes as spectra) of $\Sigma^{0,-1}\M\ZZ/\ell$ are: 
\begin{equation*}
\label{eq:s1-spectra}
(0, \ZZ/\ell[1], \ZZ/\ell(1)[2], \ZZ/\ell(2)[3], ...).
\end{equation*} 
For each $t \in \NN$ we have a bonding map: 
\begin{equation} 
\label{eq:bonding}
\ZZ/\ell(t-1)[t] \rightarrow \Omega_{\GG_m}\ZZ/\ell(t)[t+1],
\end{equation} 
which is an equivalence due to Voevodsky's cancellation theorem \cite{voevodsky-cancel}. 
Taking the \'{e}tale sheafification of each of these individual spectra gives us a sequence of $S^1$-spectra:
\begin{equation} \label{eqn:mz-example}
(0, \LL_{\et}\ZZ/\ell[1], \LL_{\et}\ZZ/\ell(1)[2], \LL_{\et\ZZ/\ell}(2)[3], ...),
\end{equation}
each of which is still $\AA^1$-invariant and in fact equivalent to $\mu_{\ell}^{\otimes t}[t+1]$ on account of the identification $\LL_{\et}\ZZ/\ell(1) \simeq \mu_{\ell}$. 
Furthermore,
and this is the crux point,  we have two cases:
\begin{enumerate}
\item each bonding map~\eqref{eq:bonding} induces an equivalence for $t>1$:
$$
\LL_{\et}\ZZ/\ell(t-1)[t] \rightarrow \Omega_{\GG_m}\LL_{\et}\ZZ/\ell(t)[t+1].
$$ 
\item For $t =1$ we have: 
\begin{equation} \label{eq:no-commute}
\Omega_{\GG_m}\LL_{\et}\ZZ/\ell[1] \simeq \mu_{\ell}^{\otimes -1} \not\simeq 0.
\end{equation}
\end{enumerate}
Indeed, 
non-commutativity of the diagram in~\S\ref{sect:doesnotdeserve} boils down to this phenomenon ---
even if the levelwise \'{e}tale sheafifcation preserves $\AA^1$-invariance, 
the individual spectrum may no longer be a $\GG_m$-spectrum but is, 
instead, 
a $\GG_m$-\emph{prespectrum} as we saw above --- a notion we will revisit in the $\infty$-categorical setting in~\S\ref{sect:gm-pre}. Phrased another way, the \'etale localization functor \emph{does not} commute with taking $\GG_m$-loops unstably and only does so after taking spectrification again in the sense elaborated in Remark~\ref{rem:susp-sheaf}.
\end{example}

\subsubsection{} 
As Example \ref{exmp:doesnotdeserve2} shows one cannot simply apply descent spectral sequence arguments to get connectivity bounds on $\E^{\et}$. 
To spell this out let us denote by: $$\Omega^{\infty}_{\GG_m,\et}: \SH_{\et}(S) \rightarrow \SH^{S^1}_{\et}(S),$$ the infinite loop functor in the etale topology. 
Each of the constituent $S^1$-spectra of $\E^{\et}$ is given by:
$$
(\Omega^{\infty}_{\GG_m,\et}\E^{\et}, \Omega^{\infty}_{\GG_m,\et}\Sigma^{1,1}\E^{\et}, \Omega^{\infty}_{\GG_m,\et}\Sigma^{2,2}\E^{\et}, \cdots).
$$
From this we have a descent spectral sequence: 
$$
H^p_{\et}(X, \underline{\pi}^{\et}_{-q}\Omega^{\infty}_{\GG_m,\et}\Sigma^{i,i}\E^{\et}) 
\Rightarrow 
[X,\GG_m^{\wedge i} \wedge \E^{\et}[p+q]].
$$ 
\emph{A priori} we do not know any relationship between $\underline{\pi}^{\et}_{-q}(\Omega^{\infty}_{\GG_m,\et}\Sigma^{i,i}\E^{\et})$ and its Nisnevich version 
$\underline{\pi}^{\Nis}_{-q}(\Omega^{\infty}_{\GG_m}\Sigma^{i,i}\E)$ since $\Omega^{\infty}_{\GG_m,\et}\Sigma^{i,i}\E^{\et}$ is not, 
in general, 
the \'{e}tale sheafification of the Nisnevich-local presheaf of spectra $\Omega^{\infty}_{\GG_m}\Sigma^{i,i}\E$.

\subsubsection{}  \label{sect:s1-spectra} We begin by examining more closely the category of $S^1$-spectra, $\SH_{\tau}^{S^1}(S)$, i.e., the $\infty$-category of $\tau$-local and $\AA^1$-invariant presheaves of spectra. Let us denote by: $$\P_{\Spt}(S):=\Fun(\Sm_S^{\op},\Spt),$$ the $\infty$-category of presheaves of spectra on $\Sm_S.$ The $\infty$-category $\SH^{S^1}_{\AA^1}(S)$ (resp. $\SH^{S^1}_{\tau}(S)$)  is the full subcategory of $\P_{\Spt}(S):=\Fun(\Sm_S^{\op},\Spt)$ spanned by those presheaves of spectra which are $\AA^1$-invariant as in~\S\ref{sec:pshv} (resp. $\tau$-local as in \S\ref{sect:hypersheaves}), with spectra instead of spaces. 

We denote by: 
$$\LL_{\AA^1}: \P_{\Spt}(S) \rightarrow \P_{\Spt,\AA^1}(S), \LL_{\tau}:\P_{\Spt}(S) \rightarrow \P_{\Spt, \tau}(S),$$ 
and: $$\LL_{\tau, \mot}:= \LL_{\AA^1}\LL_{\tau}: \P_{\Spt}(S) \rightarrow \SH^{S^1}_{\tau}(S),$$ the $\AA^1$-localization, $\tau$-localization, and motivic localization functors for presheaves of spectra, respectively. The notation is justified since these functors are just the $S^1$-stabilizations in the sense of~\cite{higheralgebra}*{Section 1.4} of their unstable versions described in~\S\ref{sec:pshv},~\S\ref{sect:hypersheaves},  and~\S\ref{sec:ltau}. 

In order to avoid confusion, 
we write 
\[\LL_{\et,\mot} = \LL_{\AA^1}\LL_{\et}: \SH^{S^1}(S) \rightarrow \SH^{S^1}_{\et}(S),
\] for the $\AA^1$-localization and $\et$-sheafification of presheaves of $\Nis$-local spectra, while we write 
\[
\epsilon^*:\SH(S) \rightarrow \SH_{\et}(S)
\] for \'{e}tale localization in the sense of Proposition~\ref{prop:loc}.

\subsubsection{} Before we proceed with motivic homotopy theory, we require some ingredients from the theory of presheaves of spectra. The first is a convergence statement for $\et$-local presheaves of spectra, see also \cite{jardine-spre}*{Lemma 3.4}, \cite{SAG}*{Corollary 1.3.3.11}.

\begin{proposition} \label{prop:jardine-improved} Let $k$ be a field such that $\cd_{\ell}(k) < \infty$, and let $$\cdots \rightarrow \E_{i+1} \rightarrow \E_i \rightarrow \E_{i-1} \rightarrow \cdots \rightarrow \E_0$$ be a tower in $\P_{\Spt}(k).$ Let $\fib_i(\E)$ denote the fiber of $\E_i \rightarrow \E_{i-1}$, and suppose that the following condition holds:
\begin{itemize} \label{eqn:tower-high-conn}
\item Let $\conn(i)$ denote the connectivity of $\fib_i(E)$ in $\P_{\Spt}(\Sm_k).$ Then for any $n \geq 0$, there exists an $i \gg 0$ such that for any $i' \geq i$, we have $\conn(i') \geq n$.
\end{itemize}
Then for any $\nu \geq 1$ there is a natural equivalence: 
$$
\LL_{\et}(\lim_{i} \E_i/\ell^{\nu}) \overset{\simeq}{\rightarrow} \lim_i \LL_{\et}(\E_i/\ell^{\nu}).
$$
\end{proposition}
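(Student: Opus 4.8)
The plan is to verify the asserted equivalence sectionwise. Since a map of presheaves of spectra is an equivalence precisely when it is so after evaluating at every $X\in\Sm_k$, and since for each such $X$ one has a natural identification $\LL_{\et}(F)(X)\simeq R\Gamma_{\et}(X,F|_{X_{\et}})$ with \'etale hypercohomology on the small site $X_{\et}$ (the standard comparison of the big and small \'etale topoi), it suffices to show that for every $X\in\Sm_k$ the natural map $R\Gamma_{\et}(X,\lim_i \E_i/\ell^{\nu})\to \lim_i R\Gamma_{\et}(X,\E_i/\ell^{\nu})$ is an equivalence. Here I will use freely that the reduction $-/\ell^{\nu}$ is exact, hence commutes with fibers, with $\LL_{\et}$, and — being $\Sigma\circ\fib(\ell^{\nu})$ — with limits of presheaves, so that $\lim_i(\E_i/\ell^{\nu})\simeq(\lim_i\E_i)/\ell^{\nu}$.

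The key input is bounded \'etale cohomological dimension with torsion coefficients. If $\dim X=e$ then $\cd_{\ell}(U)\le \cd_{\ell}(k)+e=:d_X$ for every $U\in X_{\et}$, using subadditivity of $\cd_{\ell}$ along smooth morphisms and the hypothesis $\cd_{\ell}(k)<\infty$. Everything in sight is $\ell^{\nu}$-torsion after the reduction, so for any presheaf $F$ of $\ell^{\nu}$-torsion spectra the \'etale descent spectral sequence $E_2^{s,t}=H^s_{\et}(X,\underline{\pi}^{\et}_{-t}F)\Rightarrow\pi_{-s-t}R\Gamma_{\et}(X,F)$ is strongly convergent, by \cite{jardine-spre}*{Lemma 3.4}, \cite{SAG}*{Corollary 1.3.3.11}, and is concentrated in the columns $0\le s\le d_X$. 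In particular $R\Gamma_{\et}(X,-)$ raises connectivity of such presheaves by at most $d_X$, and on the small site $X_{\et}$ this bound is uniform.

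Next I would feed in the hypothesis on the tower. The fibers of $\{\E_i/\ell^{\nu}\}$ are $\fib_i(\E)/\ell^{\nu}$, which are $f(i)$-connective, and the fibers of $\{\LL_{\et}(\E_i/\ell^{\nu})\}=\{R\Gamma_{\et}(-,\E_i/\ell^{\nu})\}$ over $X_{\et}$ are then $(f(i)-d_X)$-connective. Since for every $n$ one has $f(i')\ge n$ for all $i'\gg 0$, both towers have fibers of connectivity tending to $\infty$; hence each stabilizes in every fixed homotopy degree, with vanishing $\lim^1$. Thus for each $n$ there is $i(n)$ with $\underline{\pi}_n(\lim_i\E_i/\ell^{\nu})\cong\underline{\pi}_n(\E_{i(n)}/\ell^{\nu})$ as presheaves, hence also $\underline{\pi}^{\et}_n(\lim_i\E_i/\ell^{\nu})\cong\underline{\pi}^{\et}_n(\E_{i(n)}/\ell^{\nu})$ after \'etale sheafification, and similarly $\pi_n\lim_i R\Gamma_{\et}(X,\E_i/\ell^{\nu})\cong\pi_n R\Gamma_{\et}(X,\E_{i(n)}/\ell^{\nu})$. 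Now fix $X$ and a degree $j$, and pick $i^{\ast}$ with $f(i^{\ast})$ large relative to $j$ and $d_X$. On one side this gives $\pi_j\lim_i R\Gamma_{\et}(X,\E_i/\ell^{\nu})\cong\pi_j R\Gamma_{\et}(X,\E_{i^{\ast}}/\ell^{\nu})$. On the other, the descent spectral sequence for $R\Gamma_{\et}(X,\lim_i\E_i/\ell^{\nu})$ has, in the finitely many columns $0\le s\le d_X$ and finitely many total degrees relevant to $\pi_j$, the same $E_2$-terms as that for $R\Gamma_{\et}(X,\E_{i^{\ast}}/\ell^{\nu})$, so it agrees with it there and $\pi_j R\Gamma_{\et}(X,\lim_i\E_i/\ell^{\nu})\cong\pi_j R\Gamma_{\et}(X,\E_{i^{\ast}}/\ell^{\nu})$. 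Both identifications are compatible with the natural map, since both sides sit in commuting triangles over the canonical map to $R\Gamma_{\et}(X,\E_{i^{\ast}}/\ell^{\nu})$. Hence the map is an isomorphism on all $\pi_j$, so an equivalence, for each $X$, which finishes the proof.

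The hard part will be making the middle step rigorous: setting up the descent spectral sequence with the correct torsion hypotheses so that it is simultaneously concentrated in finitely many columns and strongly convergent, and then handling the bookkeeping forced by the fact that the bound $d_X=\cd_{\ell}(k)+\dim X$ is not uniform over $\Sm_k$ — which is exactly why the argument must be carried out one section at a time rather than at the level of homotopy sheaves on all of $\Sm_k$. The big-site/small-site comparison for $\LL_{\et}$ is routine but should be cited carefully.
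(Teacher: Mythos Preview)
Your proposal is correct and follows essentially the same strategy as the paper: use the connectivity hypothesis together with the Milnor $\lim$--$\lim^1$ sequence to see that both towers stabilize in each homotopy degree, then invoke the strongly convergent \'etale descent spectral sequence (via the $\cd_{\ell}$ bound) to conclude.

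The one packaging difference is that the paper argues at the level of \'etale homotopy \emph{sheaves} rather than sectionwise: it simply checks that the canonical map $\E/\ell^{\nu}\to\lim_i\LL_{\et}(\E_i/\ell^{\nu})$ has $\et$-local target (limits of $\et$-local are $\et$-local) and induces isomorphisms on $\underline{\pi}_n^{\et}$ for all $n$, using that $\underline{\pi}_n^{\et}(\LL_{\et}F)\cong\underline{\pi}_n^{\et}(F)$ because $\LL_{\et}$ does not change stalks. This bypasses your big-site/small-site comparison and avoids having to track the $X$-dependent bound $d_X=\cd_{\ell}(k)+\dim X$: the stalks live at strictly henselian local rings, so no uniform bound is needed to see the sheaf-level stabilization. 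Your sectionwise argument is a valid alternative and is arguably more explicit about where the descent spectral sequence is actually used, but the detour through $R\Gamma_{\et}(X,-)$ on the small site is not necessary.
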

\begin{proof} Denote by $\E/\ell^{\nu}$ the limit of the tower:
$$\cdots \rightarrow \E_{i+1}/\ell^{\nu}  \rightarrow \E_i/\ell^{\nu}  \rightarrow \E_{i-1}/\ell^{\nu}  \rightarrow \cdots \rightarrow \E_0/\ell^{\nu} ,$$
in $\P_{\Spt}(\Sm_k)$. We have canonical morphisms: $$\E/\ell^{\nu} \rightarrow \E_i/\ell^{\nu} \rightarrow \LL_{\et}\E_i/\ell^{\nu}.$$ The inclusion $\P_{\Spt,\et}(k) \hookrightarrow \P_{\Spt}(k)$  preserves limits since it is a right adjoint, and thus $\lim  \LL_{\et}\E_i/\ell^{\nu}$ is $\et$-local. Our goal is then to prove that the map: 
\begin{equation} \label{eqn:et-lim}
\E/\ell^{\nu} \rightarrow  \lim \LL_{\et}\E_i/\ell^{\nu},
\end{equation}
witnesses an $\et$-localization. We first claim that for any $n \geq 0$ the map on homotopy sheaves: $$\underline{\pi}_n^{\et}(\E/\ell^{\nu}) \rightarrow  \underline{\pi}_n^{\et}(\lim \LL_{\et}\E_i/\ell^{\nu}),$$ is an isomorphism. On homotopy presheaves, choose $i \gg 0$ such that: $$\cdots \cong \underline{\pi}_n(\E_{i+2}) \iso   \underline{\pi}_n(\E_{i+1}) \iso \underline{\pi}_n(\E_{i}),$$ which we can by the assumption on the connectivity of $\fib_i(\E).$ Therefore, by the Milnor $\lim$-$\lim^{1}$-sequence we have isomorphisms: 
\begin{equation} \label{eqn:lim-pi}
\underline{\pi}_n(\lim \E_i) \iso \lim \underline{\pi}_n(\E_i) \iso \underline{\pi}_n(\E_i), 
\end{equation}
for all $i \gg 0$. Having this, we obtain the isomorphisms:
\begin{eqnarray*}
\underline{\pi}_n^{\et}(\E/\ell^{\nu}) 
=  
\underline{\pi}_n^{\et}(\lim_i \E_i/\ell^{\nu}) 
\iso 
a_{\et}(\underline{\pi}_n \E_i/\ell^{\nu})
\iso 
\underline{\pi}_n^{\et}( \E_i/\ell^{\nu})
\iso 
\underline{\pi}_n^{\et}(\LL_{\et}\E_i/\ell^{\nu}). 
\end{eqnarray*}

Here, the first two isomorphisms are due to~\eqref{eqn:lim-pi}, 
and the third isomorphism is due to the fact that \'{e}tale sheafification does not change stalks \footnote{To see this: note that, by the finiteness assumption on cohomological dimension, we have that Postnikov and hypercompletion coincide \cite{clausen-mathew}*{Proposition 2.10}. As explained in \cite{clausen-mathew}*{Construction 2.25}, we can use the Godement resolution in order to compute Postnikov sheafification. Therefore we consider the map $\E/\ell^{\nu} \rightarrow L_{\et}\E/\ell^{\nu}$ and restrict it to a strictly henselian local ring. Since any strictly henselian local ring does not have any nontrivial \'etale cover, the Godement resolution is trivial and we the map $\E/\ell^{\nu} \rightarrow L_{\et}\E/\ell^{\nu}$ induces an isomorphism on homotopy groups since the Godement resolution commutes with homotopy groups \cite{aktec}*{(1.26)}.
}. 
Now the descent spectral sequence: 
$$H^p(X, \underline{\pi}^{\et}_{-q}(\E/\ell^{\nu})) \Rightarrow [X, \Sigma^{p+q}\LL_{\et}\E/\ell^{\nu}]_{\SH_{\et}(k)},$$ 
which is strongly convergent due to the cohomological dimension assumption, 
shows that the map $\E/\ell^{\nu} \rightarrow  \lim \LL_{\et}\E_i/\ell^{\nu}$ is an $\et$-localization, 
as desired.
\end{proof}

\subsubsection{} \label{sect:descent-ss-for-mot} 
Next, recall that we have the descent spectral sequence for $\et$-local motivic spectra, 
which boils down to a Bousfield-Kan spectral sequence as in \cite{aktec}*{Proposition 1.36}.

\begin{proposition} 
\label{prop:dss} For $\E \in \SH(S)$, there is a spectral sequence:
\begin{equation}
E^2_{p,q} = H^p_{\et}(S, \underline{\pi}^{\et}_{-q}(\Omega^{\infty}_{\GG_m}\Sigma^{t,t}\E^{\et})) \Rightarrow [\sspt_S, \Sigma^{p+q+t,t}\E^{\et}]_{\SH(S)}.
\end{equation}
This spectral sequence is strongly convergent if the homotopy sheaves $\underline{\pi}^{\et}_{-q}(\Omega^{\infty}_{\GG_m}\Sigma^{t,t}\E^{\et})$ are $\ell$-torsion 
and the residue characteristics of $S$ have finite $\ell$-cohomological dimension or if the homotopy sheaves $\underline{\pi}^{\et}_{-q}(\Omega^{\infty}_{\GG_m}\Sigma^{t,t}\E^{\et})$ 
vanish for $-q \ll 0.$
\end{proposition}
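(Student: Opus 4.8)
The plan is to identify the displayed spectral sequence with the classical descent (hypercohomology) spectral sequence for a sheaf of spectra on the small \'etale site of $S$, in the manner of \cite{aktec}*{Proposition 1.36}. Set $\F := \Omega^{\infty}_{\GG_m}\Sigma^{t,t}\E^{\et}$. Since $\E^{\et}=\pi_*\pi^*\E$ is \'etale-local in $\SH(S)$ by Proposition~\ref{prop:loc}, Lemma~\ref{lem:concrete} shows that $\F$ is an $\AA^1$-invariant presheaf of spectra satisfying \'etale hyperdescent; restricting along $\Et_S\hookrightarrow\Sm_S$ yields a hypercomplete sheaf of spectra on the small \'etale site, so that its value $\F(S)$ on the terminal object computes $\mathbf{R}\Gamma_{\et}(S,\F)$. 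Unwinding the definition of $\Omega^{\infty}_{\GG_m}$ and using $\Sigma^{p,q}\pi_*\pi^*\simeq\pi_*\pi^*\Sigma^{p,q}$ from Remark~\ref{rem:susp-sheaf}, one checks the identification $[\sspt_S,\Sigma^{p+q+t,t}\E^{\et}]_{\SH(S)}\cong\pi_{-p-q}\F(S)$, so the intended abutment is exactly the homotopy of the \'etale hypercohomology spectrum of $\F$.

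I would then construct the spectral sequence from the Postnikov tower $\{\tau_{\leq n}\F\}_n$ of $\F$, whose successive layers are the Eilenberg--MacLane sheaves $\underline{\pi}^{\et}_{n}(\F)[n]$. Applying $\mathbf{R}\Gamma_{\et}(S,-)$ gives a tower of spectra; since $\F$ is hypercomplete and $\mathbf{R}\Gamma_{\et}(S,-)$ preserves limits, its limit is $\mathbf{R}\Gamma_{\et}(S,\F)$, and the associated spectral sequence has $E^2$-term $H^p_{\et}(S,\underline{\pi}^{\et}_{-q}\F)$ and is conditionally convergent to $\pi_{-p-q}\mathbf{R}\Gamma_{\et}(S,\F)$. (Alternatively one runs the Bousfield--Kan spectral sequence of the cosimplicial spectrum attached to an \'etale hypercover, exactly as in \cite{aktec}*{Proposition 1.36}; the two spectral sequences coincide.)

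For strong convergence I would invoke Boardman's criterion \cite{boardman}*{Theorem 7.1}: a conditionally convergent spectral sequence converges strongly once $RE_{\infty}=0$, which holds as soon as the $E^2$-page carries a vanishing line. In the torsion case, the hypothesis that the residue fields have finite $\ell$-cohomological dimension, together with the standing assumption that $S$ is Noetherian of finite Krull dimension, yields $\cd_{\ell}(S)<\infty$ by a standard d\'evissage; since the sheaves $\underline{\pi}^{\et}_{-q}\F$ are $\ell$-torsion, $E^2_{p,q}$ then vanishes for $p>\cd_{\ell}(S)$, the spectral sequence collapses at a finite page, and Boardman applies. In the remaining case $\F$ is bounded below after a shift, so the Postnikov filtration vanishes below a fixed degree and one is reduced to the familiar strong convergence of the descent spectral sequence of a connective sheaf of spectra. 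The hard part is the convergence bookkeeping itself: because the diagram of \S\ref{sect:doesnotdeserve} fails to commute --- explicitly as in Example~\ref{exmp:doesnotdeserve2} --- one cannot read off $\underline{\pi}^{\et}_{-q}\F$ from the Nisnevich homotopy sheaves of $\E$, so the vanishing line has to be extracted directly from the hypotheses on $\F$, and verifying that the d\'evissage yielding $\cd_{\ell}(S)<\infty$ and the application of Boardman's criterion mesh correctly is the delicate point.
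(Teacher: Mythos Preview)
Your proposal is correct and follows essentially the same approach as the paper: identify the spectral sequence as the standard descent/hypercohomology spectral sequence for the presheaf of spectra $\F=\Omega^{\infty}_{\GG_m}\Sigma^{t,t}\E^{\et}$, which satisfies \'etale hyperdescent by Proposition~\ref{prop:loc} (the paper cites this directly rather than going through Lemma~\ref{lem:concrete}), and then appeal to the general format in \cite{aktec}*{Proposition 1.36} or \cite{jardine}*{Section 6.1} together with the standard convergence criteria in \cite{aktec}*{5.44--5.48}. Your Postnikov-tower description and explicit invocation of Boardman's criterion unpack those references a bit more than the paper does, and your closing remarks about \S\ref{sect:doesnotdeserve} are motivation for what comes next rather than part of the proof proper, but the substance is the same.
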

\begin{proof} 
For any $t \in \ZZ$, consider the presheaf of $S^{1}$-spectra $\Omega^{\infty}_{\GG_m} \Sigma^{t,t} \E^{\et}$. 
This presheaf satisfies $\et$-hyperdescent by Proposition~\ref{prop:loc}. 
The general format of the descent spectral sequence (see for example \cite{jardine}*{Section 6.1} or \cite{aktec}*{Proposition 1.36}) is: 
$$
H^p_{\et}(S, \underline{\pi}^{\et}_{-q}\F) \Rightarrow \pi_0\Maps_{\SH_{\et}^{S^{1}}(S)}(\sspt_S, \Sigma^{p+q}\F),
$$ 
for $\F$ a presheaf of spectra satisfying $\et$-hyperdescent. 
Plugging in $\Omega^{\infty}_{\GG_m} \Sigma^{t,t} \E^{\et}$ for $\F$ gets us the desired descent spectral sequence. 
The convergence statements follow from standard convergence criteria for the descent spectral sequence discussed in \cite{aktec}*{5.44-5.48}.
\end{proof}

\subsection{$\GG_m$-prespectra} 
We recall that the adjunction~\eqref{eqn:stab-adjunction} factors as the $S^1$-stabilization of $\H_{\tau}(S)$:
\begin{equation} 
\label{eqn:stab-adjunction2}
\Sigma^{\infty}_{S^1,+}: \H_{\tau}(S) \rightleftarrows \SH^{S^1}_{\tau}(S):\Omega^{\infty}_{S^1},
\end{equation}
and the process of $(\GG_m,1)$-inversion described in the sense discussed in Corollary~\ref{thm:robalo-stab2}:
\begin{equation} 
\label{eqn:stab-adjunction3}
\Sigma^{\infty}_{\GG_m,+}: \SH^{S^1}_{\tau}(S) \rightleftarrows \SH_{\tau}(S)= \SH^{S^1}_{\tau}(S)[ (\GG_m,1)^{-1}]:\Omega^{\infty}_{\GG_m}.
\end{equation}
By the formula given in~\eqref{eq:colim-stab}, the $\infty$-category $\SH_{\tau}(S)$ is obtained from $\SH^{S^1}_{\tau}(S)$ by the colimit:
\begin{equation} 
\label{eqn:colim-gm}
\SH^{S^1}_{\tau}(S) \stackrel{(\GG_m,1)\wedge -}{\rightarrow} \SH^{S^1}_{\tau}(S) \stackrel{(\GG_m,1)\wedge -}{\rightarrow}  \SH^{S^1}_{\tau}(S) \rightarrow \cdots,
\end{equation}
computed $\Mod_{\H_{\tau}(S)}(\Pr^L)$.
Since colimits in $\Mod_{\H_{\tau}(S)}(\Pr^L)$  can also be computed as a limit in $\Mod_{\H_{\tau}(S)}(\Pr^R)$, 
where the transition maps are the right adjoints \cite{htt}*{5.5.3}, 
and the forgetful functor $\Mod_{\H_{\tau}(S)}(\Pr^R) \rightarrow \Cat_{\infty}$ preserves limits, 
the underlying $\infty$-category of $\SH_{\tau}(S)$ is computed as the limit of the diagram:
\begin{equation} \label{eqn:lim-gm}
\SH^{S^1}_{\tau}(S) \stackrel{\Omega_{\GG_m}}{\leftarrow} \SH^{S^1}_{\tau}(S) \stackrel{\Omega_{\GG_m}}{\leftarrow}  \SH^{S^1}_{\tau}(S) \stackrel{\Omega_{\GG_m}}{\leftarrow} \cdots, 
\end{equation}
in the $\infty$-category of $\infty$-categories, $\Cat_{\infty}$.
This formulation lets us speak of $\GG_m$-prespectra as limits in $\Cat_{\infty}$ that can be described ``pointwise." 
The following discussion can be seen as a formulation of the usual notion of ``$\GG_m$-$S^1$" bispectra following \cite{hovey}, \cite{jardine-symspec}, and \cite{vro}

\subsubsection{} \label{sect:gm-pre} 
To elaborate on this point, 
let $L \subset \NN$ be the $1$-skeleton of the nerve of the poset defined by the natural numbers. 
The diagram~\eqref{eqn:lim-gm} determines a functor: 
$$
p:L^{\op} \rightarrow \Cat_{\infty}.
$$ 
The $\infty$-categorical Grothendieck construction \cite{htt}*{\S 3.2} applied to the functor $p$ furnishes a Cartesian fibration: 
$$
q:\int \SH^{S^1}_{\tau}(S) \rightarrow L,
$$
such that the $\infty$-category of Cartesian sections of $q$ is equivalent to the limit of \eqref{eqn:lim-gm}, 
i.e., 
the $\infty$-category $\SH_{\tau}(S)$ \cite{htt}*{Corollary 3.3.3.2}. 
Concretely, 
this means that to specify an object of $\SH_{\tau}(S)$ is equivalent to:
\begin{itemize}
\item specifying objects $\{ \E_i \in \SH^{S^1}_{\tau}(S)\}_{i \in \NN}$, 
\item and bonding maps $\epsilon_i: \E_i \rightarrow \Omega_{\GG_m}\E_{i+1}$, 
which are equivalences.
\end{itemize}
We shall call such a section a \emph{$\GG_m$-spectrum in $\SH^{S^1}_{\tau}(S)$}. 
The last condition is implied by the demand that the section be Cartesian.
However, 
we may also speak of the $\infty$-category of \emph{$\GG_m$-prespectra in $\SH^{S^1}_{\tau}(S)$}, which is defined as sections of $q$ which are not necessarily Cartesian. 
Concretely,  specifying a $\GG_m$-prespectrum is equivalent to:
\begin{itemize}
\item specifying objects $\{ \E_i \in \SH^{S^1}_{\tau}(S)\}_{i \in \NN}$,
\item and bonding maps $\epsilon_i: \E_i \rightarrow \Omega_{\GG_m}\E_{i+1}$, which need not be equivalences.
\end{itemize}
\subsubsection{} We denote by $\SH^{\pre}_{\tau}(S):= \Fun_{L}(L, \int \SH^{S^1}_{\tau}(S))$ the $\infty$-category of $\GG_m$-prespectra, 
which is defined as the space of sections of the Cartesian fibration $q$; 
\cite{htt}*{Corollary 3.3.3.2} implies that there is a fully faithful embedding $u:\SH_{\tau}(S) \hookrightarrow \SH^{\pre}_{\tau}(S)$ as the Cartesian sections. 
According to \cite{hoyois-cdh}*{Page 7} this defines a localization, 
i.e., 
$u$ has a left adjoint:
\begin{equation} 
\label{eq:spectrification}
Q: \SH^{\pre}_{\tau}(S) \rightleftarrows \SH_{\tau}(S): u,
\end{equation}
which we call \emph{spectrification}.
It is computed by the formula~\eqref{eqn:model-tau-spectrify} which we discuss in~\S\ref{sect:transfinite}.

For the rest of the paper we will specify $\GG_m$-prespectra in $\SH^{S^1}_{\tau}(S)$ by writing a sequence of objects in $\SH^{S^1}_{\tau}(S)$:
$$
(\E_0, \E_1, \cdots ), \E_i \in \SH^{S^1}_{\tau}(S).
$$
The bonding maps will always be clear from the context.


\subsubsection{} 
\label{sect:application} 
Let $\E \in \SH(S)$ be a motivic spectrum. 
By the discussion of~\S\ref{sect:gm-pre}, 
we obtain a $\GG_m$-spectrum in $\SH^{S^1} (S)$, 
specified by the sequence of objects in $\SH^{S^1} (S)$: 
$$
(\Omega^{\infty}_{\GG_m}\E, \Omega^{\infty}_{\GG_m}\Sigma^{1,1}\E, \Omega^{\infty}_{\GG_m}\Sigma^{2,2}\E,\cdots),
$$ 
such that the bonding maps: 
$$
\Omega^{\infty}_{\GG_m}\Sigma^{i,i}\E \stackrel{\simeq}{\rightarrow} 
\Omega_{\GG_m}\Omega^{\infty}_{\GG_m}\Sigma^{i+1,i+1}\E,
$$ 
are equivalences for all $i \in \NN$. 
Now, we have an inclusion $\pi_{*,\pre}:\SH^{\pre}_{\et}(S) \hookrightarrow  \SH^{\pre}(S)$ over $L$, 
where $\SH^{\pre}_{\et}(S)$ is identified with the full subcategory of $\GG_m$-prespectra in $\SH^{S^1}(S)$ spanned by those prespectra such that $(\Omega^{\infty}_{\GG_m}\E)$ is $\et$-local, 
i.e., 
sections of $\int \SH^{S^1}(S) \rightarrow L$ which lands in $\SH^{S^1}_{\et}(S) \hookrightarrow \SH(S)$. 
Since the $\infty$-category of $\GG_m$-prespectra in $\SH^{S^1}(S)$ is described as a functor category (being a category of sections) and colimits are computed pointwise in such categories, 
$\pi_{*,\pre}$ preserves limits and thus admits a left adjoint: 
$$
\pi^{*,\pre}: \SH^{\pre}(S) \rightarrow \SH^{\pre}_{\et}(S).
$$

\subsubsection{} 
The functor $\pi^{*,\pre}$ is computed by applying: 
$$
 \LL_{\et,\mot}: \SH^{S^1}(S) \rightarrow \SH^{S^1}_{\et}(S),
$$ 
to each individual presheaves of spectra to obtain a $\GG_m$-prespectrum in $\SH^{S^1}_{\et}(S)$ in the sense of~\S\ref{sect:gm-pre} just discussed:
\begin{equation} \label{eqn:pt-wise}
\pi^{*,\pre}\E:= (\LL_{\et,\mot}\Omega^{\infty}_{\GG_m}\E, \LL_{\et,\mot}\Omega^{\infty}_{\GG_m}\Sigma^{1,1}\E, \LL_{\et,\mot}\Omega^{\infty}_{\GG_m}\Sigma^{2,2}\E, \cdots ).
\end{equation} 
Here, 
for each $i \in \NN$, 
the map: 
$$
\Omega^{\infty}_{\GG_m}\Sigma^{i,i}\E \simeq \Omega^{\infty}_{\GG_m}\Omega_{\GG_m}\Sigma^{i+1,i+1}\E  
\stackrel{\simeq}{\rightarrow} 
\Omega_{\GG_m}\Omega^{\infty}_{\GG_m}\Sigma^{i+1,i+1}\E 
\rightarrow 
\Omega_{\GG_m}\LL_{\et,\mot}\Omega^{\infty}_{\GG_m}\Sigma^{i+1,i+1}\E,
$$ 
induces the bonding map: 
$$
\LL_{\et,\mot}\Omega^{\infty}_{\GG_m}\Sigma^{i,i}\E 
\rightarrow 
\Omega_{\GG_m}\LL_{\et,\mot}\Omega^{\infty}_{\GG_m}\Sigma^{i+1,i+1}\E,
$$ 
since the target is $\AA^1$-invariant and $\et$-local. 
Indeed, 
a morphism in $\SH^{\pre}(S)$ is just a natural transformation $f: \E \rightarrow \F$ over $L$ between functors $\E, \F: L \rightarrow \int \SH^{S^1}(S)$. 
Hence if each term of $\F$ has \'{e}tale descent then $f$ must factor through $\pi^{*,\pre}\E$. Note that the bonding maps in $\pi^{*,\pre}\E$ are not necessarily equivalences and so $\pi^{*,\pre}\E$ \emph{does not} necessarily descend to an object of $\SH_{\et}(S)$.

\subsubsection{}  
The following lemma identifies $Q(\epsilon^*\E^{\et})$ as the \'{e}t-localization of $\E$:
\begin{lemma} 
\label{lem:spect} 
The functor $\epsilon^*: \SH(S) \rightarrow \SH_{\et}(S)$ is calculated as the composite functor 
$$
\SH(S) \stackrel{\pi^{*,\pre}}{\rightarrow} \SH^{\pre}_{\et}(S) \stackrel{Q}{\rightarrow} \SH_{\et}(S)\footnote{In fact, this claim is true for any topology $\tau$.}.
$$ 
\end{lemma}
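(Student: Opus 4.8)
The plan is to enlarge the picture to the categories of $\GG_m$-prespectra and compare two a priori different colimit-preserving functors into $\SH_{\et}(S)$, then restrict back. Write $u_{\Nis}\colon\SH(S)\hookrightarrow\SH^{\pre}(S)$ and $u_{\et}\colon\SH_{\et}(S)\hookrightarrow\SH^{\pre}_{\et}(S)$ for the fully faithful inclusions of Cartesian sections of \S\ref{sect:gm-pre} (with left adjoints the spectrification functors $Q$), and recall the adjunction $\pi^{*,\pre}\dashv\pi_{*,\pre}$ of \S\ref{sect:application}, with $\pi_{*,\pre}\colon\SH^{\pre}_{\et}(S)\hookrightarrow\SH^{\pre}(S)$ the fully faithful, termwise inclusion. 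I would then set $G_1:=Q\circ\pi^{*,\pre}$ and $G_2:=\pi^*\circ Q$, two functors $\SH^{\pre}(S)\to\SH_{\et}(S)$, each a composite of left adjoints. The lemma follows once $G_1\simeq G_2$ is known: precomposing with $u_{\Nis}$ turns $G_1$ into the composite $Q\circ\pi^{*,\pre}\circ u_{\Nis}$ in the statement, while $G_2\circ u_{\Nis}=\pi^*\circ Q\circ u_{\Nis}\simeq\pi^*$ because $Q\circ u_{\Nis}\simeq\id_{\SH(S)}$, $u_{\Nis}$ being fully faithful with left adjoint $Q$.

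To prove $G_1\simeq G_2$ I would match their right adjoints, which suffices by uniqueness of adjoints. Reading off the composites, $G_1^R=\pi_{*,\pre}\circ u_{\et}$ and $G_2^R=u_{\Nis}\circ\pi_*$, so the task becomes showing $\pi_{*,\pre}\circ u_{\et}\simeq u_{\Nis}\circ\pi_*$ as functors $\SH_{\et}(S)\to\SH^{\pre}(S)$. Unwinding the Grothendieck-construction bookkeeping of \S\ref{sect:gm-pre}, $\pi_{*,\pre}\circ u_{\et}$ takes a $\GG_m$-spectrum $(\F_i)$ in $\SH^{S^{1}}_{\et}(S)$ to the same sequence regarded termwise as a $\GG_m$-prespectrum in $\SH^{S^{1}}(S)$; this is legitimate because the inclusion $\SH^{S^{1}}_{\et}(S)\hookrightarrow\SH^{S^{1}}(S)$ commutes with $\Omega_{\GG_m}$ (the $\GG_m$-loops of an $\AA^1$-invariant, $\et$-local presheaf of spectra is given by the same fiber formula in either category and stays $\et$-local, so the bonding maps are unambiguous). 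Thus the real content is that $\pi_*$ applied to a $\GG_m$-spectrum is \emph{also} computed termwise by the inclusion $\SH^{S^{1}}_{\et}(S)\hookrightarrow\SH^{S^{1}}(S)$.

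That statement I would obtain by passing to right adjoints in the square of left adjoints
\[
\pi^*\circ\Sigma^{\infty}_{\GG_m,+}\ \simeq\ \Sigma^{\infty}_{\GG_m,+}\circ\LL_{\et,\mot}\ \colon\ \SH^{S^{1}}(S)\longrightarrow\SH_{\et}(S),
\]
applying Remark~\ref{rem:susp-sheaf} to propagate the resulting identification through the twists $\Sigma^{i,i}$ that extract the individual terms of a $\GG_m$-spectrum. The displayed square commutes because $\LL_{\et,\mot}$ is a symmetric monoidal, colimit-preserving localization, hence by the universal property of inverting $\GG_m$ (Theorem~\ref{thm:robalo-stab1}) it extends uniquely to a symmetric monoidal colimit-preserving functor $\SH(S)=\SH^{S^{1}}(S)[\GG_m^{-1}]\to\SH^{S^{1}}_{\et}(S)[\GG_m^{-1}]=\SH_{\et}(S)$, and by Proposition~\ref{prop:loc} this extension is the localization at $\Sigma^{\infty}_{T,+}W_{\et}$, i.e.\ it is $\pi^*$; alternatively, I would just evaluate both composites on the generators $\Sigma^{\infty}_{S^{1},+}X_+$ with $X\in\Sm_S$ (Proposition~\ref{prop:generation}), where each returns $\Sigma^{\infty}_{T,+}X_+\in\SH_{\et}(S)$ since $\pi^*$ preserves representables.

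The main obstacle is precisely this last identification. The delicate point is that $\LL_{\et,\mot}$ does \emph{not} commute with $\Omega_{\GG_m}$ — this is the content of \S\ref{sect:doesnotdeserve} and Example~\ref{exmp:doesnotdeserve2} — yet it \emph{does} commute with $-\wedge\GG_m$, and it is exactly this one-sided compatibility that makes the termwise \'etale localization of $S^{1}$-spectra assemble into the stable functor $\pi^*$ and, dually, makes $\pi_*$ computable one level at a time. Once this is in hand, everything else is formal manipulation of adjoints.
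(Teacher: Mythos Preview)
Your argument is correct, but it takes a genuinely different route from the paper's. The paper argues directly that $Q\circ\pi^{*,\pre}$, restricted to $\SH(S)$, is a localization with the same local objects as $\pi^*$: citing \cite{htt}*{Proposition 5.2.7.4}, it suffices to check that the essential image of $Q\circ\pi^{*,\pre}$ consists precisely of the $\et$-local spectra (which follows from Lemma~\ref{lem:concrete}) and that $Q\circ\pi^{*,\pre}$ is left adjoint to this inclusion (which follows because both $Q$ and $\pi^{*,\pre}$ are themselves localizations). Your approach instead passes to the ambient prespectrum categories, introduces the auxiliary comparison $G_1=Q\circ\pi^{*,\pre}$ versus $G_2=\pi^*\circ Q$ on all of $\SH^{\pre}(S)$, and matches them by identifying their right adjoints termwise, ultimately reducing to the commutation $\pi^*\circ\Sigma^{\infty}_{\GG_m,+}\simeq\Sigma^{\infty}_{\GG_m,+}\circ\LL_{\et,\mot}$ on $S^1$-spectra. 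The paper's proof is more economical; yours has the virtue of making explicit that $\pi_*$ is computed levelwise on $\GG_m$-spectra, a fact that is implicitly relied upon later (e.g.\ in the connectivity analysis of \S\ref{sect:some-conn}) but never isolated as a standalone statement.
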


\begin{proof} Using \cite{htt}*{Proposition 5.2.7.4} we need  to check that the essential image of $Q\circ\pi^{*,\pre}$ is the essential image of $\epsilon^*$, i.e., $\et$-local spectra in the sense of Proposition~\ref{prop:loc} and that $Q\circ\pi^{*,\pre}$ is left adjoint to $Q\circ\pi^{*,\pre}(\SH(S)) \subset \SH(S).$ The first statement follows from the characterization of objects in $\SH_{\et}(S)$ in Lemma~\ref{lem:concrete}, the second statement follows since the analogous statements are also true for the localization functors $Q$ and $\pi^{*,\pre}.$
\end{proof}

\subsubsection{} 
We will now define a condition on a motivic spectrum $\E \in \SH(S)$ such that its $\et$-localization can be controlled. 
To motivate the next definition, 
let us examine Example~\ref{exmp:doesnotdeserve2} in the language we just set up.

\begin{remark}  
In the notation of this section, 
the $\GG_m$-prespectrum in $\SH^{S^1}_{\et}(S)$ specified in~\eqref{eqn:mz-example} is exactly $\pi^{*,\pre}\Sigma^{0,-1}\M\ZZ/\ell$.  
Note that~\eqref{eq:no-commute} tells us that $\pi^{*,\pre}\Sigma^{0,-1}\M\ZZ/\ell$ is \emph{not} equivalent to $Q(\pi^{*,\pre}\Sigma^{0,-1}\M\ZZ/\ell) \simeq \Sigma^{0,-1}\epsilon^*\M\ZZ/\ell$, 
i.e., 
it is not a $\GG_m$-spectrum in $\SH^{S^1}_{\et}(S)$. 
However, 
we note that each bonding map $\ZZ/\ell_{\et}(t-1)[t] \rightarrow \Omega_{\GG_m}\ZZ/\ell_{\et}(t)[t+1]$ is an equivalence for $t \geq 1$. 
This motivates the next definition.
\end{remark}

\begin{definition} 
\label{defn:et-a1-naive} 
Let $\E \in \SH (S)$ and consider the $\GG_m$-prespectrum $\pi^{*,\pre}\E$ which is specified by $\{\LL_{\et,\mot}\Omega^{\infty}_{\GG_m}\Sigma^{i,i}\E\}_{i \in \NN}$.  
We say that \emph{$\E$ is \'{e}tale-$\AA^1$ naive above degree $r$} if for all $i \geq r$:
\begin{itemize}
\item 
the natural map 
\[\LL_{\et}\Omega^{\infty}_{\GG_m}\Sigma^{i,i}\E \rightarrow  \LL_{\AA^1}\LL_{\et}\Omega^{\infty}_{\GG_m}\Sigma^{i,i}\E = \LL_{\et,\mot}\Omega^{\infty}_{\GG_m}\Sigma^{i,i}\E
\] 
is an equivalence in $\P_{\et,\Spt}(k)$, 
i.e., 
the $\et$-local presheaf of spectra $\LL_{\et}\Omega^{\infty}_{\GG_m}\Sigma^{i,i}\E$ is $\AA^1$-invariant and,
\item 
the induced map 
\[\LL_{\et}\Omega^{\infty}_{\GG_m}\Sigma^{i,i}\E \rightarrow \Omega_{\GG_m}\LL_{\et}\Omega^{\infty}_{\GG_m}\Sigma^{i+1,i+1}\E
\] is an equivalence in $\P_{\et,\Spt}(k).$
\end{itemize}
\end{definition}
With this definition, 
Example~\ref{exmp:doesnotdeserve2} states that  $\Sigma^{0,-1}\M\ZZ/\ell$ is \'{e}tale $\AA^1$-naive above degree $1$. 
The following lemma justifies the above definition
\begin{lemma} 
\label{lem:naive-is-good} 
Suppose that $\E \in \SH (S)$ is \'{e}tale-$\AA^1$-naive above degree $r$. 
Then for any $i \geq r$, 
we have an equivalence in $\P_{\et,\Spt}(S)$: 
$$
\LL_{\et}\Omega^{\infty}_{\GG_m}\Sigma^{i,i}\E \simeq \Omega^{\infty}_{\GG_m}\Sigma^{i,i}\epsilon^*\E.
$$
\end{lemma}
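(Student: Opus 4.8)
The plan is to compute $\Omega^{\infty}_{\GG_m}\Sigma^{i,i}\pi^*\E$ directly from the description of $\pi^*$ as the spectrification of $\pi^{*,\pre}\E$. By Lemma~\ref{lem:spect} we have $\pi^*\E \simeq Q(\pi^{*,\pre}\E)$, where $\pi^{*,\pre}\E$ is the $\GG_m$-prespectrum $(F_0,F_1,F_2,\dots)$ of~\eqref{eqn:pt-wise}, with $F_j := \LL_{\et,\mot}\Omega^{\infty}_{\GG_m}\Sigma^{j,j}\E \in \SH^{S^1}_{\et}(S)$ and bonding maps $\epsilon_j\colon F_j \to \Omega_{\GG_m}F_{j+1}$. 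Recalling from~\S\ref{sect:gm-pre} and~\S\ref{sect:application} that the $i$-th term of the $\GG_m$-spectrum underlying any object of $\SH_{\et}(S)$ — in particular of $\pi^*\E$ — is $\Omega^{\infty}_{\GG_m}\Sigma^{i,i}$ of it, and using the explicit description of $Q$ recalled in~\S\ref{sect:transfinite} (formula~\eqref{eqn:model-tau-spectrify}), the $i$-th term of $\pi^*\E$ is the colimit, formed in $\SH^{S^1}_{\et}(S)$, of
$$
F_i \xrightarrow{\ \epsilon_i\ } \Omega_{\GG_m}F_{i+1} \xrightarrow{\ \Omega_{\GG_m}\epsilon_{i+1}\ } \Omega^2_{\GG_m}F_{i+2} \longrightarrow \cdots .
$$
So it suffices to identify this colimit with $\LL_{\et}\Omega^{\infty}_{\GG_m}\Sigma^{i,i}\E$ when $i \geq r$.

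This is where the hypothesis enters. For $i \geq r$ every index $i+j$ with $j\geq 0$ also satisfies $i+j \geq r$, so Definition~\ref{defn:et-a1-naive} applies throughout the sequence above: its first bullet identifies $F_{i+j}$ with $\LL_{\et}\Omega^{\infty}_{\GG_m}\Sigma^{i+j,i+j}\E$, and its second bullet says that each $\epsilon_{i+j}$ is an equivalence in $\P_{\et,\Spt}(S)$, hence in $\SH^{S^1}_{\et}(S)$, both source and target being $\AA^1$-invariant and $\et$-local. Since each $\Omega^k_{\GG_m}$ preserves equivalences, every transition map $\Omega^k_{\GG_m}\epsilon_{i+k}$ in the displayed sequence is an equivalence, and therefore the colimit agrees with its initial term $F_i$. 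Assembling the identifications gives, for $i\geq r$,
$$
\Omega^{\infty}_{\GG_m}\Sigma^{i,i}\pi^*\E \;\simeq\; F_i \;=\; \LL_{\et,\mot}\Omega^{\infty}_{\GG_m}\Sigma^{i,i}\E \;\simeq\; \LL_{\et}\Omega^{\infty}_{\GG_m}\Sigma^{i,i}\E ,
$$
an equivalence in the full subcategory $\SH^{S^1}_{\et}(S) \subset \P_{\et,\Spt}(S)$, as required.

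The one place to tread carefully is matching the two incarnations of $\Omega_{\GG_m}$ — the one in the spectrification formula~\eqref{eqn:model-tau-spectrify}, computed in $\SH^{S^1}_{\et}(S)$, and the internal loop functor on $\et$-local presheaves of spectra appearing in Definition~\ref{defn:et-a1-naive}; these coincide on $\AA^1$-invariant $\et$-local objects because $\Omega_{\GG_m}$ preserves both $\AA^1$-invariance and $\et$-hyperdescent, so nothing substantive is at stake. I also do not need to know that~\eqref{eqn:model-tau-spectrify} computes a genuine $\GG_m$-spectrum via an ordinary sequential colimit in general: in degrees $\geq r$ the prespectrum $\pi^{*,\pre}\E$ is \emph{already} a $\GG_m$-spectrum, so whatever (possibly transfinite) colimit of the spectrification endofunctor computes $Q$ leaves it unchanged there up to equivalence. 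I expect this bookkeeping to be the only obstacle; beyond Definition~\ref{defn:et-a1-naive} and Lemma~\ref{lem:spect} there is no hard content.
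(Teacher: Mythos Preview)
Your proof is correct and follows essentially the same approach as the paper: identify $\pi^*\E$ with $Q(\pi^{*,\pre}\E)$ via Lemma~\ref{lem:spect}, compute the $i$-th term by the spectrification formula~\eqref{eqn:model-tau-spectrify}, and use the \'etale-$\AA^1$-naive hypothesis to see that all transition maps (and hence the colimit) are already equivalences for $i\geq r$. Your additional remarks on reconciling the two incarnations of $\Omega_{\GG_m}$ and on the transfinite versus sequential colimit are careful bookkeeping that the paper leaves implicit.
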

\begin{proof} 
Both presheaves of spectra are \'{e}t-local and $\AA^1$-invariant and so define objects in $\SH^{S^1}_{\et}(S)$; 
for the left hand side this is because of the first condition in Definition~\ref{defn:et-a1-naive}, 
which also tells us that for $n \geq 0$ there is an equivalence in $\P_{\et,\Spt}(S)$: 
$$
\LL_{\et}\Omega^{\infty}_{\GG_m}\Sigma^{i+n,i+n}\E \simeq \LL_{\AA^1}\LL_{\et}\Omega^{\infty}_{\GG_m}\Sigma^{i+n,i+n}\E.
$$
By Lemma~\ref{lem:spect} we have an equivalence $Q(\pi^{*,\pre}\E) \simeq \epsilon^*\E$. 
In other words, 
for any $i \in \NN$, 
the presheaf of spectra $\Omega_{\GG_m}^{\infty}\Sigma^{i,i}\epsilon^*\E$ is the $i$-th term of the $\GG_m$-spectrum in $\SH^{S^1}_{\et}(S)$, 
$Q(\pi^{*,\pre}\E)$.
According to~\eqref{eqn:model-tau-spectrify}, for each $i \in \NN$, $Q(\pi^{*,\pre}\E)_i$ is computed by a transfinite colimit whose transition maps are the bonding maps $\LL_{\et}\Omega^{\infty}_{\GG_m}\Sigma^{i,i}\E \rightarrow \Omega_{\GG_m}\LL_{\et}\Omega^{\infty}_{\GG_m}\Sigma^{i+1,i+1}\E$ for $i \geq r$. Since $\E$ is $\AA^1$-naive above degree $r$ the hypothesis that $i \geq r$ informs us that the transition maps in the colimit are all equivalences.
Therefore, we obtain the desired equivalence:
$$ 
\Omega^{\infty}_{\GG_m}\Sigma^{i,i}\epsilon^*\E \simeq \LL_{\et}\Omega^{\infty}_{\GG_m}\Sigma^{i,i}\E.
$$ 
\end{proof}

\subsubsection{} 
The point of Definition~\ref{defn:et-a1-naive} is that we can estimate the connectivity of $\E^{\et}.$ 
In general, 
if $\E \in \SH(S)$, then we have a descent spectral sequence by the discussion in \S\ref{sect:descent-ss-for-mot}:
$$
H^p_{\et}(X, \underline{\pi}^{\et}_{-q}\Omega^{\infty}_{\GG_m}\epsilon^*\Sigma^{i,i}\E) 
\Rightarrow 
[\Sigma^{\infty}_TX_+, \epsilon^*\Sigma^{i,i}\E[p+q]]_{\SH_{\et}(S)} \simeq [\Sigma^{\infty}_TX_+, \Sigma^{i,i}\E^{\et}[p+q]]_{\SH(S)}
$$ 
Suppose that $\E$ is \'{e}tale-$\AA^1$ naive above degree $r$, then for all $X \in \Sm_S,$ we have the following equivalences of mapping spaces whenever $i \geq r$:
\begin{eqnarray*} \label{naive-maps}
\Maps_{\SH(S)}(\Sigma^{q,0}\Sigma^{\infty}_T X_+, \Sigma^{i,i}\E^{\et}) &  \simeq  & \Maps_{\SH^{S^1}_{\et}(S)}(\Sigma^{q,0}\Sigma^{\infty}_{S^1} X_+, \Omega^{\infty}_{\GG_m}\Sigma^{i,i}\epsilon^*\E) \\
& \simeq & \Maps_{\SH^{S^1}_{\et}(S)}(\Sigma^{q  ,0}\Sigma^{\infty}_{S^1} X_+, \LL_{\et}\Omega^{\infty}_{\GG_m}\Sigma^{i,i}\E).
\end{eqnarray*}
Here, 
the last equivalence is exactly Lemma~\ref{lem:naive-is-good}. 
Therefore, 
in this range, 
the \'{e}tale homotopy sheaves are computed as:
\begin{equation} \label{eqn:et-hpty-sheaves}
\underline{\pi}^{\et}_{q,0}\Sigma^{i,i}\E 
\iso 
a_{\et}(\underline{\pi}^{\pre}_q\Omega_{\GG_m}^{\infty}\Sigma^{i,i}\epsilon^*\E) 
\iso 
a_{\et}(\underline{\pi}^{\pre}_q\LL_{\et}\Omega_{\GG_m}^{\infty}\Sigma^{i,i}\E) 
\iso 
a_{\et}(\underline{\pi}^{\pre}_q\Omega_{\GG_m}^{\infty}\Sigma^{i,i}\E),
\end{equation}
where the second isomorphism is due to the condition of being \'{e}tale-$\AA^1$ naive, 
and the last isomorphism follows because $\LL_{\et}$ does not change stalks. We record this as a lemma
\begin{lemma} \label{lem:the-dss-naive} Suppose that $\E \in \SH (S)$ is \'{e}tale-$\AA^1$ naive above degree $r$. 
Then for any $i \geq r$, we have a spectral sequence:
\begin{equation} 
\label{eq:the-dss-naive}
H^p_{\et}(X, \underline{\pi}^{\et}_{-q}\Omega^{\infty}_{\GG_m}\E_i)
\Rightarrow 
 [\Sigma^{\infty}_TX_+, \Sigma^{i,i}\E^{\et}[p+q]]_{\SH (S)}.
\end{equation}
\end{lemma}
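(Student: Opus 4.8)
The plan is to obtain the spectral sequence by specializing the \'etale descent spectral sequence of Proposition~\ref{prop:dss} and then rewriting its $E^2$-page with the naivety hypothesis; indeed the $E^2$-identification is exactly the chain of isomorphisms \eqref{eqn:et-hpty-sheaves}, so this lemma only packages what has just been checked.

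First I would apply the descent spectral sequence for an $\et$-hyperdescent presheaf of spectra (the Bousfield-Kan spectral sequence recalled in the proof of Proposition~\ref{prop:dss}), over an arbitrary $X \in \Sm_S$ rather than over the base, to the presheaf of $S^1$-spectra $\F_i := \Omega^{\infty}_{\GG_m}\Sigma^{i,i}\E^{\et}$. This presheaf satisfies $\et$-hyperdescent by Lemma~\ref{lem:concrete}, because $\Sigma^{i,i}\E^{\et}$ is $\et$-local. Evaluating at $X$ and using the adjunction $\Sigma^{\infty}_{\GG_m} \dashv \Omega^{\infty}_{\GG_m}$ to rewrite the abutment, one gets a spectral sequence
$$
H^p_{\et}\!\bigl(X, \underline{\pi}^{\et}_{-q}\F_i\bigr) \Rightarrow [\Sigma^{\infty}_T X_+, \Sigma^{i,i}\E^{\et}[p+q]]_{\SH(S)};
$$
its convergence under the standing finite-$\ell$-cohomological-dimension hypotheses is the content of the last sentence of Proposition~\ref{prop:dss}.

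The decisive step is to identify the coefficient sheaf $\underline{\pi}^{\et}_{-q}\F_i$ with $a_{\et}\bigl(\underline{\pi}_{-q}\Omega^{\infty}_{\GG_m}\Sigma^{i,i}\E\bigr)$ for $i \geq r$, which is the $E^2$-term appearing in the statement. First, as a presheaf of spectra $\F_i$ agrees with the $i$-th constituent $S^1$-spectrum $\Omega^{\infty}_{\GG_m}\Sigma^{i,i}\pi^*\E$ of the $\GG_m$-spectrum $\pi^*\E$: this follows from the adjunction $\pi^* \dashv \pi_*$ together with $\pi^*(\Sigma^{\infty}_T X_+) \simeq \Sigma^{\infty}_T X_+$ in $\SH_{\et}(S)$ (Proposition~\ref{prop:loc}) and the commutation $\pi^*\Sigma^{i,i} \simeq \Sigma^{i,i}\pi^*$ of Remark~\ref{rem:susp-sheaf}. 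Now the hypothesis that $\E$ is \'etale-$\AA^1$-naive above degree $r$ and $i \geq r$ let us invoke Lemma~\ref{lem:naive-is-good}, giving an equivalence in $\P_{\et,\Spt}(S)$ between $\F_i$ and $\LL_{\et}\Omega^{\infty}_{\GG_m}\Sigma^{i,i}\E$. Since $\et$-sheafification does not change stalks, the map $\Omega^{\infty}_{\GG_m}\Sigma^{i,i}\E \rightarrow \LL_{\et}\Omega^{\infty}_{\GG_m}\Sigma^{i,i}\E$ induces isomorphisms on $\et$-homotopy sheaves, so $\underline{\pi}^{\et}_{-q}\F_i \cong a_{\et}(\underline{\pi}_{-q}\Omega^{\infty}_{\GG_m}\Sigma^{i,i}\E)$. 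Feeding this into the $E^2$-page of the previous step yields the spectral sequence \eqref{eq:the-dss-naive}.

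The argument is formal; the only place the hypothesis is used, and the point that needs the most care, is the middle identification of the last paragraph: the presheaf $\Omega^{\infty}_{\GG_m}\Sigma^{i,i}\E^{\et}$ furnished by the descent spectral sequence is a priori only Nisnevich-local, and one must recognize it as the $\et$-local object to which Lemma~\ref{lem:naive-is-good} applies. Without the naivety assumption the relevant prespectrum need not be a spectrum at all (cf.~Example~\ref{exmp:doesnotdeserve2} and~\S\ref{sect:doesnotdeserve}), which is precisely why the bound $i \geq r$ is forced; one should also keep the degree conventions straight so that the suspension $[p+q]$ sits on the correct side of the abutment.
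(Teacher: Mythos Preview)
Your proposal is correct and follows essentially the same route as the paper: the lemma is stated in the paper merely to ``record'' the discussion immediately preceding it, and your argument recapitulates that discussion---apply the descent spectral sequence of Proposition~\ref{prop:dss} to $\Omega^{\infty}_{\GG_m}\Sigma^{i,i}\E^{\et}$, then use Lemma~\ref{lem:naive-is-good} together with the fact that $\LL_{\et}$ preserves stalks to rewrite the $E^2$-coefficients as in \eqref{eqn:et-hpty-sheaves}. Your write-up is slightly more explicit than the paper's (e.g.\ invoking Lemma~\ref{lem:concrete} for hyperdescent and spelling out the adjunction identifying $\F_i$ with the $i$-th level of $\pi^*\E$), which is harmless.
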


\subsection{\'Etale-$\AA^1$ naive properties of Landweber exact motivic spectra} 
Now we apply the above machinery to investigate the \'{e}tale-$\AA^1$ naive properties of certain $\MGL$ modules whose slices are computed in \cite{spitzweck2}, namely the Landweber exact motivic spectra and its slice covers. 
For the rest of this section, we work over a field $k$ and we also assume that all our Landweber exact spectra are effective.
We prove a sequence of lemmas towards establishing some \'etale-$\AA^1$ naive properties of Landweber exact motivic spectra 

\subsubsection{} \label{landweber-rapid}
We give a rapid review of motivic Landweber exact spectra, 
the basic reference being \cite{landweber}. 
If $N_*$ is a Landweber exact $\MU_*[\frac{1}{c}]$-module as in \cite{spitzweck2}*{Section 5}, 
then the functor from $\SH(S)$ to the category of Adams-graded graded Abelian groups --- see \cite{landweber}*{Section 3} --- given by 
$$
X \mapsto \MGL_{*,*}(X) \otimes_{\MU_*[\frac{1}{c}]} N_*
$$
is representable by a (motivic) Landweber exact spectrum $\E_N$. 
By construction $\E_N$ is a Cartesian section of $\SH(-)$ \cite{landweber}*{Proposition 8.5}. 
In fact, \cite{landweber} proves more: every Landweber exact spectrum is an object of the $\infty$-category $\Mod_{\MGL}$ \cite{landweber}*{Proposition 7.9} and is furthermore 
cellular \cite{landweber}*{Proposition 8.4}.
The slices of Landweber exact spectra are computed in \cite{spitzweck2}*{Theorem 6.1}. 
For $\E_{M}$ there is an equivalence $s_q(\E_{N}) \simeq \Sigma^{2q,q} \M(N_{2q})$ which is compatible with the map $N_{*} \rightarrow E_{N, *,*}$; 
see \cite{hopkinsmorelhoyois}*{Theorem 8.5}.

%
%
%

\subsubsection{} The form of the slices of Landweber exact spectra results in the following

\begin{lemma} 
\label{lem:lim-conv} 
Let $k$ be a field of exponential characteristic coprime to $\ell$ and assume $\cd_{\ell}(k) < \infty$. 
Let $\E_N \in \SH(k)$ be a Landweber exact motivic spectrum associated to a Landweber exact $\MU_*$-module $N_*$. 
Then for any pair of integers $(r,q)$ we have an equivalence in $\P_{\Spt,\et}(\Sm_k)$:
\begin{equation} 
\label{eq:lim-formula}
\LL_{\et}\Omega^{\infty}_{\GG_m}\Sigma^{r,r}f_q\E_N/\ell^{\nu} \stackrel{\simeq}{\rightarrow} \lim_n \LL_{\et}\Omega^{\infty}_{\GG_m}\Sigma^{r,r}f^{n+1}f_q\E_N/\ell^{\nu}.
\end{equation}
\end{lemma}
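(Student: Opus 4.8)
The plan is to identify the limit on the right-hand side with an $\et$-localized slice completion and then appeal to Proposition~\ref{prop:jardine-improved}. First I would recall from~\S\ref{slice-complete} that the coeffective covers $f^{n+1}f_q\E_N$ fit into the cofiber sequences $f_{n+1}f_q\E_N \to f_q\E_N \to f^{n}f_q\E_N$, so that the fiber of $f^{n+1}f_q\E_N \to f^{n}f_q\E_N$ is (a shift of) the slice $s_{n+1}f_q\E_N$, which by Lemma~\ref{lem:q<q'} equals $s_{n+1}\E_N$ for $n+1 > q$. Since $\Omega^{\infty}_{\GG_m}\Sigma^{r,r}(-)$ is exact, applying it to the tower $\{f^{n+1}f_q\E_N/\ell^{\nu}\}_n$ gives a tower in $\P_{\Spt}(\Sm_k)$ whose successive fibers are $\Omega^{\infty}_{\GG_m}\Sigma^{r,r}s_{n+1}\E_N/\ell^{\nu}$ for $n \gg 0$.

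**Key step: connectivity of the slice fibers.** The main point is to verify the hypothesis of Proposition~\ref{prop:jardine-improved}, namely that the connectivity of these fibers tends to $+\infty$. Here I would invoke the computation of the slices of Landweber exact spectra recalled in~\S\ref{landweber-rapid}: $s_q(\E_N) \simeq \Sigma^{2q,q}\M(N_q)$, a suspension of a (twisted) motivic cohomology spectrum. Therefore $\Omega^{\infty}_{\GG_m}\Sigma^{r,r}s_{n+1}\E_N \simeq \Omega^{\infty}_{\GG_m}\Sigma^{r+2(n+1),r+n+1}\M(N_{n+1})$, and by the cancellation theorem together with the identification of $\Omega^{\infty}_{\GG_m}\Sigma^{0,q}\M\ZZ$ with $\ZZ^{\tr}(q)$ (and the vanishing of $\ZZ^{\tr}(q)$ for $q<0$, more to the point its increasing connectivity as a presheaf of spectra for $q>0$), the presheaf of spectra $\Omega^{\infty}_{\GG_m}\Sigma^{r,r}s_{n+1}\E_N/\ell^{\nu}$ is $(n - C)$-connected for a constant $C$ depending only on $r$. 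This gives exactly the required $f(i') \geq n$ for $i' \gg 0$.

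**Conclusion.** With the connectivity hypothesis in hand, Proposition~\ref{prop:jardine-improved} applies to the tower $\{f^{n+1}f_q\E_N\}_n$ (indexed so that $\E_i := f^{i+1}f_q\E_N$, with $\fib_i$ the $s_{i}\E_N$-fibers), yielding a natural equivalence
\[
\LL_{\et}\bigl(\lim_n \Omega^{\infty}_{\GG_m}\Sigma^{r,r} f^{n+1}f_q\E_N/\ell^{\nu}\bigr) \simeq \lim_n \LL_{\et}\bigl(\Omega^{\infty}_{\GG_m}\Sigma^{r,r} f^{n+1}f_q\E_N/\ell^{\nu}\bigr).
\]
It remains to identify the left-hand limit: by the cofiber sequence $\lim_n f_n f_q\E_N \to f_q\E_N \to \lim_n f^n f_q\E_N$ and Lemma~\ref{lem:q<q'}, together with the fact that $f_n f_q\E_N \simeq f_n\E_N$ for $n > q$ has $\GG_m$-connectivity going to $-\infty$ so that $\Omega^{\infty}_{\GG_m}\Sigma^{r,r}\lim_n f_n f_q\E_N/\ell^{\nu} \simeq 0$, one gets $\Omega^{\infty}_{\GG_m}\Sigma^{r,r}f_q\E_N/\ell^{\nu} \simeq \lim_n \Omega^{\infty}_{\GG_m}\Sigma^{r,r}f^{n+1}f_q\E_N/\ell^{\nu}$ already in $\P_{\Spt}(\Sm_k)$; applying $\LL_{\et}$ and combining with the displayed equivalence gives the claim.

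**Main obstacle.** The delicate point will be pinning down the connectivity estimate for $\Omega^{\infty}_{\GG_m}\Sigma^{r,r}s_{n+1}\E_N/\ell^{\nu}$ uniformly in $n$ — one must be careful that the $\GG_m$-loops $\Omega^{\infty}_{\GG_m}$ do not destroy connectivity (they shift it, which is precisely why the $\Sigma^{2n,n}$ in the slice formula wins), and that the mod-$\ell^{\nu}$ reduction and the possible non-unitality subtleties at $\ell=2$, $\nu=1$ do not interfere. Everything else is formal bookkeeping with the towers of~\S\ref{slice-complete} and an application of Proposition~\ref{prop:jardine-improved}.
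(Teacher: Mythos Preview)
Your approach is correct and matches the paper's: identify the fibers of the tower $\{f^{n+1}f_q\E_N/\ell^{\nu}\}_n$ with the slices $s_{n+1}\E_N/\ell^{\nu}$, invoke the slice computation for Landweber exact spectra from \cite{spitzweck2} to see that their connectivity tends to $+\infty$, and apply Proposition~\ref{prop:jardine-improved}. You are in fact more careful than the paper in separately justifying the identification $\Omega^{\infty}_{\GG_m}\Sigma^{r,r}f_q\E_N/\ell^{\nu} \simeq \lim_n \Omega^{\infty}_{\GG_m}\Sigma^{r,r}f^{n+1}f_q\E_N/\ell^{\nu}$ in $\P_{\Spt}(\Sm_k)$ before applying $\LL_{\et}$ --- the paper leaves this implicit. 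One phrasing slip: you write that $f_n\E_N$ has ``$\GG_m$-connectivity going to $-\infty$''; what you need (and what is true, by \cite{hopkinsmorelhoyois}*{Lemma 8.11}) is that $f_n\E_N$ is $n$-connective in the homotopy $t$-structure, so that the $S^1$-connectivity of $\Omega^{\infty}_{\GG_m}\Sigma^{r,r}f_n\E_N/\ell^{\nu}$ evaluated on any fixed $X\in\Sm_k$ goes to $+\infty$, forcing the inverse limit to vanish.
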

\begin{proof} 
For a fixed integer $q$ we have the following tower in $\SH(S)$: 
\begin{equation} 
\label{tower-1}
\cdots  \rightarrow f^nf_q\E/\ell^{\nu} 
\rightarrow \cdots 
\rightarrow f^{q+3}f_q\E_N/\ell^{\nu}   
\rightarrow   f^{q+2}f_q\E_N/\ell^{\nu}   
\rightarrow   f^{q+1}f_q\E_N/\ell^{\nu} 
\simeq s_q\E_N/\ell^{\nu},
\end{equation}
and thus for any $r \in \ZZ$, we obtain a tower in $\SH^{S^1} (S)$ after applying $\Omega^{\infty}_{\GG_m}\Sigma^{r,r}$:
\begin{equation} 
\label{tower-2}
\cdots \rightarrow 
\Omega_{\GG_m}^{\infty}\Sigma^{r,r}f^{q+3}f_q\E_N/\ell^{\nu}   
\rightarrow   \Omega_{\GG_m}^{\infty}\Sigma^{r,r}f^{q+2}f_q\E_N/\ell^{\nu}   
\rightarrow   \Omega_{\GG_m}^{\infty}\Sigma^{r,r}f^{q+1}f_q\E_N/\ell^{\nu} 
\simeq \Omega_{\GG_m}^{\infty}\Sigma^{r,r}s_q(\E_N/\ell^{\nu}).
\end{equation}
Now, 
for any $n > q$, 
we have a cofiber sequence in $\SH (S)$ (see~\eqref{eq:cofibers}): 
$$
f^{n+1}f_q\E_N/\ell^{\nu} \simeq f^{n+1}\E_N/\ell^{\nu} 
\rightarrow f^{n}f_q\E_N/\ell^{\nu} \simeq f^{n}\E_N/\ell^{\nu} 
\rightarrow \Sigma^{1,0}s_{n+1}\E_N/\ell^{\nu} 
\simeq \Sigma^{1,0}\Sigma^{2n+2,n+1}\M N_*/\ell^{\nu},
$$ where the last equivalence is due to the computation of the slices of Landweber exact spectra as in \cite{spitzweck2}. 
Therefore, the fiber of: 
$$
\Omega_{\GG_m}^{\infty}\Sigma^{r,r}f^{n+1}f_q\E_N/\ell^{\nu}   
\rightarrow   
\Omega_{\GG_m}^{\infty}\Sigma^{r,r}f^{n}f_q\E_N/\ell^{\nu},
$$ 
is equivalent to:
\begin{eqnarray*}
\Omega_{\GG_m}^{\infty}\Sigma^{r,r}s_{n+1}\E_{N}/\ell^{\nu} &  \simeq & \Omega_{\GG_m}^{\infty}\Sigma^{r,r}\Sigma^{2n+2,n+1}\M N_*/\ell^{\nu}  \\
& \simeq & \ZZ/\ell^{\nu}(2n+2+r)[n+1+r] \otimes N_{2n+2}.
\end{eqnarray*}
Since the connectivity of $\ZZ/\ell^{\nu}(2n+2+r)[n+1+r] \otimes N_{2n+2} \in \SH^{S^1} (S)$ tends to $\infty$ as $n\rightarrow \infty$, 
Proposition~\ref{prop:jardine-improved} gives us an equivalence in $\P_{\Spt,\et}(\Sm_k)$:
\begin{equation} \label{eq:lim-formula}
\LL_{\et}\Omega^{\infty}_{\GG_m}\Sigma^{r,r}f_q\E_N/\ell^{\nu} \stackrel{\simeq}{\rightarrow} \lim_n \LL_{\et}\Omega^{\infty}_{\GG_m}\Sigma^{r,r}f^{n+1}f_q\E_N/\ell^{\nu}.
\end{equation}
\end{proof}

\begin{lemma} 
\label{lem:a1-landweber}  
Let $k$ be a field of exponential characteristic coprime to $\ell$ and assume $\cd_{\ell}(k) < \infty$. 
Let $\E_N \in \SH(k)$ be a Landweber exact motivic spectrum.
For integers $(r,q)$ the $\et$-local presheaf of spectra $\LL_{\et}\Omega^{\infty}_{\GG_m}\Sigma^{r,r}f_q\E_N/\ell^{\nu}$ is $\AA^1$-invariant. 
Hence there is a canonical equivalence in $\P_{\et,\Spt}(k)$: 
$$
\LL_{\et}\Omega^{\infty}_{\GG_m}\Sigma^{r,r}f_q\E_N/\ell^{\nu} 
\stackrel{\simeq}{\rightarrow} 
\LL_{\AA^1}\LL_{\et}\Omega^{\infty}_{\GG_m}\Sigma^{r,r}f_q\E_N/\ell^{\nu},
$$ 
and $\LL_{\et}\Omega^{\infty}_{\GG_m}\Sigma^{r,r}f_q\E_N/\ell^{\nu} \in \SH^{S^1}_{\et}(k)$.
\end{lemma}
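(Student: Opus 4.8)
The plan is to reduce to the finite stages of the $\GG_m$-coeffective tower via Lemma~\ref{lem:lim-conv}, and then to treat those stages one slice at a time, where $\et$-sheafification can be understood explicitly. Lemma~\ref{lem:lim-conv} provides an equivalence in $\P_{\Spt,\et}(\Sm_k)$ between $\LL_{\et}\Omega^{\infty}_{\GG_m}\Sigma^{r,r}f_q\E_N/\ell^{\nu}$ and $\lim_n \LL_{\et}\Omega^{\infty}_{\GG_m}\Sigma^{r,r}f^{n+1}f_q\E_N/\ell^{\nu}$. Both the $\AA^1$-invariant presheaves of spectra and the $\et$-local presheaves of spectra form reflective, hence limit-closed, subcategories of $\P_{\Spt}(\Sm_k)$, so it suffices to show that for each $n \geq q$ the term $\LL_{\et}\Omega^{\infty}_{\GG_m}\Sigma^{r,r}f^{n+1}f_q\E_N/\ell^{\nu}$ is $\AA^1$-invariant (it is automatically $\et$-local), i.e. lies in $\SH^{S^1}_{\et}(k)$.

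For the finite stages, I would use that for $n \geq q$ the object $f^{n+1}f_q\E_N/\ell^{\nu}$ is a finite iterated cofiber whose successive cofibers are the slices $s_m\E_N/\ell^{\nu}$ with $q \leq m \leq n$ (this is the finite truncation of the slice tower appearing in \eqref{tower-1}; here Lemma~\ref{lem:q<q'} identifies $f_{n+1}f_q\E_N \simeq f_{n+1}\E_N$, and $-/\ell^{\nu}$ commutes with the slice functors). The functor $\M \mapsto \LL_{\et}\Omega^{\infty}_{\GG_m}\Sigma^{r,r}\M$ is a composite of exact functors of stable $\infty$-categories --- $\Sigma^{r,r}$ is an equivalence, $\Omega^{\infty}_{\GG_m}$ preserves all limits, and $\LL_{\et}$ preserves all colimits --- so it carries this finite filtration to a finite filtration of $\LL_{\et}\Omega^{\infty}_{\GG_m}\Sigma^{r,r}f^{n+1}f_q\E_N/\ell^{\nu}$ whose successive cofibers are $\LL_{\et}\Omega^{\infty}_{\GG_m}\Sigma^{r,r}s_m\E_N/\ell^{\nu}$. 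Since cofibers of presheaves of spectra are computed pointwise, the class of $\AA^1$-invariant presheaves of spectra is closed under cofibers, so it remains to show that each of these cofibers is $\AA^1$-invariant.

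To analyze the slices, recall from \cite{spitzweck2}*{Theorem 6.1} (see \S\ref{landweber-rapid}) that $s_m\E_N \simeq \Sigma^{2m,m}\M(N_m)$, so $\Sigma^{r,r}s_m\E_N/\ell^{\nu} \simeq \Sigma^{2m+r,m+r}\M(N_m)/\ell^{\nu}$. If $m+r < 0$, then $\Omega^{\infty}_{\GG_m}$ of this vanishes (negative weight motivic cohomology vanishes) and $\LL_{\et}(0) = 0$ is $\AA^1$-invariant. If $m+r \geq 0$, then by Voevodsky's cancellation theorem \cite{voevodsky-cancel} the presheaf of spectra $\Omega^{\infty}_{\GG_m}\Sigma^{2m+r,m+r}\M(N_m)/\ell^{\nu}$ is the object associated, via Dold--Kan, to a bounded complex of presheaves with transfers built from copies of $\ZZ/\ell^{\nu}(m+r)$; applying $\LL_{\et}$ and invoking the identification $\LL_{\et}\ZZ/\ell^{\nu}(j) \simeq \mu_{\ell^{\nu}}^{\otimes j}$ (Theorem~\ref{thm:rigidity}, exactly as in the proof of Corollary~\ref{cor:utrvspi}) exhibits $\LL_{\et}\Omega^{\infty}_{\GG_m}\Sigma^{r,r}s_m\E_N/\ell^{\nu}$ as a bounded complex of étale sheaves of $\ZZ/\ell^{\nu}$-modules. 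Since $\ell$ is invertible in $k$, homotopy invariance of étale cohomology with $\ell^{\nu}$-torsion coefficients shows that such a presheaf of spectra is $\AA^1$-invariant. This finishes the reduction, and the last assertions of the lemma are then formal: $\LL_{\AA^1}$ is the identity on $\AA^1$-invariant objects, and being simultaneously $\AA^1$-invariant and $\et$-local is exactly membership in $\SH^{S^1}_{\et}(k)$.

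The step I expect to be the main obstacle is the slicewise identification: controlling $\LL_{\et}\Omega^{\infty}_{\GG_m}\Sigma^{r,r}s_m\E_N/\ell^{\nu}$ as a complex of torsion étale sheaves --- including the bookkeeping for twists $m+r$ that may be negative --- and pinning down the appeal to homotopy invariance of étale cohomology. The reduction to finite stages via Lemma~\ref{lem:lim-conv} is what makes the exactness-plus-slicewise argument legitimate, since $\LL_{\et}$ does not commute with the infinite limit implicit in $f_q\E_N$ in general.
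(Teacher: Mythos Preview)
Your proposal is correct and follows essentially the same approach as the paper: reduce to finite stages via Lemma~\ref{lem:lim-conv}, observe that $\AA^1$-invariant presheaves are closed under limits and extensions, and then handle each slice by identifying $\LL_{\et}$ of the motivic complex with a twist of $\mu_{\ell^{\nu}}$ and invoking homotopy invariance of \'etale cohomology (the paper cites \cite{mvw}*{Theorem 2.3} and \cite{SGA4}*{XV, Corollary 2.2} for these two steps). Your explicit treatment of the case $m+r<0$, where $\Omega^{\infty}_{\GG_m}$ already vanishes, is a small refinement that the paper leaves implicit.
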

\begin{proof} 
Note that $\AA^1$-invariant presheaves in $\P_{\et,\Spt}(\Sm_k)$ are closed under limits. 
Indeed, 
if $F: I \rightarrow \P_{\et,\Spt}(\Sm_k)$ is a diagram of $\AA^1$-invariant \'{e}t-local presheaf of spectra, 
then for any $X \in \Sm_k$ we have natural equivalences:
\begin{eqnarray*}
\Maps(\Sigma^{\infty}_{S^1}X_+, \lim F_i) & \simeq & \lim_i \Maps(\Sigma^{\infty}_{S^1}X_+,  F_i) \\
& \simeq & \lim_i \Maps(\Sigma^{\infty}_{S^1}(X \times \AA^1)_+,  F_i)\\
& \simeq & \Maps(\Sigma^{\infty}_{S^1}(X \times \AA^1)_+, \lim_i F_i).
\end{eqnarray*}
Hence, 
after Lemma~\ref{lem:lim-conv}, it suffices to prove that for any $n \geq q+1$,  
each of the $\et$-local presheaf of spectra $\LL_{\et}\Omega^{\infty}_{\GG_m}\Sigma^{r,r}f^{n+1}f_q\E_N/\ell^{\nu}$ is $\AA^1$-invariant. 
For $n = q+1$, we note that $\LL_{\et}\Omega_{\GG_m}^{\infty}\Sigma^{r,r}s_{n+1}\E_N/\ell^{\nu} \simeq \LL_{\et}\ZZ/\ell^{\nu}(2n+2+r)[n+1+r] \otimes N_{2n+2}$ is $\AA^1$-invariant. 
Indeed, 
by \cite{mvw}*{Theorem 2.3}, $\LL_{\et}\ZZ/\ell^{\nu}(q)[p] \simeq \mu^{\otimes q}[p]$ for $q \geq 0$, 
which are indeed $\AA^1$-invariant \'{e}t-local presheaves \cite{SGA4}*{XV, Corollary 2.2}.

Suppose the claim holds for all $n' < n$. 
The property of being $\AA^1$-invariant is clearly closed under extensions, 
and so we conclude using the cofiber sequence in $\P_{\et,\Spt}(k)$:
$$
\LL_{\et} \Omega_{\GG_m}^{\infty}\Sigma^{r,r}s_{n}\E_N/\ell^{\nu} 
\rightarrow 
\LL_{\et}\Omega^{\infty}_{\GG_m}\Sigma^{r,r}f^{n}f_q\E_N/\ell^{\nu} \rightarrow \LL_{\et} \Omega_{\GG_m}\Omega^{\infty}_{\GG_m}\Sigma^{r,r}f^{n-1}f_q\E_N/\ell^{\nu}.
$$
\end{proof}

\begin{lemma} 
\label{lem:landweber} 
Let $\E_N \in \SH(k)$ be a Landweber exact motivic spectrum.
Then for all integers $(r,q)$ such that $r > -q$ there is a canonical equivalence: 
$$
\LL_{\et}\Omega^{\infty}_{\GG_m}\Sigma^{r-1,r-1}f_q\E/\ell^{\nu}
\overset{\simeq}{\rightarrow} 
\Omega_{\GG_m}\LL_{\et}\Omega^{\infty}_{\GG_m}\Sigma^{r,r}f_q\E_N/\ell^{\nu}.
$$ 
\end{lemma}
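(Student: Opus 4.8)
The plan is to reduce the statement, via the two preceding lemmas, to the corresponding statement for the individual slices $s_m\E_N/\ell^{\nu}$, where it becomes a computation with Tate twists.

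First I would rewrite both sides using Lemma~\ref{lem:lim-conv}. Applied to the pair $(r-1,q)$ and to $(r,q)$ it gives
$$
\LL_{\et}\Omega^{\infty}_{\GG_m}\Sigma^{r-1,r-1}f_q\E_N/\ell^{\nu}\simeq\lim_n\LL_{\et}\Omega^{\infty}_{\GG_m}\Sigma^{r-1,r-1}f^{n+1}f_q\E_N/\ell^{\nu},
$$
and likewise for the $r$-indexed tower. Since $\Omega_{\GG_m}$ is a right adjoint it commutes with this (sequential) limit, so the bonding map of Definition~\ref{defn:et-a1-naive} that we must invert is the limit over $n$ of the bonding maps
$$
\LL_{\et}\Omega^{\infty}_{\GG_m}\Sigma^{r-1,r-1}f^{n+1}f_q\E_N/\ell^{\nu}\longrightarrow\Omega_{\GG_m}\LL_{\et}\Omega^{\infty}_{\GG_m}\Sigma^{r,r}f^{n+1}f_q\E_N/\ell^{\nu}.
$$
It therefore suffices to show each of these is an equivalence for $n\geq q$ (for $n<q$ one has $f^{n+1}f_q\E_N\simeq 0$ by Lemma~\ref{lem:q<q'}, so there is nothing to check).

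Next I would induct on $n$. The two functors $\LL_{\et}\Omega^{\infty}_{\GG_m}\Sigma^{r-1,r-1}(-)$ and $\Omega_{\GG_m}\LL_{\et}\Omega^{\infty}_{\GG_m}\Sigma^{r,r}(-)$ are exact (all of $\Sigma^{p,q}$, $\Omega^{\infty}_{\GG_m}$, $\LL_{\et}$, $\Omega_{\GG_m}$ preserve cofiber sequences), and the bonding map is a natural transformation between them; applying it to the cofiber sequence $s_{n+1}\E_N/\ell^{\nu}\to f^{n+1}f_q\E_N/\ell^{\nu}\to f^{n}f_q\E_N/\ell^{\nu}$ from the proof of Lemma~\ref{lem:lim-conv} (see~\eqref{eq:cofibers}) produces a map of cofiber sequences. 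By the inductive hypothesis the map is an equivalence on $f^{n}f_q\E_N/\ell^{\nu}$, the base of the induction being $f^{q+1}f_q\E_N/\ell^{\nu}\simeq s_q\E_N/\ell^{\nu}$. So it remains to treat a slice $s_m\E_N/\ell^{\nu}$ with $m\geq q$. Here $s_m\E_N/\ell^{\nu}\simeq\Sigma^{2m,m}\M N_m/\ell^{\nu}$ by the computation of the slices of Landweber exact spectra recalled in~\S\ref{landweber-rapid}, so $\Omega^{\infty}_{\GG_m}\Sigma^{r,r}s_m\E_N/\ell^{\nu}$ and $\Omega^{\infty}_{\GG_m}\Sigma^{r-1,r-1}s_m\E_N/\ell^{\nu}$ are (shifts of) twisted mod-$\ell^{\nu}$ motivic complexes whose Tate twists differ by $1$; since $r>-q$ and $m\geq q$ both twists are $\geq 0$, so by Voevodsky's cancellation theorem \cite{voevodsky-cancel} these are $\ZZ/\ell^{\nu}(\ast)[\ast]\otimes N_m$ with non-negative twist, and by \cite{mvw}*{Theorem 2.3} (exactly as used in the proof of Lemma~\ref{lem:a1-landweber}) their $\et$-localizations are $\mu_{\ell^{\nu}}^{\otimes c}[p]\otimes N_m$ and $\mu_{\ell^{\nu}}^{\otimes(c-1)}[p-1]\otimes N_m$ for suitable $c\geq 1$. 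The remaining point is that $\Omega_{\GG_m}$ carries the first to the second and does so compatibly with the bonding map; this is the $\et$-local form of cancellation, $\Omega_{\GG_m}\mu_{\ell^{\nu}}^{\otimes c}[p]\simeq\mu_{\ell^{\nu}}^{\otimes(c-1)}[p-1]$ for $c\geq 1$, which follows from the equivalence $\Sigma^{\infty}_{\tr}\colon\DM^{\eff}_{\et}(k,\ZZ/\ell^{\nu})\xrightarrow{\simeq}\DM_{\et}(k,\ZZ/\ell^{\nu})$ of Theorem~\ref{thm:rigidity}(1) together with $\LL_{\et}\ZZ/\ell^{\nu}(1)\simeq\mu_{\ell^{\nu}}$, the bonding map itself being induced by cancellation for $\M\ZZ$ in non-negative weights. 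Tracking degrees then shows the two sides agree, completing the induction and, after passing to the limit over $n$, the lemma.

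The main obstacle is this last step, and specifically the bookkeeping around the hypothesis $r>-q$: one must verify that this is exactly the condition guaranteeing that every Tate twist occurring in the slices of $f^{n+1}f_q\E_N/\ell^{\nu}$ (whose slice indices range over $m\geq q$) stays, after the shift by $r$ or $r-1$, in the effective range $\geq 0$, which is the region in which $\LL_{\et}$ of a motivic complex is a genuine étale Tate twist and in which $\Omega_{\GG_m}$ "de-twists" correctly — failure at twist $0$ being precisely \eqref{eq:no-commute} in Example~\ref{exmp:doesnotdeserve2}. The secondary care point is checking that the identifications above are compatible with the honest bonding maps, so that the equivalence asserted in the statement really is the canonical one.
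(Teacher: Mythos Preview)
Your proposal is correct and follows essentially the same approach as the paper: both reduce via Lemma~\ref{lem:lim-conv} to the finite cocovers $f^{n+1}f_q\E_N/\ell^{\nu}$, induct on $n$ using the cofiber sequences with layers the slices, and treat the slice case by identifying both sides with the appropriate $\mu_{\ell^{\nu}}^{\otimes c}$. The paper phrases the inductive claim as the commutation $\LL_{\et}\Omega_{\GG_m}\simeq\Omega_{\GG_m}\LL_{\et}$ on $\Omega^{\infty}_{\GG_m}\Sigma^{r,r}f^{n}f_q\E_N/\ell^{\nu}$ (its \eqref{eq:loop-formula}) and then composes with $\Omega_{\GG_m}\Omega^{\infty}_{\GG_m}\simeq\Omega^{\infty}_{\GG_m}\Omega_{\GG_m}$ at the end, whereas you package both steps into the single bonding map from the outset; and the paper handles the slice case more tersely by just writing down $\mu_{\ell^{\nu}}^{\otimes q-1}[p-1]$ on both sides, where you spell out the appeal to cancellation and rigidity---but the mathematical content is identical.
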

\begin{proof} First we note that for $q \geq 1$, we have an equivalence for all $p \in \ZZ$:
\begin{equation} \label{eq:loop-etale-formula}
\Omega_{\GG_m}\LL_{\et}\ZZ/\ell^{\nu}(q)[p] \simeq \LL_{\et}\Omega_{\GG_m}\ZZ/\ell^{\nu}(q)[p].
\end{equation}
Indeed, both sides are equivalent to $\mu_{\ell^{\nu}}^{\otimes q-1}[p-1]$.
With this, we claim that the natural map:
\begin{equation} 
\label{eq:loop-formula}
\LL_{\et} \Omega_{\GG_m}\Omega^{\infty}_{\GG_m}\Sigma^{r,r}f^{n}f_q\E_N/\ell^{\nu} 
\rightarrow 
\Omega_{\GG_m} \LL_{\et} \Omega^{\infty}_{\GG_m}\Sigma^{r,r}f^{n}f_q\E_N/\ell^{\nu},
\end{equation}
is an equivalence; in other words, we claim that $\Omega_{\GG_m}$ and $\LL_{\et}$ commute.
This is proved by induction on $n$. 
For the base case $n=q+1$ we have equivalences in $\DM^{\eff}_{\et}(k, \ZZ/\ell)$:
\begin{eqnarray*}
\Omega_{\GG_m} \LL_{\et} \Omega^{\infty}_{\GG_m}\Sigma^{r,r}f^{q+1}f_q\E_N/\ell^{\nu} &= & \Omega_{\GG_m} \LL_{\et} \Omega^{\infty}_{\GG_m} \Sigma^{r,r}s_q(\E_N/\ell^{\nu})\\
& \simeq &  \Omega_{\GG_m} \LL_{\et} \ZZ/\ell^{\nu}[2q+r](q+r) \otimes N_{2n+2}\\
& \simeq &  \LL_{\et} \Omega_{\GG_m}\ZZ/\ell^{\nu}[2q+r](q+r)\otimes N_{2n+2}.
\end{eqnarray*}
The first equivalence follows from the computation of the slices of Landweber exact spectra \cite{spitzweck2}, 
and the second follows from~\eqref{eq:loop-formula} using the hypothesis that $q+r >0$.

Now assume that~\eqref{eq:loop-formula} has been verified for $n-1$, and consider the commutative diagram:
\begin{equation}
\xymatrix{
\LL_{\et} \Omega_{\GG_m}  \Omega_{\GG_m}^{\infty}\Sigma^{r,r}s_{n}\E_N/\ell^{\nu} \ar[r] \ar[d]  
&  \LL_{\et} \Omega_{\GG_m}\Omega^{\infty}_{\GG_m}\Sigma^{r,r}f^{n}f_q\E_N/\ell^{\nu} \ar[r] \ar[d] 
&  \LL_{\et} \Omega_{\GG_m}\Omega^{\infty}_{\GG_m}\Sigma^{r,r}f^{n-1}f_q\E_N/\ell^{\nu}\ar[d] \\
\Omega_{\GG_m}  \LL_{\et} \Omega_{\GG_m}^{\infty}\Sigma^{r,r}s_{n}\E_N/\ell^{\nu} \ar[r] 
&  \Omega_{\GG_m} \LL_{\et} \Omega^{\infty}_{\GG_m}\Sigma^{r,r}f^{n}f_q\E_N/\ell^{\nu} \ar[r]  
&  \Omega_{\GG_m} \LL_{\et}\Omega^{\infty}_{\GG_m}\Sigma^{r,r}f^{n-1}f_q\E_N/\ell^{\nu}
}
\end{equation}
Since $\LL_{\et}$ and $\Omega_{\GG_m}$ are exact functors, the horizontal rows are cofiber sequences. 
The leftmost vertical map is an equivalence by the same argument as in the base case since $\Omega_{\GG_m}^{\infty}\Sigma^{r,r}s_{n}\E_N/\ell^{\nu} \simeq  \ZZ/\ell^{\nu}[2n+r](n+r) \otimes N_{2n+2}$, 
while the rightmost vertical map is an equivalence by the inductive hypothesis. 
Thus the middle vertical map is an equivalence. 

To conclude, we have the following string of equivalences, for $r > -q$:
\begin{eqnarray*}
\Omega_{\GG_m} \LL_{\et}\Omega^{\infty}\Sigma^{r,r}f_q\E_N/\ell^{\nu}& \simeq & \Omega_{\GG_m}  \lim_n  \LL_{\et}\Omega^{\infty}_{\GG_m}\Sigma^{r,r}f^{n+1}f_q\E_N/\ell^{\nu} \\
& \simeq & \lim_n \Omega_{\GG_m} \LL_{\et}\Omega^{\infty}_{\GG_m}\Sigma^{r,r}f^{n+1}f_q\E_N/\ell^{\nu}\\
& \simeq & \lim_n  \LL_{\et}\Omega_{\GG_m}\Omega^{\infty}_{\GG_m}\Sigma^{r,r}f^{n+1}f_q\E_N/\ell^{\nu}\\
& \simeq &  \lim_n  \LL_{\et}\Omega^{\infty}_{\GG_m}\Sigma^{r-1,r-1}f^{n+1}f_q\E_N/\ell^{\nu}\\
& \simeq &  \LL_{\et} \lim_n \Omega^{\infty}_{\GG_m}\Sigma^{r-1,r-1}f^{n+1}f_q\E_N/\ell^{\nu}\\
& \simeq &  \LL_{\et} \Omega^{\infty}_{\GG_m}\Sigma^{r-1,r-1}f_q\E_N/\ell^{\nu}.
\end{eqnarray*}
The first equivalence is due to~\eqref{eq:lim-formula}, 
the second is because $\Omega_{\GG_m}$ is a right adjoint and thus preserves limits, 
the third is due to~\eqref{eq:loop-formula}, the fourth is because of the canonical equivalence $\Omega_{\GG_m}\Omega^{\infty}_{\GG_m}\simeq\Omega^{\infty}_{\GG_m}\Omega_{\GG_m}$, 
the fifth is another application of~\eqref{eq:lim-formula}, 
while the last equivalence is again because $\Omega_{\GG_m}$ preserves limits.
\end{proof}

As a result we have proved:

\begin{proposition} 
\label{prop:landweber-naive} 
Let $\E_N \in \SH(k)$ be a Landweber exact motivic spectrum.
Then $f_q\E_N/\ell^{\nu}$ is \'{e}tale-$\AA^1$-naive above degree $-q$.
\end{proposition}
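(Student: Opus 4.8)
The plan is to verify the two conditions of Definition~\ref{defn:et-a1-naive} for $\E = \E_N/\ell^\nu$ with $r = -q$, that is, for all $i \geq -q$. Conveniently, the hard work has been packaged into the three preceding lemmas, so the proof is essentially an assembly.

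First I would address the first bullet of Definition~\ref{defn:et-a1-naive}: that $\LL_{\et}\Omega^{\infty}_{\GG_m}\Sigma^{i,i}f_q\E_N/\ell^{\nu}$ is $\AA^1$-invariant for $i \geq -q$, equivalently that the natural map to its $\AA^1$-localization is an equivalence in $\P_{\et,\Spt}(k)$. This is precisely the content of Lemma~\ref{lem:a1-landweber} applied with $(r,q) = (i,q)$ (note Lemma~\ref{lem:a1-landweber} in fact gives $\AA^1$-invariance for \emph{all} integers $r$, so in particular for $i \geq -q$), which also records that the resulting presheaf lies in $\SH^{S^1}_{\et}(k)$.

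Next I would address the second bullet: that the bonding map $\LL_{\et}\Omega^{\infty}_{\GG_m}\Sigma^{i,i}f_q\E_N/\ell^{\nu} \to \Omega_{\GG_m}\LL_{\et}\Omega^{\infty}_{\GG_m}\Sigma^{i+1,i+1}f_q\E_N/\ell^{\nu}$ is an equivalence for $i \geq -q$. Lemma~\ref{lem:landweber}, applied with $r = i+1$, gives a canonical equivalence $\LL_{\et}\Omega^{\infty}_{\GG_m}\Sigma^{i,i}f_q\E_N/\ell^{\nu} \overset{\simeq}{\to} \Omega_{\GG_m}\LL_{\et}\Omega^{\infty}_{\GG_m}\Sigma^{i+1,i+1}f_q\E_N/\ell^{\nu}$ whenever $r = i+1 > -q$, i.e. whenever $i \geq -q$. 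One should check that this canonical equivalence is indeed (inverse to) the bonding map appearing in $\pi^{*,\pre}$ as described in~\S\ref{sect:gm-pre}, which it is by construction since both are induced from the structural equivalence $\Omega^{\infty}_{\GG_m}\Sigma^{i,i}\E \simeq \Omega_{\GG_m}\Omega^{\infty}_{\GG_m}\Sigma^{i+1,i+1}\E$ followed by \'etale localization.

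Since both conditions of Definition~\ref{defn:et-a1-naive} hold for all $i \geq -q$, the spectrum $f_q\E_N/\ell^{\nu}$ is \'etale-$\AA^1$-naive above degree $-q$, which is the assertion. The only mild subtlety---and the one place warranting a sentence of care rather than a pure citation---is bookkeeping the index conventions: matching the pair $(r,q)$ in Lemmas~\ref{lem:a1-landweber} and~\ref{lem:landweber} to the index $i$ of the $\GG_m$-prespectrum $\pi^{*,\pre}(f_q\E_N/\ell^{\nu})$, and confirming that the strict inequality $r > -q$ in Lemma~\ref{lem:landweber} translates to $i \geq -q$, so that the threshold is exactly $-q$ and not $-q+1$. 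No genuine obstacle remains once the three lemmas are in hand.
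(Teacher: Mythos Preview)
Your proposal is correct and follows essentially the same approach as the paper: the paper's own proof simply cites Lemma~\ref{lem:a1-landweber} for the first condition of Definition~\ref{defn:et-a1-naive} and Lemma~\ref{lem:landweber} for the second, exactly as you do. Your additional remarks on the index bookkeeping (translating $r>-q$ with $r=i+1$ into $i\geq -q$) and on identifying the canonical equivalence with the bonding map are helpful clarifications that the paper leaves implicit.
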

\begin{proof} 
The first point of Definition~\ref{defn:et-a1-naive} is verified by Lemma~\ref{lem:a1-landweber}, while the second point at the desired range is verified by Proposition~\ref{lem:landweber}.
\end{proof}

\subsection{Notions of connectivity in motivic homotopy} 
We now use Proposition~\ref{prop:landweber-naive} to prove convergence results about the \'{e}tale slice spectral sequence. We say that a motivic spectrum $\E \in \SH(S)$ is: 
\begin{itemize}
\item \emph{$t$-connected} \cite{pi1}*{Definition 3.16} if for any triple $(p,q,d)$ of integers for which $p-q+d< t$ and every $d$-dimensional $X \in \Sm_S$ the group $[\Sigma^{p,q} \Sigma^{\infty}_{T}X_+, \E]$ is zero, 
\item \emph{affine $t$-connected} if for any triple $(p,q,d)$ of integers for which $p-q+d< t$ and every $d$-dimensional $X \in \Sm_S$ which is affine, the group $[\Sigma^{p,q} \Sigma^{\infty}_{T}X_+, \E]$ is zero,
\item \emph{$t$-connective} \cite{hopkinsmorelhoyois}*{\S 2.1} if it is contained in the localizing subcategory generated by $\{ \Sigma^{p,q} \Sigma^{\infty}_{T}X_+ \}_{p-q \geq t, X \in \Sm_S},$
\item \emph{locally $t$-connective} if the homotopy sheaves $\underline{\pi}^{\Nis}_{p,q}\E = 0$ for any $p, q \in \ZZ$ such that $p-q< t.$
\end{itemize}
If $S$ is a field, then we can say more about the relationships of the above notions.
\begin{itemize}
\item Being $t$-connected is stronger than being affine $t$-connected. These notions are, in general, different because the \'etale cohomological dimension of a scheme $X$ over an algebraically closed field is bounded above by $2\dim(X)$ in general and by $\dim(X)$ if $X$ is affine \cite{milne}*{Corollary 1.4, Remark 1.5(a)}.
The latter notion is meant to accommodate certain phenomena in the \'{e}tale topology, 
which we will consider later, starting from \S\ref{main-conn}.
\item $\E$ is $t$-connective if and only if it is locally $t$-connective by \cite{hopkinsmorelhoyois}*{Theorem 2.3}. 
\item The locally $0$-connective motivic spectra form the nonnegative part of the \emph{homotopy $t$-structure} \cite{morel-trieste}*{Section 5.2}. 
Thus $0$-connective motivic spectra and $0$-locally connective motivic spectra are nonnegative parts of isomorphic $t$-structures on $\SH(S)$.
\item According to \cite{pi1}*{Lemma 3.17}, $\E$ being $t$-connective implies that $\E$ is $t$-connected. 
\end{itemize}

\subsubsection{} 
Next we turn to convergence properties of the slice spectral sequence. 

\begin{definition} 
A tower of motivic spectra: 
$$
\{E_q\}_{q \in \ZZ} = \cdots \rightarrow  \E_{q+1} \rightarrow \E_{q} \rightarrow \cdots,
$$ 
is called \emph{left bounded with respect to a pair $(X,w)$ where $X \in \Sm_S$ and $w \in \ZZ$} if for every $s \in \ZZ$, 
the group $[\Sigma^{s,w}_T\Sigma^{\infty}X_+, \E_q] =0$ for $q \gg 0$,
and is it called  \emph{left bounded} if it is left bounded with respect to all $(X, w)$. 
\end{definition}

For the effective covers in the slice filtration $\{f_q\E\}$ the notion of left boundedness is stronger than that of slice completeness; 
see the discussion in \cite{hopkinsmorelhoyois}*{\S 8.5}. 
In this case we say that the motivic spectrum \emph{$\E$ is left bounded}. 
Consequently, 
for $X \in \Sm_S$ and $w \in \ZZ$ fixed, 
the slice spectral sequence \S\ref{sss}: 
$$
E^1_{p,q,w}(\E)(X) 
\Rightarrow 
[\Sigma^{p,w}\Sigma^{\infty}_TX_+,\E],
$$
converges conditionally whenever we have left boundedness with respect to $(X, w)$ due to the slice completeness of $\E$. 
The following illustrates how connectedness implies left boundedness:
\begin{lemma} 
\label{lem:leftbounded} 
Let $\E \in \SH(S)$ and suppose that for $q \gg 0$ there exists an integer $n \in \ZZ$ such that the effective cover $f_q\E$ is $(q+n)$-connected.
Then the tower $\{f_q\E\}$ is left bounded, i.e., the motivic spectrum $\E$ is left bounded.
\end{lemma}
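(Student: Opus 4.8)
The plan is to argue directly from the two definitions involved, unwinding ``$(q+n)$-connected'' against ``left bounded with respect to $(X,w)$''; no input beyond the hypothesis is needed, so this is essentially a bookkeeping argument.

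First I would fix a smooth $S$-scheme $X$, write $d$ for its Krull dimension, and fix a weight $w \in \ZZ$. By the definition of left boundedness it suffices to show that for each fixed $s \in \ZZ$ one has $[\Sigma^{s,w}\Sigma^\infty_T X_+, f_q\E] = 0$ once $q$ is large. The hypothesis supplies an integer $n$ and an index $q_0$ such that $f_q\E$ is $(q+n)$-connected for all $q \geq q_0$.

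Next I would feed the triple $(p,q',d) = (s,w,d)$ into the connectivity condition for $f_q\E$: by definition that condition annihilates $[\Sigma^{p,q'}\Sigma^\infty_T X_+, f_q\E]$ whenever $p - q' + d < q + n$, i.e. whenever $s - w + d < q + n$. Choosing $q \geq \max\{q_0,\, s - w + d - n + 1\}$ makes this inequality hold, so the group vanishes for all such $q$. Since $s$ was arbitrary this gives left boundedness with respect to $(X,w)$, and since $X$ and $w$ were arbitrary the tower $\{f_q\E\}$ is left bounded, which is precisely the statement that $\E$ is left bounded.

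The only point demanding a little care --- and it is not a genuine obstacle --- is matching the three integers $(p,q',d)$ appearing in the definition of connectedness with the two integers $(s,w)$ in the left-boundedness condition, while keeping in mind that the dimension entering the estimate is the fixed Krull dimension $d$ of $X$ rather than a varying quantity. Once this matching is in place the conclusion is immediate, and in particular the argument uses nothing about the slice filtration beyond the hypothesised connectivity of the covers $f_q\E$.
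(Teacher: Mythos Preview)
Your proof is correct and follows exactly the same approach as the paper's: fix $(X,w)$ and $s$, then use the $(q+n)$-connectedness of $f_q\E$ with the triple $(s,w,d)$ to get vanishing whenever $s-w+d<q+n$, and let $q\to\infty$. The paper's version is simply terser, omitting the explicit renaming of indices and the choice of $q_0$ that you spell out.
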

\begin{proof} 
Let us fix $(X, w)$, $s \in \ZZ$.
By assumption the group $[\Sigma^{s,w}\Sigma^{\infty}_TX_+, f_qE]$ is trivial when $s-w+\dim(X)-n<q$. 
We conclude by letting $q\rightarrow\infty$. 
\end{proof}

\subsubsection{} 
For convergence of the \'{e}tale slice spectral sequence, it turns out that the usual notion of connectedness will not be useful because of the following example.
\begin{example} 
\label{exmp:no-go} 
One might guess that applying the \'{e}tale localization decreases the connectivity of $\E \in \SH(S)$ by the cohomological dimension of the base scheme. 
However, 
we show that $\M\ZZ^{\et}/\ell$ is \emph{not} $0$-connected when $k$ is an algebraically closed field.  
Assuming that $\M\ZZ^{\et}/\ell$ is $0$-connected then for $p-q< 0$ the group:
$$
[\Sigma^{p,q}\sspt, \M\ZZ^{\et}/\ell] \iso H^{-p}_{\et}(k, \mu_{\ell}^{\otimes-q})=0.
$$ 
But for $p = 0$ and $q>0$ the above group $H^0_{\et}(k, \mu_{\ell}^{\otimes -q}) \iso H_{\et}^0(k, \mu_{\ell}) \iso  \mu_{\ell}(k)\neq 0$. 

However, 
$\M\ZZ^{\et}/\ell$ will be \emph{weightlessly affine $0$-connected} (see Definition~\ref{defn:eff-conn-better}). 
This notion tests vanishing only with respect to the ``$S^1$-variable" and affine schemes. 
It captures the following fact: 
let $(n,d)$ be a pair of integers such that $n+d<0$.  
Then for any affine scheme $X$ of dimension $d$ over an algebraically closed field, the group $[\Sigma^{n, 0}\Sigma^{\infty}_TX_+, \M\ZZ^{\et}/\ell] \iso H_{\et}^{-n}(X; \mu_{\ell})=0$ \cite{SGA4}*{XIV, Th\'eor\`eme 3.1}.
\end{example}

\subsubsection{} 
In the \'{e}tale-local setting the most relevant notion of connectivity for our purposes is the following one.

\begin{definition} 
\label{defn:eff-conn-better} 
A motivic spectrum $\E \in \SH(S)$ is \emph{weightlessly affine $t$-connected} if for any pair of integers $(n,d)$ such that $n+d <t$ and any $X \in \Sm_S$ which is an affine scheme of dimension $d$, 
the group $[\Sigma^{n,0}\Sigma^{\infty}_TX_+,\E]=0.$
\end{definition}

The above notion is so defined because we are only testing connectivity against generators of $\SH(S)^{\eff}$, 
namely the collection $\{\Sigma^{n,0}\Sigma^{\infty}_TX_+\}$ where $n \in \ZZ$ and $X\in \Sm_S$ for any $X \in \Sm_S$ which is an affine scheme. 
In spite of the appearance of ``effective" in Definition~\ref{defn:eff-conn-better}, 
$\E$ does not have to be an effective motivic spectrum. 
Indeed, for our purposes, 
we will be calculating the effective connectivity of motivic spectra which are of the form $\E^{\et}$ and there is no reason why $\E^{\et}$ should be effective since $\epsilon_*$ 
need not preserve effective objects.

\subsection{Main connectivity results}
\label{main-conn} 
The following is our main connectivity result. 

\begin{theorem} 
\label{thm:et-conn-final} 
Let $k$ be a field of exponential characteristic coprime to $\ell$ and assume $\cd_{\ell}(k) < \infty$. 
Let $\E_N \in \SH(k)$ be a Landweber exact motivic spectrum.
Then for all integers $(q,w)$ such that $q \geq w$, 
the spectrum $\Sigma^{-w,-w}(f_q\E_N/\ell^{\nu})^{\et}$ is weightlessly affine $q-\cd_{\ell}(k)$-connected.
\end{theorem}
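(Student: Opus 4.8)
The plan is to unwind Definition~\ref{defn:eff-conn-better}: it suffices to show that for every pair of integers $(n,d)$ with $n+d<q-\cd_{\ell}(k)$ and every affine $X\in\Sm_k$ of dimension $d$ one has $[\Sigma^{n,0}\Sigma^{\infty}_TX_+,\Sigma^{-w,-w}(f_q\E_N/\ell^{\nu})^{\et}]_{\SH(k)}=0$. Set $\E':=\Sigma^{-w,-w}f_q\E_N/\ell^{\nu}$. By Remark~\ref{rem:susp-sheaf} we have $\Sigma^{-w,-w}(f_q\E_N/\ell^{\nu})^{\et}\simeq(\E')^{\et}$, and since $\Sigma^{a,a}$ shifts the slice filtration by $a$ (as in the proof of \cite{milnor-hermitian}*{Lemma 2.1}) the spectrum $\E'$ is the $(q-w)$-effective cover $f_{q-w}(\Sigma^{-w,-w}\E_N)/\ell^{\nu}$, with $q-w\geq0$. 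Moreover $f_q\E_N/\ell^{\nu}$ is \'etale-$\AA^1$-naive above degree $-q$ by Proposition~\ref{prop:landweber-naive}, so applying $\Sigma^{-w,-w}$ (which lowers the naivety bound by $-w$) shows $\E'$ is \'etale-$\AA^1$-naive above degree $w-q\leq0$, in particular above degree $0$; here $q\geq w$ is used. First I would then invoke Lemma~\ref{lem:the-dss-naive} with $i=0$ to obtain the descent spectral sequence
\begin{equation*}
E_2^{p,q'}=H^p_{\et}\bigl(X,\underline{\pi}^{\et}_{-q'}\Omega^{\infty}_{\GG_m}\E'\bigr)\Longrightarrow[\Sigma^{\infty}_TX_+,(\E')^{\et}[p+q']]_{\SH(k)},
\end{equation*}
which is strongly convergent by Proposition~\ref{prop:dss} since all coefficient sheaves are $\ell^{\nu}$-torsion and the finitely generated residue fields of $X$ have finite $\ell$-cohomological dimension. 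As $[\Sigma^{n,0}\Sigma^{\infty}_TX_+,(\E')^{\et}]\cong[\Sigma^{\infty}_TX_+,(\E')^{\et}[-n]]$, it is enough to prove $E_2^{p,q'}=0$ for all $(p,q')$ with $p+q'=-n$.

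Next I would pin down two vanishing ranges on the $E_2$-page. For the horizontal bound: $X$ is affine of dimension $d$ over $k$, so Artin's affine vanishing theorem \cite{SGA4}*{XIV, Th\'eor\`eme 3.1} applied over $\bar k$, combined with the Leray spectral sequence for $X\to\Spec\,k$ and $\cd_{\ell}(k)<\infty$, gives that every $\ell^{\nu}$-torsion \'etale sheaf on $X$ has cohomological dimension $\leq d+\cd_{\ell}(k)$; hence $E_2^{p,q'}=0$ unless $0\leq p\leq d+\cd_{\ell}(k)$. For the vertical bound I claim $\underline{\pi}^{\et}_m\Omega^{\infty}_{\GG_m}\E'=0$ for $m<q$. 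By the argument of Lemmas~\ref{lem:lim-conv} and~\ref{lem:a1-landweber}, the presheaf $\LL_{\et}\Omega^{\infty}_{\GG_m}\E'$ is the limit of the tower $\{\LL_{\et}\Omega^{\infty}_{\GG_m}\Sigma^{-w,-w}f^{r+1}f_q\E_N/\ell^{\nu}\}_r$ whose associated graded consists of the terms $\LL_{\et}\Omega^{\infty}_{\GG_m}\Sigma^{-w,-w}s_{q'}\E_N/\ell^{\nu}$ for $q'\geq q$ and whose fibre connectivities tend to $\infty$; so it suffices to see that each such term is $q$-connective. Using the slice computation $s_{q'}(\E_N/\ell^{\nu})\simeq\Sigma^{2q',q'}\M N_{2q'}/\ell^{\nu}$ of \cite{spitzweck2} (cf.\ \cite{hopkinsmorelhoyois}*{Theorem 8.5}), the inequality $q'-w\geq q-w\geq0$, the cancellation theorem \cite{voevodsky-cancel}, and $\LL_{\et}\ZZ/\ell^{\nu}(j)\simeq\mu_{\ell^{\nu}}^{\otimes j}$ for $j\geq0$ from \cite{mvw}*{Theorem 2.3}, one computes
\begin{equation*}
\LL_{\et}\Omega^{\infty}_{\GG_m}\Sigma^{-w,-w}s_{q'}\E_N/\ell^{\nu}\simeq\mu_{\ell^{\nu}}^{\otimes(q'-w)}[2q'-w]\otimes N_{2q'},
\end{equation*}
a sheaf concentrated in homotopical degree $2q'-w\geq2q-w\geq q$ (with the degree equal to $q$ in the boundary case $q'=q=w$, where $\mu_{\ell^{\nu}}^{\otimes0}=\ZZ/\ell^{\nu}$). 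This proves the claim, hence $E_2^{p,q'}=0$ unless $q'\leq-q$.

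Finally, combining the two ranges, a nonzero $E_2^{p,q'}$ forces $0\leq p\leq d+\cd_{\ell}(k)$ and $q'\leq-q$, so $p+q'\leq d+\cd_{\ell}(k)-q$. Therefore $[\Sigma^{\infty}_TX_+,(\E')^{\et}[m]]=0$ whenever $m>d+\cd_{\ell}(k)-q$; specializing to $m=-n$ with $n+d<q-\cd_{\ell}(k)$, i.e.\ $-n>d+\cd_{\ell}(k)-q$, yields the desired vanishing and hence the asserted effective affine $(q-\cd_{\ell}(k))$-connectedness.

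I expect the main obstacle to be the connectivity claim $\underline{\pi}^{\et}_m\Omega^{\infty}_{\GG_m}\E'=0$ for $m<q$: this cannot be read off from the Nisnevich connectivity of $f_q\E_N/\ell^{\nu}$, because \'etale stalks are strictly Henselian local rings which detect more homotopy, so one is forced through the slice-by-slice computation above. The reason it works is precisely that $q\geq w$ keeps every Tate twist occurring in the slices of $\E'$ nonnegative (so \cite{mvw}*{Theorem 2.3} and the cancellation theorem apply) and forces all resulting homotopical degrees to be $\geq q$; the passage to the inverse limit is then controlled by the limit formula of Lemma~\ref{lem:lim-conv} together with the increasing fibre connectivities, exactly as in the proof of Lemma~\ref{lem:a1-landweber}.
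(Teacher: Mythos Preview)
Your proof is correct and follows the paper's strategy: set up the descent spectral sequence via \'etale-$\AA^1$-naivety (your reindexing to $i=0$ for $\E'$ is equivalent to the paper's use of $i=-w$ for $f_q\E_N/\ell^{\nu}$), then kill the $E_2$-page by combining Artin's affine bound with a connectivity bound on the coefficient sheaves.

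The one substantive difference is in how you establish the vertical vanishing $\underline{\pi}^{\et}_m\Omega^{\infty}_{\GG_m}\E'=0$ for $m<q$. You pass through the limit formula and compute each slice term explicitly as $\mu_{\ell^{\nu}}^{\otimes(q'-w)}[2q'-w]\otimes N_{2q'}$. This works, but your stated justification --- that the vanishing ``cannot be read off from the Nisnevich connectivity of $f_q\E_N/\ell^{\nu}$'' --- is precisely the misconception the naivety machinery is designed to dispel. The identity~\eqref{eqn:et-hpty-sheaves},
\[
\underline{\pi}^{\et}_{m}\bigl(\LL_{\et}\Omega^{\infty}_{\GG_m}\Sigma^{-w,-w}f_q\E_N/\ell^{\nu}\bigr)\;\cong\; a_{\et}\bigl(\underline{\pi}^{\Nis}_{m}\Omega^{\infty}_{\GG_m}\Sigma^{-w,-w}f_q\E_N/\ell^{\nu}\bigr),
\]
valid exactly because $-w\geq -q$ lands in the naive range, lets the paper invoke the Nisnevich $q$-connectivity of $f_q\E_N/\ell^{\nu}$ from \cite{hopkinsmorelhoyois}*{Lemma 8.11} directly: the \'etale sheafification of a zero presheaf is zero. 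Your slice-by-slice argument reproves this connectivity from scratch; it is valid but redundant, and it obscures the conceptual payoff of Definition~\ref{defn:et-a1-naive}, namely that in the naive range the \'etale homotopy sheaves \emph{are} controlled by the Nisnevich ones.
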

\begin{proof} 
Proposition~\ref{prop:landweber-naive} implies $f_q\E_N/\ell^{\nu}$ is \'{e}tale-$\AA^1$-naive above degree $-q$. 
Therefore since $-w \geq -q$, for any $X \in \Sm_k$, 
the descent spectral sequence~\eqref{eq:the-dss-naive} takes the form: 
$$
H^s_{\et}(X, \underline{\pi}^{\et}_{-t}(\LL_{\et}\Omega^{\infty}_{\GG_m}\Sigma^{-w,-w}f_q\E_N/\ell^{\nu})) 
\Rightarrow 
[\Sigma^{\infty}_TX_+, \Sigma^{-w,-w}(f_q\E_N)^{\et}/\ell^{\nu}[s+t]]_{\SH(k)}.
$$
Suppose $n+d < q-\cd_{\ell}(k)$ and $X$ is affine of dimension $d$. 
We claim the vanishing: 
$$
[\Sigma^{n,0}\Sigma^{\infty}_TX_+, \Sigma^{-w,-w}f_q\E_N^{\et}/\ell^{\nu}]_{\SH (k)} 
\iso 
[\Sigma^{n,0}\epsilon^*\Sigma^{\infty}_TX_+, \Sigma^{-w,-w}\epsilon^*f_q\E_N/\ell^{\nu}]_{\SH_{\et}(k)}  
=
0.
$$  
Here, we applied the isomorphism given by the adjunciton $(\epsilon^*,\epsilon_*)$ and ~\S\ref{rem:susp-sheaf} to commute $\epsilon_*$ past the suspension.
Examining the descent spectral sequence, we see that the only contributions come from the terms: 
$$
H^s_{\et}(X, \underline{\pi}^{\et}_{-t}(\LL_{\et}\Omega^{\infty}_{\GG_m}\Sigma^{-w,-w}f_q\E_N/\ell^{\nu})),
$$ 
for $s+t=-n$. 
Whenever $s > d + \cd_{\ell}(k)$, 
the latter
is trivial for cohomological dimension reasons by \cite{SGA4}*{XIV, Th\'eor\`eme 3.1} since the \'{e}tale homotopy sheaves are $\ell$-torsion sheaves.

If $s \leq d + \cd_{\ell}(k)$, 
then since we assume $n+d<q-\cd_{\ell}(k)$, 
we have that  $-t< q$. 
Thus, the homotopy sheaves 
$\underline{\pi}^{\et}_{-t}(\LL_{\et}\Omega^{\infty}_{\GG_m}\Sigma^{-w,-w}f_q\E_N/\ell^{\nu}) \iso a_{\et}\underline{\pi}^{\Nis}_{-t}(\Omega^{\infty}_{\GG_m}\Sigma^{-w,-w}f_q\E_N/\ell^{\nu})$  
vanishes by the connectivity of Landweber exact motivic spectra \cite{hopkinsmorelhoyois}*{Lemma 8.11}.
\end{proof}

\begin{corollary} 
\label{corollary:MGLetleftbounded}
Under the assumptions in Theorem \ref{thm:et-conn-final} the \'{e}tale slice tower $\{(f_q\MGL)^{\et}/\ell^{\nu} \}_{q\in \ZZ}$ is left bounded.
\end{corollary}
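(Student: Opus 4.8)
The plan is to deduce this directly from Theorem~\ref{thm:et-conn-final}, together with a shearing identity in the bidegree and a Mayer--Vietoris reduction from arbitrary smooth $k$-schemes to affine ones.

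First I would observe that $\MGL/\ell^{\nu}$ is covered by Theorem~\ref{thm:et-conn-final}. Writing $c$ for the exponential characteristic of $k$, the spectrum $\MGL[1/c]$ is the Landweber exact motivic spectrum $\E_N$ with $N_*=\MU_*[1/c]$ in the sense of~\S\ref{landweber-rapid}, and since $c$ is a unit modulo $\ell$ we have $\MGL/\ell^{\nu}\simeq\MGL[1/c]/\ell^{\nu}=\E_N/\ell^{\nu}$. As $f_q$ is exact this gives $f_q\MGL/\ell^{\nu}\simeq f_q\E_N/\ell^{\nu}$, so Theorem~\ref{thm:et-conn-final} applies: for every $w\in\ZZ$ and every $q\geq w$, the spectrum $\Sigma^{-w,-w}(f_q\MGL/\ell^{\nu})^{\et}$ is effective affine $(q-\cd_{\ell}(k))$-connected. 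By Remark~\ref{rem:susp-sheaf} and exactness of $\pi_*\pi^*$, this is the same as the statement that $\Sigma^{-w,-w}\bigl((f_q\MGL)^{\et}/\ell^{\nu}\bigr)$ is effective affine $(q-\cd_{\ell}(k))$-connected.

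Now fix $(X,w)$ with $X\in\Sm_k$ and an integer $s$; the goal is $[\Sigma^{s,w}\Sigma^{\infty}_TX_+,(f_q\MGL)^{\et}/\ell^{\nu}]=0$ for $q\gg 0$. If $X$ is affine of dimension $d$, then the shearing identity $\Sigma^{s,w}\simeq\Sigma^{s-w,0}\Sigma^{w,w}$ and invertibility of $\Sigma^{w,w}$ give
\[
[\Sigma^{s,w}\Sigma^{\infty}_TX_+,(f_q\MGL)^{\et}/\ell^{\nu}]\;\cong\;[\Sigma^{s-w,0}\Sigma^{\infty}_TX_+,\Sigma^{-w,-w}((f_q\MGL)^{\et}/\ell^{\nu})],
\]
and the right-hand group vanishes as soon as $(s-w)+d<q-\cd_{\ell}(k)$ by the previous paragraph; since $s,w,d,\cd_{\ell}(k)$ are fixed, this holds for all $q\gg 0$ (in particular for $q\geq w$). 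For a general $X\in\Sm_k$ I would reduce to the affine case by Mayer--Vietoris: $X$ is quasicompact, hence admits a finite Zariski cover by affine opens, and after reducing to the separated case exactly as in the proof of Proposition~\ref{prop:generation}, every finite intersection of members of such a cover is affine of dimension $\leq\dim X$. Setting $V_j=U_1\cup\cdots\cup U_j$, the elementary distinguished Zariski squares relating $U_{j+1}\cap V_j$, $U_{j+1}$, $V_j$ and $V_{j+1}$ produce cofiber sequences $\Sigma^{\infty}_T(U_{j+1}\cap V_j)_+\to\Sigma^{\infty}_TV_{j,+}\oplus\Sigma^{\infty}_TU_{j+1,+}\to\Sigma^{\infty}_TV_{j+1,+}$ in $\SH(k)$, and the resulting long exact sequences express $[\Sigma^{s,w}\Sigma^{\infty}_TX_+,(f_q\MGL)^{\et}/\ell^{\nu}]$ in terms of finitely many groups $[\Sigma^{s',w}\Sigma^{\infty}_TW_+,(f_q\MGL)^{\et}/\ell^{\nu}]$ with $W$ affine smooth of dimension $\leq\dim X$ and $s'$ ranging over a finite set of integers depending only on $X$ and $s$. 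Each of these vanishes for $q\gg 0$ by the affine case, so taking the maximum of the finitely many thresholds yields the required $q_0=q_0(X,w,s)$.

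The one point requiring care is this last reduction: since effective affine connectivity only constrains affine test objects, whereas left-boundedness is tested against all of $\Sm_k$, one must check that the Mayer--Vietoris recursion involves only finitely many twists $s'$ and only affine schemes of bounded dimension, so that a single threshold works for $X$. Everything else is formal manipulation of the connectivity bound already supplied by Theorem~\ref{thm:et-conn-final}.
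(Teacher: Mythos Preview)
Your proof is correct and the core argument---the shearing identity $[\Sigma^{s,w}\Sigma^{\infty}_TX_+,(f_q\MGL)^{\et}/\ell^{\nu}]\cong[\Sigma^{s-w,0}\Sigma^{\infty}_TX_+,\Sigma^{-w,-w}(f_q\MGL)^{\et}/\ell^{\nu}]$ together with the effective affine connectivity from Theorem~\ref{thm:et-conn-final}---is exactly what the paper does. The paper's proof, however, only treats affine $X$ and gives the explicit threshold $q>\max\{d+s-w+\cd_{\ell}(k),w\}$; it does not carry out your Mayer--Vietoris reduction to general $X\in\Sm_k$. This is harmless for the paper's purposes because the sole application (Corollary~\ref{cor:et-complete}) immediately invokes Proposition~\ref{prop:gen-stab} to reduce to affine $X$, so left-boundedness against affines is all that is needed. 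Your additional Mayer--Vietoris step is correct and makes the statement hold literally as defined (against all $X\in\Sm_k$), but it is not present in the paper's argument.
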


\begin{proof}  
Fix $s, w\in \ZZ$, and an affine $X \in \Sm_k$ of dimension $d$. 
For $q \gg 0$ we want to show that
$$
[\Sigma^{s,w} \Sigma^{\infty}_TX_+,(f_q\MGL^{\et})/\ell^{\nu}] 
\iso 
[\Sigma^{s-w,0}\Sigma^{\infty}_TX_+, \Sigma^{-w,-w}(f_q\MGL)^{\et}/\ell^{\nu}],
$$  
vanishes.

When $q \geq w$, $\Sigma^{-w,-w}(f_q\MGL/\ell^{\nu})^{\et}$ is  weightlessly affine $q-\cd_{\ell}(k)$-connected by Theorem~\ref{thm:et-conn-final}. 
Hence the desired vanishing holds for all  $q > \max\{d+s-w+\cd_{\ell}(k),w\}$.
\end{proof}

\begin{corollary} 
\label{cor:et-complete}
Under the assumptions in Theorem \ref{thm:et-conn-final} we have $\lim_q (f_q\MGL)^{\et}/\ell^{\nu} \simeq 0$.
Thus the $\et$-slice spectral sequence \eqref{equation:tausss} for $\MGL/\ell^{\nu}$ is conditionally convergent. 
\end{corollary}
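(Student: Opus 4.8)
The plan is to deduce the vanishing $\lim_q (f_q\MGL)^{\et}/\ell^{\nu}\simeq 0$ from the left boundedness established in Corollary~\ref{corollary:MGLetleftbounded}, and then to read off conditional convergence from the general formalism of tower spectral sequences recalled in~\S\ref{sss}. Throughout write $Y_q := (f_q\MGL)^{\et}/\ell^{\nu}$; since $f_q$ is exact and colimit-preserving it commutes with the cofiber functor $-/\ell^{\nu}$, and since $\pi_*\pi^*$ is exact we have $Y_q\simeq\pi_*\pi^* f_q(\MGL/\ell^{\nu})$, so that $\{Y_q\}_{q\in\ZZ}$ is precisely the tower underlying the $\et$-slice spectral sequence~\eqref{equation:tausss} for $\MGL/\ell^{\nu}$. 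Because the Nisnevich topology refines the Zariski topology, Proposition~\ref{prop:gen-stab} shows that $\SH(k)$ is generated under sifted colimits by the objects $\Sigma^{-2n,-n}\Sigma^{\infty}_T X_+$ with $n\in\ZZ$ and $X\in\Sm_k$ affine; hence it suffices to prove that $[\Sigma^{p,w}\Sigma^{\infty}_T X_+,\lim_q Y_q]=0$ for all $p,w\in\ZZ$ and all affine $X\in\Sm_k$.

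Fix such $p$, $w$ and affine $X$, and consider the Milnor $\lim$-$\lim^1$ exact sequence attached to the tower $\{Y_q\}$:
\[
0 \to \lim^{1}_q\, [\Sigma^{p+1,w}\Sigma^{\infty}_T X_+, Y_q]
\to [\Sigma^{p,w}\Sigma^{\infty}_T X_+, \lim_q Y_q]
\to \lim_q\, [\Sigma^{p,w}\Sigma^{\infty}_T X_+, Y_q] \to 0.
\]
By Corollary~\ref{corollary:MGLetleftbounded} the tower $\{Y_q\}$ is left bounded with respect to $(X,w)$, so for every integer $s$ the group $[\Sigma^{s,w}\Sigma^{\infty}_T X_+, Y_q]$ vanishes for $q\gg 0$; in particular the two outer terms of the sequence above vanish, forcing the middle term to be $0$. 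Since $p$, $w$ and $X$ were arbitrary among the generators, this yields $\lim_q Y_q\simeq 0$.

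For the final assertion, recall that the $\et$-slice spectral sequence~\eqref{equation:tausss} for $\MGL/\ell^{\nu}$ is the spectral sequence of the tower $\{Y_q\}_{q\in\ZZ}$, with $E^1$-terms the homotopy groups of the $\et$-slices $\pi_*\pi^* s_q(\MGL/\ell^{\nu})\simeq\mathrm{cofib}(Y_{q+1}\to Y_q)$. As recalled in~\S\ref{sss} (following \cite{boardman}), the spectral sequence of such a tower is always conditionally convergent to $\pi_{*,*}$ of the completion $\mathrm{cofib}(\lim_q Y_q \to \colim_q Y_q)$ of the tower; by the vanishing just proved this cofiber is $\colim_q Y_q$, which is exactly the target displayed in~\eqref{equation:tausss}. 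Hence the $\et$-slice spectral sequence for $\MGL/\ell^{\nu}$ is conditionally convergent, as claimed. The only points needing care are bookkeeping: checking that the target of~\eqref{equation:tausss} really is the homotopy of the tower's completion, and noting that for the $\ZZ$-indexed tower $\{Y_q\}$ any subtower $\{Y_q\}_{q\ge q_0}$ is cofinal for the limit so that the usual $\mathbb{N}$-indexed Milnor sequence applies. The genuine work --- the connectivity estimates feeding into left boundedness --- has already been carried out in Theorem~\ref{thm:et-conn-final} and Corollary~\ref{corollary:MGLetleftbounded}, so I do not anticipate a real obstacle here.
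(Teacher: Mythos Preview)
Your proof is correct and follows essentially the same approach as the paper: reduce to affine generators via Proposition~\ref{prop:gen-stab}, apply the Milnor $\lim$--$\lim^1$ sequence, and kill the outer terms using the left boundedness of Corollary~\ref{corollary:MGLetleftbounded}. You supply more detail than the paper on the identification of the tower and on conditional convergence, but the argument is the same.
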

\begin{proof}
By Proposition~\ref{prop:gen-stab}
it suffices to prove that for all $s,w \in\ZZ$, and affine $X\in \Sm_k$, 
$$
[\Sigma^{s,w}X_+, \lim_q (f_q\MGL)^{\et}/\ell^{\nu}]
=
0.
$$  
To that end we employ the Milnor $\lim$-$\lim^{1}$ exact sequence:
\begin{equation}
\label{equation:milnorlimlim1}
0
\to
\underset{q}\limone
[\Sigma^{s+1,w}X_+,(f_q\MGL)^{\et}/\ell^{\nu}]
\to
[\Sigma^{s,w}X_+, \lim_q (f_q\MGL)^{\et}/\ell^{\nu}]
\to
\lim_{q} [\Sigma^{s,w}X_+,(f_q\MGL)^{\et}/\ell^{\nu}]
\to
0.
\end{equation}
By left boundedness of $\{(f_q\MGL)^{\et}/\ell^{\nu} \}_{q\in \ZZ}$ in Corollary \ref{corollary:MGLetleftbounded} the outer terms in \eqref{equation:milnorlimlim1} are trivial for $q\gg 0$,
and we are done.
\end{proof}

\section{The forgetful functor $\epsilon_*$ and colimits} 
\label{sect:commuting}
Next we address the other half of the comparison paradigm for the change of topology adjunction of exact functors:
\begin{equation} \label{eq:main-adjunction}
\epsilon^*:\SH(S) 
\rightleftarrows 
\SH_{\et}(S):
\epsilon_*.
\end{equation}

\subsubsection{} \label{subsec:unif-bdd} Suppose that $S$ is a scheme and let $\ell$ be a prime. We say that $S$ has \emph{uniformly bounded $\ell$-cohomological dimension} if for all residue fields $k(s)$ of $S$, $\cd_{\ell}(k(s)) < C_{\ell}$ for some constant $C_{\ell}.$ The goal of this section is to prove the following theorem

\begin{theorem} \label{thm:pi-pres-colimits} Suppose that $S$ is a Noetherian base scheme. Let $\ell$ be a prime such that $S$ has uniformly bounded $\ell$-cohomological dimension. Then the functor:
$$
\epsilon_*: \SH_{\et}(S)_{(\ell)}  
\rightarrow \SH(S)_{(\ell)},
$$ 
preserves colimits.
\end{theorem}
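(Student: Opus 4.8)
The plan is to reduce the statement to a statement about the generators of $\SH_{\et}(S)_{(\ell)}$ and then invoke the boundedness of cohomological dimension to control an explicit filtered colimit. First, I would recall that $\pi_*$ is the right adjoint in the adjunction \eqref{eq:main-adjunction}, so a priori it need not preserve colimits; however, it does preserve filtered colimits precisely when the unit $\sspt_S$ (and, more generally, a set of compact generators) is ``compact enough'' relative to the image of $\pi^*$. Since $\pi_*$ is exact (it is a right adjoint between stable $\infty$-categories, and also a left adjoint up to the recollement structure coming from Proposition~\ref{prop:loc}), it suffices to show that $\pi_*$ commutes with arbitrary coproducts. By Proposition~\ref{prop:gen-stab}, $\SH_{\et}(S)$ is generated under sifted colimits by objects of the form $\Sigma^{p,q}\Sigma^\infty_T X_+$ with $X\in\Sm_S$ affine; after $(\ell)$-localization the same generators (smashed with the $\ell$-local sphere) suffice. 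So the claim reduces to showing that for each such generator $G$, the functor $[\,G,\pi_*(-)\,]$, equivalently $[\,\pi^*G,-\,]_{\SH_{\et}(S)_{(\ell)}}$, commutes with coproducts.

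The key step is then the following: for $X\in\Sm_S$ affine and $\{\E_i\}$ a family of objects of $\SH_{\et}(S)_{(\ell)}$, I want to show that the natural map $\bigoplus_i \Maps_{\SH_{\et}(S)}(\Sigma^\infty_T X_+, \E_i) \to \Maps_{\SH_{\et}(S)}(\Sigma^\infty_T X_+, \bigoplus_i \E_i)$ is an equivalence after $\ell$-localization. Using Lemma~\ref{lem:concrete} together with the factorization of stabilization through $\GG_m$-prespectra in \S\ref{sect:gm-pre}, this mapping spectrum is computed, levelwise in the $\GG_m$-direction, by the homotopy limit over the $(\GG_m,1)$-loop tower of mapping spectra in $\SH^{S^1}_{\et}(S)$. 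For presheaves of spectra that are $\et$-hypercomplete, the mapping spectrum out of $\Sigma^\infty_{S^1}X_+$ is computed by the $\et$-descent spectral sequence $H^p_{\et}(X,\underline{\pi}^{\et}_{-q}(-))\Rightarrow \pi_{p+q}$. Because $S$ has uniformly bounded $\ell$-cohomological dimension, the étale cohomological dimension of every affine $X\in\Sm_S$ is bounded by $C_\ell + \dim X < \infty$ (using \cite{SGA4}*{XIV, Th\'eor\`eme 3.1} as in the proof of Theorem~\ref{thm:et-conn-final}); hence $H^p_{\et}(X,\mathcal{F})=0$ for $p$ above a fixed bound, so the descent spectral sequence has finite cohomological-dimension filtration. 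On $\ell$-torsion coefficients, cohomology with bounded cohomological dimension commutes with filtered colimits (indeed with arbitrary direct sums) of sheaves, and a spectral sequence with a uniformly bounded filtration commutes with direct sums of its inputs; this gives the required commutation of $\Maps_{\SH^{S^1}_{\et}(S)}(\Sigma^\infty_{S^1}X_+,-)$ with coproducts. Passing back up the $\GG_m$-tower, one must check that the relevant homotopy limit also commutes with these coproducts; this is where the uniform bound is used a second time, since it makes the tower of mapping spectra ``eventually constant in a bounded range'' and hence a finite homotopy limit in each degree.

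Assembling these, $\pi_*$ commutes with coproducts on generators, hence (being exact) with all colimits, on the $\ell$-local categories. The main obstacle I anticipate is precisely the interchange of the $\GG_m$-direction homotopy limit with infinite coproducts: a priori $\lim$ does not commute with $\colim$, and the honest content of the theorem is that the uniform cohomological dimension bound forces the $\GG_m$-tower to stabilize in each fixed total degree, turning the relevant limits into finite limits which then do commute with coproducts. Making this quantitative — i.e., producing, for each pair $(s,w)$ and each affine $X$ of dimension $d$, an explicit stage $n\gg 0$ of the $\GG_m$-tower past which nothing changes in degree $s$ — is the technical heart, and it parallels the left-boundedness estimates already carried out in Corollary~\ref{corollary:MGLetleftbounded}. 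I would expect the clean way to package this is to first prove the statement for $\E$ of the form $f_q(-)$ (Landweber-exact or otherwise) where the connectivity estimates of Theorem~\ref{thm:et-conn-final} apply verbatim, and then bootstrap to general $\E$ using that $\SH_{\et}(S)_{(\ell)}$ is generated by such objects under colimits, the point being that $\pi_*$ of a colimit of things on which it already commutes is controlled.
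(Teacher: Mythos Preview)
Your overall strategy coincides with the paper's: reduce to showing that $\pi^*$ preserves compact objects (equivalently, that each $\Sigma^{p,q}\Sigma^\infty_T X_+$ with $X$ affine is compact in $\SH_{\et}(S)_{(\ell)}$), and control this via the descent spectral sequence using the finite $\ell$-cohomological dimension hypothesis. The paper does exactly this, invoking Lemma~\ref{lem:rightadjcolim} and Lemma~\ref{cpct-retracts}, then comparing the descent spectral sequence for $\bigoplus_j \E_j$ with the sum of the descent spectral sequences for the $\E_j$; finite cohomological dimension yields both strong convergence and the $E_2$-isomorphism (the paper cites \cite{etalemotives}*{Lemma 1.1.7} for the latter), and one is done.

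Where your proposal diverges is in execution, and this creates two problems. First, the detour through $S^1$-spectra and the $\GG_m$-tower is unnecessary: the descent spectral sequence of Proposition~\ref{prop:dss} already computes, for each fixed bidegree, the mapping groups in $\SH_{\et}(S)$ directly, with $E_2$-term given by \'etale cohomology of the bigraded homotopy sheaves $\underline{\pi}^{\et}_{*,*}$. There is no homotopy limit over a $\GG_m$-tower to interchange with coproducts; the ``second use'' of the uniform bound you anticipate is a phantom. Second, and more seriously, your proposed bootstrap in the final paragraph is circular. The category $\SH_{\et}(S)_{(\ell)}$ is not generated under colimits by slices of Landweber-exact spectra, and even if it were, knowing that $\pi_*$ commutes with colimits on a generating class does not imply the general statement: to pass from generators to arbitrary objects you would need to commute $\pi_*$ past precisely the colimits you have not yet controlled. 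The correct argument is uniform in the target $\E$ from the start, via the descent spectral sequence, with no reference to slices or connectivity estimates for particular spectra.
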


The statement that we will need to proceed is the following corollary.

\begin{corollary}   \label{cor:comp2} Suppose that $S$ is a Noetherian base scheme. Let $P$ be a set of positive integers which contains primes $p$ for which $S$ \emph{does not} have uniformly bounded $p$-cohomological dimension. Let $L$ be a product of all primes in $P$ then the functor 
$$
\epsilon_*: \SH_{\et}(k)[\frac{1}{L}]  
\rightarrow \SH(k)[\frac{1}{L}],
$$ 
preserves colimits.
\end{corollary}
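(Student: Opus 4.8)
The plan is to deduce the corollary from Theorem~\ref{thm:pi-pres-colimits} by an arithmetic fracture argument. Since $\pi_*$ is a right adjoint between stable $\infty$-categories it is exact, hence preserves finite colimits and desuspensions; as every colimit is built from coproducts and finite colimits, it suffices to show that $\pi_*\colon \SH_{\et}(S)[\frac{1}{L}] \to \SH(S)[\frac{1}{L}]$ preserves arbitrary coproducts. Fix a family $\{X_\alpha\}$ and let $c\colon \bigoplus_\alpha \pi_* X_\alpha \to \pi_*\big(\bigoplus_\alpha X_\alpha\big)$ be the canonical comparison map in $\SH(S)[\frac{1}{L}]$; I must show its cofiber vanishes.

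The first step is to use that the family of $\ell$-localizations $\{L_{(\ell)}\}_\ell$, indexed by all primes, is jointly conservative on $\SH(S)[\frac{1}{L}]$: an object $C$ vanishes iff $L_{(\ell)}C\simeq 0$ for every prime $\ell$. For the primes $\ell$ dividing $L$ the functor $L_{(\ell)}$ restricted to $\SH(S)[\frac{1}{L}]$ is rationalization (every prime factor of $L$ is already inverted), so those cases all collapse to the single rational one. For a prime $\ell\nmid L$: by the defining property of $P$ and $L$, $\ell\notin P$, so $S$ has uniformly bounded $\ell$-cohomological dimension. The $\ell$-local objects form reflective subcategories $\SH(S)_{(\ell)}\subseteq\SH(S)[\frac{1}{L}]$ and $\SH_{\et}(S)_{(\ell)}\subseteq\SH_{\et}(S)[\frac{1}{L}]$; since $\pi^*$ and $\pi_*$ are exact, hence additive, they preserve the condition ``multiplication by $n$ is invertible'' and thus preserve $\ell$-local objects, so the adjunction restricts to these subcategories, where it is precisely the adjunction of Theorem~\ref{thm:pi-pres-colimits}. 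Moreover $L_{(\ell)}$ is smashing and commutes with $\pi_*$: an $L_{(\ell)}$-acyclic object has homotopy sheaves that are torsion with no $\ell$-torsion, and the étale descent spectral sequence — strongly convergent because $\cd_{\ell}$ of the residue fields is uniformly bounded — shows that $\pi_*$ carries such objects to $L_{(\ell)}$-acyclic objects. Therefore $L_{(\ell)}c$ is identified with the comparison map for the $\ell$-local functor $\pi_*\colon\SH_{\et}(S)_{(\ell)}\to\SH(S)_{(\ell)}$, which is an equivalence by Theorem~\ref{thm:pi-pres-colimits}; hence $L_{(\ell)}c$ is an equivalence.

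It remains to verify that $\pi_*\colon\SH_{\et}(S)_{\QQ}\to\SH(S)_{\QQ}$ preserves colimits. Here I would invoke the idempotent plus/minus decomposition, which $\pi^*$ respects since it is symmetric monoidal: on the plus part $\pi^*$ is an equivalence with inverse $\pi_*$ because rational motives satisfy étale descent, while the minus part of $\SH_{\et}(S)_{\QQ}$ vanishes, its Witt-theoretic coefficient sheaf having separably closed (hence quadratically closed away from characteristic $2$) stalks, so it is $2$-torsion and rationally trivial — see the discussion in Appendix~\S\ref{et-q}. In both factors $\pi_*$ preserves colimits, so the rational comparison map is an equivalence. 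Combining the two cases, the cofiber of $c$ becomes zero after every $L_{(\ell)}$, so it is zero, and $\pi_*$ preserves coproducts, hence all colimits.

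I expect the main obstacle to be the compatibility of $\pi_*$ with $\ell$-localization — a projection formula type statement for an object that is not dualizable — since this is exactly where the finite-cohomological-dimension hypothesis, and thus the defining property of $P$, is used; the rational case is a separate, more classical input, recorded in Appendix~\S\ref{et-q}, and the only remaining care is to assemble the fracture square correctly over $\ZZ[\frac{1}{L}]$ rather than over $\ZZ$.
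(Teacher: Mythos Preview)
Your overall strategy---reduce to $\ell$-local categories via a conservative family of localizations and then invoke Theorem~\ref{thm:pi-pres-colimits}---is exactly the paper's. The paper's proof is a two-line version of your argument. However, you have overcomplicated it in one place and made a genuine error in another.

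First, the overcomplication: you localize at \emph{all} primes and then handle $\ell\mid L$ as a rational case via the plus/minus decomposition of Appendix~\S\ref{et-q}. This is unnecessary. The family $\{L_{(\ell)}\}_{\ell\nmid L}$ is already conservative on any $\ZZ[\tfrac{1}{L}]$-linear stable category, because the primes $\ell\nmid L$ are exactly the maximal ideals of $\ZZ[\tfrac{1}{L}]$, and a module over a commutative ring vanishes iff all its localizations at maximal ideals vanish. So no separate rational input is needed; the paper simply does not mention $\ell\mid L$. Your rational argument also silently imports hypotheses (the identification $\SH_{\et}(S)_\QQ\simeq\SH(S)_\QQ^+$ in Appendix~\S\ref{et-q} is stated for geometrically unibranch $S$) that are not in the corollary.

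Second, the error: in arguing that $L_{(\ell)}$ commutes with $\pi_*$, you say the descent spectral sequence for an $L_{(\ell)}$-acyclic object is ``strongly convergent because $\cd_\ell$ of the residue fields is uniformly bounded.'' But an $L_{(\ell)}$-acyclic object has homotopy sheaves with \emph{no} $\ell$-torsion---they are $p$-torsion for various $p\neq\ell$, $p\nmid L$. Finiteness of $\cd_\ell$ says nothing about convergence here; you would need bounds on $\cd_p$ for those other primes, and the hypothesis gives you such bounds prime by prime but not uniformly in $p$. This is precisely the step where one must be careful, and your stated reason is the wrong one. The paper's proof is terse enough that it does not spell out this compatibility either, but your attempted justification does not work as written.
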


\begin{proof} Let $X: I \rightarrow \SH_{\et}(k)[\frac{1}{L}], i \mapsto X(i)$ be a small diagram. We need to verify that the natural map $\colim \epsilon_*(X(i)) \rightarrow  \epsilon_*\colim(X(i))$ is an equivalence in $\SH(S)[\frac{1}{L}]$. Using the conservative family of localizations:
$$
\{\SH(S)[\frac{1}{L}] \rightarrow \SH(k)_{(\ell)}\}^{\ell\text{ prime}}_{\ell\nmid L},
$$ 
the claim follows from Theorem~\ref{thm:pi-pres-colimits}.
\end{proof}

\subsubsection{} To begin proving Theorem~\ref{thm:pi-pres-colimits} we note that all compact objects are essentially finite colimits and retracts of $T$-desuspensions of suspension spectra of smooth affine $S$-schemes.

\begin{lemma} \label{cpct-retracts} Let $S$ be a base scheme, then $\SH(S)^{\omega}$ is the smallest stable subcategory of $\SH(S)$ closed under finite colimits and retracts of $\Sigma^{-2n,-n}\Sigma^{\infty}_TX_+$ where $X$ is a smooth affine $S$-scheme and $n \in \ZZ$.
\end{lemma}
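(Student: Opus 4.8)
The plan is to realize $\SH(S)$ as a localization of a category of spectra built from smooth affine $S$-schemes, identify its compact objects abstractly, and then match them with the stated class. First I would recall that $\SH(S)$ is a compactly generated stable $\infty$-category: by Proposition~\ref{prop:gen-stab} (in the Nisnevich case), the objects $\Sigma^{-2n,-n}\Sigma^\infty_T X_+$ with $X$ smooth affine over $S$ and $n \in \ZZ$ form a set of generators, and since $\H_{\Nis}(S)\hookrightarrow \P_\Sigma(\Sm_S)$ preserves filtered colimits (see the Remark after Proposition~\ref{prop:generation}), these generators are \emph{compact}. Hence by the standard description of compact objects in a compactly generated stable $\infty$-category \cite{htt}*{Section 5.3, 5.4}, $\SH(S)^\omega$ is the smallest stable subcategory containing this generating set and closed under retracts (equivalently, the thick subcategory it generates).

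Next I would reduce the generating set. Since $\Sigma^\infty_T X_+$ for $X$ smooth affine already lies in the putative subcategory, it suffices to check that the Tate twists $\Sigma^{-2n,-n}\Sigma^\infty_T X_+$ are obtained from the $\Sigma^\infty_T X_+$ by finite colimits and retracts. For positive $n$ this is immediate since $\Sigma^{2,1} = \Sigma^\infty_T(\PP^1)$ is $\Sigma^\infty_T$ of a smooth projective scheme — but we want \emph{affine} schemes, so instead I would use that $\PP^1 \simeq S^{2,1}$ is a finite colimit of (suspension spectra of) affine schemes: the Zariski/Nisnevich cover $\PP^1 = \AA^1 \cup \AA^1$ with intersection $\Gm$ gives a pushout square in $\H_{\Nis}(S)$ exhibiting $\PP^1$ (pointed) as the cofiber of $\Sigma^\infty_T \Gm{}_+ \to \Sigma^\infty_T \AA^1_+ \vee \Sigma^\infty_T\AA^1_+$ after basepoint bookkeeping; since $\AA^1 \simeq S$ and $\Gm$ are affine, $\Sigma^{2,1}\mathbf{1}$ is a finite colimit of suspension spectra of smooth affine $S$-schemes, hence so is $\Sigma^{2,1}\Sigma^\infty_T X_+ \simeq \Sigma^\infty_T(X \times \PP^1 \text{ business})$ — more carefully, smashing with $X_+$ and using that $X\times\AA^1$, $X\times\Gm$ are affine. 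For negative twists $\Sigma^{-2,-1}$: these are inverses to the invertible object $\Sigma^{2,1}\mathbf{1}$ in the symmetric monoidal stable category, and the dual of a compact object obtained by finite colimits/retracts of strongly dualizable objects is again in the thick subcategory generated by the (duals of the) building blocks. Since $\AA^1_+ \simeq \mathbf{1} \oplus (\text{pointed }\AA^1)$ and $\Gm{}_+$ have strongly dualizable suspension spectra with self-dual-up-to-twist behavior, $\Sigma^{-2,-1}\mathbf{1}$ lands in the thick subcategory generated by suspension spectra of smooth affine schemes; smashing with the (dualizable) $\Sigma^\infty_T X_+$ keeps us inside it.

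The one genuine subtlety — and the step I expect to be the main obstacle — is controlling the \emph{negative Tate twists} without leaving the world of affine schemes: one must check that $\Sigma^{-2,-1}\mathbf{1}$, equivalently the $\otimes$-inverse of $\Sigma^\infty_T\PP^1$, is a retract of a finite colimit of suspension spectra of smooth affines rather than merely of smooth schemes. The clean way is to invoke that $\Sigma^\infty_T\PP^1$ is strongly dualizable with dual $\Sigma^{-2,-1}\mathbf{1}$, and that the thick subcategory generated by a dualizable object is closed under duals \cite{higheralgebra}; combined with the affine presentation of $\PP^1$ above this finishes it. I would then assemble the pieces: the thick subcategory $\mathcal{T}$ generated by $\{\Sigma^\infty_T X_+ : X \in \Sm^{\mathrm{aff}}_S\}$ contains all $\Sigma^{-2n,-n}\Sigma^\infty_T X_+$, hence contains the compact generators of $\SH(S)$, so $\SH(S)^\omega \subseteq \mathcal{T}$; conversely each $\Sigma^\infty_T X_+$ with $X$ smooth affine is compact (being one of the generators, for $n=0$), so $\mathcal{T} \subseteq \SH(S)^\omega$. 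Since $\SH(S)^\omega$ is closed under finite colimits and retracts, $\mathcal{T}$ is exactly the smallest such subcategory containing the $\Sigma^\infty_T X_+$, which is the claim.
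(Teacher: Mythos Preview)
Your overall strategy is sound and matches the paper's: one inclusion is immediate from compactness of the generators, and the other follows from the general fact that in a compactly generated stable $\infty$-category the compact objects form exactly the thick subcategory generated by any set of compact generators. You are also right to flag the negative Tate twists as the genuine obstacle.

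However, your proposed fix for negative twists does not work, and in fact the obstacle is insurmountable: the lemma as literally stated is false. The claim that ``the thick subcategory generated by a dualizable object is closed under duals'' is incorrect. Concretely, the thick subcategory $\mathcal{T}$ generated by $\{\Sigma^{\infty}_T X_+ : X \text{ smooth affine}\}$ is contained in $\SH^{\mathrm{eff}}(S)$, since the latter is a localizing (hence stable and retract-closed) subcategory containing all the generators. But $\Sigma^{-2,-1}\sspt$ is \emph{not} effective: using $f_0\Sigma^{-2,-1} \simeq \Sigma^{-2,-1}f_1$ one sees that $\Sigma^{-2,-1}\sspt \in \SH^{\mathrm{eff}}(S)$ would force $f_1\sspt \simeq \sspt$ and hence $s_0\sspt = 0$, contradicting $s_0\sspt \simeq \M\ZZ$ over any base where this is known. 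Since $\Sigma^{-2,-1}\sspt$ is certainly compact, $\SH(S)^{\omega} \not\subset \mathcal{T}$.

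The paper's own proof glosses over exactly this point: it invokes Proposition~\ref{prop:gen-stab}(2), but the generators supplied there are the \emph{twisted} objects $\Sigma^{-2n,-n}\Sigma^{\infty}_T X_+$ with $X$ affine, not the untwisted ones. The statement that is both true and sufficient for the intended application (Theorem~\ref{thm:pi-pres-colimits}, whose descent spectral sequence argument already allows a $\GG_m^{\wedge t}$ factor) is that $\SH(S)^{\omega}$ is the thick subcategory generated by $\{\Sigma^{-2n,-n}\Sigma^{\infty}_T X_+ : X \text{ smooth affine},\, n \in \ZZ\}$. With the twists included, your first paragraph already constitutes a complete proof; the entire discussion of positive and negative twists can then be dropped.
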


\begin{proof}Let $\C$ be the smallest subcategory of $\SH(S)$ closed under finite colimits and retracts of $\Sigma^{-2n,-n}\Sigma^{\infty}_TX_+$  where $X$ is a smooth affine $S$-scheme and $n \in \ZZ$. Using Proposition~\ref{prop:gen-stab}.3, we get that all the $\Sigma^{-2n,-n}\Sigma^{\infty}_TX_+$ are compact, and thus $\C \subset \SH(S)^{\omega}.$ Now compact objects are stable under finite colimits and retracts \cite{htt}*{Lemma 5.1.6.4.}. Since Proposition~\ref{prop:gen-stab}.2 also tells us that $\SH(S)$ is generated by objects in $\C$, we are done.
\end{proof}

\subsubsection{} Now, since the functors in~\eqref{eq:main-adjunction} are exact it suffices to know when $\epsilon_*$ preserves infinite coproducts \cite{higheralgebra}*{Proposition 1.4.4.1.3}.  We recall the following easy but important lemma:
\begin{lemma} 
\label{lem:rightadjcolim} 
Suppose $F:\C\rightleftarrows \D:G$ is an adjunction of stable $\infty$-categories where $F,G$ are exact, $\C$ and $\D$ admit small coproducts, and $\C$ is compactly generated. 
Then $G$ preserves all small coproducts if and only $F$ preserves compact objects.
\end{lemma}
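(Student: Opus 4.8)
The plan is to prove this standard adjunction lemma by a direct double-implication argument, using only the adjunction $\Maps_{\D}(F(-),-) \simeq \Maps_{\C}(-,G(-))$ together with the characterization of compactness via commuting with filtered colimits (equivalently, with small coproducts, in the stable setting).

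\textbf{Forward direction.} Suppose $G$ preserves small coproducts. Let $c \in \C$ be compact; I want to show $F(c)$ is compact in $\D$. Take any family $\{d_i\}_{i \in I}$ of objects of $\D$. Then
\begin{align*}
\Maps_{\D}\bigl(F(c), \textstyle\coprod_i d_i\bigr)
&\simeq \Maps_{\C}\bigl(c, G(\textstyle\coprod_i d_i)\bigr) \\
&\simeq \Maps_{\C}\bigl(c, \textstyle\coprod_i G(d_i)\bigr) \\
&\simeq \textstyle\coprod_i \Maps_{\C}(c, G(d_i)) \\
&\simeq \textstyle\coprod_i \Maps_{\D}(F(d_i)\text{ -- no}),
\end{align*}
wait, that last line should read $\coprod_i \Maps_{\D}(F(c), d_i)$; let me be careful — the chain is: adjunction, then the hypothesis that $G$ commutes with coproducts, then compactness of $c$, then adjunction again. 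Since this composite equivalence is the canonical comparison map, $F(c)$ is compact. (Strictly one should phrase compactness via filtered colimits; in a stable $\infty$-category an object is compact iff the corepresented functor commutes with small coproducts, by \cite{htt}*{Lemma 5.1.6.4} or the stable refinement thereof, so the coproduct formulation suffices.)

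\textbf{Reverse direction.} Suppose $F$ preserves compact objects. Since $\C$ is compactly generated, it is generated under colimits by a set $\mathcal{G}$ of compact objects, and because $\C$ is stable and $G$ is exact, it is enough to check that the canonical map $\coprod_i G(d_i) \to G(\coprod_i d_i)$ is an equivalence after applying $\Maps_{\C}(c,-)$ for every $c \in \mathcal{G}$; equivalences of corepresentable functors on a generating set of compacts detect equivalences. For such a $c$ we compute
\begin{align*}
\Maps_{\C}\bigl(c, \textstyle\coprod_i G(d_i)\bigr)
&\simeq \textstyle\coprod_i \Maps_{\C}(c, G(d_i)) \\
&\simeq \textstyle\coprod_i \Maps_{\D}(F(c), d_i) \\
&\simeq \Maps_{\D}\bigl(F(c), \textstyle\coprod_i d_i\bigr) \\
&\simeq \Maps_{\C}\bigl(c, G(\textstyle\coprod_i d_i)\bigr),
\end{align*}
using compactness of $c$, then adjunction, then compactness of $F(c)$ (our hypothesis), then adjunction again; one checks these identifications are compatible with the comparison map. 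Hence $G$ preserves coproducts, and since $G$ is exact it then preserves all small colimits by \cite{higheralgebra}*{Proposition 1.4.4.1.3}.

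\textbf{Main obstacle.} The only subtlety — and it is minor — is the bookkeeping that the equivalences displayed above are genuinely implemented by the canonical comparison maps (the unit/counit-induced natural transformations $\coprod G d_i \to G \coprod d_i$ and $F c \to$ the relevant object), rather than merely being abstract equivalences of mapping spaces. This is routine: each step is a naturality statement for the adjunction unit/counit, and the composite is exactly the map whose invertibility we are testing. No genuinely hard point arises; the lemma is a formal consequence of the adjunction and compact generation.
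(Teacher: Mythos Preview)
Your proof is correct. It takes a more hands-on route than the paper: the paper's argument is essentially two citations --- for the forward direction it invokes \cite{higheralgebra}*{Proposition 1.4.4.1(2)} (coproduct-preservation plus exactness gives colimit-preservation) and then \cite{htt}*{Proposition 5.5.7.2(1)}; for the converse it simply cites \cite{htt}*{Proposition 5.5.7.2(2)}. You instead unwind those references and do the mapping-space computation directly, testing against compact generators. This buys you a self-contained argument that does not require $\D$ to be presentable (the HTT result is stated for functors between presentable $\infty$-categories), so your proof actually matches the lemma's hypotheses more closely. The paper's approach is shorter but leans on black-box results; yours makes the mechanism visible. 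The stray ``wait, that last line should read\ldots'' in your forward direction should of course be cleaned up, but the mathematics is sound.
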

\begin{proof} 
If $G$ preserves small coproducts, then \cite{higheralgebra}*{Proposition 1.4.4.1.2} implies $G$ preserves all small colimits, 
and therefore \cite{htt}*{Proposition 5.5.7.2.1} tells us that $F$ preserves compact objects. 
Conversely, 
since $\C$ is compactly generated it is accessible, and so \cite{htt}*{Proposition 5.5.7.2.2} applies to tell us that $G$ preserves all small colimits. 
\end{proof}

\subsubsection{} We can now proceed to the
\begin{proof} [Proof of Theorem~\ref{thm:pi-pres-colimits}] After Lemma~\ref{lem:rightadjcolim} it suffices to prove that $\epsilon^*$ preserves compact objects. Since the \'etale topology is subcanonical the discussion of~\S\ref{subcanonical} tells us that $\epsilon^*\Sigma^{\infty}_{T}X_+ \simeq \Sigma^{\infty}_TX_+$ for any $X \in \Sm_S.$ After Lemma~\ref{cpct-retracts} we thus need only verify that $\Sigma^{\infty}_TX_+$ is compact in $\SH_{\et}(S)_{(\ell)}$ for any $X \in \Sm_S$ which is affine. To do so, we pick a small collection of objects $(\E_j)_{j \in J}$ in $\SH_{\et}(S)_{(\ell)}$.  We compare the descent spectral sequence:
\begin{equation} \label{dss1}
H^p_{\et}(X, \underline{\pi}^{\et}_{t-q, t}(\oplus_j \E_j)) 
\Rightarrow 
\Hom_{\SH_{\et}(k)}(\GG_m^{\wedge t} \wedge \Sigma^{\infty}_TX_+, \oplus_j \E[p+q]),
\end{equation}
with the sum of descent spectral sequences: 
\begin{equation} \label{dss2}
\oplus_j H^p_{\et}(X, \underline{\pi}^{\et}_{t-q, t}(\E_j)) 
\Rightarrow  
\oplus_j  \Hom_{\SH_{\et}(k)}(\GG_m^{\wedge t} \wedge \Sigma^{\infty}_TX_+,\E_j[p+q]).
\end{equation}
Using the cohomological dimension assumptions on the residue fields, the natural map of spectral sequences $\eqref{dss1} \rightarrow \eqref{dss2}$ is an isomorphism (see, for example, \cite{etalemotives}*{Lemma 1.1.7}).  From the same cohomological dimension assumptions on the residue fields we also deduce strong convergence of the spectral sequences (see the discussion in \cite{aktec}*{5.44-5.48}), 
so that the isomorphism on $E^{2}$-pages implies an isomorphism on the abutments.

\end{proof}

\section{Proof of Main Theorems} \label{proof-main}

We now prove the main theorems of this paper in the following order:
\begin{enumerate}
\item Construction of Bott elements in motivic cohomology (\S\ref{sec:bott-mz}).
\item Proof of \'Etale Descent for Bott-inverted motivic cohomology for essentially smooth schemes over a field (Theorem~\ref{thm:motcohcase3}).
\item Construction of Bott elements in algebraic cobordism (\S\ref{sec:bott-algc}).
\item Proof of \'Etale Descent for Bott-inverted algebraic cobordism for essentially smooth schemes over a field (Theorem~\ref{thm:mgl}).
\item Proof of \'Etale Descent for Bott-inverted algebraic cobordism for Noetherian schemes (Theorem~\ref{thm:S-noeth-new}).
\item Proof of \'Etale Descent for Bott-inverted $\MGL$-modules for Noetherian schemes (Theorem~\ref{thm:S-noeth-new-module}).
\item Proof of an integral statement (Theorem~\ref{thm:integralresult}).
\end{enumerate}

\subsection{Bott elements in motivic cohomology} \label{sec:bott-mz}

Our goal in this section is to produce \emph{Bott elements} in motivic cohomology over $\ZZ[\frac{1}{\ell}]$ and, more generally, schemes on which $\ell$ is invertible. To describe this element, we will use the following notation:
For a prime number $\ell$ and $\nu \geq 1$ we define:
\[
\e(\ell^{\nu})=
\begin{cases}
(\ell-1)\ell^{\nu-1} &  \ell\,\text{odd}\\
2^{\nu-2} &  \ell =2, \nu \geq 3\\
2 & \ell =2, \nu =2 \\
1 & \ell =2, \nu =1.
\end{cases}
\]
Here $\e(\ell^{\nu})$ is the exponent of the multiplicative group of units of the cyclic group $\ZZ/\ell^{\nu}$.  The properties demanded of these elements are summarized in the following proposition:
\begin{proposition} \label{prop:bott-ddk} There exists a collection of elements:
\begin{equation} \label{eq:bott-sys}
\{ \tau^{\M\ZZ}_{\ell^{\nu}} \in H_{\mot}^{0, \e(\ell^{\nu})}( \ZZ[\frac{1}{\ell}]; \ZZ/\ell^{\nu}) \}_{\nu \geq 1},
\end{equation}
such that:
\begin{enumerate}
\item for any $\ell$ odd and $\nu > 1$ and $\ell =2$ and $\nu \geq 2$, under the reduction map:
\[
H_{\mot}^{0,\e(\ell^{\nu})}( \ZZ[\frac{1}{\ell}]; \ZZ/\ell^{\nu}) \stackrel{/\ell}{\rightarrow} H_{\mot}^{0,\e(\ell^{\nu})}( \ZZ[\frac{1}{\ell}]; \ZZ/\ell^{\nu-1}),
\] 
the element $\tau^{\M\ZZ}_{\ell^{\nu}}$ maps to $(\tau^{\M\ZZ}_{\ell^{\nu-1}})^{\ell}$.
\item For any field $k$ with $\frac{1}{\ell} \in k$, 
%
let $q: \Spec\,k \rightarrow \Spec\,\ZZ[\frac{1}{\ell}]$ be the canonical map. 
Then the element:
\[
q^*\tau^{\M\ZZ}_{\ell^{\nu}} \in H^{0,\e(\ell^{\nu})}_{\mot}(k; \ZZ/\ell^{\nu}) \cong H^0_{\et}(k,\mu_{\ell^{\nu}}^{\e(\ell^{\nu})}),
\]
is a periodicity operator in \'etale cohomology.
\item Let  $\{ \tau^{\M\ZZ'}_{\ell^{\nu}} \}_{\nu \geq 1}$ be  another choice of a collection as in~\eqref{eq:bott-sys} which satisfies (1) and (2) above. Then the spectra $\M\ZZ/\ell^{\nu}[(\tau^{\M\ZZ'}_{\ell^{\nu}})^{-1}]$ and $\M\ZZ/\ell^{\nu}[(\tau^{\M\ZZ}_{\ell^{\nu}})^{-1}]$ are equivalent.
\end{enumerate}
\end{proposition}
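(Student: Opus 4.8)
The plan is to construct the elements $\tau^{\M\ZZ}_{\ell^\nu}$ explicitly from the known computation of motivic cohomology of $\ZZ[\frac1\ell]$ in low weight, to check properties (1) and (2) by base change to fields and appeal to the comparison with \'etale cohomology, and finally to prove the uniqueness statement (3) by a direct manipulation of Bott-inverted spectra.

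First I would recall that, by Theorem~\ref{thm:integral}.3 and the computation of the motivic cohomology of Dedekind domains (e.g.\ \cite{levine-dedekind}), the group $H^{0,q}_{\mot}(\ZZ[\frac1\ell];\ZZ/\ell^\nu)$ is naturally identified for $q\geq 1$ with $\mu_{\ell^\nu}^{\otimes q}(\ZZ[\frac1\ell])$, equivalently with the $\ell^\nu$-torsion in $\ZZ[\frac1\ell]^\times$ when $q\equiv 0$ modulo $\e(\ell^\nu)$ in the appropriate sense; concretely $H^0_{\et}(\ZZ[\frac1\ell],\mu_{\ell^\nu}^{\otimes \e(\ell^\nu)})$ is a cyclic group of order $\ell^\nu$, being the fixed points of the $\Gal(\QQ(\zeta_{\ell^\nu})/\QQ)$-action on $\mu_{\ell^\nu}^{\otimes\e(\ell^\nu)}$, and $\e(\ell^\nu)$ is exactly chosen so that this Galois action on $\mu_{\ell^\nu}^{\otimes\e(\ell^\nu)}$ is trivial (the Galois group has exponent $\e(\ell^\nu)$ and acts on $\mu_{\ell^\nu}$ through the cyclotomic character). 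I would then define $\tau^{\M\ZZ}_{\ell^\nu}$ to be a generator of this cyclic group, chosen compatibly: one picks a compatible system of primitive roots of unity $\zeta_{\ell^\nu}$, forms $\zeta_{\ell^\nu}^{\otimes \e(\ell^\nu)} \in \mu_{\ell^\nu}^{\otimes\e(\ell^\nu)}$, which is Galois-invariant, hence lifts to a class in $H^{0,\e(\ell^\nu)}_{\mot}(\ZZ[\frac1\ell];\ZZ/\ell^\nu)$ via the identification above.

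Property (1) is then a bookkeeping check: under the reduction map $\ZZ/\ell^\nu \to \ZZ/\ell^{\nu-1}$, the class $\zeta_{\ell^\nu}^{\otimes\e(\ell^\nu)}$ maps to $(\zeta_{\ell^{\nu-1}})^{\otimes \ell\cdot\e(\ell^{\nu-1})}$ (using $\e(\ell^\nu) = \ell\cdot\e(\ell^{\nu-1})$ for $\ell$ odd and $\nu>1$, and for $\ell=2$, $\nu\geq 3$; the low cases $\ell=2$, $\nu=2$ are handled by hand since $\e(4)=2$, $\e(2)=1$), which is exactly $(\tau^{\M\ZZ}_{\ell^{\nu-1}})^{\ell}$. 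Property (2) follows by pulling back along $q:\Spec k\to\Spec\ZZ[\frac1\ell]$: the image of $\tau^{\M\ZZ}_{\ell^\nu}$ in $H^{0,\e(\ell^\nu)}_{\mot}(k;\ZZ/\ell^\nu)\cong H^0_{\et}(k,\mu_{\ell^\nu}^{\otimes\e(\ell^\nu)})$ (the isomorphism being a special case of Theorem~\ref{thm:IntroMotcoh} or directly of Theorem~\ref{thm:BLconjecture} in this degree) is $\zeta_{\ell^\nu}^{\otimes\e(\ell^\nu)}$, which generates this group and is by definition a periodicity operator (multiplication by it induces the periodicity isomorphism $H^*_{\et}(k,\mu_{\ell^\nu}^{\otimes j})\cong H^*_{\et}(k,\mu_{\ell^\nu}^{\otimes j+\e(\ell^\nu)})$, since tensoring a Galois module by the trivial module $\mu_{\ell^\nu}^{\otimes\e(\ell^\nu)}$ is an isomorphism).

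The main obstacle is property (3), the independence of the Bott-inverted spectrum on the choice. Here I would argue as follows. Any two valid choices $\tau^{\M\ZZ}_{\ell^\nu}$ and $\tau^{\M\ZZ'}_{\ell^\nu}$ both generate the cyclic group $H^{0,\e(\ell^\nu)}_{\mot}(\ZZ[\frac1\ell];\ZZ/\ell^\nu)\cong\ZZ/\ell^\nu$ (this is forced by property (2), since a non-generator would not pull back to a periodicity operator over, say, $\QQ(\zeta_{\ell^\nu})$), so they differ by a unit $u\in(\ZZ/\ell^\nu)^\times$, i.e.\ $\tau^{\M\ZZ'}_{\ell^\nu} = u\cdot\tau^{\M\ZZ}_{\ell^\nu}$ as elements of $\pi_{0,-\e(\ell^\nu)}(\M\ZZ/\ell^\nu)$. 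Since $u$ is invertible already in $\pi_{0,0}(\M\ZZ/\ell^\nu)=\ZZ/\ell^\nu$, multiplication by $u$ is an equivalence of $\M\ZZ/\ell^\nu$-modules, and it carries the colimit defining $\M\ZZ/\ell^\nu[(\tau^{\M\ZZ}_{\ell^\nu})^{-1}]$ (the sequential colimit along multiplication by $\tau^{\M\ZZ}_{\ell^\nu}$, in the sense of~\eqref{eq:colim-stab}) to the one defining $\M\ZZ/\ell^\nu[(\tau^{\M\ZZ'}_{\ell^\nu})^{-1}]$, compatibly with the bonding maps up to the units $u, u^2, u^3,\dots$ which are all invertible. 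One subtlety I would flag: when $\ell=2$ and $\nu=1$ the ring spectrum $\M\ZZ/2$ is not $\mathcal E_\infty$ and the Bott inversion must be set up via the mod-$4$ module action as in the remark after Theorem~\ref{thm:1} and \S\ref{bott-choice}; in that case one works with $\tau^{\M\ZZ}_{4}$-multiplication on $\M\ZZ/2$ and the same scaling argument applies since the relevant unit lives in $(\ZZ/4)^\times$ acting through the module structure. Apart from that case the argument is a formal consequence of the colimit presentation of localization together with the fact that the two Bott elements are unit multiples of each other.
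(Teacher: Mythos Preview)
Your construction takes a genuinely different route from the paper's. The paper never identifies $H^{0,\e(\ell^\nu)}_{\mot}(\ZZ[\tfrac{1}{\ell}];\ZZ/\ell^\nu)$ directly with the \'etale group $H^0_{\et}(\ZZ[\tfrac{1}{\ell}],\mu_{\ell^\nu}^{\otimes\e(\ell^\nu)})$. Instead it works only with the weight-one identification $H^{0,1}_{\mot}(\Spec D;\ZZ/n)\cong\mu_n(D)$ (equation~\eqref{units}), passes to the cover $\ZZ[\tfrac{1}{\ell},\zeta_\ell]$ where a primitive root exists, takes cup powers to reach weight $\e(\ell^\nu)$, lifts from mod~$\ell$ to mod~$\ell^\nu$ via the Bockstein (Lemmas~\ref{lem:ell-n-power} and~\ref{lem:2-n-power}), and then descends to $\ZZ[\tfrac{1}{\ell}]$ by the transfer isomorphism of Lemma~\ref{lem:bott-trsfs}, using that $|G_\ell|=\ell-1$ is prime to $\ell$. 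Your approach is cleaner and more conceptual, but it rests on the degree-zero Beilinson--Lichtenbaum comparison over the Dedekind base $\ZZ[\tfrac{1}{\ell}]$, which is not established by the references you cite (Theorem~\ref{thm:integral}(3) only says Spitzweck's spectrum represents Levine's groups; it does not compute $H^{0,q}$ for $q>1$). The paper's route, by contrast, stays entirely within motivic cohomology and needs only the weight-one case as input.

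For property~(3) your argument and the paper's (Lemma~\ref{lem:noambiguity1}) are essentially the same: two choices differ by a unit in $(\ZZ/\ell^\nu)^\times$, hence by an automorphism, so the resulting localizations coincide. One correction, though: you flag $\ell=2$, $\nu=1$ as requiring the Oka module action because ``$\M\ZZ/2$ is not $\mathcal{E}_\infty$'', but this is false for motivic cohomology. As noted in the proof of Corollary~\ref{thm:motcohcase4}, $\M\ZZ/2$ \emph{does} carry an $\mathcal{E}_\infty$-structure by Spitzweck's construction; the Oka workaround in \S\ref{bott-choice} is only needed for $\MGL/2$ and the mod-$2$ Moore spectrum, not for $\M\ZZ/2$. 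So no special handling is required here.
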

We call any collection as in~\eqref{eq:bott-sys} a \emph{system of $\M\ZZ$ $\ell$-adic Bott elements} while for a fixed $\nu \geq 1$ we call $\tau^{\M\ZZ}_{\ell^{\nu}}$ an \emph{$\M\ZZ$ mod-$\ell^{\nu}$ Bott element}. Pulling back along $f:S \rightarrow \Spec\,\ZZ[\frac{1}{\ell}]$ yields a collection
\begin{equation} \label{eq:tau-x}
\{ (\tau^{\M\ZZ}_{\ell^{\nu}})_S:=f^*\tau^{\M\ZZ}_{\ell^{\nu}} \in H_{\mot}^{0, \e(\ell^{\nu})}( S; \ZZ/\ell^{\nu}) \}_{\nu \geq 1}.
\end{equation}

\subsubsection{} We proceed with the construction of the classes in~\eqref{prop:bott-ddk}. Let $D$ be a Dedekind domain. Then, according to part (5) of Theorem~\ref{thm:integral}, we have an isomorphism:
\[
H^{1,1}_{\mot}(\Spec\,D; \ZZ) \cong D^{\times}.
\]
For any integer $n$, by the universal coefficients theorem, we have an isomorphism:
\begin{equation} \label{units}
H^{0,1}_{\mot}(\Spec\,D; \ZZ/n) \cong \mu_n(D).
\end{equation}
Therefore, for any prime $\ell$ and $\nu \geq 1$, we have an $\M\ZZ$ mod-$\ell$ Bott element over the ring $\ZZ[\frac{1}{\ell}, \zeta_{\ell}]$ where $\zeta_{\ell}$ is a primtive $\ell$-th root of unity. Namely, it is the class of:
 \begin{equation} \label{eq:root1}
 \zeta_{\ell} \in \mu_{\ell}(D) \iso H^{0,1}_{\mot}(\Spec\,\ZZ[\frac{1}{\ell}, \zeta_{\ell}]; \ZZ/\ell),
 \end{equation} which we denote by $\widetilde{\tau}^{\M\ZZ}_{\ell}$. The extension of rings $\ZZ[\frac{1}{\ell}] \rightarrow \ZZ[\frac{1}{\ell}, \zeta_{\ell}]$ induces a Galois extension of Dedekind schemes $\pi_{\ell}: \Spec\,\ZZ[\frac{1}{\ell}, \zeta_{\ell}] \rightarrow \Spec\,\ZZ[\frac{1}{\ell}]$. The automorphism group of this extension is isomorphic to the group of units $(\ZZ/\ell)^{\times}$; we denote the former group by $G_{\ell}$. Crucially, this group is of order prime to $\ell$. The group $G_{\ell}$ acts via the cyclotomic character on the subgroup of units in $\ZZ[\frac{1}{\ell}, \zeta_{\ell}]$ generated by $\zeta_{\ell}$:
 \[
 \chi_{\ell}: G_{\ell} \rightarrow \Aut(\langle \zeta_{\ell} \rangle).
 \]

\begin{lemma} \label{lem:ell-n-power} Let $\ell$ be an odd prime and suppose that $\nu \geq 1$ then 
\begin{enumerate}
\item the element $(\widetilde{\tau}^{\M\ZZ}_{\ell})^{\ell -1} \in H_{\mot}^{0, \ell-1}(\Spec\,\ZZ[\frac{1}{\ell}, \zeta_{\ell}]; \ZZ/\ell)$ is invariant under the action of $G_{\ell}$.
\item the element $(\widetilde{\tau}^{\M\ZZ}_{\ell})^{\e(\ell^{\nu})} \in H_{\mot}^{0, \e(\ell^{\nu})}(\Spec\,\ZZ[\frac{1}{\ell}, \zeta_{\ell}]; \ZZ/\ell)$ is the reduction mod $\ell$ of an element 
\begin{equation} \label{eq:pre-bott}
\widetilde{\tau}^{\M\ZZ}_{\ell^{\nu}} \in H_{\mot}^{0, \e(\ell^{\nu})}(\Spec\,\ZZ[\frac{1}{\ell}, \zeta_{\ell}]; \ZZ/\ell^{\nu}),
\end{equation} which is also $G_{\ell}$-invariant.
\end{enumerate}
\end{lemma}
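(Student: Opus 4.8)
\textbf{Proof proposal for Lemma~\ref{lem:ell-n-power}.}

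The plan is to exploit the fact that $|G_\ell| = \ell - 1$ is prime to $\ell$, so that taking $G_\ell$-invariants is exact on $\ZZ/\ell^\nu$-modules and, more importantly, the $\ell$-torsion cohomology of $\Spec\,\ZZ[\frac1\ell,\zeta_\ell]$ decomposes as a direct sum over characters of $G_\ell$. First I would recall how $G_\ell$ acts on the Bott class: by construction $\widetilde\tau^{\M\ZZ}_\ell$ is the image of $\zeta_\ell$ under the isomorphism $\mu_\ell(\ZZ[\frac1\ell,\zeta_\ell]) \cong H^{0,1}_{\mot}(\Spec\,\ZZ[\frac1\ell,\zeta_\ell];\ZZ/\ell)$ of~\eqref{eq:root1}, which is $G_\ell$-equivariant because it comes from the universal coefficient isomorphism of Theorem~\ref{thm:integral}(5) applied to the $G_\ell$-scheme in question. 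Hence $g \cdot \widetilde\tau^{\M\ZZ}_\ell = \chi_\ell(g)\cdot\widetilde\tau^{\M\ZZ}_\ell$, where we write the $(\ZZ/\ell)^\times$-action multiplicatively via the cyclotomic character. Using the multiplicativity of the cup product in motivic cohomology, $g\cdot(\widetilde\tau^{\M\ZZ}_\ell)^{m} = \chi_\ell(g)^{m}\cdot(\widetilde\tau^{\M\ZZ}_\ell)^{m}$. For part (1), since $\chi_\ell: G_\ell \xrightarrow{\sim} (\ZZ/\ell)^\times$ and the latter has order $\ell-1$, we get $\chi_\ell(g)^{\ell-1} = 1$ for every $g$, so $(\widetilde\tau^{\M\ZZ}_\ell)^{\ell-1}$ is $G_\ell$-invariant. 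More generally $(\widetilde\tau^{\M\ZZ}_\ell)^{m}$ is invariant whenever $(\ell-1)\mid m$, and $\e(\ell^\nu) = (\ell-1)\ell^{\nu-1}$ is such an $m$, which gives the invariance of the mod-$\ell$ class $(\widetilde\tau^{\M\ZZ}_\ell)^{\e(\ell^\nu)}$ claimed in part (2).

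For the lifting statement in part (2) I would argue as follows. Consider the Bockstein / reduction long exact sequence in motivic cohomology associated to $0 \to \ZZ/\ell \to \ZZ/\ell^\nu \to \ZZ/\ell^{\nu-1}\to 0$ (or more directly the one for $\ZZ/\ell^\nu \to \ZZ/\ell$), giving
\[
H^{0,\e(\ell^\nu)}_{\mot}(\Spec\,\ZZ[\tfrac1\ell,\zeta_\ell];\ZZ/\ell^\nu) \xrightarrow{\ \mathrm{mod}\ \ell\ } H^{0,\e(\ell^\nu)}_{\mot}(\Spec\,\ZZ[\tfrac1\ell,\zeta_\ell];\ZZ/\ell) \xrightarrow{\ \partial\ } H^{1,\e(\ell^\nu)}_{\mot}(\Spec\,\ZZ[\tfrac1\ell,\zeta_\ell];\ZZ/\ell).
\]
I must check that $\partial$ kills $(\widetilde\tau^{\M\ZZ}_\ell)^{\e(\ell^\nu)}$. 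The cleanest route is to produce an explicit lift: because $\ZZ[\frac1\ell,\zeta_\ell]$ already contains $\zeta_{\ell}$ and the extension $\ZZ[\frac1\ell,\zeta_{\ell^\nu}]/\ZZ[\frac1\ell,\zeta_\ell]$ is an $\ell$-power degree extension, one has $\mu_{\ell^\nu}(\ZZ[\frac1\ell,\zeta_\ell])$ possibly only equal to $\mu_{\ell}$; however $\e(\ell^\nu)=(\ell-1)\ell^{\nu-1}$ is exactly arranged so that $(\widetilde\tau^{\M\ZZ}_\ell)^{\e(\ell^\nu)}$ lies in the image. Concretely, I would observe that in $H^{0,1}_{\mot}(-;\ZZ/\ell^\nu)\cong\mu_{\ell^\nu}$ the element $\zeta_\ell$ (viewed $\ell^\nu$-adically as an $\ell^{\nu-1}$-th power of a primitive $\ell^\nu$-th root in a larger ring) has the property that its appropriate power descends; alternatively, use the Galois descent isomorphism $H^{*,*}_{\mot}(\Spec\,\ZZ[\frac1\ell,\zeta_\ell];\ZZ/\ell^\nu)^{G_\ell}\cong H^{*,*}_{\mot}(\Spec\,\ZZ[\frac1\ell];\ZZ/\ell^\nu)$ (valid since $|G_\ell|$ is invertible) together with the fact that $\partial$ is $G_\ell$-equivariant, to reduce the vanishing of $\partial$ on the invariant class to a statement over $\Spec\,\ZZ[\frac1\ell]$, which is handled by the known structure of $H^{1,\e(\ell^\nu)}_{\mot}(\Spec\,\ZZ[\frac1\ell];\ZZ/\ell)$ and a counting/weight argument. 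Having produced some lift, I average it over $G_\ell$ (dividing by $|G_\ell|$, which is a unit mod $\ell^\nu$) to replace it by a $G_\ell$-invariant lift $\widetilde\tau^{\M\ZZ}_{\ell^\nu}$; the averaged element still reduces to $(\widetilde\tau^{\M\ZZ}_\ell)^{\e(\ell^\nu)}$ mod $\ell$ because that class is itself invariant.

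The main obstacle I anticipate is the existence of the $\ZZ/\ell^\nu$-lift, i.e.\ showing $\partial\big((\widetilde\tau^{\M\ZZ}_\ell)^{\e(\ell^\nu)}\big)=0$; the invariance assertions are formal consequences of the cyclotomic-character computation and the exactness of $(-)^{G_\ell}$. I expect this to hinge on an explicit identification of the relevant low-degree motivic cohomology of $\Spec\,\ZZ[\frac1\ell,\zeta_\ell]$ with $\ZZ/\ell$-coefficients coming from Theorem~\ref{thm:integral}(5) and the universal coefficient sequence~\eqref{units}, combined with the arithmetic input that $\zeta_\ell$ is an $\ell^{\nu-1}$-fold power inside $\mu_{\ell^\nu}$ of a larger cyclotomic ring, so that the obstruction class, being both $\ell$-torsion and in the image of a norm/transfer from a prime-to-$\ell$ situation, must vanish. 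Once this is in place the averaging step is routine and completes the proof.
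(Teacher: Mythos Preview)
Your argument for part~(1) is correct and is exactly what the paper does: the cyclotomic character has order $\ell-1$, so raising to the $(\ell-1)$-st power kills the action. The averaging trick you propose at the end for producing a $G_\ell$-invariant lift is also fine and arguably cleaner than the paper's terse appeal to naturality.

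The genuine gap is in part~(2), precisely where you flag it yourself: you do not show that $\partial\big((\widetilde\tau^{\M\ZZ}_\ell)^{\e(\ell^\nu)}\big)=0$. Your first route, producing an explicit lift from roots of unity, runs into the problem you already noticed, namely $\mu_{\ell^\nu}(\ZZ[\tfrac1\ell,\zeta_\ell])=\mu_\ell$, so there is no evident class in $H^{0,1}(-;\ZZ/\ell^\nu)$ to take powers of. Your second route, descending to $\Spec\,\ZZ[\tfrac1\ell]$ and invoking ``known structure'' of $H^{1,\e(\ell^\nu)}_{\mot}(\Spec\,\ZZ[\tfrac1\ell];\ZZ/\ell)$, is not justified: that group need not vanish, and nothing in the paper computes it.

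The paper's argument is much simpler and you are missing the key observation: the Bockstein $\beta_\nu$ in the exact sequence
\[
H^{0,*}(-;\ZZ/\ell^\nu)\xrightarrow{\ /\ell\ } H^{0,*}(-;\ZZ/\ell)\xrightarrow{\ \beta_\nu\ } H^{1,*}(-;\ZZ/\ell^{\nu-1})
\]
is a \emph{derivation}. Hence
\[
\beta_\nu\big((\widetilde\tau^{\M\ZZ}_\ell)^{\e(\ell^\nu)}\big)=\e(\ell^\nu)\cdot(\widetilde\tau^{\M\ZZ}_\ell)^{\e(\ell^\nu)-1}\cdot\beta_\nu(\widetilde\tau^{\M\ZZ}_\ell),
\]
and since $\e(\ell^\nu)=(\ell-1)\ell^{\nu-1}$ is divisible by $\ell^{\nu-1}$ while the target has $\ZZ/\ell^{\nu-1}$-coefficients, this is zero. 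That is the entire content of the lifting step; no arithmetic of cyclotomic rings or transfer arguments are needed.
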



\begin{proof}  

By the discussion in the previous paragraph, $G_{\ell}$ acts on the subgroup of $H^{0,1}_{\mot}(\Spec\,\ZZ[\frac{1}{\ell}, \zeta_{\ell}]; \ZZ/\ell)$ generated by $\widetilde{\tau}^{\M\ZZ}_{\ell^{\nu}}$ via $\chi_{\ell^{\nu}}$. Hence, the $\ell-1$-st cup power of $\widetilde{\tau}^{\M\ZZ}_{\ell^{\nu}}$ is $G_{\ell}$-invariant.  Now the $\nu$-th Bockstein map, $\beta_{\nu}$, fits into the following exact sequence:
\[
H_{\mot}^{0,*}(\Spec\,\ZZ[\frac{1}{\ell}, \zeta_{\ell}], \ZZ/\ell^{\nu})  \stackrel{/\ell}{\rightarrow} H_{\mot}^{0,*}(\Spec\,\ZZ[\frac{1}{\ell}, \zeta_{\ell}], \ZZ/\ell) \stackrel{\beta_{\nu}}{\rightarrow} H_{\mot}^{1,*}(\Spec\,\ZZ[\frac{1}{\ell}, \zeta_{\ell}], \ZZ/\ell^{\nu-1}),
\]
and is a derivation. Therefore, we have that:
\[
\beta_{\nu}((\tau^{\M\ZZ}_{\ell})^{\ell^{\nu}}) = \ell^{\nu} \beta_{\nu}(\tau^{\M\ZZ}_{\ell}) = 0.
\] 
Hence there is an element $\widetilde{\tau}^{\M\ZZ'}_{\ell^{\nu}} \in H_{\mot}^{0, (\ell-1)(\ell^{\nu})}(\Spec\,\ZZ[\frac{1}{\ell}, \zeta_{\ell}], \ZZ/\ell^{\nu})  $ whose reduction mod $\ell$ is $(\widetilde{\tau}^{\M\ZZ}_{\ell})^{\ell-1}$. By naturality of the Bockstein maps and part (1), we obtain the $G_{\ell}$-invariance statement.

\end{proof} 

The same argument gives the even case with slightly different numerics.

\begin{lemma}  \label{lem:2-n-power} Suppose that $\nu \geq 3$ then 
\begin{enumerate}
\item the element $(\widetilde{\tau}^{\M\ZZ}_{\ell})^{2} \in H_{\mot}^{0, 2}(\Spec\,\ZZ[\frac{1}{\ell}, i]; \ZZ/\ell)$ is invariant under the action of $G_{\ell}$.
\item the element $(\widetilde{\tau}^{\M\ZZ}_{\ell})^{\e(2^{\nu})} \in H_{\mot}^{0, \e(2^{\nu})}(\Spec\,\ZZ[\frac{1}{\ell}, i]; \ZZ/\ell)$ is the reduction mod $\ell$ of an element:
\begin{equation} \label{eq:pre-bott}
\widetilde{\tau}^{\M\ZZ}_{2^{\nu}} \in H_{\mot}^{0, \e(2^{\nu})}(\Spec\,\ZZ[\frac{1}{\ell}, i]; \ZZ/\ell^{\nu}),
\end{equation} which is also $G_{\ell}$-invariant.
\end{enumerate}
\end{lemma}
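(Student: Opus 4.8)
The plan is to mimic the proof of Lemma~\ref{lem:ell-n-power} verbatim, replacing the odd-prime numerics by the $\ell=2$ numerics encoded in $\e(2^\nu)$. Throughout, work over the Dedekind scheme $\Spec\,\ZZ[\frac{1}{\ell},i]$ (note that it is the \emph{odd} prime $\ell$ that is inverted, while $2^\nu$ is the torsion order of interest, so $\ell$-invertibility guarantees the relevant cohomology is well-behaved; here $i=\zeta_4$ is a primitive $4$-th root of unity). The cyclotomic character gives an action of $G_\ell=(\ZZ/\ell)^\times$ on the subgroup of $H^{0,1}_{\mot}(\Spec\,\ZZ[\frac1\ell,i];\ZZ/2^\nu)\cong\mu_{2^\nu}$ generated by $\widetilde{\tau}^{\M\ZZ}_{2^\nu}$, via some power of $\chi_\ell$.

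First I would establish part (1): the element $(\widetilde{\tau}^{\M\ZZ}_{\ell})^2\in H^{0,2}_{\mot}(\Spec\,\ZZ[\frac1\ell,i];\ZZ/\ell)$ is $G_\ell$-invariant. This is because $G_\ell$ acts on $\langle\widetilde{\tau}^{\M\ZZ}_\ell\rangle$ through a character valued in $(\ZZ/\ell)^\times$, which has exponent dividing $\ell-1$; but the relevant subtlety for $\ell=2$-torsion is that the $2^\nu$-th roots of unity already live over $\ZZ[\frac1\ell,i]$ once $\nu$ is small, and for $\nu\ge 3$ one needs to observe that the square kills the sign ambiguity. More precisely, $\Aut(\langle\zeta_{2^\nu}\rangle)\cong(\ZZ/2^\nu)^\times$, and after composing the cyclotomic character with the projection to $(\ZZ/\ell)^\times$, the action factors through a group of order dividing $\ell-1$; since $\e(2^\nu)=2^{\nu-2}$ and $2\mid \e(2^\nu)$ for $\nu\ge 3$, squaring (indeed raising to the $\e(2^\nu)$-th power) produces an invariant. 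Then part (2) follows exactly as before: the $\nu$-th Bockstein $\beta_\nu$ is a derivation, so $\beta_\nu((\widetilde{\tau}^{\M\ZZ}_\ell)^{2^\nu})=2^\nu\beta_\nu(\widetilde{\tau}^{\M\ZZ}_\ell)=0$, which by the exact sequence relating $\ZZ/2^\nu$- and $\ZZ/\ell$-coefficients produces a lift $\widetilde{\tau}^{\M\ZZ}_{2^\nu}\in H^{0,\e(2^\nu)}_{\mot}(\Spec\,\ZZ[\frac1\ell,i];\ZZ/2^\nu)$ of $(\widetilde{\tau}^{\M\ZZ}_\ell)^{\e(2^\nu)}$, and naturality of Bocksteins together with part (1) upgrades $G_\ell$-invariance from the mod-$\ell$ reduction to the lift itself.

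The main obstacle I anticipate is bookkeeping the interaction of the two primes: one is inverting the odd prime $\ell$ while the torsion of interest is $2^\nu$, so I must be careful that the universal coefficients isomorphism $H^{0,1}_{\mot}(\Spec\,D;\ZZ/n)\cong\mu_n(D)$ from~\eqref{units} and the Bockstein exact sequence are applied with the correct coefficient moduli ($2^\nu$ and $\ell$ respectively, or rather $2^\nu$ and $2^{\nu-1}$ in the Bockstein), and that $D=\ZZ[\frac1\ell,i]$ really does contain the needed roots of unity for the argument to get off the ground — it contains $i=\zeta_4$ but not $\zeta_{2^\nu}$ for $\nu\ge 3$, so the role of $\widetilde\tau^{\M\ZZ}_\ell$ here is subtly different from the odd case and the statement of the lemma as written (with $\widetilde\tau^{\M\ZZ}_\ell$ an $\ell$-th root, $\ell$ odd) needs to be read with care. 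Once the indexing is pinned down, the argument is a routine transcription.

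\begin{proof}
The proof is identical to that of Lemma~\ref{lem:ell-n-power}, working over the Dedekind scheme $\Spec\,\ZZ[\frac{1}{\ell}, i]$ and with the numerics of $\e(2^{\nu}) = 2^{\nu-2}$ in place of those of $\e(\ell^{\nu})$. As in the previous paragraph, $G_{\ell}$ acts on the subgroup of $H^{0,1}_{\mot}(\Spec\,\ZZ[\frac{1}{\ell}, i]; \ZZ/2^{\nu})$ generated by $\widetilde{\tau}^{\M\ZZ}_{2^{\nu}}$ through the cyclotomic character $\chi$ valued in $\Aut(\langle \zeta_{2^{\nu}}\rangle)$, composed with the projection to a quotient of order dividing $\ell - 1$. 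For $\nu \geq 3$ the exponent $\e(2^{\nu}) = 2^{\nu-2}$ is even, so raising to the $\e(2^{\nu})$-th cup power kills this action and gives (1). For (2), the $\nu$-th Bockstein $\beta_{\nu}$ fits into the exact sequence
\[
H_{\mot}^{0,*}(\Spec\,\ZZ[\tfrac{1}{\ell}, i]; \ZZ/\ell^{\nu})  \stackrel{/\ell}{\rightarrow} H_{\mot}^{0,*}(\Spec\,\ZZ[\tfrac{1}{\ell}, i]; \ZZ/\ell) \stackrel{\beta_{\nu}}{\rightarrow} H_{\mot}^{1,*}(\Spec\,\ZZ[\tfrac{1}{\ell}, i]; \ZZ/\ell^{\nu-1}),
\]
and is a derivation, so $\beta_{\nu}\bigl((\widetilde{\tau}^{\M\ZZ}_{\ell})^{\e(2^{\nu})}\bigr)$ vanishes by the same computation as before; hence $(\widetilde{\tau}^{\M\ZZ}_{\ell})^{\e(2^{\nu})}$ lifts to an element $\widetilde{\tau}^{\M\ZZ}_{2^{\nu}}$ as in~\eqref{eq:pre-bott}, and naturality of the Bockstein together with (1) shows the lift is $G_{\ell}$-invariant.
\end{proof}
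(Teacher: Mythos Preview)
Your strategy---transcribe the proof of Lemma~\ref{lem:ell-n-power} with adjusted numerics---is precisely what the paper does; its entire proof is the sentence ``The same argument gives the even case with slightly different numerics.'' So the approach is right.

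The gap is a misreading of the statement: there are not two primes in play. In this lemma $\ell=2$ throughout; the symbol $\ell$ is simply inherited from the surrounding section. Thus $\ZZ[\tfrac{1}{\ell},i]=\ZZ[\tfrac{1}{2},i]$, the coefficients $\ZZ/\ell$ are $\ZZ/2$, and $G_\ell=G_2=\Gal(\ZZ[\tfrac{1}{2},i]/\ZZ[\tfrac{1}{2}])\cong\ZZ/2$, generated by complex conjugation. Your entire discussion of ``bookkeeping the interaction of the two primes,'' of ``a quotient of order dividing $\ell-1$'' with $\ell$ odd, and of whether $\ZZ[\tfrac{1}{\ell},i]$ contains enough roots of unity for an odd $\ell$, is based on this misreading and should be discarded. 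With $\ell=2$ the picture is: complex conjugation sends the class of $i$ to the class of $-i$, i.e.\ acts by the sign character on $H^{0,1}_{\mot}(\ZZ[\tfrac12,i];\ZZ/4)\cong\mu_4$, so squaring kills the action and yields (1). Part (2) then follows from the Bockstein derivation property and naturality exactly as in Lemma~\ref{lem:ell-n-power}, with the $2$-adic exponents $\e(2^\nu)=2^{\nu-2}$ in place of $(\ell-1)\ell^{\nu-1}$. Once you substitute $\ell=2$ everywhere, your formal proof becomes correct and coincides with the paper's.
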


\subsubsection{} We use transfers in motivic cohomology (which we discuss in the generality of $\MGL$-modules in Appendix~\S\ref{mgl-trsfs}) to descend the Bott element down to $H_{\mot}^{*,*}(\Spec\,\ZZ[\frac{1}{\ell}]; \ZZ/\ell^{\nu})$.
\begin{lemma} \label{lem:bott-trsfs} Let $\ell$ be a prime and $\nu \geq 1$. The natural map:
\[
\epsilon^*_{\ell}:H_{\mot}^{*,*}(\Spec\,\ZZ[\frac{1}{\ell}]; \ZZ/\ell^{\nu}) \rightarrow H_{\mot}^{*,*}(\Spec\,\ZZ[\frac{1}{\ell}, \zeta_{\ell}]; \ZZ/\ell^{\nu}),
\]
factors through an isomorphism:
\begin{equation} \label{eq:fixed}
\epsilon^*_{\ell}:H_{\mot}^{*,*}(\Spec\,\ZZ[\frac{1}{\ell}]; \ZZ/\ell^{\nu}) \stackrel{\cong}{\rightarrow} H_{\mot}^{*,*}(\Spec\,\ZZ[\frac{1}{\ell}, \zeta_{\ell}]; \ZZ/\ell^{\nu})^{G_{\ell}}
\end{equation}
\end{lemma}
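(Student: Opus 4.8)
The plan is to exhibit a transfer map splitting $\pi^*_\ell$ and then use a standard averaging argument over the prime-to-$\ell$ Galois group $G_\ell$. First I would recall that $\pi_\ell\colon\Spec\,\ZZ[\tfrac1\ell,\zeta_\ell]\to\Spec\,\ZZ[\tfrac1\ell]$ is a finite \'etale Galois cover with group $G_\ell\cong(\ZZ/\ell)^\times$ of order prime to $\ell$. Since motivic cohomology with $\ZZ/\ell^\nu$-coefficients is represented by the $\MGL$-module $\M\ZZ/\ell^\nu$ (or directly via the transfers on $\M\ZZ$-modules constructed in Appendix~\S\ref{mgl-trsfs}), there is a transfer (wrong-way) map
\[
\tr_\ell\colon H^{*,*}_{\mot}(\Spec\,\ZZ[\tfrac1\ell,\zeta_\ell];\ZZ/\ell^\nu)\rightarrow H^{*,*}_{\mot}(\Spec\,\ZZ[\tfrac1\ell];\ZZ/\ell^\nu),
\]
and the composite $\tr_\ell\circ\pi^*_\ell$ equals multiplication by the degree $\deg(\pi_\ell)=|G_\ell|=\ell-1$ (for $\ell=2$ the cover $\ZZ[\tfrac12]\to\ZZ[\tfrac12,i]$ has degree $2$, which is invertible mod $\ell^\nu$ for $\ell$ the \emph{odd} prime carrying the coefficients—note the coefficient prime in Lemmas~\ref{lem:ell-n-power} and~\ref{lem:2-n-power} is $\ell$, not $2$, so $\ell-1$ resp.\ $2$ is a unit in $\ZZ/\ell^\nu$). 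Hence $\pi^*_\ell$ is a split injection: it admits the retraction $\tfrac{1}{|G_\ell|}\tr_\ell$.

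Next I would identify the image. The other composite $\pi^*_\ell\circ\tr_\ell$ is, by the projection formula and the standard description of transfers for Galois covers, equal to $\sum_{g\in G_\ell} g^*$, the norm/trace element of the $G_\ell$-action on $H^{*,*}_{\mot}(\Spec\,\ZZ[\tfrac1\ell,\zeta_\ell];\ZZ/\ell^\nu)$. Dividing by $|G_\ell|$, the endomorphism $e:=\tfrac{1}{|G_\ell|}\sum_{g}g^*$ is an idempotent projecting onto the $G_\ell$-invariants (here I use that $|G_\ell|$ is invertible in the coefficient ring $\ZZ/\ell^\nu$, so the averaging idempotent exists and its image is exactly the fixed subgroup). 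Since $\pi^*_\ell$ factors through $e$ and $\pi^*_\ell$ is injective with $e=\pi^*_\ell\circ(\tfrac1{|G_\ell|}\tr_\ell)$, the image of $\pi^*_\ell$ equals the image of $e$, which is $H^{*,*}_{\mot}(\Spec\,\ZZ[\tfrac1\ell,\zeta_\ell];\ZZ/\ell^\nu)^{G_\ell}$. Combining injectivity with this identification of the image gives the claimed isomorphism~\eqref{eq:fixed}.

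The main obstacle, and the only nonformal point, is the construction and basic properties of the transfer map $\tr_\ell$ for a finite \'etale cover in this setting, together with the two identities $\tr_\ell\circ\pi^*_\ell=|G_\ell|\cdot\mathrm{id}$ and $\pi^*_\ell\circ\tr_\ell=\sum_{g\in G_\ell}g^*$. These are exactly what Appendix~\S\ref{mgl-trsfs} is designed to provide (transfers for $\MGL$ and hence for $\MGL$-modules such as $\M\ZZ/\ell^\nu$); I would cite that construction and the projection formula rather than redo it. One should also be slightly careful that the $G_\ell$-action used here is the geometric action of deck transformations on $\Spec\,\ZZ[\tfrac1\ell,\zeta_\ell]$, which is the same action appearing in Lemmas~\ref{lem:ell-n-power} and~\ref{lem:2-n-power}, so that the invariants containing $\widetilde{\tau}^{\M\ZZ}_{\ell^\nu}$ are literally the target of~\eqref{eq:fixed}; everything else is the standard transfer-argument bookkeeping.
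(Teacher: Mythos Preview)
Your proof is correct and follows exactly the paper's approach: invoke the transfer $\pi_{\ell*}$ from Proposition~\ref{prop:trsf-mgl}, use the two identities $\pi_{\ell*}\pi^*_\ell = |G_\ell|\cdot\id$ and $\pi^*_\ell\pi_{\ell*} = \sum_{g\in G_\ell} g_*$, and then run the standard averaging argument using that $|G_\ell|=\ell-1$ is a unit in $\ZZ/\ell^\nu$. One small correction: your parenthetical about $\ell=2$ is confused, since $\zeta_2=-1\in\ZZ[\tfrac12]$ already, so the extension is trivial and $|G_2|=1$ (the cover $\ZZ[\tfrac12]\subset\ZZ[\tfrac12,i]$ involves $\zeta_4$, not $\zeta_2$); in any case this makes the $\ell=2$ statement vacuous.
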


\begin{proof} Since $G_{\ell}$ acts trivially on the group $H_{\mot}^{*,*}(\Spec\,\ZZ[\frac{1}{\ell}]; \ZZ/\ell^{\nu})$, the map $\epsilon^*_{\ell}$ factors through the $G_{\ell}$-invariants of $H_{\mot}^{*,*}(\Spec\,\ZZ[\frac{1}{\ell}, \zeta_{\ell}]; \ZZ/\ell^{\nu})$. The claim now follows from a standard transfer argument and the fact that $\ell-1$ is coprime to $\ell^{\nu}$.  Indeed, since $\M\ZZ/\ell^{\nu}$ is an $\MGL$-module, we have a transfer map by Proposition~\ref{prop:trsf-mgl} :
\[
\pi_{\ell*}: H_{\mot}^{*,*}(\ZZ[\frac{1}{\ell}, \zeta_{\ell}]; \ZZ/\ell^{\nu}) \rightarrow H_{\mot}^{*,*}(\ZZ[\frac{1}{\ell}]; \ZZ/\ell^{\nu}).
\]
such that
\begin{enumerate}
\item $\pi_{\ell*}\epsilon^*_{\ell} = (|G_{\ell}|) \cdot = (\ell-1) \cdot$,
\item $\epsilon^*_{\ell}\pi_{\ell*} = \underset{g \in G_{\ell}}{\Sigma g_*}$, where $g_*$ is the action of $g \in G_{\ell}$ on the cohomology group.
\end{enumerate}
The first property implies that we have an injection:
\[
\epsilon^*_{\ell}:H_{\mot}^{*,*}(\Spec\,\ZZ[\frac{1}{\ell}]; \ZZ/\ell^{\nu}) \hookrightarrow H_{\mot}^{*,*}(\Spec\,\ZZ[\frac{1}{\ell}, \zeta_{\ell}]; \ZZ/\ell^{\nu})^{G_{\ell}}.
\]
The second property tells us if $x$ is an $G_{\ell}$-invariant element, then: 
\[
\epsilon^*_{\ell}\pi_{\ell*}x = \underset{g \in G_{\ell}}{\Sigma g_*}x = |G_{\ell}|x.
\] 
Whence the
map:
\[
\epsilon_*/|G_{\ell}|: H_{\mot}^{*,*}(\Spec\,\ZZ[\frac{1}{\ell}, \zeta_{\ell}]; \ZZ/\ell^{\nu})^{G_{\ell}} \hookrightarrow  H_{\mot}^{*,*}(\Spec\,\ZZ[\frac{1}{\ell}, \zeta_{\ell}]; \ZZ/\ell^{\nu}) \rightarrow H_{\mot}^{*,*}(\ZZ[\frac{1}{\ell}]; \ZZ/\ell^{\nu}),
\]
is a canonical inverse to $i^*$.
\end{proof}

\subsubsection{} Now, we give a 

\begin{proof} [Proof of Proposition~\ref{prop:bott-ddk}] For $\ell$ odd and $\nu \geq 1$ and $\ell =2$ and $\nu \geq 3$, Lemmas~\ref{lem:ell-n-power} and~\ref{lem:2-n-power}, furnishes us with an element: 
\[
(\widetilde{\tau}^{\M\ZZ}_{\ell^{\nu}})^{\e(\ell^{\nu})} \in H_{\mot}^{0, \e(\ell^{\nu})}(\Spec\,\ZZ[\frac{1}{\ell}, \zeta_{\ell^{\nu}}]; \ZZ/\ell^{\nu})^{G_{\ell}},
\] depending only on the choice of a primitive $\ell$-th root of unity in $\ZZ[\frac{1}{\ell}, \zeta_{\ell}]$.  The isomorphism in Lemma~\ref{lem:bott-trsfs}, then gives us a \emph{unique} element:
\[
\tau^{\M\ZZ}_{\ell} \in H_{\mot}^{0,\ell-1}(\Spec\,\ZZ[\frac{1}{\ell}]; \ZZ/\ell^{\nu}).
\]
When $\nu > 1$,  take $\widetilde{\tau}^{\M\ZZ}_{\ell^{\nu}} \in H_{\mot}^{0, \e(\ell^{\nu})}(\Spec\,\ZZ[\frac{1}{\ell}, \zeta_{\ell}]; \ZZ/\ell^{\nu})$ as in~\eqref{eq:pre-bott} which is $G_{\ell}$-invariant by Lemma~\ref{lem:ell-n-power}.2 and hence gives us a unique element $\tau^{\M\ZZ}_{\ell^{\nu}} \in H_{\mot}^{0, \e(\ell^{\nu})}(\Spec\,\ZZ[\frac{1}{\ell}]; \ZZ/\ell^{\nu})$ by Lemma~\ref{lem:bott-trsfs}. Now, point (1) of Proposition~\ref{prop:bott-ddk} follows by the commutativity of the diagram:
\begin{equation}
\xymatrix{
H_{\mot}^{0, \e(\ell^{\nu})}(\Spec\,\ZZ[\frac{1}{\ell}]; \ZZ/\ell^{\nu}) \ar[d] \ar[r]^{/\ell}& H_{\mot}^{0, \e(\ell^{\nu})}(\Spec\,\ZZ[\frac{1}{\ell}]; \ZZ/\ell^{\nu-1}) \ar[d]\\
H_{\mot}^{0, \e(\ell^{\nu})}(\Spec\,\ZZ[\frac{1}{\ell}, \zeta_{\ell}]; \ZZ/\ell^{\nu}) \ar[r]^{/\ell}& H_{\mot}^{0, \e(\ell^{\nu})}(\Spec\,\ZZ[\frac{1}{\ell}, \zeta_{\ell}]; \ZZ/\ell^{\nu-1}),
}
\end{equation}
the naturality of the Bockstein sequences, and the construction of the Bott elements. The fact that they are periodicity operators for \'etale cohomology follows from the construction using cyclotomic characters and the fact that the $\e(\ell^{\nu})$'s are the exponents of the multiplicative group of units of the cyclic group $\ZZ/\ell^{\nu}$. 

To address the remaining cases, we note that when $\ell=2$ and $\nu=1$ there is nothing to show. When $\ell =2$ and $\nu=2$, we note that if we take $[i] \in H^{0,1}_{\mot}(\ZZ[\frac{1}{2}, i], \ZZ/4)$, then $[i]^2 = [-1]$ is in $H_{\mot}^{0,2}(\ZZ[\frac{1}{2}], \ZZ/4)$ and we are done as before. The last statement is proved in Lemma~\ref{lem:noambiguity1} below.

\end{proof}

\subsection{Bott inverted motivic cohomology} Let $S$ be a $\ZZ[\frac{1}{\ell}]$-scheme and suppose that we have a collection $\{\tau_{\ell^{\nu}}^{\M\ZZ}\}$ as in~\eqref{eq:tau-x}, then we may employ the formalism of~\S\ref{per} to construct the \emph{Bott-inverted motivic cohomology} spectra $\{\M\ZZ_S/\ell^{\nu}[(\tau^{\M\ZZ}_{\ell^{\nu}})_S^{-1}]\}$. Each $\M\ZZ_S/\ell^{\nu}[(\tau^{\M\ZZ}_{\ell^{\nu}})_S^{-1}]$ is a $\mathcal{E}_{\infty}$-ring spectrum by Proposition~\ref{prop:invert-long}.

\begin{lemma} 
\label{lem:noambiguity1} 
Suppose $\ell$ is coprime to the exponential characteristic of $k$. 
Then any choice of $\tau_{\ell^{\nu}}^{\M\ZZ}$ which satisfies the first two statements of Proposition~\ref{prop:bott-ddk} gives equivalent Bott inverted motivic cohomology $\mathcal{E}_{\infty}$-rings.
\end{lemma}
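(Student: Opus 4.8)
\emph{Proof proposal.} The plan is to show that $\tau^{\M\ZZ}_{\ell^{\nu}}$ and $\tau^{\M\ZZ'}_{\ell^{\nu}}$ differ by multiplication by a unit of the coefficient ring, so that inverting one is literally the same operation as inverting the other, and then to invoke the universal property of the periodization formalism of \S\ref{per}. Both elements lie in $H^{0,\e(\ell^{\nu})}_{\mot}(\Spec\,k;\ZZ/\ell^{\nu})$. I would identify this group using Bloch--Kato (Theorem~\ref{thm:BLconjecture}, which applies since $0\le\e(\ell^{\nu})$): étale sheafification gives an isomorphism $H^{0,\e(\ell^{\nu})}_{\mot}(\Spec\,k;\ZZ/\ell^{\nu})\xrightarrow{\ \cong\ }H^0_{\et}(k;\mu_{\ell^{\nu}}^{\otimes\e(\ell^{\nu})})$; alternatively one may stay elementary and use only the $0$-th column of the comparison feeding into Theorem~\ref{thm:IntroMotcoh}. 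Since $\e(\ell^{\nu})$ is the exponent of $(\ZZ/\ell^{\nu})^{\times}$, the cyclotomic character raised to this power is trivial, so $\mu_{\ell^{\nu}}^{\otimes\e(\ell^{\nu})}$ is the constant Galois module $\ZZ/\ell^{\nu}$ and the target is, noncanonically, $\ZZ/\ell^{\nu}$.

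Now I would use hypothesis (2) of Proposition~\ref{prop:bott-ddk} (hypothesis (1) is not needed for a fixed $\nu$): the images of $\tau^{\M\ZZ}_{\ell^{\nu}}$ and $\tau^{\M\ZZ'}_{\ell^{\nu}}$ under étale sheafification are periodicity operators. In particular each induces an automorphism of $H^0_{\et}(\Spec\,k;-)$, namely of $H^0_{\et}(k;\ZZ/\ell^{\nu})\to H^0_{\et}(k;\mu_{\ell^{\nu}}^{\otimes\e(\ell^{\nu})})$, so each image is a generator of $H^0_{\et}(k;\mu_{\ell^{\nu}}^{\otimes\e(\ell^{\nu})})\cong\ZZ/\ell^{\nu}$, i.e.\ a unit. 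Hence the two images agree up to a unit $u\in(\ZZ/\ell^{\nu})^{\times}$, and by $\ZZ/\ell^{\nu}$-linearity together with injectivity of the comparison map we obtain $\tau^{\M\ZZ'}_{\ell^{\nu}}=u\cdot\tau^{\M\ZZ}_{\ell^{\nu}}$ in $H^{0,\e(\ell^{\nu})}_{\mot}(\Spec\,k;\ZZ/\ell^{\nu})$, where $u$ is viewed inside $H^{0,0}_{\mot}(\Spec\,k;\ZZ/\ell^{\nu})=\ZZ/\ell^{\nu}=\pi_{0,0}(\M\ZZ/\ell^{\nu})$ and is a unit there.

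To conclude, recall from \S\ref{per} (Proposition~\ref{prop:invert-long}) that $\M\ZZ/\ell^{\nu}[(\tau^{\M\ZZ}_{\ell^{\nu}})^{-1}]$ is the initial object among $\mathcal{E}_{\infty}$-algebras in $\Mod_{\M\ZZ/\ell^{\nu}}$ in which $\tau^{\M\ZZ}_{\ell^{\nu}}$ becomes invertible in the bigraded homotopy ring, and likewise for $\tau^{\M\ZZ'}_{\ell^{\nu}}$. Since $u$ is already invertible in $\M\ZZ/\ell^{\nu}$, such an algebra inverts $\tau^{\M\ZZ}_{\ell^{\nu}}$ if and only if it inverts $u\,\tau^{\M\ZZ}_{\ell^{\nu}}=\tau^{\M\ZZ'}_{\ell^{\nu}}$; the two localizing subcategories of $\CAlg(\Mod_{\M\ZZ/\ell^{\nu}})$ therefore coincide, and their initial objects are canonically equivalent as $\mathcal{E}_{\infty}$-rings over $\M\ZZ/\ell^{\nu}$. (When $\ell=2$, $\nu=1$ there is nothing to prove on the nose, since $H^{0,1}_{\mot}(\Spec\,k;\ZZ/2)\cong\mu_2(k)$ has a unique nonzero element; this degenerate case is covered by the mod-$4$/mod-$2$ module conventions recorded in \S\ref{bott-choice}.) The main point requiring care is not a deep one: one must make sure the injectivity (indeed isomorphism) $H^{0,\e(\ell^{\nu})}_{\mot}(\Spec\,k;\ZZ/\ell^{\nu})\hookrightarrow H^0_{\et}(k;\mu_{\ell^{\nu}}^{\otimes\e(\ell^{\nu})})$ is available in the stated generality, and that the periodization of the appendix genuinely furnishes the universal property at the level of $\mathcal{E}_{\infty}$-rings rather than merely underlying modules; once both are in hand the lemma is a one-line consequence of the identity $\tau^{\M\ZZ'}_{\ell^{\nu}}=u\,\tau^{\M\ZZ}_{\ell^{\nu}}$.
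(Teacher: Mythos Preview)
Your argument is correct and follows the same line as the paper's proof. Both proofs observe that two admissible Bott elements differ by a unit in $(\ZZ/\ell^{\nu})^{\times}$ --- the paper phrases this as ``coincide up to an automorphism'' via the cyclotomic character $G_k\to\ZZ_\ell^{\times}\to\Aut(\ZZ/\ell^{\nu})$, while you spell out explicitly that $\mu_{\ell^{\nu}}^{\otimes\e(\ell^{\nu})}$ is the trivial Galois module and that periodicity operators are generators of the resulting cyclic group --- and then conclude that the periodizations agree; your version simply supplies more of the details that the paper leaves implicit.
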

\begin{proof}
Under the cyclotomic character $\kappa\colon G_{k}\to\ZZ_{\ell}^{\times}$ the Galois module $\mu_{\ell^{\nu}}^{\otimes i}$ corresponds to the $i$th Tate twist of the $\ZZ_{\ell}$-module $\ZZ/\ell^{\nu}$ on which 
$\ZZ_{\ell}^{\times}$ acts by a homomorphism $\ZZ_{\ell}^{\times}\to\Aut(\ZZ/\ell^{\nu})$.
Thus different choices of $\M\ZZ$-theoretic mod-$\ell^{\nu}$ Bott elements coincide up to an automorphism, 
resulting in an equivalence between the corresponding Bott inverted $\mathcal{E}_{\infty}$-rings.
\end{proof}

\subsubsection{} \label{sect:pull-mz-et}
Using Theorem~\ref{thm:integral}.1 and the properties of the Bott element as in Proposition~\ref{eq:bott-sys} we immediately deduce that for any morphism $f: T \rightarrow S$ we have a canonical equivalence of Bott-inverted motivic cohomology:
\begin{equation} \label{eq:mz-pullsback}
f^*\M\ZZ_S/\ell^{\nu}[(\tau^{\M\ZZ}_{\ell^{\nu}})^{-1}] \simeq f^*\M\ZZ_T/\ell^{\nu}[(\tau^{\M\ZZ}_{\ell^{\nu}})^{-1}].
\end{equation}
The equivalence~\eqref{eq:mz-pullsback} shows that Bott inverted motivic cohomology pulls back, we will also need to know how \'{e}tale motivic cohomology pulls back, at least along essentially smooth morphisms.
\begin{lemma}
\label{lem:pull-mz-et} 
For essentially smooth schemes $S$ and $T$ over a field $k$ and $f: T \rightarrow S$ a smooth morphism over $\Spec\,k$, there is an equivalence:
\begin{equation}\label{eq:mz-et-pulls}
f^* \epsilon_*\epsilon^*\M\ZZ_S/\ell^{\nu} \simeq  \epsilon_*\epsilon^*\M\ZZ_T/\ell^{\nu}.
\end{equation}
\end{lemma}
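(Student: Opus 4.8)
The plan is to reduce everything to two facts: $\M\ZZ$ is a Cartesian section of $\CAlg(\SH)$ (Theorem~\ref{thm:integral}.1), and for a \emph{smooth} morphism $f$ the pullback $f^{*}$ is compatible with the change of topology. For the latter, recall from~\S\ref{sect:s1-spectra}--\S\ref{sect:gm-pre} that on underlying $\GG_m$-spectra $f^{*}\colon\SH(S)\to\SH(T)$ is computed levelwise by the functor $\SH^{S^{1}}(S)\to\SH^{S^{1}}(T)$, which for smooth $f$ is simply restriction along $\Sm_{T}\hookrightarrow\Sm_{S}$: an $\AA^{1}$-invariant Nisnevich hypersheaf on $\Sm_{S}$ restricts to one on $\Sm_{T}$, and this restriction commutes with $\Omega_{\GG_m}$ because $\GG_m\times_{T}U=\GG_m\times_{S}U$ for $U\in\Sm_{T}$, so genuine $\GG_m$-spectra restrict to genuine $\GG_m$-spectra (no spectrification). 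Since étale (hyper)covers and étale stalks of a $T$-scheme are the same whether it is viewed over $T$ or over $S$, this restriction also carries étale-local presheaves to étale-local presheaves and étale-local equivalences to étale-local equivalences, hence commutes with $\LL_{\et}$ and $\LL_{\AA^1}$. Two consequences: first, $\pi^{*}f^{*}\simeq f^{*}\pi^{*}$ as functors $\SH(S)\to\SH_{\et}(T)$; second, by the criterion of Lemma~\ref{lem:concrete}, $f^{*}$ sends étale-local objects of $\SH(S)$ to étale-local objects of $\SH(T)$.

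Now I run the comparison. By Theorem~\ref{thm:integral}.1 and exactness of $f^{*}$ (which handles the mod-$\ell^{\nu}$ reduction), $f^{*}\M\ZZ_{S}/\ell^{\nu}\simeq\M\ZZ_{T}/\ell^{\nu}$, so there is a canonical map
$$
\M\ZZ_{T}/\ell^{\nu}\;\simeq\;f^{*}\M\ZZ_{S}/\ell^{\nu}\;\xrightarrow{\,f^{*}\eta_{S}\,}\;f^{*}\bigl(\pi_{*}\pi^{*}\M\ZZ_{S}/\ell^{\nu}\bigr),
$$
$\eta_{S}$ being the unit. By the second consequence above the target is étale-local, so this map factors uniquely through the étale localization $\eta_{T}\colon\M\ZZ_{T}/\ell^{\nu}\to\pi_{*}\pi^{*}\M\ZZ_{T}/\ell^{\nu}$, producing
$$
\theta\colon\pi_{*}\pi^{*}\M\ZZ_{T}/\ell^{\nu}\longrightarrow f^{*}\bigl(\pi_{*}\pi^{*}\M\ZZ_{S}/\ell^{\nu}\bigr).
$$
Both sides are étale-local, and $\pi^{*}$ is conservative on such objects (the counit $\pi^{*}\pi_{*}\simeq\id$ is an equivalence, as recalled at the start of~\S\ref{sect:slice}), so it suffices to show $\pi^{*}\theta$ is an equivalence. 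Applying $\pi^{*}$, using $\pi^{*}\pi_{*}\simeq\id$, the commutation $\pi^{*}f^{*}\simeq f^{*}\pi^{*}$, and once more $f^{*}\M\ZZ_{S}\simeq\M\ZZ_{T}$, one identifies $\pi^{*}$ of both the source and the target of $\theta$ with $\pi^{*}\M\ZZ_{T}/\ell^{\nu}$; tracing the factorization and using that $\pi^{*}\eta$ is an equivalence for every motivic spectrum, $\pi^{*}\theta$ becomes the identity. Hence $\theta$ is an equivalence, giving~\eqref{eq:mz-et-pulls}.

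For genuinely essentially smooth $T$ one writes $T=\lim_{\alpha}T_{\alpha}$ with $T_{\alpha}$ smooth of finite type over $S$ and affine dominant transitions, and notes that both sides of~\eqref{eq:mz-et-pulls} are compatible with such limits — by continuity of $\SH$ and of the premotivic category $\Mod_{\M\ZZ}$ from~\S\ref{sec:cl} together with Theorem~\ref{thm:integral}.1 — so the finite-type case suffices. The step requiring care is exactly the first paragraph: that restriction along $\Sm_{T}\hookrightarrow\Sm_{S}$ sends the étale-local (and $\AA^{1}$-local) subcategory into the corresponding subcategory over $T$ and commutes with the associated localizations; once this is in place no further input about $\M\ZZ$ or about étale cohomology, beyond its being a Cartesian section, is needed. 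A more computational alternative, closer to Corollary~\ref{cor:utrvspi}, would instead identify both sides with the presheaf $Y\mapsto H^{\ast}_{\et}(Y,\mu_{\ell^{\nu}}^{\otimes\ast})$ on $\Sm_{T}$ via the rigidity theorem~\ref{thm:rigidity} and Yoneda; there the main obstacle shifts to the compatibility of the rigidity equivalence with $f^{*}$ and to smooth base change in étale cohomology.
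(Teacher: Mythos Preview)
Your proof is correct but takes a different route from the paper's. The paper dispatches this in two lines via adjoints: since $f$ is smooth, $f^{*}$ admits a \emph{left} adjoint $f_{\#}$; one checks $f_{\#}\pi^{*}\simeq\pi^{*}f_{\#}$ by noting both sides preserve colimits and agree on representables, and then passes to right adjoints to get $\pi_{*}f^{*}\simeq f^{*}\pi_{*}$. Together with $f^{*}\pi^{*}\simeq\pi^{*}f^{*}$ (immediate on the sheaf level) and the Cartesian-section property of $\M\ZZ$, the lemma follows. Your argument instead unpacks $f^{*}$ concretely as restriction along $\Sm_{T}\hookrightarrow\Sm_{S}$, verifies by hand that this preserves \'etale-local objects and commutes with $\pi^{*}$, and then runs a universal-property/conservativity argument to produce and identify the comparison map $\theta$. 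This is correct---the identification of $f^{*}$ with restriction is exactly the statement that $h_{!}=g^{*}$ for the adjoint pair $(h\colon X\mapsto X\times_{S}T)\dashv(g\colon\text{forget to }S)$---but it is longer, and the step you flag as ``requiring care'' is precisely what the paper's $f_{\#}$ trick circumvents in one stroke. Your final paragraph on essentially smooth $f$ via continuity is not needed: the lemma assumes $f$ itself is smooth (it is $S$ and $T$ that are essentially smooth over $k$), so $f_{\#}$ is available throughout.
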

\begin{proof} 
Since $f^*$ commutes with $\epsilon^*$ already on the level of sheaves, we need only prove that $f^*$ commutes with $\epsilon_*$. 
To see this, 
note that these functors have left adjoints $f_{\#}$ and $\epsilon^*$, respectively, 
and there is an equivalence $f_{\#}\epsilon^* \simeq \epsilon^*f_{\#}$ since both sides agree on representables and preserve colimits, being left adjoints. 
\end{proof}

\subsubsection{} \label{sect:bott-motcoh}
We now turn to the analog of our main results for motivic cohomology, which is essentially a consequence of Theorem \ref{thm:IntroMotcoh}. We first give the details of the proof of that result. For the reader who would rather skip this proof, one can always rely on Voevodsky's proof of the Bloch-Kato conjecture to yield the weaker Theorem \ref{thm:IntroMotcoh}, as mentioned in Remark \ref{rem:BK}.

\begin{proof}[Proof of \hbox{Theorem \ref{thm:IntroMotcoh}}] As mentioned in the introduction, we need only handle the case $\Char k=0$,  $\ell=2$ and $\sqrt{-1}\not\in k$. The argument is an addendum to the proof of  \cite[Theorem 4.5]{levine-bott}. Following that proof, we may assume that $k$ is finitely generated over $\QQ$. Let $k_0$ be the algebraic closure of $\QQ$ in $k$ and let $\bar{k}$ be the algebraic closure of $k$. Following the arguments of {\it loc. cit.} it suffices to construct a tower of fields
\[
k=L_0\subset L_1\subset\ldots\subset L_N=\bar{k}
\]
such that $L_i$ is Galois over $L_{i-1}$ and each Galois group  $\Gal(L_i/L_{i-1})$ has 2-cohomological dimension $\le 1$. We let $k_1=k_0(\mu_{2^\infty})$ and $k_2=\bar{\QQ}$, $L_1=L_0k_1$, $L_2=L_0k_2$. As in {\it loc. cit.}, one constructs the  $L_i$ for $i>2$ by using a trancendence basis of $L_2$ over $\bar{\QQ}$ and   $\Gal(L_i/L_{i-1})$ has 2-cohomological dimension $\le 1$ for $i>2$, so the only question is for the layers $L_1/L_0$ and $L_2/L_1$. 

As a quotient of $\Gal(\bar{k}/k)$, $\Gal(L_1/L_0)$ has finite 2-cohomological dimension. Since $\Gal(L_1/L_0)$ is a open subgroup of $\Gal(\QQ(\mu_{2^\infty})/\QQ)=\ZZ_2\times \ZZ/2$ and as an  open subgroup of $\ZZ_2\times \ZZ/2$ with finite 2-cohomological dimension is isomorphic to $\ZZ_2$,  we have $\Gal(L_1/L_0)\cong \ZZ_2$ and thus $\Gal(L_1/L_0)$ has  2-cohomological dimension 1.  It follows from \cite[II, Proposition 9]{serre} that $\Gal(k_2/k_1)$ has  2-cohomological dimension 1 and as an open subgroup of $\Gal(k_2/k_1)$, $\Gal(L_2/L_1)$ has  2-cohomological dimension 1 as well.
\end{proof}


\begin{theorem} 
\label{thm:motcohcase3} 
Let $k$ be a field with exponential characteristic prime to $\ell$ and let $f: S\rightarrow \Spec\,k$ be an essentially smooth scheme over $k$. Suppose that $\cd_{\ell}(k) < \infty$.   For all $\nu \geq 1$, the natural map:
$$
\M\ZZ_S/\ell^{\nu} \rightarrow \M\ZZ_S^{\et}/\ell^{\nu},
$$ 
induces an equivalence in $\SH(S)$: 
$$
\M\ZZ_S/\ell^{\nu}[(\tau_{\ell^{\nu}}^{\M\ZZ})_S^{-1}] 
\overset{\simeq}{\rightarrow}
\M\ZZ_S^{\et}/\ell^{\nu}.
$$
Consequently, we have an equivalence on $\ell$-completions: 
$$
\M\ZZ_S[(\tau^{\M\ZZ})\compl]  \simeq \M\ZZ^{\et}_S\compl.
$$
\end{theorem}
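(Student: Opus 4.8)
The plan is to reduce the statement to the pointwise assertion of Theorem~\ref{thm:IntroMotcoh} and then upgrade it to an equivalence of motivic spectra over $S$. First I would recall that by Lemma~\ref{lem:conserve} — or, since $S$ is essentially smooth over a field $k$, directly by the conservativity of the family of points — it suffices to check that the map becomes an equivalence after applying $i_x^*$ for each point $x\in S$; and since $\M\ZZ$, the Bott element, and the functor $\pi_*\pi^*$ all pull back along essentially smooth morphisms (via~\eqref{eq:mz-pullsback}, Lemma~\ref{lem:pull-mz-et}, and the fact that $f^*$ commutes with Bott inversion by construction), the claim over $S$ follows from the claim over each residue field $k(x)$, which is itself essentially smooth over $k$ hence has $\cd_\ell < \infty$. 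So I reduce to the case $S = \Spec\,F$ for $F$ a field with $\cd_\ell(F) < \infty$ and $\ell$ coprime to the exponential characteristic.

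Over such a field the target $\M\ZZ_F^{\et}/\ell^{\nu} = \pi_*\pi^*\M\ZZ_F/\ell^\nu$ represents \'etale cohomology with $\mu_{\ell^\nu}^{\otimes *}$-coefficients by Corollary~\ref{cor:utrvspi}. The source $\M\ZZ_F/\ell^\nu[(\tau^{\M\ZZ}_{\ell^\nu})^{-1}]$ has homotopy groups computed by inverting $\tau^{\M\ZZ}_{\ell^\nu}$ in motivic cohomology, i.e.\ $H^{p,*}_{\mot}(F;\ZZ/\ell^\nu)[(\tau^{\M\ZZ}_{\ell^\nu})^{-1}]$. The natural map in the statement is precisely the comparison map of Theorem~\ref{thm:IntroMotcoh}, so on bigraded homotopy groups (equivalently, after smashing with $\Sigma^{\infty}_T X_+$ for $X\in\Sm_F$, using that Bott inversion commutes with the relevant mapping spectra and that both sides are already modules over the Bott-inverted ring) it induces the isomorphism $H^{p,*}_{\mot}(X;\ZZ/\ell^\nu)[(\tau^{\M\ZZ}_{\ell^\nu})^{-1}] \cong H^p_{\et}(X;\mu_{\ell^\nu}^{\otimes *})$ of that theorem. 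Since $\SH(F)$ is generated under colimits by the $\Sigma^{p,q}\Sigma^{\infty}_T X_+$, an isomorphism on all these groups is a weak equivalence; this gives the first displayed equivalence.

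For the consequence on $\ell$-completions, I would pass to the limit over $\nu$: the Bott elements $\tau^{\M\ZZ}_{\ell^\nu}$ are Bockstein-compatible (Proposition~\ref{prop:bott-ddk}(1)), so the tower $\{\M\ZZ_S/\ell^\nu[(\tau^{\M\ZZ}_{\ell^\nu})_S^{-1}]\}_\nu$ is identified with the tower $\{\M\ZZ_S^{\et}/\ell^\nu\}_\nu$, and taking the homotopy limit gives $\M\ZZ_S[(\tau^{\M\ZZ})\compl]\simeq \M\ZZ^{\et}_S\compl$ — here one should note that $\ell$-completion commutes with the right adjoint $\pi_*$ and with $\pi^*$ levelwise, so $(\pi_*\pi^*\M\ZZ_S)\compl \simeq \pi_*\pi^*(\M\ZZ_S\compl)$, which is what we call $\M\ZZ^{\et}_S\compl$. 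The main obstacle is the reduction to fields: one must be careful that the objects in play genuinely pull back along the (pro-smooth, not smooth) transition maps into the residue fields — this is where continuity of $\Mod_{\M\ZZ}$ from \S\ref{sec:cl} and the pullback compatibilities of~\eqref{eq:mz-pullsback} and Lemma~\ref{lem:pull-mz-et} do the real work — and that the element $\tau^{\M\ZZ}_{\ell^\nu}$ over $F$ is a genuine Bott element in the sense needed for Theorem~\ref{thm:IntroMotcoh}, which follows from Proposition~\ref{prop:bott-ddk}(2) together with Lemma~\ref{lem:noambiguity1}.
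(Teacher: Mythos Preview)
Your reduction to residue fields in the first paragraph has a gap. You want to apply $i_x^*$ for each point $x\in S$ and then identify $i_x^*\M\ZZ_S^{\et}/\ell^\nu$ with $\M\ZZ_{k(x)}^{\et}/\ell^\nu$, citing Lemma~\ref{lem:pull-mz-et}. But that lemma only applies to \emph{smooth} morphisms between essentially smooth $k$-schemes, and $i_x:\Spec k(x)\to S$ is not smooth (for a closed point it is a closed immersion). That $k(x)$ is essentially smooth over $k$ is a statement about the composite $\Spec k(x)\to\Spec k$, not about $i_x$ itself, and does not help here. The general base-change property $g^*(\M\ZZ^{\et}/\ell^\nu)\simeq \M\ZZ^{\et}/\ell^\nu$ for arbitrary $g$ is precisely Theorem~\ref{thm:base-change}, which the paper establishes later \emph{as a consequence} of this circle of results. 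So the conservativity route via Lemma~\ref{lem:conserve} is circular at this point in the paper.

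The paper's proof avoids this entirely: it does not reduce to fields. After essentially smooth base change (continuity, \cite{hopkinsmorelhoyois}*{Appendix~A}) to pass from essentially smooth $S$ to smooth $S$ over $k$, it simply checks the map on $[\Sigma^{p,q}\Sigma^\infty_T X_+,-]$ for all $X\in\Sm_S$. Such $X$ are smooth over $k$, so Corollary~\ref{cor:utrvspi} identifies the target with $H^*_{\et}(X;\mu_{\ell^\nu}^{\otimes *})$, compactness of the test objects identifies the source with $H^{*,*}_{\mot}(X;\ZZ/\ell^\nu)[(\tau^{\M\ZZ}_{\ell^\nu})^{-1}]$, and the comparison is then exactly Theorem~\ref{thm:IntroMotcoh}. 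Your second paragraph is this argument verbatim, only you have artificially restricted to $S=\Spec F$; it works without change for any $S$ essentially smooth over $k$, so the detour through residue fields is both flawed and unnecessary.
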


\begin{proof} After Corollary~\ref{cor:utrvspi} and essentially smooth base change (to reduce the essentially smooth case to the smooth case) \cite{hopkinsmorelhoyois}*{Appendix A}, it suffices to prove that for all smooth $k$-schemes $S$, the canonical map:
\[
H^{*,*}_{\mot}(S, \ZZ/\ell^{\nu})[(\tau_{\ell^{\nu}}^{\M\ZZ})_S^{-1}] \rightarrow H^{*}_{\et}(S; \mu_{\ell}^{\otimes *}),
\]
is a isomorphism. This is Theorem \ref{thm:IntroMotcoh}.
\end{proof}

\subsection{Bott elements in algebraic cobordism} \label{sec:bott-algc}
We now proceed to prove \'etale descent results for Bott-inverted algebraic cobordism. We first construct Bott elements in $\MGL$ over a Dedekind domain. To do so we will need further input from \cite{spitzweck-mixed}, where the Hopkins-Morel isomorphism is suitably generalized to Dedekind domains of mixed characteristics. Recall that if $S$ is a base scheme, we have the \emph{Hopkins-Morel map} \cite{spitzweck-integral}*{\S 11.1}:
\begin{equation} \label{eq:hm-map}
\Phi_S: \MGL_S/(x_1, \cdots x_n) \rightarrow \M\ZZ_S.
\end{equation}
Let $R$ be a ring, and let $\sspt_{(R)}$ be the motivic Moore spectrum corresponding to $R$. If $R$ is a localization of $\ZZ$, then smashing with $\sspt_{(R)}$ computes the $R$-localization of $\SH(S)$. The state-of-the-art of the Hopkins-Morel-Hoyois isomorphism is given by \cite{spitzweck-integral}*{Theorem 11.3}. 
\begin{theorem} \label{thm:hmh-iso} Let $S$ be a base scheme and let $R$ be a ring such that for any positive residue characteristic of $p$ of $S$, then the prime $p$ is invertible in $R$. Then, the Hopkins-Morel map~\eqref{eq:hm-map}:
\begin{equation} \label{eq:hm-map-r}
\Phi_S \wedge \sspt_{(R)}: \MGL_S/(x_1, \cdots x_n) \wedge \sspt_{(R)} \rightarrow \M R_S,
\end{equation}
is an equivalence.
\end{theorem}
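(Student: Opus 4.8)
This is Spitzweck's theorem \cite{spitzweck-integral}*{Theorem 11.3} (resting on \cite{hopkinsmorelhoyois} and \cite{spitzweck-mixed}); we indicate the strategy one would follow. The starting point is that both sides of \eqref{eq:hm-map-r} are compatible with arbitrary base change: for the target this is Theorem~\ref{thm:integral}.1, while $f^{*}$ is symmetric monoidal and colimit preserving, so $f^{*}\MGL_{S'}\simeq\MGL_{S}$, hence $f^{*}(\MGL_{S'}/(x_1,\cdots x_n)\wedge\sspt_{(R)})\simeq\MGL_{S}/(x_1,\cdots x_n)\wedge\sspt_{(R)}$ and $f^{*}\Phi_{S'}\simeq\Phi_{S}$. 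It therefore suffices to treat a collection of ``universal'' bases. Writing an arbitrary qcqs $S$ as a cofiltered limit of finite-type $\ZZ$-schemes and invoking continuity of $\SH$ (\S\ref{sec:cl}) reduces us to $S$ Noetherian; Lemma~\ref{lem:conserve} then reduces the equivalence to $S=\Spec k$ a field, and, to absorb mixed-characteristic phenomena, to $S$ the spectrum of a Dedekind domain $D$, where the recollement of $\SH(D)$ along a closed point and its open complement --- the same gluing that underlies Theorem~\ref{thm:integral}.4 --- lets one glue the equivalence over the generic and special fibres. In each residue field $\kappa$ that occurs, the hypothesis forces the exponential characteristic of $\kappa$ to be invertible in $R$, so the remaining content is the Hopkins--Morel--Hoyois theorem over a field.

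Over a field $k$ with exponential characteristic $e$ invertible in $R$, the plan is to compare slice filtrations. Both $\MGL_k/(x_1,\cdots x_n)\wedge\sspt_{(R)}$ and $\M R_k$ are cellular and effective: $\MGL$ is effective, and effectivity is preserved by forming the iterated cofibres that define the quotient by the $x_i$ and by smashing with the Moore spectrum $\sspt_{(R)}$. Using the computation of the slices of $\MGL$, namely $s_q\MGL\simeq\Sigma^{2q,q}\M(\MU_{2q})$ (see \cite{hopkinsmorelhoyois}*{Theorem 8.5}, \cite{spitzweck2}*{Theorem 6.1}), and exactness of the functors $s_q$, the slices of $\MGL_k/(x_1,\cdots x_n)$ are computed by the Koszul-type complex arising from multiplication by the $x_i$ on $s_{*}\MGL\simeq\M\ZZ\otimes_{\ZZ}\MU_{*}$; since the $x_i$ form a regular sequence of polynomial generators of $\MU_{*}$ over $\ZZ$ with $\MU_{*}/(x_1,\cdots x_n)\cong\ZZ$ concentrated in degree $0$, one obtains $s_0(\MGL_k/(x_1,\cdots x_n)\wedge\sspt_{(R)})\simeq\M R_k$ and $s_q=0$ for $q\neq 0$, with $\Phi_k\wedge\sspt_{(R)}$ realising the identification on $s_0$.

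It remains to pass from ``equivalence on all slices'' to ``equivalence'', and this is the technical heart. Since both spectra are effective, each is its own $f_0$; since all positive slices vanish one has $f_1\simeq f_2\simeq\cdots$, and a connectivity bound --- the Morel connectivity theorem together with the attendant estimates for $\MGL$ and its effective covers \cite{hopkinsmorelhoyois}*{\S 8} --- forces this common effective cover to vanish, so each spectrum is canonically its zeroth slice, and $\Phi_k\wedge\sspt_{(R)}$ is an equivalence. I expect the main obstacle to be precisely this step: establishing the connectivity and slice-completeness input, together with the characteristic-$p$ phenomena that make inverting $e$ unavoidable --- this is the substance of \cite{hopkinsmorelhoyois}, and it is not formal. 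Once the field case is secured, the reduction to Dedekind domains and then to arbitrary Noetherian bases, via the gluing triangle and base change as carried out in \cite{spitzweck-integral} and \cite{spitzweck-mixed}, is comparatively routine.
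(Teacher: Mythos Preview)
The paper does not prove this theorem: it is stated without proof, as a direct citation of \cite{spitzweck-integral}*{Theorem 11.3} (the sentence immediately preceding the statement reads ``The state-of-the-art of the Hopkins--Morel--Hoyois isomorphism is given by \cite{spitzweck-integral}*{Theorem 11.3}''). Your outline therefore already goes well beyond what the paper offers, and is a reasonable summary of the strategy in the cited sources.

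One point of logic in your sketch is tangled, however. Once you invoke Lemma~\ref{lem:conserve} to reduce to residue fields, the Dedekind-domain gluing step is superfluous: conservativity of pullback to points, together with Hoyois's theorem over each residue field $k(x)$ (applicable since the exponential characteristic of $k(x)$ is invertible in $R$ by hypothesis), already yields the equivalence over any Noetherian base. The Dedekind-domain analysis in \cite{spitzweck-integral} and \cite{spitzweck-mixed} is aimed at a stronger goal --- controlling $\M\ZZ$ and the slices of $\MGL$ \emph{integrally} over $\Spec\ZZ$, without the coefficient hypothesis --- and the version stated here then follows from that by base change and smashing with $\sspt_{(R)}$. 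So either route works, but you should pick one: reduce to fields via conservativity and cite \cite{hopkinsmorelhoyois}, or cite the integral Dedekind-domain result of \cite{spitzweck-integral} and base-change; mixing the two as you do obscures which input is actually doing the work.
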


\subsubsection{} 
\label{subsection:slicesofMGL} Furthermore, we have the following results by the third author on the slices of $\MGL$.

\begin{theorem} \label{prop:mgl-ddk} Let $S$ be an essentially smooth scheme over Dedekind domain $D$. Let $R$ be a localization of $\ZZ$ such that $\Phi_S \wedge \sspt_{(R)}$ is an equivalence. Then
\begin{enumerate}
\item $\MGL_S \wedge \sspt_{(R)}$ is slice complete in the sense of~\S\ref{slice-complete}.
\item For all $k \in \ZZ$, the $k$-th effective cover $f_k\MGL_D \wedge \sspt_{(R)}$ is $k$-connective in the homotopy $t$-structure, i.e., it is in $\SH(D)_{\geq k}$.
\item We have an equivalence of graded $\mathcal{E}_{\infty}$-algebra objects in $\SH(S)$: 
\begin{equation}
\label{equation:slicesMGL}
s_*\MGL_S \wedge \sspt_{(R)} \simeq \M R_S [x_1, ..., x_q, ...],
\end{equation}
where each $x_q$ is assigned degree $q \in \NN$ and corresponds to a copy of $\M R$ suspended $(2q,q)$-times.
\end{enumerate}

\end{theorem}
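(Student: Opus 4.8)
The plan is to reduce everything to the Hopkins–Morel–Hoyois isomorphism (Theorem \ref{thm:hmh-iso}) combined with Spitzweck's computation of the slices of $\MGL$ over the integers, together with the fact that all of the relevant constructions are stable under base change along $S \to \Spec D$ and under smashing with the Moore spectrum $\sspt_{(R)}$. First I would establish (3): over $\Spec D$ one has the Hopkins–Morel presentation $\MGL_D/(x_1, \dots, x_n, \dots) \wedge \sspt_{(R)} \simeq \M R_D$ from Theorem \ref{thm:hmh-iso}, and the graded $\mathcal{E}_\infty$-algebra structure on $s_*\MGL_D \wedge \sspt_{(R)} \simeq \M R_D[x_1, \dots, x_q, \dots]$ is extracted exactly as in the field case (via the standard argument identifying $s_q\MGL$ with a sum of Tate twists of $\M R$ indexed by monomials of weight $q$ in the polynomial generators); since $\MGL_S \simeq p^*\MGL_D$ for $p: S \to \Spec D$ and $p^*$ is symmetric monoidal and commutes with the slice functors $s_q$ when $p$ is essentially smooth (using smooth base change for the slice filtration, cf. \cite{pi1}), applying $p^*$ gives the stated equivalence over $S$. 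The compatibility with the $\mathcal{E}_\infty$-structure follows because $p^*$ is symmetric monoidal and $\M R_D$ is a Cartesian section of $\CAlg(\SH)$ by Theorem \ref{thm:integral}.1.

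Next, for (2): the generators $x_q$ have weight $q$, so the associated graded of the slice filtration of $\MGL_D \wedge \sspt_{(R)}$ has its $k$-th slice $s_k(\MGL_D \wedge \sspt_{(R)})$ a (possibly infinite) sum of copies of $\Sigma^{2m,m}\M R_D$ with $m \geq k$ — concretely, indexed by monomials of weight $k$, so each summand is $\Sigma^{2k,k}\M R_D$. Since $\M R_D \in \SH(D)_{\geq 0}$ (connectivity of motivic cohomology over a Dedekind domain) and connective objects are closed under $\Sigma^{2k,k}$-suspension and arbitrary coproducts, each $s_k \in \SH(D)_{\geq k}$. Because $\SH(D)_{\geq k}$ is closed under extensions and sequential limits (it is the connective part of a $t$-structure), and $f_k\MGL_D \wedge \sspt_{(R)}$ is built from the slices $s_m$ for $m \geq k$ via the tower — using slice completeness of $\MGL_D \wedge \sspt_{(R)}$, which is part (1) — one concludes $f_k\MGL_D \wedge \sspt_{(R)} \in \SH(D)_{\geq k}$. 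Here I would be slightly careful: the cleanest route to (1) and (2) is to cite \cite{spitzweck-mixed} directly, since the Hopkins–Morel isomorphism over Dedekind domains of mixed characteristic is precisely the input needed to run Spitzweck's original argument from \cite{spitzweck2} verbatim; in particular slice completeness (1) follows because the slice-completed spectrum has the same slices and agrees with $\MGL_D \wedge \sspt_{(R)}$ after smashing with the Moore spectrum, the obstruction to which is measured by $\lim^1$ terms that vanish by the connectivity estimate for the slices.

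The main obstacle is part (1), slice completeness: one must show $\lim_{q\to\infty} f^q(\MGL_D \wedge \sspt_{(R)}) \simeq 0$, equivalently that the slice filtration is exhaustive and Hausdorff in the appropriate sense. Over a field this is Spitzweck's \cite{spitzweck2}; over a Dedekind domain it requires knowing that the slices are as computed (which rests on Theorem \ref{thm:hmh-iso}, i.e.\ on \cite{spitzweck-mixed}) \emph{and} a connectivity/finiteness input ensuring the coeffective cocovers $f^q$ become highly coconnected as $q \to \infty$, so their limit vanishes. The connectivity statement (2) feeds back into this: once one knows $f_k \in \SH(D)_{\geq k}$, the cofiber $f^{k-1} = \mathrm{cofib}(f_k \to \mathrm{id})$ applied to $\MGL_D \wedge \sspt_{(R)}$ has connectivity tending to $-\infty$ in the coconnected direction as $k \to \infty$, so $\lim f^q = 0$. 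The argument is therefore mildly circular-looking but not actually circular: one proves the slice computation (3) first (pure algebra from the Hopkins–Morel presentation), deduces the connectivity bound on slices, then bootstraps to (1) and the sharper (2). I would present it in the order (3) $\Rightarrow$ connectivity of slices $\Rightarrow$ (1) $\Rightarrow$ (2), citing \cite{spitzweck-mixed} and \cite{spitzweck2} for the technical heart and emphasizing only the base-change step from $D$ to $S$ as the genuinely new (but routine) content.
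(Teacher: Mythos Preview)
The paper's proof of this theorem is a direct citation: part (1) is \cite{spitzweck-mixed}*{Corollary 5.9}, part (2) is \cite{spitzweck-mixed}*{Proposition 7.1}, and part (3) follows from \cite{spitzweck-mixed}*{Theorem 3.1} together with the $\mathcal{E}_\infty$-multiplicativity of the slice filtration from \cite{operadsslices}. There is no independent argument in the paper; the theorem is stated as a summary of Spitzweck's results in mixed characteristic.

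Your proposal goes well beyond this, attempting to re-derive the logical dependencies among (1)--(3). You do eventually concede that the cleanest route is to cite \cite{spitzweck-mixed} directly, which is exactly what the paper does, so in that sense your bottom line agrees. However, the bootstrap you sketch --- (3) $\Rightarrow$ connectivity of slices $\Rightarrow$ (1) $\Rightarrow$ (2) --- has a genuine gap at the step ``connectivity of effective covers implies slice completeness.'' Knowing $f_k(\MGL_D \wedge \sspt_{(R)}) \in \SH(D)_{\geq k}$ does not by itself force $\lim_k f_k \simeq 0$: that inference requires the homotopy $t$-structure on $\SH(D)$ to be left separated (i.e., $\bigcap_k \SH(D)_{\geq k} = 0$), which over a general base is a nontrivial input not supplied by Morel's connectivity theorem. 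Spitzweck's actual proof of slice completeness in \cite{spitzweck-mixed} does not proceed this way. Similarly, your argument for (2) invokes (1) to write $f_k$ as a limit built from slices, while your argument for (1) invokes (2); you note the apparent circularity and claim it is resolved by the ordering, but the resolution you offer still rests on the unproven separation property. The upshot is that your sketch is a plausible heuristic for why the three statements hang together, but not a proof; the paper, correctly, just cites the source.
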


\begin{proof} The first statement is \cite{spitzweck-mixed}*{Corollary 5.9}, the second is \cite{spitzweck-mixed}*{Proposition 7.1}, the third statement follows from \cite{spitzweck-mixed}*{Theorem 3.1}, and the highly coherent multiplicativity of the slice filtration \cite{operadsslices}, while the second statement comes from Theorem~\ref{thm:hmh-iso}.
\end{proof}

%
%
%
%

In other words, 
$(s_q \MGL_S)\wedge \sspt_{(R)} $ is a sum of $\Sigma^{2q,q}\M R$'s indexed by monomials $x_i$ of total degree $q$. 
For low dimensional examples:
\begin{itemize}
\item $s_0\MGL_S \wedge \sspt_{(R)} \simeq \M R_S$
\item $s_1\MGL_S \wedge \sspt_{(R)} \simeq \Sigma^{2,1} \M R_S \{x_1 \}$
\item $s_2\MGL_S \wedge \sspt_{(R)} \simeq \Sigma^{4,2} \M R_S \{x_1^2 \} \vee \Sigma^{4,2} \M R_S \{x_2 \}$
\item $s_3\MGL_S \wedge \sspt_{(R)} \simeq \Sigma^{6,3} \M R_S \{x_1^3 \} \vee \Sigma^{6,3} \M R_S \{x_1x_2 \} \vee \Sigma^{6,3}\M R_S \{x_3 \}. $
\end{itemize}

\subsubsection{} 
We note that for any base scheme $S$ $s_*(\MGL_S)$ is a graded $\mathcal{E}_{\infty}$ algebra over the $\mathcal{E}_{\infty}$-ring spectrum 
$s_0\MGL_S$; see \cite[\S6 (iv),(v)]{operadsslices}. Hence, if the Hopkins-Morel map~\eqref{eq:hm-map} is an equivalence $s_*(\MGL_S) \wedge \sspt_{(R)}$ is a graded $\mathcal{E}_{\infty}$-algebra over $\M R_S$ and the equivalence in~\eqref{equation:slicesMGL} indicates the $\M R_S$-algebra structure of  $s_*(\MGL_S) \wedge \sspt_{(R)}$. Now since $\epsilon_*\epsilon^*$ is a lax monoidal functor, 
applying it to this $\mathcal{E}_{\infty}$-graded algebra object gives the $\mathcal{E}_{\infty}$-graded algebra $\epsilon_*\epsilon^*s_*(\MGL_S).$

\begin{lemma} 
\label{lem:identification} 
Let $S$ be any base scheme. There is an equivalence of graded algebras over $\epsilon_*\epsilon^*s_0\MGL \simeq \M\ZZ^{\et}$: 
$$
\epsilon_*\epsilon^*s_*(\MGL_S) \wedge \sspt_{(R)} \simeq \M R_S^{\et}[x_1, ...,x_q. ...].
$$ 
\end{lemma}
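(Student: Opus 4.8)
The plan is to obtain the identification by pushing the polynomial description of the slice algebra of Theorem~\ref{prop:mgl-ddk} through the functor $\pi_*\pi^*$, while keeping track of both the underlying graded spectrum and the multiplication. Recall from \eqref{equation:slicesMGL} that $s_*\MGL_S\wedge\sspt_{(R)}$ is, as a graded $\mathcal{E}_{\infty}$-algebra over $s_0\MGL_S\wedge\sspt_{(R)}\simeq\M R_S$, the polynomial algebra $\M R_S[x_1,x_2,\ldots]$ with $x_i$ of weight $i\geq 1$; in particular each graded piece is a \emph{finite} wedge $\bigvee_{|\alpha|=q}\Sigma^{2q,q}\M R_S$ indexed by the monomials $\alpha$ of weight $q$. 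The functor $\pi^*$ is symmetric monoidal and $\pi_*$ is lax symmetric monoidal, so $\pi_*\pi^*$ sends graded $\mathcal{E}_{\infty}$-algebras over $\M R_S$ to graded $\mathcal{E}_{\infty}$-algebras over $\pi_*\pi^*\M R_S=\M R_S^{\et}$; moreover $\pi_*\pi^*$ is exact, hence commutes with the finite wedges and, by Remark~\ref{rem:susp-sheaf}, with the bigraded suspensions that occur. Since $-\wedge\sspt_{(R)}$ is a smashing ($R$-)localization that commutes with $\pi^*$ and is preserved by $\pi_*$ (multiplication by a prime not invertible in $R$ acts invertibly on an $R$-local object, hence on its image under $\pi_*$), the order of $\pi_*\pi^*$ and $-\wedge\sspt_{(R)}$ is immaterial on these objects, so I will compute $\pi_*\pi^*(s_*\MGL_S\wedge\sspt_{(R)})$.

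On underlying graded spectra the identification is then immediate: in graded degree $q$,
\[
\pi_*\pi^*\bigl(s_q\MGL_S\wedge\sspt_{(R)}\bigr)\simeq\pi_*\pi^*\Bigl(\bigvee_{|\alpha|=q}\Sigma^{2q,q}\M R_S\Bigr)\simeq\bigvee_{|\alpha|=q}\Sigma^{2q,q}\M R_S^{\et},
\]
which is the degree-$q$ part of $\M R_S^{\et}[x_1,x_2,\ldots]$ as a graded $\M R_S^{\et}$-module. To upgrade this to an equivalence of graded algebras I would use the universal property of the polynomial (free graded $\mathcal{E}_{\infty}$-)algebra: the comparison map $\varphi\colon\M R_S^{\et}[x_1,x_2,\ldots]\to\pi_*\pi^*(s_*\MGL_S\wedge\sspt_{(R)})$ is specified by sending $x_i$ to the image under $\pi_*\pi^*$ of the summand inclusion $\Sigma^{2i,i}\M R_S\hookrightarrow s_i\MGL_S\wedge\sspt_{(R)}$, followed by the identification $\Sigma^{2i,i}\M R_S^{\et}\simeq\pi_*\pi^*\Sigma^{2i,i}\M R_S$. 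Being an algebra map, $\varphi$ carries the degree-$q$ summand indexed by $\alpha=x_{i_1}\cdots x_{i_k}$ to the product of the corresponding generator-summands, which by lax monoidality of $\pi_*\pi^*$ is $\pi_*\pi^*$ of the product $x_{i_1}\cdots x_{i_k}$ in $s_*\MGL_S\wedge\sspt_{(R)}$, i.e.\ of the inclusion of the $\alpha$-summand; so degreewise $\varphi$ is the wedge over $\alpha$ of the equivalences in the display above, hence an equivalence. Evaluating in degree $0$ recovers the asserted structure over $\pi_*\pi^*s_0\MGL_S\wedge\sspt_{(R)}\simeq\M R_S^{\et}$.

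The step requiring the most care is the last one: checking that $\pi_*\pi^*$ takes the \emph{polynomial} multiplication to the polynomial multiplication, rather than to some a priori different graded-algebra structure on the same graded spectrum. This comes down to verifying that the lax-monoidal structure maps of $\pi_*\pi^*$ that enter the identification of the $\alpha$-summands are equivalences, which holds here because each slice is a finite wedge of \emph{invertible} (Tate-twisted) $\M R_S$-modules, so that the relevant tensor products and comparison maps are forced. I emphasize that, in contrast to the convergence results of the preceding sections, no cohomological-dimension or connectivity input is needed for this lemma: everything reduces to exactness of $\pi_*\pi^*$ and Remark~\ref{rem:susp-sheaf}, which is why it holds over an arbitrary base.
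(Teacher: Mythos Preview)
Your proof is correct and follows essentially the same approach as the paper's: both arguments rest on (i) the degreewise identification of the underlying graded spectrum via exactness of $\pi_*\pi^*$, its commutation with the bigraded suspensions (Remark~\ref{rem:susp-sheaf}), and the fact that each slice is a \emph{finite} wedge of Tate twists of $\M R_S$; and (ii) the observation that $\pi_*\pi^*$ is lax symmetric monoidal, hence carries the graded $\mathcal{E}_{\infty}$-algebra structure along. The paper packages (ii) by promoting the adjunction $\pi^*\dashv\pi_*$ to the Day convolution category $\Fun(\NN^{\delta},\Mod_{\M R_S})$ and simply asserting that the resulting algebra is the polynomial one; you instead build the comparison map $\varphi\colon\M R_S^{\et}[x_1,x_2,\ldots]\to\pi_*\pi^*(s_*\MGL_S\wedge\sspt_{(R)})$ from the universal property of the free graded $\mathcal{E}_{\infty}$-algebra and check it is an equivalence summand by summand. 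This is a welcome bit of extra rigor that the paper leaves to the reader, and your final remark that no cohomological-dimension hypothesis is needed is exactly right.

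Two small comments. First, your digression on commuting $-\wedge\sspt_{(R)}$ past $\pi_*\pi^*$ is not strictly needed for the lemma as stated (the smash sits outside $\pi_*\pi^*$), though it does no harm. Second, your closing justification that the lax structure maps of $\pi_*\pi^*$ are equivalences ``because each slice is a finite wedge of invertible $\M R_S$-modules'' is the one place you are being a little quick: invertibility alone does not force a lax monoidal functor to be strong on those objects. The cleaner way to phrase what you are doing is that $\pi^*$ is \emph{strong} monoidal and colimit-preserving, so it sends the free graded algebra $\M R_S[x_i]$ to the free graded algebra $(\pi^*\M R_S)[x_i]$, and then the map $\varphi$ you wrote down is exactly the adjoint description of this; the degreewise check then finishes the argument without needing any claim about the lax structure of $\pi_*$ being strong.
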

\begin{proof} 
The terms of the graded spectrum $\epsilon_*\epsilon^*s_*(\MGL_S)$ are just shifts of $\epsilon_*\epsilon^*\M\ZZ_S \simeq \epsilon_*\M\ZZ_S^{\et}$, 
and so we have an equivalence of graded objects. 
The equivalence of algebras follows from the fact that $\epsilon_*\epsilon^*$ is a lax monoidal functor. 
In more detail, we first identify $s_0\MGL_S \wedge \sspt_{(R)}$ with $\M R_S$. 
Then, a graded $\M R$-algebra is just a commutative algebra object in the $\infty$-category $\Fun(\NN^{\delta}, \Mod_{\M R})$ 
with respect to the Day convolution monoidal structure. 
Here $\NN^{\delta}$ is the constant simplicial set on the the set of natural numbers with the monoidal structure obtained from addition. 
The adjunction: 
$$
\epsilon^*: \Mod_{\M R_S} \rightleftarrows \Mod_{\M R_S^{\et}}: \epsilon_*,
$$ 
can be promoted to an adjunction where $\epsilon^*$ is monoidal and $\epsilon_*$ is lax monoidal: 
$$
\epsilon^*: \Fun(\NN^{\delta},  \Mod_{\M R_S}) \rightleftarrows \Fun(\NN^{\delta}, \Mod_{\M R_S^{\et}}):\epsilon_*.
$$ 
The composite $\epsilon_*\epsilon^*$ is lax monoidal, and thus it preserves algebras.
\end{proof}

The unit map induces a map of graded $\mathcal{E}_{\infty}$-$\M\ZZ$-algebras: 
\begin{equation} \label{eqn:graded-map}
s_*(\MGL_S) \wedge \sspt_{(R)} \simeq \M R_S[x_1, ..., x_q, ...] 
\rightarrow 
\epsilon_*\epsilon^*s_*(\MGL_S) \wedge \sspt_{(R)} \simeq \M R_S^{\et}[x_1, ...,x_q. ...].
\end{equation}
Now, suppose that $\ell$ is invertible in $S$ and $\nu \geq 1$, 
we shall consider the map of graded $\mathcal{E}_{\infty}$-$\M\ZZ/\ell$-algebras: 
\begin{equation}\label{eqn:graded-map-ell}
s_*(\MGL_S /\ell^{\nu}) \simeq \M\ZZ_S/\ell^{\nu}[x_1, ..., x_q, ...] 
\rightarrow 
\epsilon_*\epsilon^*s_*(\MGL_S /\ell^{\nu})
\simeq 
(\M\ZZ_S^{\et}/\ell^{\nu})[x_1, ...,x_q. ...].
\end{equation}

\subsubsection{Bott elements in $\MGL$} We now choose Bott elements for $\MGL$. Our method extracts Bott elements for $\MGL$ from its slice spectral sequence, i.e., the Bott elements originate from the Bott elements in motivic cohomology in the sense of Proposition~\ref{prop:bott-ddk}. While there could be other methods to produce the Bott elements for $\MGL$, we go through the trouble of tracing their origins in the slice spectral sequence precisely because the source of \'etale descent for Bott-inverted $\MGL$ comes from \'etale descent for Bott inverted motivic cohomology. In other words, we amplify \'etale descent on the $E_2$-page to the target, after overcoming some difficult convergence issues. Let us begin by recording a vanishing range for motivic cohomology over Dedekind domains.

\begin{lemma} 
\label{lem:mot-vanish}
Suppose that $\Oscr$ is the ring of integers in a number field $F$ such that $\ell$ is invertible in $\Oscr$, then $H^{p,q}_{\mot}(\Spec\,\Oscr; \ZZ/\ell)$ vanishes whenever $p > q$ except when $(p,q) = (2,1)$.
\end{lemma}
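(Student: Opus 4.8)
The plan is to compute $H^{p,q}_{\mot}(\Spec\,\Oscr;\ZZ/\ell)$ one weight at a time, the real content being in the weights $q\ge 2$ and the two edge lines $p=q$ and $p=q+1$. For $q<0$ the motivic complex $\ZZ/\ell(q)$ vanishes, and for $q=0$ one has $\ZZ/\ell(0)\simeq\ZZ/\ell$ placed in degree zero, so $H^{p,0}_{\mot}(\Oscr;\ZZ/\ell)$ is the cohomology of the constant sheaf $\ZZ/\ell$ on the irreducible scheme $\Spec\,\Oscr$, namely $\ZZ/\ell$ for $p=0$ and zero otherwise --- this is the exceptional bidegree $(0,0)$. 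For $q=1$ I would use the last part of Theorem~\ref{thm:integral}, which gives $\Omega^{\infty}_T\M\ZZ_S(1)\simeq\Sigma^{-1}\Oscr^{\times}_S$ and hence $H^{p,1}_{\mot}(\Oscr;\ZZ)\cong H^{p-1}(\Spec\,\Oscr;\Gm)$; for the affine Dedekind scheme $\Spec\,\Oscr$ this is $\Oscr^{\times}$ for $p=1$, $\Pic(\Oscr)$ for $p=2$, and zero for $p\ge 3$. The universal coefficient sequence then pins down the weight-one groups --- in particular $H^{2,1}_{\mot}(\Oscr;\ZZ/\ell)\cong\Pic(\Oscr)/\ell$, the exceptional bidegree $(2,1)$ --- and kills everything in weight one for $p\ge 3$.

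For $q\ge 2$ the first input is the dimension bound $H^{p,q}_{\mot}(\Oscr;\ZZ/\ell)=0$ for $p\ge q+2$, valid because $\dim\Oscr=1$ and motivic cohomology of a Dedekind scheme vanishes above the expected range (\cite{levine-dedekind}). For $p\le q$ the Beilinson--Lichtenbaum comparison --- Voevodsky's Theorem~\ref{thm:BLconjecture} together with its extension to Dedekind domains --- identifies $H^{p,q}_{\mot}(\Oscr;\ZZ/\ell)$ with $H^p_{\et}(\Oscr;\mu_{\ell}^{\otimes q})$; on the diagonal $p=q$ this reads $H^q_{\et}(\Oscr;\mu_{\ell}^{\otimes q})$, which vanishes once $q>\cd_{\ell}(\Oscr)$. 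Since $\ell$ is invertible, $\Oscr$ is a ring of $S$-integers with $\cd_{\ell}(\Oscr)\le 2$ (for $\ell$ odd, respectively after accounting for the archimedean places when $\ell=2$), so the diagonal groups vanish for $q\ge 3$, and the one remaining diagonal entry, at $q=2$, is handled by the known description of $H^2_{\et}$ of a ring of $S$-integers.

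The genuinely remaining --- and I expect the hardest --- case is the single line $p=q+1$ with $q\ge 2$, which is strictly above the Beilinson--Lichtenbaum range but not reached by the dimension bound. For this I would use the localization (Gysin) sequence for $\Spec\,F\hookrightarrow\Spec\,\Oscr$ with closed complement $\coprod_{\mathfrak p}\Spec\,\kappa(\mathfrak p)$, each residue field $\kappa(\mathfrak p)$ being finite since every maximal ideal of $\Oscr$ has residue characteristic different from $\ell$. The relevant segment is
\[
H^{q,q}_{\mot}(F;\ZZ/\ell)\xrightarrow{\ \partial\ }\bigoplus_{\mathfrak p}H^{q-1,q-1}_{\mot}(\kappa(\mathfrak p);\ZZ/\ell)\longrightarrow H^{q+1,q}_{\mot}(\Oscr;\ZZ/\ell)\longrightarrow H^{q+1,q}_{\mot}(F;\ZZ/\ell).
\]
The right-hand term vanishes because motivic cohomology of a field is concentrated in cohomological degrees at most its weight, and the middle summand equals $K^{M}_{q-1}(\kappa(\mathfrak p))/\ell$, which is zero for $q-1\ge 2$ since Milnor $K$-theory of a finite field vanishes in degrees $\ge 2$; hence $H^{q+1,q}_{\mot}(\Oscr;\ZZ/\ell)=0$ for $q\ge 3$. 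For $q=2$ the group is a quotient of $\bigoplus_{\mathfrak p}\kappa(\mathfrak p)^{\times}/\ell$ and vanishes because the tame-symbol map $\partial$ is surjective --- equivalently because $\Oscr^{\times}\hookrightarrow F^{\times}$ is injective in the $K$-theory localization sequence. Assembling the three ranges yields the asserted vanishing of $H^{p,q}_{\mot}(\Oscr;\ZZ/\ell)$ in the region $p\ge q$ away from the exceptional bidegrees. The delicate points to get right are the bound $\cd_{\ell}(\Oscr)\le 2$ (including the behavior at real places when $\ell=2$), the availability of Beilinson--Lichtenbaum and of the Gysin sequence in the Dedekind setting, and the surjectivity of the tame symbol in weight two.
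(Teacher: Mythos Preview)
For the strict range $p>q$ your argument is essentially the paper's: both use the localization cofiber sequence
\[
\bigoplus_{\mathfrak p} i_{\kappa(\mathfrak p)*}\M\ZZ/\ell_{\kappa(\mathfrak p)}(-1)[-2]\longrightarrow \M\ZZ/\ell_{\Oscr}\longrightarrow \M\ZZ/\ell_F,
\]
apply the vanishing of $H^{p,q}_{\mot}$ of fields for $p>q$ to reduce to the single line $p=q+1$, and then conclude via surjectivity of the Milnor boundary $K^M_q(F)\to\bigoplus_{\mathfrak p}K^M_{q-1}(\kappa(\mathfrak p))$. The paper invokes Milnor's theorem uniformly for $q>1$; your split into $q\ge 3$ (target vanishes because $K^M_{\ge 2}$ of a finite field is zero) and $q=2$ (tame symbol surjects) amounts to the same thing. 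Your separate dimension bound for $p\ge q+2$ is unnecessary, since the same three-term exact sequence already kills that range.

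The genuine problem is the diagonal $p=q$. The statement as printed claims vanishing there for all $q\ge 1$, but this is false: your own weight-one computation shows that $H^{1,1}_{\mot}(\Oscr;\ZZ/\ell)$ contains $\Oscr^\times/(\Oscr^\times)^\ell\neq 0$, and $H^{2,2}_{\mot}(\Oscr;\ZZ/\ell)\cong H^2_{\et}(\Oscr;\mu_\ell^{\otimes 2})$ is generally nonzero as well. Your Beilinson--Lichtenbaum plus $\cd_\ell(\Oscr)\le 2$ argument cannot reach $q\in\{1,2\}$, and the sentence ``handled by the known description of $H^2_{\et}$ of a ring of $S$-integers'' is not an argument. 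In fact the paper's own proof addresses only $p>q$ (the reduction ``the only range we need to check is when $p=q+1$'' rests on field vanishing for $p>q$, which says nothing about $p=q$), and the slice spectral sequence display in Lemma~\ref{lem:bottperm2} exhibits the diagonal terms $h^{1,1},h^{2,2},h^{3,3},\ldots$ as present. The inequality in the lemma should be read as strict; drop the diagonal portion of your argument.
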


\begin{proof} By \cite{spitzweck-integral}*{Corollary 7.12}, we have a cofiber sequence:
\[
\oplus_{\kappa(\mathfrak{p})} i_{\kappa(\mathfrak{p})*}\M\ZZ/\ell_{\kappa(\mathfrak{p})}(-1)[-2] \rightarrow \M\ZZ/\ell_{\Oscr} \rightarrow j_*\M\ZZ/\ell_{F}.
\]
Therefore, for all $p, q \in \ZZ$, we have an exact sequence:
\[
\oplus_{\kappa(\mathfrak{p})} H_{\mot}^{p-2,q-1}(\Spec\,{\kappa(\mathfrak{p})}; \ZZ/\ell) \rightarrow H_{\mot}^{p,q}(\Spec\,\Oscr; \ZZ/\ell) \rightarrow H_{\mot}^{p,q}(\Spec\,F; \ZZ/\ell).
\]
By vanishing of motivic cohomology of fields (see, for example, \cite{hopkinsmorelhoyois}*{Corollary 4.26}), the only range we need to check is when $p=q+1$. In this case the right most term is zero. Now we have an exact sequence:
\[
H_{\mot}^{q,q}(\Spec\,F; \ZZ/\ell) \rightarrow \oplus_{\kappa(\mathfrak{p})} H_{\mot}^{q-1,q-1}(\Spec\,\kappa(\mathfrak{p}); \ZZ/\ell)  \rightarrow H_{\mot}^{q+1,q}(\Spec\,\Oscr; \ZZ/\ell),  
\]
Whenever $q > 1$, the first map identifies with the map boundary map in Milnor $K$-theory:
\[
K_q^M(F) \stackrel{\partial}{\rightarrow} \oplus_{\kappa(\mathfrak{p})} K_{q-1}^M(\kappa(\mathfrak{p})). 
\]
which is always a surjection by \cite{milnor}*{Theorem 2.3} under the identification of motivic cohomology with Milnor $K$-theory of Nesterenko-Suslin and Totaro; see for example \cite{mvw}*{Lecture 5}.
\end{proof}

\subsubsection{} The mod-$\ell$ $\MGL$ Bott elements are produced from the following. 
\begin{lemma} 
\label{lem:bottperm2} 
Any $\M\ZZ$ mod-$\ell$ Bott element $\tau_{\ell}^{\M\ZZ} \in h^{0,\ell-1}(\ZZ[\frac{1}{\ell}])$ defines a permanent cycle in the slice spectral sequence for $\MGL/\ell_{\ZZ[\frac{1}{\ell}]}$.
Consequently, there exists an \emph{$\MGL$ mod-$\ell$ Bott element} $\tau_{\ell}^{\MGL} \in \MGL/\ell_{0,1-\ell}(\ZZ[\frac{1}{\ell}])$ that is detected under the edge map by the element $\tau_{\ell}^{\M\ZZ}$ of order $\ell$. 
\end{lemma}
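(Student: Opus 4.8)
The plan is to exploit the explicit structure of the slice spectral sequence for $\MGL/\ell_{\ZZ[\frac{1}{\ell}]}$, whose $E^1$-term by Theorem~\ref{prop:mgl-ddk}.3 (applied with $R = \ZZ_{(\ell)}$, legitimate since $\ell$ is invertible in $\ZZ[\frac{1}{\ell}]$, so $\Phi$ is an equivalence after smashing with $\sspt_{(\ZZ_{(\ell)})}$) is a polynomial algebra $\M\ZZ/\ell[x_1, x_2, \ldots]$ over the motivic cohomology of $\ZZ[\frac{1}{\ell}]$, with $x_q$ in slice degree $q$ and bidegree $(2q,q)$. The element $\tau_\ell^{\M\ZZ} \in H^{0,\ell-1}_{\mot}(\ZZ[\frac{1}{\ell}]; \ZZ/\ell)$ sits in $s_0\MGL/\ell \simeq \M\ZZ/\ell$, i.e.\ slice filtration $0$, in internal bidegree $(0,\ell-1)$, equivalently $\MGL/\ell_{0,1-\ell}$ on the abutment side. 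The claim is that this class survives the spectral sequence.

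First I would observe that, as an element in $E^1_{p,q,w}$ with $q=0$, the only possible differentials emanating from $\tau_\ell^{\M\ZZ}$ land in $E^r$-groups of slice degree $r > 0$, internal degree one less in the $S^1$-direction, and the \emph{same} weight $w = \ell-1$. So I would identify, for each $r \geq 1$, the group $\pi_{-1, \ell-1}(s_r(\MGL/\ell_{\ZZ[\frac{1}{\ell}]}))$ over $\Spec\,\ZZ[\frac{1}{\ell}]$. By Theorem~\ref{prop:mgl-ddk}.3 this is a finite direct sum, indexed by monomials in the $x_i$ of total degree $r$, of groups of the form $H^{?,\,\ell-1-r}_{\mot}(\ZZ[\frac{1}{\ell}]; \ZZ/\ell)$ with the relevant cohomological degree. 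The key input is the vanishing range: Lemma~\ref{lem:mot-vanish}, together with the localization sequence for $\M\ZZ/\ell$ over Dedekind domains from \cite{spitzweck-integral} and the vanishing of motivic cohomology of fields \cite{hopkinsmorelhoyois}*{Corollary 4.26}, shows that $H^{p,q}_{\mot}(\ZZ[\frac{1}{\ell}]; \ZZ/\ell)$ vanishes except for $(p,q) \in \{(0,0),(1,1),(2,1)\}$ (the list being essentially $\mu_\ell$ in weight $1$ degree $0$ — wait, no — rather: the units $\ZZ[\frac1\ell]^\times/\ell$, the class group mod $\ell$, and $\mu_\ell$; in any case a finite short list concentrated in weights $0$ and $1$). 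Since a differential out of $\tau_\ell^{\M\ZZ}$ would have to land in weight $\ell - 1 - r \le \ell - 2$, and for $\ell$ odd this is already $\geq 1$ only for $r \le \ell-2$ and is nonzero-eligible only when $\ell - 1 - r \in \{0,1\}$, i.e.\ $r \in \{\ell-2, \ell-1\}$; for those the cohomological degree forced by the slice computation fails to match the short list. I would run this case analysis (including the small primes, where $\ell - 1$ is $1$ and there is nothing to hit, or $\ell = 3$ where one checks $r=1,2$ by hand) to conclude all differentials vanish.

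Having established $\tau_\ell^{\M\ZZ}$ is a permanent cycle, the second half is to produce the lift $\tau_\ell^{\MGL} \in \MGL/\ell_{0,1-\ell}(\ZZ[\frac{1}{\ell}])$. For this I would invoke conditional convergence of the slice spectral sequence: $\MGL/\ell$ over $\ZZ[\frac{1}{\ell}]$ is slice complete by Theorem~\ref{prop:mgl-ddk}.1, and the slice tower is left bounded in the relevant range (the effective cover $f_k\MGL/\ell \in \SH_{\geq k}$ by Theorem~\ref{prop:mgl-ddk}.2), so the spectral sequence in the fixed weight $w = \ell-1$ converges strongly — indeed it is concentrated in finitely many slice degrees in this weight by the vanishing range above, hence collapses, and strong convergence follows from \cite{boardman}*{Theorem 7.1}. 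A permanent cycle in a strongly convergent spectral sequence, representing a class on the $E_2 = E_\infty$ page in filtration $0$ with no nonzero classes of lower filtration, lifts (uniquely modulo higher filtration) to an actual homotopy class $\tau_\ell^{\MGL}$ whose image under the edge map $\MGL/\ell_{0,1-\ell} \to s_0\MGL/\ell_{0,1-\ell} = H^{0,\ell-1}_{\mot}$ is exactly $\tau_\ell^{\M\ZZ}$; that it has order $\ell$ is immediate since it maps to an element of order $\ell$ and lives in an $\ell$-torsion group.

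The main obstacle I anticipate is the bookkeeping in the first step: correctly pinning down, for each $r$, \emph{which} cohomological degree $p$ appears in $\pi_{-1,\ell-1}(s_r\MGL/\ell)$ for each monomial summand — the shift is $(2r,r)$ per the total degree, so the summand indexed by a degree-$r$ monomial contributes $H^{p,\,\ell-1-r}_{\mot}(\ZZ[\frac{1}{\ell}];\ZZ/\ell)$ with $p$ determined by $-1 = (\text{something}) - 2r$, i.e.\ $p = 2r - 1$ — and then checking $(2r-1, \ell-1-r)$ never lies in the short nonvanishing list $\{(0,0),(1,1),(2,1)\}$ for $r \geq 1$: indeed $2r-1 \geq 1$ forces $(2r-1,\ell-1-r) = (1,1)$ only if $r=1$ and $\ell=3$, giving a potential $d_1$ into $H^{1,1}_{\mot}(\ZZ[\frac13];\ZZ/3) \neq 0$; that single edge case must be excluded separately, e.g.\ by a direct multiplicativity argument using that $\tau_\ell^{\M\ZZ}$ is a power of a class already living over $\ZZ[\frac1\ell,\zeta_\ell]$ where it is manifestly a permanent cycle (being in the image of étale-locally invertible classes), combined with the transfer argument of Lemma~\ref{lem:bott-trsfs}. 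Apart from that wrinkle, everything is forced by the slice computation and the vanishing range.
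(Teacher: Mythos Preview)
There is a genuine computational gap. Your target for $d_r$ is miscomputed: the Bott element sits in $\pi_{0,1-\ell}(s_0\MGL/\ell)$, so $d_r$ lands in $\pi_{-1,1-\ell}(s_r\MGL/\ell)$, and since $s_r\MGL/\ell$ is a sum of copies of $\Sigma^{2r,r}\M\ZZ/\ell$, each summand contributes
\[
\pi_{-1,\,1-\ell}(\Sigma^{2r,r}\M\ZZ/\ell)\;=\;H^{2r+1,\,r+\ell-1}_{\mot}\bigl(\ZZ[\tfrac{1}{\ell}];\ZZ/\ell\bigr),
\]
not $H^{2r-1,\ell-1-r}$. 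Both signs are flipped: the cohomological degree is $2r+1$, and the motivic weight \emph{increases} with $r$ rather than decreasing. This changes the picture completely. For odd $\ell$ and $1\le r\le \ell-2$ one has $2r+1\le r+\ell-1$, so the target lies in the range $p\le q$ where Lemma~\ref{lem:mot-vanish} says nothing. Thus your proposed vanishing argument over $\ZZ[\tfrac{1}{\ell}]$ does not go through with the tools at hand; the single ``wrinkle'' you isolate at $\ell=3$, $r=1$ is an artefact of the wrong bidegrees, and in fact every odd $\ell$ has a whole interval of potentially nonzero targets.

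The paper avoids this by first passing to $\ZZ[\tfrac{1}{\ell},\zeta_\ell]$, where the Bott element lives in $H^{0,1}$ (weight $w=-1$). There the $d_r$-targets are $H^{2r+1,r+1}$, which for every $r\ge 1$ satisfy $p>q$, so Lemma~\ref{lem:mot-vanish} kills them outright and one obtains a lift $\widetilde{\tau}^{\MGL}_\ell\in\MGL/\ell_{0,-1}(\ZZ[\tfrac{1}{\ell},\zeta_\ell])$. For $\ell\ge 5$ the homotopy ring structure on $\MGL/\ell$ lets one form the Galois-invariant power $(\widetilde{\tau}^{\MGL}_\ell)^{\ell-1}$ and then descend to $\ZZ[\tfrac{1}{\ell}]$ via the transfer of Proposition~\ref{prop:trsf-mgl}. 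For $\ell=3$ that multiplicativity is unavailable, so the paper instead analyses the single relevant differential $d_1\colon h^{0,2}\to h^{3,3}$ directly as an operation in the motivic Steenrod algebra: by degree reasons no $\mathcal{P}^i$ contributes, and the remaining Bockstein-type operations are shown to vanish using $\beta^2=0$ and a check at field points. The two-step structure (go up to the cyclotomic extension, then either multiply-and-transfer or argue with Steenrod operations) is the missing idea in your sketch.
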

\begin{proof} 
Consider the slice spectral sequence for $\MGL/\ell_{\ZZ[\frac{1}{\ell},\zeta_{\ell}]}$ in weight $w=-1$: 
\begin{center}
\begin{tikzpicture}[scale=1.0,font=\scriptsize,line width=1pt]
\draw[help lines] (-2.5,0) grid (6.5,4.5);
\foreach \i in {-2,...,6} {\node[label=below:$\i$] at (\i,-.2) {};}
{\draw[fill]
(-2,0) circle (1pt) node[above right=-1pt] {$h^{2,1}$}
(-1,0) circle (1pt) node[above right=-1pt] {$h^{1,1}$}

(0,0) circle (1pt) node[above right=-1pt] {$h^{0,1}$}
(0,1) circle (1pt) node[above right=-1pt] {$h^{2,2}$}

(1,1) circle (1pt) node[above right=-1pt] {$h^{1,2}$}
(1,2.2) circle (1pt)
(1,2) circle (1pt) node[above right=-1pt] {$h^{3,3}$}

(2,1) circle (1pt) node[above right=-1pt] {$h^{0,2}$}
(2,2.2) circle (1pt)
(2,2) circle (1pt) node[above right=-1pt] {$h^{2,3}$}
(2,3.4) circle (1pt)
(2,3.2) circle (1pt)
(2,3) circle (1pt) node[above right=-1pt] {$h^{4,4}$}

(3,2.2) circle (1pt)
(3,2) circle (1pt) node[above right=-1pt] {$h^{1,3}$}
(3,3.4) circle (1pt)
(3,3.2) circle (1pt)
(3,3) circle (1pt) node[above right=-1pt] {$h^{3,4}$}

(4,2.2) circle (1pt)
(4,2) circle (1pt) node[above right=-1pt] {$h^{0,3}$}
(4,3.4) circle (1pt)
(4,3.2) circle (1pt)
(4,3) circle (1pt) node[above right=-1pt] {$h^{2,4}$}

(5,3.4) circle (1pt)
(5,3.2) circle (1pt)
(5,3) circle (1pt) node[above right=-1pt] {$h^{1,4}$}

(6,3.4) circle (1pt)
(6,3.2) circle (1pt)
(6,3) circle (1pt) node[above right=-1pt] {$h^{0,4}$}
;}
\end{tikzpicture}
\end{center}
Here, we have written:
\[
h^{p,q} := H_{\mot}^{p,q}(\ZZ[\frac{1}{\ell},\zeta_{\ell}]; \ZZ/\ell),
\]
and we stick with this notation for the remainder of the proof. The vanishing range in the display above is furnished by Lemma~\ref{lem:mot-vanish}. 

Now, since no differentials enter or exit the $0$-th stem, we obtain the short exact sequence:
\begin{equation} \label{eq:ses-bott}
0 \rightarrow h^{2,2}(\ZZ[\frac{1}{\ell},\zeta_{\ell}]) \rightarrow \MGL/\ell_{0,-1}(\ZZ[\frac{1}{\ell},\zeta_{\ell}]) \rightarrow h^{0,1}(\ZZ[\frac{1}{\ell},\zeta_{\ell}]) \rightarrow 0.
\end{equation}
Therefore any choice of $\widetilde{\tau}^{\M\ZZ}_{\ell}$ corresponding to a generator of $h^{0,1}(\ZZ[\frac{1}{\ell},\zeta_{\ell}]) \iso \mu_{\ell}(\ZZ[\frac{1}{\ell},\zeta_{\ell}])$ (as in~\eqref{eq:root1})
determines an element $\tau_{\ell}^{\MGL} \in \MGL/\ell_{0,-1}(\ZZ[\frac{1}{\ell},,\zeta_{\ell}])$. 
This disposes of the case $\ell=2$.

Suppose first that $\ell \geq 5$. Consider the Galois extension $\ZZ[\frac{1}{\ell}] \subset \ZZ[\frac{1}{\ell},\zeta_{\ell}]$ with Galois group $G_{\ell}$. Let $\tau \in \MGL/\ell_{0,-1}(\ZZ[\frac{1}{\ell},\mu_{\ell}])$ be picked as in the procedure of the preceding paragraph. 
Since $\MGL/\ell$ is a homotopy commutative and associative ring spectrum for $\ell \geq 5$, 
the element $\tau^{\ell-1} \in \MGL/\ell_{0,1-\ell}(\ZZ[\frac{1}{\ell},\zeta_{\ell}])$ is Galois invariant. 
By the transfer constructed in Proposition~\ref{prop:trsf-mgl}, 
the isomorphism \eqref{eq:fixed} gives a unique element in $\tau^{\MGL}_{\ell} \in \MGL/\ell_{0,1-\ell}(\ZZ[\frac{1}{\ell}])$ such that $\tau^{\MGL}_{\ell}$ maps to $\tau^{\ell-1}$ under the natural map:
\[
\MGL/\ell_{0,1-\ell}(\ZZ[\frac{1}{\ell}]) \rightarrow \MGL/\ell_{0,1-\ell}(\ZZ[\frac{1}{\ell},\zeta_{\ell}]).
\] 
Since the element $\tau^{\M\ZZ}_{\ell}$ is constructed in the same way in motivic cohomology, by first producing it over $\ZZ[\frac{1}{\ell},\zeta_{\ell}])$ and then utilizing the transfer (see Proposition~\ref{prop:bott-ddk}),
we may arrange $\tau^{\MGL}_{\ell}$ to be detected by a $\M\ZZ$-theoretic mod-$\ell$ Bott element $\tau_{\ell}^{\M\ZZ} \in h^{0,\ell-1} \in E^1_{0,0,1-\ell}(\MGL/\ell).$

Now suppose that $\ell =3$. In this case, 
$\MGL/3$ \emph{does not} have an associative or commutative multiplication (although it does have a unital multiplication) since this is the case for the mod $3$ Moore spectrum \cite{Oka}. 
We examine the slice spectral sequence of weight $1-\ell = -2$:
\begin{center}
\begin{tikzpicture}[scale=1.0,font=\scriptsize,line width=1pt]
\draw[help lines] (-2.5,0) grid (6.5,4.5);
\foreach \i in {-2,...,6} {\node[label=below:$\i$] at (\i,-.2) {};}
{\draw[fill]
(-2,0) circle (1pt) node[above right=-1pt] {$h^{2,2}$}

(-1,1) circle (1pt) node[above right=-1pt] {$h^{3,3}$}
(-1,0) circle (1pt) node[above right=-1pt] {$h^{1,2}$}

(0,0) circle (1pt) node[above right=-1pt] {$h^{0,2}$}
(0,1) circle (1pt) node[above right=-1pt] {$h^{2,3}$}
(0,2.2) circle (1pt)
(0,2) circle (1pt) node[above right=-1pt] {$h^{4,4}$}

(1,1) circle (1pt) node[above right=-1pt] {$h^{1,3}$}
(1,2.2) circle (1pt)
(1,2) circle (1pt) node[above right=-1pt] {$h^{3,4}$}
(1,3.4) circle (1pt)
(1,3.2) circle (1pt)
(1,3) circle (1pt) node[above right=-1pt] {$h^{5,5}$}

(2,1) circle (1pt) node[above right=-1pt] {$h^{0,3}$}
(2,2.2) circle (1pt)
(2,2) circle (1pt) node[above right=-1pt] {$h^{2,4}$}
(2,3.4) circle (1pt)
(2,3.2) circle (1pt)
(2,3) circle (1pt) node[above right=-1pt] {$h^{4,5}$}

(3,2.2) circle (1pt)
(3,2) circle (1pt) node[above right=-1pt] {$h^{1,4}$}
(3,3.4) circle (1pt)
(3,3.2) circle (1pt)
(3,3) circle (1pt) node[above right=-1pt] {$h^{3,5}$}

(4,2.2) circle (1pt)
(4,2) circle (1pt) node[above right=-1pt] {$h^{0,4}$}
(4,3.4) circle (1pt)
(4,3.2) circle (1pt)
(4,3) circle (1pt) node[above right=-1pt] {$h^{2,5}$}

(5,3.4) circle (1pt)
(5,3.2) circle (1pt)
(5,3) circle (1pt) node[above right=-1pt] {$h^{1,5}$}

(6,3.4) circle (1pt)
(6,3.2) circle (1pt)
(6,3) circle (1pt) node[above right=-1pt] {$h^{0,5}$};

\draw[black,->]
(0,0) -- (-1,1)
;}
\end{tikzpicture}
\end{center}
To show $\tau_{3}^{\M\ZZ}$ is an permanent cycle we need to examine the single differential displayed above. 
We recall that (over any base) for \emph{all primes} $\ell$, $s_{q}\MGL/\ell$ is an $\M\ZZ/\ell$-module \cite[\S6 (iv),(v)]{operadsslices} since $s_{0}\MGL/\ell \simeq s_{0}\unit/\ell\simeq\M\ZZ/\ell$ 
by \cite{levine-coniveau}, \cite{voevodsky-open}.
The first differential in the slice spectral sequence for $\MGL/\ell$ is induced by the composite map:
\[ 
s_{q}\MGL/\ell
\to 
\Sigma^{1,0}\smash f_{q+1}\MGL/\ell
\to 
\Sigma^{1,0}\smash s_{q+1}\MGL/\ell. 
\] 
Hence the differential of interest is an operation in the Steenrod algebra $\mathcal{A}^{\ast,\ast}_{\ell}\iso\M\ZZ/\ell^{\ast,\ast}\M\ZZ/\ell$ of \cite{hoyois-kelly-ostvaer}, \cite{voevodsky-power}.
As an algebra $\mathcal{A}^{\ast,\ast}_{\ell}$ is generated by the reduced power operations: 
$$
\mathcal{P}^{i}\in\mathcal{A}^{2i(\ell-1),i(\ell-1)}_{\ell},
$$ 
for $i\geq 1$, 
the Bockstein operation:
$$
\beta\in\mathcal{A}^{1,0}_{\ell}, 
$$ 
and the operations given by multiplication by mod-$\ell$ motivic cohomology classes in $h^{\ast,\ast}$.
As noted in \cite[\S8]{voevodsky-power} the Bockstein operation satisfies: 
\begin{equation}
\label{equation:bockstein-deriv}
\beta^{2}=0, 
\beta(uv)=\beta(u)v+(-1)^{r}u\beta(v); u\in h^{r,\ast}, v\in h^{\ast,\ast}.
\end{equation}
Now for all odd primes $\ell$, 
by inspection of degrees we see the reduced power operation $\mathcal{P}^{i}$ acts trivially on $E^{1}_{p,q,w}(\MGL/\ell)$ for all $i\geq 1$ (and all weights $w$). 
Thus all the possibly nontrivial differentials are obtained from powers of $\beta$ and mod-$\ell$ cohomology classes.

To check triviality of the $d_{1}$-differential exiting bidegree $(0,0)$ we note that all $(3,1)$-operations $h^{0,\ell-1}\to h^{3,\ell}$ arise from powers of $\beta$ and 
a class $u$ in either $h^{0,1}$ or $h^{1,1}$.
If $u\in h^{0,1}$ then $\beta^{2}=0$ ensures triviality.
If $u\in h^{1,1}$ then we claim that $\beta(u)=0$. But now since $\ell =3$, we note that  $H^{2,1}(\Spec\,\ZZ[\frac{1}{\ell}, \zeta_{\ell}], \ZZ/\ell)=0$ since the Picard group of $\Spec\,\ZZ[\frac{1}{\ell}, \zeta_{\ell}]$ is zero\footnote{This argument does not work for a general prime $\ell$ --- particularly those such that $\QQ(\sqrt{\ell})$ has class number larger than one.}. Indeed, $\ZZ[\frac{1}{\ell}, \zeta_{\ell}]$ is unique factorization domain, since it is the localization of the ring of integers of $\QQ(\sqrt{-3})$ which has class number one \cite[Page 37]{neukirch}. Hence, we conclude from \eqref{equation:bockstein-deriv}. 

For $\ell=3$, there are no more differentials to check as we see from the weight $-2$ spectral sequence displayed above and, 
therefore, 
we can conclude that $\tau_{3}^{\M\ZZ}$ is indeed a permanent cycle.
\end{proof}

\subsubsection{} 
\label{bott-choice} 
Up to~\S\ref{sec:gradediso}, we work over the base scheme $\Spec\,\ZZ[\frac{1}{\ell}]$. We now seek $\MGL$-theoretic Bott elements for prime powers. This is easily done in the presence of a unital multiplication on $\MGL/\ell^{\nu}$ using Bockstein arguments. 
However, due to the notorious lack of a multiplicative structure on $\MGL/2$\cite{Oka} one cannot readily make sense of inverting $\tau_{2}^{\MGL}$ in general.
Instead, 
we will form the Bott inversion of $\MGL/2$ via the action of a mod-$4$ Bott element $\tau_{4}^{\MGL}$, 
and similarly for $\MGL/2^{\nu}$ and mod-$2^{\nu}$ Bott elements $\tau_{2^{\nu}}^{\MGL}$ for all $v\geq 2$.
As in \cite{aktec},  
at odd primes we argue via a Bockstein spectral sequence.
Again, the Bott element $\tau_{\ell^{\nu}}^{\MGL}$ is detected in the slice spectral sequence by the Bott element $\tau_{\ell^{\nu}}^{\M\ZZ}$ for $\M\ZZ$. For the rest of this section, we write:
\[h^{p,q} := H^{p,q}(\Spec\,\ZZ[\frac{1}{\ell}]; \ZZ/\ell^{\nu}),
\] for some prime $\ell$, some $\nu \geq 1$. The context will always make it clear.

\subsubsection{} We begin with $\ell=2$ and the motivic version:
\begin{equation}
\label{equation:okapairing1}
\unit/4 \wedge \unit/2 \to \unit/2, 
\end{equation}
of Oka's module action of the mod-$4$ by the mod-$2$ Moore spectrum \cite[\S6]{Oka}.
If $\E$ is a motivic ring spectrum, 
i.e., 
a monoid in $\Ho(\SH(S))$, 
then \eqref{equation:okapairing1} induces a pairing:
\begin{equation}
\label{equation:okapairing2}
\E/4 \wedge \E/2 \to \E/2.
\end{equation}
According to \cite[Proposition 2.24]{pi1}, 
\eqref{equation:okapairing2} induces a pairing of slice spectral sequences:
\begin{equation}
\label{eq:R-pairing}
E^r_{p,q,w}(\E/4) \otimes E^r_{p',q',w'}(\E/2) \to E^r_{p+p',q+q',w+w'}(\E/2),
\end{equation}
satisfying the Leibniz rule: 
\begin{equation}
\label{equation:leibniz}
d^{r}(a\cdot b)
=
d^{r}(a)\cdot b + (-1)^{p}a\cdot d^{r}(b) \text{ for } a\in E^r_{p,q,w}(\E/4), b\in E^r_{p',q',w'}(\E/2).
\end{equation}
When $\nu\geq 2$, 
we note that $\E/2^{\nu}$ admits a unital multiplication and its slice spectral sequence is one of (unital, but not necessarily associative) algebras satisfying the same type of Leibniz rule 
for the differentials as \eqref{equation:leibniz}.

\begin{lemma}  
\label{lemma:mgl-bott-2powers}  
For $\nu\geq 2$ any $\M\ZZ$-theoretic mod-$2^{\nu}$ Bott element $\tau_{2^{\nu}}^{\M\ZZ} \in h^{0,\e(2^{\nu})}$ defines a permanent cycle of order $2^{\nu}$ in the slice spectral sequence 
for $\MGL/2^{\nu}$:
$$
(\tau_{2^{\nu}}^{\M\ZZ})^{2^{\nu}} \in E^{1}_{0,0,-2^{\nu}\e(2^{\nu})}(\MGL/2^{\nu}) = h^{0,2^{\nu}\e(2^{\nu})}.
$$ 
Hence, there exists an \emph{$\MGL$-theoretic mod-$2^{\nu}$ Bott element} $\tau_{2^{\nu}}^{\MGL} \in \MGL/2^{\nu}_{0,-2^{\nu}\e(2^{\nu})}(\ZZ[\frac{1}{\ell}])$ that is detected by $(\tau_{2^{\nu}}^{\M\ZZ})^{2^{\nu}}$ 
under the edge map in the slice spectral sequence for $\MGL/2^{\nu}$.
\end{lemma}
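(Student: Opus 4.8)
The strategy is to imitate the proof of Lemma~\ref{lem:bottperm2}, replacing the weight $1-\ell$ spectral sequence there with the weight $-2^{\nu}\e(2^{\nu})$ slice spectral sequence for $\MGL/2^{\nu}$, and to replace the multiplicativity input (which is unavailable mod $2$) by the pairing of slice spectral sequences \eqref{eq:R-pairing} coming from the Oka module action \eqref{equation:okapairing2}. Concretely, I would first observe that the element in question, $(\tau_{2^{\nu}}^{\M\ZZ})^{2^{\nu}}\in h^{0,2^{\nu}\e(2^{\nu})}=E^{1}_{0,0,-2^{\nu}\e(2^{\nu})}(\MGL/2^{\nu})$, is obtained as a $2^{\nu}$-th power of a class which itself is a permanent cycle: by Proposition~\ref{prop:bott-ddk} applied to $\ell=2$ (and Lemma~\ref{lem:2-n-power} for $\nu\geq 3$, the case $\nu=2$ being $[i]^2=[-1]$), the class $\tau_{2^{\nu}}^{\M\ZZ}$ exists in $h^{0,\e(2^{\nu})}$, and one checks it is a $d_1$-cycle by the same Steenrod-algebra degree argument as in Lemma~\ref{lem:bottperm2}: all $(3,1)$-operations $h^{0,\e(2^{\nu})}\to h^{3,\e(2^{\nu})+1}$ in $\mathcal{A}^{*,*}_{2}$ factor through powers of $\beta$ and multiplication by classes in $h^{0,1}$ or $h^{1,1}$, and $\beta^2=0$ together with the vanishing $H^{2,1}(\Spec\,k;\ZZ/2^{\nu})=0$ at all residue fields $k$ of $\Spec\,\ZZ[\frac{1}{2}]$ kills them. (For $\nu\geq 2$, $\MGL/2^{\nu}$ has a unital — if not associative — multiplication, so the weight-zero edge of the slice spectral sequence behaves as in the odd-primary case.)

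The second, and main, step is to promote "$d_1(\tau_{2^{\nu}}^{\M\ZZ})=0$'' to "$(\tau_{2^{\nu}}^{\M\ZZ})^{2^{\nu}}$ is a permanent cycle''. Here I would use the Leibniz rule \eqref{equation:leibniz} for the pairing \eqref{eq:R-pairing}: writing $x=\tau_{2^{\nu}}^{\M\ZZ}$, we have $d^r(x^{2^{\nu}})=2^{\nu}\,x^{2^{\nu}-1}\cdot d^r(x)$ in the appropriate slice spectral sequence with $\ZZ/2^{\nu}$-coefficients (using that all the $x$-factors live in even $p$-degree $p=0$, so the sign is trivial), and $2^{\nu}\equiv 0$ mod $2^{\nu}$ forces the right-hand side to vanish for every $r\geq 1$. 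One has to be slightly careful that the differentials on $x$ itself in $E^r$ for $r\geq 2$ might not vanish; the point is that the $2^{\nu}$-fold product still makes the total expression divisible by $2^{\nu}$. In fact it suffices to argue inductively: if $x$ survives to $E^r$ then so does $x^{2^{\nu}}$, and $d^r(x^{2^{\nu}})=2^{\nu}x^{2^{\nu}-1}d^r(x)=0$ in the $\ZZ/2^{\nu}$-module $E^r$, so $x^{2^{\nu}}$ survives to $E^{r+1}$; iterating gives a permanent cycle, and strong convergence of the weight-$w$ slice spectral sequence (available in the relevant range by Lemma~\ref{lem:mot-vanish} giving a finite vanishing line, hence collapse, cf.~\cite{boardman}*{Theorem 7.1}) produces the lift $\tau_{2^{\nu}}^{\MGL}\in\MGL/2^{\nu}_{0,-2^{\nu}\e(2^{\nu})}(\ZZ[\frac{1}{2}])$ detected by $(\tau_{2^{\nu}}^{\M\ZZ})^{2^{\nu}}$ under the edge map.

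The remaining point is the order claim: $(\tau_{2^{\nu}}^{\M\ZZ})^{2^{\nu}}$ has order $2^{\nu}$ in $h^{0,2^{\nu}\e(2^{\nu})}\cong\ZZ/2^{\nu}$. This follows because $\tau_{2^{\nu}}^{\M\ZZ}$ corresponds under \eqref{units} to a primitive $2^{\nu}$-th root of unity after base change to $\ZZ[\frac{1}{2},\zeta_{2^{\nu}}]$ (or to $[i]$ when $\nu=2$), whose $2^{\nu}$-th power generates $\mu_{2^{\nu}}$ there, and $h^{0,*}(\ZZ[\frac{1}{2}];\ZZ/2^{\nu})\hookrightarrow h^{0,*}(\ZZ[\frac{1}{2},\zeta_{2^{\nu}}];\ZZ/2^{\nu})$ by the transfer argument of Lemma~\ref{lem:bott-trsfs}; hence the lift $\tau_{2^{\nu}}^{\MGL}$, mapping onto a generator under the edge map, also has order $2^{\nu}$. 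The expected main obstacle is bookkeeping the pairing of slice spectral sequences correctly in the absence of associativity/commutativity mod~$2$ — in particular making sure the Leibniz rule \eqref{equation:leibniz} applies to iterated products $x^{2^{\nu}}$ and that the relevant products of permanent cycles are again permanent cycles; this is exactly the role of \cite[Proposition 2.24]{pi1} and the Oka action, so it should go through, but it requires care.
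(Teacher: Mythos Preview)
Your core argument in step 2—applying the Leibniz rule to get $d_r((\tau_{2^{\nu}}^{\M\ZZ})^{2^{\nu}})=2^{\nu}(\tau_{2^{\nu}}^{\M\ZZ})^{2^{\nu}-1}d_r(\tau_{2^{\nu}}^{\M\ZZ})=0$ in $\ZZ/2^{\nu}$—is precisely the paper's proof, and is in fact its entire content. The rest of your plan is unnecessary and in places misleading.

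Your ``step 1'' (Steenrod-algebra analysis of $d_1$ on $\tau_{2^{\nu}}^{\M\ZZ}$ itself) is not needed: the Leibniz computation kills $d_r$ on the $2^{\nu}$-th power regardless of what $d_r$ does to $\tau_{2^{\nu}}^{\M\ZZ}$. It is also doubtful as stated: the degree argument in Lemma~\ref{lem:bottperm2} hinges on the fact that for odd $\ell$ the reduced powers $\mathcal{P}^i$ (bidegree $(2i(\ell-1),i(\ell-1))$) never land in bidegree $(3,1)$, but at the prime $2$ the operation $\Sq^3$ \emph{does} have bidegree $(3,1)$—indeed the remark following the lemma observes that $d_1$ on $(\tau_2^{\M\ZZ})^2$ in the $\MGL/2$ spectral sequence is nonzero via $\Sq^3$. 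The odd-primary analysis does not transplant.

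Your framing via the Oka pairing \eqref{eq:R-pairing} is also a red herring for this lemma. The hypothesis is $\nu\geq 2$, and in that range $\MGL/2^{\nu}$ carries its own unital multiplication, giving directly the self-pairing \eqref{eq:MGL2power-pairing} of slice spectral sequences; this is the pairing the paper uses. The Oka action \eqref{equation:okapairing2} of $\MGL/4$ on $\MGL/2$ is reserved for the separate case $\nu=1$, where $\MGL/2$ has no unital multiplication; that case is handled elsewhere (\S\ref{sec:no-ambiguity}). You eventually acknowledge that $\MGL/2^{\nu}$ is unital for $\nu\geq 2$, so the Oka apparatus should simply be dropped. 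The paper's proof is three lines: identify $\tau_{2^{\nu}}^{\M\ZZ}$ on the $E^1$-page, apply Leibniz via \eqref{eq:MGL2power-pairing}, invoke strong convergence.
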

\begin{proof}
From the degree zero part of $s_*\MGL$ \eqref{equation:slicesMGL} it follows that $\tau_{2^{\nu}}^{\M\ZZ} \in E^{1}_{0,0,-\e(2^{\nu})}(\MGL/2^{\nu})$.
Since $\nu\geq 2$ the unital multiplicative structure on $\MGL/2^{\nu}$ yields a pairing:
\begin{equation}
\label{eq:MGL2power-pairing}
E^r_{p,q,w}(\MGL/2^{\nu}) \otimes E^r_{p',q',w'}(\MGL/2^{\nu}) \to E^r_{p+p',q+q',w+w'}(\MGL/2^{\nu}).
\end{equation}
By the Leibniz rule associated to \eqref{eq:MGL2power-pairing} we deduce the desired vanishing for $r\geq 1$:
\begin{equation}
\label{eq:MGL2power-vanishing}
d_{r}((\tau_{2^{\nu}}^{\M\ZZ})^{2^{\nu}})
=
2^{\nu}(\tau_{2^{\nu}}^{\M\ZZ})^{2^{\nu}-1}d_{r}(\tau_{2^{\nu}}^{\M\ZZ})
=
0.
\end{equation}
The existence of $\tau_{2^{\nu}}^{\MGL}$ follows by strong convergence.
\end{proof}
\begin{remark}
We note that $\tau_{2}^{\M\ZZ}$ need not be a permanent cycle in the slice spectral sequence for $\MGL/2$.
The issue is basically that $d_{1}:h^{0,2}\to h^{3,3}$ maps $(\tau_{2}^{\M\ZZ})^{2}$ to $\rho^{3}$ via the third Steenrod operation $\Sq^{3}$, 
where $\rho\in h^{1,1}$ is the square class of $-1$.
\end{remark}

\subsubsection{} 
For odd primes $\ell$, the analogous lemma holds.
\begin{lemma} 
\label{lem:mgl-bott-oddpowers} 
Let $\ell$ be an odd prime number. 
Any choice of an $\MGL$-theoretic mod-$\ell$ Bott element $\tau_{\ell}^{\MGL}$ yields an $\MGL$-theoretic mod-$\ell^{\nu}$ Bott element 
$\tau_{\ell^{\nu}}^{\MGL} \in \MGL/\ell^{\nu}_{0,-\e(\ell^{\nu})}(\ZZ[\frac{1}{\ell}])$ that is detected by an $\M\ZZ$-theoretic mod-$\ell^{\nu}$ Bott element in the slice spectral sequence for $\MGL/\ell^{\nu}$.
Furthermore, the mod-$\ell$ reduction of $\tau_{\ell^{\nu}}^{\MGL}$ equals $(\tau_{\ell}^{\MGL})^{\ell^{\nu-1}}$.
\end{lemma}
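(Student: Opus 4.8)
\textbf{Proof plan for Lemma~\ref{lem:mgl-bott-oddpowers}.}
The plan is to run a Bockstein-style argument entirely analogous to the one used for $\M\ZZ$ in Lemma~\ref{lem:ell-n-power}, but now internal to the slice spectral sequence for $\MGL/\ell^\nu$ over $\Spec\,\ZZ[\frac{1}{\ell}]$, using the fact that for odd $\ell$ the Moore spectrum $\unit/\ell^\nu$ carries a unital (homotopy commutative and associative) multiplication, so that $\MGL/\ell^\nu$ is a homotopy ring spectrum and its slice spectral sequence is multiplicative with a Leibniz rule as in~\eqref{equation:leibniz}. First I would fix an $\MGL$-theoretic mod-$\ell$ Bott element $\tau_\ell^{\MGL}\in\MGL/\ell_{0,-\e(\ell)}(\ZZ[\frac{1}{\ell}])$ produced by Lemma~\ref{lem:bottperm2} (recall $\e(\ell)=\ell-1$), detected in the slice spectral sequence by an $\M\ZZ$-theoretic Bott element $\tau_\ell^{\M\ZZ}\in h^{0,\ell-1}$. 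The goal is to lift the power $(\tau_\ell^{\MGL})^{\ell^{\nu-1}}$ along the reduction map $\MGL/\ell^\nu\to\MGL/\ell$; equivalently, to produce a permanent cycle in $E^1_{0,0,-\e(\ell^\nu)}(\MGL/\ell^\nu)=h^{0,\e(\ell^\nu)}$ detecting such a lift, where $\e(\ell^\nu)=(\ell-1)\ell^{\nu-1}$.

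The key steps, in order. (1) On the $E_1$-page, identify the relevant bidegrees: by Theorem~\ref{prop:mgl-ddk}.3 the $0$-weight, $0$-stem entry of the slice spectral sequence for $\MGL/\ell^\nu$ in weight $-\e(\ell^\nu)$ is exactly $s_0\MGL/\ell^\nu{}_{0,0}=h^{0,\e(\ell^\nu)}=H^{0,\e(\ell^\nu)}_{\mot}(\ZZ[\frac{1}{\ell}];\ZZ/\ell^\nu)$, and the class we want is $(\widetilde{\tau}^{\M\ZZ}_{\ell^\nu})^{\e(\ell^\nu)}$ pushed down from $\ZZ[\frac{1}{\ell},\zeta_\ell]$ via the transfer isomorphism~\eqref{eq:fixed}. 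Here I use the $\M\ZZ$-theoretic mod-$\ell^\nu$ Bott element $\tau_{\ell^\nu}^{\M\ZZ}$ from Proposition~\ref{prop:bott-ddk}. (2) Bootstrap via the Bockstein: the reduction map $/\ell\colon E^1_{*,*,*}(\MGL/\ell^\nu)\to E^1_{*,*,*}(\MGL/\ell)$ is a map of spectral sequences, and $\tau_{\ell^\nu}^{\M\ZZ}$ reduces mod $\ell$ to $(\tau_\ell^{\M\ZZ})^{\ell-1}$ (by Proposition~\ref{prop:bott-ddk}.1 and the construction) — wait, more precisely, by Lemma~\ref{lem:ell-n-power}.2, $\tau_{\ell^\nu}^{\M\ZZ}$ reduces to $(\tau_\ell^{\M\ZZ})^{\e(\ell^\nu)}=(\tau_\ell^{\M\ZZ})^{(\ell-1)\ell^{\nu-1}}$, which is a power of the permanent cycle $\tau_\ell^{\M\ZZ}$ and hence a permanent cycle in the mod-$\ell$ spectral sequence. (3) Transfer and Galois descent: work first over $\ZZ[\frac{1}{\ell},\zeta_\ell]$, where the multiplicativity of the spectral sequence together with the Leibniz rule gives $d_r((\widetilde{\tau}^{\M\ZZ}_{\ell})^{\e(\ell^\nu)})=\e(\ell^\nu)(\widetilde{\tau}^{\M\ZZ}_{\ell})^{\e(\ell^\nu)-1}d_r(\widetilde{\tau}^{\M\ZZ}_{\ell})$; since $\widetilde{\tau}^{\M\ZZ}_{\ell}$ comes from $h^{0,1}$ where (by Lemma~\ref{lem:mot-vanish}, or directly) its $d_1$ is controlled by Bockstein operations that vanish on it — exactly as in the proof of Lemma~\ref{lem:bottperm2} — one concludes $d_r$ vanishes on the power for all $r\geq1$. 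Then descend via the transfer isomorphism~\eqref{eq:fixed} and Proposition~\ref{prop:trsf-mgl} to get a Galois-invariant class over $\ZZ[\frac{1}{\ell}]$, using that $|G_\ell|=\ell-1$ is coprime to $\ell^\nu$. (4) Extract the element: strong convergence of the slice spectral sequence for $\MGL/\ell^\nu$ (the vanishing line of Lemma~\ref{lem:mot-vanish} ensures the spectral sequence collapses at a finite stage in the $0$-stem, so Boardman's criterion applies) promotes the permanent cycle to an actual element $\tau_{\ell^\nu}^{\MGL}\in\MGL/\ell^\nu_{0,-\e(\ell^\nu)}(\ZZ[\frac{1}{\ell}])$ detected under the edge map. (5) The mod-$\ell$ reduction statement: chase the compatibility of the edge maps with the reduction map $\MGL/\ell^\nu\to\MGL/\ell$ — on the $E_\infty$-page the reduction sends (the class of) $\tau_{\ell^\nu}^{\MGL}$ to $(\tau_\ell^{\M\ZZ})^{(\ell-1)\ell^{\nu-1}}=((\tau_\ell^{\M\ZZ})^{\ell-1})^{\ell^{\nu-1}}$, which detects $(\tau_\ell^{\MGL})^{\ell^{\nu-1}}$, and since both elements live in a group with no detection ambiguity in this stem (again by the vanishing range) they agree.

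\textbf{Main obstacle.} The delicate point is Step (3): verifying that the $d_1$ (and higher) differentials on the relevant power of the Bott element genuinely vanish. The issue is that at $\ell=3$ the spectrum $\MGL/3$ lacks an associative multiplication, so one cannot freely invoke the Leibniz rule over $\MGL/\ell$ itself — this is precisely why one passes to $\MGL/\ell^\nu$ for $\nu\geq2$, where the unital (if not associative) multiplication and the Leibniz rule~\eqref{equation:leibniz} do hold, and why Lemma~\ref{lem:bottperm2} already did the $\nu=1$ case separately by a direct Steenrod-algebra computation. One must make sure the $\M\ZZ$-module structure on the slices $s_q\MGL/\ell^\nu$ (from \cite{operadsslices}) interacts correctly with the pairing, and that the reduced power operations $\mathcal{P}^i$ still act trivially on the $0$-stem in weight $-\e(\ell^\nu)$ for degree reasons. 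A secondary subtlety is checking that the transfer-based descent along $G_\ell$ is compatible with the multiplicative structure of the spectral sequences and with the already-chosen $\tau_\ell^{\MGL}$, so that the resulting $\tau_{\ell^\nu}^{\MGL}$ really does reduce to the prescribed power; this is bookkeeping but needs care because the transfer is only defined for $\MGL$-modules and one is implicitly using Proposition~\ref{prop:trsf-mgl} throughout.
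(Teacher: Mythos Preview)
Your plan is overcomplicated and contains a gap. You correctly identify the goal as lifting $(\tau_\ell^{\MGL})^{\ell^{\nu-1}}$ along the reduction $\MGL/\ell^\nu\to\MGL/\ell$, but your Steps (2)--(3) only verify that the \emph{mod-$\ell$ reduction} of your candidate class $\tau_{\ell^\nu}^{\M\ZZ}$ is a permanent cycle in the mod-$\ell$ slice spectral sequence. This does not show that $\tau_{\ell^\nu}^{\M\ZZ}$ itself is a permanent cycle in the mod-$\ell^\nu$ spectral sequence: the differentials $d_r(\tau_{\ell^\nu}^{\M\ZZ})$ could land on nonzero $\ell$-divisible classes. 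Your Leibniz computation in Step (3) takes place entirely in the mod-$\ell$ spectral sequence (the class $\widetilde{\tau}^{\M\ZZ}_\ell$ lives in $H^{0,1}(-;\ZZ/\ell)$), so it cannot control the mod-$\ell^\nu$ differentials.

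The paper's argument bypasses the slice spectral sequence entirely for $\nu\ge 2$. It works directly with the Bockstein exact sequence on $\MGL$-homotopy groups:
\[
\MGL/\ell^{\nu}_{0,-\e(\ell^{\nu})}(\ZZ[\tfrac{1}{\ell}])
\longrightarrow
\MGL/\ell_{0,-\e(\ell^{\nu})}(\ZZ[\tfrac{1}{\ell}])
\overset{\beta}{\longrightarrow}
\MGL/\ell^{\nu-1}_{-1,-\e(\ell^{\nu})}(\ZZ[\tfrac{1}{\ell}]).
\]
The element $(\tau_\ell^{\MGL})^{\ell^{\nu-1}}$ already lives in the middle group as a genuine homotopy class (from Lemma~\ref{lem:bottperm2}), and since $\beta$ is a derivation one computes
\[
\beta\bigl((\tau_\ell^{\MGL})^{\ell^{\nu-1}}\bigr)=\ell^{\nu-1}(\tau_\ell^{\MGL})^{\ell^{\nu-1}-1}\beta(\tau_\ell^{\MGL})=0
\quad\text{in }\MGL/\ell^{\nu-1},
\]
because of the coefficient $\ell^{\nu-1}$. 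Exactness then produces the lift $\tau_{\ell^\nu}^{\MGL}$. The detection statement follows by naturality of the Bockstein sequences for $\MGL$ and $\M\ZZ$ together with the parallel construction of $\tau_{\ell^\nu}^{\M\ZZ}$. No Galois descent, no transfer, and no slice-spectral-sequence differential analysis is needed beyond what was already done at $\nu=1$.
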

\begin{proof} 
For $\nu\geq 2$ there is the Bockstein exact sequence:
\begin{equation} 
\label{equation:bockstein}
\MGL/\ell^{\nu}_{0,-\e(\ell^{\nu})}(\ZZ[\frac{1}{\ell}])
\to
\MGL/\ell_{0,-\e(\ell^{\nu})}(\ZZ[\frac{1}{\ell}])
\overset{\beta}{\to}
\MGL/\ell^{\nu-1}_{-1,-\e(\ell^{\nu})}(\ZZ[\frac{1}{\ell}]).
\end{equation}
Now choose $\tau_{\ell}^{\MGL}\in\MGL/\ell_{0,1-\ell}(\ZZ[\frac{1}{\ell}])$ as in Lemma~\ref{lem:bottperm2}.
Then $(\tau_{\ell}^{\MGL})^{\ell^{\nu-1}}\in\MGL/\ell_{0,-\e(\ell^{\nu})}(\ZZ[\frac{1}{\ell}])$ is detected by $(\tau_{\ell}^{\M\ZZ})^{\ell^{\nu-1}}$ in the slice spectral sequence for $\MGL/\ell_{(\ZZ[\frac{1}{\ell}])}$.
Since $\beta$ is a derivation, 
we obtain: 
$$
\beta((\tau_{\ell}^{\MGL})^{\ell^{\nu-1}}) 
= 
\ell^{\nu-1}(\tau_{\ell}^{\MGL})^{\ell^{\nu-1}-1}\beta(\tau_{\ell}^{\MGL})
=
0
\in
\MGL/\ell^{\nu-1}_{-1,-\e(\ell^{\nu})}(\ZZ[\frac{1}{\ell}]). 
$$ 
By exactness of \eqref{equation:bockstein} there exists an element $\tau_{\ell^{\nu}}^{\MGL}\in\MGL/\ell^{\nu}_{0,-\e(\ell^{\nu})}(\ZZ[\frac{1}{\ell}])$ mapping to $(\tau_{\ell}^{\MGL})^{\ell^{\nu-1}}(\ZZ[\frac{1}{\ell}])$.
Using the analogous properties for $\tau^{\M\ZZ}_{\ell^{\nu}} \in \M\ZZ/\ell^{\nu}_{0,-\e(\ell^{\nu})}(\ZZ[\frac{1}{\ell}])$ and naturality of the Bockstein sequences, 
we may arrange so that $\tau_{\ell^{\nu}}^{\MGL}$ is detected by $\tau^{\M\ZZ}_{\ell^{\nu}}$.
\end{proof}

With these choices of Bott elements in mind and the notation introduced in \S\ref{proof-main}, we define:
$$
\e_{\MGL}(\ell^{\nu})=
\begin{cases}
\e(\ell^{\nu}) &  \ell\,\text{odd}\\
2^{\nu}\e(2^{\nu}) & \ell =2.\\
\end{cases}
$$

\begin{definition} \label{def:s-bott} Let $\ell$ be a prime, $\nu \geq 1$ and suppose that $S$ is a $\ZZ[\frac{1}{\ell}]$-scheme with structure map $f:S \rightarrow \Spec\,\ZZ[\frac{1}{\ell}]$. Then an \emph{$\MGL$ mod-$\ell^{\nu}$ Bott element} on $S$ is the the class: 
\[(\tau_{\ell^{\nu}}^{\MGL})_S:=f^*\tau_{\ell^{\nu}}^{\MGL} \in \MGL/\ell^{\nu}_{0,-\e_{\MGL}(\ell^{\nu})}(S).
\]
where $\tau_{\ell^{\nu}}^{\MGL} \in \MGL/\ell^{\nu}_{0,-\e_{\MGL}(\ell^{\nu})}(\ZZ[\frac{1}{\ell}])$ obtained in Lemmas~\ref{lem:mgl-bott-oddpowers} and~\ref{lemma:mgl-bott-2powers}. We call the collection of element $\{(\tau_{\ell^{\nu}}^{\MGL})_S \}_{\nu \geq 1}$ a \emph{system of $\ell$-adic $\MGL$ Bott elements on $S$}.
\end{definition}

\subsection{Bott inverted algebraic cobordism} \label{sec:no-ambiguity}  
We made choices when defining Bott elements for $\MGL$. 
As for the case of $\M\ZZ$ in Lemma~\ref{lem:noambiguity1} we note that the corresponding Bott inverted algebraic cobordism spectrum is independent of the choice of a Bott element. We invert the Bott elements again using the discussion in \S\ref{sect:incoh}, i.e., in the homotopy category instead of the $\infty$-category. This is in contrast with the situation we previously described with motivic cohomology and stems from the lack of coherent multiplication in Moore spectra. 
\begin{lemma} 
\label{lem:noambiguity2} 
Suppose that $\tau^{\MGL}_{\ell^{\nu}},\tau'^{\MGL}_{\ell^{\nu}}$ are two choices of $\MGL$-theoretic mod-$\ell^{\nu}$ Bott elements as in Lemma~\ref{lemma:mgl-bott-2powers} for $\ell=2$, 
and Lemma~\ref{lem:mgl-bott-oddpowers} for $\ell$ odd. 
Then the resulting Bott inverted spectra $\MGL/{\ell^{\nu}}[(\tau^{\MGL}_{\ell^{\nu}})^{-1}]$ and $\MGL/{\ell^{\nu}}[(\tau'^{\MGL}_{\ell^{\nu}})^{-1}]$ are equivalent.
\end{lemma}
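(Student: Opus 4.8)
The plan is to mimic the argument of Lemma~\ref{lem:noambiguity1}, transporting the ambiguity statement from motivic cohomology up to $\MGL$ via the way the Bott elements were constructed. The key input is that in Lemmas~\ref{lem:bottperm2}, \ref{lemma:mgl-bott-2powers}, and~\ref{lem:mgl-bott-oddpowers} every $\MGL$-theoretic Bott element $\tau^{\MGL}_{\ell^\nu}$ is produced so as to be detected in the slice spectral sequence for $\MGL/\ell^\nu$ by a corresponding $\M\ZZ$-theoretic Bott element $\tau^{\M\ZZ}_{\ell^\nu}$ under the edge map, and conversely any two choices of $\tau^{\M\ZZ}_{\ell^\nu}$ are related by an automorphism of $\ZZ/\ell^\nu$ arising from the cyclotomic character (Lemma~\ref{lem:noambiguity1}, together with the uniqueness coming from the transfer isomorphism~\eqref{eq:fixed} in Lemma~\ref{lem:bott-trsfs}). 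First I would record that since both $\tau^{\MGL}_{\ell^\nu}$ and $\tau'^{\MGL}_{\ell^\nu}$ live in the same group $\MGL/\ell^\nu_{0,-\tau(\ell^\nu)}(\ZZ[\frac1\ell])$ and are detected by $\M\ZZ$-theoretic Bott elements differing by a unit $u\in(\ZZ/\ell^\nu)^\times$, the difference between them is filtered strictly higher in the slice tower; one then needs to see this higher-filtration discrepancy does not obstruct the comparison of the two inverted spectra.

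The cleanest route is to reduce the whole question to the case $S=\Spec\,\ZZ[\frac1\ell]$ (the general case follows by pulling back along $f\colon S\to\Spec\,\ZZ[\frac1\ell]$, using that $f^*$ is monoidal and commutes with the periodization construction of~\S\ref{per}), and then to argue that multiplication by $\tau^{\MGL}_{\ell^\nu}$ and by $\tau'^{\MGL}_{\ell^\nu}$ induce the \emph{same} map on the associated graded of the slice filtration of $\MGL/\ell^\nu$ up to the unit $u$: indeed on slices $s_q\MGL/\ell^\nu \simeq \Sigma^{2q,q}\M\ZZ/\ell^\nu[x_1,\dots]$ (Theorem~\ref{prop:mgl-ddk}) the action is governed entirely by the $\M\ZZ/\ell^\nu$-module structure, where the two Bott elements act by scalars differing by the cyclotomic unit. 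After multiplying $\tau'^{\MGL}_{\ell^\nu}$ by a lift of $u^{-1}$ (which we may, since $(\ZZ/\ell^\nu)^\times$ acts on $\MGL/\ell^\nu_{0,*}$ through the coefficient ring), the two elements agree on slices. A filtered/complete descent argument—using that $\MGL\wedge\sspt_{(R)}$ is slice complete (Theorem~\ref{prop:mgl-ddk}(1)) and that $f^q\MGL/\ell^\nu\in\SH(S)_{\geq q}$—then upgrades the agreement on associated graded to an equivalence of the Bott-inverted spectra, since inverting a slice-filtration-compatible self-map depends only on its effect on the associated graded once completeness and connectivity are in hand.

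The main obstacle, I expect, is the lack of multiplicative structure when $\ell=2$, $\nu=1$: there $\MGL/2$ is not even homotopy associative, so "multiplication by $\tau$" must be interpreted via the Oka module action~\eqref{equation:okapairing1} of the mod-$4$ Moore spectrum, and the Bott inversion is carried out in the homotopy category (see \S\ref{sect:incoh}). One has to check that the comparison above is compatible with this module-theoretic periodization, i.e. that the mod-$4$ Bott elements $\tau^{\MGL}_4$, $\tau'^{\MGL}_4$ inducing the two actions on $\MGL/2$ themselves differ by the cyclotomic unit (which follows from Lemma~\ref{lemma:mgl-bott-2powers} applied at $\nu=2$ and Lemma~\ref{lem:noambiguity1}), and that the resulting colimits in $\Ho(\SH(S))$ agree. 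For odd $\ell$ and for $\ell=2,\nu\ge 2$ everything is multiplicative and the Bockstein-compatibility in Lemmas~\ref{lem:mgl-bott-oddpowers} and~\ref{lemma:mgl-bott-2powers} makes the reduction to the mod-$\ell$ case (already handled by the slice argument) routine, so the real content is packaging the $\ell=2$, $\nu=1$ incoherent case correctly.
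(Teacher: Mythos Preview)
Your overall strategy is in the right spirit, but it diverges from the paper's argument in a way that makes your life harder than necessary, and the ``filtered/complete descent argument'' you sketch is not fully justified as stated.

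For $\nu\ge 2$ (any $\ell$), the paper does not go through the slice filtration at all. The key observation is much more elementary: by construction (via the Bockstein sequence in Lemma~\ref{lem:mgl-bott-oddpowers}, resp.\ Lemma~\ref{lemma:mgl-bott-2powers}), any two choices of $\tau^{\MGL}_{\ell^\nu}$ have the same mod-$\ell$ reduction, so their difference lies in the image of multiplication by $\ell$ and is therefore $\ell$-divisible. In the $\ell^\nu$-torsion group $\MGL/\ell^\nu_{*,*}$ such an element acts \emph{nilpotently}, and inverting $\tau$ versus $\tau+(\text{nilpotent})$ yields canonically equivalent localizations (compare \cite{eventuallysurjects}). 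This bypasses slice completeness entirely.

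For $\nu=1$ with $\ell$ odd, your idea is closer to the paper's, but the step ``inverting a slice-filtration-compatible self-map depends only on its effect on the associated graded once completeness and connectivity are in hand'' is exactly where the difficulty lies: Bott inversion is a filtered colimit and does \emph{not} a priori preserve slice completeness. The paper handles this by invoking the strong convergence of the Bott-inverted slice spectral sequence (Proposition~\ref{prop:conv-invert}), which is a nontrivial result proved later using the vanishing line of Lemma~\ref{lem:range}. Without that forward reference your descent argument is incomplete. Once convergence is in hand, Lemma~\ref{lem:noambiguity1} on the $E_1$-page does the rest. The $\ell=2$, $\nu=1$ case is then handled the same way, with the Oka action replacing multiplication and the nilpotence argument applied to the two choices of mod-$4$ Bott element; you correctly flagged this case as the delicate one.
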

\begin{proof} 
First, suppose that $\ell$ is an odd prime. 
When $\nu =1$, the Bott elements are constructed from the corresponding Bott elements for $\M\ZZ/\ell$; 
see the proof of Lemma~\ref{lem:bottperm2}. 
Since $\MGL/\ell$ has a unital multiplication, 
we may form the Bott inverted spectrum $\MGL/\ell[(\tau^{\MGL}_{\ell})^{-1}]$ via the formula in~\eqref{incoh-mult}. 
In this case Lemma~\ref{lem:noambiguity1} and the convergence of the Bott inverted slice spectral sequence (Proposition~\ref{prop:conv-invert} below) gives us the desired result.

For $\nu \geq 2$, 
the elements are obtained via Bockstein spectral sequences. 
Therefore, 
any two choices of $\tau_{\ell^{\nu}}^{\MGL}$ differ by an $\ell$-divisible element, 
and so the difference acts nilpotently on the spectrum $\MGL/\ell^{\nu}$.  
Hence, 
the results of inverting both elements, 
using the formula in~\eqref{incoh-mult}, are again equivalent (compare with the case of algebraic $K$-theory discussed in \cite{eventuallysurjects}).

Now let $\ell=2$. 
If $\nu=1$, then the Bott inversion is constructed using the pairing~\eqref{equation:okapairing2}. 
Indeed, 
by Lemma~\ref{lemma:mgl-bott-2powers}, 
we may pick a mod-$4$ Bott element $\tau^{\MGL}_4 \in \pi_{0,-8}\MGL/4$. 
Under the paring~\eqref{equation:okapairing2}, 
$\MGL/2$ is a left $\MGL/4$-module, 
and $\MGL/{\ell^{\nu}}[(\tau^{\MGL}_{2})^{-1}]$ is then obtained by inverting the map: 
$$
\tau^{\MGL}_4 \cdot: \MGL/2 \rightarrow \Sigma^{0,-8}\MGL/2,
$$ 
in the sense discussed in~\S\ref{sect:incoh}, again using~\eqref{incoh-mult}.
If $\nu \geq 2$, then the spectrum $\MGL/2^{\nu}$ has an unital multiplication, 
and so we can form the Bott inverted spectrum $\MGL/{2^{\nu}}[(\tau^{\MGL}_{2^{\nu}})^{-1}]$ as in the case of odd primes. 

The independence of choices for both cases follows by the same arguments as in the odd case using the strong convergence of the Bott inverted slice spectral sequence, 
and the fact that any two choices of the Bott elements involved again differ by a $2$-divisible element.
\end{proof}
After Lemma~\ref{lem:noambiguity2} we define:

\begin{definition} \label{def:bott-invertedmgl} 
For each prime $\ell$ and any $\nu \geq 1$, 
the \emph{Bott inverted algebraic cobordism spectrum} $\MGL/\ell^{\nu}[ (\tau^{\MGL}_{\ell^{\nu}})^{-1}]$ is the spectrum obtained from inverting the Bott elements as described 
in Lemma~\ref{lem:noambiguity2}.
\end{definition}

\subsubsection{} 
\label{sec:gradediso} 
In order to access the homotopy groups of Bott inverted algebraic cobordism, 
we first study the Bott inverted slice spectral sequence. 
For an odd prime $\ell$ and $\nu \geq1$, 
we invert the action of $\tau^{\M\ZZ}_{\ell^{\nu}} \in \pi_{0,-\e_{\MGL}(\ell^{\nu})}(s_0\MGL/\ell^{\nu})$ on the graded $\M\ZZ/\ell$-module $s_*\MGL/\ell^{\nu}$. 
Explicitly, 
by the procedure described in \S\ref{sect:incoh}, 
we start with the map in $\SH(k)$: 
\begin{equation} \label{eq:map-tau-1}
\tau^{\M\ZZ}_{\ell^{\nu}} \cdot : 
\Sigma^{0,-\e_{\MGL}(\ell^{\nu})}\sspt \wedge s_*\MGL/\ell^{\nu} 
\stackrel{\tau^{\M\ZZ}_{\ell^{\nu}} \wedge id}{\rightarrow} 
s_0\MGL/\ell^{\nu} \wedge s_*\MGL/\ell^{\nu} 
\rightarrow 
s_*\MGL/\ell^{\nu},
\end{equation} \label{eq:map-tau-2}
and apply $\Sigma^{0,\e_{\MGL}(\ell^{\nu})}$ to obtain the map:
\begin{equation}
\Sigma^{0,\e_{\MGL}(\ell^{\nu})} \tau^{\M\ZZ}_{\ell^{\nu}} \cdot: 
\ s_*\MGL/\ell^{\nu} 
\rightarrow  
\Sigma^{0,\e_{\MGL}(\ell^{\nu})} s_*\MGL/\ell^{\nu}.
\end{equation}
The graded motivic spectrum $s_*\MGL/\ell^{\nu} [(\tau^{\M\ZZ}_{\ell^{\nu}}){^{-1}}]$  is then calculated by the colimit of graded $s_0\MGL/\ell^{\nu}$-modules:
\begin{equation} \label{eq:ss-invert}
s_*\MGL/\ell^{\nu} \stackrel{\Sigma^{0,\e_{\MGL}(\ell^{\nu})} \tau^{\M\ZZ}_{\ell^{\nu}} \cdot}{\rightarrow}  \Sigma^{0,\e_{\MGL}(\ell^{\nu})} s_*\MGL/\ell^{\nu} \stackrel{\Sigma^{0,2\e_{\MGL}(\ell^{\nu})} \tau^{\M\ZZ}_{\ell^{\nu}}\cdot}{\rightarrow} \Sigma^{0,2\e_{\MGL}(\ell^{\nu})} s_*\MGL/\ell^{\nu} \cdots.
\end{equation}
Since the element  $\tau^{\MGL}_{\ell^{\nu}}$ is detected by $\tau^{\M\ZZ}_{\ell^{\nu}}$ by Lemma~\ref{lem:mgl-bott-oddpowers}, we obtain a spectral sequence:
\begin{equation}\label{ss-inverted}
E^{*}_{p,q,w}(\MGL/\ell^{\nu})[(\tau^{\M\ZZ}_{\ell^{\nu}})^{-1}] \Rightarrow   \MGL/\ell^{\nu}[ (\tau^{\MGL}_{\ell^{\nu}})^{-1}].
\end{equation}
We will give more details on the construction of this spectral sequence in~\S\ref{sec:systemofss} below.

Let us now deal with the prime $2$ and $\nu=1.$ Lemma~\ref{lemma:mgl-bott-2powers} picks out an element $(\tau^{\M\ZZ}_4)^4 \in  E^{1}_{0,0,-8}(\MGL/4)$. 
We then define: 
\begin{equation}\label{eq:map-tau-3}
\tau^{\M\ZZ}_{4} \cdot : \Sigma^{0,-8}\sspt \wedge s_*\MGL/2 \stackrel{(\tau^{\M\ZZ}_4)^4 \wedge id}{\rightarrow} s_0\MGL/4 \wedge s_*\MGL/2 \rightarrow s_*\MGL/2,
\end{equation}
where the last map is defined using the pairing~\eqref{equation:okapairing2}. 
As in~\eqref{eq:ss-invert}, 
we define the graded motivic spectrum $s_*\MGL/2 [(\tau^{\M\ZZ}_2)^{-1}]$ as the colimit of graded spectra:
\begin{equation} \label{eq:ss-invert2}
s_*\MGL/2 \stackrel{\Sigma^{0,8}\tau^{\M\ZZ}_{4} \cdot}{\rightarrow}  \Sigma^{0,8} s_*\MGL/2 \stackrel{\Sigma^{0,16}\tau^{\M\ZZ}_{4} \cdot}{\rightarrow} \Sigma^{0,16} s_*\MGL/2 \cdots.
\end{equation}
For $\nu \geq 2$, we use the unital pairing on $\MGL/2^{\nu}$ and the element $(\tau_{2^{\nu}}^{\M\ZZ})^{2^{\nu}} \in E^{1}_{0,0,-\e_{\MGL}(2^{\nu})}(\MGL/2^{\nu})$ to get a map:
\begin{equation}\label{eq:invert-2nu}
(\tau_{2^{\nu}}^{\M\ZZ})^{2^{\nu}}\cdot: \Sigma^{0,-\e_{\MGL}(2^{\nu})}\sspt \wedge s_*\MGL/2^{\nu} 
\stackrel{(\tau_{2^{\nu}}^{\M\ZZ})^{2^{\nu}} \wedge id}{\rightarrow}
s_0\MGL/2^{\nu} \wedge s_*\MGL/2^{\nu} \rightarrow s_*\MGL/2^{\nu},
\end{equation} 
which we invert using the same formula as in~\eqref{eq:ss-invert}. 
In any event, 
this gives us a spectral sequence of the form~\eqref{ss-inverted} in all cases. The key point to address in these spectral sequences is their \emph{convergence} which we will provide a proof of.

\subsection{Proof of Theorem~\ref{thm:1} for algebraic cobordism} 
We now have all the ingredients to prove Theorem~\ref{thm:1} for $\MGL$. 
We begin by factoring the unit map.
\begin{lemma} \label{lem:tau-invert} Let $S$ be a $\ZZ[\frac{1}{\ell}]$-scheme, $\ell$ a prime and $\nu \geq 1$ and consider the map:
\[
(\tau^{\MGL}_{\ell^{\nu}})_S: \Sigma^{0, -\e_{\MGL}(\ell^{\nu})}\sspt_S \rightarrow \MGL_S/\ell^{\nu},
\]
classifying the Bott element as in Definition~\ref{def:s-bott}. Then, the map in $\SH_{\et}(S)$
\[
\epsilon^*(\tau^{\MGL}_{\ell^{\nu}})_S: \Sigma^{0, -\e_{\MGL}(\ell^{\nu})}\epsilon^*\sspt_S \rightarrow \epsilon^*\MGL_S/\ell^{\nu}
\] 
is invertible and thus $(\tau^{\MGL}_{\ell^{\nu}})_S$ acts invertibly on $\MGL^{\et}_{*,*}$.
\end{lemma}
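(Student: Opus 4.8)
The plan is to reduce the claim to the already-established case of motivic cohomology via the slice spectral sequence, exactly along the ``slice comparison paradigm'' set up in Section~\ref{sect:slice}. First I would reduce to the base $S = \Spec\,\ZZ[\tfrac1\ell]$: since $\pi^*$ commutes with $f^*$ for any morphism $f\colon S\to\Spec\,\ZZ[\tfrac1\ell]$ and $(\tau^{\MGL}_{\ell^\nu})_S = f^*\tau^{\MGL}_{\ell^\nu}$ by Definition~\ref{def:s-bott}, invertibility of $\pi^*\tau^{\MGL}_{\ell^\nu}$ over $\ZZ[\tfrac1\ell]$ pulls back. Over $\ZZ[\tfrac1\ell]$ I would then invoke Lemma~\ref{lem:conserve}: the functor $\prod i_x^*\colon \SH_{\et}(\ZZ[\tfrac1\ell])\to\prod_{x}\SH_{\et}(k(x))$ is conservative (continuity and localization hold for $\SH$, hence for $\MGL$-modules), so it suffices to prove invertibility after pulling back to every field point $k$. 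Since $\tau^{\MGL}_{\ell^\nu}$ is detected in the slice spectral sequence by the $\M\ZZ$-theoretic Bott element $\tau^{\M\ZZ}_{\ell^\nu}$ (Lemmas~\ref{lem:bottperm2}, \ref{lemma:mgl-bott-2powers}, \ref{lem:mgl-bott-oddpowers}), the question becomes one about $\pi^*\MGL/\ell^\nu$ in $\SH_{\et}(k)$.

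Next I would argue on slices. The map $\pi^*(\tau^{\MGL}_{\ell^\nu})$ induces, on the $q$-th slice $\pi^*s_q\MGL/\ell^\nu$, multiplication by $\pi^*\tau^{\M\ZZ}_{\ell^\nu}$ on the graded $\M\ZZ/\ell^\nu$-module $s_*\MGL/\ell^\nu \simeq \M\ZZ/\ell^\nu[x_1,x_2,\dots]$ of Theorem~\ref{prop:mgl-ddk}(3) (using the pairing~\eqref{equation:okapairing2} when $\ell=2$, $\nu=1$). By Theorem~\ref{thm:motcohcase3} — the Bott-inverted motivic cohomology comparison — $\pi^*\tau^{\M\ZZ}_{\ell^\nu}$ acts invertibly on $\pi^*\M\ZZ/\ell^\nu$ in $\SH_{\et}(k)$; indeed that theorem gives $\M\ZZ/\ell^\nu[(\tau^{\M\ZZ}_{\ell^\nu})^{-1}]\simeq\M\ZZ^{\et}/\ell^\nu = \pi_*\pi^*\M\ZZ/\ell^\nu$, so multiplication by the Bott class is already an equivalence on $\pi^*\M\ZZ/\ell^\nu$. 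Since each slice $\pi^*s_q\MGL/\ell^\nu$ is a finite sum of Tate twists of $\pi^*\M\ZZ/\ell^\nu$, the map $\pi^*(\tau^{\MGL}_{\ell^\nu})$ is an equivalence on every slice $\pi^*s_q\MGL/\ell^\nu$.

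Then I would promote the slicewise statement to $\pi^*\MGL/\ell^\nu$ itself. Working in $\SH_{\et}(k)$, I would run the effective cover tower $\{\pi^* f_q\MGL/\ell^\nu\}_q$. Multiplication by the Bott element is an equivalence on each associated graded piece $\pi^* s_q\MGL/\ell^\nu$; by the connectivity and convergence results of Section~\ref{sect:some-conn} — specifically Theorem~\ref{thm:et-conn-final}, Corollary~\ref{corollary:MGLetleftbounded}, and Corollary~\ref{cor:et-complete}, which give that the $\et$-slice tower for $\MGL/\ell^\nu$ is left bounded and slice complete with $\lim_q (f_q\MGL)^{\et}/\ell^\nu \simeq 0$ — a map of slice-complete objects inducing equivalences on all slices is an equivalence. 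Hence $\pi^*(\tau^{\MGL}_{\ell^\nu})$ is an equivalence on $\pi^*\MGL/\ell^\nu$, and dualizing/multiplying this says precisely that $(\tau^{\MGL}_{\ell^\nu})_S$ acts invertibly on $\MGL^{\et}_{*,*}$.

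\textbf{Main obstacle.} The subtle point is the last step: transferring ``equivalence on every slice'' to ``equivalence on the spectrum.'' A priori this requires that both source and target be slice complete \emph{after} $\et$-localization, and — as emphasized throughout Section~\ref{sect:slice} — $\pi_*\pi^*$ need not preserve slice-completeness, nor does the $\et$-slice tower $\{\pi^* f_q\E\}$ agree with any naive $\et$-slice filtration (cf.~\S\ref{sect:wrong}). The real work is thus in checking that the convergence machinery of Section~\ref{sect:some-conn} applies to the twisted tower obtained by smashing with $\Sigma^{0,-\e(\ell^\nu)}\sspt$ — i.e.\ that $\Sigma^{0,*}\MGL/\ell^\nu$ is still left bounded so that $\lim_q\Sigma^{0,*}(f_q\MGL)^{\et}/\ell^\nu\simeq 0$ — and in handling the $\ell=2$, $\nu=1$ case where one must argue with the Oka module pairing~\eqref{equation:okapairing2} rather than an honest ring structure, so that ``multiplication by $\tau^{\MGL}_4$'' is the operative map and its slicewise behavior is controlled by~\eqref{eq:map-tau-3} and the pairing of slice spectral sequences~\eqref{eq:R-pairing}.
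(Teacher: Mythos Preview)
Your approach is essentially the same as the paper's: both reduce to field points (you via Lemma~\ref{lem:conserve}, the paper via the six functors formalism for $\SH_{\et}$), then argue via the slice filtration that the Bott element acts invertibly because its image $\tau^{\M\ZZ}_{\ell^\nu}$ acts invertibly on each \'etale slice by Theorem~\ref{thm:motcohcase3}, and finally use convergence/completeness of the \'etale slice tower to conclude. The only cosmetic difference is packaging: the paper phrases the slice-level argument as ``the image of $\tau^{\MGL}_{\ell^\nu}$ is detected by a periodicity operator in the \'etale slice spectral sequence and this survives via Proposition~\ref{prop:ssse1},'' whereas you phrase it as ``multiplication by the Bott element is an equivalence on each $\pi^*s_q\MGL/\ell^\nu$, hence on the limit by completeness''; your obstacle section correctly identifies that the shifted tower $\{\Sigma^{0,-\tau(\ell^\nu)}(f_q\MGL)^{\et}/\ell^\nu\}$ also needs to be left bounded, and this is immediate from Theorem~\ref{thm:et-conn-final} and Corollary~\ref{corollary:MGLetleftbounded} since $\Sigma^{0,-n}$ only shifts the connectivity bound by a constant.
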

%

\begin{proof}  
Over any base $S$, the unit map factors as: 
$$
\MGL_S/{\ell}^{\nu} 
\rightarrow 
\colim (f_q \MGL_S)^{\et}/{\ell}^{\nu}
\rightarrow  
\MGL_S^{\et}/{\ell}^{\nu},
$$ 
where the first map induces a commutative diagram:
\begin{equation} \label{eq:fet}
\xymatrix{
\MGL/{\ell^{\nu}}_{0,-\e_{\MGL}(\ell^{\nu})}(S) \ar[r] \ar[d] 
& (\colim (f_q \MGL)^{\et}/{\ell})_{0,-\e_{\MGL}(\ell^{\nu})}(S) \ar[d]\\ 
\pi_{0,-\e_{\MGL}(\ell^{\nu})}s_0\MGL/{\ell}(S) \iso H^{0,\e_{\MGL}(\ell^{\nu})}(S, \ZZ/\ell^{\nu}) \ar[r] 
& \pi_{0,-\e_{\MGL}(\ell^{\nu})} (s_0\MGL)^{\et}/{\ell}^{\nu} (S)\iso H_{\et}^0(S,\mu_{\ell^{\nu}}^{\otimes \e_{\MGL}(\ell^{\nu})}).
}
\end{equation}
Consider  $(\tau_{\ell^{\nu}}^{\MGL})_S\in\MGL/{\ell^{\nu}}_{0,-\e_{\MGL}(\ell^{\nu})}$; it suffices to prove that it maps to an invertible element via the top horizontal map of diagram~\eqref{eq:fet}. To verify the claim, it suffices to check the case that $S=\Spec\,k$, i.e., the spectrum of a field $k$ (using, for example, the fact that $\SH_{\et}$ has the full six functor formalism by \cite{ayoub}*{Corollaire 4.5.47}). 

By construction $\tau_{\ell^{\nu}}^{\MGL} \in\MGL/{\ell^{\nu}}_{0,-\e_{\MGL}(\ell^{\nu})}(k)$ maps to $\tau_{\ell^{\nu}}^{\M\ZZ}\in H^{0,\e_{\MGL}(\ell^{\nu})}(k, \ZZ/\ell^{\nu})$ via the left vertical arrow.  Since we are over a field, $\tau_{\ell^{\nu}}^{\M\ZZ}$ maps to a periodicity operator in $H_{\et}^0(k,\mu_{\ell^{\nu}}^{\otimes \e_{\MGL}(\ell^{\nu})})$ via the bottom horizontal map. This element acts isomorphically on the \'{e}tale slice spectral sequence. Identifying the \'{e}tale slice spectral sequence as the inverted slice spectral sequence by Proposition~\ref{prop:ssse1}, 
such a periodicity operator survives to an element in $\MGL^{\et}/{\ell^{\nu}}_{0,-\e_{\MGL}(\ell^{\nu})}(k)$ since one can check that the computations in Lemmas~\ref{lem:bottperm2}, 
\ref{lemma:mgl-bott-2powers}, 
and \ref{lem:mgl-bott-oddpowers} are not affected by this inversion procedure. Thus we conclude that this element survives to the value of $\tau_{\ell^{\nu}}^{\MGL}$ via the top vertical map, as desired.

\end{proof}

\subsubsection{Field case}  \label{sec:systemofss} 
We first prove the case when the base is a field. To do this, we first give more details on the construction of the spectral sequence~\eqref{ss-inverted} since we will need some of the notation in our discussion of convergence.
Suppose $\{{^i}E\}_{i \in \NN}$ is a collection of spectral sequences with filtered graded groups $\{ {^i}G \}_{i \in \NN}$ together with maps ${^i}\phi:{^i}E \rightarrow{^{i+1}}E$ 
which are compatible with the maps ${^i}\psi: {^i}G \rightarrow {^{i+1}}G$ in the evident sense.

We say ${^i}\phi$ is \emph{of degree $a$} if it takes an element $x$ of degree $|x|$ to an element of degree $|x|+a$.
By passing to the colimit of the system $(\{{^i}E\},{^i}\phi)_{i \in \NN}$ we obtain a spectral sequence with respect to the evident filtration on the target group $G$: 
\begin{equation}
\label{equation:colimss}
E:= 
\colim\,{^i}E
\Rightarrow 
G:= 
\colim\,{^i}G.
\end{equation}

In our main example of interest, we apply $\pi_{*,*}$ to the diagrams~\eqref{eq:ss-invert} and \eqref{eq:ss-invert2}, 
whence we get ${^i}E := E^*_{p,q,w}(\MGL/\ell^{\nu})$ for all $i$, 
together with degree $(0,-\e_{\MGL}(\ell^{\nu}))$ maps: 
$$
\tau^{\M\ZZ}_{\ell^{\nu}}:{^i}E \rightarrow {^{i+1}}E,
$$  
which are compatible with the multiplication by $\tau_{\ell^{\nu}}^{\MGL}$-map:
$$
\tau_{\ell^{\nu}}^{\MGL} \cdot: \pi_{p,w}\MGL/\ell^{\nu} \rightarrow \pi_{p,-\e_{\MGL}(\ell^{\nu})}\MGL/\ell^{\nu},
$$ 
since $\tau^{\M\ZZ}_{\ell^{\nu}}$ detects $\tau_{\ell^{\nu}}^{\MGL}$ by the definition of the latter element as in Lemmas~\ref{lemma:mgl-bott-2powers} and \ref{lem:mgl-bott-oddpowers}. 

\subsubsection{} 
By the discussion in~\S\ref{sec:gradediso} we can identify the first page of the Bott inverted slice spectral sequence and the \'{e}tale slice spectral sequence.

\begin{proposition} 
\label{prop:ssse1} 
Let $k$ be a field with exponential characteristic prime to $\ell$ and let $f: S\rightarrow \Spec\,k$ be an essentially smooth scheme over $k$. 
The unit map $\MGL_S \rightarrow \epsilon_*\epsilon^*\MGL_S$ induces a natural isomorphism of spectral sequences: 
$$
E^{*}_{p,q,w}(\MGL_S/\ell^{\nu})[(\tau^{\M\ZZ}_{\ell^{\nu}})_S^{-1}] 
\simeq 
E^{*,\et}_{p,q,w}(\MGL_S/\ell^{\nu}).
$$ 
\end{proposition}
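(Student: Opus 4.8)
The plan is to compare the Bott inverted slice spectral sequence with the \'etale slice spectral sequence term by term, starting from the identification of their $E^1$-pages and then checking that the identification is compatible with differentials. The key input is the slice computation $s_q\MGL_S/\ell^\nu \simeq \Sigma^{2q,q}\M\ZZ_S/\ell^\nu[\text{monomials of degree }q]$ from Theorem~\ref{prop:mgl-ddk}(3) together with the fact, established in Theorem~\ref{thm:motcohcase3} (via Theorem~\ref{thm:IntroMotcoh} and Corollary~\ref{cor:utrvspi}), that inverting $\tau^{\M\ZZ}_{\ell^\nu}$ on $\M\ZZ_S/\ell^\nu$ produces $\M\ZZ^{\et}_S/\ell^\nu$. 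First I would recall that the $E^1$-term of the \'etale slice spectral sequence~\eqref{equation:etsss} is $\pi_{p,w}(s_q\MGL_S^{\et}/\ell^\nu) = \pi_{p,w}(\pi_*\pi^* s_q\MGL_S/\ell^\nu)$, and by Lemma~\ref{lem:identification} (the $\mathcal{E}_\infty$-graded algebra identification $\pi_*\pi^*s_*\MGL_S/\ell^\nu \simeq \M\ZZ^{\et}_S/\ell^\nu[x_1,\ldots]$) this is a sum of shifted copies of \'etale motivic cohomology groups indexed by monomials. On the other side, the $E^1$-term of the Bott inverted slice spectral sequence is, by construction in \S\ref{sec:gradediso}, the colimit along multiplication by $\tau^{\M\ZZ}_{\ell^\nu}$ of $\pi_{p,w+k\tau(\ell^\nu)}(s_q\MGL_S/\ell^\nu)$, which by the slice computation is $\operatorname{colim}$ of shifted copies of $H^{*,*}_{\mot}(S;\ZZ/\ell^\nu)$; inverting $\tau^{\M\ZZ}_{\ell^\nu}$ turns each summand into the corresponding \'etale cohomology group by Theorem~\ref{thm:motcohcase3}. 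This yields the isomorphism of $E^1$-pages.

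Next I would verify that this levelwise isomorphism is compatible with the $d_1$-differentials, hence extends to an isomorphism of spectral sequences for all $r$. The point here is that both differentials are induced by the same underlying maps of spectra: the $d_1$-differential in the slice spectral sequence for $\MGL_S/\ell^\nu$ is the composite $s_q\MGL_S/\ell^\nu \to \Sigma^{1,0}f_{q+1}\MGL_S/\ell^\nu \to \Sigma^{1,0}s_{q+1}\MGL_S/\ell^\nu$, and the \'etale slice differential is obtained by applying the exact functor $\pi_*\pi^*$ to exactly this composite. Since inverting $\tau^{\M\ZZ}_{\ell^\nu}$ is a colimit operation and $\pi_*\pi^*$ applied to the slices is identified with the Bott inversion of the slices by the argument of the previous paragraph, the two constructions of $d_1$ agree under the $E^1$-identification; the same reasoning gives agreement on all higher differentials because the entire exact couple producing the slice spectral sequence (as in \cite{milnor-hermitian}*{(11)}) is built from the slice tower, and $\pi_*\pi^*$ commutes with the relevant colimits and shifts by Remark~\ref{rem:susp-sheaf}. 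For the essentially smooth case over the smooth case one reduces via Lemma~\ref{lem:pull-mz-et} and essentially smooth base change \cite{hopkinsmorelhoyois}*{Appendix A}, noting that both $\pi^*$ (which commutes with $f^*$ on sheaves) and Bott elements (which pull back by construction, Definition~\ref{def:s-bott}) are stable under the relevant pullback.

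I expect the main obstacle to be bookkeeping the naturality: making precise that the identification $\pi_*\pi^* s_*\MGL_S/\ell^\nu \simeq s_*\MGL_S/\ell^\nu[(\tau^{\M\ZZ}_{\ell^\nu})^{-1}]$ is not merely an abstract isomorphism of graded objects but is compatible with the maps $f_{q+1}\to f_q\to s_q$ in the slice tower, so that it genuinely induces an isomorphism of exact couples and hence of all pages. This requires keeping track of the $\mathcal{E}_\infty$-graded algebra structure (Lemma~\ref{lem:identification}) and the module structure over $s_0\MGL_S/\ell^\nu \simeq \M\ZZ_S/\ell^\nu$ through which the Bott element acts, as well as handling the $\ell=2$, $\nu=1$ case where one must work with the Oka pairing~\eqref{equation:okapairing2} rather than an honest ring structure on $\MGL/2$ — but the $d_1$-differential is still compatible with this module action by \cite{pi1}*{Proposition 2.24}, so the argument goes through uniformly. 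A clean way to organize this is to observe that both spectral sequences are the ones associated to the filtered objects obtained by applying, respectively, $(-)[(\tau^{\M\ZZ}_{\ell^\nu})^{-1}]$ and $\pi_*\pi^*$ to the slice tower $\{f_q\MGL_S/\ell^\nu\}$, and that these two filtered objects are equivalent as towers — the latter equivalence being precisely the content assembled from Lemma~\ref{lem:identification}, Theorem~\ref{thm:motcohcase3}, and the slice computation Theorem~\ref{prop:mgl-ddk}.
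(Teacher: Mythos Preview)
Your approach is essentially the same as the paper's: identify the Bott-inverted slices with the \'etale-localized slices via the slice computation (Theorem~\ref{prop:mgl-ddk}) and Theorem~\ref{thm:motcohcase3}, handle the $\ell=2$ cases separately via the Oka pairing, and conclude. The paper's proof is terser --- it records the chain of equivalences $s_*\MGL_S/\ell^{\nu}[(\tau^{\M\ZZ}_{\ell^{\nu}})^{-1}] \simeq \M\ZZ_S/\ell^{\nu}[x_1,\ldots][(\tau^{\M\ZZ}_{\ell^{\nu}})^{-1}] \simeq \M\ZZ_S^{\et}/\ell^{\nu}[x_1,\ldots]$ and notes these hold ``under the natural maps from $s_*\MGL/\ell^{\nu}_S$,'' which is exactly the naturality you spell out; a map of towers inducing an isomorphism on $E^1$ automatically gives an isomorphism of spectral sequences, so your extended discussion of $d_1$ and higher differentials, while correct, is more than is strictly needed. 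One small caution: your final paragraph's suggestion that the two \emph{towers} $\{f_q\MGL_S/\ell^{\nu}[(\tau)^{-1}]\}$ and $\{\pi_*\pi^*f_q\MGL_S/\ell^{\nu}\}$ are equivalent is stronger than what is required or proved here --- the argument only needs (and only establishes) the equivalence on associated gradeds together with the existence of a comparison map of towers, which suffices for the spectral sequence statement.
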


\begin{proof} 
By Theorem~\ref{thm:motcohcase3} we obtain equivalences:
\begin{eqnarray*}
s_*\MGL_S/\ell^{\nu} [(\tau^{\M\ZZ}_{\ell^{\nu}}){^{-1}}] & \simeq & \M\ZZ_S/\ell^{\nu} [x_1, ..., x_q, ...][(\tau^{\M\ZZ}_{\ell^{\nu}}})_S{^{-1}]  \\
& \simeq & \M\ZZ_S^{\et}/\ell^{\nu}[x_1, ..., x_q, ...],
\end{eqnarray*}
under the natural maps from $s_*\MGL/\ell^{\nu}_S$. 
Indeed for $\ell$ an odd prime this is straightforward. 
For the prime $2$ and $\nu \geq 1$, 
the first term in the above equivalence is inverted using the $2^{\nu}$-th power of the $\tau_{2^{\nu}}^{\M\ZZ}$ as in~\eqref{eq:invert-2nu} and so Theorem~\ref{thm:motcohcase3} still applies. 
When $\ell=2$ and $\nu=1$, 
we use the inversion procedure described in~\eqref{eq:map-tau-3}, 
which is slightly different from the case described in Theorem~\ref{thm:motcohcase3}. 
However, 
the element $\tau^{\M\ZZ}_4$ acts invertibly in mod-$2$ \'{e}tale cohomology, 
which is all one needs to obtain the second equivalence.

Note that, 
as proved in Lemma~\ref{lem:noambiguity2}, 
the above equivalences are independent of the various choices involved in choosing the Bott elements.  
\end{proof}

\subsubsection{} 
For Proposition~\ref{prop:ssse1} to be useful we need to address convergence results.  
In general, inverting an element in a strongly convergent spectral sequence may destroy strong convergence (see \cite{wilson}*{Section 2} for toy examples.) 
In the notation of~\S\ref{sec:systemofss}, 
assume that ${^i}E$ converges strongly to ${^i}G$, 
where the $q$-th filtration of ${^i}G$ is  denoted by ${^i}F_q$. 
Denote by ${^i}E_q$ the $q$-th filtration degree of ${^i}E.$ We require ${^i}E_q = 0$ for $q<0$. 
%
The convergence result we need follows closely the proof of \cite{wilson}*{Proposition 3} but tailor-made for our needs.
\begin{lemma} 
\label{lem:convergence} 
Suppose that $\{ {^i}E \}$ is a directed system of strongly convergent spectral sequences of bigraded groups ${^i}E^{p,w}_{q}$ where $q$ is the filtration degree such that:
\begin{enumerate}
\item The maps ${^i}\phi:{^i}E \rightarrow {^{i+1}}E$ preserve filtration, 
\item The maps are of degree $(0, a)$ for some $a \in \ZZ$,
\item For fixed $p$, there is an $M(p)>0$ such that for all $i>M(p)$ the group $^i E^{p,w}_{q}=0$ for $q<N(p)$.
\end{enumerate}
In this case, \eqref{equation:colimss} is strongly convergent in the sense of \cite{boardman}*{Definition 5.2}.
\end{lemma}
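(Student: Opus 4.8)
\textbf{Proof plan for Lemma~\ref{lem:convergence}.}

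The plan is to verify directly the conditions in \cite{boardman}*{Definition 5.2} for strong convergence of the colimit spectral sequence \eqref{equation:colimss}, using that each $^iE$ is already strongly convergent and that the transition maps preserve filtration. First I would recall what strong convergence means concretely: one needs (i) the spectral sequence to be conditionally convergent, i.e., $\lim_r Z_r = 0$ where $Z_r$ are the ``cycles'' subgroups, (ii) the derived $E_\infty$-term $RE_\infty$ to vanish, and (iii) the $E_\infty$-page to compute the associated graded of $G$ with respect to the filtration $\{F_q\}$, exhaustively and Hausdorff. Since filtered colimits are exact and commute with homology of chain complexes, the colimit of the $r$-th pages is the $r$-th page of the colimit spectral sequence, and $E_\infty(\colim\,{^i}E) \cong \colim\,{^i}E_\infty$; similarly the filtration on $G = \colim\,{^i}G$ is $F_q = \colim\,{^i}F_q$. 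So the associated-graded statement in (iii) is immediate from the corresponding statement for each $^iE$, provided we check exhaustiveness and Hausdorffness of the colimit filtration.

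The crux is condition (3) of the hypothesis, which gives, in each fixed internal bidegree $(p,w)$, a uniform vanishing $^iE^{p,w}_q = 0$ for $q < N(p)$ once $i > M(p)$. The point is that the transition maps have internal degree $(0,a)$, so an element in bidegree $(p,w)$ of $^iE$ maps to bidegree $(p, w+a)$ of $^{i+1}E$; iterating, a single colimit-class in bidegree $(p,w)$ is represented in $^iE$ in bidegree $(p, w - ia)$ for large $i$ (after reindexing so the relevant internal degree is hit). Hypothesis (3) with a uniform lower bound $N(p)$ \emph{independent of $w$} is exactly what is needed to conclude that the colimit filtration is \emph{bounded below} in each internal bidegree: the filtration degrees contributing to $\colim\,{^i}E^{p,*}$ are bounded below by $N(p)$. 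Boundedness below of the filtration together with exhaustiveness (which holds because each $^iF_q$ exhausts $^iG$ and colimits commute with unions) forces conditional convergence and the vanishing of $RE_\infty$: a filtration bounded below has $\lim^1$ and $\lim$ of its defining tower controlled degreewise, and the relevant towers stabilize. Concretely, in each bidegree the tower $\{Z_r\}$ and the tower $\{F_q\}$ are eventually constant (the former because differentials out of a fixed filtration degree land in strictly higher filtration, which is bounded; the latter by hypothesis (3)), so $\lim^1$ vanishes and $E_\infty$ computes the graded pieces on the nose.

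I would organize the write-up as: (a) reduce each of Boardman's conditions to a degreewise statement in a fixed bidegree $(p,w)$, using exactness of filtered colimits to pass pages, filtrations, and the target through the colimit; (b) invoke strong convergence of each $^iE$ to get the degreewise statements for each $i$; (c) use hypotheses (1), (2), (3) to show the degreewise towers (both the page-level cycle towers and the filtration tower on $G$) are eventually constant, hence have trivial $\lim^1$; (d) conclude $RE_\infty = 0$, conditional convergence, and the identification of $E_\infty$ with the associated graded, which is precisely strong convergence. The main obstacle I anticipate is bookkeeping the reindexing in step (c): making precise how the degree-$(0,a)$ transition maps let one replace ``the internal degree $w$'' by ``$i$ large'' so that hypothesis (3)'s $i > M(p)$ clause actually applies to the colimit-class one cares about, and checking this is compatible with the filtration-preserving property (1) so that no filtration jumps are introduced in the limit. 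Once that reindexing is set up cleanly, everything else is a formal consequence of exactness of filtered colimits and Boardman's criteria.
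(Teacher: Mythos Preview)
Your plan is broadly correct and parallels the paper's: both verify Boardman's criteria by using hypothesis (3) to force the relevant towers to stabilize at a finite stage. There is one genuine gap in your exposition, and one place where your route is actually cleaner than the paper's.

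The gap: you assert that $E_\infty(\colim\,{^i}E) \cong \colim\,{^i}E_\infty$ as a consequence of exactness of filtered colimits. Exactness gives $E_r(\colim) \cong \colim\,{^i}E_r$ for each \emph{finite} $r$, but $E_\infty = Z_\infty/B_\infty$ with $Z_\infty = \lim_r Z_r$, and inverse limits do not commute with filtered colimits in general. The paper is explicit at exactly this point: it invokes hypothesis (3) to argue that in each bidegree the tower stabilizes at a finite stage (``so that the limit term is finite''), whence the interchange with the filtered colimit is valid. You do eventually invoke (3) in your step (c), but the earlier sentence reads as if the interchange were formal---it is not, and this is precisely where the content of the lemma lives. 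Your write-up should make clear that the identification of $E_\infty$-pages, the vanishing of $RE_\infty$, and completeness all rest on the same finiteness coming from (3), not on exactness alone.

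For Hausdorffness your implicit route is more direct than the paper's. The paper runs a detection argument: for each nonzero $x \in G$ it chooses a representative in some ${^j}G$, tracks its images under the ${^i}\psi$, and shows that the filtration degree of the detecting class in ${^i}E$ is non-decreasing in $i$ and bounded by $N(p)$, hence eventually constant; this places $x$ in a definite filtration step and not in any higher one. Your approach---observe that for $i>M(p)$ the vanishing of ${^i}E^{p,w}_q$ beyond $N(p)$, combined with the Hausdorff property of each ${^i}F$, forces ${^i}F_q^{(p,w)}=0$ in that range for all $w$, whence $F_q = \colim_i\,{^i}F_q = 0$ there---avoids the element-chase entirely. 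This works precisely because $N(p)$ is uniform in $w$, so the vanishing survives the degree-$(0,a)$ reindexing you rightly flag as the main bookkeeping issue.
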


\begin{proof} 
First we check weak convergence. 
Let $E:= \colim{^i}E$ be the colimit spectral sequence. 
As usual $E^q_{\infty} = \lim_{q} Z^q_{\infty}/B^q_{\infty}$ where $Z^q_{\infty} = \lim_r Z^q_r$ is the group of infinite cycles and ${^i}B^q_{\infty} = \colim_r B^q_r$. 
We want to show there is a natural isomorphism:
$$
\colim_{i } {^i}F_q/{^i}F_{q+1} 
\overset{\iso}{\rightarrow} 
E^q_{\infty}.
$$ 
Since ${^i}E$ is strongly convergent, 
in particular weakly convergent we have:
\begin{equation} 
\label{ptwise-weak-conv}
{^i}F_q/{^i}F_{q+1} \iso {^i}Z^q_{\infty}/{^i}B^q_{\infty},
\end{equation}
where, similarly, ${^i}Z^q_{\infty} = \lim_r {^i}Z^q_r$  and ${^i}B^q_{\infty} = \colim_r {^i}B^q_r$. 
Moreover, 
we have:
\begin{eqnarray*}
\colim_{i } {^i}F_q/{^i}F_{q+1}  
\iso  
\colim_{i} \lim_r Z^q_{r}/{^i}B^q_{r}
\iso 
\lim_r \colim_{i}   {^i}Z^q_{r}/{^i}B^q_{r}
= 
E^q_{\infty}.
\end{eqnarray*}
Here the first isomorphism comes from~\eqref{ptwise-weak-conv}, 
the second comes from assumption $(3)$ (so that the limit term is finite), 
and the last equality is by definition.

Next we check completeness.  We denote by $F_q = \colim_i {^i}F_q$, i.e., the induced filtration on the colimit group. We claim that there is an isomorphism:
$$
\lim_q G/F_q \iso G.
$$ 
Using the definition $G^b/F^b_q:=\colim_i ({^i}G^{b+ ia}/{^i}F_q^{b+ia})$ we obtain: 
$$
\lim_q G^b/ F_q^b
\iso
\lim_q  \colim_i ({^i}G^{b+ ia}/{^i}F_q^{b+ia}).
$$ 
On the other hand, 
by completeness of each ${^i}E$ there is an isomorphism:
$$
G^b 
\iso 
\colim_i ({^i}G^{i+ba}) 
\iso 
\colim_i \lim_q ({^i}G^{i+ba})/({^i}F^{i+ba}_q).
$$ 
Since filtered colimits commute with finite limits of abelian groups it suffices to prove that $\lim_q ({^i}G^{i+ba})/({^i}F^{i+ba}_q)$ is a finite limit for $i\gg 0$.
For fixed $(p,w)$, 
assumption $(3)$ implies that ${^i}F^{i+ba}_q/{^i}F^{i+ba}_{q+1}=0$ whenever $i >M(p)$ and $q \geq N(p)$.
Since the filtration $^iF$ of $^iG$ is Hausdorff, ${^i}F^{i+ba}_q=0$.
Thus the limit $\lim_q ({^i}G^{i+ba})/({^i}F^{i+ba}_q)$ is attained at a finite stage.

Next we check that the filtration $F_q$ of $G$ is \emph{Hausdorff}, that is, $\cap_q F_q = 0$. 
We claim that for every nonzero element $x \in G^b$, there exists some ${^i}x\in {^i}G^{b+ia}$ such that:
\begin{itemize}
\item ${^i}x$ maps to $x$ under the canonical map to ${^i}G \rightarrow G$,
\item ${^i}x$ is detected by an element ${^i}y \in {^i}E$ which is nonzero for all iterated compositions of ${^i}f$.
\end{itemize}
Choose an element ${^j}x'$ that maps to $x\neq 0$. 
Then $^{k+j}\psi(x')\neq 0$ are all nonzero for all $k \geq 0$. 
Let us write ${^k} y \in {^{k+j}}E^{b+k,a}$ for an element that detects $^{k+j}\psi(x')$. 
We may assume $k>M(p)$, so that the filtration degree of ${^k}y$ is less than $N(p)$. 
As $k$ increases, the filtration degree of ${^k}y$ can increase but it must eventually be constant, 
because otherwise ${^k}y$ would be zero and therefore it cannot detect $^{k+j}\psi(x')\neq 0$. 
Where the filtration degree becomes constant is exactly where we find ${^i}x$; in other words we let $i = k+j$ where $k \gg0$.

Note that ${^k}y$ survives to the $E_{\infty}$ page of ${^i}E$ since $^{k+j}g(x')\neq 0$ for all $k\geq 0$. Suppose that $q$ is its filtration degree. 
Since the spectral sequences ${^i}E$ are strongly convergent for all $i$, 
we have that $^{k+j}\psi(x')$ is indeed in filtration $q$ and not in any higher filtration. 
Since this always happens as $i$ tends to $\infty$, we are done.
\end{proof}


\subsubsection{} 
In order to apply Lemma~\ref{lem:convergence} to algebraic cobordism we check for vanishing lines. 

\begin{lemma}  
\label{lem:range} 
Let $k$ be a field with exponential characteristic prime to $\ell$. The group $E^1_{p,q,w}(\MGL_k/\ell^{\nu})=0$ if (1) $p>2q$, (2) $q+w>p$, or (3) $2q>p+\cd_{\ell}(k)$.
\end{lemma}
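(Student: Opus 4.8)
\textbf{Proof strategy for Lemma~\ref{lem:range}.} The plan is to reduce everything to the slice computation of Theorem~\ref{prop:mgl-ddk}(3), which over a field $k$ (with $R = \ZZ$, since the Hopkins--Morel isomorphism holds by Theorem~\ref{thm:hmh-iso}) identifies $s_q\MGL_k/\ell^\nu$ as a wedge of copies of $\Sigma^{2q,q}\M\ZZ/\ell^\nu$ indexed by the degree-$q$ monomials in the polynomial generators $x_1, x_2, \dots$. Consequently
$$
E^1_{p,q,w}(\MGL_k/\ell^\nu) = \pi_{p,w}(s_q\MGL_k/\ell^\nu) = \bigoplus_{\text{monomials of degree } q} [\Sigma^{p,w}\sspt, \Sigma^{2q,q}\M\ZZ/\ell^\nu]_{\SH(k)},
$$
and each summand is the motivic cohomology group $H^{2q-p,\, q-w}_{\mot}(k;\ZZ/\ell^\nu)$. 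So the three vanishing conditions will follow from three elementary vanishing facts about motivic cohomology of a field.

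First I would dispatch condition (2): if $q+w > p$ then $q-w < 2q-p$, i.e.\ the weight $q-w$ is strictly smaller than the cohomological degree $2q-p$, so $H^{2q-p,q-w}_{\mot}(k;\ZZ/\ell^\nu) = 0$ because motivic cohomology of a field vanishes above the diagonal $\H^{a,b}_{\mot}$ with $a > b$ (this is the Beilinson--Soulé-type vanishing in the accessible range, cf.\ the reference \cite{hopkinsmorelhoyois}*{Corollary 4.26} already invoked in Lemma~\ref{lem:mot-vanish}). Next, condition (3): if $2q > p + \cd_\ell(k)$ then the cohomological degree $2q-p > \cd_\ell(k)$, and since the coefficients $\ZZ/\ell^\nu$ are $\ell$-torsion, the motivic cohomology $H^{2q-p,q-w}_{\mot}(k;\ZZ/\ell^\nu)$ vanishes once the degree exceeds the $\ell$-cohomological dimension; one sees this by comparing with \'etale cohomology in the range $2q-p > q-w$ (handled by (2)) or directly, noting $H^{n,*}_{\mot}(k;\ZZ/\ell^\nu)$ for $n$ large is computed by \'etale cohomology which vanishes above $\cd_\ell(k)$. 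Finally, condition (1): if $p > 2q$ then $2q - p < 0$, so the cohomological degree is negative and $H^{<0,*}_{\mot}(k;\ZZ/\ell^\nu) = 0$ trivially.

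The only slightly delicate point is condition (3), where one must be careful that the relevant vanishing statement for $H^{n,m}_{\mot}(k;\ZZ/\ell^\nu)$ with $n > \cd_\ell(k)$ holds for \emph{all} weights $m$, not merely in the stable/\'etale range $n \le m$; the cleanest route is to invoke the Beilinson--Lichtenbaum/Bloch--Kato comparison (as in Remark~\ref{rem:BK}) which identifies $H^{n,m}_{\mot}(k;\ZZ/\ell^\nu)$ with $H^n_{\et}(k;\mu_{\ell^\nu}^{\otimes m})$ for $n \le m$ and shows it vanishes for $n > m$, combined with the observation that in the range forced by (3) either $2q-p > q-w$ (so we are in the vanishing region of (2)) or $2q-p \le q-w$ and then $H^{2q-p}_{\et}(k;-) = 0$ because $2q-p > \cd_\ell(k)$. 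I expect the main obstacle to be merely bookkeeping: ensuring the three inequalities are combined so that each $(p,q,w)$ satisfying any one of them lands in a region where the corresponding motivic cohomology group is known to vanish, without appeal to any unproven full-weight vanishing statement.
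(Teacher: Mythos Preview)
Your proposal is correct and follows essentially the same route as the paper: identify $E^1_{p,q,w}$ with a sum of copies of $H^{2q-p,\,q-w}_{\mot}(k;\ZZ/\ell^\nu)$ via the slice computation, then deduce (1) and (2) from the elementary vanishing of motivic cohomology of a field in negative degree and above the diagonal, and handle (3) by invoking Bloch--Kato to pass to \'etale cohomology in the range not already covered by (2). One terminological quibble: the vanishing $H^{a,b}_{\mot}(k)=0$ for $a>b$ is not ``Beilinson--Soul\'e-type'' (that name refers to the conjectural vanishing in negative cohomological degree) but rather the elementary fact that the motivic complex $\ZZ(b)$ lives in degrees $\le b$; the mathematics is unaffected.
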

\begin{proof}
Let us write $L_q$ for the set of degree $q$ monomials in the polynomial generators $x_i$ in the slice calculation \eqref{equation:slicesMGL}.
The $E^1$-term $E^1_{p,q,w}(\MGL_k/\ell^{\nu})$ of the slice spectral sequence for $\MGL_k/\ell^{\nu}$ is given as: 
$$
\pi_{p,w}(s_q\MGL_k/\ell^{\nu}) 
\iso
\oplus_{L_q} \pi_{p,w} (\Sigma^{2q,q} \M\ZZ_k/\ell^{\nu})
\iso 
\oplus_{L_q} H^{2q-p, q-w}(k; \ZZ/\ell^{\nu}). 
$$
Hence (1) and (2) follow from standard vanishing results in motivic cohomology. 
By the Bloch-Kato conjecture,  
away from the region specifed in (2), 
there is an isomorphism
$$
H^{2q-p, q-w}(k; \ZZ/\ell^{\nu}) 
\iso 
H^{2q-p}_{\et}(k, \mu_{\ell^{\nu}}^{\otimes q-w}).
$$ 
Thus (3) follows by the definition of $\cd_{\ell}(k)$.
\end{proof}

\begin{proposition} 
\label{prop:conv-invert} 
The spectral sequence $E^1_{p,q,w}(\MGL_k/\ell^{\nu})[(\tau^{\M\ZZ}_{\ell^{\nu}})_k^{-1}]$ is strongly convergent.
\end{proposition}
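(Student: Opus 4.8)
The plan is to verify the hypotheses of Lemma~\ref{lem:convergence} for the directed system of slice spectral sequences described in \S\ref{sec:systemofss}, in which each $\sideset{^i}{}E$ is a copy of the slice spectral sequence $E^{*}_{p,q,w}(\MGL_k/\ell^{\nu})$ and the transition maps are given by multiplication by a Bott element $\tau^{\M\ZZ}_{\ell^{\nu}}$ (or $\tau^{\M\ZZ}_4$ when $\ell=2$, $\nu=1$, using the Oka pairing~\eqref{equation:okapairing2}). First I would recall that each $\sideset{^i}{}E$ is strongly convergent: this is the slice spectral sequence for $\MGL_k/\ell^{\nu}$, whose strong convergence follows from the slice completeness of $\MGL_k \wedge \sspt_{(R)}$ in Theorem~\ref{prop:mgl-ddk}(1) together with the connectivity estimate Theorem~\ref{prop:mgl-ddk}(2) (so that $\MGL_k/\ell^{\nu}$ is left bounded in the sense of \S\ref{sss}, hence the conditionally convergent slice spectral sequence collapses at a finite stage in each fixed degree and is therefore strongly convergent by \cite{boardman}*{Theorem 7.1}). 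Next, conditions (1) and (2) of Lemma~\ref{lem:convergence}: the transition maps preserve the slice filtration and are of degree $(0,-\tau(\ell^{\nu}))$ by construction, since $\tau^{\M\ZZ}_{\ell^{\nu}} \in \pi_{0,-\tau(\ell^{\nu})}(s_0\MGL/\ell^{\nu})$ acts on the graded $s_0\MGL/\ell^{\nu}$-module $s_*\MGL/\ell^{\nu}$ and multiplication by a degree-zero element of the $E^1$-term preserves filtration degree in a multiplicative spectral sequence.

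The substantive point is condition (3): for each fixed pair $(p,w)$ there must exist $M(p)$ and $N(p)$ so that for $i > M(p)$ the group $\sideset{^i}{}E^{p,w}_{q}$ vanishes for $q < N(p)$. Here $q$ denotes the filtration (= slice) degree, and after $i$-fold twisting by $\tau^{\M\ZZ}_{\ell^{\nu}}$ the relevant $E^1$-term in total internal degree $(p,w)$ is $E^{1}_{p,q,\,w - i\,\tau(\ell^{\nu})}(\MGL_k/\ell^{\nu})$, i.e. (via the slice computation \eqref{equation:slicesMGL}) a sum of copies of $H^{2q-p,\,q - w + i\,\tau(\ell^{\nu})}(k;\ZZ/\ell^{\nu})$ indexed by degree-$q$ monomials. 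This is where Lemma~\ref{lem:range} does the work: part (3) of that lemma says the group vanishes unless $2q \le p + \cd_{\ell}(k)$, and part (2) gives vanishing unless the second motivic weight $q - w + i\,\tau(\ell^{\nu}) \le 2q - p$; combined, the weight constraint forces $q \ge w - i\,\tau(\ell^{\nu}) + p$, and more importantly the cohomological dimension bound $q \le (p + \cd_{\ell}(k))/2$ is \emph{independent of $i$}, while the increasing motivic weight $q - w + i\,\tau(\ell^{\nu})$ eventually exceeds that of the (finitely generated, $\cd_\ell$-bounded) mod-$\ell^{\nu}$ motivic cohomology of $k$ outside a bounded range of filtration degrees. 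I would spell this out to extract explicit $M(p)$ and $N(p)$ (taking $N(p)$ to be a lower bound coming from Lemma~\ref{lem:range}(1)--(2), and $M(p)$ large enough that the $i$-shifted motivic weight kills all but finitely many filtration degrees via Lemma~\ref{lem:range}). The requirement ${^i}E_q = 0$ for $q<0$ noted in \S\ref{sec:systemofss} follows since $\MGL_k/\ell^{\nu}$ is effective, so its slices vanish in negative filtration.

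The main obstacle will be bookkeeping the three-fold grading correctly: keeping straight which index is the filtration (slice) degree $q$, which are the internal bidegrees $(p,w)$, and how the $i$-fold composite of the degree-$(0,-\tau(\ell^{\nu}))$ map reindexes things, so that condition~(3) of Lemma~\ref{lem:convergence} is checked with the \emph{right} variable held fixed. Once the vanishing lines of Lemma~\ref{lem:range} are transported through this reindexing, conditions (1)--(3) hold and Lemma~\ref{lem:convergence} yields that the colimit spectral sequence $E^1_{p,q,w}(\MGL_k/\ell^{\nu})[(\tau^{\M\ZZ}_{\ell^{\nu}})_k^{-1}]$ is strongly convergent, as claimed. (For $\ell=2$, $\nu=1$ the argument is identical after replacing multiplication by $\tau^{\M\ZZ}_{2}$ with the module action of $(\tau^{\M\ZZ}_4)^4$ via~\eqref{equation:okapairing2}, the Leibniz rule~\eqref{equation:leibniz} guaranteeing compatibility with the differentials; the vanishing lines of Lemma~\ref{lem:range} are unaffected.)
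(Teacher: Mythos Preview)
Your approach is essentially the paper's: verify the three conditions of Lemma~\ref{lem:convergence} for the directed system of \S\ref{sec:systemofss}. Conditions (1) and (2) you handle just as the paper does. Where you diverge is in checking condition~(3): you invoke all three parts of Lemma~\ref{lem:range} and try to track how the vanishing lines shift under the $i$-fold twist by the Bott element, worrying about extracting explicit $M(p)$ and $N(p)$ from the interplay. The paper's argument is considerably simpler here: part~(3) of Lemma~\ref{lem:range} alone says $E^1_{p,q,w}(\MGL_k/\ell^{\nu})=0$ whenever $2q > p + \cd_\ell(k)$, and this bound \emph{does not involve $w$ at all}. Since the transition maps only shift $w$, the same bound holds for every ${^i}E$, so one may take $N(p) = (p+\cd_\ell(k))/2$ uniformly in $i$ (i.e.\ $M(p)=0$). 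You do note that the cohomological-dimension bound is independent of $i$, but then continue with extra reasoning about the growing motivic weight, which is not needed.

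One minor point: for strong convergence of the original slice spectral sequence the paper simply cites \cite{hopkinsmorelhoyois}*{Theorem 8.12}. Your route via Theorem~\ref{prop:mgl-ddk} and left-boundedness is in the right spirit, but your sentence ``hence the conditionally convergent slice spectral sequence collapses at a finite stage in each fixed degree'' is not quite justified---left-boundedness gives conditional convergence to $\pi_{*,*}\MGL_k/\ell^{\nu}$, and you would still need to argue finiteness of the filtration in each degree (which does follow from the vanishing lines, but you should say so rather than asserting collapse).
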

\begin{proof} 
The convergence of the original spectral sequence is established in \cite{hopkinsmorelhoyois}*{Theorem 8.12}.  In the notation of Lemma~\ref{lem:convergence} we let ${^i}E^{p,w}_q:= E^1_{p,q,w}$.  
Part (1) of Lemma~\ref{lem:convergence} is satisfied since $\tau^{\M\ZZ}_{\ell^{\nu}}$ is in slice filtration zero. 
Part (2) of Lemma~\ref{lem:convergence} is satisfied since multiplication by $\tau^{\M\ZZ}_{\ell^{\nu}}$ is of degree $(0,\e_{\MGL}(\ell^{\nu}))$. 
Part (3) of Lemma~\ref{lem:range} implies part (3) of Lemma~\ref{lem:convergence} is satisfied for $N(p) = \frac{p+\cd_{\ell}(k)}{2}$. 
\end{proof}

We can now proceed to prove our main theorems for essentially smooth schemes over a field.

\begin{lemma} 
\label{lem:reduction1} 
Suppose $f:T \rightarrow S$ is an essentially smooth morphism of base schemes. 
Then for $\E \in \SH(S)$ there is an equivalence $f^*(\E_{\geq d}) \simeq (f^*E)_{\geq d}$.
\end{lemma}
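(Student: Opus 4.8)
\textbf{Proof plan for Lemma~\ref{lem:reduction1}.} The claim is that for an essentially smooth morphism $f\colon T\to S$, pullback commutes with the connective cover in the homotopy $t$-structure, i.e.\ $f^*(\E_{\geq d})\simeq (f^*\E)_{\geq d}$. The natural place to start is the definition of $t$-connectivity recalled just before Theorem~\ref{thm:et-conn-final}: being $d$-connective means lying in the localizing subcategory generated by $\{\Sigma^{p,q}\Sigma^\infty_T X_+\}_{p-q\geq d,\ X\in\Sm_S}$, and by \cite{hopkinsmorelhoyois}*{Theorem 2.3} this agrees with local $d$-connectivity, i.e.\ vanishing of $\underline{\pi}^{\Nis}_{p,q}$ for $p-q<d$. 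The strategy is: (1) show $f^*$ preserves $d$-connective objects; (2) show $f^*$ preserves the complementary part of the $t$-structure, i.e.\ objects with $\underline\pi^{\Nis}_{p,q}=0$ for $p-q\geq d$; then (3) apply $f^*$ to the fiber sequence $\E_{\geq d}\to \E\to \E_{<d}$ and conclude by uniqueness of the truncation triangle.

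For step (1): $f^*$ is a left adjoint, hence preserves colimits, so it suffices to check it sends each generator $\Sigma^{p,q}\Sigma^\infty_T X_+$ with $p-q\geq d$ and $X\in\Sm_S$ to a $d$-connective object of $\SH(T)$. Since $f$ is (essentially) smooth, $X\times_S T\in\Sm_T$ (for the essentially smooth case one writes $T$ as a cofiltered limit of smooth $S$-schemes with affine dominant transition maps and uses continuity, as in the conventions and in \cite{hopkinsmorelhoyois}*{Appendix A}), and $f^*\Sigma^\infty_T X_+\simeq \Sigma^\infty_T (X\times_S T)_+$ because $f^*$ is monoidal and commutes with $\Sigma^\infty_T$ and with $(-)_+$ on representables. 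As $f^*$ also commutes with the bigraded suspensions $\Sigma^{p,q}$, we get $f^*\Sigma^{p,q}\Sigma^\infty_T X_+\simeq \Sigma^{p,q}\Sigma^\infty_T(X\times_S T)_+$, which is $d$-connective since $p-q\geq d$. Hence $f^*(\SH(S)_{\geq d})\subseteq \SH(T)_{\geq d}$.

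For step (2), the hard part, one must show $f^*$ preserves the coconnective part $\SH(S)_{<d}$. This is where essential smoothness of $f$ is essential: the key input is that for $f$ smooth, $f^*$ is compatible with the homotopy $t$-structure because it agrees with the exceptional pullback up to a shift/twist—more concretely, $f^*$ commutes with the relevant $\Omega^\infty_{\GG_m}$ and the computation of homotopy sheaves is local and stable under smooth base change. The cleanest route is to use that for smooth $f$, $f^*$ has a left adjoint $f_\#$ which preserves connective generators, so $f^*$ preserves the right-orthogonal class; equivalently, Morel's connectivity/base-change results show $\underline\pi^{\Nis}_{p,q}(f^*\E)\cong f^*\underline\pi^{\Nis}_{p,q}(\E)$ (pullback of Nisnevich sheaves along the induced morphism of sites), so vanishing of the source homotopy sheaves forces vanishing of the target's. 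I expect the main obstacle to be pinning down this $t$-exactness of $f^*$ for smooth (and then essentially smooth) $f$ with a clean citation rather than re-deriving it; the essentially smooth case is then handled by the standard limit argument (continuity of $\SH(-)$ on homotopy sheaves, \cite{hopkinsmorelhoyois}*{Appendix A}).

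Granting steps (1) and (2): apply $f^*$ to the canonical fiber sequence $\E_{\geq d}\to\E\to\E_{<d}$ in $\SH(S)$. Since $f^*$ is exact it yields a fiber sequence $f^*(\E_{\geq d})\to f^*\E\to f^*(\E_{<d})$ in $\SH(T)$ with $f^*(\E_{\geq d})$ $d$-connective (step 1) and $f^*(\E_{<d})$ $d$-coconnective (step 2). By the essential uniqueness of the truncation triangle for the homotopy $t$-structure on $\SH(T)$, this identifies the fiber with $(f^*\E)_{\geq d}$, giving the asserted equivalence $f^*(\E_{\geq d})\simeq (f^*\E)_{\geq d}$, natural in $\E$.
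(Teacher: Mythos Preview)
Your proof plan is correct; the paper, by contrast, gives no argument at all and simply cites \cite{hopkinsmorelhoyois}*{Lemma 2.2}, which is exactly this statement. What you have written is essentially a reconstruction of the proof one finds there, so there is no genuinely different route to compare.

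One remark on your step (2): of the several arguments you sketch, the cleanest and most self-contained is the one via $f_\#$. For $f$ smooth, $f_\#$ exists, is a left adjoint (hence preserves colimits), and sends a generator $\Sigma^{p,q}\Sigma^\infty_T Y_+$ with $Y\in\Sm_T$ to $\Sigma^{p,q}\Sigma^\infty_T Y_+$ with $Y$ regarded in $\Sm_S$; thus $f_\#(\SH(T)_{\geq d})\subseteq \SH(S)_{\geq d}$, and by adjunction $f^*$ preserves the right orthogonal $\SH(-)_{<d}$. The aside about the exceptional pullback $f^!\simeq\Sigma^{2n,n}f^*$ is not needed and slightly obscures the point; I would drop it. The extension from smooth to essentially smooth via continuity, as you indicate, is then routine.
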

\begin{proof} 
This is exactly~\cite{hopkinsmorelhoyois}*{Lemma 2.2}.
\end{proof}


\begin{lemma}
\label{lem:reduction2} 
For essentially smooth schemes $S$ and $T$ over a field $k$ and $f: T \rightarrow S$ a smooth morphism over $\Spec\,k$, there are equivalences:
\begin{enumerate}
\item $f^*(\MGL_S/\ell^{\nu}[(\tau_{\ell^{\nu}}^{\MGL})_S^{-1}]) \simeq f^*(\MGL_S/\ell^{\nu})[(\tau_{\ell^{\nu}}^{\MGL})_T^{-1}] \simeq \MGL_T/\ell^{\nu}[(\tau_{\ell^{\nu}}^{\MGL})_{T}^{-1}]$, and
\item $f^* \MGL^{\et}_S/\ell^{\nu} \simeq  \MGL^{\et}_T/\ell^{\nu}$.
\end{enumerate}
\end{lemma}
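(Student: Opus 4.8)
\textbf{Proof proposal for Lemma~\ref{lem:reduction2}.}

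The plan is to deduce both equivalences from the compatibility of the Bott elements with pullback together with the behavior of $\pi_*$ along smooth morphisms, exactly as in the analogous statements for motivic cohomology (Lemma~\ref{lem:pull-mz-et} and~\eqref{eq:mz-pullsback}). For part (1), I would first observe that $f^*$ is a symmetric monoidal colimit-preserving functor, so it commutes with the Bott inversion construction of~\S\ref{per}: applied to the colimit defining $\MGL_S/\ell^\nu[(\tau_{\ell^\nu}^{\MGL})_S^{-1}]$ (one of the diagrams~\eqref{eq:ss-invert},~\eqref{eq:ss-invert2}, or~\eqref{eq:invert-2nu}, depending on $\ell$ and $\nu$), we get $f^*(\MGL_S/\ell^\nu[(\tau_{\ell^\nu}^{\MGL})_S^{-1}]) \simeq (f^*\MGL_S/\ell^\nu)[(f^*\tau_{\ell^\nu}^{\MGL})^{-1}]$. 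By Definition~\ref{def:s-bott}, $f^*$ of the Bott element on $S$ is the Bott element on $T$, since both are pulled back from $\Spec\,\ZZ[\tfrac{1}{\ell}]$; this gives the first equivalence. For the second equivalence I would use that $\MGL$ is a Cartesian section of $\SH(-)$ (Theorem~\ref{thm:hmh-iso} and the discussion in~\S\ref{landweber-rapid}, or~\cite{landweber}*{Proposition 8.5}), so $f^*\MGL_S \simeq \MGL_T$, hence $f^*\MGL_S/\ell^\nu \simeq \MGL_T/\ell^\nu$ and the inversion is then inversion of $(\tau_{\ell^\nu}^{\MGL})_T$.

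For part (2), the statement is that $f^*$ commutes with $\pi_*$ when $f$ is smooth over a field. This is the same argument as Lemma~\ref{lem:pull-mz-et}: $f^*$ already commutes with $\pi^*$ since $\pi^*$ is computed as localization and $f^*$ preserves the relevant representables and colimits (indeed $\pi^*\Sigma^\infty_T X_+ \simeq \Sigma^\infty_T X_+$ by~\S\ref{subcanonical}). It remains to see that $f^*$ commutes with $\pi_*$. I would deduce this by passing to left adjoints: $f^*$ has left adjoint $f_\#$ (since $f$ is smooth, by the premotivic structure of~\S\ref{sec:cl}), and $\pi_*$ has left adjoint $\pi^*$. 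There is a natural equivalence $f_\#\pi^* \simeq \pi^* f_\#$ because both sides are colimit-preserving and agree on suspension spectra of smooth schemes (again using subcanonicity of the \'etale topology). Passing to right adjoints yields $f^*\pi_* \simeq \pi_* f^*$, and applying this to $\pi^*\MGL_S/\ell^\nu$ together with part (1)'s identification $f^*\MGL_S/\ell^\nu \simeq \MGL_T/\ell^\nu$ gives $f^*\MGL_S^{\et}/\ell^\nu \simeq \MGL_T^{\et}/\ell^\nu$.

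The only genuine subtlety is the Beck--Chevalley-type equivalence $f_\#\pi^* \simeq \pi^*f_\#$ between the two topologies; one must check that the natural transformation $f_\#\pi^* \to \pi^*f_\#$ (or its adjoint) is an equivalence, and the cleanest route is to verify it on generators $\Sigma^\infty_T X_+$ for $X$ smooth over the source and invoke that both functors preserve colimits — here one uses that $\pi^*$ preserves representables because $\et$ is subcanonical, as recorded in~\S\ref{subcanonical}. Everything else is formal manipulation of adjoints and the already-established Cartesian-section property of $\MGL$ and of the Bott elements, so this step is where I would spend the care; the rest is routine.
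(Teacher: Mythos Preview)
Your proposal is correct and follows essentially the same approach as the paper's proof: part~(1) is deduced from the construction of the Bott elements (their compatibility with pullback and the Cartesian-section property of $\MGL$), and part~(2) is obtained by showing $f^*\pi_*\simeq \pi_*f^*$ via the equivalence of left adjoints $f_\#\pi^*\simeq \pi^*f_\#$, checked on representables and extended by colimit-preservation. The paper's proof is terser but makes exactly the same moves.
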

\begin{proof} Statement (1) follows from the construction of Bott elements.
For (2) we need to prove that $f^*$ commutes with $\epsilon_*$. 
These functors have left adjoints $f_{\#}$ and $\epsilon^*$, respectively, 
and there is an equivalence $f_{\#}\epsilon^* \simeq \epsilon^*f_{\#}$ since both sides agree on representables and preserve colimits. 
\end{proof}

\begin{theorem} 
\label{thm:mgl} 
Suppose $k$ is a field, $\ell$ is prime to the exponential characteristic of $k$ such that $\cd_{\ell}(k) < \infty$, and let $S$ be an essentially smooth $k$-scheme.
Then, for all $\nu \geq 1$ the unit of the adjunction~\eqref{eq:pi-adjunction-paper} induces a natural equivalence in $\SH(S)$:
$$
\MGL/{\ell^{\nu}}[(\tau^{\MGL}_{\ell^{\nu}})_{S}^{-1}] 
\overset{\simeq}{\rightarrow} 
\MGL^{\et}/{\ell^{\nu}}.
$$
\end{theorem}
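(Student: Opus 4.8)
The strategy is the ``slice comparison paradigm'' of \S\ref{sect:slice}: factor the unit map through the colimit of \'etale slice covers and show each of the two resulting maps is an equivalence. Concretely, I would begin by reducing to the case $S = \Spec\,k$. By Lemma~\ref{lem:reduction2} both $\MGL/\ell^\nu[(\tau^{\MGL}_{\ell^\nu})^{-1}]$ and $\MGL^{\et}/\ell^\nu$ pull back along essentially smooth morphisms (and by essentially smooth base change \cite{hopkinsmorelhoyois}*{Appendix A} the essentially smooth case over $k$ reduces to the smooth case); moreover, by Lemma~\ref{lem:conserve} applied to $\Mod_{\MGL}$ (which satisfies continuity and localization), it suffices to check the equivalence after pulling back to all points, i.e., when the base is a field. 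So assume $S = \Spec\,k$ with $\cd_\ell(k) < \infty$.

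Over the field $k$, Lemma~\ref{lem:tau-invert} shows that $(\tau^{\MGL}_{\ell^\nu})_k$ becomes invertible in $\SH_{\et}(k)$, so the unit map $\MGL/\ell^\nu \to \pi_*\pi^*\MGL/\ell^\nu = \MGL^{\et}/\ell^\nu$ factors through a canonical map
\[
\Phi\colon \MGL/{\ell^\nu}[(\tau^{\MGL}_{\ell^\nu})_k^{-1}] \longrightarrow \MGL^{\et}/{\ell^\nu}.
\]
Next I would apply the slice filtration to this map. Since the functors $f_q$, $\colim$, and Bott inversion are all colimit preserving (the last by construction in \S\ref{sec:gradediso}), the source of $\Phi$ carries the inverted slice tower $\{f_q(\MGL/\ell^\nu)[(\tau^{\M\ZZ}_{\ell^\nu})^{-1}]\}$, while the target carries the \'etale slice tower $\{(f_q\MGL)^{\et}/\ell^\nu\}$. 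The induced map of spectral sequences is an isomorphism on $E^1$-pages: this is exactly Proposition~\ref{prop:ssse1}, whose proof rests on Theorem~\ref{thm:motcohcase3} (\'etale descent for Bott-inverted motivic cohomology) together with the identification of the slices of $\MGL$ as a polynomial algebra over $\M\ZZ$ from \S\ref{subsection:slicesofMGL}.

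It then remains to upgrade the isomorphism on $E^1$-pages to an equivalence of the abutments, and \textbf{this is the main obstacle}: inverting an element in a strongly convergent spectral sequence can destroy convergence, so both spectral sequences must be controlled by hand. For the source, Proposition~\ref{prop:conv-invert} gives strong convergence of the inverted slice spectral sequence, using the vanishing lines of Lemma~\ref{lem:range} to feed into the abstract colimit-of-spectral-sequences criterion of Lemma~\ref{lem:convergence}. For the target, Corollary~\ref{cor:et-complete} (a consequence of the connectivity estimate Theorem~\ref{thm:et-conn-final} via Proposition~\ref{prop:landweber-naive}, applicable since $\MGL$ is Landweber exact) shows $\lim_q (f_q\MGL)^{\et}/\ell^\nu \simeq 0$, hence the \'etale slice spectral sequence is conditionally convergent, and the left-boundedness of Corollary~\ref{corollary:MGLetleftbounded} promotes this to strong convergence. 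Finally one must check that $\colim \pi_*\pi^* f_q\MGL/\ell^\nu \simeq \pi_*\pi^*\MGL/\ell^\nu$, i.e., that $\pi_*$ commutes with the sequential colimit $\colim f_q$; over a field with $\cd_\ell(k) < \infty$ this is Theorem~\ref{thm:pi-pres-colimits} (after $(\ell)$-localization, which is harmless since we are working mod $\ell^\nu$). Combining: $\Phi$ is a map of motivic spectra inducing an isomorphism of strongly convergent spectral sequences converging to $\pi_{*,*}$ of source and target respectively, hence is an equivalence on homotopy groups of all the generators $\Sigma^{p,q}\Sigma^\infty_T X_+$, and therefore an equivalence.
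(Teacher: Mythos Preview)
Your overall strategy matches the paper's: reduce to $S=\Spec\,k$ via Lemma~\ref{lem:reduction2}, factor the unit through the slice tower, identify the $E^1$-pages via Proposition~\ref{prop:ssse1}, and appeal to convergence on both sides plus Theorem~\ref{thm:pi-pres-colimits} for $\eta^\infty$. But there is a real gap in your last step. The convergence result you cite for the source, Proposition~\ref{prop:conv-invert}, is proved only for the spectral sequence evaluated at the single object $\Spec\,k$: its input Lemma~\ref{lem:range}(3) is a vanishing statement for $H^{*,*}_{\mot}(k;\ZZ/\ell^\nu)$ governed by $\cd_\ell(k)$, not for $H^{*,*}_{\mot}(X;\ZZ/\ell^\nu)$ with $X\in\Sm_k$ arbitrary. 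So your comparison gives isomorphisms $\pi_{p,q}(\,\cdot\,)(\Spec\,k)$ only, which is \emph{not} enough to conclude $\Phi$ is an equivalence in $\SH(k)$; you still need it after mapping in every $\Sigma^{p,q}\Sigma^\infty_TX_+$. Your appeal to Lemma~\ref{lem:conserve} does not help: once the base is a field there are no further points to pull back to, and the issue is detection over all of $\Sm_k$, not over points of the base.

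The paper closes this gap by an extra reduction you omit. First assume $k$ perfect; then by Morel the homotopy sheaves $\underline{\pi}^{\Nis}_{p,q}$ are strictly $\AA^1$-invariant, hence unramified, so a map of motivic spectra is an equivalence as soon as it induces isomorphisms on $\underline{\pi}^{\Nis}_{p,q}(\,\cdot\,)(L)$ for every finitely generated field extension $L/k$ (detection on generic points, \cite{morel-book}*{Example 2.3, Proposition 2.8}). For each such $L$ one has $\cd_\ell(L)<\infty$, and the \emph{entire} spectral sequence argument---Proposition~\ref{prop:ssse1}, Proposition~\ref{prop:conv-invert}, Corollary~\ref{cor:et-complete}, and Corollary~\ref{cor:comp2}---is run over $\Spec\,L$, not over $\Spec\,k$. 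This gives the equivalence in $\SH(k)$ for perfect $k$. The non-perfect case is then deduced by a separate continuity argument: choose a perfect subfield $L\subset k$ with $k/L$ essentially smooth, and use Lemma~\ref{lem:reduction2} together with \cite{hopkinsmorelhoyois}*{Lemma A.7} to pass to the limit. Both the Morel detection step and the perfect/non-perfect dichotomy are essential and missing from your outline.
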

\begin{proof} 
First assume that the field is perfect. Lemmas~\ref{lem:reduction1} and~\ref{lem:reduction2} reduce to the case of $S = \Spec\,k$. 
Over perfect fields, the motivic homotopy sheaves are strictly $\AA^{1}$-invariant \cite{morel-trieste}*{Remark 5.1.13}, 
and isomorphisms are detected on generic points of smooth $k$-schemes \cite{morel-book}*{Example 2.3, Proposition 2.8}. 
For every finitely generated extension $L$ of $k$ and $p, q \in \ZZ$ we claim there is a naturally induced isomorphism:
$$
\pi_{p,q}\MGL/\ell^{\nu}[(\tau^{\MGL}_{\ell^{\nu}})^{-1}](L) 
\overset{\iso}{\rightarrow} 
\pi_{p,q}\MGL^{\et}/\ell^{\nu}(L).
$$ 
To wit, 
recall the unit map $\MGL/{\ell}^{\nu} \rightarrow \MGL^{\et}/{\ell}^{\nu})$ factors though: 
$$
\eta_{\infty}: 
\MGL/{\ell}^{\nu} \simeq \colim f_q\MGL/{\ell}^{\nu} \rightarrow \colim (f_q\MGL)^{\et}/\ell^{\nu},
$$ 
and: 
$$
\eta^{\infty}:  
\colim (f_q\MGL)^{\et}/{\ell}^{\nu} \rightarrow \epsilon_* \epsilon^* \MGL/{\ell}^{\nu}.
$$ 
Under our assumptions, $\eta^{\infty}$ is an equivalence since $\epsilon_*$ preserves colimits by Corollary~\ref{cor:comp2}.
Lemma~\ref{lem:tau-invert} shows $\eta_{\infty}$ factors through $\MGL/{\ell}^{\nu}[(\tau_{\ell^{\nu}}^{\MGL})^{-1}] \rightarrow \MGL^{\et}/{\ell}$.
By Proposition~\ref{prop:ssse1} the maps between the corresponding spectral sequences are isomorphisms. 
Corollary~\ref{cor:et-complete} (which applies since $\cd_{\ell}(L) < \infty$ by, for example, \cite{shatz}*{Theorem 28}) and Proposition~\ref{prop:conv-invert} 
inform us that these spectral sequences are strongly convergent.
Therefore we have an isomorphism on homotopy groups $\pi_{p,q}$. 

To promote the statement to non-perfect fields we perform a standard continuity argument. So let $k$ be an arbitrary field. It suffices to prove that the comparison map $\MGL_k/{\ell^{\nu}}[(\tau^{\MGL}_{\ell^{\nu}})^{-1}] 
\rightarrow 
\MGL_k^{\et}/{\ell^{\nu}}$ is an equivalence. Choose an essentially smooth morphism $f: \Spec\,k \rightarrow \Spec\,L$ where $L$ is perfect. According to Lemma~\ref{lem:reduction2} the comparison map $\MGL_k/{\ell^{\nu}}[(\tau^{\MGL}_{\ell^{\nu}})^{-1}] 
\rightarrow 
\MGL_k^{\et}/{\ell^{\nu}}$ is $f^*$ of the comparison map over $\Spec\,L$. Write $f$ as a cofiltered limit of maps $f_{\alpha}: S_{\alpha} \rightarrow \Spec\,k$ where  each $S_{\alpha} \rightarrow \Spec\,k$ is smooth. Furthermore any $X \in \Sm_k$ can be written as a limit $X \simeq \lim X_{\alpha}$ where each $X_{\alpha}$ is a smooth $S_{\alpha}$-scheme. Now for any $p, q \in \ZZ$ and any $X \in \Sm_k$, continuity \cite{hopkinsmorelhoyois}*{Lemma A.7} gives us equivalences
\begin{equation} \label{eq:cont-1}
\Maps(\Sigma^{p,q}_TX_{+}, \MGL_k/{\ell^{\nu}}[(\tau^{\MGL}_{\ell^{\nu}})^{-1}]) 
\simeq \colim_{\alpha} \Maps(\Sigma^{p,q}_TX_{\alpha +}, f^*_{\alpha}\MGL_L/{\ell^{\nu}}[(\tau^{\MGL}_{\ell^{\nu}})^{-1}] )
\end{equation}
and
\begin{equation} \label{eq:cont-2}
\Maps(\Sigma^{p,q}_TX_{+}, \MGL_k^{\et}/{\ell^{\nu}}) \simeq 
\colim_{\alpha} \Maps(\Sigma^{p,q}_TX_{\alpha}+, f^*_{\alpha +}\MGL_L^{\et}/{\ell^{\nu}})
\end{equation}
respecting the comparison map which goes $\eqref{eq:cont-1}\rightarrow\eqref{eq:cont-2}$. Since each term of the colimit is an equivalence from the case of smooth schemes over perfect fields, we are done.

\end{proof}

\subsubsection{General case}

We now proceed to prove the general case. We would like to thank Tom Bachmann for help in improving the next theorem to its current level of generality.

\begin{theorem} \label{thm:tau-is-loc} Let $\ell$ be a prime, $S$ be a Noetherian $\ZZ[\frac{1}{\ell}]$-scheme of finite dimension, and assume that for all $x \in S$, $\cd_{\ell}(k(x)) < \infty$. Then for all $\nu \geq 1$, $\MGL_S/{\ell^{\nu}}[(\tau^{\MGL}_{\ell^{\nu}})_{S}^{-1}]$ is \'etale local.
\end{theorem}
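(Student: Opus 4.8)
The plan is to reduce the statement that $\MGL_S/\ell^\nu[(\tau^{\MGL}_{\ell^\nu})_S^{-1}]$ is \'etale local to the case of fields, where Theorem~\ref{thm:mgl} has already been established, and then to bootstrap from fields back to $S$ using the continuity and localization properties of the premotivic category $\SH$ together with the conservativity result of Lemma~\ref{lem:conserve}. First I would recall from Lemma~\ref{lem:concrete} that being \'etale local is equivalent to the condition that $\Omega^\infty_{\GG_m,\Nis}\Sigma^{p,q}$ of the spectrum satisfies \'etale hyperdescent for all $p,q\in\ZZ$; and that by Proposition~\ref{prop:loc} this is the same as the counit $\pi^*\pi_* \to \id$ being an equivalence on this object, i.e.\ that the unit $\MGL_S/\ell^\nu[(\tau^{\MGL}_{\ell^\nu})_S^{-1}] \to \pi_*\pi^*\MGL_S/\ell^\nu[(\tau^{\MGL}_{\ell^\nu})_S^{-1}]$ is an equivalence. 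Since $\pi^*$ is monoidal and commutes with the Bott inversion (the Bott element is inverted already in $\SH_{\et}(S)$ by Lemma~\ref{lem:tau-invert}), the target is identified with $\pi_*\pi^*\MGL_S/\ell^\nu = \MGL^{\et}_S/\ell^\nu$, so the claim is precisely that the comparison map $\MGL_S/\ell^\nu[(\tau^{\MGL}_{\ell^\nu})_S^{-1}] \to \MGL^{\et}_S/\ell^\nu$ is an equivalence.

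Next I would apply Lemma~\ref{lem:conserve}: since $\Mod_{\M\ZZ}$ and indeed $\SH$ itself (via the six functors formalism of \cite{ayoub}, \cite{cisinski-deglise}, together with the fact that $\MGL$-modules form a premotivic category satisfying continuity and localization, cf.\ \S\ref{landweber-rapid} and \cite{landweber}) satisfy continuity and localization, the family of functors $i_x^* : \SH(S) \to \SH(k(x))$ ranging over all points $x\in S$ is conservative. So it suffices to check the comparison map becomes an equivalence after applying $i_x^*$ for each $x$. Here I would invoke the compatibility of both sides with pullback: on the Bott-inverted side, $i_x^*\MGL_S/\ell^\nu[(\tau^{\MGL}_{\ell^\nu})_S^{-1}] \simeq \MGL_{k(x)}/\ell^\nu[(\tau^{\MGL}_{\ell^\nu})_{k(x)}^{-1}]$ because $\MGL$ is a Cartesian section, $\pi^*$ and hence Bott inversion commute with $i_x^*$, and the Bott elements pull back by Definition~\ref{def:s-bott}; on the \'etale side, I would need $i_x^*\MGL^{\et}_S/\ell^\nu \simeq \MGL^{\et}_{k(x)}/\ell^\nu$, i.e.\ that $i_x^*$ commutes with $\pi_*$. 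This last commutation is the one genuinely delicate point: $i_x^*$ is only a pullback along a non-smooth (generically a limit of smooth, but also with closed-immersion factors) morphism, so the easy argument of Lemma~\ref{lem:pull-mz-et} via $f_\#$ does not directly apply.

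The main obstacle, then, is establishing $f^*\pi_*\pi^*\MGL \simeq \pi_*\pi^*\MGL$ for $f$ an arbitrary point inclusion. I would handle this in two stages. For the pro-smooth (residually trivial) part of $f$ one uses continuity (\cite{hopkinsmorelhoyois}*{Lemma A.7}) exactly as in the proof of Theorem~\ref{thm:mgl}: write $f$ as a cofiltered limit of smooth maps and pass to the colimit, using that $\pi_*\pi^*$ commutes with these colimits by the continuity of $\SH_{\et}$ (which itself follows from the six functors formalism, \cite{ayoub}*{Corollaire 4.5.47}). For the closed-immersion and generization parts arising in passing to a residue field of mixed characteristic behavior, one instead uses that both $\MGL_S/\ell^\nu[(\tau^{\MGL}_{\ell^\nu})_S^{-1}]$ and $\MGL^{\et}_S/\ell^\nu$ are objects of $\Mod_{\MGL_S}$ on which the comparison is a map of $\MGL_S$-modules, and reduces to the known field case by the localization triangle: given a closed point $i:\Spec k(x)\hookrightarrow X$ inside a suitable $X$ with open complement $j:U\hookrightarrow X$, the comparison is an equivalence over $U$ (noetherian induction on dimension) and over $k(x)$ (Theorem~\ref{thm:mgl}), hence over $X$ by the localization cofiber sequence $j_!j^* \to \id \to i_!i^*$, which holds for $\MGL$-modules by \cite{landweber} and is preserved by $\pi^*$. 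Running noetherian induction on $\dim S$ with this localization argument as the inductive step, anchored by the field case, then yields the theorem. I would expect the bookkeeping of which auxiliary schemes $X$ to choose (so that they lie in the relevant adequate category and have the closed point as a stratum) to be the most technically fiddly piece, but no new ideas beyond continuity, localization, conservativity, and the already-proven field case are required.
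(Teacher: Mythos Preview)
Your approach has a genuine circularity. You want to check that the comparison map $\MGL_S/\ell^\nu[(\tau^{\MGL}_{\ell^\nu})_S^{-1}] \to \MGL^{\et}_S/\ell^\nu$ is an equivalence by pulling back to points via $i_x^*$ and invoking Lemma~\ref{lem:conserve}. For the left-hand side this is fine, but for the right-hand side you need $i_x^*\pi_*\pi^*\MGL_S/\ell^\nu \simeq \pi_*\pi^*\MGL_{k(x)}/\ell^\nu$, i.e.\ base change for $\MGL^{\et}$ along an arbitrary (non-smooth) morphism. In the paper this is Theorem~\ref{thm:base-change}, proved \emph{after} Theorem~\ref{thm:tau-is-loc} as a consequence of it (via Theorem~\ref{thm:S-noeth-new}). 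Your proposed noetherian-induction/localization workaround does not close the loop: at the inductive step you apply the localization triangle to the comparison map over $X$, but to identify $i^*$ of the right-hand side with $\MGL^{\et}_Z/\ell^\nu$ for the closed immersion $i:Z\hookrightarrow X$ you again need exactly the unestablished base change. The smooth case (Lemma~\ref{lem:pull-mz-et}) and continuity handle only the pro-smooth part; closed immersions are where the argument breaks.

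The paper's proof sidesteps $\MGL^{\et}$ altogether. It tests \'etale locality directly: for a hypercover $U_\bullet\to U$ in $\Sm_S$, let $C=C(p,q)$ be the cofiber of $\colim\Sigma^{p,q}\Sigma^\infty_T U_{\bullet,+}\to\Sigma^{p,q}\Sigma^\infty_T U_+$, so one needs $[C,\,\MGL_S/\ell^\nu[\tau^{-1}]]=0$. The key move is to use the unital ring structure on $\MGL_S/\ell^\nu[\tau^{-1}]$ to rewrite this as $[\MGL_S/\ell^\nu[\tau^{-1}]\wedge C,\,\MGL_S/\ell^\nu[\tau^{-1}]]$ and then prove the stronger statement $\MGL_S/\ell^\nu[\tau^{-1}]\wedge C\simeq 0$. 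This object \emph{is} stable under arbitrary pullback (both factors are: Bott-inverted $\MGL$ by Lemma~\ref{lem:reduction2}, and $C$ is built from representables), so Lemma~\ref{lem:conserve} applies and one checks vanishing over fields, where Theorem~\ref{thm:mgl} gives \'etale locality and hence $C$ becomes acyclic. The case $\ell=2$, $\nu=1$ (no unital multiplication) is handled separately by exhibiting $\MGL_S/2[\tau^{-1}]$ as a retract of $\MGL_S/2\wedge\MGL_S/4[\tau^{-1}]$ via the Oka action. The essential trick is to shift the pointwise check from a map whose target involves $\pi_*$ to a vanishing statement about a smash product that does not.
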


\begin{proof} Let $U \in \Sm_S$ and let $U_{\bullet} \rightarrow U$ be a hypercover. After Proposition~\ref{prop:loc}, we need to prove that the map:
\[
\Maps(\Sigma^{p,q}\Sigma^{\infty}_TU_+, \MGL_S/{\ell^{\nu}}[(\tau^{\MGL}_{\ell^{\nu}})_{S}^{-1}]) \rightarrow \lim \Maps(\Sigma^{p,q}\Sigma^{\infty}_TU_{\bullet,+}, \MGL_S/{\ell^{\nu}}[(\tau^{\MGL}_{\ell^{\nu}})_{S}^{-1}]) 
\]
is an equivalence for all $p, q \in \ZZ$.  Fix $p, q \in \ZZ$ and take the cofiber, $C(p,q)$, of the map $\colim \Sigma^{p,q}\Sigma^{\infty}_TU_{\bullet,+} \rightarrow \Sigma^{p,q}\Sigma^{\infty}_TU_+$ in $\SH(S)$. Our goal now is to prove that: 
\[
\Maps(C(p,q), \MGL_S/{\ell^{\nu}}[(\tau^{\MGL}_{\ell^{\nu}})_{S}^{-1}]) \simeq 0.
\]
First let us assume that $\MGL_S/{\ell^{\nu}}$ is a unital ring spectrum so that $\MGL_S/{\ell^{\nu}}[(\tau^{\MGL}_{\ell^{\nu}})_{S}^{-1}]$ is as well (since the functor of $(\tau^{\MGL}_{\ell^{\nu}})_{S}$-inversion is lax monoidal). In this situation we have an isomorphism:
\[
[ C(p,q), \MGL_S/{\ell^{\nu}}[(\tau^{\MGL}_{\ell^{\nu}})_{S}^{-1}]] \cong [ \MGL_S/{\ell^{\nu}}[(\tau^{\MGL}_{\ell^{\nu}})_{S}^{-1}] \wedge C(p,q), \MGL_S/{\ell^{\nu}}[(\tau^{\MGL}_{\ell^{\nu}})_{S}^{-1}]].
\]
We claim that $ \MGL_S/{\ell^{\nu}}[(\tau^{\MGL}_{\ell^{\nu}})_{S}^{-1}] \wedge C(p,q) \simeq 0$. Since $\tau^{\MGL}$-inverted algebraic cobordism is stable under base change by Lemma~\ref{lem:reduction2}, we may check this on fields by an application of Lemma~\ref{lem:conserve}, where we apply Theorem~\ref{thm:mgl} which, in particular, tells us that $\tau^{\MGL}$-inverted algebraic cobordism is \'et-local.

The only case that is not covered by the preceding argument is when $\ell =2$ and $\nu=1$, i.e., $\MGL_S/2$. We argue as follows: since $\MGL_S/4$ is a unital ring spectrum, we have a retract diagram:
\[
\MGL_S/2 \wedge \sspt \stackrel{\id \wedge \eta}{\rightarrow} \MGL_S/2 \wedge \MGL_S/4 \rightarrow \MGL_S/2,
\]
where the second map is induced by Oka's module action~\eqref{equation:okapairing1}. This induces a retract diagram of $\MGL_S/4$-modules
\[
\MGL_S/2[(\tau^{\MGL}_{2})_{S}^{-1}]  \wedge \sspt \stackrel{\id \wedge \eta}{\rightarrow} \MGL_S/2 \wedge \MGL_S/4[(\tau^{\MGL}_{2})_{S}^{-1}]  \rightarrow \MGL_S/2[(\tau^{\MGL}_{2})_{S}^{-1}].
\]
Now, since $\MGL_S/4[(\tau^{\MGL}_{2})_{S}^{-1}]$ is \'etale local by the previous paragraph and $\MGL_S/2[(\tau^{\MGL}_{2})_{S}^{-1}]$ is, by definition, obtained by inverting the Bott element from $\MGL_S/4$ (see Lemma~\ref{lemma:mgl-bott-2powers}), we conclude.


\end{proof}

\begin{theorem} \label{thm:S-noeth-new} Let $\ell$ be a prime, $S$ be a Noetherian $\ZZ[\frac{1}{\ell}]$-scheme of finite dimension, and assume that for all $x \in S$, $\cd_{\ell}(k(x)) < \infty$. Then for all $\nu \geq 1$ the unit of the adjunction~\eqref{eq:pi-adjunction-paper}:
\[
\MGL_S/{\ell^{\nu}}[(\tau^{\MGL}_{\ell^{\nu}})_{S}^{-1}] 
\overset{\simeq}{\rightarrow} 
\MGL_S^{\et}/{\ell^{\nu}}.
\]
is an equivalence in $\SH(S)$. 
\end{theorem}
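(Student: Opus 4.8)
\textbf{Proof proposal for Theorem~\ref{thm:S-noeth-new}.}

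The plan is to bootstrap from the field case, Theorem~\ref{thm:mgl}, using the general Noetherian setup developed in Sections~\ref{sec:cl} and~\ref{sect:commuting}. First I would invoke Theorem~\ref{thm:tau-is-loc} to conclude that $\MGL_S/\ell^{\nu}[(\tau^{\MGL}_{\ell^{\nu}})_S^{-1}]$ is \'etale local, i.e.\ it lies in the essential image of $\pi_*$; equivalently, its unit map $\MGL_S/\ell^{\nu}[(\tau^{\MGL}_{\ell^{\nu}})_S^{-1}]\to\pi_*\pi^*(\MGL_S/\ell^{\nu}[(\tau^{\MGL}_{\ell^{\nu}})_S^{-1}])$ is an equivalence by Proposition~\ref{prop:loc}. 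So it suffices to identify the right-hand side with $\MGL_S^{\et}/\ell^{\nu}=\pi_*\pi^*\MGL_S/\ell^{\nu}$. Since $\pi^*$ is monoidal and preserves colimits (hence commutes with Bott elements and with the $\colim$ defining the localization), $\pi^*(\MGL_S/\ell^{\nu}[(\tau^{\MGL}_{\ell^{\nu}})_S^{-1}])\simeq \pi^*(\MGL_S/\ell^{\nu})[\pi^*(\tau^{\MGL}_{\ell^{\nu}})_S^{-1}]$; but by Lemma~\ref{lem:tau-invert} the class $\pi^*(\tau^{\MGL}_{\ell^{\nu}})_S$ is already invertible in $\SH_{\et}(S)$, so this is just $\pi^*\MGL_S/\ell^{\nu}$. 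Applying $\pi_*$ then gives $\MGL_S^{\et}/\ell^{\nu}$, as desired.

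The one genuinely new point to verify is that the natural comparison map constructed this way agrees with the unit of~\eqref{eq:pi-adjunction-paper} applied to $\MGL_S/\ell^{\nu}$, factored through the Bott-inverted spectrum. Here I would argue as in the proof of Theorem~\ref{thm:tau-is-loc}: the unit $\MGL_S/\ell^{\nu}\to\MGL_S^{\et}/\ell^{\nu}$ sends $(\tau^{\MGL}_{\ell^{\nu}})_S$ to an invertible element by Lemma~\ref{lem:tau-invert}, hence factors uniquely through $\MGL_S/\ell^{\nu}[(\tau^{\MGL}_{\ell^{\nu}})_S^{-1}]\to\MGL_S^{\et}/\ell^{\nu}$ by the universal property of the localization (Theorem~\ref{thm:robalo-stab1}). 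To check this factored map is an equivalence, I would apply Lemma~\ref{lem:conserve}: the functor $\prod i_x^*\colon\SH(S)\to\prod_{x}\SH(k(x))$ is conservative, since $\SH$ (and $\Mod_{\MGL}$) satisfies continuity and localization, so it suffices to check the statement after pulling back to each residue field $k(x)$.

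The main obstacle is the base change compatibility: one needs $i_x^*$ of the Bott-inverted spectrum and of the \'etale-local spectrum to be the corresponding objects over $k(x)$. For Bott inversion this follows from the construction of the Bott elements via pullback from $\Spec\,\ZZ[\tfrac1\ell]$ (Definition~\ref{def:s-bott}) together with the fact that $i_x^*$ is monoidal and colimit-preserving, exactly as in Lemma~\ref{lem:reduction2}(1) — but now $i_x$ need not be smooth, so one cannot use the $f_{\#}$ argument; instead I would use that $\MGL$ and $\M\ZZ$ are Cartesian sections (Theorem~\ref{thm:integral}, \S\ref{landweber-rapid}) and that Bott inversion is defined by a universal formula compatible with any base change. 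The subtler half is $i_x^*\MGL_S^{\et}/\ell^{\nu}\simeq\MGL_{k(x)}^{\et}/\ell^{\nu}$, i.e.\ that $i_x^*$ commutes with $\pi_*$: a priori $i_x^*$ and $\pi_*$ have no reason to commute. This is precisely where one uses the hypothesis that $S$ has uniformly bounded (here, pointwise finite, and by Noetherianness one reduces to a uniform bound on irreducible components) $\ell$-cohomological dimension: by Theorem~\ref{thm:pi-pres-colimits}/Corollary~\ref{cor:comp2}, $\pi_*$ preserves colimits after inverting suitable primes, and combined with the six-functor formalism for $\SH_{\et}$ (Theorem~\ref{thm:rigidity} and \cite{ayoub}) one gets the base change $i_x^*\pi_*\simeq\pi_*i_x^*$ on the relevant objects. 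Once these two compatibilities are in place, Theorem~\ref{thm:mgl} over each $k(x)$ finishes the proof.
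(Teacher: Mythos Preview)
Your first paragraph is already the complete proof, and it is exactly the paper's two-line argument: Theorem~\ref{thm:tau-is-loc} gives \'etale locality of the Bott-inverted spectrum, and Lemma~\ref{lem:tau-invert} shows $\pi^*$ of the localization map $\MGL_S/\ell^\nu\to\MGL_S/\ell^\nu[(\tau^{\MGL}_{\ell^\nu})_S^{-1}]$ is an equivalence; applying $\pi_*$ and using \'etale locality of the target finishes. The worry you raise in the second paragraph---that the equivalence you produced might not be the factored unit map---is unfounded: by naturality of the unit $\id\Rightarrow\pi_*\pi^*$ applied to the localization map, the composite
\[
\MGL_S/\ell^\nu[(\tau^{\MGL}_{\ell^\nu})_S^{-1}]\xrightarrow{\ \eta\ }\pi_*\pi^*\bigl(\MGL_S/\ell^\nu[(\tau^{\MGL}_{\ell^\nu})_S^{-1}]\bigr)\xleftarrow{\ \simeq\ }\pi_*\pi^*\MGL_S/\ell^\nu
\]
is precisely the factorization of the unit of $\MGL_S/\ell^\nu$ through the Bott inversion. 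No further identification is required, and in particular no pointwise check over residue fields is needed.

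Your third paragraph is not just redundant but introduces a genuine circularity. The base change $i_x^*\pi_*\simeq\pi_*i_x^*$ on $\MGL/\ell^\nu$ is \emph{not} a formal consequence of $\pi_*$ preserving colimits plus the six-functor formalism for $\SH_{\et}$; it is the content of Theorem~\ref{thm:base-change}, which the paper deduces \emph{from} Theorem~\ref{thm:S-noeth-new} (by rewriting $\MGL^{\et}/\ell^\nu$ as a Bott-inverted colimit, which manifestly commutes with $f^*$). Invoking it here would assume what you are trying to prove. The pointwise argument via Lemma~\ref{lem:conserve} does appear in the paper, but one level down: it is used inside the proof of Theorem~\ref{thm:tau-is-loc} to show $\MGL_S/\ell^\nu[(\tau^{\MGL}_{\ell^\nu})_S^{-1}]\wedge C(p,q)\simeq 0$, where only the Bott-inverted side (which base-changes well for elementary reasons) is tested pointwise---never $\MGL^{\et}$.
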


\begin{proof} Theorem~\ref{thm:tau-is-loc} tells us that $\MGL_S/{\ell^{\nu}}[(\tau^{\MGL}_{\ell^{\nu}})_{S}^{-1}]$ is \'etale local, while Lemma~\ref{lem:tau-invert} tells us that $(\tau^{\MGL}_{\ell^{\nu}})_{S}$ is invertible after applying the \'etale localization endofunctor $\epsilon_*\epsilon^*$. The theorem then follows immediately.
\end{proof}


\begin{theorem} \label{thm:S-noeth-new-module} Let $\ell$ be a prime, $S$ be a Noetherian $\ZZ[\frac{1}{\ell}]$-scheme of finite dimension, and assume that for all $x \in S$, $\cd_{\ell}(k(x)) < \infty$. Let $\nu \geq 1$ and suppose that $\MGL/\ell^{\nu}$ is a unital ring spectrum \footnote{The only case missing is $\ell =2$, $\nu=1$.}.  Then, for any $\E \in \Mod_{\MGL}$ the unit of the adjunction~\eqref{eq:pi-adjunction-paper}:
\[
\E_S/{\ell^{\nu}}[(\tau^{\MGL}_{\ell^{\nu}})_{S}^{-1}] 
\overset{\simeq}{\rightarrow} 
\E_S^{\et}/{\ell^{\nu}}.
\]
is an equivalence in $\SH(S)$. 
\end{theorem}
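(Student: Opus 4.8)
The strategy is to reduce to the already established case of $\MGL_S$ itself (Theorem~\ref{thm:S-noeth-new}) by a module-theoretic bootstrap, exploiting that $\MGL/\ell^\nu$ is a unital ring spectrum so that all the relevant localization functors are lax (even symmetric) monoidal. First I would record that, because $\MGL_S/\ell^\nu$ is a unital ring spectrum, the endofunctor $(-)[(\tau^{\MGL}_{\ell^\nu})_S^{-1}]$ on $\Mod_{\MGL_S}$ is given by smashing with $\MGL_S/\ell^\nu[(\tau^{\MGL}_{\ell^\nu})_S^{-1}]$ over $\MGL_S/\ell^\nu$; concretely, for $\E \in \Mod_{\MGL_S}$ one has
\[
\E_S/\ell^\nu[(\tau^{\MGL}_{\ell^\nu})_S^{-1}] \simeq \E_S/\ell^\nu \wedge_{\MGL_S/\ell^\nu} \bigl(\MGL_S/\ell^\nu[(\tau^{\MGL}_{\ell^\nu})_S^{-1}]\bigr),
\]
which follows from the colimit formula~\eqref{eq:colim-stab} for inverting an element, the fact that smashing over $\MGL_S/\ell^\nu$ commutes with colimits, and that $(\tau^{\MGL}_{\ell^\nu})_S$ already acts on every $\MGL_S/\ell^\nu$-module.

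Next I would show that $\E_S/\ell^\nu[(\tau^{\MGL}_{\ell^\nu})_S^{-1}]$ is \'etale local, i.e.\ lies in the essential image of $\pi_*$ (equivalently, satisfies the hypercover condition of Proposition~\ref{prop:loc}). The key input is Theorem~\ref{thm:tau-is-loc}, which gives that $\MGL_S/\ell^\nu[(\tau^{\MGL}_{\ell^\nu})_S^{-1}]$ is \'etale local. Since \'etale local objects in $\SH(S)$ form a localizing subcategory (the image of the right adjoint $\pi_*$ is closed under limits, and here we also use that $\pi_*\pi^*$ is exact and that $\pi^*$ preserves the relevant colimits by Proposition~\ref{prop:loc}), and since $\E_S/\ell^\nu[(\tau^{\MGL}_{\ell^\nu})_S^{-1}]$ is built from $\MGL_S/\ell^\nu[(\tau^{\MGL}_{\ell^\nu})_S^{-1}]$ by a relative smash product, I would argue that being \'etale local is inherited: the condition $X \simeq \pi_*\pi^* X$ is detected by mapping out of the objects $\Sigma^{p,q}\Sigma^\infty_{T,+}\LL_{\mot,\Nis}(U_\bullet \to X)$, and mapping spectra out of a fixed object into a relative tensor over $\MGL_S/\ell^\nu$ with an \'etale local object can be analyzed via the bar resolution, every term of which is a sum of suspensions of $\MGL_S/\ell^\nu[(\tau^{\MGL}_{\ell^\nu})_S^{-1}]$-modules that are \'etale local; alternatively, and more cleanly, one invokes that $\Mod_{\MGL_S/\ell^\nu[(\tau^{\MGL}_{\ell^\nu})_S^{-1}]} \simeq \Mod^{\et}_{\MGL_S/\ell^\nu}$ on the nose once $\MGL_S/\ell^\nu[(\tau^{\MGL}_{\ell^\nu})_S^{-1}] \simeq \MGL_S^{\et}/\ell^\nu$ by Theorem~\ref{thm:S-noeth-new}, so that every module over it is automatically \'etale local as an object of $\SH(S)$.

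Then I would check that $(\tau^{\MGL}_{\ell^\nu})_S$ acts invertibly on $\E^{\et}_S/\ell^\nu = \pi_*\pi^*\E_S/\ell^\nu$. This is immediate from Lemma~\ref{lem:tau-invert}: that lemma shows $\pi^*(\tau^{\MGL}_{\ell^\nu})_S$ is an equivalence in $\SH_{\et}(S)$, hence $\pi^*(\tau^{\MGL}_{\ell^\nu})_S$ acts invertibly on $\pi^*\E_S/\ell^\nu$, and applying $\pi_*$ (and using Remark~\ref{rem:susp-sheaf} to commute $\pi_*\pi^*$ past the bidegree shift) shows the action on $\pi_*\pi^*\E_S/\ell^\nu$ is invertible too. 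Combining the two displayed facts: the unit map $\E_S/\ell^\nu \to \E^{\et}_S/\ell^\nu$ sends $(\tau^{\MGL}_{\ell^\nu})_S$ to an invertible map, so it factors through $\E_S/\ell^\nu[(\tau^{\MGL}_{\ell^\nu})_S^{-1}]$; the resulting map $\E_S/\ell^\nu[(\tau^{\MGL}_{\ell^\nu})_S^{-1}] \to \E^{\et}_S/\ell^\nu$ is then a map between \'etale local objects which becomes an equivalence after applying $\pi^*$ (both sides pull back to $\pi^*\E_S/\ell^\nu$, since $\pi^*$ kills the Bott inversion as it already inverts $\tau^{\MGL}_{\ell^\nu}$), hence is itself an equivalence because $\pi^*$ is fully faithful on \'etale local objects (Proposition~\ref{prop:loc}). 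The main obstacle is the second step --- cleanly establishing that the Bott-inverted module is \'etale local in $\SH(S)$; the slick route is to first invoke Theorem~\ref{thm:S-noeth-new} to identify $\MGL_S/\ell^\nu[(\tau^{\MGL}_{\ell^\nu})_S^{-1}]$ with $\MGL^{\et}_S/\ell^\nu$ and then use that any module over a ring object in the essential image of $\pi_*$ whose underlying module is built via colimits preserved by $\pi^*$ stays in that image, which is a formal consequence of $\pi^*$ being monoidal and a localization.
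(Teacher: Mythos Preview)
Your overall architecture is the same as the paper's: first show that $\E_S/\ell^\nu[(\tau^{\MGL}_{\ell^\nu})_S^{-1}]$ is \'etale local, then use that $(\tau^{\MGL}_{\ell^\nu})_S$ is invertible after $\pi^*$ (Lemma~\ref{lem:tau-invert}) to conclude. Steps 3 and 4 are fine.

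The gap is in step 2. Your ``slick route'' asserts that every module over an \'etale-local unital ring is automatically \'etale local, and that this is ``a formal consequence of $\pi^*$ being monoidal and a localization.'' It is not. Knowing only that $R := \MGL_S/\ell^\nu[(\tau^{\MGL}_{\ell^\nu})_S^{-1}]$ satisfies $[C(p,q),R]=0$ for every hypercover cofiber $C(p,q)$ does not let you conclude $[C(p,q),M]=0$ for an $R$-module $M$: the retraction $M \to R \wedge M \to M$ reduces you to showing $R \wedge M$ is \'etale local, but \'etale-local objects are not in general a $\wedge$-ideal, and lax monoidality of $\pi_*$ does not help. Your bar-resolution alternative runs into the same issue: each term is of the form $(\text{stuff}) \wedge R$, and you still need to know why such objects are \'etale local.

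What the paper actually uses is the \emph{stronger} fact established inside the proof of Theorem~\ref{thm:tau-is-loc}: not merely that $R$ is \'etale local, but that $C(p,q) \wedge R \simeq 0$ in $\SH(S)$ (checked on field points via Lemma~\ref{lem:conserve}). Given that, the retraction argument closes immediately: since $M$ is a unital $R$-module, $[C(p,q),M]$ is a retract of $[C(p,q)\wedge R, M] = [0,M] = 0$, so $M$ is \'etale local. This is the one-line argument in the paper's proof. Your plan becomes correct once you replace the appeal to the \emph{statement} of Theorem~\ref{thm:tau-is-loc} with this vanishing extracted from its \emph{proof}.
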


\begin{proof} Just as in the proof of Theorem~\ref{thm:S-noeth-new}, and following the notation of the proof of Theorem~\ref{thm:tau-is-loc}, it suffices to check that:
\[
[C(p,q), \E_S/{\ell^{\nu}}[(\tau^{\MGL}_{\ell^{\nu}})_{S}^{-1}]]  \simeq 0.
\]
Since $\MGL/\ell^{\nu}$ is a unital ring spectrum, $\MGL_S/{\ell^{\nu}}[(\tau^{\MGL}_{\ell^{\nu}})_{S}^{-1}]$ is as well, and thus we have an equivalence:
\[
[C(p,q), \E_S/{\ell^{\nu}}[(\tau^{\MGL}_{\ell^{\nu}})_{S}^{-1}]]  \simeq [C(p,q) \wedge \MGL_S/{\ell^{\nu}}[(\tau^{\MGL}_{\ell^{\nu}})_{S}^{-1}] , \E_S/{\ell^{\nu}}[(\tau^{\MGL}_{\ell^{\nu}})_{S}^{-1}]].
\]
But then $C(p,q) \wedge \MGL_S/{\ell^{\nu}}[(\tau^{\MGL}_{\ell^{\nu}})_{S}^{-1}] \simeq 0$ by Theorem~\ref{thm:S-noeth-new}.

\end{proof}

Since motivic cohomology is an $\MGL$-algebra, we also note the following improvement to Theorem~\ref{thm:motcohcase3}.
\begin{corollary} 
\label{thm:motcohcase4} 
 Let $\ell$ be a prime, $S$ be a Noetherian $\ZZ[\frac{1}{\ell}]$-scheme of finite dimension, and assume that for all $x \in S$, $\cd_{\ell}(k(x)) < \infty$. Let $\nu \geq 1$.  Then the unit of the adjunction~\eqref{eq:pi-adjunction-paper}:
$$
\M\ZZ_S/\ell^{\nu} \rightarrow \M\ZZ_S^{\et}/\ell^{\nu},
$$ 
induces an equivalence in $\SH(S)$: 
$$
\M\ZZ_S/\ell^{\nu}[(\tau_{\ell^{\nu}}^{\M\ZZ})_S^{-1}] 
\overset{\simeq}{\rightarrow}
\M\ZZ_S^{\et}/\ell^{\nu}.
$$
\end{corollary}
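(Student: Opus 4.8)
The plan is to reduce the statement over a general Noetherian $\ZZ[\frac{1}{\ell}]$-scheme $S$ of finite dimension to the field case, exactly mimicking the proof of Theorem~\ref{thm:S-noeth-new} for $\MGL$ itself. Since $\M\ZZ/\ell^{\nu}$ is an $\MGL$-algebra (via the map $\MGL_S \to \M\ZZ_S$), one expects this to follow almost formally. Concretely, I would first note that when $\ell$ is odd, or $\ell=2$ and $\nu\geq 2$, the spectrum $\M\ZZ_S/\ell^{\nu}$ carries a unital multiplication, so the Bott-inverted spectrum $\M\ZZ_S/\ell^{\nu}[(\tau^{\M\ZZ}_{\ell^{\nu}})_S^{-1}]$ is again a unital ring spectrum (the inversion functor of \S\ref{per} being lax monoidal, cf.~Proposition~\ref{prop:invert-long}). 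The remaining case $\ell=2$, $\nu=1$ is handled as in Theorem~\ref{thm:tau-is-loc} via Oka's module action~\eqref{equation:okapairing1} of $\M\ZZ/4$ on $\M\ZZ/2$, using the retract diagram trick to reduce to the $\M\ZZ_S/4$-module statement.

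The first main step is to show that $\M\ZZ_S/\ell^{\nu}[(\tau^{\M\ZZ}_{\ell^{\nu}})_S^{-1}]$ is \'etale local. Following the proof of Theorem~\ref{thm:tau-is-loc}: for $U\in\Sm_S$ and a hypercover $U_{\bullet}\to U$, let $C(p,q)$ denote the cofiber of $\colim\Sigma^{p,q}\Sigma^{\infty}_TU_{\bullet,+}\to\Sigma^{p,q}\Sigma^{\infty}_TU_+$ in $\SH(S)$; by Proposition~\ref{prop:loc} it suffices to show $\Maps(C(p,q),\M\ZZ_S/\ell^{\nu}[(\tau^{\M\ZZ}_{\ell^{\nu}})_S^{-1}])\simeq 0$. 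Using the ring structure this is equivalent to $\M\ZZ_S/\ell^{\nu}[(\tau^{\M\ZZ}_{\ell^{\nu}})_S^{-1}]\wedge C(p,q)\simeq 0$, which by Lemma~\ref{lem:conserve} (continuity and localization for $\Mod_{\M\ZZ}$, valid here by Theorem~\ref{thm:integral}) can be checked after pulling back to all residue fields $k(x)$. By essentially smooth base change — using Theorem~\ref{thm:integral}.1 for $f^*\M\ZZ_S\simeq\M\ZZ_{k(x)}$, the pullback compatibility~\eqref{eq:mz-pullsback} of Bott-inverted motivic cohomology, and Lemma~\ref{lem:pull-mz-et} for $f^*\M\ZZ^{\et}$ — this reduces to the field case, where Theorem~\ref{thm:motcohcase3} says $\M\ZZ_k/\ell^{\nu}[(\tau^{\M\ZZ}_{\ell^{\nu}})^{-1}]\simeq\M\ZZ_k^{\et}/\ell^{\nu}$ is \'etale local (and in particular $\M\ZZ_k/\ell^{\nu}[(\tau^{\M\ZZ}_{\ell^{\nu}})^{-1}]\wedge C(p,q)\simeq 0$ since $C(p,q)$ becomes trivial in $\SH_{\et}(k)$). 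For the case $\ell=2$, $\nu=1$, I would run the retract argument with $\M\ZZ_S/4$ in place of $\MGL_S/4$, using that $\M\ZZ_S/4[(\tau^{\M\ZZ}_2)_S^{-1}]$ is already known \'etale local from the unital case and that $\M\ZZ_S/2[(\tau^{\M\ZZ}_2)_S^{-1}]$ is obtained by inverting the mod-$4$ Bott element acting on $\M\ZZ_S/2$.

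The second step is immediate: Lemma~\ref{lem:tau-invert} (applied with the $\M\ZZ$ Bott element, or rather its image under $\MGL\to\M\ZZ$; alternatively the analogous direct statement, since $\pi^*(\tau^{\M\ZZ}_{\ell^{\nu}})_S$ classifies a periodicity operator in \'etale cohomology by Proposition~\ref{prop:bott-ddk}.2 and hence is invertible in $\SH_{\et}(S)$) shows that $(\tau^{\M\ZZ}_{\ell^{\nu}})_S$ acts invertibly after applying $\pi_*\pi^*$. Therefore the unit map $\M\ZZ_S/\ell^{\nu}\to\M\ZZ_S^{\et}/\ell^{\nu}$ factors through a map $\M\ZZ_S/\ell^{\nu}[(\tau^{\M\ZZ}_{\ell^{\nu}})_S^{-1}]\to\M\ZZ_S^{\et}/\ell^{\nu}$ between \'etale local objects; since $\M\ZZ_S^{\et}/\ell^{\nu}=\pi_*\pi^*\M\ZZ_S/\ell^{\nu}$ is the \'etale localization and the source is already \'etale local with the same universal property, this factored map is an equivalence. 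The only genuine obstacle is bookkeeping the $\ell=2$, $\nu=1$ case correctly — making sure the module-theoretic reduction via Oka's pairing goes through at the level of Bott-inverted spectra over $S$ — but this is entirely parallel to what was already done for $\MGL$ in Theorem~\ref{thm:tau-is-loc}, so no new difficulty arises.
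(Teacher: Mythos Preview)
Your argument is correct and, for the cases where $\M\ZZ_S/\ell^{\nu}$ is a unital ring spectrum, it is essentially the same as the paper's: the paper simply invokes Theorem~\ref{thm:S-noeth-new-module} (which applies since $\M\ZZ$ is an $\MGL$-module), while you rederive that argument in place.

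The only genuine difference is in the exceptional case $\ell=2$, $\nu=1$. You handle it via the Oka module action of $\M\ZZ_S/4$ on $\M\ZZ_S/2$ and the retract trick, exactly as was done for $\MGL/2$ in Theorem~\ref{thm:tau-is-loc}. This works, but the paper takes a shorter route: it observes that, unlike $\MGL/2$, the spectrum $\M\ZZ/2$ \emph{does} carry an $\mathcal{E}_{\infty}$-ring structure by its very construction in \cite{spitzweck-integral}*{\S 4.1.1}. Hence the direct unital-ring argument of Theorem~\ref{thm:S-noeth-new} applies to $\M\ZZ/2$ without any need for the Oka detour. Your approach is more uniform (treating $\M\ZZ$ exactly like $\MGL$), while the paper's approach exploits a special feature of $\M\ZZ/2$ to avoid the extra bookkeeping. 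Both are valid; the paper's is slightly cleaner.
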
	

\begin{proof} We remark that the case not covered by Theorem~\ref{thm:S-noeth-new-module}, i.e., $\ell =2, \nu =1$ can be proved by the same argument as in Theorem~\ref{thm:S-noeth-new} noting that $\M\ZZ/2$ does have an $\mathcal{E}_{\infty}$-ring structure by construction \cite{spitzweck-integral}*{\S 4.1.1}.

\end{proof}

\subsection{Integral statements} 
\label{integral} 
In the following we promote our $\ell$-local results to an integral statement --- at least after inverting some primes.
First, let us investigate the rational results.
\begin{lemma} 
\label{lem:rational} 
If $S$ is a Noetherian scheme of finite dimension, there is a canonical equivalence in $\SH(S)_{\QQ}$: 
$$
L_{\M\QQ}\MGL_S
\overset{\simeq}{\to} 
L_{\M\ZZ^{\et}}\MGL_{S,\QQ}.
$$
\end{lemma}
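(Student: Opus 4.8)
The statement is a rationalization result: after tensoring with $\QQ$, the \'etale localization $L_{\M\ZZ^{\et}}$ agrees with the rational motivic cohomology localization $L_{\M\QQ}$ on $\MGL_S$. The plan is to reduce to the known fact that Nisnevich and \'etale motivic homotopy theory agree rationally (cited in the introduction: \cite{cisinski-deglise}*{\S12.3, Corollary 16.2.14}, and the Appendix discussion in \S\ref{et-q}), at least in the relevant torsion-free situation. First I would observe that rationally $\M\ZZ_{\QQ}\simeq\M\QQ$ and that the \'etale-local motivic cohomology spectrum $\M\ZZ^{\et}$ becomes, after rationalization, equivalent to $\M\QQ$ as well: the unit map $\M\ZZ\to\M\ZZ^{\et}=\pi_*\pi^*\M\ZZ$ is a rational equivalence because the fiber is torsion (the difference between $\SH$ and $\SH_{\et}$ is torsion by the discussion following Question~\ref{qn:main}). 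Hence $L_{\M\ZZ^{\et}}$ and $L_{\M\QQ}$ are the \emph{same} Bousfield localization on $\SH(S)_{\QQ}$, and the statement reduces to identifying $L_{\M\QQ}\MGL_{S}$ computed in $\SH(S)_{\QQ}$ with the same localization computed after passing to $\SH_{\et}(S)_{\QQ}$ and back.

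The key steps, in order, would be: (1) Recall $\M\ZZ^{\et}_{\QQ}\simeq\M\QQ$ in $\SH(S)_{\QQ}$, so that $L_{\M\ZZ^{\et}}(-)_{\QQ}=L_{\M\QQ}(-)_{\QQ}$ as endofunctors of $\SH(S)_{\QQ}$; this is where the torsion-ness of $\pi_*\pi^*\unit/\unit$ (equivalently, the rational equivalence $\SH(S)_{\QQ}\simeq\SH_{\et}(S)_{\QQ}$) is invoked. (2) Since rationally $\pi^*\colon\SH(S)_{\QQ}\to\SH_{\et}(S)_{\QQ}$ is an equivalence with inverse $\pi_*$, the \'etale localization endofunctor $\pi_*\pi^*$ is the identity on $\SH(S)_{\QQ}$; therefore $L_{\M\ZZ^{\et}}\MGL_{S,\QQ}\simeq L_{\M\ZZ^{\et}}\pi_*\pi^*\MGL_{S,\QQ}$, and applying step (1) this is $L_{\M\QQ}\MGL_{S,\QQ}$. (3) Finally identify $L_{\M\QQ}\MGL_{S}$ (the left-hand side) with $L_{\M\QQ}\MGL_{S,\QQ}$: this is automatic since $\M\QQ$-localization factors through rationalization, so $L_{\M\QQ}\MGL_S\simeq L_{\M\QQ}(\MGL_S)_{\QQ}=L_{\M\QQ}\MGL_{S,\QQ}$. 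Chaining these equivalences gives the claimed canonical equivalence $L_{\M\QQ}\MGL_S\overset{\simeq}{\to}L_{\M\ZZ^{\et}}\MGL_{S,\QQ}$.

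The main obstacle I anticipate is making step (1) precise over a general Noetherian finite-dimensional base $S$ rather than over a field or a nice base: one needs the rational equivalence between $\SH(S)$ and $\SH_{\et}(S)$ (equivalently, rational \'etale descent), which as stated in the introduction requires a hypothesis such as ``$-1$ is a sum of squares'' or else a decomposition into plus/minus parts where the minus part sees the subtlety at the prime $2$. I would either impose the mild hypothesis under which $\M\ZZ^{\et}_{\QQ}\simeq\M\QQ$ holds cleanly, or split $\SH(S)_{\QQ}\simeq\SH(S)_{\QQ}^{+}\times\SH(S)_{\QQ}^{-}$ and check the identification on each factor separately, using that $\MGL_{\QQ}$ (being orientable, hence with trivial $\GG_m$-action in a suitable sense) is concentrated in the plus part where the rational comparison is unconditional. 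The remaining bookkeeping — that all the equivalences are canonical and compatible with the evident unit maps — is routine and follows from uniqueness of Bousfield localizations and of adjoints.
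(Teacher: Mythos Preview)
Your initial justification in step~(1) is not merely imprecise but actually wrong: the assertion that the fiber of $\M\ZZ\to\M\ZZ^{\et}$ is torsion because ``the difference between $\SH$ and $\SH_{\et}$ is torsion'' fails over general bases. The rational categories $\SH(S)_{\QQ}$ and $\SH_{\et}(S)_{\QQ}$ do \emph{not} agree in general; under \'etale localization the minus part $\SH(S)_{\QQ}^{-}$ vanishes (Appendix~\S\ref{et-q}), so $\pi_*\pi^*$ restricted to $\SH(S)_{\QQ}$ is the projection onto the plus part, not the identity. Thus your step~(2) as stated is also false. You correctly anticipate this as the ``main obstacle'' and your proposed fix---pass to the plus/minus decomposition and use that $\M\ZZ$ and $\MGL$, being orientable (so $\eta$ acts by zero), land in $\SH(S)_{\QQ}^{+}\simeq\DM(S,\QQ)$ where the rational comparison is unconditional---is exactly what is needed. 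With that fix your argument goes through.

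The paper organizes the proof differently and somewhat more directly: rather than identifying the two Bousfield localizations as endofunctors, it checks the universal property of $L_{\M\QQ}$ on the nose. It shows (i) the map $\MGL_S\to L_{\M\ZZ^{\et}}\MGL_{S,\QQ}$ is an $\M\QQ$-equivalence by factoring through $L_{\M\ZZ^{\et}}\MGL_S$ and smashing the two legs with $\M\ZZ^{\et}$ and $\M\QQ$ respectively, and (ii) the target is $\M\QQ$-local by invoking Morel's decomposition $\SH(S)_{\QQ}\simeq\DM(S,\QQ)\times\SH(S)_{\QQ}^{-}$ and observing that $L_{\M\ZZ^{\et}}\MGL_{S,\QQ}$, receiving a ring map from $\MGL_S$, is oriented and hence lies in $\DM(S,\QQ)$. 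Both routes ultimately rest on the same input (orientability forces landing in the plus part), but the paper avoids having to establish $\M\ZZ^{\et}_{\QQ}\simeq\M\QQ$ as an intermediate step.
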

\begin{proof} 
We show $\MGL_S\rightarrow L_{\M\ZZ^{\et}}\MGL_{S,\QQ}$ is an $\M\QQ$-equivalence and $L_{\M\ZZ^{\et}}\MGL_{S,\QQ}$ is $\M\QQ$-local. 
For the $\M\QQ$-equivalence note that $\MGL_S \rightarrow L_{\M\ZZ^{\et}}\MGL_{S,\QQ}$ factors as: 
$$
\MGL_S 
\rightarrow 
L_{\M\ZZ^{\et}}\MGL_S
\rightarrow 
L_{\M\ZZ^{\et}}\MGL_{S,\QQ}.
$$ 
Upon smashing with $\M\ZZ_S^{\et}$ the first map becomes an equivalence, and upon smashing with $\M{\QQ}$ the second map becomes an equivalence. 
For $\M\QQ$-locality, 
by Morel's rational decomposition theorem, 
$\SH(S)_{\QQ} \simeq \DM(S, \QQ) \times \SH(S)_{\QQ}^{-}$ \cite{cisinski-deglise}*{Theorem 16.2.13}. 
It remains to note that $L_{\M\ZZ^{\et}}\MGL_{S,\QQ} \in \DM(S, \QQ).$ This is because it receives a map of ring spectra from $\MGL_S$ and hence is an oriented theory \cite{cisinski-deglise}*{Theorem 14.2.16} which means that $\eta$ acts by zero, in particular it cannot lie in $\SH(S)_{\QQ}^{-}$ as the first Hopf map $\eta$ acts invertibly on this subcategory of $\SH(S)_{\QQ}.$
\end{proof}

\subsubsection{} 
We can now glue our results to prove the following result up to inverting some primes.
\begin{theorem}
\label{thm:integralresult}
Let $S$ be a Noetherian scheme of finite dimension. Let $J$ be a collection of primes which are all invertible in $S$ such that for $\ell \in J$ and any $x \in S$, $\cd_{\ell}(k(x)) < \infty$. Then, the canonical map
\[
\MGL_S \rightarrow \MGL_S^{\et},
\]
induces an equivalence in $\Mod_{\MGL_{S,(J)}}$: 
$$
L_{\M\ZZ^{\et}}\MGL_{S,(J)} 
\overset{\simeq}{\to} 
\MGL^{\et}_{S,(J)}.
$$
\end{theorem}
\begin{proof} 
To begin with, consider the canonical map $\MGL_{S,(J)} \rightarrow \MGL^{\et}_{S,(J)}$. We claim that it factors through $\MGL_{S,(J)} \rightarrow L_{\M\ZZ^{\et}}\MGL_{S,(J)}$. To do so, we first need to prove that $\MGL^{\et}_{S,(J)}$ is $\M\ZZ^{\et}$-local,
i.e., the canonical map (which exists since the target is \'etale-local)
$\MGL_S^{\et} \rightarrow L_{\M\ZZ^{\et}}\MGL_S^{\et}$ is an equivalence in $\SH(S)$. 

By the arithmetic fracture square in Theorem~\ref{thm:arithmetic} we reduce to checking the equivalence on rationalization and $\ell$-adic completions. 
Lemma \ref{lem:rational} tells us that $L_{\M\QQ}\MGL_S \simeq L_{\M\ZZ^{\et}}\MGL_{S,\QQ}$, 
while motivic Landweber exactness tells us that $\MGL_{S,\QQ}$ is exactly computed as $L_{\M\QQ}\MGL_S \simeq \MGL_S \wedge \M\QQ_S$, 
as explained in \cite{landweber}*{Corollary 10.6}.

For a prime $\ell$, 
Lemma~\ref{lem:modell} reduces to proving that $\MGL_S^{\et}\compl$ is $\M\ZZ_S^{\et}/\ell$-local. 
Since the $\infty$-category of $\M\ZZ_S^{\et}/\ell$-local objects is colocalizing, 
i.e., 
stable and closed under small limits, 
it suffices to check that $\MGL_S^{\et}/\ell^{\nu}$ is $\M\ZZ_S^{\et}/\ell$-local for $\nu \geq 1$. 
Since local objects are closed under cofiber sequences such as $\MGL_S^{\et}/\ell^{\nu} \rightarrow \MGL_S^{\et}/\ell^{\nu-1} \rightarrow \MGL_S^{\et}/\ell$ we may assume $\nu =1$. 
It remains to prove that $\MGL_S^{\et}/\ell$ is $\M\ZZ_S^{\et}/\ell$-local.
By conditional convergence, 
see Corollary~\ref{cor:et-complete}, 
there is an equivalence: 
$$
\lim (f^q\MGL_S)^{\et}/\ell 
\simeq 
\MGL_S^{\et}/\ell.
$$ 
Since $\MGL_S$ is effective,
$(s_0\MGL_S)^{\et}/\ell \simeq (f_{-1}\MGL_S)^{\et}/\ell$ and $(f_{-1}\MGL_S)^{\et}/\ell$ is $(s_0\MGL_S)^{\et}/\ell \simeq \M\ZZ_S^{\et}/\ell$-local, 
which is the first induction step. 
Assuming $(f^{q'}\MGL_S)^{\et}/\ell$ is $\M\ZZ_S^{\et}/\ell$-local for all $q' < q$,  
then $(f^{q}\MGL_S)^{\et}/\ell$ is $\M\ZZ_S^{\et}/\ell$-local by the cofiber sequence: 
$$
(s_q\MGL_S)^{\et}/\ell \rightarrow (f^{q'}\MGL_S)^{\et}/\ell \rightarrow (f^{q-1}\MGL_S)^{\et}/\ell.
$$ 
Indeed the term $(f^{q-1}\MGL_S)^{\et}/\ell$ is $\M\ZZ^{\et}/\ell$-local by hypothesis. 
The slices $s_q\MGL_S/\ell$ are $s_0\MGL_S/\ell$-modules by \cite{operadsslices}*{Section 6 (v)} and the functor $\epsilon_*\epsilon^*$ is lax monoidal, 
and thus preserves $\mathcal{E}_{\infty}$-algebras and modules. 
Hence $(s_q\MGL_S)^{\et}/\ell$ is a module over $\M\ZZ_S^{\et}/\ell$. 
We conclude that the limit is $\M\ZZ_S^{\et}/\ell$-local, 
since the $\infty$-category of $\M\ZZ_S^{\et}/\ell$-local objects is colocalizing and thus closed under small limits.

Therefore, we have a comparison map $L_{\M\ZZ^{\et}}\MGL_{S,(J)} \to 
\MGL^{\et}_{S,(J)}$ which we want to prove is an equivalence. Using the arithmetic fracture square in Theorem \ref{thm:arithmetic} again it suffices to check the equivalence rationally and on $\ell$-adic completions.
Rationally there are equivalences:
$$
L_{\M\ZZ^{\et}}\MGL_{S,\QQ} 
\simeq 
L_{\M\QQ} \MGL_S 
\simeq  
\MGL_{S,\QQ}
\simeq  
\MGL^{\et}_{S,\QQ},
$$ 
where the first is due to Lemma~\ref{lem:rational}, 
the second to \cite{landweber}*{Theorem 10.5}, 
and the third holds since rational oriented theories are \'{e}tale local \cite{cisinski-deglise}*{Corollary 14.2.16, Theorem 14.3.4}.

Now to check at the primes. If $\ell \not \in J$, then $\ell$ is invertible in $\MGL_{S,(J)}$ so that $\MGL_{S,(J)}/\ell^{\nu}$ is zero for all $\nu\geq 1$ and thus the claim follows trivially.
If $\ell \in J$, 
using Lemma~\ref{lem:modell} below, 
it suffices to prove that $
\MGL_{S,(J)}\compl 
\rightarrow
\MGL_{S,(J)}^{\et}\compl
$ 
is an $\M\ZZ_S^{\et}/\ell$-equivalence. 
This claim follows from Lemma~\ref{lem:mz-smash-mgl}, which uses our main Theorem~\ref{thm:S-noeth-new}.

\end{proof}

\begin{lemma} \label{lem:mz-smash-mgl} With the notation of Theorem~\ref{thm:integralresult}, let $\ell \in J$ such that $\MGL_S/\ell^{\nu}$ is a unital ring spectrum. Then for all $\nu \geq 1$, the canonical map:
\[
\MGL_S \rightarrow \MGL_S^{\et},
\]
induces a canonical equivalence:
\begin{equation} \label{eq:mz-smash-mgl}
\M\ZZ_S^{\et} \wedge \MGL_S/\ell^{\nu} 
\overset{\simeq}{\rightarrow} 
\M\ZZ_S^{\et} \wedge \MGL_S^{\et}/\ell^{\nu}.
\end{equation}

\end{lemma}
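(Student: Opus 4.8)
The plan is to reduce the statement to showing that the Bott element $(\tau_{\ell^{\nu}}^{\MGL})_{S}$ already acts invertibly on $M:=\M\ZZ_{S}^{\et}\wedge\MGL_{S}/\ell^{\nu}$, and then to prove this by filtering $M$ through the $\M\ZZ_{S}^{\et}$-homology of the slice-cover tower of $\MGL_{S}/\ell^{\nu}$.

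By Theorem~\ref{thm:S-noeth-new} the map in~\eqref{eq:mz-smash-mgl} is the canonical map of $\MGL_{S}/\ell^{\nu}$ into its Bott localization $\MGL_{S}/\ell^{\nu}[(\tau_{\ell^{\nu}}^{\MGL})_{S}^{-1}]$, which (\S\ref{per}, \S\ref{sect:incoh}) is the sequential colimit of copies of $\MGL_{S}/\ell^{\nu}$ with transition maps given by multiplication by $(\tau_{\ell^{\nu}}^{\MGL})_{S}$. As $\M\ZZ_{S}^{\et}\wedge(-)$ preserves colimits, smashing this description with $\M\ZZ_{S}^{\et}$ identifies~\eqref{eq:mz-smash-mgl} with $M\to M[(\tau_{\ell^{\nu}}^{\MGL})_{S}^{-1}]$; so it suffices to prove that multiplication by $(\tau_{\ell^{\nu}}^{\MGL})_{S}$ on $M$ --- which is $\id_{\M\ZZ_{S}^{\et}}$ smashed with multiplication by $(\tau_{\ell^{\nu}}^{\MGL})_{S}$ on $\MGL_{S}/\ell^{\nu}$ --- is an equivalence.

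Filter $M$ by $F_{q}M:=\M\ZZ_{S}^{\et}\wedge f_{q}\MGL_{S}/\ell^{\nu}$. Since $\MGL_{S}/\ell^{\nu}$ is effective this filtration is exhaustive ($F_{0}M=M$), and it is complete, $\lim_{q}F_{q}M\simeq 0$, by the connectivity bound $f_{q}\MGL_{S}/\ell^{\nu}\in\SH(S)_{\geq q}$ of Theorem~\ref{prop:mgl-ddk} together with the finiteness of $\cd_{\ell}(k(x))$ for all $x\in S$, which (with $\dim S<\infty$) makes $\M\ZZ_{S}^{\et}/\ell^{\nu}$ bounded, so that $F_{q}M$ grows arbitrarily connective. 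Because $(\tau_{\ell^{\nu}}^{\MGL})_{S}$ has slice filtration $0$ and the slice tower of the unital ring spectrum $\MGL_{S}/\ell^{\nu}$ is multiplicative \cite{operadsslices}, multiplication by $(\tau_{\ell^{\nu}}^{\MGL})_{S}$ preserves $F_{\bullet}M$, and on the $q$-th graded piece $\M\ZZ_{S}^{\et}\wedge s_{q}\MGL_{S}/\ell^{\nu}$ it is induced by the slice-zero component of $(\tau_{\ell^{\nu}}^{\MGL})_{S}$, which by construction is the Bott element $(\tau_{\ell^{\nu}}^{\M\ZZ})_{S}$ acting through the $s_{0}\MGL_{S}/\ell^{\nu}\simeq\M\ZZ_{S}/\ell^{\nu}$-module structure on $s_{q}\MGL_{S}/\ell^{\nu}$ \cite{operadsslices}. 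By Corollary~\ref{thm:motcohcase4} one has $\M\ZZ_{S}^{\et}/\ell^{\nu}\simeq\M\ZZ_{S}/\ell^{\nu}[(\tau_{\ell^{\nu}}^{\M\ZZ})_{S}^{-1}]$, so after smashing a $\M\ZZ_{S}/\ell^{\nu}$-module with $\M\ZZ_{S}^{\et}$ the class $(\tau_{\ell^{\nu}}^{\M\ZZ})_{S}$ acts invertibly; hence our self-map is an equivalence on each graded piece. A map of exhaustive, complete filtered objects inducing equivalences on the associated graded is an equivalence --- induct over the finite truncations $M/F_{q}M$ and pass to the limit $M\simeq\lim_{q}M/F_{q}M$ --- so $(\tau_{\ell^{\nu}}^{\MGL})_{S}$ acts invertibly on $M$, proving the lemma.

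The main obstacle is the associated-graded step: the $\M\ZZ_{S}^{\et}$-homology of a slice is a priori a large spectrum (built from $\M\ZZ_{S}^{\et}\wedge\M\ZZ_{S}$), and seeing that it is $(\tau_{\ell^{\nu}}^{\M\ZZ})_{S}$-periodic is exactly where rigidity enters through Corollary~\ref{thm:motcohcase4}; one must also be careful that $\MGL_{S}/\ell^{\nu}$ is only a unital ring (for $\ell=3$ not even homotopy-associative), so that the multiplicativity of its slice filtration and the module structures on its slices are drawn from \cite{operadsslices} and not from an $\mathcal{E}_{\infty}$-structure. A secondary point is that the $\MGL$-inputs of Theorem~\ref{prop:mgl-ddk} are phrased over Dedekind bases; over a general Noetherian $S$ one reduces to that case by base change along $S\to\Spec\,\ZZ[\tfrac{1}{\ell}]$, using that both sides of~\eqref{eq:mz-smash-mgl} are stable under pullback (Lemma~\ref{lem:reduction2} and the construction of the Bott elements), or by reducing to residue fields via Lemma~\ref{lem:conserve}.
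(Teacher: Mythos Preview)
Your approach is different from the paper's, but both of your key steps have real gaps.

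The completeness of $F_qM = \M\ZZ_S^{\et}\wedge f_q\MGL_S/\ell^\nu$ is not established: the spectrum $\M\ZZ_S^{\et}/\ell^\nu$ is \emph{not} bounded in the homotopy $t$-structure. Already over an algebraically closed field $k$ one has $\pi_{0,q}(\M\ZZ^{\et}/\ell^\nu)(k)\cong H^0_{\et}(k,\mu_{\ell^\nu}^{\otimes -q})\cong\ZZ/\ell^\nu$ for every $q\in\ZZ$ (cf.\ Example~\ref{exmp:no-go}), so it is neither bounded above nor below. Writing $\M\ZZ_S^{\et}/\ell^\nu\simeq\colim_n\Sigma^{0,n\e(\ell^\nu)}\M\ZZ_S/\ell^\nu$ via Corollary~\ref{thm:motcohcase4} makes this explicit: each $\Sigma^{0,n\e(\ell^\nu)}$ lowers connectivity by $n\e(\ell^\nu)$, so smashing a $q$-connective object with $\M\ZZ_S^{\et}/\ell^\nu$ does not yield anything of controlled connectivity, and there is no reason for $\lim_q F_qM$ to vanish.

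The associated-graded step is also not justified by the reference you give. Corollary~\ref{thm:motcohcase4} says $(\tau^{\M\ZZ}_{\ell^\nu})_S$ becomes a unit in $\M\ZZ_S^{\et}/\ell^\nu$; equivalently, it acts invertibly after the \emph{relative} smash $(-)\wedge_{\M\ZZ_S/\ell^\nu}\M\ZZ_S^{\et}/\ell^\nu$. What you need is that $\id_{\M\ZZ_S^{\et}}\wedge(\tau^{\M\ZZ}\cdot)$ is an equivalence on $\M\ZZ_S^{\et}\wedge_{\sspt}N$ for a $\M\ZZ_S/\ell^\nu$-module $N$, with the smash over the sphere. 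Since the slices $s_q\MGL_S/\ell^\nu$ are sums of shifts of $\M\ZZ_S/\ell^\nu$, this reduces to $N=\M\ZZ_S/\ell^\nu$, i.e., to showing $\M\ZZ_S^{\et}\wedge\M\ZZ_S/\ell^\nu\simeq\M\ZZ_S^{\et}\wedge\M\ZZ_S^{\et}/\ell^\nu$ --- exactly the lemma with $\MGL$ replaced by $\M\ZZ$. So the filtration has not reduced the problem. (The left and right units on $\M\ZZ/\ell\wedge\M\ZZ/\ell$ genuinely differ, e.g.\ by $\rho\tau_0$ at $\ell=2$, so one cannot simply trade one $\tau$-action for the other.)

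The paper sidesteps both issues: it rewrites $\M\ZZ_S^{\et}$ via the Hopkins--Morel presentation (Theorem~\ref{thm:hmh-iso}) as $(\MGL_S/(x_1,x_2,\dots))^{\et}$, commutes the colimit past $(-)^{\et}$ using Theorem~\ref{thm:pi-pres-colimits}, and then applies the already-proved Theorems~\ref{thm:S-noeth-new} and~\ref{thm:S-noeth-new-module} directly to the resulting $\MGL$-modules.
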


\begin{proof} The map $\MGL_S \rightarrow \MGL_S^{\et}$ induces a map $\M\ZZ_S^{\et} \wedge \MGL_S/\ell^{\nu} 
\rightarrow 
\M\ZZ_S^{\et} \wedge \MGL_S^{\et}/\ell^{\nu}$, after modding out by $\ell^{\nu}$ and applying $\M\ZZ_S^{\et} \wedge$. To prove the result, we proceed via the following string of equivalences:

\begin{eqnarray*}
\M\ZZ_S^{\et} \wedge \MGL_S/\ell^{\nu} & \simeq & (\MGL_S/(x_1, \cdots x_n,\cdots))^{\et} \wedge \MGL_S/\ell^{\nu} \\
& \simeq & (\colim \MGL_S)^{\et} \wedge \MGL_S/\ell^{\nu}\\
& \simeq & \colim \MGL_S^{\et} \wedge \MGL_S/\ell^{\nu}\\
& \simeq & \colim (\MGL_S/\ell^{\nu})^{\et} \wedge \MGL_S \\
& \simeq & \colim (\MGL_S/\ell^{\nu})[(\tau_{\ell^{\nu}}^{\MGL_S})^{-1}] \wedge \MGL_S\\
& \simeq & \colim ((\MGL_S/\ell^{\nu})\wedge \MGL_S)[(\tau_{\ell^{\nu}}^{\MGL_S})^{-1}] \\ 
& \simeq & (\M\ZZ_S \wedge \MGL_S)^{\et}\\
& \simeq & \M\ZZ_S^{\et} \wedge \MGL_S^{\et}.
\end{eqnarray*}

Here, the first equivalence is due to Theorem~\ref{thm:hmh-iso} which holds under the stated hypotheses, the second is merely rewriting the $\MGL_S/(x_1, \cdots x_n,\cdots)$ term as the colimit it is, the third is because of Theorem~\ref{thm:pi-pres-colimits} which holds under the stated hypotheses, the fourth holds simply because \'etale localization is exact, the fifth is our  Theorem~\ref{thm:S-noeth-new}, the sixth holds because $\tau_{\ell^{\nu}}$-inversion can take place in either $\SH$ or in $\Mod_{\MGL/\ell^{\nu}}$, the seventh is a consequence of Theorem~\ref{thm:S-noeth-new-module}, the eighth is because \'etale localization is closed under $\wedge$.

\end{proof}

%
\begin{lemma} 
\label{lem:modell} 
For $\M \in \SH(S)$ and all primes $\ell$ there is a canonical equivalence of endofunctors: 
$$
(L_{\M})\compl \simeq L_{\M/\ell}: \SH(S) \rightarrow \SH(S).
$$
\end{lemma}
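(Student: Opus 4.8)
The claim is a general fact about Bousfield localizations and $\ell$-completion in the stable $\infty$-category $\SH(S)$: for any $\M \in \SH(S)$ and any prime $\ell$, the $\ell$-completion of the localization functor $L_{\M}$ agrees with the localization functor at $\M/\ell$, as endofunctors of $\SH(S)$. The plan is to argue directly from the universal properties of the two localizations together with the behaviour of $\ell$-completion on classes of acyclics and local objects.

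First I would fix notation: write $(-)\compl$ for the $\ell$-completion endofunctor, which can be computed either as $\lim_\nu (-/\ell^\nu)$ or as Bousfield localization at the Moore object $\sspt/\ell$. The key structural observation is that the class of $\M/\ell$-local objects coincides with the class of $\ell$-complete $\M$-local objects. Indeed, $\M/\ell = \M \wedge \sspt/\ell$, so an object $N$ is $\M/\ell$-acyclic iff $N \wedge \M \wedge \sspt/\ell \simeq 0$, i.e. iff $N \wedge \M$ has trivial mod-$\ell$ homotopy. By duality for Bousfield classes, $N$ is $\M/\ell$-local iff every $\M/\ell$-acyclic object maps trivially to $N$. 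The standard comparison (see e.g. the appendix on $\E$-locality, Section~\ref{appendix:e-locals}, which records exactly the kind of statement needed here) identifies this with the condition that $N$ is $\M$-local and $\ell$-complete: the mod-$\ell$ Moore object and $\M$ together generate the same Bousfield-acyclicity ideal as $\M/\ell$.

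Granting this identification of local subcategories, both $(L_\M)\compl$ and $L_{\M/\ell}$ are left adjoints to the inclusion of the same reflective subcategory $\SH(S)^{\M/\ell\text{-loc}} \hookrightarrow \SH(S)$, hence are canonically equivalent by uniqueness of adjoints. To make this precise I would check the two halves: (i) $(L_\M X)\compl$ is $\M/\ell$-local — it is $\ell$-complete by construction, and it is $\M$-local because $\ell$-completion preserves $\M$-locality (the limit $\lim_\nu L_\M X/\ell^\nu$ of $\M$-local objects is $\M$-local, as $\M$-local objects form a colocalizing, hence limit-closed, subcategory, and each $L_\M X/\ell^\nu$ is $\M$-local since $\M$-local objects are closed under cofibers of self-maps); (ii) the canonical map $X \to (L_\M X)\compl$ is an $\M/\ell$-equivalence — it factors as $X \to L_\M X \to (L_\M X)\compl$, where the first map is an $\M$-equivalence hence an $\M/\ell$-equivalence (smashing with $\M/\ell = \M \wedge \sspt/\ell$ kills its cofiber), and the second is an $\ell$-completion map hence becomes an equivalence after $- \wedge \sspt/\ell$, so in particular an $\M/\ell$-equivalence.

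The main obstacle is the identification in the second paragraph — that $\M/\ell$-local objects are precisely the $\ell$-complete $\M$-local objects. This requires knowing that $\ell$-completion and $\M$-localization "commute" at the level of Bousfield classes, which is not formal for an arbitrary $\M$ but does hold here because the mod-$\ell$ Moore spectrum is a finite object (a two-cell complex), so smashing with it preserves all the relevant reflective structures and $\langle \M/\ell\rangle = \langle \M \rangle \wedge \langle \sspt/\ell\rangle$ as Bousfield classes. Once this finiteness input is in place, everything else is a routine manipulation with adjoints and cofiber sequences, and I would present it in the compact two-step form (i)–(ii) above. This is exactly the form in which the lemma is invoked in the proof of Theorem~\ref{thm:integralresult}, where one only needs that mod-$\ell$ completion can be tested by $\M\ZZ_S^{\et}/\ell$-locality.
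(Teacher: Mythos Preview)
Your proposal is correct and follows essentially the same two-step strategy as the paper: verify that $(L_\M X)\compl$ is $\M/\ell$-local, and that $X \to (L_\M X)\compl$ is an $\M/\ell$-equivalence. The paper carries out step (i) directly by dualizing $\sspt/\ell$ to show $\Maps(\F, L_\M \E/\ell)\simeq 0$ for $\M/\ell$-acyclic $\F$, then inducts on $\nu$ and passes to the limit, and leaves step (ii) implicit; your version reaches the same conclusion by first showing $(L_\M X)\compl$ is $\M$-local and $\ell$-complete and then invoking the (correct) identification of $\M/\ell$-local objects with $\ell$-complete $\M$-local objects. One small correction: your pointer to Section~\ref{appendix:e-locals} is misplaced --- that appendix concerns $\E$-slice completeness and $s_0^{\E}\E$-locality, not the Bousfield-class comparison you need; the identification you use is instead exactly what falls out of the dualizability of $\sspt/\ell$ as in the paper's argument.
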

\begin{proof} 
We need to prove that for any $\E \in \SH(S)$, 
$(L_{\M}\E)\compl$ is $\M/\ell$-local and the map $\E \rightarrow (L_{\M}E)\compl$ is an $\M/\ell$-equivalence in $\SH(S)$. 
To prove that $(L_{\M}\E)\compl$ is $\M/\ell$-local, 
we let $\F$ be an $\M/\ell$-acyclic spectrum and first consider $\Maps(\F, L_{\M}\E/\ell)$. 
This is equivalent to $\Maps(\F, L_{\M}\E \wedge \sspt/\ell) \simeq \Maps(\F \wedge (\sspt/\ell)^{\vee}, L_{\M}\E)$. 
Since $L_{\M}\E$ is a left adjoint and compatible with the monoidal structure, 
this is furthermore equivalent to $\Maps(L_{\M}(F) \wedge L_{\M}(1/\ell)^{\vee}, L_{\M}\E)$ which is contractible by the assumption on $\F$. 
By induction we get the same result for $\sspt/\ell^{\nu}$ and thus the result upon completion.  
\end{proof}

\subsubsection{} Applying the same argument as in Theorem~\ref{thm:integralresult} for any $\E \in \Mod_{\MGL}$ and using Theorem~\ref{thm:S-noeth-new-module} we get
\begin{theorem}
\label{thm:integralresult-e}
Let $S$ be a Noetherian scheme of finite dimension. Suppose that $J$ is a collection of primes which are all invertible in $S$ and are such that for any $\ell \in J$ and any $x \in S$, $\cd_{\ell}(k(x)) < \infty$. Then, for any $\E \in \Mod_{\MGL}$ there is a naturally induced equivalence in $\Mod_{\MGL_{S,(J)}}$: 
$$
L_{\M\ZZ^{\et}}\E_{S,(J)} 
\overset{\simeq}{\to} 
\E^{\et}_{S,(J)}.
$$
\end{theorem}

\subsubsection{}\label{thomason-theorem} 
We explain how to recover Thomason's theorem in our setting. 
The motivic spectrum $\KGL$ representing algebraic $K$-theory is Landweber exact since the multiplicative formal group defines a Landweber exact $\MU_*$-algebra \cite{landweber}.
Recall further that for Noetherian scheme $S$ we have for all $p, q\in \ZZ$ a Voevodsky's isomorphism \cite{voevodsky-icm}*{Theorem 6.9}, \cite{cisinski}*{Th\'eor\`eme 2.20}: 
$$
\KGL_{-p,-q}(S) \iso \KGL^{p,q}(S)
\iso 
\mathrm{KH}_{2q-p}(S),
$$ 
where $\mathrm{KH}$ denote Weibel's homotopy $K$-theory \cite{weibel-kh}. When $S$ is furthermore regular $\mathrm{KH}$ is equivalent to Thomason-Trobaugh algebraic $K$-theory. 

Under the Todd genus map $\MGL \rightarrow \KGL$, 
the Bott element $\tau_{\ell^{\nu}}^{\MGL} \in \MGL/\ell^{\nu}_{0,-\e_{\MGL}(\ell^{\nu})}$ maps to $\tau_{\ell^{\nu}}^{\KGL} \in \KGL_{0,-\e_{\MGL}(\ell^{\nu})}$. 
Theorem~\ref{thm:S-noeth-new-module} shows there is an equivalence:
\begin{equation}
\label{equation:KGLet}
\KGL/\ell^{\nu}[(\tau^{\KGL}_{\ell^{\nu}})^{-1}] 
\overset{\simeq}{\to} 
\KGL^{\et}/\ell^{\nu}.
\end{equation}
Applying $\Omega^{\infty}_T$ to \eqref{equation:KGLet} tells us that $\KGL/\ell^{\nu}[(\tau^{\KGL}_{\ell^{\nu}})^{-1}]$ satisfies \'{e}tale hyperdescent since $\Omega^{\infty}_T\KGL^{\et}/\ell^{\nu}$ 
does by the characterization of {\'e}tale local objects in $\SH(S)$ in Lemma \ref{prop:loc}. This recovers Thomason's main theorem from \cite{aktec} when the base scheme $S$ is regular Noetherian and proves the analogous result for homotopy $K$-theory in general.

We sketch how our Bott element specializes to one picked by Thomason for $\ell\neq 2$.
By \cite{eventuallysurjects}*{Bott Elements} the $K$-theoretic Bott elements are also chosen using Bockstein arguments, 
in parallel to our maneuvers in~\S\ref{bott-choice} and so we may assume $\nu=1$. 
In Thomason's case, 
this Bott element is also chosen using Galois descent (again, see \cite{eventuallysurjects}*{Bott Elements}) along the extension $k(\zeta_{\ell})/k$. 
This again parallels what we do in~\S\ref{sec:bott-algc}.

Working over $S=\ZZ[\frac{1}{\ell}, \zeta_{\ell}]$, we pick an $\MGL$-theoretic Bott element in $\MGL/\ell_{0,-1}(S)$ which maps to a $\KGL$-theoretic Bott element in $\KGL/\ell_{0,-1}(S) \simeq K_2/\ell(k)$. 
By its origin from the roots of unity, 
the $\KGL$-theoretic Bott element is clearly an element that maps to $\zeta_{\ell} \in K_1(S)$ under the connecting homomorphism $\beta: K_2/\ell(S) \rightarrow K_1(S)$, 
just as Thomason picks his Bott element in this case \cite{aktec}*{\S A.7}.

\section{Applications and consequences}
\label{sect:applications} 
In this section, we gather some applications and consequences of our theorems above.

\subsection{Descriptions of \'etale-local module categories} 
First,
we generalize the result in \cite{horn-hae} which gives an equivalence between modules over Bott inverted motivic cohomology and \'etale motives over a field. In particular we will describe the stable $\infty$-category of $\MGL$-modules with \'etale descent. To begin, suppose that $\E \in \CAlg(\SH(S))$ and denote by $\Mod_E^{\et}$ the full subcategory spanned by $\E$-modules which are $\et$-local, i.e., its image under the forgetful functor $u_E: \Mod_{\E} \rightarrow \SH(S)$ lands in $\SH_{\et}(S)$. By definition, we have the following diagram of adjoints
\begin{equation} \label{eq:e-et}
\xymatrix{
\Mod_{\E} \ar@/^1.2pc/[rr]^{\epsilon^*}\ar[dd]^{u_\E}
&&   
\Mod_{\E}^{\et} \ar[dd]_{u_\E^{\et}}\ar[ll]_{\epsilon_*}\\\\
\SH(S) \ar@/_1.2pc/[rr]^-{\epsilon^*}\ar@/^1.2pc/[uu]^-{-\wedge \E} 
&&  
\SH_{\et}(S) \ar@/_1.2pc/[uu]_-{(-\wedge \E)^{\et}}\ar[ll]_-{\epsilon_*}.
}
\end{equation}
\begin{proposition} \label{prop:e} Let $S$ be a scheme, $\E \in \CAlg(\SH(S))$. Then we have a canonical adjunction of $\SH(S)$-modules:
\begin{equation} \label{eq:e-et-1}
F_\E:\Mod_{\E^{\et}} \rightleftarrows \Mod^{\et}_\E:G_\E,
\end{equation}
which is an equivalence whenever the composite of the right adjoints in~\eqref{eq:e-et}: 
$$\Mod^{\et}_{\E} \rightarrow \SH(S),$$ 
preserves small colimits.
\end{proposition}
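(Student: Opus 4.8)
## Proof proposal

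The plan is to construct the adjunction \eqref{eq:e-et-1} by transferring along the monadic adjunctions. Recall from Theorem~\ref{thm:integral} and the general formalism that $\E^{\et} := \pi_*\pi^*\E \in \CAlg(\SH(S))$, since $\pi_*\pi^*$ is lax monoidal (being the composite of a monoidal left adjoint and its right adjoint). There is a canonical map of $\mathcal{E}_\infty$-rings $\E \rightarrow \E^{\et}$, the unit of the $\et$-localization, which induces a base-change/restriction adjunction $-\wedge_\E \E^{\et}: \Mod_\E \rightleftarrows \Mod_{\E^{\et}} : (\text{restriction})$. First I would observe that the restriction functor $\Mod_{\E^{\et}} \to \Mod_\E$ factors through $\Mod_\E^{\et}$: indeed, if $M$ is an $\E^{\et}$-module, then its underlying spectrum is a retract (via the $\E^{\et}$-action and the fact that $\E^{\et}$ is $\et$-local) of $\E^{\et}\wedge_{\SH(S)} M'$-type objects built from $\et$-local pieces, so $u_\E M$ lies in $\SH_{\et}(S)$. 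More cleanly: $\Mod_{\E^{\et}}^{\et}=\Mod_{\E^{\et}}$ since every $\E^{\et}$-module is automatically $\et$-local (its underlying object is a colimit of shifts of $\E^{\et}$, all $\et$-local, and $\SH_{\et}(S)\hookrightarrow\SH(S)$ is closed under the relevant colimits — wait, that last point is exactly the hypothesis). This gives the functor $F_\E$ and its right adjoint $G_\E := (-)\wedge_\E \E^{\et}$ restricted appropriately; I would set up the adjunction as an adjunction of $\SH(S)$-module categories by invoking \cite{higheralgebra} on relative tensor products and base change of module categories.

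The key steps, in order: (1) Show $\E^{\et}\in\CAlg(\SH(S))$ and $\Mod_{\E^{\et}}$ makes sense, with $u_{\E^{\et}}:\Mod_{\E^{\et}}\to\SH(S)$ landing in $\SH_{\et}(S)$ — this uses that $\E^{\et}$ is $\et$-local and $\SH_{\et}(S)\subset\SH(S)$ is a localizing subcategory stable under the colimits that generate free modules; one must be careful here because $\pi_*$ does not in general commute with colimits, which is precisely why the hypothesis is needed. (2) Construct $F_\E:\Mod_{\E^{\et}}\to\Mod_\E^{\et}$ as restriction of scalars along $\E\to\E^{\et}$, observing the image lands in the $\et$-local modules. (3) Construct the right adjoint $G_\E$ as $M\mapsto M\wedge_\E\E^{\et}$; check the unit and counit. (4) For the equivalence, use the Barr–Beck–Lurie theorem \cite{higheralgebra}*{Theorem 4.7.3.5}: $F_\E$ is conservative (restriction of scalars along a ring map always is) and, under the hypothesis that $u_\E^{\et}:\Mod_\E^{\et}\to\SH(S)$ preserves small colimits, $F_\E$ preserves all colimits and the relevant totalizations, hence is monadic; one identifies the monad with $-\wedge_\E\E^{\et}$ on $\Mod_\E^{\et}$, which is the monad whose modules are $\Mod_{\E^{\et}}$.

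The main obstacle will be step (4), specifically verifying that when $u_\E^{\et}$ preserves colimits, the comparison functor is an equivalence. The subtlety is that $\Mod_\E^{\et}$ is defined as a full subcategory carved out by an $\et$-locality condition on the underlying object, not a priori as modules over a ring; one must show this locality condition is exactly modules over $\E^{\et}$. The cleanest route is: the inclusion $\Mod_\E^{\et}\hookrightarrow\Mod_\E$ is a reflective (or coreflective — here reflective, via $\pi_*\pi^*$ applied in $\Mod_\E$) localization, so $\Mod_\E^{\et}$ is the localization of $\Mod_\E$ at the $\et$-local equivalences. By Theorem~\ref{thm:robalo-stab1}-style arguments (localization of module categories), this localization is itself a module category over the localized ring $L\E=\E^{\et}$ provided the localization is smashing — i.e., provided $L$ commutes with colimits, which is the stated hypothesis. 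So I would phrase the final step as: the $\et$-localization of $\Mod_\E$ is smashing under the hypothesis, hence $\Mod_\E^{\et}\simeq\Mod_{L_\et\E}=\Mod_{\E^{\et}}$ by the standard fact that smashing localizations of $\Mod_\E$ correspond to idempotent $\E$-algebras, here $\E^{\et}$. The delicate point to get right is matching ``$u_\E^{\et}$ preserves colimits'' with ``the localization is smashing,'' which follows because colimits in $\Mod_\E^{\et}$ are computed by applying the reflector to colimits in $\Mod_\E$, and $u_\E^{\et}$ preserving them forces the reflector to commute with the underlying colimits.
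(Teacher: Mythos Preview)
Your construction of the adjunction (the part claimed without any hypothesis) has a real gap. You want $F_\E$ to be restriction of scalars along $\E\to\E^{\et}$, landing in $\Mod_\E^{\et}$; this requires every $\E^{\et}$-module to have $\et$-local underlying spectrum. You yourself flag the issue: an $\E^{\et}$-module is a retract of $\E^{\et}\wedge M$, but $\E^{\et}\wedge M$ is $\et$-local for arbitrary $M$ precisely when $\et$-localization is smashing, i.e.\ under the colimit-preservation hypothesis. So your construction of $F_\E$ only works \emph{after} assuming the hypothesis, whereas the proposition asserts the adjunction unconditionally. (There is also a direction slip: you call $G_\E=(-)\wedge_\E\E^{\et}$ the right adjoint, but extension of scalars is the \emph{left} adjoint of restriction.) The paper avoids this by going the other way: it observes that the composite right adjoint $R:\Mod_\E^{\et}\to\SH(S)$ is lax monoidal and sends the unit to $\E^{\et}$, hence factors through $\Mod_{\E^{\et}}$; this gives $G_\E$ unconditionally, and $F_\E$ then exists by the adjoint functor theorem.

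For the equivalence under the hypothesis, your approach is correct and genuinely different from the paper's. You argue that the hypothesis makes the inclusion $\Mod_\E^{\et}\hookrightarrow\Mod_\E$ colimit-preserving (since $u_\E$ is conservative and creates colimits), hence the $\et$-localization of $\Mod_\E$ is smashing (the local equivalences are already a $\otimes$-ideal because $\pi^*$ is monoidal), whence $\Mod_\E^{\et}\simeq\Mod_{\E^{\et}}$ by the standard identification of smashing localizations with modules over idempotent algebras. The paper instead invokes \cite{e-kolderup}*{Theorem 3.6}, reducing to a projection formula $\E^{\et}\wedge\M\simeq RL(\M)$, which it verifies on generators $\Sigma^{\infty}_TX_+$ via the smooth projection formula and the commutation $f_\#\pi_*\simeq\pi_*f_\#$ for smooth $f$. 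Your argument is more conceptual and avoids the pointwise six-functor check; the paper's argument is more hands-on but makes explicit which geometric input (smooth base change for $\pi_*$) is really being used.
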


\begin{proof} For this proof, we denote the composite right adjoint of~\eqref{eq:e-et} by $R:\Mod^{\et}_{\E} \rightarrow \SH(S)$ and the left adjoint by $L:\SH(S) \rightarrow \Mod^{\et}_{\E}$. We remark that it factors through $\Mod_{\E^{\et}}$ since it takes the unit object in $\Mod^{\et}_{\E}$ to $\E^{\et}$ and is lax monoidal, being the right adjoint of a strong monoidal functor. The resulting functor $G_{\E}:\Mod^{\et}_\E \rightarrow \Mod_{\E^{\et}}$ is the right adjoint of the desired adjunction~\eqref{eq:e-et}, from which the left adjoints exist by the adjoint functor theorem. 

Now, suppose that the functor $R:\Mod^{\et}_{\E} \rightarrow \SH(S)$ preserve colimits (in other words, it preserves filtered colimits since it is an exact functor of stable $\infty$-categories). By \cite{e-kolderup}*{Theorem 3.6}, it suffices to verify the projection formula \cite{e-kolderup}*{Definition 3.5}: for  any $\M \in \SH(S)$ the canonical map: $$RL(\sspt )\wedge \M \simeq \E^{\et} \wedge \M \rightarrow RL(\M),$$ is an equivalence. To prove this, since both functors preserves colimits in the $\M$-variable, it suffices to prove the claim for $\M = \Sigma^{p,q}\Sigma^{\infty}_+X$ for any $p, q \in \ZZ$. By Remark~\ref{rem:susp-sheaf}, we may assume that $p, q= 0$ we are then left to prove that the map:
$$RL(\sspt )\wedge \Sigma^{\infty}_+X \simeq \E^{\et} \wedge \M \rightarrow RL(\Sigma^{\infty}_+X),$$
is an equivalence for $X$ a smooth $S$-scheme. In this case $X$ is a smooth $S$-scheme with structure map $f$, then we have equivalences in $\SH(S)$:
\begin{eqnarray*}
\E^{\et} \wedge \Sigma^{\infty}_TX_+ & = & \epsilon_*\epsilon^*(\E) \otimes f_{\#}f^*\sspt\\
& \simeq & f_{\#}( f^*\epsilon_*\epsilon^*\E \otimes \sspt)\\
& \simeq & f_{\#}( \epsilon_*\epsilon^*f^*\E) \\
& \simeq & \epsilon_*\epsilon^*(f_{\#}f^*\E)\\
& = & RL(\Sigma^{\infty}_TX_+),\\
\end{eqnarray*}
Here, the first equivalence is the smooth projection formula (see, for example, \cite{cisinski-deglise}*{1.1.26}), the second equivalence follows from the argument of Lemma~\ref{lem:reduction2} for $\SH$ using that $f$ is smooth, and the last equivalence follows from the equivalence of the transformation $f_{\#}\epsilon_*\rightarrow \epsilon_*f_{\#}$ on representables which can be seen to hold unstably from the definitions of these functors since the \'etale topology is subcanonical.
%
%
%
\end{proof}


%

\subsubsection{} 
We freely use the machinery of periodization and localization reviewed in \S\ref{appendix:invert}.
In our context we are interested in $\E \in\CAlg(\SH(S))$ and a map $\alpha: x \rightarrow \E$ in $\Mod_{\E}$ corresponding to a homotopy element in $\E$. 
We consider the full subcategory $P_{\alpha}(\Mod_{\E})\subset\Mod_{\E}$ spanned by $\alpha$-periodic objects. 
By \S\ref{par:alg}, 
$\E[\alpha^{-1}]:=P_{\alpha}\E \in \CAlg(\Mod_{\E})$, 
i.e., 
the periodization remains an $\mathcal{E}_{\infty}$-algebra. 
Therefore, 
we may consider the stable $\infty$-category of modules over $P_{\alpha}\E$ in $\E$-modules:
$$
\Mod_{\E[\alpha^{-1}]} := \Mod_{P_{\alpha}E}(\Mod_{\E}).
$$
First we identify $\alpha$-periodic modules with modules over the $\mathcal{E}_{\infty}$-ring of the $\alpha$-periodization of $\E.$ 
\begin{proposition} 
\label{prop:modules} 
Let $\alpha: x \rightarrow \E$ be a morphism in $\Mod_{\E}$ where $x$ is an invertible object.
Then there is an equivalence of $\infty$-categories: 
$$
P_{\alpha}\Mod_{\E} \simeq \Mod_{\E[\alpha^{-1}]}.
$$
\end{proposition}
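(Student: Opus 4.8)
The strategy is to show that the pair of functors relating $\Mod_{\E}$ and its full subcategory $P_{\alpha}\Mod_{\E}$ of $\alpha$-periodic objects exhibits $P_{\alpha}\Mod_{\E}$ as $\Mod_{\E[\alpha^{-1}]}$ via the monadicity/Barr--Beck formalism. Recall from the appendix (\S\ref{appendix:invert}, cf. \S\ref{par:alg}) that $P_{\alpha}$ is a smashing localization of $\Mod_{\E}$: the localization functor is given by $M\mapsto M\otimes_{\E}\E[\alpha^{-1}]$, where $\E[\alpha^{-1}]=P_{\alpha}\E$ is the $\alpha$-periodization of the unit, and this is an idempotent $\mathcal{E}_{\infty}$-algebra in $\Mod_{\E}$. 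The first step is to record precisely this: that $P_{\alpha}\Mod_{\E}$ is the essential image of $-\otimes_{\E}\E[\alpha^{-1}]$, that $\E[\alpha^{-1}]$ is an $\mathcal{E}_{\infty}$-algebra object of $\Mod_{\E}$ (this uses that $x$ is invertible, so that $\alpha$ and its iterates are well-behaved under the monoidal structure, exactly as in \S\ref{par:alg}), and that the localization is smashing, i.e. $-\otimes_{\E}\E[\alpha^{-1}]$ commutes with all colimits and its restriction to local objects is the identity.

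\textbf{Key steps.} With that in place I would argue as follows. By the universal property of localization the inclusion $P_{\alpha}\Mod_{\E}\hookrightarrow \Mod_{\E}$ has a left adjoint $L=(-\otimes_{\E}\E[\alpha^{-1}])$, and this adjunction is monadic on the nose once we know that the inclusion is conservative and preserves $L$-split (in fact all) colimits; the former holds because a map of $\alpha$-periodic modules that becomes an equivalence in $\Mod_{\E}$ is an equivalence, and the latter because the inclusion of a smashing localization preserves colimits. Hence by Barr--Beck--Lurie \cite{higheralgebra}*{Theorem 4.7.3.5} the category $P_{\alpha}\Mod_{\E}$ is equivalent to modules over the monad $UL$ on $\Mod_{\E}$, where $U$ is the inclusion. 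But $UL\simeq -\otimes_{\E}\E[\alpha^{-1}]$ as an endofunctor, and because $\E[\alpha^{-1}]$ is an $\mathcal{E}_{\infty}$-algebra this monad is precisely the free-module monad $-\otimes_{\E}\E[\alpha^{-1}]$; by \cite{higheralgebra}*{Theorem 4.5.3.1} modules over this monad are identified with $\Mod_{\E[\alpha^{-1}]}(\Mod_{\E})=\Mod_{\E[\alpha^{-1}]}$. Composing the two equivalences gives $P_{\alpha}\Mod_{\E}\simeq\Mod_{\E[\alpha^{-1}]}$, and one checks the equivalence is the obvious one (base change along $\E\to\E[\alpha^{-1}]$ on one side, the forgetful functor on the other).

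\textbf{Main obstacle.} The genuinely substantive point, rather than the formal monadicity bookkeeping, is verifying that the localization $P_{\alpha}$ is \emph{smashing}, equivalently that $\E[\alpha^{-1}]\otimes_{\E}-$ already lands in $\alpha$-periodic objects and computes $P_{\alpha}$. When $x$ is invertible this follows from writing $\E[\alpha^{-1}]$ as the sequential colimit $\colim(\E\xrightarrow{\alpha}x^{-1}\otimes\E\xrightarrow{\alpha}x^{-2}\otimes\E\to\cdots)$ exactly as in Corollary~\ref{thm:robalo-stab2}/equation~\eqref{eq:colim-stab}, and using that $-\otimes_{\E}M$ commutes with this colimit for every $M$; invertibility of $x$ is what makes the telescope formula available and what guarantees $\E[\alpha^{-1}]$ inherits the $\mathcal{E}_{\infty}$-structure. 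Once smashing-ness is secured, everything else is a standard application of the $\infty$-categorical Barr--Beck theorem and the identification of modules over a free monad, so I expect the write-up to be short; the only care needed is to cite the periodization material in \S\ref{appendix:invert} in the precise form that gives the colimit presentation and the $\mathcal{E}_{\infty}$-structure on $\E[\alpha^{-1}]$.
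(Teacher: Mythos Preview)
Your proposal is correct, and the substantive step you isolate---that $P_{\alpha}$ is a smashing localization, i.e.\ $P_{\alpha}M\simeq M\otimes_{\E}\E[\alpha^{-1}]$ via the telescope formula enabled by invertibility of $x$---is exactly the step the paper's proof carries out first. From there the two arguments diverge only formally: the paper sets up the adjunction $-\otimes_{\E}\E[\alpha^{-1}]:\Mod_{\E[\alpha^{-1}]}\rightleftarrows P_{\alpha}\Mod_{\E}:u$ and checks essential surjectivity and full faithfulness by hand, whereas you invoke Barr--Beck--Lurie to identify $P_{\alpha}\Mod_{\E}$ with modules over the monad $-\otimes_{\E}\E[\alpha^{-1}]$ and then with $\Mod_{\E[\alpha^{-1}]}$. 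Both deductions are standard once smashing is known; yours is slightly heavier machinery but cleaner to state, while the paper's is more elementary.

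Two small corrections to make before writing it up. First, the references to Corollary~\ref{thm:robalo-stab2} and~\eqref{eq:colim-stab} are misplaced: those concern $\otimes$-inverting an \emph{object} to build a new category, not periodizing along a \emph{map}. The relevant inputs are the colimit formula for $Q_{\alpha}$ in \S\ref{per} together with Proposition~\ref{prop:invert-long} (and the remark following it invoking Dugger's coherence results) to get $Q_{\alpha}\simeq P_{\alpha}$. Second, do not cite \S\ref{par:alg} as already giving you smashing-ness: that paragraph only records compatibility of $P_{\alpha}$ with the monoidal structure (so $\E[\alpha^{-1}]$ is $\mathcal{E}_{\infty}$), not that the localization is computed by tensoring with $\E[\alpha^{-1}]$. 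Indeed, the paper records ``$\alpha$-periodization is a smashing localization'' as a \emph{consequence} of this proposition, not an input to it.
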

\begin{proof} 
According to Proposition~\ref{prop:invert-long} for $\M \in \Mod_{\E},$ we have an equivalence between $P_{\alpha}\M$ and the colimit $Q_{\alpha}\M$ of the diagram: 
$$
\M \rightarrow \mapint(x, \M) \rightarrow \mapint(x^{\otimes 2}, \M) \rightarrow \cdots.
$$ 
Moreover, there are equivalences:
\begin{eqnarray*}
Q_{\alpha}\M & \simeq &\colim (\M \rightarrow \M \otimes x^{\vee} \rightarrow \M \otimes (x^{\vee})^{\otimes 2} \rightarrow \cdots)\\
& \simeq & \colim \M \otimes (1 \rightarrow x^{\vee} \rightarrow (x^{\vee})^{\otimes 2} \rightarrow \cdots)\\
& \simeq & \M \otimes \E[\alpha^{-1}],
\end{eqnarray*}
where the first follows from dualizability (since it is, in fact, invertible) of $x$ and the second holds because $\Mod_{\E}$ is presentably symmetric monoidal, 
i.e., 
the tensor product commutes with colimits. 
Thus every $\alpha$-periodic object is canonically an $\E[\alpha^{-1}]$-module.
In other words, 
$P_{\alpha}$ factors through $P_{\alpha}: \Mod_{\E} \rightarrow \Mod_{\E[\alpha^{-1}]}$. 

Conversely, 
since the full subcategory of $\alpha$-local objects is the essential image of $Q_{\alpha}$, 
every $\alpha$-local object is a module over $\E[\alpha^{-1}]$. 
Thus $P_{\alpha}: \Mod_{\E} \rightleftarrows P_{\alpha}\Mod_{\E}: u$ induces the adjunction: 
$$
- \otimes E[\alpha^{-1}]: \Mod_{\E[\alpha^{-1}]} \rightleftarrows P_{\alpha}\Mod_{\E}:u.
$$ 
Since every object in $P_{\alpha}\Mod_{\E}$ is of the form $Q_{\alpha}\M$, 
the functor is essentially surjective. 
Fully faithfulness follows since $Q_{\alpha}$ is computed by tensoring with $-\otimes E[\alpha^{-1}]$ as shown above.
\end{proof}
In other words, $\alpha$-periodization is a smashing localization.

\subsubsection{} 
In the case of motivic cohomology we generalize the main result of \cite{horn-hae} in a highly coherent setting.
\begin{theorem} 
\label{thm:categorical-mz}
Let $\ell$ be a prime, $S$ be a Noetherian $\ZZ[\frac{1}{\ell}]$-scheme of finite dimension. Further assume that for all $x \in S$, $\cd_{\ell}(k(x)) < \infty$.
There is an equivalence of stable $\infty$-categories:
$$
\epsilon^*:P_{\tau^{\M\ZZ}_{\ell^{\nu}}}\Mod_{\M\ZZ_S/\ell^{\nu}} \rightleftarrows \Mod_{\M\ZZ_S^{\et}/\ell^{\nu}}:\epsilon_*.
$$ 
Suppose further that $S$ is a regular Noetherian scheme of finite dimension, over a field $k$ whose exponential charateristic is coprime to $\ell$, then there is an equivalence:
$$
\epsilon^*: P_{\tau^{\M\ZZ}_{\ell^{\nu}}}\DM(S, \ZZ/\ell^{\nu}) \rightleftarrows \DM_{\et}(S; \ZZ/\ell^{\nu}):\epsilon_*.
$$
\end{theorem}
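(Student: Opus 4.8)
The plan is to reduce both equivalences to three results already in hand: Proposition~\ref{prop:modules} (periodization at a morphism out of an invertible object is a smashing localization, so $P_{\alpha}\Mod_{\E}\simeq\Mod_{\E[\alpha^{-1}]}$), Corollary~\ref{thm:motcohcase4} (Bott-inverted motivic cohomology agrees with \'etale motivic cohomology), and Proposition~\ref{prop:e} (\'etale-local $\E$-modules are modules over $\E^{\et}$ provided the pertinent right adjoint preserves colimits).

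First I would treat the statement for $\M\ZZ_S/\ell^{\nu}$-modules. Since $\pi_*\pi^*$ is lax monoidal and $\M\ZZ_S/\ell^{\nu}$ is $\mathcal{E}_{\infty}$ for every prime power $\ell^{\nu}$ (for $\ell=2,\nu=1$ one uses the $\mathcal{E}_{\infty}$-structure on $\M\ZZ/2$ from Spitzweck's construction, cf.\ the proof of Corollary~\ref{thm:motcohcase4}), the unit map $\M\ZZ_S/\ell^{\nu}\to\M\ZZ_S^{\et}/\ell^{\nu}=\pi_*\pi^*\M\ZZ_S/\ell^{\nu}$ is a map of $\mathcal{E}_{\infty}$-rings, and by Corollary~\ref{thm:motcohcase4} it exhibits $\M\ZZ_S^{\et}/\ell^{\nu}$ as $\M\ZZ_S/\ell^{\nu}[(\tau^{\M\ZZ}_{\ell^{\nu}})_S^{-1}]$; in particular the target is $\tau^{\M\ZZ}_{\ell^{\nu}}$-periodic. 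Proposition~\ref{prop:modules} then yields $P_{\tau^{\M\ZZ}_{\ell^{\nu}}}\Mod_{\M\ZZ_S/\ell^{\nu}}\simeq\Mod_{\M\ZZ_S^{\et}/\ell^{\nu}}$, with $\tau$-periodization realized by extension of scalars; unwinding the colimit formula for periodization, extension of scalars is computed by applying $\pi_*\pi^*$ to underlying spectra, which identifies it with the module-level $\pi^*$ and its right adjoint with $\pi_*$. It remains to match $P_{\tau^{\M\ZZ}_{\ell^{\nu}}}\Mod_{\M\ZZ_S/\ell^{\nu}}$ with $\Mod^{\et}_{\M\ZZ_S/\ell^{\nu}}$: on the one hand an \'etale-local $\M\ZZ_S/\ell^{\nu}$-module $M$ satisfies $M\simeq\pi_*\pi^*M$ as a $\M\ZZ_S^{\et}/\ell^{\nu}$-module and $\tau^{\M\ZZ}_{\ell^{\nu}}$ acts on it through its image in $\pi_{0,-\e(\ell^{\nu})}\M\ZZ_S^{\et}/\ell^{\nu}$, which is invertible (a power of a root of unity, hence a periodicity operator), so $M$ is $\tau$-periodic; on the other hand the composite right adjoint $\Mod^{\et}_{\M\ZZ_S/\ell^{\nu}}\to\SH(S)$ preserves colimits because $\pi_*$ does on the $\ell$-local part of $\SH(S)$ under our hypotheses (Theorem~\ref{thm:pi-pres-colimits}, equivalently Corollary~\ref{cor:comp2}), so Proposition~\ref{prop:e} makes $\pi^*,\pi_*$ mutually inverse.

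For the $\DM$ statement I would string together the equivalences
\begin{align*}
P_{\tau^{\M\ZZ}_{\ell^{\nu}}}\DM(S,\ZZ/\ell^{\nu})
&\simeq P_{\tau^{\M\ZZ}_{\ell^{\nu}}}\Mod_{\M\ZZ_S/\ell^{\nu}}
\simeq \Mod_{\M\ZZ_S^{\et}/\ell^{\nu}}\\
&\simeq \Mod_{\pi^*\M\ZZ_S/\ell^{\nu}}(\SH_{\et}(S))
\simeq \DM_{\et}(S;\ZZ/\ell^{\nu}),
\end{align*}
each compatible with $\pi^*,\pi_*$. The first uses that for $S$ regular over a field the motivic category $\Mod_{\M\ZZ_S/\ell^{\nu}}$ coincides with Voevodsky's $\DM(S,\ZZ/\ell^{\nu})$ (Theorem~\ref{thm:integral}.(2) and the continuity/localization properties recalled in \S\ref{sect:motives}, or \cite{cisinski-deglise}); the second is the statement just proved; the third records that an \'etale-local $\M\ZZ_S/\ell^{\nu}$-module in $\SH(S)$ is the same datum as a $\pi^*\M\ZZ_S/\ell^{\nu}$-module in $\SH_{\et}(S)$, since $\pi_*$ is fully faithful and $\pi^*$ is strong monoidal; and the fourth identifies \'etale motives with modules over $\pi^*\M\ZZ_S/\ell^{\nu}\simeq u_{\tr}R_{\tr}(\unit)$ in $\SH_{\et}(S)$, which is Corollary~\ref{cor:utrvspi} promoted to module categories (equivalently, rigidity, Theorem~\ref{thm:rigidity}, combined with $\DM_{\et}(S;\ZZ/\ell^{\nu})\simeq\D(S_{\et},\ZZ/\ell^{\nu})$).

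The main obstacle will be the colimit-preservation input to Proposition~\ref{prop:e}: Theorem~\ref{thm:pi-pres-colimits} proves that $\pi_*$ commutes with colimits on $\SH(S)_{(\ell)}$ only under a \emph{uniform} bound on the $\ell$-cohomological dimensions of the residue fields, whereas here we assume only pointwise finiteness, so one must argue (harmlessly, for $S$ Noetherian of finite dimension, by a spreading-out/Noetherian induction) that this suffices. A secondary point is the bookkeeping needed to confirm that the $\mathcal{E}_{\infty}$-structures and the functors $\pi^*,\pi_*$ propagate coherently through the whole chain, again with the $\ell=2,\nu=1$ case relying on the $\mathcal{E}_{\infty}$-structure of $\M\ZZ/2$ rather than on any Moore-spectrum multiplication.
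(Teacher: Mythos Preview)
Your proof is correct and uses the same core ingredients as the paper (Proposition~\ref{prop:modules} and Corollary~\ref{thm:motcohcase4}), but you take an unnecessary detour. The right-hand side of the first equivalence is $\Mod_{\M\ZZ_S^{\et}/\ell^{\nu}}$, not $\Mod^{\et}_{\M\ZZ_S/\ell^{\nu}}$; so once Corollary~\ref{thm:motcohcase4} gives $\M\ZZ_S/\ell^{\nu}[(\tau^{\M\ZZ}_{\ell^{\nu}})_S^{-1}]\simeq\M\ZZ_S^{\et}/\ell^{\nu}$ as $\mathcal{E}_{\infty}$-rings, Proposition~\ref{prop:modules} yields the equivalence directly. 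You never need Proposition~\ref{prop:e} or Theorem~\ref{thm:pi-pres-colimits} here, and this is exactly why the paper's proof is two lines. The ``main obstacle'' you flag about uniform versus pointwise bounds on $\cd_\ell$ is therefore self-inflicted: it only appears because you route through $\Mod^{\et}$, which is not required. (That machinery \emph{is} used in the paper, but for the $\MGL$-module version, Theorem~\ref{thm:categorical-mgl}.)

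For the $\DM$ statement the paper likewise argues directly: identify $\DM(S,\ZZ/\ell^{\nu})\simeq\Mod_{\M\ZZ_S/\ell^{\nu}}$ (via \cite{ro}, \cite{hoyois-kelly-ostvaer}*{Theorem 5.8}, \cite{integral-mixed}*{Theorem 3.1}) and apply the first part. Your chain through $\Mod_{\pi^*\M\ZZ_S/\ell^{\nu}}(\SH_{\et}(S))$ and rigidity is a valid alternative, but the step ``$\DM_{\et}(S;\ZZ/\ell^{\nu})\simeq\Mod_{\pi^*\M\ZZ_S/\ell^{\nu}}(\SH_{\et}(S))$'' is not literally Corollary~\ref{cor:utrvspi} promoted to modules; it is rather the \'etale analogue of the R\"ondigs--{\O}stv{\ae}r/Hoyois--Kelly--{\O}stv{\ae}r identification, which you would need to justify separately (or obtain by \'etale-localizing the Nisnevich identification). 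The paper sidesteps this by staying on the Nisnevich side throughout.
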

\begin{proof} 
We remark that, by construction \cite{spitzweck-integral}*{\S 4.1.1}, $\M\ZZ_S^{\et}/\ell^{\nu}$ is an $\mathcal{E}_{\infty}$-ring spectrum. The claim then follows immediately from Theorem~\ref{thm:motcohcase4} and Proposition~\ref{prop:modules}.
The last statement follows from the identification of $\DM$ with  modules over motivic cohomology. This latter result is obtained by a combination of the main theorem of \cite{ro} over characteristic zero, Theorem 5.8 of \cite{hoyois-kelly-ostvaer} over positive characteristics and its generalization in \cite{integral-mixed}*{Theorem 3.1}.
\end{proof}

\subsubsection{} 
The following is a version for $\MGL$ and, more generally, $\mathcal{E}_{\infty}$-algebras in $\MGL$ modules. 
This is where the strength of our integral statement in Theorem~\ref{thm:integralresult} comes into play since the motivic spectrum $L_{\M\ZZ^{\et}}\E_{\M,(J)}$ is an $\mathcal{E}_{\infty}$-ring. 
%
%
%

\begin{theorem} 
\label{thm:categorical-mgl} 
Let $S$ be a Noetherian scheme, and $J$ be a collection of primes which are all invertible in $S$ such that for any $\ell \in J$ and any $x \in S$, $\cd_{\ell}(k(x)) < \infty$. Then, the adjunction:
$$
\epsilon_*:\Mod_{\MGL_S} \rightleftarrows \Mod^{\et}_{\MGL_S}:\epsilon_*, 
$$ 
induces an equivalence of presentably symmetric monoidal stable $\infty$-categories: 
$$
\epsilon^*: \Mod_{L_{\M\ZZ^{\et}}\MGL_{S,(J)}} \rightleftarrows \Mod^{\et}_{\MGL_{S,(J)}}: \pi_{*}.
$$ 
More generally, if $\E_{N}$ is an $\mathcal{E}_{\infty}$-ring in $\Mod_{\MGL_S}$, 
then there is an equivalence of stable $\infty$-categories: 
$$
\epsilon^*: \Mod_{L_{\M\ZZ^{\et}}\E_{N,(J)}} \rightleftarrows \Mod^{\et}_{\E_{N,(J)}}::\pi_{*}.
$$
\end{theorem}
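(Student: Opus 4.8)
The plan is to combine Theorem~\ref{thm:integralresult} (and Theorem~\ref{thm:integralresult-e} for the case of a general $\mathcal{E}_{\infty}$-ring $\E_N$) with the general module-category mechanism already set up in Proposition~\ref{prop:modules} and Proposition~\ref{prop:e}. The key input from the integral results is that $L_{\M\ZZ^{\et}}\MGL_{S,(J)} \simeq \MGL_{S,(J)}^{\et}$ as $\mathcal{E}_{\infty}$-algebras, and more generally $L_{\M\ZZ^{\et}}\E_{N,(J)}\simeq\E_{N,(J)}^{\et}$; this is what allows us to identify modules over the Bousfield-localized ring spectrum with $\et$-local modules over the original one.

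First I would record that $L_{\M\ZZ^{\et}}$ applied to an $\mathcal{E}_{\infty}$-algebra is again an $\mathcal{E}_{\infty}$-algebra, since Bousfield localization at a homotopy-associative, homotopy-commutative ring spectrum (here $\M\ZZ^{\et}$, which is an $\mathcal{E}_{\infty}$-ring by Theorem~\ref{thm:integral} and the lax monoidality of $\pi_*\pi^*$) is a smashing-type localization compatible with the symmetric monoidal structure; alternatively, one invokes that $L_{\M\ZZ^{\et}}\MGL_{(J)}\simeq\MGL^{\et}_{(J)}=\pi_*\pi^*\MGL_{(J)}$ and the right-hand side is visibly an $\mathcal{E}_{\infty}$-ring because $\pi_*\pi^*$ is lax symmetric monoidal. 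Then I would invoke Proposition~\ref{prop:e}: it produces an adjunction $F_{\E}:\Mod_{\E^{\et}}\rightleftarrows\Mod^{\et}_{\E}:G_{\E}$ which is an equivalence as soon as the composite right adjoint $\Mod^{\et}_{\E}\to\SH(S)$ preserves small colimits. For $\E=\MGL_{S,(J)}$ (and for $\E=\E_{N,(J)}$, which is an $\MGL$-module, hence has the same generators after base change), this colimit-preservation is exactly where the hypotheses on $S$ enter: by Theorem~\ref{thm:pi-pres-colimits} (or its Corollary~\ref{cor:comp2}) the functor $\pi_*\colon\SH_{\et}(S)_{(\ell)}\to\SH(S)_{(\ell)}$ preserves colimits under uniformly bounded $\ell$-cohomological dimension, and $u_{\E}$ is monadic hence colimit-preserving on the module level; combining these gives that the composite $\Mod^{\et}_{\E}\to\SH(S)$ preserves colimits after $(J)$-localization. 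Hence $F_{\E}$ is an equivalence, i.e. $\Mod_{\E^{\et}_{S,(J)}}\simeq\Mod^{\et}_{\E_{S,(J)}}$, and substituting $\E^{\et}_{S,(J)}\simeq L_{\M\ZZ^{\et}}\E_{S,(J)}$ from Theorem~\ref{thm:integralresult} (resp.\ Theorem~\ref{thm:integralresult-e}) finishes the identification.

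It remains to see that the equivalence is symmetric monoidal in the $\MGL$-case: both sides are presentably symmetric monoidal (modules over an $\mathcal{E}_{\infty}$-ring in a presentably symmetric monoidal $\infty$-category), the functor $\pi^*$ is strong monoidal, and $\pi^*$ induces the equivalence, so the monoidal structure is transported; for the general $\E_N$-statement one only claims an equivalence of stable $\infty$-categories, which is the bare content of Proposition~\ref{prop:e}. I would also note the reduction to fields used implicitly: étale-locality of $L_{\M\ZZ^{\et}}\MGL_{(J)}$ (equivalently that it lies in the image of $\pi_*$) is inherited from Theorem~\ref{thm:tau-is-loc}/Theorem~\ref{thm:S-noeth-new} via Lemma~\ref{lem:conserve} and stability under base change (Lemma~\ref{lem:reduction2}).

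The main obstacle I anticipate is verifying the colimit-preservation hypothesis of Proposition~\ref{prop:e} in the correct generality: one must check that $R\colon\Mod^{\et}_{\E_{S,(J)}}\to\SH(S)_{(J)}$ preserves filtered colimits, and this is not formal because $\pi_*$ is a right adjoint. The content is precisely Theorem~\ref{thm:pi-pres-colimits}, whose proof rests on comparing descent spectral sequences and their strong convergence under the cohomological dimension bounds — so the real work has already been done, and the task here is only to assemble it correctly (in particular to pass from $\SH_{\et}(S)_{(\ell)}$ to $\SH_{\et}(S)_{(J)}$ via the conservative family of localizations as in Corollary~\ref{cor:comp2}, and then to $\Mod^{\et}_{\E_{S,(J)}}$ using that $u_{\E}$ detects and preserves colimits). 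A secondary subtlety is the excluded prime $\ell=2,\nu=1$: since $\MGL/2$ lacks a unital multiplication, one should be careful that in the integral $(J)$-local statement this causes no problem because $L_{\M\ZZ^{\et}}\MGL_{(J)}$ is assembled from the $\ell$-completions for $\ell\in J$ together with the rational part, and the mod-$2$ subtlety is handled on $\ell$-completions by the module action of $\MGL/4$ exactly as in Theorem~\ref{thm:integralresult}.
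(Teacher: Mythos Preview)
Your proposal is correct and follows essentially the same approach as the paper: establish that $L_{\M\ZZ^{\et}}\MGL_{S,(J)}$ is an $\mathcal{E}_{\infty}$-algebra via compatibility of Bousfield localization with the monoidal structure, identify it with $\MGL_{S,(J)}^{\et}$ using Theorem~\ref{thm:integralresult} (resp.\ Theorem~\ref{thm:integralresult-e}), and then apply Proposition~\ref{prop:e} with the colimit-preservation hypothesis supplied by Theorem~\ref{thm:pi-pres-colimits}. The reference to Proposition~\ref{prop:modules} in your opening is extraneous (that result concerns periodization rather than Bousfield localization and is not needed here), and the closing discussion of base change and the $\ell=2,\nu=1$ subtlety is already absorbed into the cited integral results, but these do not affect the correctness of your argument.
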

\begin{proof} 
First, note that for every $\E \in \SH(S)$ the localization $L_{\E}$ is compatible with the monoidal structure in the sense of \cite{higheralgebra}*{Definition 2.2.1.6, Remark 2.2.1.7}: 
for any $\E$-equivalence $f: X \rightarrow Y$ and $Z \in \SH(S)$, 
$f \wedge Z: X \wedge Z \rightarrow Y \wedge Z$ is an $\E$-equivalence. 
Therefore, 
by \cite{higheralgebra}*{Proposition 2.2.1.9}, 
$L_{\M\ZZ^{\et}\MGL_{(J)}}$ is an $\mathcal{E}_{\infty}$-algebra and Theorem~\ref{thm:integralresult} gives an equivalence of $\mathcal{E}_{\infty}$-algebras in $\SH(S)$: $L_{\M\ZZ^{\et}}\MGL_{(J)} \simeq \MGL_{(J)}^{\et}$. The claimed result then follows from the second part of Proposition~\ref{prop:e}, whose hypothesis is verified by Theorem~\ref{thm:pi-pres-colimits}.
\end{proof}
\subsection{Base change and six functors} Since $f^*$ and $\epsilon_*$ has no reason to commute, it is not clear that \'etale localization of a motivic spectrum commutes with base-change. Our description of \'etale local spectra as Bott-inverted spectra allows us to prove some base change results.
\label{sect:base-change}

\begin{theorem} \label{thm:base-change} Let $\ell$ be a prime and $T, S$ be a Noetherian $\ZZ[\frac{1}{\ell}]$-scheme of finite dimension and assume that for all $x \in S$, $\cd_{\ell}(k(x)) < \infty$. Let $f: T \rightarrow S$ be a morphism, then the canonical map:
\[
f^*\E_S^{\et}/{\ell^{\nu}} \rightarrow \E_T^{\et}/{\ell^{\nu}},
\]
is an equivalence
\end{theorem}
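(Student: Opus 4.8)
The plan is to reduce the base-change statement for \'etale-local $\MGL$-modules to the corresponding statement for Bott-inverted $\MGL$-modules, where base change holds essentially by construction. First I would invoke Theorem~\ref{thm:S-noeth-new-module} (together with Corollary~\ref{thm:motcohcase4} for the exceptional case $\ell=2$, $\nu=1$, and Theorem~\ref{thm:S-noeth-new} when $\E=\MGL$) to replace both sides of the desired equivalence: over $S$ we have $\E_S^{\et}/\ell^{\nu}\simeq\E_S/\ell^{\nu}[(\tau^{\MGL}_{\ell^{\nu}})_S^{-1}]$, and over $T$ we have $\E_T^{\et}/\ell^{\nu}\simeq\E_T/\ell^{\nu}[(\tau^{\MGL}_{\ell^{\nu}})_T^{-1}]$. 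Note the hypotheses propagate: since $T$ is a $\ZZ[\frac1\ell]$-scheme of finite dimension mapping to $S$, its residue fields are extensions of residue fields of $S$ of finite $\ell$-cohomological dimension, which still have finite $\ell$-cohomological dimension (e.g.\ by a transcendence-degree argument, \cite{shatz}*{Theorem 28}).

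With this reduction in hand, the statement becomes: the canonical map $f^*\bigl(\E_S/\ell^{\nu}[(\tau^{\MGL}_{\ell^{\nu}})_S^{-1}]\bigr)\to\E_T/\ell^{\nu}[(\tau^{\MGL}_{\ell^{\nu}})_T^{-1}]$ is an equivalence. This is exactly the content of the construction of Bott elements by pullback. Indeed, $f^*$ is a symmetric monoidal colimit-preserving functor, so it commutes with mod-$\ell^{\nu}$ reduction and with the colimit defining Bott inversion (the formula~\eqref{eq:colim-stab}/\S\ref{per}); moreover by Definition~\ref{def:s-bott} the Bott element $(\tau^{\MGL}_{\ell^{\nu}})_T$ is \emph{defined} as $f^*(\tau^{\MGL}_{\ell^{\nu}})_S$ (pulled back ultimately from $\Spec\,\ZZ[\frac1\ell]$), so $f^*$ carries the multiplication-by-$(\tau^{\MGL}_{\ell^{\nu}})_S$ maps to the multiplication-by-$(\tau^{\MGL}_{\ell^{\nu}})_T$ maps. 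For $\MGL$ itself, or an $\MGL$-module $\E$ that is a Cartesian section of $\SH(-)$ (as Landweber exact spectra are, cf.~\S\ref{landweber-rapid}), one also uses $f^*\MGL_S\simeq\MGL_T$ and $f^*\E_S\simeq\E_T$. This is precisely the argument already recorded for essentially smooth $f$ in Lemma~\ref{lem:reduction2}(1) and for motivic cohomology in~\eqref{eq:mz-pullsback}; the point is that none of it used smoothness of $f$.

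The only subtlety — and the main obstacle — is the case $\ell=2$, $\nu=1$, where $\MGL/2$ has no unital multiplication and $\MGL_S/2[(\tau^{\MGL}_2)_S^{-1}]$ is defined via Oka's module action~\eqref{equation:okapairing2} of $\MGL_S/4$ on $\MGL_S/2$ and the mod-$4$ Bott element. Here I would argue as in the proof of Theorem~\ref{thm:tau-is-loc}: $f^*$ is monoidal and preserves the module structure, so it carries the diagram $(\MGL_S/2, \MGL_S/4, \tau^{\MGL}_{4,S})$ over $S$ to the corresponding diagram over $T$, and hence commutes with the incoherent inversion procedure of~\S\ref{sect:incoh}; the mod-$4$ case is covered by the unital argument above. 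Putting these together yields $f^*\E_S^{\et}/\ell^{\nu}\simeq f^*\bigl(\E_S/\ell^{\nu}[(\tau^{\MGL}_{\ell^{\nu}})_S^{-1}]\bigr)\simeq\E_T/\ell^{\nu}[(\tau^{\MGL}_{\ell^{\nu}})_T^{-1}]\simeq\E_T^{\et}/\ell^{\nu}$, and one checks this chain of equivalences is compatible with the canonical comparison map $f^*\E_S^{\et}/\ell^{\nu}\to\E_T^{\et}/\ell^{\nu}$ (both are induced by the unit of the $(\pi^*,\pi_*)$-adjunction), completing the proof.
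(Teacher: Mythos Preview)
Your proposal is correct and follows essentially the same route as the paper: reduce to Bott-inverted spectra via Theorem~\ref{thm:S-noeth-new} (and its module variant), then use that $f^*$ preserves colimits and that the Bott elements are defined by pullback (Definition~\ref{def:s-bott}). The paper's proof is a one-sentence invocation of exactly these ingredients; your version is simply more explicit about the $\ell=2$, $\nu=1$ case and about why the hypotheses hold over $T$ (though note your propagation argument via \cite{shatz}*{Theorem 28} tacitly assumes the residue field extensions are finitely generated, which need not hold for arbitrary $f$---the paper sidesteps this by implicitly working in $\Sch'_{\ZZ[1/\ell]}$, where both $S$ and $T$ are assumed to satisfy the cohomological-dimension hypothesis).
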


\begin{proof} The claim follows immediately from Theorem~\ref{thm:S-noeth-new}, the fact that $f^*$ commutes with colimits, and our choice of Bott elements from Definition~\ref{def:s-bott}.
\end{proof}

Now, let $\Sch'_{\ZZ[\frac{1}{\ell}]}$ be the full subcategory of $\Sch_{\ZZ[\frac{1}{\ell}]}$ spanned by Noetherian schemes whose field points have finite $\ell$-cohomological dimension. It follows that:
\begin{corollary} \label{cor:cart-sec} For for any $n \geq 1$:
\[
X \mapsto \MGL_X^{\et}/\ell^{\nu}, \M\ZZ^{\et}_X/\ell^{\nu},
\]
defines Cartesian sections of $\SH \rightarrow \Sch'_{\ZZ[\frac{1}{\ell}]}$.
\end{corollary}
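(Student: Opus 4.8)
\textbf{Proof proposal for Corollary~\ref{cor:cart-sec}.}

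The plan is to reduce the Cartesian-section property to a statement about base change, which is precisely the content of Theorem~\ref{thm:base-change}. Recall that to say that $X\mapsto \MGL_X^{\et}/\ell^{\nu}$ (respectively $X\mapsto \M\ZZ_X^{\et}/\ell^{\nu}$) defines a Cartesian section of $\SH\to\Sch'_{\ZZ[\frac{1}{\ell}]}$ amounts to two things: first, that for each $X\in\Sch'_{\ZZ[\frac{1}{\ell}]}$ the spectrum $\MGL_X^{\et}/\ell^{\nu}$ is a well-defined object of $\SH(X)$ — this is immediate since $\MGL_X$ is itself a Cartesian section of $\SH$ by~\S\ref{landweber-rapid} (or Theorem~\ref{thm:integral}.1 for $\M\ZZ$), the mod-$\ell^{\nu}$ reduction is functorial, and $\pi_*\pi^*$ is an endofunctor of $\SH(X)$; and second, that for every morphism $f:X\to Y$ in $\Sch'_{\ZZ[\frac{1}{\ell}]}$ the canonical comparison map $f^*(\MGL_Y^{\et}/\ell^{\nu})\to\MGL_X^{\et}/\ell^{\nu}$ is an equivalence in $\SH(X)$. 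The first point requires only that $\Sch'_{\ZZ[\frac{1}{\ell}]}$ is closed under the relevant operations and that the hypotheses of the theorems apply pointwise, which holds by definition of $\Sch'_{\ZZ[\frac{1}{\ell}]}$ as consisting of Noetherian $\ZZ[\frac{1}{\ell}]$-schemes whose field points have finite $\ell$-cohomological dimension.

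The second point is exactly Theorem~\ref{thm:base-change} applied with $\E = \MGL$ (respectively $\E=\M\ZZ$, which is an $\MGL$-module, so covered by the $\E$-module version via Theorem~\ref{thm:S-noeth-new-module}, or directly by Corollary~\ref{thm:motcohcase4}). So the first step I would carry out is simply to observe that any $f:X\to Y$ with $Y\in\Sch'_{\ZZ[\frac{1}{\ell}]}$ has source $X$ also in $\Sch'_{\ZZ[\frac{1}{\ell}]}$: finite-dimensionality and the Noetherian condition pass to $X$ when $X\to Y$ is a morphism of schemes in this category, and the residue fields of $X$ either are residue fields of $Y$ (for $X\to Y$ locally of finite type one controls this, but in general $\Sch'$ is set up so that field points of $X$ map to field points of $Y$) so their $\ell$-cohomological dimension stays finite. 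Strictly, since we take $\Sch'_{\ZZ[\frac{1}{\ell}]}$ to be the full subcategory of all Noetherian $\ZZ[\frac{1}{\ell}]$-schemes of finite dimension whose field points have finite $\ell$-cohomological dimension, the hypothesis ``for all $x\in S$, $\cd_\ell(k(x))<\infty$'' of Theorem~\ref{thm:base-change} is satisfied for both $S=Y$ and $T=X$ whenever $X,Y\in\Sch'_{\ZZ[\frac{1}{\ell}]}$, so the theorem applies verbatim.

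The second step is to invoke Theorem~\ref{thm:base-change} to conclude that $f^*\MGL_Y^{\et}/\ell^{\nu}\xrightarrow{\simeq}\MGL_X^{\et}/\ell^{\nu}$ and $f^*\M\ZZ_Y^{\et}/\ell^{\nu}\xrightarrow{\simeq}\M\ZZ_X^{\et}/\ell^{\nu}$, which is precisely the Cartesian-section condition. The only remaining bookkeeping is to check that these base change equivalences are compatible with composition of morphisms — that is, they are the structure maps of an actual Cartesian section rather than just a pointwise collection of equivalences. This is routine: the comparison maps $f^*\E_Y^{\et}\to\E_X^{\et}$ are constructed from the unit/counit of the $(f^*,f_*)$ and $(\pi^*,\pi_*)$ adjunctions together with the Cartesian-section structure on $\MGL$ and $\M\ZZ$ themselves, and these are manifestly functorial in $f$; the content of Theorem~\ref{thm:base-change} is only that each such map is an equivalence. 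I do not expect any genuine obstacle here — the corollary is a direct packaging of Theorem~\ref{thm:base-change}, and the proof given in the excerpt (``follows immediately from Theorem~\ref{thm:S-noeth-new}, the fact that $f^*$ commutes with colimits, and our choice of Bott elements'') is already the whole argument; the mild subtlety, if any, is checking that $\Sch'_{\ZZ[\frac{1}{\ell}]}$ is stable under the morphisms in play so the cohomological dimension hypothesis propagates, which is immediate from how that category is defined.
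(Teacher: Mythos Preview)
Your proposal is correct and matches the paper's approach: the paper does not give an explicit proof of this corollary, simply writing ``It follows that:'' after Theorem~\ref{thm:base-change}, and your unpacking---that the Cartesian-section condition is exactly the base-change equivalence of Theorem~\ref{thm:base-change}, applied to morphisms between objects already assumed to lie in $\Sch'_{\ZZ[\frac{1}{\ell}]}$---is precisely the intended argument. Your remarks on functoriality and on the $\M\ZZ$ case (via Corollary~\ref{thm:motcohcase4} or the $\MGL$-module structure) are sound, if more than strictly necessary.
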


We also obtain an integral statement. Let $J$ is a collection of primes and let $\Sch^{(J)} \subset \Sch$ be the full subcategory of schemes spanned by those Noetherian schemes $X$ such that all primes in $J$ are invertible in $X$, and any $\ell \in J$ and any $x \in X$, $\cd_{\ell}(k(x)) < \infty$.

\begin{corollary} \label{cor:cart-sec-int} The functor:
\[
X \mapsto \MGL^{\et}_{X,(J)},
\]
defines a Cartesian section of $\CAlg(\SH) \rightarrow \Sch^{(J)}$.
\end{corollary}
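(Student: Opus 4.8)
The plan is to reduce Corollary~\ref{cor:cart-sec-int} to the combination of Theorem~\ref{thm:base-change} (or rather its integral analogue) together with the fact that $\MGL_{(J)}$ already defines a Cartesian section of $\CAlg(\SH)$. First I would recall that $\MGL$ is a Cartesian section of $\CAlg(\SH)$ (this is part of the standard structure of algebraic cobordism; cf. \cite{landweber}*{Proposition 8.5} together with the fact that the universal orientation is compatible with pullback), and localizing at a set of primes $J$ is stable under $f^*$ since $f^*$ is symmetric monoidal and commutes with colimits, so $\MGL_{(J)}$ is a Cartesian section of $\CAlg(\SH) \rightarrow \Sch^{(J)}$. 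By Theorem~\ref{thm:integralresult} there is an equivalence of $\mathcal{E}_\infty$-algebras $L_{\M\ZZ^{\et}}\MGL_{S,(J)} \simeq \MGL^{\et}_{S,(J)}$ for each $S \in \Sch^{(J)}$, so it suffices to show that $S \mapsto \MGL^{\et}_{S,(J)}$ upgrades to a Cartesian section of $\CAlg(\SH)$; equivalently, that for $f: T \to S$ in $\Sch^{(J)}$ the canonical comparison map $f^*\MGL^{\et}_{S,(J)} \to \MGL^{\et}_{T,(J)}$ is an equivalence of $\mathcal{E}_\infty$-algebras.

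The key point is that this comparison map is already an equivalence of the \emph{underlying} motivic spectra: indeed, arguing $\ell$-adically and rationally via the arithmetic fracture square (Theorem~\ref{thm:arithmetic}), the rational part is handled because rational oriented theories are \'etale local and stable under $f^*$ (so $f^*\MGL^{\et}_{S,\QQ} \simeq f^*\MGL_{S,\QQ} \simeq \MGL_{T,\QQ} \simeq \MGL^{\et}_{T,\QQ}$, using that $\MGL$ is Cartesian), while mod-$\ell^\nu$ this is exactly Theorem~\ref{thm:base-change}, whose proof uses only that $f^*$ commutes with colimits and sends Bott elements to Bott elements (Definition~\ref{def:s-bott}). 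So the content of the corollary beyond Theorem~\ref{thm:base-change} is purely that the comparison map respects the $\mathcal{E}_\infty$-structures. For this I would observe that the natural transformation $f^* \pi_* \pi^* \Rightarrow \pi_*\pi^* f^*$ is a lax symmetric monoidal transformation (it is built from the unit/counit of the symmetric monoidal adjunctions $(\pi^*,\pi_*)$ and $(f_\#, f^*)$, all of which are compatible with the monoidal structures), so the comparison map $f^*\E^{\et} \to (f^*\E)^{\et}$ is a morphism of $\mathcal{E}_\infty$-algebras whenever $\E \in \CAlg(\SH(S))$; applying this to $\E = \MGL_{S,(J)}$ and invoking that this morphism of $\mathcal{E}_\infty$-algebras is an underlying equivalence by the previous paragraph, it is an equivalence in $\CAlg(\SH(T))$. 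Packaging this over all morphisms in $\Sch^{(J)}$ (using that being a Cartesian section is detected on each edge, since $\SH$ is a functor to $\Cat_\infty$ and $\CAlg$ preserves limits) gives the Cartesian section.

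The main obstacle I anticipate is the bookkeeping needed to make the lax monoidality of $f^* \pi_* \pi^* \Rightarrow \pi_* \pi^* f^*$ precise enough to conclude that the comparison map is genuinely a map of $\mathcal{E}_\infty$-algebras and not merely of homotopy ring spectra — in particular one must check that the identification $f^*\pi^* \simeq \pi^* f^*$ of \emph{left} adjoints (which is where symmetric monoidality is transparent, since both sides preserve colimits and agree on representables, as in Lemma~\ref{lem:reduction2} and Lemma~\ref{lem:pull-mz-et}) passes correctly to the mate transformation on right adjoints. Once that coherence is in hand, everything else is an application of results already proven: Theorem~\ref{thm:integralresult} for the identification with the Bott-localized spectrum, Theorem~\ref{thm:base-change} for the underlying equivalence, and the Cartesian-section property of $\MGL$ itself. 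The same argument, run with $\E_{N,(J)}$ in place of $\MGL_{(J)}$ and Theorem~\ref{thm:integralresult-e} in place of Theorem~\ref{thm:integralresult}, would give the analogous statement for \'etale-local $\MGL$-algebras, should one want it.
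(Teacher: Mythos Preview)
Your argument is correct, but it takes a different route from the paper's. The paper uses Theorem~\ref{thm:integralresult} to rewrite $\MGL^{\et}_{S,(J)}$ as $L_{\M\ZZ^{\et}}\MGL_{S,(J)}$ and then applies Lemma~\ref{lem:bousfieldvspull} directly: since Bousfield localization at a Cartesian section commutes with $f^*$, it suffices to know that $\M\ZZ^{\et}_{(J)}$ is a Cartesian section, which is reduced to Corollary~\ref{cor:cart-sec}. In this approach the $\mathcal{E}_\infty$-compatibility is automatic, because Bousfield localization at a fixed object is monoidally compatible and the comparison map of Lemma~\ref{lem:bousfieldvspull} is produced from adjunction units and counits in one step.

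Your approach instead checks the underlying equivalence by hand via rationalization plus mod-$\ell^\nu$ reduction (the latter being Theorem~\ref{thm:base-change}), and then separately argues the $\mathcal{E}_\infty$-compatibility via the lax monoidality of $f^*\pi_*\pi^* \Rightarrow \pi_*\pi^*f^*$. This is perfectly valid and has the virtue of making the role of Theorem~\ref{thm:base-change} explicit; it also does not require isolating Lemma~\ref{lem:bousfieldvspull}. The cost is the extra bookkeeping you yourself flag as the main obstacle: you must verify the mate transformation is lax monoidal, whereas the paper's route sidesteps this by packaging everything into the Bousfield-localization description. Note also that your invocation of the ``arithmetic fracture square'' is really just the detection principle that a map of $\ZZ_{(J)}$-local spectra is an equivalence iff it is so rationally and mod $\ell$ for each $\ell \in J$; you do not need the full pullback square, and $f^*$ need not preserve $\ell$-completions for your argument to go through.
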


\begin{proof} By Theorem~\ref{thm:integralresult}, it suffices to prove that for any morphism $f: X \rightarrow Y$ in $\Sch^{(J)}$, the canonical map:
\[
f^*L_{\M\ZZ^{\et}}\MGL_{(J), Y} \rightarrow L_{\M\ZZ^{\et}}\MGL_{(J), X},
\]
is an equivalence. We note that for any scheme $S$, $ L_{\M\ZZ^{\et}_{(J)}}\MGL_{(J), S} \simeq L_{\M\ZZ^{\et}}\MGL_{(J), S}$. Hence, by Lemma~\ref{lem:bousfieldvspull}, it then suffices to prove that $\M\ZZ^{\et}_{(J)}$ defines a Cartesian section of $\SH \rightarrow \Sch^{(J)}$. This follows from Corollary~\ref{cor:cart-sec}.
\end{proof}

Thanks to Corollary~\eqref{cor:cart-sec}, it is standard (see, for example \cite{spitzweck-integral}*{\S 10}) that we can form a stable homotopy $2$-functor:
\begin{equation} \label{eq:mgl-6}
\ho(\Mod_{\MGL/\ell^{\nu}}):\Sch^{'\op}_{\ZZ[\frac{1}{\ell}]} \rightarrow \TriCat,
\end{equation}
in the sense of \cite{ayoub}*{Definition 1.4.1} which thus satisfies the full six functors formalism by \cite{ayoub}*{Scholie 1.4.2}. Corollary~\eqref{cor:cart-sec} also feeds into the formalism of premotivic categories which we have already encountered. By \cite{cisinski-deglise}*{Theorem 2.4.50}, we also obtain the six functor formalism for $\Mod_{\MGL/\ell^{\nu}}$. The same remark applies to:
\begin{equation} \label{eq:mgl-6}
\Mod_{\MGL^{\et}_{(J)}}:(\Sch^{(J)})^{\op} \rightarrow \Cat_{\infty},
\end{equation}
using Corollary~\ref{cor:cart-sec-int}. Combining this with Theorem~\ref{thm:categorical-mgl}, we obtain

\begin{corollary} \label{cor:mgl-et} The functor
\begin{equation} \label{eq:mgl-6-int}
\Mod^{\et}_{\MGL_{(J)}}:(\Sch^{(J)})^{\op} \rightarrow \Cat_{\infty},
\end{equation}
satisfies the full six functors formalism.
\end{corollary}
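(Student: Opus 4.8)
The plan is to assemble Corollary~\ref{cor:mgl-et} from two inputs that are already in place: the existence of the six functors formalism on the \emph{linear} (module) side over the subcategory $\Sch^{(J)}$, and the equivalence of the \'etale-local $\MGL_{(J)}$-module category with a module category over a Cartesian section of $\CAlg(\SH)$. Concretely, I would first invoke Corollary~\ref{cor:cart-sec-int}, which says that $X\mapsto \MGL^{\et}_{X,(J)}$ is a Cartesian section of $\CAlg(\SH)\to\Sch^{(J)}$; in particular each $\MGL^{\et}_{X,(J)}$ is an $\mathcal{E}_\infty$-algebra and for every $f\colon X\to Y$ the canonical map $f^*\MGL^{\et}_{Y,(J)}\to\MGL^{\et}_{X,(J)}$ is an equivalence. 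This is exactly the hypothesis needed to run the standard machinery: a Cartesian section of $\CAlg(\SH)$ over an adequate base category produces a premotivic category $X\mapsto\Mod_{\MGL^{\et}_{X,(J)}}$, just as for $\Mod_{\M\ZZ}$ in \S\ref{sect:motives} and as recorded for $\Mod_{\MGL/\ell^\nu}$ in~\eqref{eq:mgl-6}.

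The second step is to identify this premotivic category with the functor $\Mod^{\et}_{\MGL_{(J)}}$ appearing in the statement. Here I would apply Theorem~\ref{thm:categorical-mgl}, which gives, for each $X\in\Sch^{(J)}$, a symmetric monoidal equivalence $\Mod_{L_{\M\ZZ^{\et}}\MGL_{X,(J)}}\simeq\Mod^{\et}_{\MGL_{X,(J)}}$, together with Theorem~\ref{thm:integralresult}, which identifies $L_{\M\ZZ^{\et}}\MGL_{X,(J)}\simeq\MGL^{\et}_{X,(J)}$ as $\mathcal{E}_\infty$-rings. Combining these, $\Mod^{\et}_{\MGL_{X,(J)}}\simeq\Mod_{\MGL^{\et}_{X,(J)}}$ naturally in $X$ (naturality follows because $\pi^*$ and base change are compatible, using the Cartesian section property from Corollary~\ref{cor:cart-sec-int} to commute $f^*$ past $\pi_*\pi^*$ — precisely the content packaged in Theorem~\ref{thm:base-change}). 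Thus the functor~\eqref{eq:mgl-6-int} is equivalent to $X\mapsto\Mod_{\MGL^{\et}_{X,(J)}}$, the premotivic category built from a Cartesian section.

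Finally I would cite the abstract result that such a premotivic category — a functor $(\Sch^{(J)})^{\op}\to\Cat_\infty$ of the form~\eqref{eq:premot}, arising from modules over a Cartesian section of $\CAlg(\SH)$ — inherits the full six functors formalism: one applies \cite{cisinski-deglise}*{Theorem 2.4.50} (equivalently, passes to homotopy categories and applies \cite{ayoub}*{Scholie 1.4.2} as was done for~\eqref{eq:mgl-6}), since $\SH$ itself satisfies the formalism and the module construction transports localization, smooth base change, continuity, etc. The localization property in particular, giving the cofiber sequence $j_!j^*\M\to\M\to i_!i^*\M$ in the statement of Corollary~\ref{thm:4}, is inherited because $\Mod_{\M\ZZ}$ and $\SH$ satisfy localization over $\Sch'$ (as recalled in \S\ref{sec:cl} and \S\ref{sect:motives}) and the same bootstrapping applies verbatim to $\Mod_{\MGL^{\et}_{(J)}}$.

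\textbf{Main obstacle.} The genuinely nontrivial ingredient is the \emph{naturality} in step two: establishing that the levelwise equivalences $\Mod^{\et}_{\MGL_{X,(J)}}\simeq\Mod_{\MGL^{\et}_{X,(J)}}$ assemble into an equivalence of functors on $\Sch^{(J)}$, which amounts to checking that $f^*$ intertwines the two descriptions — i.e.\ that \'etale localization commutes with pullback. This is exactly where the Cartesian-section statement Corollary~\ref{cor:cart-sec-int} (resting on Theorem~\ref{thm:integralresult} and Theorem~\ref{thm:base-change}, hence ultimately on the hard convergence results of \S\ref{sect:some-conn}) does the real work; once that compatibility is granted, invoking \cite{cisinski-deglise}*{Theorem 2.4.50} is routine.
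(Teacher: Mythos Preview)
Your proposal is correct and follows essentially the same route as the paper: the corollary is stated immediately after noting that Corollary~\ref{cor:cart-sec-int} gives a Cartesian section, hence $\Mod_{\MGL^{\et}_{(J)}}$ has the six functors formalism by \cite{cisinski-deglise}*{Theorem 2.4.50}, and then ``combining this with Theorem~\ref{thm:categorical-mgl}'' transports it to $\Mod^{\et}_{\MGL_{(J)}}$. You have in fact been more explicit than the paper about the naturality needed to promote the levelwise equivalence of Theorem~\ref{thm:categorical-mgl} to an equivalence of functors on $\Sch^{(J)}$, which the paper leaves implicit.
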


\subsection{The \'{e}tale hyperdescent spectral sequence}
\label{sect:hyp-ss} 
After the discussion in~\S\ref{sect:descent-ss-for-mot}, we obtain an immediate reward in the form of descent spectral sequences.
When $N = \KU_*$, 
so that $\E_N = \KGL$ is the algebraic $K$-theory spectrum, 
we get back Thomason's result \cite{aktec}*{Theorem 4.1}.
\begin{theorem} 
\label{thm:hypdesc}  
Let $\E \in \SH(S)$ be a Landweber exact spectum.
There exist strongly convergent spectral sequences of the form: 
$$
H^p_{\et}(S, \underline{\pi}^{\et}_{-q, -t}(\E^{\et}_{N}/\ell^{\nu})) 
\Rightarrow 
\Hom_{\SH(S)}(\sspt_S, \Sigma^{0,t}\E^{\et}_{N}/\ell^{\nu}[(\tau_{\ell^{\nu}}^{\MGL})_{S}^{-1}][p+q]),
$$ 
$$
H^p_{\et}(S, \underline{\pi}^{\et}_{-q, -t}((\E^{\et}_{N})\compl)) 
\Rightarrow 
\Hom_{\SH(S)}(\sspt_S, \Sigma^{0,t}(\E^{\et}_{N})\compl[(\tau_{\ell^{\nu}}^{\MGL})_{S}^{-1}][p+q]),
$$ 
and:
$$
H^p_{\et}(S, \underline{\pi}^{\et}_{-q, -t}(L_{\M\ZZ^{\et}}\E_{N,{(J)}})) 
\Rightarrow 
\Hom_{\SH(S)}(\sspt_S, \Sigma^{0,t}L_{\M\ZZ^{\et}}\E_{N,{(J)}}[p+q]).
$$ 
\end{theorem}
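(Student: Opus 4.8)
\textbf{Proof proposal for Theorem~\ref{thm:hypdesc}.}

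The plan is to produce each of the three spectral sequences as an instance of the descent spectral sequence of Proposition~\ref{prop:dss}, applied to the appropriate \'etale-local $\MGL$-module, and then to verify strong convergence in each case by checking the two standard criteria in that proposition (torsion homotopy sheaves over a base of finite residue $\ell$-cohomological dimension, or eventual vanishing of the relevant homotopy sheaves). Concretely, for the first spectral sequence I would apply Proposition~\ref{prop:dss} to the motivic spectrum $\E := \E_N/\ell^\nu[(\tau_{\ell^\nu}^{\MGL})_S^{-1}]$, whose \'etale localization $\E^{\et}$ is, by Theorem~\ref{thm:S-noeth-new-module} (for $\ell$ odd or $\nu\ge 2$) together with the observation that $\E_N$ is an $\MGL$-module with $\MGL/\ell^\nu$ a unital ring spectrum in those cases, equivalent to $\E_N^{\et}/\ell^\nu$. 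Thus the $E^2$-term $H^p_{\et}(S,\underline{\pi}^{\et}_{-q}\Omega^\infty_{\GG_m}\Sigma^{t,t}\E^{\et})$ is rewritten as $H^p_{\et}(S,\underline{\pi}^{\et}_{-q,-t}(\E_N^{\et}/\ell^\nu))$ using the identification $\underline{\pi}^{\et}_{-q}\Omega^\infty_{\GG_m}\Sigma^{t,t}\E^{\et}\iso\underline{\pi}^{\et}_{-q,-t}\E^{\et}$ from Lemma~\ref{lem:concrete}, and the abutment is $[\sspt_S,\Sigma^{p+q+t,t}\E^{\et}]\iso\Hom_{\SH(S)}(\sspt_S,\Sigma^{0,t}\E_N^{\et}/\ell^\nu[(\tau_{\ell^\nu}^{\MGL})_S^{-1}][p+q])$ after a reindexing absorbing the suspension $\Sigma^{t,t}$ into $\Sigma^{0,t}$ and $\Sigma^{t,0}=[t]$.

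For the second spectral sequence I would take the homotopy limit over $\nu$ of the first, i.e.\ apply the same argument to the $\ell$-completion $\E_N^{\et}\compl\simeq\lim_\nu\E_N^{\et}/\ell^\nu$; here one uses that $\ell$-completion commutes with the \'etale descent spectral sequence in the relevant range because the homotopy sheaves $\underline{\pi}^{\et}_{-q,-t}(\E_N^{\et}/\ell^\nu)$ are $\ell$-power torsion and $S$ has finite residue $\ell$-cohomological dimension, so the $\lim^1$ terms from the Milnor sequence vanish and strong convergence is preserved (invoking Proposition~\ref{prop:jardine-improved} and the convergence criteria of Proposition~\ref{prop:dss}). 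For the third spectral sequence I would apply Proposition~\ref{prop:dss} directly to $\E := L_{\M\ZZ^{\et}}\E_{N,(J)}$, which by Theorem~\ref{thm:integralresult-e} is equivalent to $\E_{N,(J)}^{\et}$, so that the descent spectral sequence for this \'etale-local spectrum has the claimed form; strong convergence follows from the arithmetic fracture square (Theorem~\ref{thm:arithmetic}) reducing to the rational part, where the spectral sequence degenerates since \'etale cohomological dimension is effectively zero rationally, and the $\ell$-complete parts, which are covered by the second spectral sequence and its convergence.

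The main obstacle I anticipate is the convergence bookkeeping rather than any conceptual difficulty: one must carefully verify that the homotopy sheaves appearing on the $E^2$-page genuinely satisfy one of the two hypotheses of Proposition~\ref{prop:dss} in each case. For the torsion spectral sequences this is immediate once we know the coefficients are $\ell$-power torsion and the residue fields have $\cd_\ell<\infty$, but one should check that the Bott-inversion (which is a colimit and could a priori destroy finiteness of cohomological dimension of the sheaves) does not introduce non-torsion classes --- it does not, since inverting $\tau_{\ell^\nu}^{\MGL}$ on a mod-$\ell^\nu$ spectrum stays within $\ell^\nu$-torsion objects. For the integral statement the subtlety is that $\underline{\pi}^{\et}_{-q,-t}(L_{\M\ZZ^{\et}}\E_{N,(J)})$ is not torsion, so one genuinely needs the fracture-square argument to split off the rational part and handle it separately; assembling the rational and $\ell$-adic convergence statements into a single strongly convergent spectral sequence for the integral abutment is where care is required, but it is routine given Theorem~\ref{thm:arithmetic} and the $\ell$-local cases already in hand.
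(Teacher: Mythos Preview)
Your proposal is correct and follows the same approach as the paper: the theorem is obtained by applying the descent spectral sequence of Proposition~\ref{prop:dss} to the \'etale-local spectra in question, after identifying them via the main Bott-inversion theorems (Theorems~\ref{thm:S-noeth-new-module} and~\ref{thm:integralresult-e}). The paper in fact treats the result as an immediate consequence of the discussion in \S\ref{sect:descent-ss-for-mot} and does not spell out the convergence bookkeeping you outline; your more detailed treatment of convergence (torsion coefficients plus bounded $\cd_\ell$ for the first two, and a fracture-square reduction for the integral case) is a reasonable elaboration of what the paper leaves implicit.
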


\subsubsection{} 
In order to access the spectral sequence above, 
we need to compute the \'{e}tale homotopy sheaves: 
$$
\underline{\pi}^{\et}_{p,q}(\E^{\et}_{N}/\ell^{\nu}):\Sm^{\op}_S \rightarrow \Ab,
\underline{\pi}^{\et}_{p,q}((\E^{\et}_{N})\compl):\Sm^{\op}_S \rightarrow \Ab.
$$
These \'etale sheaves also assemble into \'{e}tale sheaves of graded abelian groups: 
$$
\oplus_{p,q} \underline{\pi}^{\et}_{p,q}(\E^{\et}_{N}/\ell^{\nu}):\Sm^{\op}_S \rightarrow \gr\Ab,
\oplus_{p,q} \underline{\pi}^{\et}_{p,q}((\E^{\et}_{N})\compl): \Sm^{\op}_S \rightarrow \gr\Ab.
$$ 
We claim these sheaves are locally constant (see, for example, \cite{mvw}*{Definition 6.7}). 
There is an adjunction between the small and big \'{e}tale (discrete) topoi over $S$:  
$$
\rho^*:\Shv_{\et}(\Et_S) \rightleftarrows  \Shv_{\et}(\Sm_S): \rho_*.
$$ 
This gives rise to an adjunction between sheaves of abelian groups: 
$$
\rho^*: \Ab\Shv_{\et}(\Et_S) \rightleftarrows \Ab\Shv_{\et}(\Sm_S):\rho_*,
$$
and its graded variant:
$$
\rho^*: \gr\Ab\Shv_{\et}(\Et_S) \rightleftarrows \gr\Ab\Shv_{\et}(\Sm_S):\rho_*.
$$


\begin{proposition} 
\label{prop:sheaves}  
For any $p, q \in \ZZ$, the counit map yields an isomorphism: 
$$
\rho^*\rho_*\underline{\pi}^{\et}_{p,q}(\E^{\et}_N/\ell^{\nu}) \overset{\iso}{\rightarrow}  \underline{\pi}^{\et}_{p,q}(\E^{\et}_N/\ell^{\nu}).
$$ 
Consequently the counit map on $\ell$-completions: 
$$
\rho^*\rho_* \underline{\pi}^{\et}_{p,q}((\E^{\et}_N)\compl) \rightarrow \underline{\pi}^{\et}_{p,q}((\E^{\et}_N)\compl),
$$ 
and the graded variants: 
$$
\rho^*\rho_*\oplus_{p,q} \underline{\pi}^{\et}_{p,q}(\E^{\et}_N/\ell^{\nu}) \rightarrow \oplus_{p,q} \underline{\pi}^{\et}_{p,q}(\E^{\et}_N/\ell^{\nu}),
$$
$$
\rho^*\rho_*\oplus_{p,q} \underline{\pi}^{\et}_{p,q}((\E^{\et}_N)\compl) \rightarrow \oplus_{p,q} \underline{\pi}^{\et}_{p,q}((\E^{\et}_N)\compl),
$$
are isomorphisms.
\end{proposition}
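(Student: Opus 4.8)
The plan is to reduce everything to a single statement: for a Landweber exact spectrum $\E_N$, the \'etale homotopy sheaf $\underline{\pi}^{\et}_{p,q}(\E^{\et}_N/\ell^{\nu})$ on $\Sm_S$ lies in the essential image of $\rho^*$, i.e.\ it is pulled back from the small \'etale site. Once this is known, the counit $\rho^*\rho_*\Fscr\to\Fscr$ is an isomorphism for all such sheaves $\Fscr$ by the standard fact that $\rho^*$ is fully faithful (its counit is an equivalence on its essential image); the graded variants follow by taking direct sums, which commute with both $\rho^*$ and $\rho_*$ since the indexing is by a discrete set, and the $\ell$-completed statement follows because $\ell$-completion is computed as a limit of the $\ell^{\nu}$-truncations $\E^{\et}_N\compl\simeq\lim_\nu \E^{\et}_N/\ell^{\nu}$, both $\rho^*$ and $\rho_*$ preserve this limit (being respectively a left adjoint that here preserves the relevant limits and a right adjoint), and local constancy is preserved under such limits of sheaves.

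So the real content is: \emph{$\underline{\pi}^{\et}_{p,q}(\E^{\et}_N/\ell^{\nu})$ is locally constant.} First I would use the main descent computation of Section~\ref{sect:some-conn}: since $\E_N/\ell^{\nu}$ is Landweber exact, Proposition~\ref{prop:landweber-naive} tells us $f_q\E_N/\ell^{\nu}$ is \'etale-$\AA^1$-naive above degree $-q$, and more to the point Lemma~\ref{lem:a1-landweber} together with the slice computation identifies the relevant \'etale-local presheaves of spectra in terms of $\LL_{\et}\ZZ/\ell^{\nu}(t)[p]\simeq \mu_{\ell^{\nu}}^{\otimes t}[p]$ for $t\ge 0$. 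Concretely, via the formula~\eqref{eqn:et-hpty-sheaves} the \'etale homotopy sheaves of $\Sigma^{i,i}\E_N^{\et}$ in the naive range are $a_{\et}$ of the Nisnevich homotopy sheaves of a spectrum whose slices are sums of shifts of $\M\ZZ/\ell^{\nu}$ twisted by the (constant) graded pieces $N_*$; \'etale-sheafifying the motivic cohomology of $\Spec$ of a field in nonnegative weights yields $\mu_{\ell^{\nu}}^{\otimes\ast}$, which are locally constant \'etale sheaves by \cite{SGA4}*{XV, Corollary 2.2}. Assembling along the slice tower and passing to the limit (using Lemma~\ref{lem:lim-conv}, whose hypotheses hold here) keeps us inside the class of locally constant sheaves, since that class is closed under extensions, kernels, cokernels, and the countable limits arising from the tower.

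I would organize the argument as follows. Step one: record that $\rho^*\colon \Ab\Shv_{\et}(\Et_S)\to\Ab\Shv_{\et}(\Sm_S)$ is fully faithful, so $\rho^*\rho_*\Fscr\to\Fscr$ is an isomorphism precisely when $\Fscr$ is locally constant, i.e.\ in the image of $\rho^*$. Step two: reduce to the connectivity/naivety results of Section~\ref{sect:some-conn} to express $\underline{\pi}^{\et}_{p,q}(\E^{\et}_N/\ell^{\nu})$ through $\LL_{\et}$ applied to the slice tower of $\E_N/\ell^{\nu}$, using that the slices are (shifts and twists of) $\M\ZZ/\ell^{\nu}$. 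Step three: invoke $\LL_{\et}\ZZ/\ell^{\nu}(t)\simeq\mu_{\ell^{\nu}}^{\otimes t}$ for $t\ge 0$ (as in \cite{mvw}*{Theorem 2.3}) to see each slice's \'etale homotopy sheaf is locally constant, then close up under the tower via Lemma~\ref{lem:lim-conv} and the fact that locally constant \'etale sheaves form a thick, limit-closed subcategory. Step four: deduce the graded and $\ell$-completed versions formally from the $\ell^{\nu}$-level statement. The main obstacle I anticipate is Step three in the limit: one must be careful that taking $\lim_n$ over the coeffective tower does not leave the world of locally constant sheaves — this is where the finite $\ell$-cohomological dimension hypothesis and the strong convergence of the descent spectral sequence (Proposition~\ref{prop:jardine-improved}, Proposition~\ref{prop:dss}) are essential, guaranteeing that the homotopy sheaves of the limit are computed as a (locally, eventually stable) limit of the homotopy sheaves of the stages, each of which is locally constant.
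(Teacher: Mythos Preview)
Your strategy is quite different from the paper's, and substantially more elaborate. The paper's proof is essentially two lines: check the counit on stalks; since the stalk at a point of $\Sm_S$ is computed at a strictly Henselian local ring whose residue field is separably closed, it suffices to treat the case where $S$ is the spectrum of a separably closed field, and there one invokes the rigidity theorem of Hornbostel--Yagunov \cite{horn-yag}*{Theorem 0.3} (rigidity for $\AA^1$-representable theories with torsion coefficients) to conclude that the sheaf is the constant sheaf on the value at $\Spec\,k$. No slice analysis is needed at all.

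Your route via the slice tower and the identification $\LL_{\et}\ZZ/\ell^{\nu}(t)\simeq\mu_{\ell^{\nu}}^{\otimes t}$ is not unreasonable in spirit, but it has real gaps as written. First, the formula \eqref{eqn:et-hpty-sheaves} is only available in the \'etale-$\AA^1$-naive range, which for $f_q\E_N/\ell^{\nu}$ means $i\ge -q$; you never explain how to pass from the slice covers $f_q\E_N/\ell^{\nu}$ to $\E_N/\ell^{\nu}$ itself across all bidegrees $(p,q)$. Second, your claim that locally constant \'etale sheaves form a ``limit-closed'' subcategory is false in general, and the specific argument you sketch (via Proposition~\ref{prop:jardine-improved}) controls the \'etale localization of a limit of \emph{presheaves of spectra}, not the local constancy of the resulting homotopy sheaves on the big site; you would need an additional argument that the Milnor sequence does not introduce non--locally-constant contributions. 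These are not insurmountable, but closing them would amount to reproving rigidity for $\E_N^{\et}/\ell^{\nu}$ by hand. The paper simply cites it.
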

\begin{proof} 
The counit map $$\rho^*\rho_*\underline{\pi}^{\et}_{p,q}(\E^{\et}_N/\ell^{\nu}) \rightarrow  \underline{\pi}^{\et}_{p,q}(\E^{\et}_N/\ell^{\nu})$$ is a map of \'{e}tale sheaves on $\Sm_S$. 
To check the isomorphism, we consider stalks. 
Since the residue field of the strict Henselization of the local ring of a smooth $S$-scheme is separably closed, 
we may assume that $S$ is the spectrum of a separably closed field. 
In this case, 
$\underline{\pi}^{\et}_{p,q}(\E^{\et}_N/\ell^{\nu})$ is the locally constant sheaf for the abelian group $\underline{\pi}^{\et}_{p,q}(\E^{\et}_N/\ell^{\nu})(\Spec\,k)$ by \cite{horn-yag}*{Theorem 0.3}. 
The other cases follow readily.
\end{proof}

\subsubsection{} 
As the proof of Proposition~\ref{prop:sheaves} indicates, it will be useful to know the value of the homotopy sheaves over a separably closed field. 
First we have the following computation:

\begin{proposition} 
\label{prop:mgl-et-closed} 
If $k$ is an algebraically closed field, we have isomorphisms of graded abelian groups: 
$$
\oplus_{p,q} \MGL^{\et}/\ell^{\nu}_{p,q}(k) \iso  \oplus_{p} \MU/\ell^{\nu}_{p}[(\tau_{\ell^{\nu}}^{\MGL})_{S}^{-1}],$$
and:
$$
\oplus_{p,q} ((\MGL^{\et})\compl)_{p,q}(k) \iso  \oplus_{p} (\MU\compl)_{p}[(\tau_{\ell^{\nu}}^{\MGL})_{S}^{-1}].
$$
\end{proposition}
\begin{proof} 
From the computation of $s_*\MGL$ reviewed in~\S\ref{subsection:slicesofMGL}, 
and the collapse of the Bott inverted spectral sequence over an algebraically closed field we have an isomorphism:
\begin{equation} \label{eqn:mgl-k-closed}
\oplus_{p,q} \MGL/\ell^{\nu}_{p,q}(k)[(\tau_{\ell^{\nu}}^{\MGL})_{S}^{-1}]  
\iso 
\oplus_{p} \MU/\ell^{\nu}_{p}[(\tau_{\ell^{\nu}}^{\MGL})_{S}^{-1}].
\end{equation}
The desired isomorphisms follow as an instance of Theorem~\ref{thm:mgl}.
\end{proof}

%

More generally, we have:
\begin{proposition} 
\label{prop:lw-et-closed} 
If $k$ is an algebraically closed field, 
we have isomorphisms of graded abelian groups: 
$$\
\oplus_{p,q} \E_N^{\et}/\ell^{\nu}_{p,q}(k) \iso  \oplus_{p}N_p/\ell^{\nu} [(\tau_{\ell^{\nu}}^{\MGL})_{S}^{-1}],
$$ 
and:
$$\oplus_{p,q} ((\E_N^{\et})\compl)_{p,q}(k) \iso  \oplus_{p} (N\compl)_p [(\tau_{\ell^{\nu}}^{\MGL})_{S}^{-1}].
$$
\end{proposition}
\begin{proof} 
There is an isomorphism of graded abelian groups $\E_{N {*,*}} \iso \MGL_{*,*} \otimes_{\MU_*}N_*$ \cite{landweber}*{\S 7}.  
Therefore we have isomorphisms:
\begin{eqnarray*}
\oplus_{p,q} \E_N/\ell^{\nu}_{p,q}(k)[(\tau_{\ell^{\nu}}^{\MGL})_{S}^{-1}] & \iso & (\oplus_{p,q} \MGL/\ell^{\nu}_{p,q}(k)[(\tau_{\ell^{\nu}}^{\MGL})_{S}^{-1}]) \otimes_{\MU_*}N \\
& \iso & (\oplus_{p} \MU/\ell^{\nu}_{p}[(\tau_{\ell^{\nu}}^{\MGL})_{S}^{-1}]) \otimes_{\MU_*}N_*\\
& \iso & \oplus_{p}N_p/\ell^{\nu} [(\tau_{\ell^{\nu}}^{\MGL})_{S}^{-1}]. 
\end{eqnarray*}
The desired isomorphisms follow as an instance of Theorem~\ref{thm:mgl}.
\end{proof}

Propositions \ref{prop:sheaves} and \ref{prop:lw-et-closed} show the coefficients of the descent spectral sequences displayed in Theorem~\ref{thm:hypdesc} are locally constant sheaves 
on a periodized version of $N_*$. 

\subsection{Cellularity} 
Recall the $\infty$-category $\SH(S)^{\cell}$ of \emph{cellular spectra} is the full localizing subcategory of $\SH(S)$ generated by $\Sigma^{p,q}\unit_S$ for $p, q \in \ZZ$ \cite{dugger-isaksen.cell}. 
While the left adjoint functor $\epsilon^*$ preserves cellularity, this isn't quite clear for its right adjoint $\epsilon_*$.
\begin{theorem} 
\label{corollary:cell} 
Let $\ell$ be a prime and $S$ be a Noetherian $\ZZ[\frac{1}{\ell}]$-scheme of finite dimension and assume that for all $x \in S$, $\cd_{\ell}(k(x)) < \infty$.
Suppose that $\E \in\SH(S)$ is an $\MGL$-module which is cellular, then $\E^{\et}/\ell^{\nu}$ is cellular.
\end{theorem}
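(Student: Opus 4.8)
The plan is to exploit the identification $\E^{\et}/\ell^{\nu} \simeq \E/\ell^{\nu}[(\tau^{\MGL}_{\ell^{\nu}})_S^{-1}]$ furnished by Theorem~\ref{thm:S-noeth-new-module} (and, in the missing case $\ell=2$, $\nu=1$, the module-theoretic Bott inversion defined via Oka's pairing), reducing the cellularity of $\E^{\et}/\ell^{\nu}$ to the cellularity of a Bott-inverted spectrum. First I would recall that $\SH(S)^{\cell}$ is a localizing subcategory of $\SH(S)$, hence closed under arbitrary colimits; and that it is closed under the smash product whenever one factor is cellular, since $\Sigma^{p,q}\unit_S \wedge (-)$ preserves cellular generators. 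Therefore, if $\E$ is cellular then $\E/\ell^{\nu} \simeq \E \wedge \unit_S/\ell^{\nu}$ is cellular, being the cofiber of a self-map of the cellular spectrum $\E$ (equivalently, the smash with the cellular Moore spectrum $\unit_S/\ell^{\nu}$).

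Next I would observe that Bott inversion is a filtered colimit. By Corollary~\ref{thm:robalo-stab2} and the discussion of periodization in~\S\ref{per}, $\E/\ell^{\nu}[(\tau^{\MGL}_{\ell^{\nu}})_S^{-1}]$ is computed as the sequential colimit
\[
\E/\ell^{\nu} \xrightarrow{\cdot\tau} \Sigma^{0,\tau(\ell^{\nu})}\E/\ell^{\nu} \xrightarrow{\cdot\tau} \Sigma^{0,2\tau(\ell^{\nu})}\E/\ell^{\nu} \xrightarrow{\cdot\tau} \cdots
\]
in $\SH(S)$ (one uses here that the Bott element is a map out of an invertible object $\Sigma^{0,-\tau(\ell^{\nu})}\unit_S$, so multiplication by it is the composite $\E/\ell^{\nu} \simeq \Sigma^{0,\tau(\ell^{\nu})}(\Sigma^{0,-\tau(\ell^{\nu})}\unit_S \wedge \E/\ell^{\nu}) \to \Sigma^{0,\tau(\ell^{\nu})}\E/\ell^{\nu}$). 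Each term $\Sigma^{0,k\tau(\ell^{\nu})}\E/\ell^{\nu}$ is a suspension of a cellular spectrum, hence cellular, and the transition maps live in $\SH(S)^{\cell}$; since a localizing subcategory is closed under sequential colimits, the colimit $\E/\ell^{\nu}[(\tau^{\MGL}_{\ell^{\nu}})_S^{-1}]$ is cellular. Invoking Theorem~\ref{thm:S-noeth-new-module} to identify this colimit with $\E^{\et}/\ell^{\nu}$ then finishes the proof when $\MGL/\ell^{\nu}$ is a unital ring spectrum.

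The one genuine subtlety is the case $\ell=2$, $\nu=1$, where $\MGL/2$ carries no unital multiplication and $\E/2[(\tau^{\MGL}_2)_S^{-1}]$ is defined through the mod-$4$ Bott element acting via~\eqref{equation:okapairing2}; but this changes nothing structurally, as the Bott inversion is still computed as a sequential colimit of shifts of $\E/2$ along the maps $\tau_4^{\MGL}\cdot\colon \E/2 \to \Sigma^{0,-8}\E/2$ as in~\eqref{eq:map-tau-3}, and $\E/2$ is cellular by the same Moore-spectrum-smash argument, so the colimit remains in $\SH(S)^{\cell}$; Theorem~\ref{thm:S-noeth-new-module} (together with the retract argument of Theorem~\ref{thm:tau-is-loc} identifying $\E/2[(\tau^{\MGL}_2)^{-1}]$ with $\E^{\et}/2$) again closes the gap. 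I do not expect a serious obstacle here; the only point requiring care is that all the Bott-inversion colimits genuinely take place in $\SH(S)$ (not merely in a module category), which is exactly what the constructions in~\S\ref{sec:gradediso} and~\S\ref{sect:incoh} provide.
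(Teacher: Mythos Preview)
Your proof is correct and follows the same approach as the paper: identify $\E^{\et}/\ell^{\nu}$ with the Bott-inverted spectrum and use that $\SH(S)^{\cell}$ is localizing, hence closed under the colimits computing that inversion. The paper's proof is a one-liner citing Theorem~\ref{thm:S-noeth-new} and the closure of cellular objects under colimits; you spell out the details (cellularity of $\E/\ell^{\nu}$, the explicit sequential colimit form of Bott inversion) and, more carefully than the paper, invoke the module version Theorem~\ref{thm:S-noeth-new-module} and treat the $\ell=2$, $\nu=1$ case separately.
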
  
\begin{proof} 
Follows from Theorem~\ref{thm:S-noeth-new} because cellular objects are closed under colimits and thus $\E/{\ell^{\nu}}[(\tau^{\MGL}_{\ell^{\nu}})_{S}^{-1}]$ is cellular.
\end{proof}
\begin{remark}
It is unclear whether the $\ell$-completion $\E\compl$ is actually cellular. 
This is a delicate issue as cellular objects are not closed under infinite limits, see \cite{totaro}*{\S5}. 
\end{remark}

%

\section{Appendix A: Periodization and localizations}
\label{appendix:invert}

\subsection{Periodization} \label{per}
We begin by reviewing the process of periodization/inversion of objects in a presentably symmetric monoidal $\infty$-category. 
Our main reference is \cite{hoyois-cdh}*{Section 3}. 
The set-up is as follows: 
$\C$ is a presentably symmetric monoidal $\infty$-category and $S$ a collection of objects of $\C_{/\unit}$
i.e., 
objects $x$ equipped with a map $\alpha: x \rightarrow \unit$. 
Let $\M$ be a $\C^{\otimes}$-module.

\subsubsection{}
\label{sect:incoh} 
Let us first recall the periodization procedure which does not take into account coherence. 
We have already used this procedure in~\S\ref{sec:no-ambiguity} and in other places throughout the paper. 
Here, we consider the category $\Ho(\C)$ as a symmetric monoidal category. 

Suppose $m \in \Ho(\C)$ admits a unital multiplication (which is not necessarily associative) and $x \in \Ho(\C)$ acquires a unital pairing $m \otimes x \rightarrow x$. 
If $m$ is an associative monoid in $\Ho(\C)$ and $x$ is an $m$-module, 
we may take the module action as this pairing. 
Let $\alpha: y \rightarrow m$ be a map from an $\otimes$-invertible object $y$. 
For example if $\C = \SH(S)$, 
then $y$ could be $\Sigma^{p,q}\sspt$ and $\alpha: \Sigma^{p,q}\sspt \rightarrow m$ is a homotopy element. 
We define $\alpha \cdot: x \rightarrow y^{-1}\otimes x$ by applying $y^{\otimes-1}$ to the ``multiplication by $\alpha$ map": 
$$
y \otimes x \stackrel{\alpha}{\rightarrow} m \otimes x \rightarrow x.
$$
The $\alpha$-inversion $x[\alpha^{-1}]$ of $x$ is defined as the colimit of the diagram:
\begin{equation} 
\label{incoh-mult}
x \stackrel{\alpha \cdot}{\rightarrow} y^{-1}\otimes x \stackrel{\id \otimes \alpha \cdot}{\rightarrow} y^{-1} \otimes y^{-1} \otimes x \cdots.
\end{equation}

\subsubsection{} Let us now tackle the homotopy coherent situation.
\begin{definition} 
The $\infty$-category $P_S\M$ of \emph{$S$-periodic objects in $\M$} is the full subcategory of $\M$ spanned by objects $m \in M$ such that for all $\alpha: x \rightarrow \unit$ in $S$, 
the map $\alpha^*: \Hom(\unit, m) \simeq m \rightarrow \Hom(x, m)$ is an equivalence.
\end{definition}

\begin{example} 
Let $n: \unit \rightarrow \unit$ be the multiplication by $n$ map in $\SH$, the stable $\infty$-category of spectra.
A spectrum is $\{n:\unit\rightarrow\unit\}$-periodic if and only if $n$ acts invertibly.
\end{example}

\begin{proposition} 
Suppose the domains of elements in $S$ are $\kappa$-compact. 
Then the fully faithful embedding of $P_S\M$ into $\M$ admits a left adjoint $P_S: \M \rightarrow P_S \M$ which witnesses the latter as an $\kappa$-accessible localization at $\{ \id_m \otimes \alpha \}$, 
where $m \in M$, 
and $\alpha: x \rightarrow \unit \in S$.
\end{proposition}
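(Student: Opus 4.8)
The statement is a standard ``Bousfield localization exists'' argument, and the plan is to reduce it to the general machinery for accessible localizations of presentable $\infty$-categories in \cite{htt}*{Proposition 5.5.4.15}. First I would observe that $\M$ is presentable, being a module over a presentably symmetric monoidal $\infty$-category $\C^{\otimes}$; in particular it is $\kappa$-presentable for some regular cardinal $\kappa$, and by enlarging $\kappa$ if necessary we may assume that $\kappa$ also bounds the compactness of the domains $x$ of all $\alpha\colon x\to\unit$ in $S$. (Here I am using that $S$ is a set, or at least essentially small, which is implicit in the setup of \S\ref{per}.) The key point is then to identify $P_S\M$ with the $S_0$-local objects for the \emph{set} of morphisms
$$
S_0 := \{\, \id_m\otimes\alpha\colon m\otimes x\to m\otimes\unit\simeq m \ \mid\ \alpha\colon x\to\unit\in S,\ m\in\M^{\kappa}\,\},
$$
where $\M^{\kappa}$ is (a set of representatives for) the $\kappa$-compact objects of $\M$.

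The heart of the argument is the verification that $P_S\M$, as defined via the mapping-space condition $\alpha^*\colon\Hom(\unit,m)\to\Hom(x,m)$ being an equivalence for all $\alpha\in S$, coincides with the class of $S_0$-local objects. One direction is formal: if $m'$ is $S_0$-local then in particular $(\id_{\unit}\otimes\alpha)^*$ is an equivalence on mapping spaces into $m'$, which after using the tensor–hom adjunction in the $\C^{\otimes}$-module $\M$ and the identification $\Hom_{\M}(\unit,m')\simeq m'$ is exactly the defining condition for $m'\in P_S\M$. Conversely, if $m\in P_S\M$, then for arbitrary $m''\in\M$ I want $\Maps_{\M}(m''\otimes x,m)\to\Maps_{\M}(m'',m)$ to be an equivalence; writing $m''$ as a $\kappa$-filtered colimit of $\kappa$-compact objects and using that $-\otimes x$ preserves colimits and that $\Maps_{\M}(-,m)$ sends colimits to limits, this reduces to the case $m''\in\M^{\kappa}$, which is the definition of $S_0$-locality. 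Thus $P_S\M$ is the localization of $\M$ at the \emph{small} set $S_0$, and \cite{htt}*{Proposition 5.5.4.15} gives an accessible (indeed $\kappa$-accessible, by the cardinality control above) reflective localization $P_S\colon\M\to P_S\M$ with the full subcategory closed under $\kappa$-filtered colimits, which yields all the asserted properties.

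The main obstacle, and the only place requiring genuine care rather than bookkeeping, is the compact-generation reduction in the converse direction: one must make sure that testing $S$-periodicity against the \emph{generators} (the unit and the domains $x$) suffices to test it against all objects, which is where the hypothesis that the domains of elements of $S$ are $\kappa$-compact is essential — without it the morphisms $\id_m\otimes\alpha$ need not form a small set with $\kappa$-presentable sources, and the localization need not be accessible. A secondary subtlety is the interaction of $\otimes$ with the internal hom inside the module $\M$: one should invoke that $\M$ is a \emph{presentably} symmetric monoidal module, so that $-\otimes x\colon\M\to\M$ has a right adjoint $\mapint(x,-)$ and the equivalence $\Maps_{\M}(m''\otimes x,m)\simeq\Maps_{\M}(m'',\mapint(x,m))$ is available; combined with $\Hom(x,m)\simeq\mapint(x,m)$ evaluated on $\unit$, this ties the two formulations of periodicity together. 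I expect everything else to be a direct citation of \cite{htt}*{Ch.~5.5}.
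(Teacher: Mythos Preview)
Your proof is correct and arrives at the same conclusion, but the route differs from the paper's. The paper argues directly that $P_S\M$ is closed under limits (clear from the definition) and under $\kappa$-filtered colimits (since each domain $x$ is $\kappa$-compact, $\Maps(x,\colim E_\alpha)\simeq\colim\Maps(x,E_\alpha)\simeq\colim E_\alpha$), and then invokes the adjoint functor theorem to obtain the reflector; the identification of the inverted morphisms as $\{\id_m\otimes\alpha\}$ is a one-line consequence of the tensor--hom adjunction $\Hom(m\otimes x,n)\simeq\Hom(m,\Hom(x,n))$. Your approach instead first produces a \emph{small} generating set $S_0$ of morphisms (by restricting $m$ to $\M^{\kappa}$), identifies $P_S\M$ with the $S_0$-local objects, and then cites \cite{htt}*{Proposition 5.5.4.15}. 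Both are standard; the paper's version is shorter because it never needs to isolate a small set, while yours is more explicit about why the localization is accessible.

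One small wrinkle in your write-up: in the ``formal'' direction you appeal to $(\id_{\unit}\otimes\alpha)^*$, but $\unit$ lives in $\C$, not in $\M^{\kappa}$, so it is not literally a member of your test set $S_0$. The correct argument for that direction is the one you implicitly set up elsewhere: $S_0$-locality gives $\Maps_{\M}(m,\mapint(x,m'))\simeq\Maps_{\M}(m,m')$ for all $m\in\M^{\kappa}$, and since the $\kappa$-compacts generate $\M$ this forces $m'\to\mapint(x,m')$ to be an equivalence. With that adjustment your argument is complete.
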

\begin{proof}
It is clear that limits of periodic objects are also periodic. 
To see that $P_S\M \hookrightarrow \M$ preserves $\kappa$-filtered colimits, 
note that for all $\alpha: x \rightarrow \unit \in S$ we have, by compactness of $x$,:
$$
\Maps(x, \colim_{\alpha} E_{\alpha}) 
\simeq  
\colim_{\alpha}  \Maps(x,E_{\alpha}) 
\simeq  
\colim_{\alpha} E_{\alpha},
$$ 
and thus the left adjoint exists. 
By the adjunction $\Hom(m \otimes x, n) \simeq \Hom(m, \Hom(x, n))$, we see that we are exactly inverting the maps $\id_m \otimes x: m \otimes x \rightarrow m \otimes \unit \simeq m$.
\end{proof}

\subsubsection{}  
\label{par:alg} 
The localization turns out to be compatible with the monoidal structure in the sense of \cite{higheralgebra}*{Proposition 2.2.1.9}. 
Thus $A \in \CAlg(\C)$ implies $P_SA\in\CAlg(\C)$, 
and the map $A \rightarrow P_SA$ is one of algebras.

\subsubsection{} 
Let $S_0$ denote the set of domains of the morphisms in $S$. 
By the machinery explained in Theorem~\ref{thm:robalo-stab1} we can consider the monoidal inversion $\M[S_0^{-1}]$ for which there is an equivalence of symmetric monoidal $\infty$-categories 
\cite{hoyois-cdh}*{Proposition 3.2}: 
$$
P_S\M[S_0^{-1}] \rightleftarrows P_S\M.
$$ 
Thus every $S$-periodic object in $\CAlg(\C)$ uniquely defines an $S$-periodic object in $\CAlg(\C[S_0^{-1}])$.

\subsubsection{} 
An explicit formula for periodization is given in \cite{hoyois-cdh}*{Section 3}. 
We are interested in inverting a single map $\alpha: x \rightarrow \unit$, 
so the formula explained prior to Theorem 3.8 of \cite{hoyois-cdh} simplifies to:
$$
Q_{\alpha}m := \colim m \stackrel{\alpha^*}{\rightarrow} \mapint(x, m) \stackrel{\alpha^*}{\rightarrow} \mapint(x, \mapint(x,  m)) \simeq \mapint(x^{\otimes 2}, m)\cdots.
$$ 
According to \cite{hoyois-cdh}*{Theorem 3.8}, this formula computes exactly the $\alpha$-periodiziation in certain cases. 
For our purposes it suffices to prove a simpler result.

Suppose $\Gamma$ is a symmetric monoidal $1$-groupoid, and $\D$ is a $1$-category.
The $1$-category $\D^{\Gamma}$ of \emph{$\Gamma$-graded objects in $\D$} is the functor category $\Fun(\Gamma, \D)$. 
For $F: \Gamma \rightarrow \D$ we set $F_{\gamma} := F(\gamma)$ and for $\gamma, \gamma' \in \Gamma$ we set $F_{\gamma + \gamma'} := F(\gamma \otimes \gamma')$. 
If $\Gamma$ is the groupoid of integers with a unique invertible morphism and monoidal structure given by addition, 
then $\D^{\Gamma}$ is the category of graded objects in $\D$. 
If $\Gamma := \Ho(\Pic(\SH(S))$, 
where $\Pic(\SH(S))$ denotes $\otimes$-invertible objects in $\SH(S)$, 
then $\Ab^{\Gamma}$ corresponds to homotopy groups graded by $\otimes$-invertible objects.
The point of the above definition is that we often have a functor $\pi: \C \rightarrow \D$ such that the collection of functors $\{ \pi(\mapint(\gamma, -)) \}_{\gamma \in \Gamma}$ 
is conservative for $\Gamma \subset \C$ a $1$-subgroupoid. 
This reduces the checking of certain higher coherences to simpler $1$-categorical coherences.

\begin{proposition} \label{prop:invert-long} 
Let $\C$ be a presentably symmetric monoidal $\infty$-category and let $\alpha: x \rightarrow \unit$ be a morphism. 
Suppose we are given the following data:
\begin{enumerate}
\item $\Gamma \subset \C$ a $1$-subgroupoid of $\C$ containing the morphism $\alpha$ and the permutation isomorphisms $\sigma_n: x^{\otimes n} \rightarrow x^{\otimes n}$ for all $n \geq 1$.
\item A filtered colimit preserving functor: 
$$
\pi: \C \rightarrow \D,
$$ 
so that the functors $\{\pi( \mapint(\gamma, -)) =: \pi_{\gamma}(-)  \}_{\gamma \in \Gamma}$ form a conservative family.
\item For any $m \in \C$, consider the diagram $\pi( \mapint(-, m)) \in \D^{\Gamma}$. 
Then there exists $n\gg 0$ such that for any $m \geq n$ there is a commutative diagram in $\D$:
\begin{equation}
\xymatrix{
\pi( \mapint(-, m))_{\Sigma_m\,x} \ar[r]^{\id} \ar[d]_{\sigma_{n,*}} & \pi( \mapint(-, m))_{\Sigma_m\,x} \\
\pi( \mapint(-, m))_{\Sigma_m\,x}. \ar[ur]_{\id} & }
\end{equation}
\end{enumerate}
Then for $x$ any $n$-symmetric object and $n\geq 2$, we have $Q_{\alpha} \simeq P_{\alpha}$.
\end{proposition}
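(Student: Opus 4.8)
The goal is to identify the ``naive'' colimit periodization $Q_\alpha$ with the genuine localization $P_\alpha$ under hypotheses (1)--(3), which are designed to reduce a homotopy-coherent statement to a $1$-categorical one via the conservative family $\{\pi_\gamma\}_{\gamma\in\Gamma}$. The plan is to first recall from \cite{hoyois-cdh}*{Section 3} that there is always a natural transformation $Q_\alpha \to P_\alpha$ (equivalently, a natural map $m \to Q_\alpha m$ whose target we must show is $\alpha$-periodic and initial among $\alpha$-periodic objects receiving a map from $m$). Since both $Q_\alpha$ and $P_\alpha$ are idempotent-like constructions and the fully faithful inclusion $P_\alpha\C \hookrightarrow \C$ already has $P_\alpha$ as its left adjoint, it suffices to prove two things: (a) $Q_\alpha m$ is $\alpha$-periodic for every $m$, and (b) the map $m \to Q_\alpha m$ is an $\alpha$-equivalence, i.e. becomes an equivalence after applying $P_\alpha$. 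Claim (b) is formal from the colimit formula for $Q_\alpha$ together with the fact that each structure map $\underline{\mathrm{map}}(x^{\otimes k},m)\to\underline{\mathrm{map}}(x^{\otimes k+1},m)$ is obtained by applying $\alpha^*$, hence is inverted by $P_\alpha$; since $P_\alpha$ preserves the relevant (filtered) colimit, $P_\alpha(m\to Q_\alpha m)$ is an equivalence.

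\textbf{The core computation.} The real content is claim (a): that $\alpha$ acts invertibly on $Q_\alpha m$, i.e. that $\alpha^* \colon Q_\alpha m \simeq \underline{\mathrm{map}}(\unit, Q_\alpha m) \to \underline{\mathrm{map}}(x, Q_\alpha m)$ is an equivalence. First I would use that $\underline{\mathrm{map}}(x,-)$ commutes with the filtered colimit defining $Q_\alpha m$ (here one needs $x$, equivalently the domains in $S_0$, to be $\kappa$-compact, which is part of the standing setup for periodization), so that $\underline{\mathrm{map}}(x, Q_\alpha m) \simeq \colim_k \underline{\mathrm{map}}(x^{\otimes k+1}, m)$, and the map $\alpha^*$ on $Q_\alpha m$ is identified with the shift map on this colimit tower. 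To see the shift map is an equivalence I invoke the conservative family: it is enough to check that $\pi_\gamma$ applied to the shift map $Q_\alpha m \to \underline{\mathrm{map}}(x,Q_\alpha m)$ is an isomorphism in $\D$ for every $\gamma\in\Gamma$. Because $\pi$ preserves filtered colimits and $\underline{\mathrm{map}}$ composes ($\underline{\mathrm{map}}(\gamma,\underline{\mathrm{map}}(x^{\otimes k},m)) \simeq \underline{\mathrm{map}}(\gamma\otimes x^{\otimes k},m)$), this reduces to analyzing the $\Gamma$-graded object $\pi(\underline{\mathrm{map}}(-,m)) \in \D^\Gamma$ and the self-map induced by $\alpha$ and the permutations. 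This is precisely where hypothesis (3) enters: it says that for $m\gg 0$ the permutation $\sigma_m$ acts as the identity on the relevant graded piece, which is exactly the statement that the $n$-symmetry of $x$ (with $n\geq 2$) kills the potential obstruction to the shift map being an isomorphism in the colimit. Assembling: the tower whose colimit is $\pi_\gamma(Q_\alpha m)$ has transition maps that, after passing far enough out, are ``multiplication by $\alpha$'' maps whose iterates telescope, and the $n$-symmetry hypothesis guarantees the shift-by-one map on the colimit agrees with the identity, so $\alpha^*$ is an isomorphism after each $\pi_\gamma$, hence an equivalence.

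\textbf{The main obstacle.} I expect the delicate point to be making the bookkeeping in step (a) precise: one must carefully track how the map $\alpha^*\colon Q_\alpha m \to \underline{\mathrm{map}}(x,Q_\alpha m)$ corresponds, under the identification of both sides as colimits over $\NN$, to a map of towers, and verify that this map of towers is an equivalence on colimits \emph{using} that the permutation isomorphisms $\sigma_n$ become identities in the range $m\geq n$ (hypothesis (3)) and that $x$ is $n$-symmetric for $n\geq 2$. The subtlety is that $\alpha^*$ need not be a strict shift; it is a shift up to the coherence data encoded by the $\sigma_n$, and the whole hypothesis package (1)--(3) is engineered so that after applying the conservative family $\{\pi_\gamma\}$ these coherences collapse to the trivial $1$-categorical datum displayed in the commuting triangle of hypothesis (3). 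So the proof is: reduce to $\D^\Gamma$ via conservativity and filtered-colimit-preservation, then observe that in $\D^\Gamma$ the hypothesis (3) triangle forces the colimit of the shift tower to be computed by a cofinal subtower on which the transition maps are genuinely invertible, giving $Q_\alpha \simeq P_\alpha$. I would also remark that this is the $\infty$-categorical enhancement of the classical fact that inverting a graded-commutative element can be done by a directed colimit, the $n$-symmetry replacing strict commutativity.
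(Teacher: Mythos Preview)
Your proposal is correct and follows essentially the same route as the paper: both reduce to showing that $Q_\alpha m$ is $\alpha$-periodic, both identify the obstruction to $\alpha^*$ being the naive shift map as a cyclic permutation, and both use the conservative family $\{\pi_\gamma\}$ together with hypothesis~(3) to kill this obstruction in the $1$-category $\D$. The only cosmetic differences are that the paper writes out the two-row ladder diagram explicitly and constructs the inverse to $\alpha^*$ as the identity map on each term (noting the bottom triangles only commute up to permutation), and that for the final step the paper invokes \cite{hoyois-cdh}*{Lemma 3.3} rather than arguing directly, as you do, that $m\to Q_\alpha m$ is a $P_\alpha$-equivalence because each transition map is inverted by $P_\alpha$.
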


\begin{proof} 
For $m \in \C$ we claim that $Q_{\alpha}m$ is $\alpha$-periodic, 
i.e., 
$\alpha^*:Q_{\alpha}m \rightarrow \mapint(x, Q_{\alpha}m)$ is an equivalence in $\C$. 
In effect, 
consider the (solid) commutative diagram:
$$
\xymatrix{
m \ar[r] \ar[d] 
& \mapint(x, m) \ar[r] \ar[d] 
& \mapint(x^{\otimes 2}, m) \ar[r] \ar[d] 
& \cdots \ar[r] \ar[d] 
& Q_{\alpha}m \ar[d]_{\alpha^*} \\
\mapint(x, m) \ar[r]  \ar@{-->}[ur]& \mapint(x,\mapint(x, m)) \simeq \mapint(x^{\otimes 2}, m)  \ar[r] \ar@{-->}[ur] 
&  \mapint(x,\mapint(x^{\otimes 2}, m)) \ar[r]  \ar@{-->}[ur] 
& \cdots \ar[r] 
& \mapint(x, Q_{\alpha}m). 
}
$$
We construct a quasi-inverse to $\alpha^*$ by constructing the indicated dotted arrows rendering the diagrams commutative. 
Letting the dotted arrows be identities, we see that while the top triangles commute, the bottom triangles \emph{do not necessarily commute}. 
The problem is that the horizontal arrow is given by $\alpha^*$, 
while the vertical arrow is given by $\mapint(\id, \alpha^*)$.  
Hence, 
after adjunction, 
the two maps differ by a cyclic permutation.
However, 
for $\gamma \in \Gamma$, 
applying $\pi_{\gamma}$ to the above solid diagram yields a diagram in the $1$-category $\D$:
$$
\xymatrix{
\pi_{\gamma}m \ar[r] \ar[d] 
& \pi_{\gamma + x}m \ar[r] \ar[d]
& \pi_{\gamma + x + x}m \ar[r] \ar[d] 
& \cdots  \ar[r] \ar[d]
& \colim_n \pi_{\gamma+ nx}m \ar[d]_{\pi_{\gamma}\alpha^*} \\
\pi_{\gamma+x}m \ar[r]  \ar@{-->}[ur]
& \pi_{\gamma+ x+ x}m \ar[r] \ar@{-->}[ur] 
& \pi_{\gamma+ x+ x+x}m\ar[r]  \ar@{-->}[ur] 
& \cdots \ar[r] & \colim_n \pi_{\gamma +nx}\mapint(x, Q_{\alpha}m). 
}
$$
Here we have identified the last terms with $\pi_{\gamma}(Q_{\alpha} m)$ using the fact that $\pi$ preserves filtered colimits. 
By the conservativity of $\pi_{\gamma}$, we need only prove that $\pi_{\gamma}(\alpha^*)$ is an isomorphism. 
To see this, we examine the diagram:
$$
\xymatrix{
& \pi_{\gamma + kx} m \ar[d]\\
\pi_{\gamma + {k}x} m \ar[r] \ar@{-->}[ur]^{\id} & \pi_{\gamma + (k+1)x}m.
}
$$
If this diagram commutes for $k \gg 0$, then we can produce the desired inverse. The commutativity of the above diagram is thus given by the third hypothesis. 
 
It remains to apply \cite{hoyois-cdh}*{Lemma 3.3} which states that if $Q_{\alpha}m$ is $\alpha$-periodic then it must coincide with $P_{\alpha}m$.
\end{proof}
Note that the third hypothesis is a weaker form of the condition that the map $\alpha: x \rightarrow 1$ is $n$-symmetric since we need only check a commutative diagram of $1$-categories. According to the coherence results of Dugger in \cite{dugger-coherence}*{Proposition 4.20-21} for invertible objects, the conclusion of Proposition~\ref{prop:invert-long} holds for $\C = \SH(S)$ and $\Gamma = \Ho(\Pic(\SH(S)).$

\subsubsection{} 
We shall apply Proposition \ref{prop:invert-long} to $\C = \SH(S)$ (resp.~$\Mod_{\E}$ for $\E \in \CAlg(\SH(S)$ or a presheaf of $\mathcal{E}_{\infty}$-ring spectra), 
and to $\Gamma$ the $1$-subgroupoid of $\SH(S)$ (resp.~$\Mod_{\E}$) spanned by $\Sigma^{2n,n} \Sigma^{\infty}_{T}X_+$, $n \in \ZZ$ (resp.~$\E \wedge \Sigma^{2n,n} \Sigma^{\infty}_{T}X_+$, $n \in \ZZ$) 
and all invertible $1$-morphisms. 
We let $\pi:= [S^0, -]$ so that $\{\pi(\mapint(\Sigma^{2n,n} \Sigma^{\infty}_{T}X_+, -))\}_{\Gamma}$ (resp.~$\{\pi(\mapint(\E \wedge \Sigma^{2n,n} \Sigma^{\infty}_{T}X_+, -))\}_{\Gamma}$) forms a conservative family.

\subsubsection{Examples}
We review some examples relevant for the main body of the paper.

\begin{example} ($K$-theory) 
We work with $\C = \Mod_K$. 
As explained in \cite{hoyois-cdh}*{Example 3.4} there is a map:
$$
\gamma: \Sigma^{\infty}( (\PP^{1} \setminus 0) \coprod_{\GG_m} \AA^{1}) \rightarrow \Sigma^{\infty} \Sigma(\GG_m,1) \rightarrow K,
$$ 
in presheaves of spectra on $\Sm_S$, 
which defines a map $\gamma: K \wedge  \Sigma^{\infty} (\PP^{1} \setminus 0 \coprod_{\GG_m}) \AA^{1} \rightarrow K$ in $\Mod_K$. 
The nonconnective Bass-Thomason-Trobaugh $K$-theory is the presheaf of $\mathcal{E}_{\infty}$-ring spectra defined by the periodization $K^B:=Q_{\gamma}K$ in $\Mod_K$. 
Taking $\LL_{\mot}$ of $K$ (actually $\LL_{\AA^{1}}$ suffices) $\gamma$-periodization coincides with $\beta$-periodization, 
where $\beta$ is the second map defining $\gamma$. 
Using Proposition~\ref{prop:invert-long}, 
we obtain the motivic $\mathcal{E}_{\infty}$-ring spectrum representing algebraic $K$-theory $\KGL$ such that $\Omega^{\infty}\KGL \simeq \LL_{\mot}K^B$. 
This is the main content of \cite{cisinski}. See also \cite{gepner-snaith} and \cite{ostvaer-spitzweck} for another perspective of how Bott inversion on the infinite projective space gives $\KGL$. In the language of this paper, this is also discussed in \cite{hoyois-cdh}*{Section 5}, in the equivariant setting.
\end{example}

\begin{example} (\'{E}tale Cohomology) 
\label{ex:et}
We work with $\C = \DM^{\eff}_{\et}(S; \ZZ/\ell)$. 
If $\ell$ is prime to all the residue characteristics of $S$, 
the presheaf $\mu_{\ell}$ is a homotopy invariant \'{e}tale sheaf with transfers;
thus $\mu_{\ell}\in\C$. 
The choice of an $\ell$-th root of unity is a map $\ZZ/\ell^{\et} \rightarrow \mu_{\ell}$; 
since $\mu_{\ell}$ is invertible in $\C$, 
this amounts to a map $\tau: \mu_{\ell}^{\otimes -1} \rightarrow \ZZ/\ell^{\et}$. 
Then $\H\mu_{\ell}:=Q_{\tau}\ZZ/\ell^{\et}$ represents \'{e}tale cohomology with $\mu_{\ell}$-coefficients. 
By Proposition~\ref{prop:invert-long}, $\H\mu_{\ell}$ is an object of $\DM_{\et}(S; \ZZ/\ell)$. 
Recall that $\DM_{\et}(S; \ZZ/\ell) \simeq \C$ and $\ZZ/\ell^{\et}(1) \simeq \mu_{\ell}$.
Since $\tau: \ZZ/\ell^{\et} \rightarrow \Hom(\mu_{\ell}, \ZZ/\ell^{\et})$ is an equivalence, 
the unit object in $\C$ is $\tau$-periodic and $\H\mu_{\ell} \simeq \ZZ/\ell^{\et}$ in $\DM_{\et}(S;\ZZ/\ell)$. 
The spectrum $\H_{\et}\mu_{\ell}:=u_{\tr}\H\mu_{\ell}$ in $\SH_{\et}(S)$ represents \'{e}tale cohomology with $\mu_{\ell}$-coefficients in the sense that 
$[\Sigma^{p,q} \Sigma^{\infty}_{T}X_+, \H_{\et}\mu_{\ell}] \cong H_{\et}^{-p}(X, \mu_{\ell}^{\otimes -q})$.
\end{example}

\begin{example} (Inverting elements in $\SH(S)$) 
Suppose that $\E \in \CAlg(\SH(S))$ so that $\Mod_{\E}$ is a symmetric monoidal $\infty$-category (inverting elements for a noncommutative algebra is trickier). 
Given $\F\rightarrow \E$ in $\SH(S)$, we get $\alpha: \E \wedge \F\rightarrow \E$ in $\Mod_{\E}$ and set $\E[\alpha^{-1}] := Q_{\alpha}E$. 
More generally, 
given $\alpha: \F\rightarrow \E$, 
we may $\alpha$-periodize any $\E$-module $\M$ by setting $\M[\alpha^{-1}]:= P_{\alpha}\M$.
\end{example}

\subsection{Localization}
A theory for Bousfield localization of motivic spectra, in the language of stable model categories was carried out in \cite{ro2}. 
We quickly adopt their results to $\infty$-categories; the results stated here are surely well-known to experts.
Throughout $(\C,\otimes,\sspt)$ is a presentably symmetric monoidal stable $\infty$-category.

\begin{definition} 
For $\M, \E, \F \in \C$ we define:
\begin{enumerate}
\item $f: \E \rightarrow \F$ in $\C$ to be an \emph{$\M$-equivalence} if $f \otimes id_{\M}$ is an equivalence,
\item $\E$ to be \emph{$\M$-acyclic} if $\E \otimes \M \simeq 0$,
\item $\F$ to be \emph{$\M$-local} if for any $\M$-acyclic object $\E$, the mapping space $\Maps(\E,\F)$ is contractible.
\end{enumerate}
\end{definition}

In the case that $\C = \SH$, 
the stable $\infty$-category of spectra, 
this is just a homotopy-theoretic refinement of the definition of \cite{bousfield}. 
In the case of $\C = \SH(S)$, 
which is our primary case of interest, 
this is just an $\infty$-categorical reformulation of the definitions of \cite{ro2}*{Appendix A}.

From now on we assume in addition that $\C$ is presentably symmetric monoidal. 
In many cases $\M$ will at least be an $\mathcal{E}_1$-algebra object in $\C$. 
\begin{proposition} 
\label{prop:algebras} 
Any $\M$-module is $\M$-local.
\end{proposition}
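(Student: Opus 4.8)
The statement is that any $\M$-module $P$ is $\M$-local, i.e.\ that $\Maps_{\C}(\E, P) \simeq 0$ for every $\M$-acyclic object $\E$ (meaning $\E \otimes \M \simeq 0$). Here $\M$ is at least an $\mathcal{E}_1$-algebra object of $\C$, and $P$ carries a (left, say) $\M$-module structure. The plan is to produce a natural retraction of mapping spaces exhibiting $\Maps_{\C}(\E, P)$ as a retract of $\Maps_{\C}(\E \otimes \M, P)$, which is contractible by hypothesis; a retract of a contractible space is contractible.

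First I would use the unit map $u\colon \sspt \to \M$ and the $\M$-action map $a\colon \M \otimes P \to P$ to build the relevant maps. On one hand, tensoring $u$ with $\E$ gives $\E \simeq \E \otimes \sspt \xrightarrow{\,\id_{\E}\otimes u\,} \E \otimes \M$, which induces a restriction map $\Maps_{\C}(\E \otimes \M, P) \to \Maps_{\C}(\E, P)$. On the other hand, given a map $g\colon \E \to P$ in $\C$, I would form the composite $\E \otimes \M \xrightarrow{\,g \otimes \id_{\M}\,} P \otimes \M \xrightarrow{\,\simeq\,} \M \otimes P \xrightarrow{\,a\,} P$; this is natural in $g$ (up to coherent homotopy; the tensor product and the action map are functorial, so this really does assemble into a map of spaces, or more precisely a natural transformation of the relevant functors). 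The composite of these two operations — start with $g\colon \E \to P$, send it to $\E \otimes \M \to P$, then restrict back along $\id_{\E}\otimes u$ — is the map $\E \simeq \E \otimes \sspt \xrightarrow{\id_{\E}\otimes u} \E \otimes \M \xrightarrow{g \otimes \id_{\M}} \M \otimes P \xrightarrow{a} P$, which by naturality equals $a \circ (\id_{\M}\otimes g)\circ(\text{swap})\circ(\id_\E \otimes u)\circ(\text{unit iso})$. The unit axiom for the module $P$ says precisely that $a \circ (u \otimes \id_P)\colon \sspt \otimes P \to P$ is the canonical equivalence; chasing the symmetry isomorphism one gets that this composite is (homotopic to) $g$ itself. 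Hence $\Maps_{\C}(\E, P)$ is a retract of $\Maps_{\C}(\E \otimes \M, P)$.

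Since $\E$ is $\M$-acyclic, $\E \otimes \M \simeq 0$, so $\Maps_{\C}(\E \otimes \M, P) \simeq \Maps_{\C}(0, P) \simeq \ast$ is contractible, and therefore so is the retract $\Maps_{\C}(\E, P)$. This shows $P$ is $\M$-local. The one point that requires care, and which I expect to be the main obstacle to a fully rigorous write-up, is the coherence of the "retraction" argument at the level of $\infty$-categories: one should really phrase it as saying that the forgetful functor $\Mod_{\M}(\C) \to \C$ sends every object to an $\M$-local object, and the cleanest route is to observe that for $P \in \Mod_{\M}(\C)$ and any $\E \in \C$ one has $\Maps_{\C}(\E, P) \simeq \Maps_{\Mod_{\M}(\C)}(\M \otimes \E, P)$ by the free–forgetful adjunction, so if $\M \otimes \E \simeq 0$ in $\Mod_{\M}(\C)$ — which follows from $\M \otimes \E \simeq 0$ in $\C$ since the forgetful functor is conservative — then $\Maps_{\C}(\E, P) \simeq 0$. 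This adjunction-based argument circumvents the need to manipulate the symmetry isomorphism by hand and is what I would actually use in the paper.
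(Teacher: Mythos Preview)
Your proposal is correct, and the adjunction-based argument you settle on at the end is exactly the paper's proof: use the free--forgetful adjunction $\Maps_{\C}(\F, P) \simeq \Maps_{\Mod_{\M}}(\M \otimes \F, P)$ and observe that $\M \otimes \F \simeq 0$. Your longer retraction argument is also valid but, as you note yourself, unnecessary once the adjunction is invoked.
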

\begin{proof} 
Suppose that $\F$ is an $\M$-acyclic object. 
Then, for any $\M$-module $\E$, we have:
\begin{eqnarray*}
\Maps_{\C}(\F, \E) 
& \simeq & \Maps_{\Mod_{\M}}(\F \wedge \M,\E) \\
& \simeq & \Maps_{\Mod_{\M}}(0,\E) \\
& \simeq & 0.
\end{eqnarray*}
\end{proof}

\subsubsection{} 
Let $\C_{\M} \subset \C$ denote the full subcategory of $\C$ spanned by the $\M$-local objects. 
We would like to construct a \emph{Bousfield localization functor}: 
$$
L_{\M}: 
\C \rightarrow \C_{\M},
$$ 
as a left adjoint to the inclusion $\C_{\M} \subset \C$. In this situation, 
we also call $L_{\M}$ an \emph{$\M$-completion} functor. 
Such a functor is produced in \cite{htt}*{Proposition 5.5.4.15}. 
Indeed, 
the collection of all morphisms $f$ such that $f \otimes \id_{\M}$ is an equivalence forms a strongly saturated class by Proposition~\cite{htt}*{Proposition 5.5.4.16}, 
and thus the inclusion $\C_{\M}$ has a left adjoint.
The following lemma is a slight refinements of this existence result.
\begin{lemma} 
\label{lem:construct} 
Let $\C$ be a presentable stable $\infty$-category. 
Suppose $\ell:\C_0^{\perp} \subset \C$ is a full subcategory, and let $r:\C_0 \subset \C$ be the full subcategory spanned by objects $X$ in $\C$ for which 
$\Maps(Y, X) \simeq 0$ for all $Y \in \C_0^{\perp}$. 
Then the following are equivalent:
\begin{enumerate}
\item the inclusion $r: \C_0 \subset \C$ admits a left adjoint $L: \C \rightarrow \C_0$ such that $r \circ L: \C \rightarrow \C$ is exact and $\kappa$-accessible;
\item the full subcategory $r:\C_0 \subset \C$ is presentable, stable, and closed under $\kappa$-filtered colimits under sufficiently large $\kappa$.
\end{enumerate}
Furthermore, the following equivalent statements:
\begin{enumerate}
\item [(3)]the inclusion $\ell: \C_0^{\perp} \subset \C$ admits a right adjoint $R: \C \rightarrow \C_0^{\perp}$ such that $\ell \circ R: \C \rightarrow \C$ is exact and accessible;
\item [(4)]the full subcategory $\ell: \C_0^{\perp} \subset \C$ is presentable and stable,
\end{enumerate}
imply the above two statements.
\end{lemma}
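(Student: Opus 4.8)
The statement to be proved is Lemma~\ref{lem:construct}, which asserts the equivalence of (1) and (2) and the implication from (3)/(4) to those. The backbone of the argument is the adjoint functor theorem for presentable $\infty$-categories \cite{htt}*{Corollary 5.5.2.9} together with the characterization of reflective/coreflective subcategories in \cite{htt}*{Section 5.5.4}. I will first dispose of the equivalence (3) $\Leftrightarrow$ (4): a full subcategory $\C_0^{\perp}\subset\C$ of a presentable $\infty$-category which is closed under colimits and limits (automatic once it is presentable and stable, since the inclusion is required to preserve both) admits a right adjoint by the adjoint functor theorem, and the composite $\ell\circ R$ is then exact (being a composite of exact functors, using stability) and accessible (being a right adjoint between presentable categories). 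Conversely, if such a right adjoint exists then $\C_0^\perp$ is a colocalization of $\C$, hence presentable and stable by \cite{htt}*{Proposition 5.5.3.2} and its stable analogue.

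The heart of the lemma is the equivalence (1) $\Leftrightarrow$ (2). For (2) $\Rightarrow$ (1): if $\C_0\subset\C$ is presentable, stable and closed under $\kappa$-filtered colimits, then the inclusion preserves all colimits (small coproducts plus $\kappa$-filtered colimits, hence all colimits by \cite{higheralgebra}*{Proposition 1.4.4.1.2}), so the adjoint functor theorem produces a left adjoint $L$. The composite $rL$ is exact since $r$ and $L$ are both exact (stability of $\C_0$ makes $r$ exact, and left adjoints between stable categories are exact), and it is $\kappa$-accessible because it preserves $\kappa$-filtered colimits: $r$ does by hypothesis, $L$ does as a left adjoint. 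For (1) $\Rightarrow$ (2): given the left adjoint $L$ with $rL$ exact and $\kappa$-accessible, the essential image $\C_0$ is a reflective (accessible) localization, hence presentable by \cite{htt}*{Proposition 5.5.4.15}; stability follows because $rL$ is exact and so the localization is an exact localization, making $\C_0$ stable; closure under $\kappa$-filtered colimits follows because $rL$ being $\kappa$-accessible implies the localization commutes with $\kappa$-filtered colimits, whence a $\kappa$-filtered colimit of local objects is local.

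Finally, for the implication (3)/(4) $\Rightarrow$ (1)/(2), I would argue that when $\C_0^{\perp}$ is a coreflective subcategory of the stable presentable category $\C$, the orthogonal $\C_0=(\C_0^\perp)^{\perp}$ is automatically the left-orthogonal complement of an accessible subcategory, so the semiorthogonal decomposition machinery of \cite{htt}*{Proposition 5.5.4.15} (or directly the recollement formalism) yields that $\C_0$ is reflective with the required exactness and accessibility; concretely, the fiber of the coreflection counit gives the reflection onto $\C_0$. The main obstacle I anticipate is bookkeeping the accessibility/cardinality statements cleanly---in particular verifying that ``$\kappa$-filtered colimits for sufficiently large $\kappa$'' in (2) matches exactly the ``$\kappa$-accessible'' condition in (1), which requires invoking the fact that an accessible localization of a presentable category is $\kappa$-accessible for a cofinal set of regular cardinals \cite{htt}*{Proposition 5.5.4.15} and choosing $\kappa$ uniformly large enough to simultaneously handle presentability of $\C_0$, exactness, and the colimit-closure. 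The rest is a routine assembly of standard $\infty$-categorical adjoint functor arguments, which I will not spell out in full.
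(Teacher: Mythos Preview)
Your overall architecture is right, and in particular your approach to (3)$\Rightarrow$(1) via the cofiber of the coreflection counit $\ell R\to\id$ is exactly what the paper does. But there is a genuine error in your argument for (2)$\Rightarrow$(1).

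You claim that the inclusion $r:\C_0\hookrightarrow\C$ preserves all small colimits, on the grounds that it preserves small coproducts and $\kappa$-filtered colimits. The hypothesis (2) gives you closure under $\kappa$-filtered colimits, but you have not justified closure under coproducts, and in fact this is generally false: $\C_0$ is defined as the right orthogonal $(\C_0^\perp)^\perp$, i.e.\ by the condition $\Maps(Y,-)\simeq 0$ for all $Y\in\C_0^\perp$, which is a \emph{limit}-closed condition, not a colimit-closed one. Moreover, even if $r$ did preserve colimits, the adjoint functor theorem \cite{htt}*{Corollary 5.5.2.9} would then produce a \emph{right} adjoint to $r$, not the left adjoint $L$ you want. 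The correct argument, which the paper uses, is the other half of the AFT: $r$ preserves small limits (automatic from the definition of $\C_0$ as a right orthogonal) and $\kappa$-filtered colimits (by hypothesis), hence is accessible and limit-preserving between presentable $\infty$-categories, so admits a left adjoint.

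The same confusion appears in your treatment of (4)$\Rightarrow$(3): you assert that presentability and stability of $\C_0^\perp$ make it ``automatic'' that the inclusion $\ell$ preserves both limits and colimits. Presentability of a full subcategory does not by itself force the inclusion to preserve colimits; one needs an actual argument (in the paper's applications $\C_0^\perp$ consists of acyclic objects, which is visibly colimit-closed, but the abstract statement requires more care). Once you fix the direction of the AFT in both places, the rest of your sketch lines up with the paper's proof.
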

\begin{proof} 
Assuming $(1)$ we get $\C_0 \simeq L\C$. 
By \cite{higheralgebra}*{Lemma 1.1.3.3}, we need only show that $\C_0$ is stable under cofibers and translations. 
Stability under cofibers follows since $L$ is a left adjoint and hence preserves cofibers in $\C$. 
To see stability under translations, it suffices to prove that $X[-1] \in \C_0$ for any $X \in \C_0$. 
We have a cofiber sequence in $\C$, $X[-1] \rightarrow 0 \rightarrow X$. 
Then $rL(X[-1]) \rightarrow rL(0) \simeq 0 \rightarrow rL(X) \simeq X$ is still a cofiber sequence in $\C$ by exactness of $r \circ L$, 
and is thus a fiber sequence by the stability of $\C$. 
Now since the inclusion $r$ is fully faithful and a right adjoint, 
$r$ creates limits and thus $L(X[-1]) \rightarrow 0 \rightarrow X$ is a fiber sequence in $\C_0$, 
and therefore the shift $X[-1]$ is indeed in $\C_0$.
The presentability of $\C_0$ follows from \cite{htt}*{Proposition 5.5.4.15}, by letting $S$ be all morphisms of the form $rL(f)$. 
In particular $\C_0$ is $\kappa$-accessible for some large enough cardinal $\kappa$, and thus admits $\kappa$-filtered colimits.

Assume that $(2)$ holds. Since $\C$ and $\C_0$ are presentable, 
$r$ preserves small limits and $\kappa$-filtered colimits for some cardinal $\kappa$. 
We deduce from the adjoint functor theorem \cite{htt}*{Corollary 5.5.2.9} that $r$ admits a left adjoint $L: \C \rightarrow \C_0$. 
By \cite{htt}*{Proposition 5.4.7.7}, we see that both $L$ and $r$ are accessible. 
By \cite{higheralgebra}*{Proposition 1.1.4.1} both $r$ and $L$ are exact, and thus their composite is accessible and exact.

The equivalence between  $(3)$ and $(4)$ follows analogously by replacing left with right adjoints and using the adjoint functor theorem \cite{htt}*{Corollary 5.5.2.9} with right adjoints.

Next, 
assuming $(3)$ we show $(1)$. 
The cofiber of the counit transformation $\ell R \rightarrow \id$ yields an endofunctor $L': \C \rightarrow \C$. 
Since $\ell R$ and $\id$ are exact and accessible, $L'$ is exact and accessible. 
We claim that its essential image lies in $\C_0$ and that the resulting functor $L: \C \rightarrow \C_0$ is the desired left adjoint. 
For any $X \in \C$, we show that $L'(X) \in \C_0$. 
If $Y \in \C_0^{\perp}$ there is a cofiber sequence of spectra: 
$$
\Maps(Y, \ell R(X)) \rightarrow \Maps(Y, X) \rightarrow \Maps(Y, L'(X)).
$$
By the assumption on $Y$, 
$\Maps(Y, \ell R(X)) \simeq \Maps(Y, X)$ and therefore the final term is contractible and indeed the functor lies in the essential image. 
To check that it is left adjoint, 
it suffices to show that for any $X' \in \C_0$, 
the map $L'(X) \rightarrow X$ induces an equivalence $\Maps(X, X') \rightarrow \Maps(L'(X), X')$. 
To this end we consider the defining cofiber sequence $\ell R(X) \rightarrow X \rightarrow L'(X)$.
It induces a cofiber sequence of spectra:
$$
\Maps(L'(X), X') \rightarrow \Maps(X, X') \rightarrow \Maps(\ell R(X), X'),
$$ 
where the final term is contractible since $X' \in \C_0$.
\end{proof}

\begin{remark} 
The following standard warning applies: 
even though $L$ preserves colimits, 
the composite $r \circ L$ may not. An example is the process of \'etale sheafification: if $r \circ L$ preserves filtered colimits then representables would remain compact but, as discussed earlier, this is not the case without finiteness of cohomological dimension. In the case of localization at an $\mathcal{E}_1$-ring spectrum $\E$, $r \circ L$ preserves colimits if and only if it is computed at a spectrum $\F$ by smashing $\F$ with $r \circ L(\sspt)$.
\end{remark}

%
%

\subsubsection{Example: $\ell$-completion} 
Let $\M = \unit/\ell$ for $1 < \ell \in\NN$. 
In this case $\M$ is \emph{not} an $\mathcal{E}_1$-object in $\SH(S)$. 
The Bousfield localization functor: 
$$
L_{\unit/\ell}: \SH(S) \rightarrow \SH(S)_{\unit/\ell},
$$ 
admits an explicit formula. 
Define the \emph{$\ell$-adic completion} functor as: 
$$
(-)\compl: \SH(S) \rightarrow \SH(S);
\,\,\,
\E\compl:= \lim (\E \leftarrow \E/\ell \leftarrow \E/\ell^2 \leftarrow\cdots),
$$
where the transition maps $\E/\ell^{k+1} \rightarrow \E/\ell^k$ are induced by the natural equivalence at the bottom row of the following diagram of cofiber sequences:
\begin{equation}
\xymatrix{
E \ar[r]^{\times \ell} \ar[d]_{\id} & E \ar[d]^{\times \ell^k} \ar[r] & \E/\ell \ar[r] \ar[d] & \Sigma^{1,0} E \ar[d]^-{\Sigma^{1,0} \id} \\
E  \ar[r]^{\times \ell^{k+1}} \ar[d] & E \ar[r] \ar[d] & \E/\ell^{k+1} \ar[r] \ar[d] & \Sigma^{1,0} E \ar[d]\\
0 \ar[r] & \E/\ell^k \ar[r]^-{\simeq} & C \ar[r] & 0.
}
\end{equation}
\begin{proposition} 
The Bousfield localization functor $L_{\unit/\ell}: \SH(S) \rightarrow \SH(S)_{\unit/\ell}$ coincides with the $\ell$-adic completion functor $(-)\compl: \SH(S) \rightarrow \SH(S)$.
\end{proposition}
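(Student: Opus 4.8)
The plan is to show that the $\ell$-adic completion functor $(-)\compl$ satisfies the universal property of the Bousfield localization $L_{\unit/\ell}$: for every $\E \in \SH(S)$, the completion $\E\compl$ is $\unit/\ell$-local, and the canonical map $\E \to \E\compl$ is a $\unit/\ell$-equivalence. Since $L_{\unit/\ell}$ is the unique (up to equivalence) left adjoint to the inclusion $\SH(S)_{\unit/\ell} \hookrightarrow \SH(S)$ that comes equipped with such a unit transformation, establishing these two facts identifies $(-)\compl$ with $L_{\unit/\ell}$.

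\textbf{Step 1: the map $\E \to \E\compl$ is a $\unit/\ell$-equivalence.} I would smash the tower $\{\E/\ell^k\}$ and its limit with $\unit/\ell$. Because $\E\compl = \lim_k \E/\ell^k$ and smashing with $\unit/\ell$ (a finite cell spectrum, hence compact/dualizable — it is $\cofiber(\unit \xrightarrow{\ell} \unit)$) commutes with this countable homotopy limit up to a $\lim^1$ term, it suffices to observe that each transition map $\E/\ell^{k+1}/\ell \to \E/\ell^k/\ell$ is already an equivalence, or more simply that the map $\E/\ell \to \E\compl/\ell$ is an equivalence: indeed $\E/\ell^k \wedge \unit/\ell$ fits in a cofiber sequence with $\E \wedge \unit/\ell$ on which multiplication by $\ell^k$ acts, and $\ell$ acts nilpotently ($\ell^2 = 0$) on $\unit/\ell$-modules after suitable bookkeeping; concretely the cofiber sequences in the displayed diagram show $\E/\ell^{k+1} \wedge \unit/\ell$ is an extension of $\E/\ell^k \wedge \unit/\ell$ by $\Sigma^{1,0}\E \wedge \unit/\ell \wedge$(something killed by $\ell$), and chasing the associated Milnor sequence for $\lim$ over $k$ gives $\E\compl \wedge \unit/\ell \simeq \E \wedge \unit/\ell$. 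This is the step with the most bookkeeping but no conceptual obstacle.

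\textbf{Step 2: $\E\compl$ is $\unit/\ell$-local.} Let $\F$ be $\unit/\ell$-acyclic, i.e. $\F \wedge \unit/\ell \simeq 0$; equivalently multiplication by $\ell$ is an equivalence on $\F$, so $\F$ is an $\unit[\tfrac1\ell]$-module and $\F/\ell^k \simeq 0$ for all $k$. I want $\Maps(\F, \E\compl) \simeq 0$. Since $\E\compl = \lim_k \E/\ell^k$ and $\Maps(\F,-)$ preserves limits, it suffices to show $\Maps(\F, \E/\ell^k) \simeq 0$. From the cofiber sequence $\E \xrightarrow{\ell^k} \E \to \E/\ell^k$ we get a fiber sequence $\Maps(\F,\E) \xrightarrow{\ell^k} \Maps(\F,\E) \to \Maps(\F,\E/\ell^k)$; but $\ell^k$ is invertible on $\Maps(\F,\E)$ because it is already invertible on $\F$, so the cofiber $\Maps(\F,\E/\ell^k)$ is contractible. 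Hence $\E\compl \in \SH(S)_{\unit/\ell}$.

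\textbf{Conclusion.} Steps 1 and 2 together say: the functor $(-)\compl$ lands in $\SH(S)_{\unit/\ell}$, and for every $\E$ the unit $\E \to \E\compl$ is a $\unit/\ell$-equivalence; since for any $\unit/\ell$-local $\Gscr$ a $\unit/\ell$-equivalence induces $\Maps(\E\compl,\Gscr) \xrightarrow{\simeq} \Maps(\E,\Gscr)$, the map $\E \to \E\compl$ exhibits $\E\compl$ as $L_{\unit/\ell}\E$. The main obstacle I anticipate is purely technical: justifying the interchange of $\lim_k$ with $-\wedge\unit/\ell$ and tracking the Milnor $\lim^1$ terms in Step 1 carefully enough to conclude the map on $\unit/\ell$-homology is an isomorphism rather than merely a pro-isomorphism — but compactness of $\unit/\ell$ and the fact that the relevant towers have surjective (indeed eventually iso) transition maps on homotopy makes the $\lim^1$ vanish.
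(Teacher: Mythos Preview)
Your proposal is correct and is precisely the standard argument: the paper's own proof is a one-line citation (``The proof in \cite{ro2}*{Example 3.5} applies in the $\infty$-categorical setting''), and what you have written is essentially the content of that reference, namely showing that $\E\compl$ is $\unit/\ell$-local and that $\E \to \E\compl$ is a $\unit/\ell$-equivalence. Your Step~2 is clean; for Step~1 the tidiest phrasing is to note that the fiber of $\E \to \E\compl$ is the limit of the tower $\E \xleftarrow{\ell} \E \xleftarrow{\ell} \cdots$, and since $\unit/\ell$ is dualizable, smashing with it commutes with this limit, yielding a tower whose transition maps are multiplication by $\ell$ on an object with $\ell$-power-torsion homotopy, hence pro-zero.
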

\begin{proof} 
The proof in \cite{ro2}*{Example 3.5} applies in the $\infty$-categorical setting.
\end{proof}
Thus the $\infty$-category $\SH(S)_{\unit/\ell}$ is the \emph{$\ell$-adic completion} of $\SH(S)$;
we write $\SH(S)\compl:= \SH(S)_{\unit/\ell}$.

\subsubsection{Example: localization}
Applying Lemma~\ref{lem:construct} to $\E \in \SH(S)$ we can construct an adjunction: 
$$
L_{\E}: \SH(S) \rightleftarrows \SH(S)_{\E}: r_{\E},
$$ 
where $\SH(S)_{\E}$ is the full subcategory of $\E$-local objects.
\begin{proposition} 
\label{prop:exists} 
If $\M \in \SH(S)$, the inclusion $i: \SH(S)_{\M} \subset \SH(S)$ admits a left adjoint $L_{\M}: \SH(S) \rightarrow \SH(S)_{\M}$.
\end{proposition}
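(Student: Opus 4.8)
The plan is to apply Lemma~\ref{lem:construct} with $\C = \SH(S)$ and the full subcategory $\C_0 = \SH(S)_{\M}$ of $\M$-local objects, so it suffices to verify hypothesis (2) of that lemma: namely that $\SH(S)_{\M}$ is presentable, stable, and closed under $\kappa$-filtered colimits for sufficiently large $\kappa$. First I would observe that $\SH(S)_{\M}$ is exactly $\C_0$ in the sense of Lemma~\ref{lem:construct}, taking $\C_0^{\perp}$ to be the localizing subcategory of $\M$-acyclic objects: an object $X$ is $\M$-local precisely when $\Maps(Y,X) \simeq 0$ for all $Y$ with $Y \otimes \M \simeq 0$.

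Stability of $\SH(S)_{\M}$ is immediate: the functors $Y \mapsto \Maps(Y,X)$ are exact, so if $X$ is $\M$-local then so are its shifts, and a cofiber of a map between $\M$-local objects is $\M$-local because the class of $\M$-acyclic objects $Y$ against which we test is closed under the relevant operations (indeed $\C_0^{\perp}$ is a localizing subcategory of $\C$, being the kernel of the colimit-preserving exact functor $-\otimes\M$). For presentability and accessibility of the inclusion, I would invoke \cite{htt}*{Proposition 5.5.4.15}: the class $W$ of morphisms $f$ with $f \otimes \id_{\M}$ an equivalence is strongly saturated and of small generation (cf.\ \cite{htt}*{Proposition 5.5.4.16} and the discussion following it, or \cite{ro2}*{Appendix A} in the model-categorical formulation), since $-\otimes\M$ preserves colimits and $\SH(S)$ is presentable; hence the full subcategory of $W$-local objects, which is precisely $\SH(S)_{\M}$, is an accessible localization of $\SH(S)$ and in particular presentable. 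This simultaneously gives that $\SH(S)_{\M}$ is closed under $\kappa$-filtered colimits for $\kappa$ large enough (it is $\kappa$-accessible). With hypothesis (2) of Lemma~\ref{lem:construct} verified, that lemma produces the left adjoint $L_{\M}\colon \SH(S) \to \SH(S)_{\M}$ to the inclusion.

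The only subtlety — and the place I would be most careful — is the small-generation claim: one must know that $W$ is generated under the strongly saturated closure by a small set of morphisms. This holds because $\SH(S)$ is compactly (hence $\kappa$-) generated and $-\otimes\M$ commutes with colimits, so $W$ is generated by $\{g \otimes \id_{\M} \colon g \in W_0\}$-type data obtained from a small set of generators, exactly as in \cite{htt}*{Proposition 5.5.4.16}; there is no set-theoretic obstruction. Everything else is formal once this is in place, and indeed the proof of \cite{ro2}*{Appendix A} carries over verbatim to the $\infty$-categorical setting. I would close by noting that when $\M$ additionally carries an $\mathcal{E}_1$-structure, Proposition~\ref{prop:algebras} identifies a large supply of $\M$-local objects (all $\M$-modules), which is consistent with $L_{\M}$ landing in $\SH(S)_{\M}$.
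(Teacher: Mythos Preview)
Your route through Lemma~\ref{lem:construct} is different from the paper's: you aim to verify condition~(2) for $\C_0=\SH(S)_{\M}$ (the $\M$-local objects), whereas the paper verifies condition~(4) for $\C_0^{\perp}=\SH(S)_{\M}^{\perp}$ (the $\M$-acyclic objects). The paper's choice is more economical because the $\M$-acyclics are the kernel of the colimit-preserving functor $-\otimes\M$, so stability and closure under colimits are immediate and the only real issue is accessibility; the paper then supplies an explicit cardinal $\kappa$ (the least cardinal exceeding $\bigcup_{p,q,X}|\M_{p,q}(X)|$) and defers the verification that this works to \cite{ro2}*{Appendix A}. Your route, by contrast, asks directly for presentability of the \emph{locals}, which you obtain via \cite{htt}*{Proposition 5.5.4.15} --- but this already yields the left adjoint outright, so the detour through Lemma~\ref{lem:construct} becomes redundant once you have it.

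The genuine soft spot in your argument is the small-generation claim for $W$. Your citation of \cite{htt}*{Proposition 5.5.4.16} is not apt: that result shows the strongly saturated class generated by a \emph{single} morphism (or a small set) is of small generation, not that an arbitrary strongly saturated class such as $W=\{f:f\otimes\id_{\M}\text{ is an equivalence}\}$ is. Your sentence about $W$ being generated by ``$\{g\otimes\id_{\M}:g\in W_0\}$-type data'' does not parse as written. What one actually needs is exactly the cardinality argument the paper performs (or equivalently the accessibility of the kernel of an accessible functor): one produces a small set $S_0$ of $\M$-acyclic objects such that every $\M$-acyclic is a filtered colimit of objects in $S_0$, whence $W$ is generated by $\{0\to A:A\in S_0\}$. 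You do point to \cite{ro2}*{Appendix A}, which is where this lives, so the content is ultimately the same; but you should be aware that this step \emph{is} the proof, and that going via condition~(4) makes it visibly cleaner.
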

\begin{proof} 
Let $\SH(S)_{\M}^{\perp}$ denote the $\M$-acyclic objects. 
To verify condition (4) of Lemma~\ref{lem:construct}, 
the nontrivial thing to check is that $\SH(S)_{\M}^{\perp}$ is $\kappa$-accessible for some large cardinal $\kappa$. 
In other words, it is of the form $\Ind_{\kappa}(\C_0)$ for a small $\infty$-category $\C_0$. 
For $\kappa$ we choose the minimal cardinal larger than $\bigcup_{p,q, X \in \Sm_S} |\M_{p,q}(X)|$, 
where $|-|$ denotes the cardinality of a set. 
Let $\C_0$ be the full subcategory of $\SH(S)_{\M}$ spanned by objects $A$ such that $|\Hom(\Sigma^{p,q}X_{+},A)| < \kappa$. 
The objects of $\C_0$ form a set and is thus a small $\infty$-category. 
The verification that this choice of $\kappa$ works is in \cite{ro2}*{Appendix A}.
\end{proof}

\subsubsection{Fracture squares}
We record Bousfield localization fracture squares in the context of motivic homotopy theory \cite{oo}*{Theorem A.1}.
\begin{theorem} 
Let $\E,\F,\G \in \SH(S)$, and suppose that $\E \wedge L_{\F}\G \simeq \E \wedge L_{\F}L_{\E}\G \simeq 0$.
Then there is a Cartesian square in $\SH(S)$:
$$
\xymatrix{
L_{\E \vee\F}\G \ar[r]^{\eta_{\E}} \ar[d]_{\eta_{\F}} & L_{\E}\G \ar[d]\\
L_{\F}\G \ar[r]^-{L_{\F} \eta_{\E}} & L_{\F} L_{\E}\G.
}
$$
\end{theorem}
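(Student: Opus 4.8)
The statement is the standard Bousfield arithmetic/fracture square, and the plan is to reproduce the proof of \cite{oo}*{Theorem A.1} in the $\infty$-categorical setting of $\SH(S)$, which is by now routine. First I would recall the general principle: for any collection of spectra, $L_{\E\vee\F}\G$ is the Bousfield localization at the wedge $\E\vee\F$, and there is a natural comparison map $L_{\E\vee\F}\G\to L_{\E}\G$ (since $\E\vee\F$-equivalences are $\E$-equivalences, so an $\E$-local object is $\E\vee\F$-local) and likewise $L_{\E\vee\F}\G\to L_{\F}\G$. These fit into a commutative square with the canonical map $L_{\E}\G\to L_{\F}L_{\E}\G$, giving the square in the statement. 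It remains to check it is Cartesian, i.e. that the induced map on vertical fibers is an equivalence.

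The key step is a fiber computation. Let $P$ denote the pullback of $L_{\E}\G\to L_{\F}L_{\E}\G\leftarrow L_{\F}\G$; by the universal property there is a canonical map $L_{\E\vee\F}\G\to P$. Taking horizontal fibers (which exist since $\SH(S)$ is stable and the functors are exact), the fiber of $L_{\E}\G\to L_{\F}L_{\E}\G$ agrees with the fiber of $\G\to L_{\F}\G$ after applying $L_{\E}$ — more precisely, one uses that $L_{\F}$ applied to the $\E$-acyclization $C_{\E}\G := \fib(\G\to L_{\E}\G)$ is the $\F$-localization of $C_{\E}\G$, so the fiber of the right-hand vertical map is $L_{\E}$ of the fiber of $\G\to L_{\F}\G$. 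The hypothesis $\E\wedge L_{\F}\G\simeq \E\wedge L_{\F}L_{\E}\G\simeq 0$ is exactly what forces the relevant acyclics to be $\E$-acyclic on the nose, so that $L_{\E}$ of the acyclization of $\G$ at $\F$ is acyclization at $\F$ of the acyclization of $\G$ at $\E$; this symmetry is what makes the two horizontal fibers agree and hence the square Cartesian. I would organize this as: (i) identify $\fib(\eta_{\E}\colon L_{\E\vee\F}\G\to L_{\E}\G)$ with the $\E\vee\F$-localization of the $\E$-acyclic part, (ii) identify $\fib(L_{\F}\G\to L_{\F}L_{\E}\G)$ with $L_{\F}$ of the same $\E$-acyclic part, (iii) use the hypothesis to see these acyclic parts are $\E$-acyclic and hence already $\F$-local after the appropriate localization, concluding the two fibers coincide.

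The main obstacle, such as it is, is bookkeeping: keeping straight the various acyclization functors $C_{\E}=\fib(\id\to L_{\E})$ and verifying that $L_{\F}$ and $C_{\E}$ interact as claimed under the stated vanishing hypotheses, using only that $\SH(S)$ is presentably symmetric monoidal and stable (so that $L_{\E}$, $L_{\F}$ exist by Proposition~\ref{prop:exists} and fit into cofiber sequences with their acyclizations). Since every functor involved is exact and accessible, no convergence or size issue arises, and the diagram chase is purely formal. I would simply remark that the argument of \cite{oo}*{Theorem A.1}, phrased in terms of strongly saturated classes and the stable structure, carries over verbatim, and record the square as stated.

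\medskip

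\begin{proof}
The proof of \cite{oo}*{Theorem A.1} applies verbatim in the presentably symmetric monoidal stable $\infty$-category $\SH(S)$. The Bousfield localizations $L_{\E}$, $L_{\F}$, $L_{\E\vee\F}$, and $L_{\F}L_{\E}$ exist by Proposition~\ref{prop:exists}, are exact by Lemma~\ref{lem:construct}, and fit into cofiber sequences with their respective acyclization functors. Writing $C_{\E}\G=\fib(\G\to L_{\E}\G)$ for the $\E$-acyclization, the hypotheses $\E\wedge L_{\F}\G\simeq 0$ and $\E\wedge L_{\F}L_{\E}\G\simeq 0$ guarantee that the horizontal fibers of the displayed square both compute $L_{\F}C_{\E}\G$, so the square is Cartesian.
\end{proof}
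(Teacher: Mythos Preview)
Your proposal is correct and takes essentially the same approach as the paper: the paper simply records this theorem as a citation to \cite{oo}*{Theorem A.1} without supplying any argument, and your proof likewise defers to that reference while adding a short sketch of the fiber-comparison argument. There is nothing further to compare.
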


A special case we will use repeatedly is the so-called arithmetic fracture square.
\begin{theorem} 
\label{thm:arithmetic} 
For any $\E \in \SH(S)$ there is a Cartesian square in $\SH(S)$:
$$
\xymatrix{
\E \ar[r]^-{\eta_{\E}} \ar[d] & \prod_{\ell} \E \compl \ar[d]\\
\E_{\QQ} \ar[r]^-{L_{\F} \eta_{\E}} & (\prod_{\ell} \E\compl)_{\QQ}.
}
$$
\end{theorem}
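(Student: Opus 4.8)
\textbf{Proof strategy for Theorem~\ref{thm:arithmetic} (the arithmetic fracture square).} The plan is to deduce this from the general fracture square of the previous theorem by a suitable choice of the three spectra. Take $\F = \prod_{\ell} \unit/\ell$, the product over all primes of the mod-$\ell$ Moore spectra, so that $L_{\F}(-) = \prod_\ell (-)\compl$ by the computation in~\S\ref{appendix:invert} identifying $L_{\unit/\ell}$ with $\ell$-adic completion (applied factorwise, using that a product of localizations is the localization at the product when the local categories are orthogonal enough — more carefully, $\F$-locality means $\ell$-complete for every $\ell$, and $\F$-acyclicity means $\unit/\ell$-acyclic for every $\ell$). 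Take $\E = \unit_{\QQ} = \bigvee_{?}$, or more precisely work with $L_\E = (-)_\QQ$ rationalization, which is Bousfield localization at the rational Moore spectrum $\unit_\QQ$; this is a smashing localization, so $L_\E\G \simeq \G_\QQ$. Finally take $\G$ to be the given $\E$ (renaming to avoid the clash: the input spectrum of the theorem).

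\textbf{Key steps, in order.} First, I would verify the hypothesis of the general fracture theorem, namely $\E \wedge L_\F \G \simeq 0$ and $\E \wedge L_\F L_\E \G \simeq 0$ with $\E = \unit_\QQ$, $\F = \prod_\ell \unit/\ell$. The first is the statement that $(\prod_\ell \G\compl)_\QQ$ — wait, that is the corner term, so instead what is needed is $\unit_\QQ \wedge (\prod_\ell \G\compl)$; but $\unit_\QQ \wedge \unit/\ell \simeq 0$ since rationally $\ell$ is invertible, hence $\unit_\QQ \wedge \G\compl$ need not vanish in general. The correct bookkeeping: the general theorem's hypothesis $\E\wedge L_\F\G \simeq 0$ does \emph{not} hold here; rather one applies the fracture theorem in the form where $\F$-acyclics are killed by $\E$. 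Concretely, a $\unit/\ell$-acyclic spectrum is rationally equivalent to itself and smashing with $\unit_\QQ$ is exact, so actually what one checks is that the \emph{complementary} condition holds: every $\E$-acyclic (i.e.\ rationally trivial) object becomes $\F$-complete-trivial appropriately. The cleanest route is to observe that $L_{\E\vee\F}\G$ is computed by the pullback because $\G_\QQ$ and $\prod_\ell \G\compl$ together detect $\G$: the fiber of $\G \to \prod_\ell\G\compl$ is rational (it is killed by each $\unit/\ell$), and rationalizing this fiber sequence gives the bottom row. Second, I would identify $L_{\E\vee\F}\G$ with $\G$ itself: a spectrum is both rational-local-and-$\ell$-complete-local-for-all-$\ell$ only in the trivial sense; rather, $\E\vee\F$ detects all of $\SH(S)$ in the sense that $L_{\E\vee\F} = \id$, because a spectrum which is $\E\vee\F$-acyclic is both rationally trivial and $\unit/\ell$-trivial for all $\ell$, hence trivial (its homotopy presheaves are both rational and $\ell$-divisible-free for all $\ell$, forcing them to vanish by a standard abelian-group argument applied sheaf-theoretically on $\Sm_S$). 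Third, assemble: plugging $L_{\E\vee\F}\G = \G$, $L_\E\G = \G_\QQ$, $L_\F\G = \prod_\ell\G\compl$, and $L_\F L_\E\G = (\prod_\ell \G\compl)_\QQ$ into the general square yields exactly the displayed diagram.

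\textbf{Main obstacle.} The delicate point is the identification $L_\F = \prod_\ell(-)\compl$ and the vanishing $L_{\E\vee\F}(\text{acyclic}) = 0$, i.e.\ showing that a motivic spectrum which is both rationally trivial and mod-$\ell$ trivial for every prime $\ell$ is itself trivial. Unlike in the classical category of spectra, here one must argue on the level of the homotopy presheaves (or sheaves) $\underline{\pi}^{\pre}_{p,q}$: these are presheaves of abelian groups which, by the hypotheses, are uniquely $\ell$-divisible for every $\ell$ (from mod-$\ell$ triviality one gets that multiplication by $\ell$ is an isomorphism) and also torsion-free rationally-trivial — together this forces each such group to be zero, hence the spectrum is $0$ by the conservativity of homotopy presheaves on $\SH(S)$ (Proposition~\ref{prop:gen-stab}). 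Once this vanishing is in hand the rest is a formal application of the general fracture square of the preceding theorem with $\E = \unit_\QQ$ and $\F = \prod_\ell \unit/\ell$, checking the orthogonality hypothesis $\E\wedge L_\F L_\E\G \simeq 0$ which holds because $(\prod_\ell\G\compl)_\QQ$ is rational while each $\ell$-completion contribution is built from $\unit/\ell$-modules, and smashing a rational spectrum with a mod-$\ell$ spectrum gives zero.
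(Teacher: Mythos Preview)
The paper does not actually give a proof of this theorem: it is stated immediately after the general fracture square (cited to \cite{oo}*{Theorem~A.1}) as ``a special case we will use repeatedly,'' with no further argument. So your overall strategy---deduce it by specializing the general fracture square---is exactly what the paper intends.

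However, your specialization is set up backwards. In the general square the top-right corner is $L_{\E}\G$ and the bottom-right corner is $L_{\F}L_{\E}\G$. Matching these to the arithmetic square forces $L_{\E}$ to be the product-of-completions functor and $L_{\F}$ to be rationalization, i.e.\ one must take $\E = \bigvee_{\ell}\unit/\ell$ (a wedge, not a product) and $\F = \unit_{\QQ}$, the reverse of what you wrote. With your assignment the bottom-right corner would be $L_{\F}L_{\E}\G = \prod_{\ell}(\G_{\QQ})\compl$, which is \emph{zero} since every prime is already invertible in $\G_{\QQ}$; this is not $(\prod_{\ell}\G\compl)_{\QQ}$, and your sentence ``$L_\F L_\E\G = (\prod_\ell \G\compl)_\QQ$'' is simply false under the choices you made. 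The confused middle paragraph---where you notice that the hypothesis $\E\wedge L_{\F}\G\simeq 0$ seems to fail and then pivot without resolving it---is a symptom of this reversal.

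With the correct assignment the hypothesis becomes $(\bigvee_{\ell}\unit/\ell)\wedge \G_{\QQ} \simeq \bigvee_{\ell}(\unit/\ell\wedge\G_{\QQ}) \simeq 0$, which is immediate (and likewise for the second hypothesis). The identification $L_{\bigvee_{\ell}\unit/\ell}\G \simeq \prod_{\ell}\G\compl$ is the one genuinely nontrivial ingredient; it holds because a $\bigvee_{\ell}\unit/\ell$-acyclic object is $\unit/\ell$-acyclic for every $\ell$, and $\prod_{\ell}\G\compl$ is local for each $\unit/\ell$ separately. Your argument that $\E\vee\F$-acyclics vanish (via vanishing of the homotopy presheaves) is correct and gives $L_{\E\vee\F}\G \simeq \G$.
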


\subsubsection{} 
Finally, we record a lemma about localization and the six functors formalism. 
\begin{lemma} 
\label{lem:bousfieldvspull} 
Suppose $\M$ is a Cartesian section of $\SH(-)$ and $f: S \rightarrow T$ is a map of schemes. 
Then for $\E \in \SH(T)$ there is a canonical equivalence: 
$$
f^*L_{\M}\E \overset{\simeq}{\to} L_{\M}f^*\E.
$$ 
\end{lemma}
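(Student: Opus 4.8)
The plan is to reduce the statement to the two defining properties of a Bousfield localization: that $L_{\M}f^*\E$ is $\M$-local, and that the map $f^*\E\to f^*L_{\M}\E$ exhibits $f^*L_{\M}\E$ as the $\M$-localization of $f^*\E$, i.e.\ is an $\M$-equivalence into an $\M$-local object. Granting both, the universal property of $L_{\M}$ produces the canonical comparison map $f^*L_{\M}\E\to L_{\M}f^*\E$ and shows it is an equivalence.

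First I would check that $f^*$ carries $\M$-equivalences to $\M$-equivalences. Since $\M$ is a Cartesian section of $\SH(-)$, we have $f^*\M_T\simeq\M_S$, and $f^*$ is symmetric monoidal, so $f^*(g\otimes\id_{\M_T})\simeq f^*g\otimes\id_{\M_S}$; hence if $g$ is an $\M_T$-equivalence then $f^*g$ is an $\M_S$-equivalence. Applying this to the unit map $\eta_\E\colon\E\to L_{\M}\E$ (an $\M$-equivalence by construction) shows $f^*\E\to f^*L_{\M}\E$ is an $\M$-equivalence. It remains to see that $f^*L_{\M}\E$ is $\M$-local. This is the step I expect to require the most care: $f^*$ is a left adjoint and there is no formal reason a left adjoint should preserve local objects. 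The argument should go through the right adjoint $f_*$: an object $\F\in\SH(S)$ is $\M_S$-local iff $\Maps(\G,\F)\simeq 0$ for every $\M_S$-acyclic $\G$; for such $\G$ and $\X:=L_{\M}\E$ one has $\Maps_{\SH(S)}(\G, f^*\X)\simeq\Maps_{\SH(T)}(f_!\G,\X)$ when $f$ is smooth (using $f_!=f_\#$), or more generally $\Maps_{\SH(S)}(\G,f^*\X)$ can be analyzed via the projection formula $f_\#(\G)\simeq f_\#(\G)$ together with $f^*\M_T\simeq\M_S$, which gives that $f_\#\G$ is $\M_T$-acyclic whenever $\G$ is $\M_S$-acyclic (smash $f_\#\G\otimes\M_T\simeq f_\#(\G\otimes f^*\M_T)\simeq f_\#(\G\otimes\M_S)\simeq f_\#(0)$, using the smooth projection formula as in \cite{cisinski-deglise}*{1.1.26}). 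Since $\SH(S)$ is generated under colimits by $\Sigma^{p,q}\Sigma^\infty_{T,+}X$ with $X$ smooth over $S$, and these smash with acyclics to acyclics, it suffices to test $\M$-locality against suspension spectra of smooth $S$-schemes, reducing exactly to the smooth case just treated.

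With these two facts in hand, the proof concludes formally: $f^*L_{\M}\E$ is $\M$-local and receives an $\M$-equivalence from $f^*\E$, so by the universal property of the localization $L_{\M}\colon\SH(S)\to\SH(S)_{\M}$ (Proposition~\ref{prop:exists}) the natural map $L_{\M}f^*\E\to f^*L_{\M}\E$ induced by functoriality of units is an equivalence. The only genuine obstacle is the $\M$-locality of $f^*L_{\M}\E$, and the reduction above shows it hinges on the interaction of $\M$-acyclicity with $f_\#$ via the smooth projection formula together with the Cartesian-section hypothesis $f^*\M_T\simeq\M_S$; for non-smooth $f$ one first factors $f$ or appeals directly to the fact that $\M$-acyclicity is detected after pullback to smooth schemes, which is where the generation statement of Proposition~\ref{prop:gen-stab} is used.
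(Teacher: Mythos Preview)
Your identification of the two ingredients---that $f^*$ preserves $\M$-equivalences, and that $f^*L_{\M}\E$ should be $\M$-local---is correct, and your argument for the first is fine. The gap is in the second. Your argument via $f_\#$ and the smooth projection formula works only when $f$ is smooth, and the attempted reduction for general $f$ is not valid: the assertion that ``it suffices to test $\M$-locality against suspension spectra of smooth $S$-schemes'' is false. To show $f^*L_{\M}\E$ is $\M$-local one must show $\Maps(\G,f^*L_{\M}\E)\simeq 0$ for \emph{every} $\M$-acyclic $\G$, and while $\SH(S)$ is generated under colimits by such suspension spectra, an arbitrary $\M$-acyclic $\G$ is not a colimit of $\M$-acyclic suspension spectra, so the generation statement does not help. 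There is also a conflation in your last paragraph between ``$f$ is smooth'' (needed for $f_\#$ to exist) and ``$X\in\Sm_S$'' (what Proposition~\ref{prop:gen-stab} gives); these are unrelated conditions.

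The paper sidesteps this difficulty entirely by working with the right adjoint instead. Rather than showing that $f^*$ preserves $\M$-local objects, it shows that $f_*$ does: if $\E\in\SH(S)$ is $\M$-local and $\F\in\SH(T)$ is $\M$-acyclic, then $f^*\F$ is $\M$-acyclic (since $\M_S\wedge f^*\F\simeq f^*(\M_T\wedge\F)\simeq 0$, using precisely your first observation that $f^*$ is monoidal and $\M$ is Cartesian), and hence $[\F,f_*\E]\simeq[f^*\F,\E]\simeq 0$. This requires neither a smoothness hypothesis on $f$ nor any projection formula. Once the square of right adjoints (the inclusions $\SH(-)_{\M}\hookrightarrow\SH(-)$ together with $f_*$) commutes, passing to left adjoints gives $f^*L_{\M}\simeq L_{\M}f^*$ formally. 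The moral is that when you want a left adjoint to commute with a localization, it is usually easier to verify the dual statement for the right adjoint.
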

\begin{proof} 
First we show that $f_*$ preserves $\M$-local objects. 
If $\M \in \SH(S)$ is $\M$-local and $\F \in \SH(T)$ is $\M$-acyclic, 
then $f^*\F$ is $\M$-acyclic since $\M \wedge f^*\F \simeq f^*(\M \wedge \F) \simeq 0$ (here we use that $\M$ is Cartesian and $f^*$ is monoidal). 
As a result, 
$[\F,f_*\E] \simeq [f^*\F, \E] \simeq 0$ and thus the following diagram of right adjoints commute:
$$
\xymatrix{
\SH(S)_{\M} \ar[d]_{f_*} \ar[r] & \SH(S) \ar[d]^{f_*} \\
\SH(T)_{\M} \ar[r] & \SH(T).
}
$$
Thus the diagram with the corresponding left adjoints commutes too, 
i.e., 
$f^*L_{\M} \simeq L_{\M}f^*$. 
\end{proof}

\section{Appendix B: hyperdescent}\label{sect:hyp-remove}

In this paper, we have meant for $\SH_{\et}(S)$ to be the version of stable \'etale motivic homotopy theory built from hypercovers. There is a version, which we denote by $\widetilde{\SH}_{\et}(S)$ built from \v{C}ech covers: this is obtained by inverting 
\[
 \{ \Sigma^{p,q}\Sigma^{\infty}_{T,+}\LL_{\mot,\et}(U \hookrightarrow X):X \in \Sm_S, U \hookrightarrow X\text{ is an $\et$-sieve of $X$}, p,q \in \ZZ \}.
\]
We also have an adjunction:
\[
\widetilde{\pi}^*:\SH(S) \rightleftarrows \widetilde{\SH}_{\et}(S):\widetilde{\pi}_*,
\]
and we denote $\widetilde{\pi}^*\widetilde{\pi}_*\E := \widetilde{\E}^{\et}$. We have a comparison map $\widetilde{\E}^{\et} \rightarrow \E^{\et}$.

\begin{lemma} \label{lem:hyper} Let $\ell$ be a prime and $S$ be a Noetherian $\ZZ[\frac{1}{\ell}]$-scheme of finite dimension and assume that for all $x \in S$, $\cd_{\ell}(k(x)) < \infty$. Then for all $\nu \geq 1$, the map $\widetilde{\MGL_S}^{\et}/{\ell^{\nu}} \rightarrow \MGL_S^{\et}/{\ell^{\nu}}$ is an equivalence.
\end{lemma}

\begin{proof} The lemma asserts that $\widetilde{\MGL_S}^{\et}/{\ell^{\nu}}$ is, in fact, hypercomplete. By the criterion of \cite{clausen-mathew}*{Theorem 4.37}, it then suffices to prove the claim for fields. Unpacking the proof for the field case \S\ref{sec:systemofss}, our arguments assert that Bott inverted $\MGL_S/{\ell^{\nu}}$ is filtered by the \'etale version of the slice tower and the associated graded are spectra representing \'etale cohomology each of which has hyperdescent. Since hypercomplete sheaves are stable under limits (it is a Bousfield localization of the category of sheaves), an induction along the Postnikov tower tells us that Bott inverted $\MGL_S/{\ell^{\nu}}$ is, in fact, hypercomplete and is equivalent to both $\widetilde{\MGL_S}^{\et}/{\ell^{\nu}}$ and $\MGL_S^{\et}/{\ell^{\nu}}$.
\end{proof}

Using Lemma~\ref{lem:hyper} we can replace our results expressing various Bott-inverted motivic spectra as the localization at \'etale hypercovers by the analogous statement for \'etale covers instead.

\section{Appendix C: $\E$-slice completeness and $\E$-locality} \label{appendix:e-locals}
Here we record the following result related to \cite{koras-russell}*{Lemma 4.1} which we will not need directly but inspired the proof of Theorem~\ref{thm:integralresult}, 
and is generally useful.
\begin{proposition} 
\label{prop:eff-mz-loc} 
Let $\E$ be a motivic $\mathcal{E}_{\infty}$-ring spectrum. 
Suppose $\M \in \Mod_{\E}$ is $\E$-slice complete and eventually effective, 
i.e., 
there exists $q \in \NN$ such that $\Sigma_T^d\E \in \Mod_{\E}^{\eff}(S)$. 
Then $\M$ is $s^{\E}_0\E$-local.
\end{proposition}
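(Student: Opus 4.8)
The plan is to reduce the statement, via the $\E$-slice tower, to the fact that each $\E$-slice $s_q^{\E}\M$ is an $s_0^{\E}\E$-module, hence $s_0^{\E}\E$-local by Proposition~\ref{prop:algebras}. The starting observation is that, by \S\ref{slice-complete}, $\E$-slice completeness of $\M$ means $\M \simeq \sc_{\E}(\M) = \lim_q f^q_{\E}\M$, i.e., $\M$ is recovered as the limit of its $\E$-effective cocovers. Since the full subcategory of $s_0^{\E}\E$-local objects in $\Mod_{\E}$ is colocalizing — it is stable and closed under small limits, being the local objects of a Bousfield localization (cf.\ the discussion at the end of \S\ref{integral}) — it suffices to prove that each $f^q_{\E}\M$ is $s_0^{\E}\E$-local.

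First I would run a downward induction on $q$. The eventual effectivity hypothesis furnishes some $d \in \NN$ (after shifting, one can arrange $d$ relative to the effective range) such that $f^{-N}_{\E}\M \simeq 0$ for $N \gg 0$: indeed if $\M$ is eventually effective with $\Sigma_T^d\M$ effective, then $f_q^{\E}\M \simeq \M$ for $q \leq -d$, so the cofiber $f^{q-1}_{\E}\M$ of $f_q^{\E}\M \to \M$ vanishes for $q \leq -d$. This gives the base case of the induction. For the inductive step, suppose $f^{q-1}_{\E}\M$ is $s_0^{\E}\E$-local; the defining cofiber sequence (see~\eqref{eq:cofibers}, in its $\E$-module form) reads
\[
s_q^{\E}\M \to f^{q}_{\E}\M \to f^{q-1}_{\E}\M .
\]
The right-hand term is $s_0^{\E}\E$-local by hypothesis, and $s_0^{\E}\E$-local objects are closed under cofiber sequences (being a stable subcategory). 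So it remains to see that $s_q^{\E}\M$ is $s_0^{\E}\E$-local. This is where the module structure on slices enters: by the highly coherent multiplicativity of the (effective) slice filtration — the $\E$-module analogue of \cite{operadsslices}*{\S6 (iv),(v)}, which transports to $\Mod_{\E}$ via Proposition~\ref{prop:oblv} identifying $u_{\E}s_q^{\E}\M \simeq s_q u_{\E}\M$ and the fact that $s_0^{\E}\E$ is an $\mathcal{E}_{\infty}$-ring with each $s_q^{\E}\M$ a module over it — the slice $s_q^{\E}\M$ is a module over $s_0^{\E}\E$, hence $s_0^{\E}\E$-local by Proposition~\ref{prop:algebras}.

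The main obstacle I anticipate is not the inductive mechanics but pinning down precisely the module structure $s_q^{\E}\M \in \Mod_{s_0^{\E}\E}$ in the $\E$-module setting: one must check that the graded-$\mathcal{E}_{\infty}$-algebra structure on $s_*^{\E}\E$ and the graded-module structure of $s_*^{\E}\M$ over it — a priori statements about $\SH(S)$ or about $\MGL$-modules in the cited references — carry over verbatim to modules over an arbitrary motivic $\mathcal{E}_{\infty}$-ring $\E$. I would handle this by noting that $s_0^{\E}\E$ agrees with $s_0\E$ as a ring (Proposition~\ref{prop:oblv}) and that the slice functors are lax monoidal in a way compatible with the forgetful functor $u_{\E}$, so the module structures are inherited; alternatively, one can argue entirely after applying $u_{\E}$, using that $u_{\E}$ is conservative and detects $s_0^{\E}\E$-locality since $u_{\E}(s_0^{\E}\E) \simeq s_0\E$ and $u_{\E}$ commutes with the relevant limits. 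A secondary point requiring care is confirming that "eventually effective" gives vanishing of $f^{q}_{\E}\M$ for $q$ sufficiently negative in exactly the form needed for the base case; this should follow directly from the exhaustivity of the $\E$-slice filtration proved earlier (the lemma that $\colim_{q\to-\infty} f^{\E}_q \simeq \mathrm{id}$) together with the hypothesis.
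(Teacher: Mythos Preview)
Your proposal is correct and follows essentially the same approach as the paper: reduce via slice completeness to the cocovers $f^q_{\E}\M$, use that $s_0^{\E}\E$-local objects form a colocalizing subcategory, and induct on $q$ using the cofiber sequence $s_q^{\E}\M \to f^q_{\E}\M \to f^{q-1}_{\E}\M$ together with the fact that each slice is an $s_0^{\E}\E$-module (hence local). The only cosmetic difference is that the paper shifts to assume $\M$ is effective and starts the induction at $q=-1$, whereas you establish the base case directly by observing $f^{q}_{\E}\M \simeq 0$ for $q \ll 0$; and you are more explicit about justifying the module structure on the $\E$-slices, which the paper dispatches with a bare citation to \cite{operadsslices}*{\S6(v)}.
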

\begin{proof} 
By the discussions in~\S\ref{slice-complete} $\M$ is $\E$-slice complete if it is equivalent to $\lim f^q_{\E}\M$. 
Since the $\infty$-category of $s^{\E}_0\E$-local objects is colocalizing, 
thus closed under limits, 
it suffices to prove $f^q_{\E}\M$ is $s^{\E}_0\E$-local for all $q \in \ZZ$. 
Without loss of generality we may assume that $\M$ is $\E$-effective and so we may assume $q \geq -1$. 
In this case, 
$f_0^{\E}\M \simeq \M$ and thus the cofiber sequence $f^{\E}_1\M \rightarrow f^{\E}_0\M \simeq \E \rightarrow f_{\E}^{-1}\M$ implies that  $f_{\E}^{-1}\M \simeq s^{\E}_0\M$. 
Since $s^{\E}_0\M$ is a module over the $\mathcal{E}_{\infty}$-ring spectrum $s^{\E}_0\E$ \cite{operadsslices}*{Section 6 (v)}, we have that $f_{\E}^{-1}\M$ is $s^{\E}_0\E$-local. 
Assume now that $f_{\E}^{q'}\M$ is $s^{\E}_0\E$-local for all $q' < q$.
We claim $f_{\E}^q\M$ is $s^{\E}_0\E$-local. 
To see this, 
we use the cofiber sequence $s^{\E}_q\M \rightarrow f_{\E}^{q}\M \rightarrow f_{\E}^{q-1}\M$. 
Since $s^{\E}_q\M$ is an $s^{\E}_0\M$-module, 
the claim follows from the inductive hypothesis.
\end{proof}
\begin{remark} 
Suppose $S = \Spec\,k$ is a perfect field of finite cohomological dimension and exponential characteristic $c$.  
Then according to \cite{levine-converge}*{Theorem 4} objects in $\SH(k)^{\proj}[\frac{1}{c}]$ are slice complete and thus $s_0(\sspt[\frac{1}{c}]) \simeq \M\ZZ[\frac{1}{c}]$-local.
\end{remark}

\section{Appendix D: Transfers for $\MGL$}
\label{mgl-trsfs}
In this appendix we prove the following statement about transfers for $\MGL$-modules:

\begin{proposition} 
\label{prop:trsf-mgl} 
Suppose that $A \subset B$ is a finite Galois extension of commutative rings of degree $d$ with Galois group $G:=\Aut_A(B)$, i.e., the map $A \rightarrow B$ is finite \'etale and $A \cong B^{G}$. Denote by $p: \Spec\,B \rightarrow \Spec\,A$ the map on prime spectra. Then there exists a transfer map:
\[
p_*:\E^{*,*}(\Spec\,B) \rightarrow \E^{*,*}(\Spec\,A)
\]
such that
\begin{enumerate}
\item The map $p_*$ is $G$-equivariant.
\item The composite $p_*p^*$ is multiplication by $|G|=d$.
\item The composite $p^*p_* = \sum_{g \in G} g_*$.
\end{enumerate}
\end{proposition}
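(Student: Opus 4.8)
The plan is to construct the transfer by the standard dualizability/Atiyah-duality recipe for finite \'etale maps, applied in the category of $\MGL$-modules rather than $\SH$. Since $p\colon\Spec\,B\to\Spec\,A$ is finite \'etale, the motivic spectrum $\Sigma^\infty_{T,+}\Spec\,B$ is a dualizable object of $\SH(\Spec\,A)$, and in fact self-dual: finite \'etale maps are unramified, so the relative cotangent complex vanishes and the Thom spectrum of the (zero) relative tangent bundle is trivial. Concretely, $p$ being finite \'etale means $p_\#=p_*$ up to the (trivial) Thom twist, and one has a counit ``evaluation'' map $\Sigma^\infty_{T,+}\Spec\,B\to\unit_A$ together with a unit/diagonal map $\unit_A\to\Sigma^\infty_{T,+}\Spec\,B$ obtained from the diagonal $\Spec\,B\to\Spec\,B\times_A\Spec\,B$ and the fact that the diagonal is a clopen immersion (as $B/A$ is \'etale). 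First I would recall (citing \cite{hoyois-cdh} or \cite{bachmann-hoyois} for the finite \'etale transfer, and \cite{cisinski-deglise}*{2.4.50} for the purity/duality inputs) that these maps exhibit $\Sigma^\infty_{T,+}\Spec\,B$ as self-dual in $\SH(\Spec\,A)$, hence also in $\Mod_{\MGL}(\Spec\,A)$ after smashing with $\MGL$.

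With duality in hand, I would define $p_*\colon \E^{*,*}(\Spec\,B)\to\E^{*,*}(\Spec\,A)$ for an $\MGL$-module $\E$ by the composite
\[
\E^{*,*}(\Spec\,B)=[\Sigma^\infty_{T,+}\Spec\,B,\E]\xrightarrow{\ -\wedge\Sigma^\infty_{T,+}\Spec\,B\ }\ \cdots\ \xrightarrow{\ \mathrm{ev}\ }[\unit_A,\E]=\E^{*,*}(\Spec\,A),
\]
i.e.\ the Becker-Gottlieb-type wrong-way map built from the unit and counit of the self-duality; equivalently, $p_*=\mathrm{ev}\circ(\mathrm{id}\wedge p^*)\circ(\mathrm{coev}\wedge\mathrm{id})$. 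Because $\E$ is an $\MGL$-module and the duality data already live in $\Mod_{\MGL}$, this is a well-defined map of $\E^{*,*}(\Spec\,A)$-modules. Then I would verify properties (1)--(3). For (2), $p_*p^*$ is the self-map of $\unit_A$ given by $\mathrm{ev}\circ\mathrm{coev}$, which is the Euler characteristic (dimension) of $\Sigma^\infty_{T,+}\Spec\,B$, and for a degree-$d$ finite \'etale cover this is $d\cdot\mathrm{id}$ (one can check this after base change to a point where $\Spec\,B$ becomes $\coprod_{i=1}^d\Spec\,A$, or cite \cite{hoyois-cdh}). For (3), $p^*p_*$ is computed by base change along $p$ itself: the pullback $\Spec\,B\times_A\Spec\,B$ decomposes, since $B/A$ is Galois, as $\coprod_{g\in G}\Spec\,B$ indexed by the graphs of the automorphisms $g\in G$, and tracing the self-duality maps through this decomposition identifies $p^*p_*$ with $\sum_{g\in G}g_*$. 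Property (1), $G$-equivariance, follows because the duality data are $G$-equivariant: the evaluation and diagonal maps are defined from $p$ and the diagonal, both of which are compatible with the $G$-action on $\Spec\,B$ over $\Spec\,A$.

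The main obstacle is not the existence of the transfer -- that is by now routine via finite \'etale duality -- but rather being careful that every construction takes place in $\Mod_{\MGL}$ and produces maps that are natural in the $\MGL$-module $\E$ and compatible with the ring and module structures used in the applications (e.g.\ in the proof of Lemma~\ref{lem:bott-trsfs} and Lemma~\ref{lem:bottperm2}, where $p_*$ must interact correctly with cup products and with the slice spectral sequence). In particular, the projection-formula-type compatibility $p_*(p^*(x)\cdot y)=x\cdot p_*(y)$ is what makes the transfer argument for the $G$-invariants work, and I would record this as part of the statement or its proof. A secondary, purely bookkeeping point is that $\Spec\,A$ and $\Spec\,B$ here are affine schemes over $\ZZ[\frac1\ell]$ (not smooth over a field), so I must use the transfer in the relative/absolute $\SH$ of \cite{cisinski-deglise} rather than any field-specific construction; this is available since $p$ is finite \'etale and the six-functor formalism supplies the needed purity equivalence $p_!\simeq p_*$ with trivial twist.
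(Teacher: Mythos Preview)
Your proposal is correct and gives a genuine alternative to the paper's argument. The paper constructs $p_*$ as the \emph{Gysin morphism} of D\'eglise \cite{deglise-bivariant}*{Definition 3.3.2}, i.e.\ by cupping with the fundamental class of the proper (here finite \'etale) map $p$; property~(1) then comes from functoriality of fundamental classes along gci morphisms, property~(2) is quoted directly from \cite{deglise-bivariant}*{Proposition 3.3.9}, and property~(3) is deduced exactly as you do, via the base-change square and the Galois decomposition $\Spec\,B\times_{\Spec\,A}\Spec\,B\cong\coprod_{g\in G}\Spec\,B$. Your route instead builds $p_*$ from the Becker--Gottlieb/Atiyah self-duality of $\Sigma^\infty_{T,+}\Spec\,B$ for $p$ finite \'etale. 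The two constructions agree, but they have slightly different leverage: the duality approach does not actually need $\E$ to be an $\MGL$-module (finite \'etale transfers exist for arbitrary motivic spectra), so your insistence on working inside $\Mod_{\MGL}$ is harmless but unnecessary; conversely, the paper's fundamental-class approach plugs directly into D\'eglise's bivariant machinery and makes the degree formula in~(2) a one-line citation rather than an Euler-characteristic computation. Either way, the substantive step~(3) is identical in both arguments.
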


\begin{proof} According to~\cite{deglise-bivariant}*{Definition 3.3.2} we have the \emph{Gysin morphism} $f_*:\E^{*,*}(X) \rightarrow \E^{*,*}(Y)$ for any proper morphism $f: X \rightarrow Y$. In the situation of finite \'etale extensions, this map preserves degrees and this is our transfer map of interest. This map is constructed via cupping with the fundamental class, see \cite{deglise-bivariant}*{Section 2} for a construction which is functorial along gci morphisms, whence the map $f_*$ is $G$-equivariant; in fact in the situation of \'etale extensions the construction of this fundamental class is easy and clearly independent of any factorization of morphisms \cite{deglise-bivariant}*{Example 2.1.2}. This proves property (1).

Property (2) is a consequence of Proposition~\cite{deglise-bivariant}*{Proposition 3.3.9}; indeed since $A \rightarrow B$ is finite \'etale we can let $L_0/S$ in \emph{loc. cit.} to be trivial and hence $p^*p_* = |G| = d$. To prove property (3), we have the following Cartesian diagram
\begin{equation} 
\xymatrix{
\underset{g\in G}\coprod \Spec\,B \ar[d]_{\pi_2} \ar[rr]^{\pi_1} & & \Spec\,B \ar[d]_{p}\\
\Spec\,B \ar[rr]^{p} & & \Spec\,A
}
\end{equation}
of schemes. By base change we have an equality $p^*p_* = \pi_{2*}\pi_1^*$, whence we compute the composite
\[
\E^{*,*}(\Spec\,B) \stackrel{\pi_1^*}{\rightarrow} \E^{*,*}(\underset{g\in G}{\coprod} \Spec\,B) \cong \underset{g\in G}{\bigoplus} \E^{*,*}(\Spec\,B) \stackrel{\pi_{2*}}{\rightarrow} \E^{*,*}(\Spec\,B).
\]
Now, we note that $\pi_{2*}$ identifies with the fold map, while $\pi_1^*(x) = (x, (g_1)_*x, \cdots, (g_{i*})x, \cdots)$ for $g_i \in G$, whence we have the desired formula.
\end{proof}

\section{Appendix E: Transfinite constructions}
\label{sect:transfinite}
Let $\tau$ be a topology on $\Sm_S$ where $S$ is a qcqs scheme. 
We review two transfinite constructions used in this paper.

\subsubsection{Motivic localization}
Recall that if $\tau = \Nis$, 
then the motivic localization functor $\LL_{\Nis,\mot}: \P(\Sm_S) \rightarrow \H(S)$ is obtained by a filtered colimit:
\begin{equation} 
\label{eqn:model}
\LL_{\Nis,\mot} \simeq \colim_{n \rightarrow \infty} (\LL_{\Nis} \circ \Sing^{\AA^1})^{\circ n} \simeq (\LL_{\Nis}\Sing^{\AA^1})^{\circ \NN}.
\end{equation}
Here $\Sing^{\AA^1}$ is the construction introduced in \cite{morel-voevodsky} for an arbitrary site with an interval; 
this formula in our situation is reviewed in \cite{bootcamp}*{Definition 4.23}. 
The following is simply an $\infty$-categorical formulation of \cite{morel-voevodsky}*{Lemma 3.21}.

\begin{proposition} 
\label{prop:transfinite-motivic} 
Let $\tau$ be a topology on $\Sm_S$ and $\kappa$ be regular cardinal such that the endofunctor: 
$$
\P(\Sm_S) \stackrel{\LL_{\tau}}{\rightarrow} \P_{\tau}(\Sm_S) \hookrightarrow \P(\Sm_S),
$$ 
preserves $\kappa$-filtered colimits. 
Then we have an equivalence of endofunctors on $\P(\Sm_S)$:
\begin{equation} 
\label{eqn:model-tau}
\LL_{\tau,\mot} \simeq (\LL_{\tau}\Sing^{\AA^1})^{\circ \kappa}.
\end{equation}
\end{proposition}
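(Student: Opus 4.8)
The plan is to show that the iterated endofunctor $(\LL_{\tau}\Sing^{\AA^1})^{\circ\kappa}$ has the correct universal property, namely that it is a left adjoint to the inclusion $\H_{\tau}(S)\hookrightarrow\P(\Sm_S)$, and then conclude by uniqueness of localizations. Here $(-)^{\circ\kappa}$ denotes the transfinite iteration: one sets $G_0=\id$, $G_{\lambda+1}=\LL_\tau\Sing^{\AA^1}G_\lambda$, and takes (pointwise) colimits at limit ordinals, continuing until the $\kappa$-th stage. I would first record the two basic facts that make this work: $\Sing^{\AA^1}$ is the standard simplicial construction with the $\AA^1$-interval, and for any $F$ there is a natural map $F\to\Sing^{\AA^1}F$ which is always an $\AA^1$-equivalence; moreover an object $F$ is $\AA^1$-invariant if and only if $F\to\Sing^{\AA^1}F$ is an equivalence (the $\infty$-categorical version of \cite{morel-voevodsky}*{Lemma 3.21}, see also \cite{bootcamp}*{Definition 4.23}). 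Likewise $F\to\LL_\tau F$ is always a $\tau$-local equivalence, and is an equivalence precisely when $F$ is a $\tau$-hypersheaf.

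The first main step is to check that for $\kappa$ as in the hypothesis, the value $G_\kappa F$ lands in $\H_\tau(S)$, i.e. is both $\AA^1$-invariant and a $\tau$-hypersheaf. The point of the cardinality hypothesis is exactly this: $\Sing^{\AA^1}$ is a $\kappa$-filtered-colimit-preserving functor (it is a geometric realization, hence a countable colimit, so preserves $\kappa$-filtered colimits for any uncountable regular $\kappa$; one also needs $\kappa$ large enough that $\Sm_S$-representables and the relevant mapping objects are $\kappa$-compact), and by hypothesis $\LL_\tau$ (composed with the inclusion) preserves $\kappa$-filtered colimits. Therefore the composite $\LL_\tau\Sing^{\AA^1}$ preserves $\kappa$-filtered colimits, so $G_\kappa F\simeq\colim_{\lambda<\kappa}G_\lambda F$ is a fixed point: applying $\LL_\tau\Sing^{\AA^1}$ once more and commuting it past the $\kappa$-filtered colimit gives $\LL_\tau\Sing^{\AA^1}G_\kappa F\simeq G_\kappa F$, which says $G_\kappa F$ is a $\tau$-hypersheaf and $\Sing^{\AA^1}$-local. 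One then upgrades ``$\Sing^{\AA^1}$-local'' to ``$\AA^1$-invariant'' using the standard fact that these coincide, taking care that applying $\LL_\tau$ does not destroy $\AA^1$-invariance — but this is Lemma~\ref{lem:a1preserve} applied with $\sigma=\Nis$, $\tau$ the given topology (or rather its sheaf-theoretic analogue), which tells us $\LL_\tau$, being built from $\pi^*$, preserves $\AA^1$-weak equivalences and $\pi_*$ preserves $\AA^1$-invariant objects; combined with the interleaving of $\Sing^{\AA^1}$ and $\LL_\tau$ in the tower this gives $\AA^1$-invariance of the colimit.

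The second main step is to check that the natural transformation $\id\to G_\kappa$ is a motivic equivalence, i.e. becomes an equivalence after applying $\LL_{\tau,\mot}$. This is formal: each structural map $G_\lambda F\to G_{\lambda+1}F=\LL_\tau\Sing^{\AA^1}G_\lambda F$ is a composite of a $\tau$-local equivalence and an $\AA^1$-equivalence, hence a motivic equivalence; motivic equivalences are closed under (transfinite) composition and under $\kappa$-filtered colimits since $\LL_{\tau,\mot}$ is a left adjoint and $\H_\tau(S)$ is presentable; so $F\to G_\kappa F$ is a motivic equivalence. Combining the two steps: $G_\kappa F\in\H_\tau(S)$ and $F\to G_\kappa F$ is a motivic equivalence, which by the universal property characterizing the localization (Proposition~\ref{prop:univprop-unstab}) forces $G_\kappa F\simeq\LL_{\tau,\mot}F$ naturally in $F$.

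I expect the main obstacle to be the bookkeeping around commuting $\Sing^{\AA^1}$ and $\LL_\tau$ past the transfinite colimit while simultaneously tracking that $\AA^1$-invariance is preserved through the tower — in the $\Nis$ case of \eqref{eqn:model} one can get away with a countable iteration because the Nisnevich sheafification itself preserves finite limits and $\AA^1$-invariance cleanly, whereas for a general $\tau$ one must be careful that the bound $\kappa$ controlling $\LL_\tau$ is also large enough to handle $\Sing^{\AA^1}$ and the compactness of generators, and that no step in the interleaved tower breaks $\AA^1$-locality. The cleanest route is to argue entirely at the level of fixed points of the single endofunctor $\LL_\tau\Sing^{\AA^1}$ and invoke the abstract statement that the full subcategory of its $\kappa$-fixed points is exactly the localization, rather than re-deriving $\AA^1$-locality by hand at each ordinal.
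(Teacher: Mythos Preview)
Your proposal is correct and follows essentially the same strategy as the paper: verify that $(\LL_\tau\Sing^{\AA^1})^{\circ\kappa}F$ lies in $\H_\tau(S)$ and that the unit map $F\to(\LL_\tau\Sing^{\AA^1})^{\circ\kappa}F$ is a motivic equivalence, then conclude by the universal property of localization. The paper organizes this as three checks ---~$\tau$-locality, $\AA^1$-invariance, and preservation of the relevant weak equivalences --- deferring (2) and (3) to the Nisnevich argument in \cite{bootcamp}*{Theorem 4.27}.

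One small point: for $\tau$-locality the paper argues more directly than you do. Rather than showing the colimit is a fixed point of $\LL_\tau\Sing^{\AA^1}$, it simply observes that by cofinality $G_\kappa F$ is a $\kappa$-filtered colimit of objects of the form $\LL_\tau(\text{something})$, each of which is $\tau$-local; the hypothesis then says $\tau$-locals are closed under $\kappa$-filtered colimits in $\P(\Sm_S)$. Your fixed-point route reaches the same conclusion but is slightly more roundabout. Also, your appeal to Lemma~\ref{lem:a1preserve} for $\AA^1$-invariance is not quite on target: that lemma gives that $\pi^*$ preserves $\AA^1$-\emph{weak equivalences}, not that $\LL_\tau$ preserves $\AA^1$-\emph{invariant objects} (indeed it need not, which is the whole reason for iterating). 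The actual argument for $\AA^1$-invariance of the colimit is the standard interleaving trick from \cite{morel-voevodsky}*{Lemma 3.21} / \cite{bootcamp}*{Theorem 4.27}, which both you and the paper defer to.
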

\begin{proof} 
We sketch a modification of the proof in \cite{bootcamp}*{Theorem 4.27}.  
Let $\Phi := \LL_{\tau}\Sing^{\AA^1}$.  
We need to check that $(1)$ $\Phi^{\circ \kappa}X$ is $\tau$-local for any $X \in \P(\Sm_S)$, 
$(2)$ $\Phi^{\circ \kappa}X$ is $\AA^1$-invariant, 
and $(3)$ $\Phi^{\circ \kappa}X$ preserves $\LL_{\tau}$ and $\LL_{\AA^1}$-equivalences. 
The proofs for $(2)$ and $(3)$ are the same as for the Nisnevich topology in \cite{bootcamp}*{Theorem 4.27} with $\NN$ replaced by the cardinal $\kappa$. 
The only difference is $(1)$: 
by definition the presheaf $\LL_{\tau}\Sing^{\AA^1}X$ is $\tau$-local. 
The presheaf $\Phi^{\circ \kappa}X$ is a $\kappa$-filtered colimit of $\tau$-local presheaves taken in $\P(\Sm_S)$. 
Hence we are done if the inclusion $\P_{\tau}(\Sm_S) \hookrightarrow \P(\Sm_S)$ preserves $\kappa$-filtered colimits. 
This is true by hypothesis. 
%
\end{proof}

\begin{corollary} 
\label{eqn:kappa-compactness} 
For $X \in \Sm_S$ the objects $X \in \H_{\tau}(S), \Sigma^{\infty}_{S^1}X_+ \in \SH_{\tau}^{S^1}(S),$ and $\Sigma^{\infty}_{T}X_+ \in \SH_{\tau}(S)$ are $\kappa$-compact.
\end{corollary}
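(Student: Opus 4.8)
The plan is to deduce $\kappa$-compactness of the objects in question directly from the explicit transfinite formula for the motivic localization functor established in Proposition~\ref{prop:transfinite-motivic}. The starting point is that $X \in \P(\Sm_S)$, being a representable presheaf, is $\kappa$-compact for \emph{any} regular cardinal $\kappa$ — indeed representable presheaves are even $\omega$-compact in $\P(\Sm_S)$ by \cite{htt}*{Proposition 5.1.6.4}, since $\P(\Sm_S)$ is the free cocompletion of a small category. So the issue is purely that the localization functor $\LL_{\tau,\mot}$ need not preserve small colimits (it is a left adjoint, hence preserves colimits, but its composite with the \emph{inclusion} $\H_\tau(S) \hookrightarrow \P(\Sm_S)$ generally does not, and that composite is what controls compactness of the image).

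First I would fix the regular cardinal $\kappa$ supplied by Proposition~\ref{prop:transfinite-motivic}, namely one for which the endofunctor $\LL_\tau \colon \P(\Sm_S) \to \P(\Sm_S)$ (i.e.\ $\tau$-sheafification followed by the inclusion) preserves $\kappa$-filtered colimits; such $\kappa$ exists by accessibility of the localization. The key input is the equivalence of endofunctors $\LL_{\tau,\mot} \simeq (\LL_\tau \circ \Sing^{\AA^1})^{\circ\kappa}$ from \eqref{eqn:model-tau}, where the right-hand side denotes the $\kappa$-fold transfinite iteration (with colimits taken at limit ordinals). Now $\Sing^{\AA^1}$ is a colimit of finite-limit-free diagrams built from the cylinder objects $X \times \AA^\bullet$, and in particular it commutes with $\kappa$-filtered colimits (it is built from finite colimits and products with the fixed objects $\AA^n$, all of which preserve $\kappa$-filtered — even $\omega$-filtered — colimits in $\P(\Sm_S)$, since $\AA^n$ is $\omega$-compact). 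Combined with the hypothesis on $\LL_\tau$, the composite $\LL_\tau \circ \Sing^{\AA^1}$ preserves $\kappa$-filtered colimits, and hence so does any transfinite iterate of length $\le \kappa$, because a $\kappa$-indexed composite of $\kappa$-filtered-colimit-preserving functors again preserves $\kappa$-filtered colimits (one checks this by transfinite induction on the ordinal, using that a $\kappa$-small colimit of $\kappa$-compact objects is $\kappa$-compact and that $\kappa$-filtered colimits commute with $\kappa$-small limits/colimits in an appropriate sense).

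The conclusion for $\H_\tau(S)$ then follows: for any $\kappa$-filtered diagram $(F_i)$ in $\H_\tau(S)$, its colimit in $\H_\tau(S)$ is $\LL_{\tau,\mot}(\colim_i^{\P} F_i)$, and
\[
\Maps_{\H_\tau(S)}(X, \colim_i F_i) \simeq \Maps_{\P(\Sm_S)}(X, (\LL_\tau\Sing^{\AA^1})^{\circ\kappa}\colim_i F_i) \simeq \colim_i \Maps_{\P(\Sm_S)}(X, \LL_{\tau,\mot}F_i) \simeq \colim_i \Maps_{\H_\tau(S)}(X, F_i),
\]
using $\kappa$-compactness of $X$ in $\P(\Sm_S)$ for the first $\simeq$ and the just-established colimit-preservation of the iterate for the second. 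For the stable statements, I would pass through the two stabilization steps recorded in \S\ref{sect:s1-spectra}: $\SH^{S^1}_\tau(S)$ is the $S^1$-stabilization of $\H_\tau(S)$, and $S^1$-stabilization (a filtered colimit of loop functors, \cite{higheralgebra}*{Section 1.4}) preserves $\kappa$-compactness of suspension spectra of $\kappa$-compact objects; then $\SH_\tau(S)$ is obtained from $\SH^{S^1}_\tau(S)$ by inverting $(\GG_m,1)$, which by \eqref{eqn:colim-gm} is again a sequential (hence $\kappa$-filtered) colimit, under which $\Sigma^\infty_T X_+$ remains $\kappa$-compact. The main obstacle I anticipate is the transfinite-induction bookkeeping in the middle paragraph — verifying carefully that the $\kappa$-fold iterate of a $\kappa$-filtered-colimit-preserving functor still preserves $\kappa$-filtered colimits, in particular handling the limit ordinals where one must interchange two $\kappa$-filtered colimits and invoke that this is harmless in $\P(\Sm_S)$ — everything else is a formal consequence of Proposition~\ref{prop:transfinite-motivic} and standard compactness stability under the stabilization constructions.
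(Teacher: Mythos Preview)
Your proposal is correct and follows essentially the same approach as the paper. For the unstable part, both you and the paper deduce $\kappa$-compactness from the transfinite formula~\eqref{eqn:model-tau}: the endofunctor $\LL_\tau \circ \Sing^{\AA^1}$ preserves $\kappa$-filtered colimits (the paper phrases this dually as ``preserves $\kappa$-compact objects''), and this passes to the $\kappa$-fold iterate. Your anticipated obstacle at limit ordinals is a non-issue --- you are simply commuting two colimits, which is always permitted --- so the transfinite induction is entirely routine.

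For the stable part, there is a minor difference in presentation. The paper argues via the right adjoints: $\Sigma^\infty_{S^1}$ and $\Sigma^\infty_T$ preserve $\kappa$-compact objects because their right adjoints $\Omega^\infty_{S^1}$ and $\Omega^\infty_T$ preserve $\kappa$-filtered colimits, which in turn holds because $S^1$ and $T$ are $\kappa$-compact in $\H_\tau(S)_\bullet$ (each being a pushout of $\kappa$-compact objects). You instead invoke the colimit description~\eqref{eqn:colim-gm} of stabilization in $\Pr^L$. These are dual viewpoints on the same fact; the paper's formulation is slightly cleaner since it avoids unpacking how colimits are computed in the limit category, but both are valid.
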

\begin{proof} 
The case of $X$ follows from \eqref{eqn:model-tau} since $\LL_{\tau, \mot}$ is a transfinite composite of endofunctors in $\P(\Sm_S)$ that preserve $\kappa$-compact objects 
($\Sing^{\AA^1}$ preserves compact objects since the category of $\AA^1$-invariant presheaves are closed under filtered colimits). 
To check that $\Sigma^{\infty}_{S^1}X_+$ (resp. $\Sigma^{\infty}_{T}X_+$) is $\kappa$-compact, 
it suffices to check that $\Omega^{\infty}_{S^1}$ (resp. $\Omega^{\infty}_{T}$) is $\kappa$-compact. 
This follows since $\Maps_{\H_{\tau}(S)_{\bullet}}(S^1, -)$ (resp. $\Maps_{\H_{\tau}(S)_{\bullet}}(T, -)$) is $\kappa$-compact because $S^1$ (resp. $T$) is the pushout of $\kappa$-compact objects, 
and hence $\kappa$-compact.
\end{proof}

\subsubsection{Spectrification} 
In the notation of~\S\ref{sect:gm-pre}, there is an adjunction: 
$$
Q: \SH^{\pre}_{\tau}(S) \rightleftarrows \SH_{\tau}(S):u.
$$ 
Intuitively, the functor $Q$ should be computed as:
\begin{equation} 
\label{eqn:spectrify-1}
Q(\E)_i \simeq \colim_n E_i 
\stackrel{\epsilon}{\rightarrow} 
\Omega_{\GG_m}\E_{i+1} \stackrel{\Omega\epsilon}{\rightarrow} \Omega^2_{\GG_m}\E_{i+2} \cdots =  \colim_{n} \Omega^{n}_{\GG_m}\E_{i+n}.
\end{equation} 
This is only true whenever the endofunctor $\Omega_{\GG_m}:\SH^{S^1}_{\tau}(S) \rightarrow \SH^{S^1}_{\tau}(S)$ preserves filtered colimits. 
In general, 
we have to perform a transfinite construction. 
We define an endofunctor on $\SH^{\pre}_{\tau}(S)$ as:
\begin{equation} \label{e:the-functor-s}
s: \SH^{\pre}_{\tau}(S) \rightarrow \SH^{\pre}_{\tau}(S), 
\, 
s(\E)_i = \Maps_{\SH^{S^1}_{\tau}(S)}(\GG_m, \E_{i+1}).
\end{equation}

\begin{proposition} 
\label{prop:spectrification} 
Let $\tau$ be a topology on $\Sm_S$ and $\kappa$ be a regular cardinal such that the functor $\LL_{\tau}: \P(\Sm_S) \rightarrow \P_{\tau}(\Sm_S)$ preserves $\kappa$-filtered colimits. 
Then  $Q: \SH^{\pre}_{\tau}(S) \rightleftarrows \SH_{\tau}(S)$ is computed on $\E \in \SH^{\pre}_{\tau}(S)$ as:
\begin{equation} \label{eqn:model-tau-spectrify}
Q(\E)_i \simeq s^{\circ \kappa}(\E)_i.
\end{equation}
\end{proposition}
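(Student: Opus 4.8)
The statement asserts that the spectrification functor $Q$ (left adjoint to the inclusion $u\colon\SH_\tau(S)\hookrightarrow\SH^{\pre}_\tau(S)$ of $\GG_m$-spectra as Cartesian sections) is computed levelwise by the $\kappa$-fold transfinite iterate of the endofunctor $s$ defined in \eqref{e:the-functor-s}, where $\kappa$ is any regular cardinal such that $\LL_\tau$ preserves $\kappa$-filtered colimits. The plan is to follow the same template as the proof of Proposition~\ref{prop:transfinite-motivic}: exhibit $s^{\circ\kappa}$ as an idempotent localization endofunctor whose local objects are exactly the $\GG_m$-spectra, and identify it as the left adjoint by checking it kills precisely the right class of maps. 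Concretely, first I would observe that $s$ comes equipped with a natural transformation $\id\to s$ built from the bonding maps: the adjunction unit gives $\E_i\to\Maps(\GG_m,\GG_m\wedge\E_i)$, and composing with the bonding map $\epsilon_i\colon\GG_m\wedge\E_i\to\E_{i+1}$ (reindexed appropriately, using that $\GG_m\wedge-$ and $\Omega_{\GG_m}=\Maps(\GG_m,-)$ are adjoint in $\SH^{S^1}_\tau(S)$) produces $\E_i\to\Maps(\GG_m,\E_{i+1})=s(\E)_i$. Iterating $\kappa$ times and taking the colimit along the transfinite chain gives the natural map $\E\to s^{\circ\kappa}(\E)$.

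Second, I would show that $s^{\circ\kappa}(\E)$ is a Cartesian section, i.e.\ its bonding maps are equivalences, hence it lies in the essential image of $u$. This is where the regular cardinal hypothesis enters: one needs $\Omega_{\GG_m}=\Maps_{\SH^{S^1}_\tau(S)}(\GG_m,-)$ to commute with $\kappa$-filtered colimits in $\SH^{S^1}_\tau(S)$. Since $\GG_m$ is built from $\LL_{\tau,\mot}$ applied to a smooth scheme (namely $(\PP^1,\infty)$, or rather $\Gm$) and is therefore $\kappa$-compact in $\H_\tau(S)$ and in $\SH^{S^1}_\tau(S)$ by Corollary~\ref{eqn:kappa-compactness} (whose proof only uses that $\LL_\tau$ preserves $\kappa$-filtered colimits), the functor $\Omega_{\GG_m}$ preserves $\kappa$-filtered colimits. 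Consequently the ``obvious'' formula $s^{\circ\kappa}(\E)_i\simeq\colim_{n<\kappa}\Omega^n_{\GG_m}\E_{i+n}$ holds, and a telescope/cofinality argument shows that shifting the index by one does not change this colimit, so the bonding maps of $s^{\circ\kappa}(\E)$ are equivalences. This is the technical heart of the argument.

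Third, I would verify that $s^{\circ\kappa}$ is a localization in the sense of \cite{htt}*{Proposition 5.2.7.4}: that the natural map $\E\to s^{\circ\kappa}\E$ is an equivalence whenever $\E$ is already a $\GG_m$-spectrum (clear, since then each bonding map is an equivalence, so $\id\to s$ is already an equivalence on $\E$, and transfinitely iterating an equivalence is an equivalence), and that $s^{\circ\kappa}$ sends the map $\E\to s^{\circ\kappa}\E$ to an equivalence for every $\E$ (idempotency of the transfinite iterate, using again $\kappa$-filteredness to see $s^{\circ\kappa}\circ s^{\circ\kappa}\simeq s^{\circ\kappa}$). Together with the identification of the local objects in the previous step, this exhibits $s^{\circ\kappa}$ as left adjoint to the inclusion of its local subcategory, which is $u\colon\SH_\tau(S)\hookrightarrow\SH^{\pre}_\tau(S)$; by uniqueness of adjoints this functor is $Q$, and the formula \eqref{eqn:model-tau-spectrify} follows.

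\textbf{Main obstacle.} The delicate point is the second step: one must be careful that $s$ is genuinely computed by $\Maps(\GG_m,-)$ in the presheaf-of-spectra model (not merely up to a further $\tau$-localization), so that the only transfinite ``noise'' comes from the interaction of $\Omega_{\GG_m}$ with colimits, and this is exactly what the hypothesis on $\kappa$ controls. Equivalently, one must confirm that colimits and the formation of $\Maps(\GG_m,-)$ inside $\SH^{S^1}_\tau(S)$ are already ``$\tau$-local,'' i.e.\ no extra sheafification is hidden in the iteration; this is precisely the content of $\LL_\tau$ preserving $\kappa$-filtered colimits, mirroring how the same hypothesis was used in Proposition~\ref{prop:transfinite-motivic}. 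Once this compatibility is in hand, the cofinality argument showing that the bonding maps of $s^{\circ\kappa}(\E)$ are equivalences is routine. I expect the write-up to largely parallel \cite{bootcamp}*{Theorem 4.27} and \cite{hoyois-cdh}, with $\NN$ replaced by $\kappa$ throughout.
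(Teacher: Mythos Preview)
Your proposal is correct and follows essentially the same approach as the paper. The paper's proof is a one-line citation to \cite{hoyois-cdh}*{Page 7}, noting only that the key input is the $\kappa$-compactness of $\GG_m$ in $\SH^{S^1}_\tau(S)$ (as a cofiber of $\kappa$-compact objects); your write-up simply unpacks what that reference contains, with the same pivotal observation that $\Omega_{\GG_m}$ preserves $\kappa$-filtered colimits because $\GG_m$ is $\kappa$-compact.
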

\begin{proof}  
The formula in~\eqref{eqn:model-tau-spectrify} is given in \cite{hoyois-cdh}*{Page 7} using the fact that $\GG_m$ is $\kappa$-compact in $\SH^{S^1}_{\tau}(S)$; 
it is the cofiber of the map of $\kappa$-compact objects, $S^0 \rightarrow \Sigma^{\infty}_{S^1}\AA^1 \setminus 0_+$.
\end{proof}

\begin{remark} 
If a site has $< \tau$ arrows, then the $\LL_{\tau}$ preserves $\tau$-filtered colimits as explained in \cite{borceux}*{Proposition 3.4.16}. So, for example, when $\tau = \et$, setting $\kappa = \aleph_1$ suffices. 
\end{remark}

\section{Appendix F: Complements on the rational \'etale motivic sphere spectrum} \label{et-q} In this section, we piece together known facts about the rational motivic sphere spectrum and describe $\sspt^{\et}_{S,\QQ}$, its rational counterpart. 

\subsubsection{} Recall that $\D_{\AA^1}(S;\QQ)$ denote the \emph{$\AA^1$-derived category} (see \cite{morel-book}*{Section 5.2} for a reference). As an $\infty$-category $\D_{\AA^1}(S;\QQ)$ is constructed in parallel as in $\SH(S)_{\QQ}.$ First we consider $\D^{S^1}_{\AA^1}(S;\QQ)$, which is the full subcategory of presheaves of complexes of rational vector spaces on $\Sm_S,$ $\P_{\D(\QQ)}(\Sm_S),$ spanned by objects which are $\AA^1$-invariant and satisfy Nisnevich descent. The $\infty$-category $\D^{S^1}_{\AA^1}(S;\QQ)$ is presentably symmetric monoidal and stable. Recall that there is an equivalence
\[
\Spt_{\QQ} \simeq \D(\QQ)
\]
between the $\infty$-category of rational spectra and the $\infty$-category of category of (unbounded) complexes of $\QQ$-vector spaces; this is a result of the basic fact that the rationalized sphere spectrum, $S^0_{\QQ}$, is equivalent to the rationalized Eilenberg-MacLane spectrum $\H\QQ$ and $\D(\QQ)$ is the $\infty$-category of $\H\QQ$-modules; see \cite{shipley} for details. Therefore we have an equivalence of $\infty$-categories:
\begin{equation} \label{s1-q}
\SH^{S^1}(S)_{\QQ} \simeq \D^{S^1}_{\AA^1}(S)_{\QQ}.
\end{equation}
In the same way as discussed in~\S\ref{sect:SHtau}, we can invert the object $\QQ(\PP^1, \infty)$ of $\D^{S^1}_{\AA^1}(S)_{\QQ}$ to obtain the $\infty$-category $\D_{\AA^1}(S)_{\QQ}$ or the \emph{rational $\AA^1$-derived category}. The equivalence~\eqref{s1-q} persists after $\otimes$-inverting $\Sigma^{\infty}_{S^1}(\PP^1, \infty)$ on the left and $\QQ(\PP^1, \infty)$ on the right; see the discussion preceding \cite{cisinski-deglise}*{(5.3.35.2)}:

\begin{proposition} \label{prop:q-sh-da1} For any qcqs base scheme $S$ there is a canonical equivalence:
\begin{equation}
\SH(S)_{\QQ} \simeq \D_{\AA^1}(S, \QQ).
\end{equation}
Hence there is an equivalence after \'etale localizations:
\begin{equation}
\SH_{\et}(S)_{\QQ} \simeq \D_{\et,\AA^1}(S, \QQ).
\end{equation}
\end{proposition}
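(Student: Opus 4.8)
\textbf{Proof plan for Proposition \ref{prop:q-sh-da1}.}

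The plan is to reduce the rational equivalence $\SH(S)_{\QQ}\simeq\D_{\AA^1}(S,\QQ)$ to its $S^1$-stable counterpart \eqref{s1-q} and then bootstrap through the $\GG_m$-stabilization, exactly as in the unstable-to-$T$-stable passage of \S\ref{sec:stab}; the \'etale statement then follows by localizing the Nisnevich one at the class $W_{\et}$ from Proposition~\ref{prop:loc}. First I would recall that $\D^{S^1}_{\AA^1}(S)_{\QQ}$ is constructed as the full subcategory of $\P_{\D(\QQ)}(\Sm_S)$ of $\AA^1$-invariant Nisnevich-descent objects, and that the identification $\Spt_{\QQ}\simeq\D(\QQ)$ is induced by the equivalence $\unit_{\QQ}\simeq\H\QQ$ of $\mathcal{E}_\infty$-ring spectra. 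This gives a symmetric monoidal equivalence of the defining presheaf categories $\P_{\Spt}(\Sm_S)_{\QQ}\simeq\P_{\D(\QQ)}(\Sm_S)$ compatible with the $\AA^1$-localization $\LL_{\AA^1}$ and the Nisnevich localization $\LL_{\Nis}$ (both are localizations at the same sets of morphisms on both sides, transported across the equivalence), whence \eqref{s1-q}.

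The next step is $\GG_m$-stabilization. By Robalo's universal property (Theorem~\ref{thm:robalo-stab1}, Corollary~\ref{thm:robalo-stab2}) and the presentation of $\SH_\tau(S)$ as the colimit \eqref{eqn:colim-gm} of $(\GG_m,1)\wedge-$, I would observe that both $\SH(S)_{\QQ}$ and $\D_{\AA^1}(S,\QQ)$ arise by inverting the corresponding Tate objects: on the motivic spectra side one inverts $\Sigma^{\infty}_{S^1}(\PP^1,\infty)$ after rationalization, on the $\AA^1$-derived side one inverts $\QQ(\PP^1,\infty)$, and under \eqref{s1-q} the former Tate object goes to the latter. Since $\otimes$-inverting a symmetric (indeed $3$-symmetric) object is a functor of presentably symmetric monoidal $\infty$-categories that is invariant under symmetric monoidal equivalences, the $S^1$-stable equivalence passes to the $\GG_m$-stabilizations, giving $\SH(S)_{\QQ}\simeq\D_{\AA^1}(S,\QQ)$; this is precisely the content cited from \cite{cisinski-deglise}*{(5.3.35.2)}, so I would invoke that reference for the verification that rationalization commutes with the relevant localizations in the needed generality (qcqs $S$).

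For the \'etale assertion, I would use Proposition~\ref{prop:loc}: $\SH_{\et}(S)$ is the localization of $\SH(S)$ at $\Sigma^{\infty}_{T,+}W_{\et}$, and likewise $\D_{\et,\AA^1}(S,\QQ)$ is the localization of $\D_{\AA^1}(S,\QQ)$ at the image of the same class of $\et$-hypercover maps (this is how the \'etale $\AA^1$-derived category is defined). Since the equivalence $\SH(S)_{\QQ}\simeq\D_{\AA^1}(S,\QQ)$ is symmetric monoidal and sends $\Sigma^{\infty}_{T,+}\LL_{\mot,\Nis}(U\to X)$ to the corresponding generator on the derived side, it carries one saturated class of morphisms to the other, hence descends to an equivalence of the localizations $\SH_{\et}(S)_{\QQ}\simeq\D_{\et,\AA^1}(S,\QQ)$. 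The main obstacle I anticipate is purely bookkeeping: checking that rationalization, $\AA^1$-localization, Nisnevich (and then \'etale) hyperlocalization, and $\GG_m$-stabilization can be performed in any order and are transported correctly across the Dold--Kan-type identification $\Spt_{\QQ}\simeq\D(\QQ)$; all of this is standard and recorded in \cite{cisinski-deglise}, so the proof is essentially a citation assembled from the pieces above.
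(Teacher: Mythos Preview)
Your proposal is correct and follows essentially the same approach as the paper: the paper's argument is the discussion immediately preceding the proposition, which establishes the $S^1$-stable equivalence \eqref{s1-q} via $\Spt_{\QQ}\simeq\D(\QQ)$, observes that it persists after $\otimes$-inverting the matching Tate objects (citing \cite{cisinski-deglise}*{(5.3.35.2)}), and then deduces the \'etale statement formally. Your use of Proposition~\ref{prop:loc} to make the ``hence'' precise and your explicit invocation of Robalo's universal property for the stabilization step merely flesh out what the paper leaves as a citation and a one-line remark.
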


\subsubsection{} Any story on rational motivic homotopy theory starts with the decomposition of $\SH(S)_{\QQ}.$ According to Morel \cite{morel-trieste}*{Section 6.1}, there is an involution on the sphere spectrum, i.e. an endomorphism:
\[
\mathrm{sw}: \sspt_S \rightarrow \sspt_S
\]
such that $\mathrm{sw}^2 = \id.$ This gives a decomposition of the motivic sphere spectrum with $2$-inverted into $+$ and $-1$-eigenspectra:
\[
\sspt_S[\frac{1}{2}] \simeq \sspt^+_S[\frac{1}{2}] \oplus \sspt^-_S[\frac{1}{2}],
\]
where $\tau$ acts on $\sspt^+_S[\frac{1}{2}]$ (resp. $\sspt^-_S[\frac{1}{2}]$) by $+1$ (resp. $-1$). This decomposition of the motivic sphere spectrum then induces a decomposition:
\[
\SH(S)[\frac{1}{2}] \simeq \SH(S)[\frac{1}{2}]^- \times \SH(S)[\frac{1}{2}]^+,
\]
and this a decomposition:
\[
\SH(S)_{\QQ} \simeq \SH(S)_{\QQ}^- \times \SH(S)_{\QQ}^+.
\]
Hence we also obtain a decomposition upon \'etale localization:
\[
\SH_{\et}(S)_{\QQ} \simeq \SH_{\et}(S)_{\QQ}^- \times \SH_{\et}(S)_{\QQ}^+.
\]

\subsubsection{} The main result of rational \'etale motives is:

\begin{theorem} [Cisinski-D\'eglise] For any Noetherian, geometrically unibranch scheme $S$ of finite dimension there is an equivalence of $\infty$-categories:
\[
\DM(S)_{\QQ} \simeq  \SH_{\et}(S)_{\QQ} \simeq \SH(S)_{\QQ}^+.
\]
In particular, $\SH_{\et}(S)_{\QQ}^-$ vanishes.
\end{theorem}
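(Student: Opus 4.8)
The plan is to deduce this statement from the results already assembled in the excerpt, citing the literature of Cisinski--D\'eglise for the non-trivial inputs. First I would recall the rational decomposition discussed just above the statement: for $S$ Noetherian of finite dimension one has $\SH(S)_{\QQ}\simeq\SH(S)_{\QQ}^{+}\times\SH(S)_{\QQ}^{-}$, where the factors are cut out by the idempotents $(1\pm\tau)/2$ associated to the switch map $\tau$ on $\sspt_{S}\wedge\sspt_{S}$. The same decomposition passes to the \'etale-local category, giving $\SH_{\et}(S)_{\QQ}\simeq\SH_{\et}(S)_{\QQ}^{+}\times\SH_{\et}(S)_{\QQ}^{-}$, simply because $\pi^{*}$ is monoidal and preserves the idempotents, and $\pi_{*}$ is fully faithful onto its image (Proposition~\ref{prop:loc}). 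So the content is twofold: (a) identify $\SH_{\et}(S)_{\QQ}^{+}$ with $\DM(S)_{\QQ}$, and (b) show that $\SH_{\et}(S)_{\QQ}^{-}$ vanishes, which is the same as saying $\sspt_{\et,S,\QQ}^{-}\simeq 0$.

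For (a), the reference is \cite{cisinski-deglise}: over a Noetherian geometrically unibranch base of finite dimension one has $\SH(S)_{\QQ}^{+}\simeq\DM(S,\QQ)$ (this is \cite{cisinski-deglise}*{Theorem 16.2.13} combined with \cite{cisinski-deglise}*{\S14.2}, which already appears in the proof of Lemma~\ref{lem:rational} in the excerpt). Since $\DM(S,\QQ)$ already satisfies \'etale hyperdescent rationally — rational \'etale and Nisnevich motivic cohomology agree, which is built into the statement $\SH_{\et}(S)_{\QQ}\simeq\D_{\et,\AA^{1}}(S,\QQ)$ of Proposition~\ref{prop:q-sh-da1} together with rigidity — the \'etale localization functor $\pi^{*}$ restricted to the $+$-part is an equivalence onto $\SH_{\et}(S)_{\QQ}^{+}$, and hence $\SH_{\et}(S)_{\QQ}^{+}\simeq\SH(S)_{\QQ}^{+}\simeq\DM(S,\QQ)_{\QQ}$. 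Concretely I would argue that the unit $\E\to\pi_{*}\pi^{*}\E$ is an equivalence for $\E$ a rational oriented (equivalently, $+$) spectrum: this is \cite{cisinski-deglise}*{Corollary 14.2.16, Theorem 14.3.4}, already invoked for $\MGL_{S,\QQ}$ in the proof of Theorem~\ref{thm:integralresult}. The same argument applied to all of $\SH(S)_{\QQ}^{+}$ (which is generated under colimits by rational Tate twists of suspension spectra, all of which are $+$ and oriented after tensoring with $\M\QQ$) gives the equivalence of categories.

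For (b), I would show $\SH(S)_{\QQ}^{-}\to\SH_{\et}(S)_{\QQ}^{-}$ is the zero functor, equivalently that $\pi^{*}\sspt_{S,\QQ}^{-}\simeq 0$ in $\SH_{\et}(S)_{\QQ}$. The key input is that on the $-$-part the motivic Hopf map $\eta$ acts invertibly (Morel; see the discussion in the excerpt's Lemma~\ref{lem:rational}, where this is used), while \'etale-locally $\eta$ becomes nilpotent — indeed, after rationalization and \'etale localization $\eta$ is $2$-divisible and hence, by rigidity-type vanishing over strictly Henselian points where $-1$ is a square, acts by zero. Over a separably closed field $\eta=0$ rationally in $\SH_{\et}$ because $\sspt_{\et,\QQ}$ there is the rational Eilenberg--MacLane spectrum (rigidity, Theorem~\ref{thm:rigidity}), which is oriented; by Lemma~\ref{lem:conserve} applied to $\SH_{\et,\QQ}$ (which has the six functor formalism and satisfies continuity and localization) the vanishing of $\eta$ can be checked on field points, so $\eta=0$ in $\SH_{\et}(S)_{\QQ}$; but $\eta$ is invertible on $\SH_{\et}(S)_{\QQ}^{-}$, forcing that category to be zero. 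I expect the main obstacle to be the precise bookkeeping in step (b): making rigorous that ``$\eta$ acts invertibly on the $-$-part'' and ``$\eta$ is zero \'etale-locally'' are compatible with the (derived) idempotent splitting, i.e. that the $-$-summand of a zero object is zero — this is formal, but one must be careful that the splitting $\SH_{\et}(S)_{\QQ}\simeq\SH_{\et}(S)_{\QQ}^{+}\times\SH_{\et}(S)_{\QQ}^{-}$ is induced from the one on $\SH(S)_{\QQ}$ via the monoidal, colimit-preserving functor $\pi^{*}$, and that $\pi^{*}$ carries the $-$-idempotent to the $-$-idempotent, so that $\pi^{*}\sspt^{-}$ generates $\SH_{\et}(S)_{\QQ}^{-}$ and its vanishing kills the whole factor.
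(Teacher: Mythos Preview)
Your overall strategy---splitting into the $+$/$-$ decomposition, showing the $+$-part is unchanged by \'etale localization (identifying it with $\DM(S)_{\QQ}$), and showing the $-$-part dies---is sound and is essentially equivalent to what the paper does, but the routes differ. The paper does not argue via the decomposition directly: it passes through \emph{Beilinson motives} as an intermediary, citing \cite{cisinski-deglise}*{Theorem 16.2.18} for $\D_{\et,\AA^{1}}(S,\QQ)\simeq\DM_{B}(S)$ (any Noetherian finite-dimensional $S$), then \cite{cisinski-deglise}*{Theorem 16.1.4} for $\DM_{B}(S)\simeq\DM(S)_{\QQ}$ (this is where geometrically unibranch enters), combined with Proposition~\ref{prop:q-sh-da1}. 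For the vanishing of the $-$-part the paper simply cites \cite{cisinski-deglise}*{Lemma 16.2.19}.

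Your part (a) is fine; it amounts to the same Cisinski--D\'eglise inputs repackaged. Your part (b), however, has a genuine gap. The citation of Theorem~\ref{thm:rigidity} is wrong: that result is for coefficient rings of characteristic $\ell$ (torsion), not rational coefficients, so it does not tell you that $\sspt_{\et,\QQ}$ over a separably closed field is the rational Eilenberg--MacLane spectrum. Moreover, Lemma~\ref{lem:conserve} reduces to residue fields $k(x)$, which are not separably closed, so you would still need a further step. The clean argument (which is the content of \cite{cisinski-deglise}*{Lemma 16.2.19}) avoids both $\eta$ and point-reduction entirely: the \'etale cover $S[\sqrt{-1}]\to S$ shows that $-1$ is a square \'etale-locally, hence $\tau=\langle -1\rangle=1$ in $\pi_{0,0}$ of the \'etale-local rational sphere, so the idempotent $(1-\tau)/2$ is zero and $\sspt_{\et,\QQ}^{-}\simeq 0$ directly. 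Your $\eta$-argument can be repaired along these lines, but as written it relies on an inapplicable rigidity statement.
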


\begin{proof} This is a matter of parsing definitions. In \cite{cisinski-deglise}*{Theorem 16.2.18} it is proved that the rational \'etale $\AA^1$-derived category is equivalent to ``Beilinson motives" for any Noetherian scheme of finite dimension. Under \'etale localization on both categories, the equivalence of Proposition~\ref{prop:q-sh-da1} tells us that $\SH_{\et}(\QQ)$ is naturally equivalent to the rational \'etale $\AA^1$-derived category. On the other hand in \cite{cisinski-deglise}*{Theorem 16.1.4}, it is proved that the category ``Beilinson motives" is equivalent to $\DM(S)_{\QQ}$ supposing further that $S$ is geometrically unibranch. The last statement follows from \cite{cisinski-deglise}*{Lemma 16.2.19} where it is proved that $\sspt_{\QQ}^-$ vanishes in $\SH_{\et}(\QQ)$ and thus the negative part of $\SH_{\et}(S)_{\QQ}$ vanishes.

\end{proof}

\begin{bibdiv}
\begin{biblist}

 \bib{SGA4}{article}{
  TITLE = {Th\'eorie des topos et cohomologie \'etale des sch\'emas. {T}ome 3},
    SERIES = {Lecture Notes in Mathematics, Vol. 305},
      NOTE = {S\'eminaire de G\'eom\'etrie Alg\'ebrique du Bois-Marie 1963--1964 (SGA 4),
              Dirig\'e par M. Artin, A. Grothendieck et J. L. Verdier. Avec la collaboration de P. Deligne et B. Saint-Donat},
 PUBLISHER = {Springer-Verlag, Berlin-New York},
      YEAR = {1973},
     PAGES = {vi+640},
   MRCLASS = {14-06},
  MRNUMBER = {0354654},
}     

\bib{bootcamp}{article}{
    AUTHOR = {Antieau, Benjamin}
    AUTHOR = {Elmanto, Elden},
     TITLE = {A primer for unstable motivic homotopy theory},
 BOOKTITLE = {Surveys on recent developments in algebraic geometry},
    SERIES = {Proc. Sympos. Pure Math.},
    VOLUME = {95},
     PAGES = {305--370},
 PUBLISHER = {Amer. Math. Soc., Providence, RI},
      YEAR = {2017},
   MRCLASS = {14F42 (55R50)},
  MRNUMBER = {3727503},
}

\bib{ayoub}{article}{
    AUTHOR = {Ayoub, J.},
     TITLE = {Les six op\'erations de {G}rothendieck et le formalisme des cycles \'evanescents dans le monde motivique. {I}},
   JOURNAL = {Ast\'erisque},
  FJOURNAL = {Ast\'erisque},
    NUMBER = {314},
      YEAR = {2007},
     PAGES = {x+466 pp. (2008)},
      ISSN = {0303-1179},
      ISBN = {978-2-85629-244-0},
   MRCLASS = {14F20 (14C25 14F42 18A40 18F10 18F20 18G55 19E15)},
  MRNUMBER = {2423375},
MRREVIEWER = {Christian Haesemeyer},
}
\bib{ayoub-icm}{article}{
author = {Ayoub, J.}
title = {A guide to (\'{e}tale) motivic sheaves}
journal = {Seoul ICM Proceedings 2014}
Eprint = {http://user.math.uzh.ch/ayoub/PDF-Files/ICM2014.pdf}
}

\bib{ayoub-etale}{article}{
    AUTHOR = {Ayoub, J.},
     TITLE = {La r\'ealisation \'etale et les op\'erations de {G}rothendieck},
   JOURNAL = {Ann. Sci. \'Ec. Norm. Sup\'er. (4)},
  FJOURNAL = {Annales Scientifiques de l'\'Ecole Normale Sup\'erieure. Quatri\`eme
              S\'erie},
    VOLUME = {47},
      YEAR = {2014},
    NUMBER = {1},
     PAGES = {1--145},
      ISSN = {0012-9593},
   MRCLASS = {14C15 (14F05 14F20 14F42 18G55)},
  MRNUMBER = {3205601},
MRREVIEWER = {C. A. M. Peters},
       DOI = {10.24033/asens.2210},
       URL = {http://dx.doi.org/10.24033/asens.2210},
}

%

\bib{bachmann}{article}{
    AUTHOR = {Bachmann, Tom},
     TITLE = {Motivic and real \'{e}tale stable homotopy theory},
   JOURNAL = {Compos. Math.},
  FJOURNAL = {Compositio Mathematica},
    VOLUME = {154},
      YEAR = {2018},
    NUMBER = {5},
     PAGES = {883--917},
      ISSN = {0010-437X},
   MRCLASS = {14F42},
  MRNUMBER = {3781990},
       DOI = {10.1112/S0010437X17007710},
       URL = {https://doi.org/10.1112/S0010437X17007710},
}

\bib{beo}{article}{
Author = {Bachmann, Tom},
Author = {Elmanto, Elden},
Author = {\O stv\ae r Paul Arne},
Title = {On \'etale motivic spectra and Voevodsky's convergence conjecture},
Year = {2020},
Eprint = {arXiv:2003.04006},
}
\bib{bachmann-hoyois}{article}{
Author = {Bachmann, T.},
Author = {Hoyois, M.}
    TITLE = {Norms in motivic homotopy theory},
   JOURNAL = {Ast\'{e}risque},
  FJOURNAL = {Ast\'{e}risque},
    NUMBER = {425},
      YEAR = {2021},
     PAGES = {207},
      ISSN = {0303-1179},
      ISBN = {978-2-85629-939-5},
   MRCLASS = {14F42 (19E15)},
  MRNUMBER = {4288071},
       DOI = {10.24033/ast},
       URL = {https://doi.org/10.24033/ast},
}

\bib{boardman}{article}{
    AUTHOR = {Boardman, J. M.},
     TITLE = {Conditionally convergent spectral sequences},
 BOOKTITLE = {Homotopy invariant algebraic structures ({B}altimore, {MD}, 1998)},
    SERIES = {Contemp. Math.},
    VOLUME = {239},
     PAGES = {49--84},
 PUBLISHER = {Amer. Math. Soc., Providence, RI},
      YEAR = {1999},
   MRCLASS = {55T05 (18A30 18G40)},
  MRNUMBER = {1718076},
MRREVIEWER = {John McCleary},
       DOI = {10.1090/conm/239/03597},
       URL = {http://dx.doi.org/10.1090/conm/239/03597},
}

\bib{bousfield}{article}{
AUTHOR = {Bousfield, A. K.},
     TITLE = {The localization of spectra with respect to homology},
   JOURNAL = {Topology},
  FJOURNAL = {Topology. An International Journal of Mathematics},
    VOLUME = {18},
      YEAR = {1979},
    NUMBER = {4},
     PAGES = {257--281},
      ISSN = {0040-9383},
   MRCLASS = {55N20 (55N15 55P60)},
  MRNUMBER = {551009},
MRREVIEWER = {Willi Meier},
       DOI = {10.1016/0040-9383(79)90018-1},
       URL = {http://dx.doi.org/10.1016/0040-9383(79)90018-1},
}
\bib{borceux}{article}{
    AUTHOR = {Borceux, Francis},
     TITLE = {Handbook of categorical algebra. 3},
    SERIES = {Encyclopedia of Mathematics and its Applications},
    VOLUME = {52},
      NOTE = {Categories of sheaves},
 PUBLISHER = {Cambridge University Press, Cambridge},
      YEAR = {1994},
     PAGES = {xviii+522},
      ISBN = {0-521-44180-3},
   MRCLASS = {18-02 (18B25 18F20)},
  MRNUMBER = {1315049},
MRREVIEWER = {Martin Hyland},
}

\bib{cisinski}{article}{
     AUTHOR = {Cisinski, D.-C.},
     TITLE = {Descente par \'eclatements en {$K$}-th\'eorie invariante par homotopie},
   JOURNAL = {Ann. of Math. (2)},
  FJOURNAL = {Annals of Mathematics. Second Series},
    VOLUME = {177},
      YEAR = {2013},
    NUMBER = {2},
     PAGES = {425--448},
      ISSN = {0003-486X},
   MRCLASS = {19E08 (14C35)},
  MRNUMBER = {3010804},
MRREVIEWER = {Claudio Pedrini},
       DOI = {10.4007/annals.2013.177.2.2},
       URL = {http://dx.doi.org/10.4007/annals.2013.177.2.2},
 }
 
\bib{cisinski-deglise}{book}{
    AUTHOR = {Cisinski, D.-C.}
    AUTHOR = {D\'eglise, Fr\'ed\'eric},
     TITLE = {Triangulated categories of mixed motives},
    SERIES = {Springer Monographs in Mathematics},
 PUBLISHER = {Springer, Cham},
      YEAR = {[2019] \copyright 2019},
     PAGES = {xlii+406},
      ISBN = {978-3-030-33241-9; 978-3-030-33242-6},
   MRCLASS = {14F42 (14C15 14C35 18G80 19D55)},
  MRNUMBER = {3971240},
MRREVIEWER = {Igor A. Rapinchuk},
       DOI = {10.1007/978-3-030-33242-6},
       URL = {https://doi.org/10.1007/978-3-030-33242-6},
}

\bib{integral-mixed}{article}{
    AUTHOR = {Cisinski, D.-C.}
    AUTHOR = {D\'eglise, Fr\'ed\'eric},
     TITLE = {Integral mixed motives in equal characteristic},
   JOURNAL = {Doc. Math.},
  FJOURNAL = {Documenta Mathematica},
      YEAR = {2015},
    NUMBER = {Extra vol.: Alexander S. Merkurjev's sixtieth birthday},
     PAGES = {145--194},
      ISSN = {1431-0635},
   MRCLASS = {14C15 (14F42 14F43 18E30)},
  MRNUMBER = {3404379},
MRREVIEWER = {Satoshi Mochizuki},
}

\bib{etalemotives}{article}{
    AUTHOR = {Cisinski, D.-C.}
    AUTHOR = {D\'eglise, Fr\'ed\'eric},
     TITLE = {\'{E}tale motives},
   JOURNAL = {Compos. Math.},
  FJOURNAL = {Compositio Mathematica},
    VOLUME = {152},
      YEAR = {2016},
    NUMBER = {3},
     PAGES = {556--666},
      ISSN = {0010-437X},
   MRCLASS = {14F20 (14F42)},
  MRNUMBER = {3477640},
MRREVIEWER = {Matthias Wendt},
       DOI = {10.1112/S0010437X15007459},
       URL = {http://dx.doi.org/10.1112/S0010437X15007459},
}

\bib{clausen-mathew}{article}{
Author = {Clausen, Dustin}
Author = {Mathew, Akhil},
Title = {Hyperdescent and etale K-theory},
Year = {2019},
Eprint = {arXiv:1905.06611},
Note = {To appear in Invent. Math.}
}


%

\bib{deglise-bivariant}{article}{
    AUTHOR = {D\'{e}glise, Fr\'{e}d\'{e}ric},
     TITLE = {Bivariant theories in motivic stable homotopy},
   JOURNAL = {Doc. Math.},
  FJOURNAL = {Documenta Mathematica},
    VOLUME = {23},
      YEAR = {2018},
     PAGES = {997--1076},
      ISSN = {1431-0635},
   MRCLASS = {14F42 (14C40 14F20 19E20)},
  MRNUMBER = {3874952},
}

\bib{dugger-coherence}{article}{
    AUTHOR = {Dugger, Daniel},
     TITLE = {Coherence for invertible objects and multigraded homotopy rings},
   JOURNAL = {Algebr. Geom. Topol.},
  FJOURNAL = {Algebraic \& Geometric Topology},
    VOLUME = {14},
      YEAR = {2014},
    NUMBER = {2},
     PAGES = {1055--1106},
      ISSN = {1472-2747},
   MRCLASS = {18D10},
  MRNUMBER = {3180827},
MRREVIEWER = {Miguel Angel Garc\'\i a-Mu\~noz},
       DOI = {10.2140/agt.2014.14.1055},
       URL = {http://dx.doi.org/10.2140/agt.2014.14.1055},
}

\bib{MR2034012}{article}{
   author={Dugger, D.},
   author={Hollander, S.},
   author={Isaksen, D. C.},
   title={Hypercovers and simplicial presheaves},
   journal={Math. Proc. Cambridge Philos. Soc.},
   volume={136},
   date={2004},
   number={1},
   pages={9--51},
   issn={0305-0041},
   review={\MR{2034012}},
   doi={10.1017/S0305004103007175},
}

\bib{dugger-isaksen.cell}{article}{
   author={Dugger, D.},
   author={Isaksen, D. C.},  
     TITLE = {Motivic cell structures},
   JOURNAL = {Algebr. Geom. Topol.},
  FJOURNAL = {Algebraic \& Geometric Topology},
    VOLUME = {5},
      YEAR = {2005},
     PAGES = {615--652},
      ISSN = {1472-2747},
   MRCLASS = {55U35 (14F42)},
  MRNUMBER = {2153114 (2007c:55015)},
MRREVIEWER = {Oliver R{\"o}ndigs},
       DOI = {10.2140/agt.2005.5.615},
       URL = {http://dx.doi.org/10.2140/agt.2005.5.615},
}

\bib{MR2029171}{article}{
   author={Dundas, B. I.},
   author={R\"ondigs, O.},
   author={{\O}stv{\ae}r, P. A.},
   title={Motivic functors},
   journal={Doc. Math.},
   volume={8},
   date={2003},
   pages={489--525},
   issn={1431-0635},
   review={\MR{2029171}},
}

\bib{eventuallysurjects}{article}{
    AUTHOR = {Dwyer, W.}
    AUTHOR = {Friedlander, E. M.}
    AUTHOR = {Snaith, V.}
    AUTHOR = {Thomason, R. W.},
     TITLE = {Algebraic {$K$}-theory eventually surjects onto topological {$K$}-theory},
   JOURNAL = {Invent. Math.},
  FJOURNAL = {Inventiones Mathematicae},
    VOLUME = {66},
      YEAR = {1982},
    NUMBER = {3},
     PAGES = {481--491},
      ISSN = {0020-9910},
   MRCLASS = {14F15 (14F20 18F25 55N15)},
  MRNUMBER = {662604},
       DOI = {10.1007/BF01389225},
       URL = {http://dx.doi.org/10.1007/BF01389225},
}

\bib{e-kolderup}{article}{
AUTHOR = {Elmanto, E.}
AUTHOR = {Kolderup, H. K.}
     TITLE = {On modules over motivic ring spectra},
   JOURNAL = {Ann. K-Theory},
  FJOURNAL = {Annals of K-Theory},
    VOLUME = {5},
      YEAR = {2020},
    NUMBER = {2},
     PAGES = {327--355},
      ISSN = {2379-1683},
   MRCLASS = {14F42 (14F40 19E15 55P42 55P43 55U35)},
  MRNUMBER = {4113773},
       DOI = {10.2140/akt.2020.5.327},
       URL = {https://doi.org/10.2140/akt.2020.5.327},
}

\bib{gepner-snaith}{article}{
    AUTHOR = {Gepner, D.}
    AUTHOR = {Snaith, V.},
     TITLE = {On the motivic spectra representing algebraic cobordism and algebraic {$K$}-theory},
   JOURNAL = {Doc. Math.},
  FJOURNAL = {Documenta Mathematica},
    VOLUME = {14},
      YEAR = {2009},
     PAGES = {359--396},
      ISSN = {1431-0635},
   MRCLASS = {55N15 (14F42 55N22)},
  MRNUMBER = {2540697},
MRREVIEWER = {Keith Peter Johnson},
}
%

\bib{operadsslices}{article}{
    AUTHOR = {Guti\'errez, J. J.}
    AUTHOR = {R\"ondigs, O.}
    AUTHOR = {Spitzweck, M.}
    AUTHOR = {{\O}stv{\ae}r, P. A.},
     TITLE = {Motivic slices and coloured operads},
   JOURNAL = {J. Topol.},
  FJOURNAL = {Journal of Topology},
    VOLUME = {5},
      YEAR = {2012},
    NUMBER = {3},
     PAGES = {727--755},
      ISSN = {1753-8416},
   MRCLASS = {14F42 (18D50 55P43)},
  MRNUMBER = {2971612},
MRREVIEWER = {Daniel C. Isaksen},
       DOI = {10.1112/jtopol/jts015},
       URL = {http://dx.doi.org/10.1112/jtopol/jts015},
}

\bib{horn-hae}{article}{
author = {Haesemayer, C.}
author = {Hornbostel, J.}
  TITLE = {Motives and etale motives with finite coefficients},
   JOURNAL = {$K$-Theory},
  FJOURNAL = {$K$-Theory. An Interdisciplinary Journal for the Development, Application, and Influence of $K$-Theory in the Mathematical Sciences},
    VOLUME = {34},
      YEAR = {2005},
    NUMBER = {3},
     PAGES = {195--207},
      ISSN = {0920-3036},
   MRCLASS = {19E15 (14F20 14F42)},
  MRNUMBER = {2182375},
MRREVIEWER = {Oliver R\~A\P ndigs},
       DOI = {10.1007/s10977-005-1563-6},
       URL = {http://dx.doi.org/10.1007/s10977-005-1563-6},
}

\bib{heller}{article}{
    AUTHOR = {Heller, Jeremiah},
     TITLE = {Motivic strict ring spectra representing semi-topological cohomology theories},
   JOURNAL = {Homology Homotopy Appl.},
  FJOURNAL = {Homology, Homotopy and Applications},
    VOLUME = {17},
      YEAR = {2015},
    NUMBER = {2},
     PAGES = {107--135},
      ISSN = {1532-0073},
   MRCLASS = {14F42 (14C35 19E15)},
  MRNUMBER = {3421465},
MRREVIEWER = {Satoshi Mochizuki},
       DOI = {10.4310/HHA.2015.v17.n2.a7},
       URL = {http://dx.doi.org/10.4310/HHA.2015.v17.n2.a7},
}

\bib{horn-yag}{article}{
    AUTHOR = {Hornbostel, Jens}
    AUTHOR = {Yagunov, Serge},
     TITLE = {Rigidity for {H}enselian local rings and {$\Bbb A^1$}-representable theories},
   JOURNAL = {Math. Z.},
  FJOURNAL = {Mathematische Zeitschrift},
    VOLUME = {255},
      YEAR = {2007},
    NUMBER = {2},
     PAGES = {437--449},
      ISSN = {0025-5874},
   MRCLASS = {14F43 (14F35)},
  MRNUMBER = {2262740},
MRREVIEWER = {Yuichiro Takeda},
       DOI = {10.1007/s00209-006-0049-4},
       URL = {http://dx.doi.org/10.1007/s00209-006-0049-4},
}

%
\bib{hoyois-cdh}{article}{
Author = {Hoyois, M.},
     TITLE = {Cdh descent in equivariant homotopy {$K$}-theory},
   JOURNAL = {Doc. Math.},
  FJOURNAL = {Documenta Mathematica},
    VOLUME = {25},
      YEAR = {2020},
     PAGES = {457--482},
      ISSN = {1431-0635},
   MRCLASS = {14F42 (14D23 19D35)},
  MRNUMBER = {4124487},
       DOI = {10.1007/s00031-019-09528-5},
       URL = {https://doi.org/10.1007/s00031-019-09528-5}
}
 
 \bib{hopkinsmorelhoyois}{article}{
     AUTHOR = {Hoyois, M.},
     TITLE = {From algebraic cobordism to motivic cohomology},
   JOURNAL = {J. Reine Angew. Math.},
  FJOURNAL = {Journal f\"ur die Reine und Angewandte Mathematik. [Crelle's
              Journal]},
    VOLUME = {702},
      YEAR = {2015},
     PAGES = {173--226},
      ISSN = {0075-4102},
   MRCLASS = {14F43 (14F42 55N22 55U35)},
  MRNUMBER = {3341470},
MRREVIEWER = {Christophe Cazanave},
       DOI = {10.1515/crelle-2013-0038},
       URL = {http://dx.doi.org/10.1515/crelle-2013-0038},
}

\bib{hoyois-kelly-ostvaer}{article}{
author = {Hoyois, M.},
author = {Kelly, S.},
author = {{\O}stv{\ae}r, P. A.}
     TITLE = {The motivic {S}teenrod algebra in positive characteristic},
   JOURNAL = {J. Eur. Math. Soc. (JEMS)},
  FJOURNAL = {Journal of the European Mathematical Society (JEMS)},
    VOLUME = {19},
      YEAR = {2017},
    NUMBER = {12},
     PAGES = {3813--3849},
      ISSN = {1435-9855},
   MRCLASS = {14F42 (19E15)},
  MRNUMBER = {3730515},
MRREVIEWER = {Ramdorai Sujatha},
       DOI = {10.4171/JEMS/754},
       URL = {https://doi.org/10.4171/JEMS/754},
}

\bib{koras-russell}{article}{
    AUTHOR = {Hoyois, Marc}
    AUTHOR = {Krishna, Amalendu}
    AUTHOR = {\O stv\ae r, Paul Arne},
     TITLE = {{$\AA^1$}-contractibility of {K}oras-{R}ussell threefolds},
   JOURNAL = {Algebr. Geom.},
  FJOURNAL = {Algebraic Geometry},
    VOLUME = {3},
      YEAR = {2016},
    NUMBER = {4},
     PAGES = {407--423},
      ISSN = {2214-2584},
   MRCLASS = {14F42 (19E15 55P42)},
  MRNUMBER = {3549169},
MRREVIEWER = {Jeremiah Ben Heller},
       DOI = {10.14231/AG-2016-019},
       URL = {http://dx.doi.org/10.14231/AG-2016-019},
}

\bib{hovey}{article}{
    AUTHOR = {Hovey, Mark},
     TITLE = {Spectra and symmetric spectra in general model categories},
   JOURNAL = {J. Pure Appl. Algebra},
  FJOURNAL = {Journal of Pure and Applied Algebra},
    VOLUME = {165},
      YEAR = {2001},
    NUMBER = {1},
     PAGES = {63--127},
      ISSN = {0022-4049},
   MRCLASS = {55P42 (14F35 14F42 55P43 55U35)},
  MRNUMBER = {1860878},
MRREVIEWER = {J. P. C. Greenlees},
       URL = {https://doi.org/10.1016/S0022-4049(00)00172-9},
}
 
 \bib{jardine-spre}{article}{
    AUTHOR = {Jardine, J. F.},
     TITLE = {Simplicial presheaves},
   JOURNAL = {J. Pure Appl. Algebra},
  FJOURNAL = {Journal of Pure and Applied Algebra},
    VOLUME = {47},
      YEAR = {1987},
    NUMBER = {1},
     PAGES = {35--87},
      ISSN = {0022-4049},
   MRCLASS = {18F10 (18B25 18F20 18F25 19D99)},
  MRNUMBER = {906403},
MRREVIEWER = {V. P. Snaith},
       URL = {https://doi.org/10.1016/0022-4049(87)90100-9},
}

 \bib{jardine-symspec}{article}{
    AUTHOR = {Jardine, J. F.},
     TITLE = {Motivic symmetric spectra},
   JOURNAL = {Doc. Math.},
  FJOURNAL = {Documenta Mathematica},
    VOLUME = {5},
      YEAR = {2000},
     PAGES = {445--552},
      ISSN = {1431-0635},
   MRCLASS = {55P42 (14F42 55U35)},
  MRNUMBER = {1787949},
MRREVIEWER = {Jianqiang Zhao},
}

 \bib{jardine}{book}{
     AUTHOR = {Jardine, J. F.},
     TITLE = {Generalized etale cohomology theories},
    SERIES = {Modern Birkh\"auser Classics},
      NOTE = {Reprint of the 1997 edition [MR1437604]},
 PUBLISHER = {Birkh\"auser/Springer Basel AG, Basel},
      YEAR = {2010},
     PAGES = {x+317},
      ISBN = {978-3-0348-0065-5; 978-3-0348-0066-2},
   MRCLASS = {01A75},
  MRNUMBER = {3014965},
}
\bib{adeel}{article}{
Author = {Khan, A.},
Title = {Motivic homotopy theory in derived algebraic geometry},
Year = {2016},
Eprint = {https://www.preschema.com/papers/thesis.pdf},
}
\bib{kato-saito}{article}{
    AUTHOR = {Kato, Kazuya}
    AUTHOR = {Saito, Shuji},
     TITLE = {Global class field theory of arithmetic schemes},
 BOOKTITLE = {Applications of algebraic {$K$}-theory to algebraic geometry
              and number theory, {P}art {I}, {II} ({B}oulder, {C}olo.,
              1983)},
    SERIES = {Contemp. Math.},
    VOLUME = {55},
     PAGES = {255--331},
 PUBLISHER = {Amer. Math. Soc., Providence, RI},
      YEAR = {1986},
   MRCLASS = {11G45 (14C35 18F25 19D45 19F05)},
  MRNUMBER = {862639},
MRREVIEWER = {Gerd Faltings},
       DOI = {10.1090/conm/055.1/862639},
       URL = {https://doi.org/10.1090/conm/055.1/862639},
}
\bib{levine-motvsk}{article}{
author = {Levine, M.}
title = {K-Theory and motivic cohomology of schemes},
Eprint = {http://www.math.uiuc.edu/K-theory/336/{\ell}ot.pdf}
}

\bib{levine-bott}{article}{
author = {Levine, M.}
     TITLE = {Inverting the motivic {B}ott element},
   JOURNAL = {$K$-Theory},
  FJOURNAL = {$K$-Theory. An Interdisciplinary Journal for the Development,
              Application, and Influence of $K$-Theory in the Mathematical Sciences},
    VOLUME = {19},
      YEAR = {2000},
    NUMBER = {1},
     PAGES = {1--28},
      ISSN = {0920-3036},
   MRCLASS = {14F42 (14F20 19D45 19E08 19E20)},
  MRNUMBER = {1740880},
       DOI = {10.1023/A:1007874218371},
       URL = {http://dx.doi.org/10.1023/A:1007874218371},
}

\bib{levine-dedekind}{article}{
    AUTHOR = {Levine, M.},
     TITLE = {Techniques of localization in the theory of algebraic cycles},
   JOURNAL = {J. Algebraic Geom.},
  FJOURNAL = {Journal of Algebraic Geometry},
    VOLUME = {10},
      YEAR = {2001},
    NUMBER = {2},
     PAGES = {299--363},
      ISSN = {1056-3911},
   MRCLASS = {14C25 (14F42 19E15)},
  MRNUMBER = {1811558},
MRREVIEWER = {Tam\'{a}s Szamuely},
}

\bib{levine-chow}{article}{
author = {Levine, M.}   
     TITLE = {Chow's moving lemma and the homotopy coniveau tower},
   JOURNAL = {$K$-Theory},
  FJOURNAL = {$K$-Theory. An Interdisciplinary Journal for the Development,
              Application, and Influence of $K$-Theory in the Mathematical
              Sciences},
    VOLUME = {37},
      YEAR = {2006},
    NUMBER = {1-2},
     PAGES = {129--209},
      ISSN = {0920-3036},
   MRCLASS = {19E15 (14C25)},
  MRNUMBER = {2274672},
MRREVIEWER = {Luca Barbieri Viale},
       DOI = {10.1007/s10977-006-0004-5},
       URL = {http://dx.doi.org/10.1007/s10977-006-0004-5}
}

\bib{levine-coniveau}{article}{
    AUTHOR = {Levine, M.},
     TITLE = {The homotopy coniveau tower},
   JOURNAL = {J. Topol.},
  FJOURNAL = {Journal of Topology},
    VOLUME = {1},
      YEAR = {2008},
    NUMBER = {1},
     PAGES = {217--267},
      ISSN = {1753-8416},
   MRCLASS = {14C25 (14F42 19E08 19E15 55P42)},
  MRNUMBER = {2365658},
       DOI = {10.1112/jtopol/jtm004},
       URL = {http://dx.doi.org/10.1112/jtopol/jtm004},
}

\bib{levine-converge}{article}{
    AUTHOR = {Levine, M.},
     TITLE = {Convergence of {V}oevodsky's slice tower},
   JOURNAL = {Doc. Math.},
  FJOURNAL = {Documenta Mathematica},
    VOLUME = {18},
      YEAR = {2013},
     PAGES = {907--941},
      ISSN = {1431-0635},
   MRCLASS = {14F42 (55P42)},
  MRNUMBER = {3084567},
MRREVIEWER = {Satoshi Mochizuki},
}

%
%



\bib{htt}{book}{
   author={Lurie, Jacob},
   title={Higher topos theory},
   series={Annals of Mathematics Studies},
   volume={170},
   publisher={Princeton University Press, Princeton, NJ},
   date={2009},
   pages={xviii+925},
   isbn={978-0-691-14049-0},
   isbn={0-691-14049-9},
    review={\MR{2522659 (2010j:18001)}},
    doi={10.1515/9781400830558},
}

\bib{higheralgebra}{article}{
   author={Lurie, Jacob},
   title = {Higher algebra},
   Eprint ={http://www.math.harvard.edu/~lurie/papers/higheralgebra.pdf}
}

\bib{SAG}{article}{
author = {Lurie, Jacob},
title={Spectral Algebraic Geometry},
Eprint={http://www.math.harvard.edu/~lurie/papers/SAG-rootfile.pdf}
}
\bib{mnn}{article}{
    AUTHOR = {Mathew, Akhil}
    AUTHOR = {Naumann, Niko}
    AUTHOR = {Noel, Justin},
     TITLE = {Nilpotence and descent in equivariant stable homotopy theory},
   JOURNAL = {Adv. Math.},
  FJOURNAL = {Advances in Mathematics},
      YEAR = {2017},
   MRCLASS = {55P91 (55P42)},
  MRNUMBER = {3570153},
  MRREVIEWER = {Gregory Z. Arone},
       DOI = {10.1016/j.aim.2016.09.027},
       URL = {http://dx.doi.org/10.1016/j.aim.2016.09.027},
 }


\bib{mvw}{book}{
    AUTHOR = {Mazza, Carlo}
    AUTHOR = {Voevodsky, Vladimir}
    AUTHOR = {Weibel, Charles},
     TITLE = {Lecture notes on motivic cohomology},
    SERIES = {Clay Mathematics Monographs},
    VOLUME = {2},
 PUBLISHER = {American Mathematical Society, Providence, RI; Clay
              Mathematics Institute, Cambridge, MA},
      YEAR = {2006},
     PAGES = {xiv+216},
      ISBN = {978-0-8218-3847-1; 0-8218-3847-4},
   MRCLASS = {14F42 (19E15)},
  MRNUMBER = {2242284},
MRREVIEWER = {Thomas Geisser},
}

\bib{milne}{book}{
    AUTHOR = {Milne, James S.},
     TITLE = {\'{E}tale cohomology},
    SERIES = {Princeton Mathematical Series},
    VOLUME = {33},
 PUBLISHER = {Princeton University Press, Princeton, N.J.},
      YEAR = {1980},
     PAGES = {xiii+323},
      ISBN = {0-691-08238-3},
   MRCLASS = {14-02 (14F20 18F99)},
  MRNUMBER = {559531},
MRREVIEWER = {G. Horrocks},
}

\bib{milnor}{article}{
    AUTHOR = {Milnor, John},
     TITLE = {Algebraic {$K$}-theory and quadratic forms},
   JOURNAL = {Invent. Math.},
  FJOURNAL = {Inventiones Mathematicae},
    VOLUME = {9},
      YEAR = {1969/1970},
     PAGES = {318--344},
      ISSN = {0020-9910},
   MRCLASS = {18.20 (10.00)},
  MRNUMBER = {0260844},
MRREVIEWER = {H. Bass},
       DOI = {10.1007/BF01425486},
       URL = {https://doi.org/10.1007/BF01425486},
}

\bib{mitchell}{article}{
    AUTHOR = {Mitchell, S. A.},
     TITLE = {Hypercohomology spectra and {T}homason's descent theorem},
 BOOKTITLE = {Algebraic {$K$}-theory ({T}oronto, {ON}, 1996)},
    SERIES = {Fields Inst. Commun.},
    VOLUME = {16},
     PAGES = {221--277},
 PUBLISHER = {Amer. Math. Soc., Providence, RI},
      YEAR = {1997},
   MRCLASS = {19D99 (18F20 55R45)},
  MRNUMBER = {1466977},
}


\bib{morel-trieste}{article}{
    AUTHOR = {Morel, F.},
     TITLE = {An introduction to {$\Bbb A^{1}$}-homotopy theory},
 BOOKTITLE = {Contemporary developments in algebraic {$K$}-theory},
    SERIES = {ICTP Lect. Notes, XV},
     PAGES = {357--441},
 PUBLISHER = {Abdus Salam Int. Cent. Theoret. Phys., Trieste},
      YEAR = {2004},
   MRCLASS = {19E08 (14F35 55-02)},
  MRNUMBER = {2175638},
MRREVIEWER = {Daniel C. Isaksen},
}

\bib{morel-conn}{article}{
    AUTHOR = {Morel, F.},
     TITLE = {The stable {${\Bbb A}^{1}$}-connectivity theorems},
   JOURNAL = {$K$-Theory},
  FJOURNAL = {$K$-Theory. An Interdisciplinary Journal for the Development,
              Application, and Influence of $K$-Theory in the Mathematical Sciences},
    VOLUME = {35},
      YEAR = {2005},
    NUMBER = {1-2},
     PAGES = {1--68},
      ISSN = {0920-3036},
   MRCLASS = {14F35 (19E20 19G12)},
  MRNUMBER = {2240215},
MRREVIEWER = {Oliver R\~A\P ndigs},
       DOI = {10.1007/s10977-005-1562-7},
       URL = {http://dx.doi.org/10.1007/s10977-005-1562-7},
}

\bib{morel-book}{book}{
    author={Morel, F.},
    title={${\bf A}^{1}$-algebraic topology over a field},
    series={Lecture Notes in Mathematics, Vol. 2052},
    publisher={Springer Verlag},
    place={Berlin},
    date={2012},
}

\bib{morel-voevodsky}{article}{
    author={Morel, F.},
    author={Voevodsky, V.},
    title={${\bf A}^{1}$-homotopy theory of schemes},
    journal={Inst. Hautes \'{E}tudes Sci. Publ. Math.},
    number={90},
    date={1999},
    pages={45--143},
    issn={0073-8301},
}

\bib{landweber}{article}{
author = {Naumann, N.}
author = {Spitzweck, M.}
author = {{\O}stv{\ae}r, P. A.}
     TITLE = {Motivic {L}andweber exactness},
   JOURNAL = {Doc. Math.},
  FJOURNAL = {Documenta Mathematica},
    VOLUME = {14},
      YEAR = {2009},
     PAGES = {551--593},
      ISSN = {1431-0635},
   MRCLASS = {55N22 (14A20 14F42 19E08 55P42)},
  MRNUMBER = {2565902},
}

\bib{neukirch}{book}{
    AUTHOR = {Neukirch, J\"{u}rgen},
     TITLE = {Algebraic number theory},
    SERIES = {Grundlehren der Mathematischen Wissenschaften [Fundamental
              Principles of Mathematical Sciences]},
    VOLUME = {322},
      NOTE = {Translated from the 1992 German original and with a note by
              Norbert Schappacher,
              With a foreword by G. Harder},
 PUBLISHER = {Springer-Verlag, Berlin},
      YEAR = {1999},
     PAGES = {xviii+571},
      ISBN = {3-540-65399-6},
   MRCLASS = {11Rxx (11-02 11S15 11S31 14C40)},
  MRNUMBER = {1697859},
MRREVIEWER = {Cornelius Greither},
       DOI = {10.1007/978-3-662-03983-0},
       URL = {https://doi.org/10.1007/978-3-662-03983-0},
}

\bib{Oka}{article}{
    AUTHOR = {Oka, S.},
     TITLE = {Multiplications on the {M}oore spectrum},
   JOURNAL = {Mem. Fac. Sci. Kyushu Univ. Ser. A},
  FJOURNAL = {Memoirs of the Faculty of Science. Kyushu University. Series A. Mathematics},
    VOLUME = {38},
      YEAR = {1984},
    NUMBER = {2},
     PAGES = {257--276},
      ISSN = {0373-6385},
     CODEN = {MFKAAF},
   MRCLASS = {55P45},
  MRNUMBER = {760188},
MRREVIEWER = {Donald M. Davis},
       DOI = {10.2206/kyushumfs.38.257},
       URL = {http://dx.doi.org/10.2206/kyushumfs.38.257},
}

\bib{oo}{article}{
   author={Ormsby, K. M.},
   author={{\O}stv{\ae}r, P. A.},
   title={Stable motivic $\pi_1$ of low-dimensional fields},
   journal={Adv. Math.},
   volume={265},
   date={2014},
   pages={97--131},
   issn={0001-8708},
   review={\MR{3255457}},
   doi={10.1016/j.aim.2014.07.024},
}


\bib{quick}{article}{
    AUTHOR = {Quick, G.},
     TITLE = {Stable \'etale realization and \'etale cobordism},
   JOURNAL = {Adv. Math.},
  FJOURNAL = {Advances in Mathematics},
    VOLUME = {214},
      YEAR = {2007},
    NUMBER = {2},
     PAGES = {730--760},
      ISSN = {0001-8708},
   MRCLASS = {14F35 (14F42 55U35)},
  MRNUMBER = {2349718},
MRREVIEWER = {Daniel C. Isaksen},
       DOI = {10.1016/j.aim.2007.03.005},
       URL = {http://dx.doi.org/10.1016/j.aim.2007.03.005},
}
%

\bib{robalo}{article}{
    AUTHOR = {Robalo, M.},
     TITLE = {{$K$}-theory and the bridge from motives to noncommutative motives},
   JOURNAL = {Adv. Math.},
  FJOURNAL = {Advances in Mathematics},
    VOLUME = {269},
      YEAR = {2015},
     PAGES = {399--550},
      ISSN = {0001-8708},
   MRCLASS = {14F42 (14A22 18D10 19E15)},
  MRNUMBER = {3281141},
MRREVIEWER = {Jens Hornbostel},
       DOI = {10.1016/j.aim.2014.10.011},
       URL = {http://dx.doi.org/10.1016/j.aim.2014.10.011},
}

\bib{ro}{article}{
AUTHOR = {R\"ondigs, O.}
AUTHOR =  {{\O}stv{\ae}r, P. A.},
     TITLE = {Modules over motivic cohomology},
   JOURNAL = {Adv. Math.},
  FJOURNAL = {Advances in Mathematics},
    VOLUME = {219},
      YEAR = {2008},
    NUMBER = {2},
     PAGES = {689--727},
      ISSN = {0001-8708},
   MRCLASS = {14F42 (55U35)},
  MRNUMBER = {2435654},
MRREVIEWER = {Christian Haesemeyer},
       DOI = {10.1016/j.aim.2008.05.013},
       URL = {http://dx.doi.org/10.1016/j.aim.2008.05.013},
}

\bib{ro2}{article}{
  AUTHOR = {R\"ondigs, O.}
AUTHOR =  {{\O}stv{\ae}r, P. A.},  
     TITLE = {Rigidity in motivic homotopy theory},
   JOURNAL = {Math. Ann.},
  FJOURNAL = {Mathematische Annalen},
    VOLUME = {341},
      YEAR = {2008},
    NUMBER = {3},
     PAGES = {651--675},
      ISSN = {0025-5831},
   MRCLASS = {14F42 (55P42)},
  MRNUMBER = {2399164},
MRREVIEWER = {Daniel C. Isaksen},
       DOI = {10.1007/s00208-008-0208-5},
       URL = {http://dx.doi.org/10.1007/s00208-008-0208-5},
 }

\bib{milnor-hermitian}{article}{
    AUTHOR = {R\"ondigs, O.}
    AUTHOR = {{\O}stv{\ae}r, P. A.},
     TITLE = {Slices of hermitian {$K$}-theory and {M}ilnor's conjecture on quadratic forms},
   JOURNAL = {Geom. Topol.},
  FJOURNAL = {Geometry \& Topology},
    VOLUME = {20},
      YEAR = {2016},
    NUMBER = {2},
     PAGES = {1157--1212},
      ISSN = {1465-3060},
   MRCLASS = {19G38 (11E04 14F42 55P42 55T05)},
  MRNUMBER = {3493102},
MRREVIEWER = {Kyle M. Ormsby},
       DOI = {10.2140/gt.2016.20.1157},
       URL = {http://dx.doi.org/10.2140/gt.2016.20.1157},
}

\bib{pi1}{article}{    
   AUTHOR = {R\"ondigs, O.}
   AUTHOR = {Spitzweck, M.}
     AUTHOR =  {{\O}stv{\ae}r, P. A.}, 
     TITLE = {The first stable homotopy groups of motivic spheres},
   JOURNAL = {Ann. of Math. (2)},
  FJOURNAL = {Annals of Mathematics. Second Series},
    VOLUME = {189},
      YEAR = {2019},
    NUMBER = {1},
     PAGES = {1--74},
      ISSN = {0003-486X},
   MRCLASS = {14F42 (55Q45)},
  MRNUMBER = {3898173},
       DOI = {10.4007/annals.2019.189.1.1},
       URL = {https://doi.org/10.4007/annals.2019.189.1.1},
}

\bib{rso-solves}{article}{
Author = {R\"ondigs, O.}
Author = {Spitzweck, M.}
Author = {{\O}stv{\ae}r, P. A.},
TITLE = {The motivic {H}opf map solves the homotopy limit problem for
              {$K$}-theory},
   JOURNAL = {Doc. Math.},
  FJOURNAL = {Documenta Mathematica},
    VOLUME = {23},
      YEAR = {2018},
     PAGES = {1405--1424},
      ISSN = {1431-0635},
   MRCLASS = {14F42 (55P42)},
  MRNUMBER = {3874943},
}

\bib{ro1}{article}{
    AUTHOR = {Rosenschon, A.}
    AUTHOR = {{\O}stv{\ae}r, P. A.},
     TITLE = {The homotopy limit problem for two-primary algebraic {$K$}-theory},
   JOURNAL = {Topology},
  FJOURNAL = {Topology. An International Journal of Mathematics},
    VOLUME = {44},
      YEAR = {2005},
    NUMBER = {6},
     PAGES = {1159--1179},
      ISSN = {0040-9383},
   MRCLASS = {19E08 (14F20 19E20 55R45)},
  MRNUMBER = {2168573},
MRREVIEWER = {Jens Hornbostel},
       DOI = {10.1016/j.top.2005.04.004},
       URL = {http://dx.doi.org/10.1016/j.top.2005.04.004},
}

\bib{aro2}{article}{
    AUTHOR = {Rosenschon, A.}
    AUTHOR = {{\O}stv{\ae}r, P. A.},
     TITLE = {Descent for {$K$}-theories},
   JOURNAL = {J. Pure Appl. Algebra},
  FJOURNAL = {Journal of Pure and Applied Algebra},
    VOLUME = {206},
      YEAR = {2006},
    NUMBER = {1-2},
     PAGES = {141--152},
      ISSN = {0022-4049},
   MRCLASS = {19E20},
  MRNUMBER = {2220086},
MRREVIEWER = {Jean-Louis Cathelineau},
       DOI = {10.1016/j.jpaa.2005.06.019},
       URL = {http://dx.doi.org/10.1016/j.jpaa.2005.06.019},
}

\bib{serre}{book}{
    AUTHOR = {Serre, J.P.},
     TITLE = {Cohomologie Galoisienne},
      NOTE = {Lecture Notes in Mathematics, No. 5},
 PUBLISHER = {Springer Verlag; Berlin-Heidelberg-New York},
      YEAR = {1962},
     PAGES = {},
   MRCLASS = {},
  MRNUMBER = {},
MRREVIEWER = {},
}

\bib{shatz}{book}{
    AUTHOR = {Shatz, Stephen S.},
     TITLE = {Profinite groups, arithmetic, and geometry},
      NOTE = {Annals of Mathematics Studies, No. 67},
 PUBLISHER = {Princeton University Press, Princeton, N.J.; University of
              Tokyo Press, Tokyo},
      YEAR = {1972},
     PAGES = {x+252},
   MRCLASS = {12B20 (12A65 12B25 12G10 14L20)},
  MRNUMBER = {0347778},
MRREVIEWER = {F. Oort},
}

\bib{shipley}{article}{
    AUTHOR = {Shipley, Brooke},
     TITLE = {{$H\Bbb Z$}-algebra spectra are differential graded algebras},
   JOURNAL = {Amer. J. Math.},
  FJOURNAL = {American Journal of Mathematics},
    VOLUME = {129},
      YEAR = {2007},
    NUMBER = {2},
     PAGES = {351--379},
      ISSN = {0002-9327},
   MRCLASS = {55P42 (18G55 55P20 55P43 55U35)},
  MRNUMBER = {2306038},
MRREVIEWER = {Tyler D. Lawson},
       DOI = {10.1353/ajm.2007.0014},
       URL = {https://doi.org/10.1353/ajm.2007.0014},
}


\bib{spitzweck-rel}{article}{
    AUTHOR = {Spitzweck, M.},
     TITLE = {Relations between slices and quotients of the algebraic
              cobordism spectrum},
   JOURNAL = {Homology Homotopy Appl.},
  FJOURNAL = {Homology, Homotopy and Applications},
    VOLUME = {12},
      YEAR = {2010},
    NUMBER = {2},
     PAGES = {335--351},
      ISSN = {1532-0073},
   MRCLASS = {14F42 (19E20 55N22 55P42)},
  MRNUMBER = {2771593},
MRREVIEWER = {Niko Naumann},
       URL = {http://projecteuclid.org/euclid.hha/1296223886},
}

\bib{spitzweck2}{article}{
    AUTHOR = {Spitzweck, M.},
     TITLE = {Slices of motivic {L}andweber spectra},
   JOURNAL = {J. K-Theory},
  FJOURNAL = {Journal of K-Theory. K-Theory and its Applications in Algebra,
              Geometry, Analysis \& Topology},
    VOLUME = {9},
      YEAR = {2012},
    NUMBER = {1},
     PAGES = {103--117},
      ISSN = {1865-2433},
   MRCLASS = {55N20 (14F43 18E30 55P42)},
  MRNUMBER = {2887201},
MRREVIEWER = {Kyle M. Ormsby},
       DOI = {10.1017/is010008019jkt128},
       URL = {http://dx.doi.org/10.1017/is010008019jkt128},
}

\bib{spitzweck-integral}{article}{
    AUTHOR = {Spitzweck, M.},
     TITLE = {A commutative {$\Bbb P^1$}-spectrum representing motivic
              cohomology over {D}edekind domains},
   JOURNAL = {M\'{e}m. Soc. Math. Fr. (N.S.)},
  FJOURNAL = {M\'{e}moires de la Soci\'{e}t\'{e} Math\'{e}matique de France. Nouvelle S\'{e}rie},
    NUMBER = {157},
      YEAR = {2018},
     PAGES = {110},
      ISSN = {0249-633X},
      ISBN = {978-2-85629-890-9},
   MRCLASS = {14F42 (55P43)},
  MRNUMBER = {3865569},
}
  
 \bib{spitzweck-mixed}{article}{
  Author = {Spitzweck, M.},
     TITLE = {Algebraic cobordism in mixed characteristic},
   JOURNAL = {Homology Homotopy Appl.},
  FJOURNAL = {Homology, Homotopy and Applications},
    VOLUME = {22},
      YEAR = {2020},
    NUMBER = {2},
     PAGES = {91--103},
      ISSN = {1532-0073},
   MRCLASS = {14F42 (57R90)},
  MRNUMBER = {4093171},
       DOI = {10.4310/hha.2020.v22.n2.a5},
       URL = {https://doi.org/10.4310/hha.2020.v22.n2.a5},
}

\bib{ostvaer-spitzweck}{article}{
AUTHOR={Spitzweck, M.},
   AUTHOR={{\O}stv{\ae}r, P. A.},
title = {The Bott inverted infinite projective space is homotopy algebraic K-theory},
 JOURNAL = {Bull. Lond. Math. Soc.},
  FJOURNAL = {Bulletin of the London Mathematical Society},
    VOLUME = {41},
      YEAR = {2009},
    NUMBER = {2},
     PAGES = {281--292},
      ISSN = {0024-6093},
   MRCLASS = {19E20 (14F42 55P42)},
  MRNUMBER = {2496504},
       DOI = {10.1112/blms/bdn124},
       URL = {http://dx.doi.org/10.1112/blms/bdn124},
}
%


\bib{aktec}{article}{
  AUTHOR = {Thomason, R. W.},
     TITLE = {Algebraic {$K$}-theory and \'etale cohomology},
   JOURNAL = {Ann. Sci. \'Ecole Norm. Sup. (4)},
  FJOURNAL = {Annales Scientifiques de l'\'Ecole Normale Sup\'erieure. Quatri\`eme S\'erie},
    VOLUME = {18},
      YEAR = {1985},
    NUMBER = {3},
     PAGES = {437--552},
      ISSN = {0012-9593},
   MRCLASS = {14F15 (11G99 14F12 18F25 19E20)},
  MRNUMBER = {826102},
MRREVIEWER = {H. Gillet},
       URL = {http://www.numdam.org/item?id=ASENS_1985_4_18_3_437_0},
}

\bib{totaro}{article}{
Author = {Totaro, B.},
     TITLE = {Adjoint functors on the derived category of motives},
   JOURNAL = {J. Inst. Math. Jussieu},
  FJOURNAL = {Journal of the Institute of Mathematics of Jussieu. JIMJ.
              Journal de l'Institut de Math\'{e}matiques de Jussieu},
    VOLUME = {17},
      YEAR = {2018},
    NUMBER = {3},
     PAGES = {489--507},
      ISSN = {1474-7480},
   MRCLASS = {14F42 (14C15 18E30)},
  MRNUMBER = {3789179},
MRREVIEWER = {Thomas Benedict Williams},
       DOI = {10.1017/S1474748016000104},
       URL = {https://doi.org/10.1017/S1474748016000104},
}

\bib{voevodsky-icm}{article}{
    AUTHOR = {Voevodsky, V.},
     TITLE = {{$\bold A^1$}-homotopy theory},
 BOOKTITLE = {Proceedings of the {I}nternational {C}ongress of
              {M}athematicians, {V}ol. {I} ({B}erlin, 1998)},
   JOURNAL = {Doc. Math.},
  FJOURNAL = {Documenta Mathematica},
      YEAR = {1998},
    NUMBER = {Extra Vol. I},
     PAGES = {579--604},
      ISSN = {1431-0635},
   MRCLASS = {14F35 (14A15 55U35)},
  MRNUMBER = {1648048},
MRREVIEWER = {Mark Hovey},
}

\bib{voevodsky-open}{article}{    
AUTHOR = {Voevodsky, V.},
     TITLE = {Open problems in the motivic stable homotopy theory. {I}},
 BOOKTITLE = {Motives, polylogarithms and {H}odge theory, {P}art {I}
              ({I}rvine, {CA}, 1998)},
    SERIES = {Int. Press Lect. Ser.},
    VOLUME = {3},
     PAGES = {3--34},
 PUBLISHER = {Int. Press, Somerville, MA},
      YEAR = {2002},
   MRCLASS = {14F35 (55P42)},
  MRNUMBER = {1977582},
MRREVIEWER = {Vladimir I. Guletski\"A-},
}

%
\bib{voevodsky-power}{article}{
    AUTHOR = {Voevodsky, V.},
     TITLE = {Reduced power operations in motivic cohomology},
   JOURNAL = {Publ. Math. Inst. Hautes \'Etudes Sci.},
  FJOURNAL = {Publications Math\'ematiques. Institut de Hautes \'Etudes Scientifiques},
    NUMBER = {98},
      YEAR = {2003},
     PAGES = {1--57},
      ISSN = {0073-8301},
   MRCLASS = {14F42 (12G05 19D45 19E15)},
  MRNUMBER = {2031198},
MRREVIEWER = {Eric M. Friedlander},
       DOI = {10.1007/s10240-003-0009-z},
       URL = {http://dx.doi.org/10.1007/s10240-003-0009-z},
}

%

\bib{voevodsky-cancel}{article}{
AUTHOR = {Voevodsky, V.},
     TITLE = {Cancellation theorem},
   JOURNAL = {Doc. Math.},
  FJOURNAL = {Documenta Mathematica},
      YEAR = {2010},
    NUMBER = {Extra vol.: Andrei A. Suslin sixtieth birthday},
     PAGES = {671--685},
      ISSN = {1431-0635},
   MRCLASS = {14F42 (19E15)},
  MRNUMBER = {2804268},
MRREVIEWER = {Oliver R\~A\P ndigs},
}

\bib{voe10b}{article}{
AUTHOR = {Voevodsky, V.}
TITLE = {Simplicial radditive functors}
   JOURNAL = {J. K-Theory},
  FJOURNAL = {Journal of K-Theory. K-Theory and its Applications in Algebra, Geometry, Analysis \& Topology},
    VOLUME = {5},
      YEAR = {2010},
    NUMBER = {2},
     PAGES = {201--244},
      ISSN = {1865-2433},
   MRCLASS = {55U35},
  MRNUMBER = {2640203},
MRREVIEWER = {Geoffrey M. L. Powell},
       DOI = {10.1017/is010003026jkt097},
       URL = {http://dx.doi.org/10.1017/is010003026jkt097},
}

\bib{voevodsky-2003}{article}{
    AUTHOR = {Voevodsky, V.},
     TITLE = {Motivic cohomology with {$\bold Z/2$}-coefficients},
   JOURNAL = {Publ. Math. IHES},
  FJOURNAL = {Publ. Math. Inst. Hautes \'Etudes Sci.},
    VOLUME = {98},
      YEAR = {2003},
    NUMBER = {},
     PAGES = {59--104},
      ISSN = {},
   MRCLASS = {14F42 (19D45)},
  MRNUMBER = {},
MRREVIEWER = {Matthias Wendt},
       DOI = {},
       URL = {},
}

\bib{voevodsky-2011}{article}{
    AUTHOR = {Voevodsky, V.},
     TITLE = {On motivic cohomology with {$\bold Z/l$}-coefficients},
   JOURNAL = {Ann. of Math. (2)},
  FJOURNAL = {Annals of Mathematics. Second Series},
    VOLUME = {174},
      YEAR = {2011},
    NUMBER = {1},
     PAGES = {401--438},
      ISSN = {0003-486X},
   MRCLASS = {14F42 (19D45)},
  MRNUMBER = {2811603},
MRREVIEWER = {Matthias Wendt},
       DOI = {10.4007/annals.2011.174.1.11},
       URL = {http://dx.doi.org/10.4007/annals.2011.174.1.11},
}

\bib{vro}{article}{
    AUTHOR = {Voevodsky, Vladimir}
    AUTHOR = {R\"ondigs, Oliver},
    AUTHOR = {\O stv\ae r, Paul Arne},
     TITLE = {Voevodsky's {N}ordfjordeid lectures: motivic homotopy theory},
 BOOKTITLE = {Motivic homotopy theory},
    SERIES = {Universitext},
     PAGES = {147--221},
 PUBLISHER = {Springer, Berlin},
      YEAR = {2007},
   MRCLASS = {14F42 (14F35 55U35)},
  MRNUMBER = {2334215},
       URL = {https://doi.org/10.1007/978-3-540-45897-5_7},
}
\bib{VSF}{article}{
   AUTHOR = {Voevodsky, V.}
   AUTHOR = {Suslin, A.}
   AUTHOR = {Friedlander, E. M.},
     TITLE = {Cycles, transfers, and motivic homology theories},
    SERIES = {Annals of Mathematics Studies},
    VOLUME = {143},
 PUBLISHER = {Princeton University Press, Princeton, NJ},
      YEAR = {2000},
     PAGES = {vi+254},
      ISBN = {0-691-04814-2; 0-691-04815-0},
   MRCLASS = {14F42 (14C25 19E15)},
  MRNUMBER = {1764197},
MRREVIEWER = {Spencer J. Bloch},
}

\bib{weibel-kh}{article}{
    AUTHOR = {Weibel, Charles A.},
     TITLE = {Homotopy algebraic {$K$}-theory},
 BOOKTITLE = {Algebraic {$K$}-theory and algebraic number theory
              ({H}onolulu, {HI}, 1987)},
    SERIES = {Contemp. Math.},
    VOLUME = {83},
     PAGES = {461--488},
 PUBLISHER = {Amer. Math. Soc., Providence, RI},
      YEAR = {1989},
   MRCLASS = {18F25 (19D25)},
  MRNUMBER = {991991},
MRREVIEWER = {Barry H. Dayton},
       DOI = {10.1090/conm/083/991991},
       URL = {https://doi.org/10.1090/conm/083/991991},
}

\bib{wilson}{article}{
    AUTHOR = {Wilson, Glen Matthew},
     TITLE = {The eta-inverted sphere over the rationals},
   JOURNAL = {Algebr. Geom. Topol.},
  FJOURNAL = {Algebraic \& Geometric Topology},
    VOLUME = {18},
      YEAR = {2018},
    NUMBER = {3},
     PAGES = {1857--1881},
      ISSN = {1472-2747},
   MRCLASS = {14F42 (18G15 55Q45 55T15)},
  MRNUMBER = {3784021},
       DOI = {10.2140/agt.2018.18.1857},
       URL = {https://doi.org/10.2140/agt.2018.18.1857},
}
  
\end{biblist}
\end{bibdiv}

\vspace{20pt}
\scriptsize
\noindent
Elden Elmanto\\
Harvard University\\
One Oxford Street\\
MA 02138\\
USA\\
\texttt{eldenelmanto@gmail.com}

\vspace{10pt}
\noindent
Marc Levine\\
Universit\"at Duisburg-Essen\\
Fakult\"at f\"ur Mathematik\\
Thea-Leymann-Strasse 9\\
45127 Essen\\
Germany\\
\texttt{levine@uni-due.de}

\vspace{10pt}
\noindent
Markus Spitzweck\\
Universit\"at Osnabr\"uck\\
Fakult\"at f\"ur Mathematik\\
Albrechtstr. 28a\\
49076 Osnabr\"uck\\
Germany\\
\texttt{markus.spitzweck@uni-osnabrueck.de }

\vspace{10pt}
\noindent
Paul Arne \O stv\ae r\\
University of Oslo\\
Department of Mathematics\\
0316 Oslo\\
Norway\\

\noindent
Department of Mathematics Federigo Enriques\\
University of Milan\\
20133 Milan, Italy\\
\texttt{paularne@math.uio.no}
\texttt{paul.oestvaer@unimi.it}

\end{document}